\documentclass[11pt,a4paper,oneside]{report}
\usepackage[utf8]{inputenc}
\usepackage[T1]{fontenc}
\usepackage{amsmath,amssymb,amsthm}
\usepackage{mathrsfs}
\usepackage{enumerate}
\usepackage{titlesec}
\usepackage{wasysym}
\usepackage{cite}
\usepackage{graphicx}
\usepackage{braket}
\usepackage[dvipsnames]{xcolor}
\usepackage{tikz}
\usepackage{tcolorbox}
\usepackage{hyperref}
\usepackage{geometry}
\usepackage{esint}

\tcbuselibrary{breakable}


\newtheorem{theorem}{{Theorem}}[section]
\newtheorem{lemma}[theorem]{{Lemma}}
\newtheorem{proposition}[theorem]{{Proposition }}
\newtheorem{definition}[theorem]{{Definition}}
\newtheorem{bemerkung}[theorem]{{Remark}}

\newtheorem{korollar}[theorem]{{Corollary}}

\numberwithin{equation}{section}

\newcommand{\Sp}{\mathbb S}

\newcommand{\N}{\mathbb{N}}
\newcommand{\R}{\mathbb{R}}

\setlength{\hoffset}{-.7cm}
\textwidth=16cm

\definecolor{gray75}{gray}{0.75}
\newcommand{\hsp}{\hspace{20pt}}
\titleformat{\chapter}[hang]{\Huge\bfseries}{\thechapter\hsp\textcolor{gray75}{|}\hsp}{0pt}{\Huge\bfseries}

\begin{document}
\pagenumbering{roman}

\Huge
 \begin{center}
Parabolic and Elliptic Schauder Theory on Manifolds for a Fourth-Order Problem\\ with a First- and a Third-Order Boundary Condition\\ 

\hspace{10pt}

\Large
Jan-Henrik Metsch$^*$ \\

\hspace{10pt}

\normalsize  
$^*$ Corresponding author: J.-H. Metsch, Department of Mathematics,\\ University of Freiburg, Germany ({\tt\small jan.metsch@math.uni-freiburg.de})

\end{center}

\hspace{10pt}

\normalsize

\setcounter{tocdepth}{1}
\begin{center}
\textbf{Abstract}
\end{center}
   These notes provide a self-contained introduction to Schauder theory on manifolds. First, we derive Schauder estimates for a fourth-order parabolic linear problem with a first- and third-order boundary condition on a smooth compact manifold $M$. Then, applying these, we prove the existence of solutions and study the associated elliptic problem. \\  
   
\noindent
\textbf{MSC2020:}: Primary: 35B45, 35R01, Secondary: 35-01, 35G16, 35J40\\

\noindent
\textbf{Keywords}: Schauder Theory, A Priori Estimates, Fourth Order Equation, PDEs on Manifolds, Initial-Boundary Value Problem, Galerkin Approximation, Introductory Exposition

\newpage
   \begingroup
\let\clearpage\relax
\tableofcontents
\endgroup

\section*{The Purpose of these Notes}
In these notes, I present a self-contained derivation of the Schauder estimates for a fourth-order parabolic problem, which I have encountered during my Ph.D. studies. Even though this problem has essentially been studied in the literature, I could not track down a completely self-contained proof in the literature that does not require potential theory. Among other things, it was, therefore, that my supervisor Prof. Dr. Ernst Kuwert, encouraged me to write up a proof myself and include it as an appendix in my dissertation. In the hope that it will be helpful to others, I then wrote this appendix in a form that resembles more lecture notes rather than an article.\\

Even though some of the proofs I present here are novel, there is probably nothing in these notes that is new to any expert in the field.\\

Finally, I wish to mention that the thesis `Biharmonischer Wärmefluss' of Tobias Lamm \cite{LammBiharmonisch} was immensely helpful throughout the writing of these notes. In the first part of his thesis, Lamm follows a similar ambition and establishes Schauder estimates for higher-order problems with Dirichlet boundary conditions. Still, the problem I study here is not covered by Lamm's analysis and therefore needed `its own proof'. 

\section*{Introduction}
Schauder estimates are a priori estimates for solutions of linear partial differential equations (PDEs) that are immensely useful for studying linear and nonlinear problems. The pioneering work, which includes the first formulation and proof of Schauder estimates, is the 1934 article \cite{schauder01} by Juliusz Schauder himself. Given a sufficiently regular domain $\Omega\subset\R^ n$, sufficiently regular functions $a^ {ij}$, $b^ i$ and $c$ as well as  $f\in C^ {0,\gamma}(\Omega)$ and $\varphi\in C^ {2,\gamma}(\partial\Omega)$, he considers the problem
\begin{equation}\nonumber
\begin{array}{ll}
\left\{ 
\begin{aligned}
a^{ij}\partial_{ij}u+b^i\partial_i u+cu=f&\hspace{.5cm}\textrm{in }\Omega,\\
u=\varphi&\hspace{.5cm}\textrm{along  }\partial\Omega.\\
\end{aligned}
\right.
\end{array}
\end{equation}
Schauder proves that any solution $u\in C^ {2,\gamma}(\Omega)$ satisfies the estimate
\begin{equation}\nonumber
\|u\|_{C^ {2,\gamma}(\Omega)}\leq C\left(\|f\|_{C^ {0,\gamma}(\Omega)}+\|\varphi\|_{C^ {2,\gamma}(\partial\Omega)}+\|u\|_{L^ \infty(\Omega)}\right).
\end{equation}
Without claiming any completeness or historical accuracy, we collect some well-known and classical sources that have generalized the work of Schauder to include general boundary conditions, higher-order problems, systems of PDEs and parabolic problems. First, we mention the articles \cite{adn1} and \cite{adn2}, in which Agmon Douglis and Nirenberg generalize to systems of elliptic equations with general boundary conditions. Parabolic equations have, for example, been studied in the book of Ladyženskaja, Solonnikov and Ural'ceva \cite{ladyvzenskaja1988linear}, the book of Eidelman\cite{eidelman}, the book of Eidelman and Zhitarashu \cite{eidelman1999parabolic} and the book of Friedmann \cite{friedman2008partial}.\\

The classical approach to deriving Schauder estimates is based on potential theoretic estimates. A modern approach that replaced these intricate estimates with a blow-up argument has been introduced by Simon in the pioneering article \cite{simon}.\\

Simon first considers constant coefficient operators that he assumes to be hypoelliptic (i.e. every solution $u\in L^2_{\operatorname{loc}}$ is smooth). He then generalizes to more general situations by perturbative arguments.\\

 To execute Simon's method, one generally only has to verify a hypoellipticity assumption, which he uses to give an elegant proof of certain \emph{Liouville type theorems}. To verify hypoellipticity, the go-to strategy is to develop suitable elliptic or parabolic regularity theory for weak solutions. While these theories are very important, these notes aim to develop Schauder theory and not $L^2$-theory. Therefore the proof we give here follows another approach: First, Liouville-type theorems are proven for smooth solutions and then extended to weaker solutions by mollification. Likewise, when discussing existence theory, we avoid introducing the concept of parabolic Sobolev spaces and weak solutions in favor of a Galerkin approximation. Still, even this approach requires the weak theory of the Neumann Laplacian, which we, therefore, also establish.\\

 \paragraph{Acknowledgements}\ \\
 I want to thank my supervisor Prof. Dr. Ernst Kuwert, for his encouragement to work out these notes, as doing so massively improved my understanding of the subject. Additionally, I want to thank him for his guidance and the many helpful discussions.

\newpage
\section*{Structure of these Notes}
These notes are split into four chapters. In the following, we summarize their contents. Note that some of the notions will be defined later in the text.\\

\hyperref[SchauderCh_Ch01FunctionalSpaces]{\textbf{Chapter 1}} recalls the definition of Hölder and Sobolev spaces and extends them to functions defined on manifolds. Additionally, the parabolic Hölder spaces $C^{k,[k/4],\gamma}(\Omega\times[0,T])$ are introduced and generalised to the manifold setting. Lastly, we prove various \emph{interpolation estimates} for functions in  $C^{k,[k/4],\gamma}(\Omega\times[0,T])$.\\

\hyperref[SchauderCh_Ch02NeumannLaplacian]{\textbf{Chapter 2}} discusses the Neumann Laplacian on a Riemannian manifold $(M,g)$. We establish the existence of an $L^2_g(M)$-orthonormal basis of $L^2_g(M)$ consisting of eigenfunctions $\phi_k$ -- that is solutions to 
 $$\begin{array}{rll}
    \displaystyle-\Delta_g \phi_k\hspace{-.2cm}&\displaystyle=\lambda_k \phi_k&\displaystyle\textrm{in $M$},\vspace{.2cm}\\
    \displaystyle\frac{\partial \phi_k}{\partial\nu}&\displaystyle=0\hspace{-.2cm}&\displaystyle\textrm{along $\partial M$}.
\end{array}$$
By first working on $\R^n$ and then lifting the results to $M$, we then develop elliptic regularity theory for the more general problem 
 $$
\left\{
\begin{aligned}
    -\Delta_g u&=f\hspace{.5cm}\textrm{in $M$},\\
    \frac{\partial u}{\partial\nu}&=h\hspace{.5cm}\textrm{along $\partial M$}.
\end{aligned}
\right.
$$
Thereby we establish smoothness and suitable estimates for the eigenfunctions $\phi_k$. Our main result is Theorem \ref{SchauderCh_neumannlaplacianResultsTheorem}. \\

\hyperref[SchauderCh_Ch03SchauderEstiamtesRn]{\textbf{Chapter 3}} establishes parabolic Schauder estimates on domains $V\times[0,T]\subset\R^n\times\R$ by following the strategy from Simon \cite{simon}. First, suitable Liouville-type theorems are derived that allow us to execute Simon's blow-up argument. Still following \cite{simon}, the Schauder estimates are localized to parabolic balls $U_R(p)$ and extended to non-constant operators. Finally, a covering argument is used to deduce Schauder estimates on general domains. The main results here are Corollaries \ref{SchauderCh_InteriorSchauderEstimateCorollary} and \ref{SchauderCh_BoundarySchauderEstimateCorollary}\\

\hyperref[SchauderCh_Ch04SchauderTheorMfd]{\textbf{Chapter 4}} is split into four parts.  In Section \ref{SchauderCh_Ch4Section1}, the results from Chapter 3 are lifted onto smooth, compact manifolds. The main result is Theorem \ref{SchauderCh_GoodAPrioriEstimateOnManifold}. In Section \ref{SchauderCh_Ch4Sec2}, we specialize to \emph{canonical boundary conditions}. These allow for a variational formulation and hence the derivation of an $L^2$ a priori estimate. The main result is Corollary \ref{SchauderCh_SchauderEstimateonmanifolds}. In Section \ref{SchauderCh_Ch04Sec3}, we study existence of solutions -- the main result is Theorem \ref{SchauderCh_FinalExistenceThmParabolic}. In Section \ref{SchauderCh_Ch04Sec4}, we refine Corollary \ref{SchauderCh_SchauderEstimateonmanifolds} by establishing a decay estimate for the initial boundary value problem 
$$
\left\{
\begin{array}{ll}
\displaystyle \dot u+\frac12\Delta(\Delta+2)u=0&\displaystyle \textrm{in $\Sp^2_+\times[0,T]$},\vspace{.15cm}\\
\displaystyle \frac{\partial u}{\partial\nu}=\frac{\partial(\Delta+2)u}{\partial\nu}=0&\displaystyle \textrm{along $\partial\Sp^2_+\times[0,T]$},\vspace{.15cm}\\
 \displaystyle u(0)=u_0&\displaystyle \textrm{on $\Sp^2_+$}.
\end{array}
\right.
$$
The author used this result in \cite{Metsch}. Finally, in Section \ref{SchauderCh_Ch04Sec05}, we use the parabolic estimates to derive elliptic Schauder estimates. Given a compact Riemannian manifold, we then use the elliptic estimates to analyze the existence of solutions of the problem 
$$
\left\{
\begin{aligned}
&\Delta_g^2u=f\hspace{2.73cm}\textrm{in $M$,}\\
&\frac{\partial u}{\partial\nu}=h_1,\ \frac{\partial\Delta_g u}{\partial\nu}=h_2\hspace{.5cm}\textrm{along $\partial M$,}\\
&\int_M ud\mu_g=0.
\end{aligned}
\right.$$
The main results are Lemma \ref{SchauderCh_EllipticSchauderEstimate} and Corollary \ref{SchauderCh_EllipticProblemExistenceKorollar}.

\section*{Prerequisites}
These notes assume that the reader is familiar with the following concepts:
\begin{itemize}
    \item Definition and most elementary properties of the Hölder and Sobolev spaces $C^{k,\gamma}(\Omega)$ and $W^{k,p}(\Omega)$. A good reference is the book of Gilbarg and Trudinger \cite{gilbarg1977elliptic} (Chapter 4 for Hölder spaces and Chapter 7 for Sobolev spaces). Another excellent reference is Chapter 5 in Evan's book \cite{evans}.
    \item Some basic knowledge of functional analysis. We do not require anything not covered in a typical introductory course. 
    \item Some basic notions of differential and Riemannian geometry. These are required only in the fourth Chapter, where we lift Schauder estimates from $\R^n$ to a manifold $M$.
\end{itemize}
Whenever we use an advanced result from one of these fields, we will provide a literature reference in the text.

\section*{Notation and Conventions}
\begin{itemize}
\item We use the multi-index notation. A multi-index is a vector $\alpha\in\N_0^n$. For such $\alpha$, $x\in\R^n$ and sufficiently regular $f:\R^n\rightarrow\R$, we put 
$$
x^\alpha:=x_1^{\alpha_1}\cdot...\cdot x_n^{\alpha_n}
\hspace{.5cm}\textrm{and}\hspace{.5cm}
\nabla_\alpha f:=\frac{\partial}{\partial x_1^{\alpha_1}}...\frac{\partial}{\partial x_n^{\alpha_n}}f.
$$  
Additionally for $\alpha=(\alpha_1,...,\alpha_n)$, we put $\alpha!:=\alpha_1!\cdot...\cdot\alpha_n!$ and define the \emph{length} of $\alpha$ as $|\alpha|:=\alpha_1+...+\alpha_n$.  
\item As we deal with functions $u(x,t)$ depending on a space variable $x$ and a time variable $t$, we use the symbol $\nabla$ instead of $D$ to denote spatial derivatives, i.e. $\nabla u$, $\nabla^2 u$ etc.
\item Given a Riemannian manifold $(M,g)$, the metric in local coordinates is denoted by $g_{ij}$. As usual, $g^{ij}$ denotes the inverse of $g_{ij}$.
\item Given a Riemannian manifold $(M,g)$ we denote the Laplace Beltrami operator by $\Delta_g$. If $\partial M\neq\emptyset$, we denote the exterior unit normal along $\partial M$ by $\nu$. 
\item The Hölder exponents $\gamma$ are always some number in $(0,1)$.
\item Given a manifold $M$ with boundary $\partial M$, we say that a chart $\phi:U\rightarrow V$ is an interior chart when $U\cap\partial M=\emptyset$. A \emph{boundary chart} is a chart $\phi:U\rightarrow V$ where $U\cap\partial M\neq\emptyset$ and $V\subset(\R^{n-1}\times[0,\infty))$. That is, the model for a manifold with boundary is the upper half-space. Therefore, the exterior unit normal along $\partial M$ in local coordinates is $\nu=-\left(g^{nn}\right)^{-\frac12}g^{ni}\partial_i$.
\item If $X$ and $Y$ are Banach spaces and $f:X\rightarrow Y$ is a compact embedding, we write $f:X\hookrightarrow\hookrightarrow Y$ or simply $X\hookrightarrow\hookrightarrow Y$, when $f$ is clear from context.
\end{itemize}

\chapter{Functional Spaces}\label{SchauderCh_Ch01FunctionalSpaces}
\pagenumbering{arabic}
\section{Hölder Spaces on \texorpdfstring{$\R^n$}{Rn}}
\subsection{Elliptic Hölder spaces}
Let $\Omega\subset\R^n$ be an open set. We denote the set of $m$-times continuously differentiable functions on $\Omega$ by $C^ m(\Omega)$. \begin{definition}[Differentiability on $\bar\Omega$]\label{SchauderCh_EllipticHolderSpaceClosedDomain}
Let $\Omega\subset\R^ n$ be open and bounded and $u\in C^ m(\Omega)$ We say that $u\in C^ m(\bar\Omega)$, if for all multiindices $\alpha\in\N_0^ n$ with length $|\alpha|\leq m$ the function $\nabla_ \alpha u$ has a continuous extension onto $\bar\Omega$. 
We equip $C^ m(\bar\Omega)$ with the norm 
$$\|u\|_{C^m(\bar\Omega)}:=\sum_{k=0}^m\sup_{x\in \Omega}|\nabla^k u(x)|.$$ 
\end{definition}

For $u\in C^0(\Omega)$ and $\gamma\in(0,1)$ we define the \emph{Hölder seminorm} as
$$
[u]_{\gamma,\Omega}:=\sup_{\substack{x,y\in\Omega\\ x\neq y}}\frac{| u(x)- u(y)|}{|x-y|^\gamma}.
$$
More generally, for $u\in C^m(\Omega)$, we define 
$$[\nabla^ m u]_{\gamma,\Omega}:=\sum_{\substack{\alpha\in\N_0^ n\\ |\alpha|=m}}[\nabla_\alpha u]_{\gamma,\Omega}.$$
\begin{definition}[Hölder Space]
Let $\Omega\subset\R^ n$ be open and bounded. We say that a function $u\in C^m(\bar\Omega)$ belongs to the space $C^ {m,\gamma}(\Omega)$, if 
$$\|u\|_{C^{m,\gamma}(\Omega)}:=\|u\|_{C^m(\Omega)}+[\nabla^m u]_{\gamma,\Omega}<\infty.$$
\end{definition}
The spaces $C^{m,\gamma}(\Omega)$ equipped with the $\|\cdot\|_{C^ {m,\gamma}(\Omega)}$-norm are Banach spaces.\\

Given a function $u\in C^{m,\gamma}(\Omega)$, it is easy to see that $\nabla^ku$ has a unique continuous extension to $\bar\Omega$ for all $0\leq k\leq m$. To make explicit in the notation, that $u$ is in fact defined on $\bar\Omega$, we sometimes write $u\in C^{m,\gamma}(\bar\Omega)$.\\

The following lemma is an important tool that allows estimating the Hölder seminorm by covering arguments.
\begin{lemma}\label{SchauderCh_SemiNormSubbadditiveonConvexSpaces}
Let $\Omega\subset\R^n$ be open and convex and $U_1,...,U_N\subset\Omega$ be open and convex sets such that $\Omega\subset \bigcup_{i=1}^N U_i$. Then for all functions $u\in C^{0,\gamma}(\Omega)$ we have 
$$[u]_{\gamma,\Omega}\leq\sum_{i=1}^N[u]_{\gamma, U_i}.$$
\end{lemma}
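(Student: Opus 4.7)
The plan is to reduce the Hölder estimate on $\Omega$ to a one-dimensional estimate along the segment joining two arbitrary points $x, y \in \Omega$. Since $\Omega$ is convex, the curve $\gamma(t) := (1-t)x + ty$, $t \in [0,1]$, stays inside $\Omega$. Setting $J_i := \gamma^{-1}(U_i) \cap [0,1]$, each $J_i$ is open in $[0,1]$ and, crucially, \emph{convex}: if $t_1 < t_2$ both lie in $J_i$, then by convexity of $U_i$ the whole segment $\gamma([t_1, t_2])$ lies in $U_i$, so $[t_1, t_2] \subset J_i$. Thus $J_1, \ldots, J_N$ are open subintervals of $[0,1]$ whose union is $[0,1]$.

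The core of the argument is a combinatorial construction that produces a partition $0 = t_0 < t_1 < \cdots < t_M = 1$ and \emph{distinct} indices $\sigma(1), \ldots, \sigma(M)$ with $[t_{j-1}, t_j] \subset J_{\sigma(j)}$. I would construct these inductively: pick $\sigma(1)$ with $0 \in J_{\sigma(1)}$ and set $s_1 := \sup J_{\sigma(1)}$. If $s_1 = 1$ and $1 \in J_{\sigma(1)}$, stop. Otherwise $s_1 \notin J_{\sigma(1)}$, so some $\sigma(2) \neq \sigma(1)$ satisfies $s_1 \in J_{\sigma(2)}$; since $J_{\sigma(2)}$ is open and $J_{\sigma(1)}$ has supremum $s_1$, one finds $t_1 \in J_{\sigma(1)} \cap J_{\sigma(2)}$ with $t_1 < s_1$, and by convexity $[0, t_1] \subset J_{\sigma(1)}$. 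Iterating and tracking $s_j := \sup J_{\sigma(j)}$, one checks that $s_1 < s_2 < \cdots$ strictly (because $s_{j-1} \in J_{\sigma(j)}$ but $s_{j-1} \notin J_{\sigma(j-1)}$), so the $\sigma(j)$ are pairwise distinct, forcing $M \leq N$ and termination.

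With this chain in hand, the triangle inequality gives
$$|u(x) - u(y)| \leq \sum_{j=1}^M |u(\gamma(t_{j-1})) - u(\gamma(t_j))| \leq \sum_{j=1}^M [u]_{\gamma, U_{\sigma(j)}} (t_j - t_{j-1})^\gamma |x-y|^\gamma.$$
Since $t_j - t_{j-1} \in [0,1]$ and $\gamma \in (0,1)$, each factor $(t_j - t_{j-1})^\gamma$ is bounded by $1$; the distinctness of the $\sigma(j)$ then yields $\sum_{j=1}^M [u]_{\gamma, U_{\sigma(j)}} \leq \sum_{i=1}^N [u]_{\gamma, U_i}$. Dividing by $|x-y|^\gamma$ and taking the supremum over $x \neq y \in \Omega$ finishes the proof. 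The main obstacle is the combinatorial step in the middle paragraph: a naive partition of $[0,1]$ into pieces each contained in some $J_i$ might reuse the same index many times and thus fails to produce the sharp bound; exploiting the convexity of the $U_i$ to force each $J_i$ to be a single interval is precisely what makes the strictly-increasing-suprema argument work and suppresses such repetitions.
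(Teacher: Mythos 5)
Your proof is correct and follows essentially the same route as the paper: both exploit the convexity of each $U_i$ to ensure its preimage on the segment $[x,y]$ is an interval, so the segment can be traversed through a chain of subarcs hitting each $U_i$ at most once, with each hop of length at most $|x-y|$. The paper phrases the chain construction as an induction on the number of covering sets that meet the segment, while you build the chain explicitly via the strictly increasing suprema; the underlying argument is the same.
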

\begin{proof}
For any $x\neq y\in\Omega$ we define $\alpha_{x,y}:[0,1]\rightarrow\Omega,\ \alpha_{x,y}(t):=(1-t)x+ty$ and put
$$S(x,y):=\set{1\leq i\leq N\ |\  U_i\cap\alpha_{x,y}([0,1])\neq\emptyset}
\hspace{.5cm}\textrm{and}\hspace{.5cm}
P(x,y):=|S(x,y)|\leq N.$$
To prove the lemma, we prove the following estimate:
\begin{equation}\label{SchauderCh_SubadditivityToprove}
\frac{|u(x)-u(y)|}{|x-y|^\gamma}\leq\sum_{i\in S(x,y)}[u]_{\gamma,U_i}
\end{equation}
We prove Estimate \ref{SchauderCh_SubadditivityToprove} by an inductive argument. First, assume that for some points $x\neq y\in\Omega$, we have $|S(x,y)|=1$. Then we can assume without loss of generality that $\alpha_{x,y}(t)\in U_1$ for all $t\in[0,1]$ and, in particular $x,y\in U_1$. In this case, Estimate \eqref{SchauderCh_SubadditivityToprove} is trivial.\\

Now suppose that we already know Estimate \eqref{SchauderCh_SubadditivityToprove} to be true for all points $x\neq y\in\Omega$ with $P(x,y)\leq m\leq N$. If $m=N$, the lemma is already proven. If $m\leq N-1$ we argue as follows. Let $x\neq y\in\Omega$ with $P(x,y)=m+1$. Without loss of generality we assume that $S(x,y)=\set{1,...,m+1}$ and that $x\in U_1$. We define 
$$t_1:=\sup\set{t\in[0,1]\ |\ \alpha_{x,y}(s)\in U_1\textrm{ for all $s\in[0,t]$}}.$$
If $t_1=1$, then $\alpha_{x,y}(t)\in U_1$ for all $t\in[0,1)$ and Estimate \eqref{SchauderCh_SubadditivityToprove} follows as 
$$
\frac{|u(y)-u(x)|}{|y-x|^\gamma}
=
\lim_{\epsilon\rightarrow 0^+}\frac{|u(\alpha_{x,y}(1-\epsilon))-u(x)|}{|\alpha_{x,y}(1-\epsilon)-x|^\gamma}\leq [u]_{\gamma, U_1}.
$$
If $t_1<1$ we define $z:=\alpha_{x,y}(t_1)$. Since $U_1$ is convex and $\alpha_{x,y}(t_1)\not\in U_1$, we deduce $\alpha_{z,y}(t)\not\in U_1$ for all $t\in[0,1]$ and hence $S(z,y)\subset S(x,y)\backslash\set1$. In particular $P(z,y)\leq P(x,y)-1=m$. By the inductive hypothesis, we have 
\begin{equation}\label{SchauderCh_Subadditivityproof_InductiveHelp}
\frac{|u(z)-u(y)|}{|z-y|^\gamma}\leq\sum_{i\in S(z,y)}[u]_{\gamma,\gamma}.
\end{equation}
For $\epsilon>0$ small we estimate 
$$\frac{|u(x)-u(y)|}{|x-y|^\gamma}\leq \frac{|u(x)-u(\alpha_{x,y}(t_1-\epsilon))|}{|x-y|^\gamma}+\frac{|u(\alpha_{x,y}(t_1-\epsilon))-u(y)|}{|x-y|^\gamma}.$$
Note that $|x-y|\geq |x-\alpha_{x,y}(t_1-\epsilon)|$ and $|x-y|\geq |y-\alpha_{x,y}(t_1-\epsilon)|$. Using that for small $\epsilon>0$ we have $\alpha_{x,y}(t_1-\epsilon)\in U_1$, we get 
$$\frac{|u(x)-u(y)|}{|x-y|^\gamma}\leq [u]_{\gamma,U_1,}+\frac{|u(\alpha(t_1-\epsilon))-u(y)|}{|\alpha_{x,y}(t_1-\epsilon)-y|^\gamma}.$$
As $t_1<1$ we can take the limit $\epsilon\rightarrow 0^+$ and, using Estimate \eqref{SchauderCh_Subadditivityproof_InductiveHelp} as well as $1\in S(x,y)$ and $S(z,y)\subset S(x,y)\backslash\set 1$, we obtain 
$$\frac{|u(x)-u(y)|}{|x-y|^\gamma}\leq
[u]_{\gamma, U_1}+\sum_{i\in S(z,y)}[u]_{\gamma,U_i}
\leq\sum_{i\in S(x,y)}[u]_{\gamma,U_i\gamma}.$$
\end{proof}

\subsection{Parabolic Hölder Spaces}
Let $\Omega\subset\R^n$ be open and $T>0$. We consider $\Omega\times(0,T)\subset\R^n\times\R$. Given a function $u:\Omega\times(0,T)\rightarrow\R$, we denote derivatives with respect to the time variable $t\in(0,T)$ by a dot $\cdot$ or by $\partial_t$ and derivatives with respect to the space variable $x$ by  $\partial_i$, $\partial_{x_i}$, etc. or simply by $\nabla$. 

\begin{definition}
    Let $\Omega\subset\R^n$ be open, $T>0$ and $m\in\N_0$. We define 
    $$C^{m,[m/4]}(\Omega\times(0,T)):=\set{u:\Omega\times(0,T)\rightarrow\R\ |\ \textrm{$\partial_t^j\nabla^k u$ exists for all $4j+k\leq m$ and is continuous}}.$$
\end{definition}

It can quickly be checked that for $u\in C^{m,[m/4]}(\Omega\times(0,T))$ we have $\nabla u\in C^{m-1,[(m-1)/4]}(\Omega\times(0,T))$ and $\partial_t u\in C^{m-4,[m/4]-1}(\Omega\times(0,T))$.\\

Next, we formulate the analog of Definition \ref{SchauderCh_EllipticHolderSpaceClosedDomain}.
\begin{definition}[Differentiability on $\bar\Omega\times[0,T\operatorname{]}$]
Let $\Omega\subset\R^n$ be open, $T>0$ and $u\in C^{m,[m/4]}(\Omega\times(0,T))$. We say that $u\in C^{m,[m/4]}(\bar\Omega\times[0,T])$ if for all multiindices $\alpha\in\N_0^n$ and $j\in\N_0$ such that $|\alpha|+4j\leq m$ the function $\partial_t^j\nabla_\alpha u$ has a continuous extension onto $\bar\Omega\times[0,T]$. We equip $C^{m,[m/4]}(\bar\Omega\times[0,T])$ with the norm 
$$\|u\|_{C^{m,[m/4]}(\Omega\times(0,T))}:=\sum_{4j+|\alpha|\leq m}\sup_{(x,t)\in\Omega\times(0,T)}|\partial_t^j\nabla_\alpha u(x,t)|.$$
\end{definition}

\begin{definition}[Parabolic Hölder Condition]
     Let $\Omega\subset\R^n$ be open, $T>0$, $\alpha,\beta\in(0,1)$ and $u\in C^{0,0}(\Omega\times(0,T))$. We define the \emph{spatial} and \emph{temporal} Hölder seminorm by
     \begin{align*}
    [u]^{\operatorname{space}}_{\alpha,\Omega\times(0,T)}&:=\sup_{\substack{x,y\in\Omega,\ t\in(0,T)\\ x\neq y}}\frac{|u(x,t)-u(y,t)|}{|x-y|^{\alpha}},\\
    [u]^{\operatorname{time}}_{\beta,\Omega\times(0,T)}&:=\sup_{\substack{x\in\Omega,\ t,s\in(0,T)\\ t\neq s}}\frac{|u(x,t)-u(x,s)|}{|t-s|^{\beta}}.
\end{align*}
\end{definition}

\begin{definition}[Parabolic Hölder Spaces]\label{SchauderCh_DefinitionofParabolicHölderSpaces}
Let $\Omega\subset\R^n$ be open and bounded, $T>0$ and $m\in\N_0$. For a function $u\in C^{m,[m/4]}(\Omega\times(0,T))$, we define 
\begin{equation}\label{SchauderCh_parabolicHölderseminormdefinition}
    [u]_{\gamma,\Omega\times(0,T)}^{(m)}:=\sum_{4j+k= m}[\partial_t^j\nabla^k u]_{\gamma,\Omega\times(0,T)}^{\operatorname{space}}
+\sum_{0<\frac{m+\gamma-k}4-j<1}[\partial_t^j \nabla^k u]^{\operatorname{time}}_{\frac{m+\gamma-k}4-j,\Omega\times(0,T)}.
\end{equation}
We say that $u\in C^{m,[m/4],\gamma}(\Omega\times(0,T))$, if 
$$\|u\|_{C^{m,[m/4],\gamma}(\Omega\times(0,T))}:=\|u\|_{C^{m,[m/4]}(\Omega\times(0,T))}+ [u]_{\gamma,\Omega\times(0,T)}^{(m)}<\infty.$$
\end{definition}
It can quickly be checked, that $C^{m,[m/4],\gamma}(\Omega\times(0,T))$ equipped with $\|\cdot\|_{C^{m,[m/4],\gamma}(\Omega\times(0,T))}$ is Banach.\\

Given a function $u\in C^{m,[m/4],\gamma}(\Omega\times[0,T])$, it is easy to see that $\partial_t^l\nabla^ku$ has a unique continuous extension to $\bar\Omega\times[0,T]$ whenever $k+4l\leq m$. To make explicit in the notation, that $u$ is in fact defined on $\bar\Omega\times[0,T]$, we sometimes write $u\in C^{m,\gamma}(\bar\Omega\times[0,T])$.

\paragraph{Motivation of Definition \ref{SchauderCh_DefinitionofParabolicHölderSpaces}}\ \\
We give a brief justification for the spatial and temporal Hölder norms appearing in Equation \eqref{SchauderCh_parabolicHölderseminormdefinition}. Given a function $u\in C^{m,[m/4]}(\Omega\times(0,T))$ and $\lambda>0$, we consider $u_\lambda(x,t):=u(\lambda x, \lambda^4 t)$. A quick computation shows
\begin{align*}
[\partial_t^j\nabla^k u_\lambda]_{\alpha,\Omega\times(0,T)}^{\operatorname{space}}&=\lambda^{4j+k}\lambda^\alpha[\partial_t^j\nabla^k u]_{\alpha,\Omega\times(0,T)}^{\operatorname{space}},\\
[\partial_t^j\nabla^k u_\lambda]_{\beta,\Omega\times(0,T)}^{\operatorname{time}}&=\lambda^{4j+k}\lambda^{4\beta}[\partial_t^j\nabla^k u]_{\beta,\Omega\times(0,T)}^{\operatorname{time}}.
\end{align*}
The expected scaling behaviour of `a $(m+\gamma)$-th derivative' is $\lambda^{m+\gamma}$. As $\alpha,\gamma\in(0,1)$ and $j,k,m\in\N_0$, this already implies that for the spatial seminorm we require $4j+k=m$ and $\alpha=\gamma$. In the temporal case, we require $\beta=\frac{m+\gamma-k}4-j$.\\

The following proposition can be proven by following the same arguments that lead to Lemma \ref{SchauderCh_SemiNormSubbadditiveonConvexSpaces}:

\begin{proposition}\label{SchauderCh_SemiNormSubbadditiveonConvexSpaces_Parabolic}
Let $\Omega\subset\R^n$ be open and convex, $T>0$, $U_1,...,U_N\subset\Omega\times(0,T)$ be open and convex sets and $I_1,...,I_N\subset(0,T)$ be intervals such that $\Omega\times(0,T)\subset \bigcup_{i=1}^N U_i\times I_i$. Then for all functions $u\in C^{0,0}(\Omega\times(0,T))$ and $\alpha,\beta\in(0,1)$
$$[u]_{\alpha,\Omega\times(0,T)}^{\operatorname{space}}\leq\sum_{i=1}^N[u]^{\operatorname{space}}_{\alpha, U_i\times I_i}
\hspace{.5cm}\textrm{and}\hspace{.5cm}
[u]_{\beta,\Omega\times(0,T)}^{\operatorname{time}}\leq\sum_{i=1}^N[u]^{\operatorname{time}}_{\beta, U_i\times I_i}.$$
\end{proposition}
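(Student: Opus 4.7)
The proposition is essentially the spatial Lemma \ref{SchauderCh_SemiNormSubbadditiveonConvexSpaces} applied slice-by-slice, so my plan is to reduce both inequalities to that lemma by freezing one variable at a time. (I read the hypothesis as: $U_1,\dots,U_N\subset\Omega$ are open and convex and $I_1,\dots,I_N\subset(0,T)$ are open intervals with $\Omega\times(0,T)\subset\bigcup_{i=1}^N U_i\times I_i$, which is the only way the product $U_i\times I_i$ makes sense as a subset of $\Omega\times(0,T)$.) The key preliminary observation is that for each fixed $t\in(0,T)$ the subcollection $\{U_i : t\in I_i\}$ is an open convex cover of $\Omega$, and symmetrically, for each fixed $x\in\Omega$, the subcollection $\{I_i : x\in U_i\}$ is an (open) convex cover of $(0,T)$; both follow directly from the hypothesis, since for any $(x,t)\in\Omega\times(0,T)$ there exists $j$ with $x\in U_j$ \emph{and} $t\in I_j$.

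For the spatial seminorm I would fix $t\in(0,T)$ and apply Lemma \ref{SchauderCh_SemiNormSubbadditiveonConvexSpaces} to the function $x\mapsto u(x,t)$ on $\Omega$ using the cover above, obtaining
$$[u(\cdot,t)]_{\alpha,\Omega}\;\leq\;\sum_{i:\,t\in I_i}[u(\cdot,t)]_{\alpha,U_i}\;\leq\;\sum_{i:\,t\in I_i}[u]^{\operatorname{space}}_{\alpha,U_i\times I_i}\;\leq\;\sum_{i=1}^N[u]^{\operatorname{space}}_{\alpha,U_i\times I_i},$$
where the middle inequality uses that $t\in I_i$ so the sup defining the spatial seminorm of $u$ on $U_i\times I_i$ includes all the quotients appearing in $[u(\cdot,t)]_{\alpha,U_i}$, and the last inequality drops the restriction $t\in I_i$ (the added terms are nonnegative). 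Taking the supremum over $t\in(0,T)$ yields the spatial claim.

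The temporal claim is proved by the mirror argument: fix $x\in\Omega$ and apply Lemma \ref{SchauderCh_SemiNormSubbadditiveonConvexSpaces} in one dimension to $t\mapsto u(x,t)$ on $(0,T)$ with the convex cover $\{I_i : x\in U_i\}$ (intervals in $\R$ are automatically convex), then bound $[u(x,\cdot)]_{\beta,I_i}\leq[u]^{\operatorname{time}}_{\beta,U_i\times I_i}$ for those $i$ with $x\in U_i$, enlarge the sum to all $i$, and take the supremum over $x$.

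I do not expect any real obstacle: the inductive straight-line covering argument of Lemma \ref{SchauderCh_SemiNormSubbadditiveonConvexSpaces} is invoked as a black box on each slice, and the only thing to verify is that its hypothesis is met on the slice, which is immediate from the product structure of the cover. The only minor book-keeping step is passing from the subsum over the indices $i$ compatible with the fixed variable to the full sum over $i=1,\dots,N$, which is trivial since the dropped constraint only removes nonnegative terms.
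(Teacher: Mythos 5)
Your proof is correct and captures the intended argument: the slice-by-slice reduction --- fixing $t$ to obtain a convex open cover of $\Omega$ by the $U_i$ with $t\in I_i$, and fixing $x$ to obtain a convex cover of $(0,T)$ by the $I_i$ with $x\in U_i$ --- is precisely what the paper means by ``following the same arguments that lead to Lemma \ref{SchauderCh_SemiNormSubbadditiveonConvexSpaces}'', and invoking that lemma as a black box on each slice is a clean way to package it. Your reading of the garbled hypothesis ($U_i\subset\Omega$, $I_i\subset(0,T)$, with the $I_i$ open so that the one-dimensional instance of Lemma \ref{SchauderCh_SemiNormSubbadditiveonConvexSpaces} applies on temporal slices) is also the intended one and is consistent with every later use of this proposition.
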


Finally, we give the following proposition.
\begin{proposition}[An equivalent Norm]\label{SchauderCh_EquivalentNOrmC41Gamma}\ \\
Let $\Omega$ be a bounded domain. For $u\in C^{4,1,\gamma}(\Omega\times(0,T))$, we define 
$$[D^{4,1}u]^{(0)}_{\gamma,\Omega\times(0,T)}:=
[\nabla^4 u]_{\gamma,\Omega\times(0,T)}^{(0)}
+
[\dot u]_{\gamma,\Omega\times(0,T)}^{(0)}.$$
    Then, the following norm is equivalent to $\|\cdot\|_{C^{4,1,\gamma}(\Omega\times(0,T))}$:
    $$\|u\|'_{C^{4,1,\gamma}(\Omega\times(0,T))}:=\|u\|_{C^{4,1}(\Omega\times(0,T))}+[D^{4,1}u]_{\gamma,\Omega\times(0,T)}$$
\end{proposition}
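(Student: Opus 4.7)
The inequality $\|u\|'_{C^{4,1,\gamma}}\leq \|u\|_{C^{4,1,\gamma}}$ is immediate, since
\[
[D^{4,1}u]^{(0)}_{\gamma} = [\nabla^4 u]^{\operatorname{space}}_\gamma + [\nabla^4 u]^{\operatorname{time}}_{\gamma/4} + [\dot u]^{\operatorname{space}}_\gamma + [\dot u]^{\operatorname{time}}_{\gamma/4},
\]
and each summand here already appears in $[u]^{(4)}_\gamma$. Conversely, the only terms of $[u]^{(4)}_\gamma$ missing from the right-hand side are the three \emph{intermediate} temporal Hölder seminorms $[\nabla^k u]^{\operatorname{time}}_{(4-k+\gamma)/4}$ for $k\in\{1,2,3\}$, so the proposition reduces to bounding each of these three seminorms by a multiple of $\|u\|'_{C^{4,1,\gamma}}$.

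The plan is to establish such a bound via a finite-difference extraction of $\partial_e^k u$ in space combined with a time-integration of $\dot u$. Fix $k\in\{1,2,3\}$, $x\in\Omega$, and $t,s\in(0,T)$ with $t\neq s$; put $h:=|t-s|$ and $r:=h^{1/4}$. Pick a unit vector $e$ so that $x+jre\in\Omega$ for $j=0,\dots,4$; this is possible for $h$ sufficiently small provided $\Omega$ satisfies a mild interior cone condition, while for $h$ bounded below the desired estimate follows trivially from $\|u\|_{C^{4,1}}$. The $5\times 5$ Vandermonde matrix in the nodes $0,1,2,3,4$ is invertible, so there exist constants $c_0,\dots,c_4$ depending only on $k$ with $\sum_{j=0}^4 c_j j^m = k!\,\delta_{mk}$ for $m=0,\dots,4$; in particular $\sum_j c_j=0$ since $k\geq 1$. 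A Taylor expansion of order $4$ in space with Hölder remainder then yields, for every $\tau\in(0,T)$,
\[
\sum_{j=0}^4 c_j\, u(x+jre,\tau) \;=\; r^k\,\partial_e^k u(x,\tau) + E(x,\tau), \qquad |E(x,\tau)|\leq C\,r^{4+\gamma}\,[\nabla^4 u]^{\operatorname{space}}_\gamma.
\]

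Subtract this identity at $\tau=t$ and $\tau=s$ and estimate both terms on the right separately by $C\|u\|'_{C^{4,1,\gamma}}\,h^{1+\gamma/4}$. For the remainder difference this is immediate: $|E(x,t)-E(x,s)|\leq 2C\,r^{4+\gamma}[\nabla^4 u]^{\operatorname{space}}_\gamma \leq C\|u\|'h^{1+\gamma/4}$. For the main sum, write $u(x+jre,t)-u(x+jre,s) = \int_s^t\dot u(x+jre,\tau)\,d\tau$; since $\sum_j c_j = 0$, one may replace $\dot u(x+jre,\tau)$ by $\dot u(x+jre,\tau) - \dot u(x,s)$ inside the sum without changing it, and the latter quantity is pointwise bounded by $[\dot u]^{\operatorname{space}}_\gamma (jr)^\gamma + [\dot u]^{\operatorname{time}}_{\gamma/4}(\tau-s)^{\gamma/4}\leq C\|u\|'h^{\gamma/4}$, producing an integral bound of $C\|u\|'h^{1+\gamma/4}$. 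Dividing the combined estimate by $r^k=h^{k/4}$ gives
\[
|\partial_e^k u(x,t) - \partial_e^k u(x,s)|\leq C\|u\|'_{C^{4,1,\gamma}}\,h^{(4-k+\gamma)/4},
\]
which is the required temporal Hölder estimate for $\partial_e^k u$. Mixed partial derivatives $\partial^\alpha u$ with $|\alpha|=k$ are handled analogously via a tensor-product version of the same one-dimensional formula. The main obstacle is the sharp bookkeeping that both $E(x,t)-E(x,s)$ and the main-sum integral scale like $h^{1+\gamma/4}$ rather than merely $h$, which is essential for recovering the correct parabolic exponent $(4-k+\gamma)/4$ after division by $h^{k/4}$; the minor technical issue of fitting the five nodes $x+jre$ inside $\Omega$ is resolved by choosing $e$ to point inward and invoking the above triviality in the regime $h\gtrsim 1$.
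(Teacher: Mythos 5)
Your approach is correct in substance and takes a genuinely different route from the paper, so let me compare the two. The paper's proof invokes Theorem \ref{SchauderCh_InterpolationEstimates} and Lemma \ref{SchauderCh_D41Estimateslemma} (on parabolic balls) together with the covering Proposition \ref{SchauderCh_SemiNormSubbadditiveonConvexSpaces_Parabolic}. The heart of Lemma \ref{SchauderCh_D41Estimateslemma} is an \emph{iterated first-order} difference-quotient argument: one forms $D^{\epsilon}_{i}$, $D^{\epsilon}_{ij}$, $D^{\epsilon}_{ijk}$ with step $\epsilon=|t-s|^{1/4}$, bounds $\partial_{ijk}u - D^{\epsilon}_{ijk}u$ by $[\nabla^4 u]^{\operatorname{time}}_{\gamma/4}$ (using the integral representation of the difference quotient) and $D^{\epsilon}_{ijk}u(t)-D^{\epsilon}_{ijk}u(s)$ by $[\dot u]^{\operatorname{space}}_{\gamma}$, and then proves the bound for $l=3,2,1$ in turn, each step feeding on the previous one. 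You instead use a single fourth-order (five-point Vandermonde) stencil in the direction $e$ to extract $\partial_e^k u$ directly with a remainder already of size $r^{4+\gamma}[\nabla^4 u]^{\operatorname{space}}_\gamma$, so no induction over $k$ is needed; and your use of $\sum_j c_j=0$ to replace $\dot u(x+jre,\tau)$ by $\dot u(x+jre,\tau)-\dot u(x,s)$ inside the time integral is a clean way to generate the extra factor $h^{\gamma/4}$ that the parabolic scaling demands. Two small gaps to fill in: mixed partials $\partial_\alpha u$ with $|\alpha|=k$ need an explicit tensor-product (or polarization) version of the stencil identity, which you only gesture at; and the stencil-fitting step requires a \emph{uniform} interior cone condition on $\Omega$ so that the threshold in the ``$h$ small / $h$ bounded below'' dichotomy does not degenerate as $x\to\partial\Omega$ — but that restriction is also implicit in the paper's covering argument, which is stated for convex $\Omega$.
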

\begin{proof}
    In view of Proposition \ref{SchauderCh_SemiNormSubbadditiveonConvexSpaces_Parabolic}, this is a consequence of Theorem \ref{SchauderCh_InterpolationEstimates} and Lemma \ref{SchauderCh_D41Estimateslemma}
\end{proof}

\begin{tcolorbox}[colback=white!20!white,colframe=black!100!white,sharp corners, breakable]
In the following we will always use the norm $\|u\|'_{C^{4,1,\gamma}(\Omega\times(0,T)}$ from Proposition \ref{SchauderCh_EquivalentNOrmC41Gamma} without including the `$'$' in the notation. 
\end{tcolorbox}

\section{Technical Lemmas}
\begin{definition}[Parabolic Ball]\label{SchauderCh_ParabolicBallDefnition}
Let $p_0=(x_0, t_0)\in\R^n\times\R$ and $\rho>0$. We define the parabolic ball of radius $\rho$ with center $p_0$ as
$$U_\rho(p_0):=\set{p=(x,t)\in\R^n\times\R\ |\ \max\{|x-x_0|, |t-t_0|^{\frac14}\}<\rho}=B_\rho(x_0)\times(t_0-\rho^4,t_0+\rho^4).$$
Additionally, we define the following truncated parabolic balls:
$$\begin{array}{l l }
    U_\rho^+(p)&\hspace{-.3cm}:=U_\rho(p)\cap(\R^n\times[0,\infty))\\
    U_\rho^-(p)&\hspace{-.3cm}:=U_\rho(p)\cap(\R^n\times(-\infty,0])\\
    U_{\rho+}(p)&\hspace{-.3cm}:=U_\rho(p)\cap(\R^{n-1}\times[0,\infty)\times\R)\\
    U_{\rho+}^+(p)&\hspace{-.3cm}:=U_\rho(p)\cap(\R^{n-1}\times[0,\infty)\times[0,\infty))\\
    U_{\rho+}^-(p)&\hspace{-.3cm}:=U_\rho(p)\cap(\R^{n-1}\times[0,\infty)\times(-\infty,0])
\end{array}$$
\end{definition}

\begin{definition}[Floor and Side Boundary]\label{SchauderCh_FloorSideBoundaryDef}
For $U\subset\R^n\times\R$ we define the \emph{floor boundary} and \emph{side boundary} respectively as 
$$\partial_FU:=U\cap(\R^n\times 0)
\hspace{.5cm}\textrm{and}\hspace{.5cm}
\partial_SU:=U\cap(\R^{n-1}\times 0\times\R).$$
\end{definition}

\begin{theorem}[Interpolation Estimates]\label{SchauderCh_InterpolationEstimates}\ \\
Let $p_0\in\R^n\times\R$, $\rho>0$, $U_\rho:=U_\rho(p_0)$ and $u\in C^{4,1,\gamma}(U_\rho)$. There exists $\epsilon_0(n)>0$ such that for all $\epsilon\in (0,\epsilon_0)$ and $0\leq k\leq 4$
\begin{align*}
    \rho^k \|\nabla^k u\|_{L^\infty(U_\rho)}&\leq \epsilon\rho^{4+\gamma}[D^{4,1} u]^{(0)}_{\gamma,U_\rho}+C(n)\epsilon^{-\frac{k}{4-k+\gamma}}\|u\|_{L^\infty(U_\rho)},\\
     \rho^4 \|\dot u\|_{L^\infty(U_\rho)}&\leq \epsilon\rho^{4+\gamma}[D^{4,1} u]^{(0)}_{\gamma,U_\rho}+C(n)\epsilon^{-\frac{k}{\gamma}}\|u\|_{L^\infty(U_\rho)}.
\end{align*}
Additionally, for $0\leq l\leq 3$ and $\epsilon\in (0,\epsilon_0)$
\begin{align*}
\rho^{l+\gamma} [\nabla^l u]_{\gamma,U_\rho}^{(0)}&\leq \epsilon\rho^{4+\gamma}[D^{4,1} u]^{(0)}_{\gamma,U_\rho}+C(n)\epsilon^{-\frac{l+\gamma}{4-l}}\|u\|_{L^\infty(U_\rho)}.
\end{align*}
\end{theorem}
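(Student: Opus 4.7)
My plan is to reduce the three claimed estimates to the case $\rho=1$ via the parabolic rescaling $v(y,s):=u(x_0+\rho y,\,t_0+\rho^4 s)$. A direct computation gives $\|\nabla^k v\|_{L^\infty(U_1)}=\rho^k\|\nabla^k u\|_{L^\infty(U_\rho)}$, $\|\dot v\|_{L^\infty(U_1)}=\rho^4\|\dot u\|_{L^\infty(U_\rho)}$, $[D^{4,1}v]^{(0)}_{\gamma,U_1}=\rho^{4+\gamma}[D^{4,1}u]^{(0)}_{\gamma,U_\rho}$, and $[\nabla^l v]^{(0)}_{\gamma,U_1}=\rho^{l+\gamma}[\nabla^l u]^{(0)}_{\gamma,U_\rho}$. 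It therefore suffices to prove, for $\delta\in(0,\delta_0(n))$ and writing $\|\cdot\|_\infty:=\|\cdot\|_{L^\infty(U_1)}$, the three multiplicative estimates
\begin{align*}
\|\nabla^k u\|_\infty &\le C\bigl(\delta^{4-k+\gamma}[D^{4,1}u]^{(0)}_\gamma+\delta^{-k}\|u\|_\infty\bigr),\\
\|\dot u\|_\infty &\le C\bigl(\delta^{\gamma}[D^{4,1}u]^{(0)}_\gamma+\delta^{-4}\|u\|_\infty\bigr),\\
[\nabla^l u]^{(0)}_{\gamma,U_1} &\le C\bigl(\delta^{4-l}[D^{4,1}u]^{(0)}_\gamma+\delta^{-(l+\gamma)}\|u\|_\infty\bigr).
\end{align*}
The three conclusions of the theorem then follow by specialising $\delta=\epsilon^{1/(4-k+\gamma)}$, $\delta=\epsilon^{1/\gamma}$, and $\delta=\epsilon^{1/(4-l)}$ respectively.

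For the first (spatial) multiplicative estimate I would first prove the scalar 1D version: for $f\in C^{4,\gamma}([-1,1])$ and $\delta\in(0,1/2)$, $\|f^{(k)}\|_\infty\le C\bigl(\delta^{4-k+\gamma}[f^{(4)}]_\gamma+\delta^{-k}\|f\|_\infty\bigr)$. Iterating the elementary Taylor bound $\|f'\|_\infty\le\delta\|f''\|_\infty+(2/\delta)\|f\|_\infty$ (from $2hf'(x)=f(x+h)-f(x-h)+O(h^2\|f''\|_\infty)$) produces the weaker version with $\delta^{4-k}\|f^{(4)}\|_\infty$, and one trades one factor of $\delta^{-4}$ for $\delta^\gamma[f^{(4)}]_\gamma$ on sub-intervals of length $2\delta$ by a polynomial-interpolation argument (approximate $f$ by its degree-$4$ Taylor polynomial at the centre; recover its coefficients from five point values up to the Hölder remainder of order $\delta^{4+\gamma}[f^{(4)}]_\gamma$). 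Sliding such segments through $B_1$ at fixed $t$ and using $[\nabla^4 u]_\gamma^{\operatorname{space}}\le [D^{4,1}u]^{(0)}_\gamma$ yields the first multiplicative estimate. For the second, for each $(x,t)\in U_1$ I pick $h$ with $|h|=\delta^4<1$ and $(x,t+h)\in U_1$ (possible since $|t-t_0|<1$) and use the identity
$$h\dot u(x,t)=u(x,t+h)-u(x,t)-\int_0^h[\dot u(x,t+s)-\dot u(x,t)]\,ds$$
to obtain $|\dot u(x,t)|\le 2|h|^{-1}\|u\|_\infty+|h|^{\gamma/4}[\dot u]^{\operatorname{time}}_{\gamma/4}\le 2\delta^{-4}\|u\|_\infty+\delta^\gamma[D^{4,1}u]^{(0)}_\gamma$.

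The main technical step is the third (Hölder seminorm) estimate, which I split as $[\nabla^l u]^{(0)}_\gamma=[\nabla^l u]^{\operatorname{space}}_\gamma+[\nabla^l u]^{\operatorname{time}}_{\gamma/4}$. The spatial Hölder part is handled by the standard case split on $|x-y|$ against threshold $\delta$ (mean-value with $\|\nabla^{l+1}u\|_\infty$ for $|x-y|\le\delta$, the trivial $2\|\nabla^l u\|_\infty$ otherwise), followed by substitution of the first multiplicative estimate. For the temporal Hölder part the difficulty is that $\partial_t\nabla^l u$ is not part of the $C^{4,1,\gamma}$ regularity; the key observation is the commutation $\nabla^l u(\cdot,t)-\nabla^l u(\cdot,s)=\nabla^l g$ with $g(x):=u(x,t)-u(x,s)$. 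Applying the intermediate spatial interpolation $\|\nabla^l g\|_\infty\le C(\sigma^{4-l}\|\nabla^4 g\|_\infty+\sigma^{-l}\|g\|_\infty)$ (the non-Hölder step in the proof of the first multiplicative estimate) to $g$, and using $\|\nabla^4 g\|_\infty\le h^\gamma[\nabla^4 u]^{\operatorname{time}}_{\gamma/4}$ with $h:=|t-s|^{1/4}$, I split pairs $(t,s)$ by whether $h\le\delta$ (then $\|g\|_\infty\le h^4\|\dot u\|_\infty$, and invoke the second multiplicative estimate) or $h>\delta$ (then $|\nabla^l u(x,t)-\nabla^l u(x,s)|/h^\gamma\le 2\delta^{-\gamma}\|\nabla^l u\|_\infty$, and invoke the first multiplicative estimate). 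With $\sigma=\delta$ throughout, every term collapses into $C(\delta^{4-l}[D^{4,1}u]^{(0)}_\gamma+\delta^{-(l+\gamma)}\|u\|_\infty)$, completing the third multiplicative estimate.

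The principal obstacle is the temporal Hölder estimate for $\nabla^l u$ when $l\ge 1$, because the mixed derivative $\partial_t\nabla^l u$ is not controlled by the $C^{4,1,\gamma}$-norm, so one cannot integrate $\partial_t\nabla^l u$ directly. The commutation $\nabla^l u(\cdot,t)-\nabla^l u(\cdot,s)=\nabla^l[u(\cdot,t)-u(\cdot,s)]$, which reduces the question to a purely spatial interpolation applied to the difference $u(\cdot,t)-u(\cdot,s)$ in combination with the time-Hölder control of $\nabla^4 u$ and the integral representation $u(\cdot,t)-u(\cdot,s)=\int_s^t\dot u(\cdot,\tau)\,d\tau$, is what circumvents this obstruction.
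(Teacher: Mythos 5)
Your proposal is correct, and for the hardest part of the theorem it takes a genuinely different and simpler route than the paper's proof. The reduction to $\rho=1$, the $L^\infty$-estimate for $\dot u$, and the spatial Hölder seminorm part (case split on $|x-y|$ vs.\ $\delta$) are essentially the same as the paper's argument. The interesting divergence is in the temporal Hölder seminorm of $\nabla^l u$ for $l\geq 1$.

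The paper's proof of that step (Estimates \eqref{SchauderCh_LongINTERPOLATIONEstimate}--\eqref{SchauderCh_Finalnabla1uEst}) builds a two-sided recursion
\[
[\nabla ^l u]_{\frac\gamma4}^{\operatorname{time}}\leq C\left(
    \sigma[\nabla^{l+1} u]_{\frac\gamma4}^{\operatorname{time}}
    +\sigma^{-1}[\nabla^{l-1} u]_{\frac\gamma4}^{\operatorname{time}}
    +\sigma^{4-l}[D^{4,1} u]_{\gamma}^{(0)}
    +\sigma^{-l-\gamma}\|u\|_{L^\infty}
    \right)
\]
via a divergence-theorem identity, and then unfolds this system by an elaborate bootstrap over $l=1,2,3$. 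Your commutation observation $\nabla^l u(\cdot,t)-\nabla^l u(\cdot,s)=\nabla^l g$, with $g:=u(\cdot,t)-u(\cdot,s)$, replaces all of that by a single application of the \emph{purely spatial} interpolation inequality $\|\nabla^l g\|_\infty\le C(\sigma^{4-l}\|\nabla^4 g\|_\infty+\sigma^{-l}\|g\|_\infty)$ to the fixed function $g\in C^4(B_1)$, combined with $\|\nabla^4 g\|_\infty\le |t-s|^{\gamma/4}[\nabla^4 u]^{\operatorname{time}}_{\gamma/4}$ and $\|g\|_\infty\le|t-s|\,\|\dot u\|_\infty$. The case split $|t-s|^{1/4}\lessgtr\delta$ together with the already-proved $L^\infty$ bounds on $\|\nabla^l u\|_\infty$ and $\|\dot u\|_\infty$ closes the loop directly, with no coupled recursion between different orders $l$. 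I have verified that the exponent bookkeeping works out exactly as you claim in both cases. This is a cleaner argument than the one in the text.

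Two small points worth noting. First, for the first multiplicative estimate you sketch a one-dimensional Taylor/Markov argument and then ``slide segments through $B_1$''; this as stated only controls pure directional derivatives $\partial_v^k u$ and requires an additional (standard) polarization step to recover the mixed partial derivatives appearing in $\nabla^k u$, whereas the paper's divergence-theorem derivation of Estimate \eqref{SchauderCh_InterpolationProof09} handles all partials at once. Either route is fine, but the polarization step should not be swept under the rug. Second, the inner inequality you invoke, $\|\nabla^l g\|_{L^\infty(B_1)}\le C(\sigma^{4-l}\|\nabla^4 g\|_{L^\infty(B_1)}+\sigma^{-l}\|g\|_{L^\infty(B_1)})$ for $\sigma\le\sigma_0(n)$, is exactly what the paper proves on the way to \eqref{SchauderCh_InterpolationProof10NEW}, so this is legitimately available to you without circularity.
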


\begin{proof}     
By translation and scaling, we can assume that $p_0=0$ and $\rho=1$.
Let $ f\in C^{1,0,\gamma}(U_{1})$, $q=(x_1,t_1)\in U_1:=U_1(0)$ and $\sigma\in(0,1)$ such that $U_\sigma(q)\subset U_1$. Denoting the exterior normal along $\partial B_\sigma(x_1)$ by $\nu$ we use the divergence theorem to estimate for $(x,t)\in U_\sigma(q)$
\begin{align}
    |\partial_ if(x,t)|\leq &\frac1{|B_\sigma(x_1)|} 
\left|\int_{B_\sigma(x_1)} \partial_i f(y,t)dy\right|+\frac1{|B_\sigma(x_1)|}\int_{B_\sigma(x_1)}|\partial_i f(x,t)-\partial_i f(y,t)|dy\label{SchauderCh_InterpolationProof02}\\
\leq & \frac1{|B_\sigma(x_1)|} 
\left|\int_{\partial B_\sigma(x_1)} f(y,t)\nu_i dy\right|+\frac1{|B_\sigma(x_1)|}\int_{B_\sigma(x_1)}[\nabla f]_{\gamma,B_\sigma(x_1)}^{\operatorname{space}}|x-y|^\gamma dy\nonumber\\
\leq & \frac{|\partial B_\sigma(x_1)|}{|B_\sigma(x_1)|} 
\|f\|_{L^\infty(B_\sigma(x_1))}+[\nabla f]_{\gamma,B_\sigma(x_1)}^{\operatorname{space}}\sigma^\gamma.\nonumber
\end{align}
We have $|B_\sigma(x_1)|=c(n)\sigma^n$ and $|\partial B_\sigma(x_1)|=c'(n)\sigma^{n-1}$. So, taking the supremum over all $(x,t)$ in $U_\sigma(p)$ and $1\leq i\leq n$, we obtain
\begin{equation}\label{SchauderCh_Interpolationproof01}
    \|\nabla f\|_{L^\infty(U_\sigma(q))}\leq \frac{C(n)}\sigma \|f\|_{L^\infty(U_\sigma(q))}+C(n)\sigma^\gamma[\nabla f]_{\gamma,U_\sigma(q)}^{(0)}.
\end{equation}
Since for all $q'\in U_1$ there exists $q\in U_1$ such that $q'\in U_\sigma(q)\subset U_1$, we deduce that for all $\sigma\in(0,1)$
\begin{equation}\label{SchauderCh_InterpolationProof03}\|\nabla f\|_{L^\infty(U_1)}\leq C(n)\left(\sigma^\gamma[\nabla f]_{\gamma,U_1}^{(0)}+\sigma^{-1}\|f\|_{L^\infty(U_1)}\right).
\end{equation}
Instead of estimating $|\partial_i f(x,t)-\nabla_i f(y,t)|$ with the Hölder seminorm, we can write $|\partial_i f(x,t)-\partial_i f(y,t)|\leq C(n)\|\nabla^2 f\||x-y|$ if $f$ is $C^{2,0,\gamma}(U_1)$. Following the same reasoning that led to \eqref{SchauderCh_InterpolationProof03}, we then obtain
\begin{equation}\label{SchauderCh_InterpolationProof04}
\|\nabla f\|_{L^\infty(U_1)}\leq C(n)\left(\sigma\|\nabla^2 f\|_{L^\infty(U_1)}+\sigma^{-1}\|f\|_{L^\infty(U_1)}\right).
\end{equation}
Taking $f=u$ in Estimate \eqref{SchauderCh_InterpolationProof04}, we get 
\begin{equation}\label{SchauderCh_InterpolationProof04_02}
    \|\nabla u\|_{L^\infty(U_1)}
     \leq C(n)\left(\sigma \|\nabla^2 u\|_{L^\infty(U_1)}
     +
     \sigma^{-1}\| u\|_{L^\infty(U_1)}\right).
\end{equation}
Next, taking $f=\nabla u$ in Estimate \eqref{SchauderCh_InterpolationProof04} and subsequently inserting Estimate \eqref{SchauderCh_InterpolationProof04_02} with $\sigma\rightarrow\mu$, implies 
\begin{align}
    \|\nabla^2 u\|_{L^\infty(U_1)}
     &\leq C(n)\left(\sigma \|\nabla^3 u\|_{L^\infty(U_1)}
     +
     \sigma^{-1}\| \nabla u\|_{L^\infty(U_1)}\right)\nonumber\\
     \ &\leq C(n)\left(\sigma \|\nabla^3 u\|_{L^\infty(U_1)}
     +
     \sigma^{-1}\left(
            \mu \|\nabla^2 u\|_{L^\infty(U_1)}
     +
     \mu^{-1}\| u\|_{L^\infty(U_1)}\label{SchauderCh_InterpolationProof05}
     \right)\right).
\end{align}
Choosing $\mu=\epsilon(n)\sigma$ with a small enough number $\epsilon(n)\in(0,1)$, we obtain
\begin{equation}\label{SchauderCh_InterpolationProof06}
\|\nabla^2 u\|_{L^\infty(U_1)}\leq C(n)\left(
    \sigma\|\nabla^{3} u\|_{L^\infty(U_1)}
    +
    \sigma^{-2}\| u\|_{L^\infty(U_1)}
\right).
\end{equation}
Inserting Estimate \eqref{SchauderCh_InterpolationProof06} into Estimate \eqref{SchauderCh_InterpolationProof04_02}, we get 
\begin{equation}\label{SchauderCh_InterpolationProof07}
\|\nabla u\|_{L^\infty(U_1)}\leq C(n)\left(
    \sigma^2\|\nabla^{3} u\|_{L^\infty(U_1)}
    +
    \sigma^{-1}\| u\|_{L^\infty(U_1)}
\right).
\end{equation}
Next, we take $f=\nabla^2 u$ in Estimate \eqref{SchauderCh_InterpolationProof04} and use Estimate \eqref{SchauderCh_InterpolationProof06} with $\sigma\rightarrow\mu$ to estimate 
\begin{align*}
    \|\nabla^3 u\|_{L^\infty(U_1)}
     &\leq C(n)\left(\sigma \|\nabla^4 u\|_{L^\infty(U_1)}
     +
     \sigma^{-1}\|  \nabla^2u\|_{L^\infty(U_1)}\right)\\
      &\leq C(n)\left(\sigma \|\nabla^4 u\|_{L^\infty(U_1)}
     +
     \sigma^{-1}\left(
            \mu \|\nabla^3 u\|_{L^\infty(U_1)}
     +
     \mu^{-2}\| u\|_{L^\infty(U_1)}
     \right)\right).
\end{align*}
Choosing again $\mu=\epsilon(n)\sigma$ with $\epsilon(n)\in(0,1)$ small enough yields 
\begin{equation}\label{SchauderCh_InterpolationProof08}
    \|\nabla^3 u\|_{L^\infty(U_1)}
     \leq C(n)\left(\sigma \|\nabla^4 u\|_{L^\infty(U_1)}
     +
     \sigma^{-3}\| u\|_{L^\infty(U_1)}\right).
\end{equation}
Inserting Estimate \eqref{SchauderCh_InterpolationProof08} back into Estimates \eqref{SchauderCh_InterpolationProof06} and \eqref{SchauderCh_InterpolationProof07}, we have shown that for $k=1,2,3$
\begin{equation}\label{SchauderCh_InterpolationProof09}
    \|\nabla^k u\|_{L^\infty(U_1)}
     \leq C(n)\left(\sigma^{4-k} \|\nabla^4 u\|_{L^\infty(U_1)}
     +
     \sigma^{-k}\|   u\|_{L^\infty(U_1)}\right).
\end{equation}
Finally, we use Estimate \eqref{SchauderCh_InterpolationProof03} with $f\rightarrow\nabla^3 u$ and subsequently Estimate \eqref{SchauderCh_InterpolationProof09} with $k=3$ to deduce that 
\begin{align*}
     \|\nabla^4 u\|_{L^\infty(U_1)} \leq &C(n)\left(\sigma^{\gamma} [\nabla^4 u]_{\gamma,U_1}^{(0)}
     +
     \sigma^{-1}\|   \nabla^3u\|_{L^\infty(U_1)}\right)\\
     \leq &C(n)\left(\sigma^{\gamma} [\nabla^4 u]_{\gamma,U_1}^{(0)}
     +
     \sigma^{-1}  \left(
        \mu  \|\nabla^4 u\|_{L^\infty(U_1)}+\mu^{-3} \|u\|_{L^\infty(U_1)}
     \right)\right).
\end{align*}
Taking $\mu=\epsilon(n)\sigma$ with small enough $\epsilon(n)\in(0,1)$ gives 
\begin{equation}\label{SchauderCh_InterpolationProof10}
     \|\nabla^4 u\|_{L^\infty(U_1)}\leq C(n)\left(\sigma^{\gamma} [\nabla^4 u]_{\gamma,U_1}^{(0)}
     +
     \sigma^{-4}\| u\|_{L^\infty(U_1)}\right).
\end{equation}
Estimates \eqref{SchauderCh_InterpolationProof09} and \eqref{SchauderCh_InterpolationProof10} together imply that for $0\leq k\leq 4$
\begin{equation}\label{SchauderCh_InterpolationProof10NEW}
     \|\nabla^k u\|_{L^\infty(U_1)}\leq C(n)\left(\sigma^{4-k+\gamma} [\nabla^4 u]_{\gamma,U_1}^{(0)}
     +
     \sigma^{-k}\| u\|_{L^\infty(U_1)}\right),
\end{equation}
which is the first estimate claimed in the theorem. 
To prove the second estimate, let again $q=(x_1,t_1)\in U_1$ and $\sigma>0$ such that $U_\sigma(q)\subset U_1$. Then, for any $(x,t)\in U_\sigma(q)$ we have $t\in(t_1-\sigma^4, t_1+\sigma^4)$. Consequently 
\begin{align}
    |\dot u(x,t)| &\leq \frac1{2\sigma^4}\left|\int_{t_1-\sigma^4}^{t_1+\sigma^4}\dot u(x,s)ds\right|+\frac1{2\sigma^4}\int_{t_1-\sigma^4}^{t_1+\sigma^4}|\dot u(x,t)-\dot u(x,s)|ds\nonumber\\
    &\leq C(n)\sigma^{-4}\| u\|_{L^\infty(U_1)}
    +\frac1{2\sigma^4}\int_{t_1-\sigma^4}^{t_1+\sigma^4}[\dot u]^{\operatorname{time}}_{\frac\gamma4, U_1}|t-s|^{\frac\gamma4}ds\nonumber\\
    &\leq C(n)\left(\sigma^{-4}\|\dot u\|_{L^\infty(U_1)}
    +\sigma^{\gamma}[u]_{\frac\gamma4, U_1}^{\operatorname{time}}\right).\label{SchauderCh_InterpolationProof11}
\end{align}
For any $q'\in U_1$ there exists $q\in U_1$ such that $q'\in U_\sigma(q)\subset U_1$. Thus, Estimate \eqref{SchauderCh_InterpolationProof11} implies the second estimate claimed in the theorem.

\paragraph{Seminorms}\ \\
We are left with estimating the spatial and temporal Hölder seminorms of $\nabla^l u$ for $l=0,1,2,3$. To do so, we consider two points $q=(x,t)\neq (y,s)=q'\in U_1$. We first consider the case where $|t-s|^{\frac14}\geq\sigma$ and $|x-y|\geq\sigma$. Using Estimate \eqref{SchauderCh_InterpolationProof10NEW}, we estimate 
\begin{align}
    &\frac{|\nabla^lu(x,t)-\nabla^lu(x,s)|}{|t-s|^{\frac\gamma4}}+
    \frac{|\nabla^lu(x,t)-\nabla^lu(y,t)|}{|x-y|^\gamma}\nonumber\\
    \leq& 4\sigma^{-\gamma}\|\nabla^l u\|_{L^\infty(U_1)}\nonumber\\
    \leq &C(n)\left(
        \sigma^{4-l}[D^{4,1} u]_{\gamma, U_1}^{(0)}+\sigma^{-l-\gamma}\|u\|_{L^\infty(U_1)}
    \right)\label{SchauderCh_InterpolationProof12}.
\end{align}
Now we consider the case where $|t-s|^{\frac14}\leq\sigma$ or $|x-y|\leq\sigma$ respectively. If $|x-y|\leq\sigma$, we use Estimates \eqref{SchauderCh_InterpolationProof10NEW} to estimate
\begin{align}
    \frac{|\nabla^lu(x,t)-\nabla^lu(y,t)|}{|x-y|^\gamma}&\leq\|\nabla^{l+1} u\|_{L^\infty(U_1)} |x-y|^{1-\gamma}\nonumber\\
    &\leq C(n)\left(\sigma^{4-l}[D^{4,1} u]_{\gamma, U_1}^{(0)}+\sigma^{-l-\gamma}\|u\|_{L^\infty(U_1)}
    \right).\label{SchauderCh_InterpolationProof122}
\end{align}
Estimates \eqref{SchauderCh_InterpolationProof12} and \eqref{SchauderCh_InterpolationProof122} already imply the claimed estimates for the spatial seminorms. It remains to consider the case $|t-s|\leq \sigma^4$. We use Estiamte \eqref{SchauderCh_InterpolationProof11} to estiamte 
\begin{align}
   \frac{ |u(x,t)-u(x,s)|}{|t-s|^{\frac\gamma4}}\leq& \|\dot u\|_{L^\infty(U_1)}|t-s|^{1-\frac{\gamma}4}\nonumber\\
   \leq &C(n)(\sigma^\gamma[D^{4,1} u]_{\gamma, U_1}^{(0)}+\sigma^{-4}\|u\|_{L^\infty(U_1)})\sigma^{4-\gamma}\nonumber\\
   \leq & C(n)\left(\sigma^4 [D^{4,1} u]_{\gamma, U_1}^{(0)}+\sigma^{-\gamma}\|u\|_{L^\infty(U_1)}\right).\label{SchauderCh_InterpolationProof1222}
\end{align}
Taking $l=0$ in Estimates  \eqref{SchauderCh_InterpolationProof12} and \eqref{SchauderCh_InterpolationProof122} and combining with Estimate \eqref{SchauderCh_InterpolationProof1222}, we obtain
\begin{equation}\label{SchauderCh_InterpolationProofuTimeSemi}
[u]_{\gamma,U_1}^{(0)}\leq C(n)\left(\sigma^4 [D^{4,1} u]_{\gamma, U_1}^{(0)}+\sigma^{-\gamma}\|u\|_{L^\infty(U_1)}\right).
\end{equation}
Next, we consider $l=1,2,3$. Let $x\in B_1(0)$, $s,t\in (-1,1)$ such that $|t-s|\leq\sigma^4$ and let $x_1\in B_1(0)$ such that $x\in B_\sigma:= B_\sigma(x_1)\subset B_1(0)$. Denoting the exterior normal along $\partial B_\sigma$ by $\nu$, we estimate  
\begin{align}
    |\nabla^lu (x,t)-\nabla^l u(x,s)|\leq & \frac1{|B_\sigma|}\left|\int_{B_\sigma} \nabla^l u(x,t)-\nabla^l u(y,t) dy
    -\int_{B_\sigma} \nabla^l u(x,s)-\nabla^l u(y,s) dy\right|\nonumber\\
    &
    +\frac1{|B_\sigma|}\left|\int_{B_\sigma}\nabla^l u(y,s)-\nabla^l (y,t) dy\right|.\label{SchauderCh_LongINTERPOLATIONEstimate} 
\end{align}
To estimate the integral in the first line of Estimate \eqref{SchauderCh_LongINTERPOLATIONEstimate}, we write 
\begin{align}
    &\left|\int_{B_\sigma} \nabla^l u(x,t)-\nabla^l u(y,t) dy
    -\int_{B_\sigma} \nabla^l u(x,s)-\nabla^l u(y,s) dy\right|\nonumber\\
    \leq &\left|\int_0^1 \int_{B_\sigma}\nabla ^{l+1} u(y+\lambda(x-y),t)(x-y)- \nabla ^{l+1} u(y+\lambda(x-y),s)(x-y)dy d\lambda\right|\nonumber\\
    \leq &\left|\int_0^1 \int_{B_\sigma}[\nabla^{l+1} u]_{\frac\gamma4, U_1}^{\operatorname{time}}||t-s|^{\frac\gamma4}|x-y|dy d\lambda\right|\nonumber\\ 
    \leq & |B_\sigma|[\nabla^{l+1} u]_{\frac\gamma4, U_1}^{\operatorname{time}}||t-s|^{\frac\gamma4}\sigma.\label{SchauderCh_LongINTERPOLATIONEstimate1}
\end{align}
To estimate the integral in the second line of Estimate \eqref{SchauderCh_LongINTERPOLATIONEstimate}, we use the divergence theorem to write 
\begin{align}
       & \left|\int_{B_\sigma}\nabla^l u(y,s)-\nabla^l (y,t) dy\right|\nonumber\\
  \leq & \left|\int_{\partial B_\sigma}\langle \nabla^{l-1} u(y,s)-\nabla^{l-1} u(y,t),\nu\rangle dS(y)\right|\nonumber\\
    \leq &\int_{\partial B_\sigma}| \nabla^{l-1} u(y,s)-\nabla^{l-1} u(y,t)| dS(y)\nonumber\\
    \leq &|\partial B_\sigma|[\nabla^{l-1} u]_{\frac\gamma4, U_1}^{\operatorname{time}}|t-s|^{\frac\gamma4}\label{SchauderCh_LongINTERPOLATIONEstimate2}
\end{align}
Inserting Estimates \eqref{SchauderCh_LongINTERPOLATIONEstimate1} and \eqref{SchauderCh_LongINTERPOLATIONEstimate2} into Estimate \eqref{SchauderCh_LongINTERPOLATIONEstimate}, we get
\begin{equation}\label{SchauderCh_LongINTERPOLATIONEstimate3}
     |\nabla^lu (x,t)-\nabla^l u(x,s)| \leq C(n)\left(\sigma[\nabla^{l+1} u]_{\frac\gamma4, U_1}^{\operatorname{time}}+\sigma^{-1}[\nabla^{l-1} u]_{\frac\gamma4, U_1}^{\operatorname{time}}\right)|t-s|^{\frac\gamma4}.
\end{equation}
Combining Estimates \eqref{SchauderCh_InterpolationProof12} and \eqref{SchauderCh_LongINTERPOLATIONEstimate3} shows that for $l=1,2,3$ and $\sigma>0$
\begin{equation}\label{SchauderCh_integralestimateSemiNorm}
    \hspace{-.2cm}[\nabla ^l u]_{\frac\gamma4, U_1}^{\operatorname{time}}\hspace{-.1cm}\leq\hspace{-.05cm} C(n)
    \left(
    \sigma[\nabla^{l+1} u]_{\frac\gamma4, U_1}^{\operatorname{time}}
    \hspace{-.1cm}+\sigma^{-1}[\nabla^{l-1} u]_{\frac\gamma4, U_1}^{\operatorname{time}}
    \hspace{-.1cm}+\sigma^{4-l}[D^{4,1} u]_{\gamma, U_1}^{(0)}
    \hspace{-.1cm}+\sigma^{-l-\gamma}\|u\|_{L^\infty(U_1)}
    \right).
\end{equation}
Using Estimate \eqref{SchauderCh_InterpolationProofuTimeSemi}, we obtain
\begin{align}
    [\nabla u]_{\frac\gamma4, U_1}^{\operatorname{time}}&\leq C(n)\left(\sigma[\nabla^{2} u]_{\frac\gamma4, U_1}^{\operatorname{time}}+\sigma^{3}[D^{4,1} u]_{\gamma, U_1}^{(0)}+\sigma^{-1-\gamma}\|u\|_{L^\infty(U_1)}\right),\label{SchauderCh_nabla1uEstimate}\\
    [\nabla ^2 u]_{\frac\gamma4, U_1}^{\operatorname{time}}&\leq C(n)\left(\sigma[\nabla^{3} u]_{\frac\gamma4, U_1}^{\operatorname{time}}+\sigma^{-1}[\nabla u]_{\frac\gamma4, U_1}^{\operatorname{time}}+\sigma^{2}[D^{4,1} u]_{\gamma, U_1}^{(0)}+\sigma^{-2-\gamma}\|u\|_{L^\infty(U_1)}\right).\label{SchauderCh_nabla2uEstimate}
\end{align}
Let $\epsilon(n)\in(0,1)$ be a small number that is allowed to depend on $n$. We insert Estimate \eqref{SchauderCh_nabla1uEstimate} $\sigma\rightarrow \epsilon(n)\sigma$ into Estimate \eqref{SchauderCh_nabla2uEstimate}. Choosing $\epsilon(n)$ small enough then shows 
\begin{equation}\label{SchauderCh_Improvednabla2uEstimate}
    [\nabla ^2 u]_{\frac\gamma4, U_1}^{\operatorname{time}}\leq C(n)\left(\sigma[\nabla^{3} u]_{\frac\gamma4, U_1}^{\operatorname{time}}+\sigma^{2}[D^{4,1} u]_{\gamma, U_1}^{(0)}+\sigma^{-2-\gamma}\|u\|_{L^\infty(U_1)}\right).
\end{equation}
Taking $l=3$ in Estimate \eqref{SchauderCh_integralestimateSemiNorm} gives
\begin{equation}\label{SchauderCh_Nabla3uEstimate}
    [\nabla ^3 u]_{\frac\gamma4, U_1}^{\operatorname{time}}\leq C(n)\left(\sigma^{-1}[\nabla^{2} u]_{\frac\gamma4, U_1}^{\operatorname{time}}+\sigma[D^{4,1} u]_{\gamma, U_1}^{(0)}+\sigma^{-3-\gamma}\|u\|_{L^\infty(U_1)}\right).
\end{equation}
Again, let $\epsilon(n)\in(0,1)$ be a small number that is allowed to depend on $n$. We insert Estimate \eqref{SchauderCh_Improvednabla2uEstimate} with  $\sigma\rightarrow \epsilon(n)\sigma$ into Estimate \eqref{SchauderCh_Nabla3uEstimate}. Choosing $\epsilon(n)$ small enough gives 
\begin{equation}\label{SchauderCh_nabla3uFinalEst}
    [\nabla ^3 u]_{\frac\gamma4, U_1}^{\operatorname{time}}\leq C(n)\left(\sigma[D^{4,1} u]_{\gamma, U_1}^{(0)}+\sigma^{-3-\gamma}\|u\|_{L^\infty(U_1)}\right).
\end{equation}
Inserting  Estimate \eqref{SchauderCh_nabla3uFinalEst} back into Estimate \eqref{SchauderCh_Improvednabla2uEstimate}, we deduce
\begin{equation}\label{SchauderCh_Finalnabla2uEst}
    [\nabla ^2 u]_{\frac\gamma4, U_1}^{\operatorname{time}}\leq C(n)\left(\sigma^{2}[D^{4,1} u]_{\gamma, U_1}^{(0)}+\sigma^{-2-\gamma}\|u\|_{L^\infty(U_1)}\right).
\end{equation}
Finally, inserting Estimate \eqref{SchauderCh_Finalnabla2uEst} into Estimate \eqref{SchauderCh_nabla1uEstimate} gives 
\begin{equation}\label{SchauderCh_Finalnabla1uEst}
    [\nabla  u]_{\frac\gamma4, U_1}^{\operatorname{time}}\leq C(n)\left(\sigma^{3}[D^{4,1} u]_{\gamma, U_1}^{(0)}+\sigma^{-1-\gamma}\|u\|_{L^\infty(U_1)}\right).
\end{equation}
Estimates \eqref{SchauderCh_InterpolationProofuTimeSemi}, \eqref{SchauderCh_nabla3uFinalEst}, \eqref{SchauderCh_Finalnabla2uEst} and \eqref{SchauderCh_Finalnabla1uEst} are the claimed estimates for the temporal seminorms. 
\end{proof}

\begin{lemma}\label{SchauderCh_D41Estimateslemma}
Let $\rho>0$, $p_0\in\R^n\times\R$, $U_\rho:=U_\rho(p_0)$ and $u\in C^ {4,1,\gamma}(U_\rho)$. Then, for $l\in\set{0,1,2,3}$ 
$$[\nabla^ l u]^ {\operatorname{time}}_{\frac{4-l+\gamma}4, U_\rho}\leq C(n)\left([D^ {4,1} u]_{\gamma,U_\rho}^{(0)} +\rho^{-4-\gamma}\|u\|_{L^\infty(U_\rho)}\right).$$
\end{lemma}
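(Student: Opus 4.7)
The plan is to exploit the extra spatial regularity of $\nabla^l u$ via a spatial mollification $\phi_\sigma$ whose kernel has vanishing polynomial moments, and to balance the mollification scale $\sigma$ against $|t-s|^{1/4}$. First I would use the parabolic scaling $u_\rho(x,t):=u(x_0+\rho x,t_0+\rho^4 t)$ to reduce to $\rho=1$. The case $l=0$ (where the exponent $(4+\gamma)/4$ exceeds $1$) is trivial under the natural convention $[u]^{\operatorname{time}}_{1+\gamma/4}=[\dot u]^{\operatorname{time}}_{\gamma/4}$, which is already part of $[D^{4,1} u]^{(0)}_{\gamma}$, so the real focus is on $l\in\{1,2,3\}$.

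Next I would fix a smooth, compactly supported kernel $\phi\in C_c^\infty(B_1(0))$ with $\int\phi=1$ and vanishing moments $\int\phi(z)z^\alpha\,dz=0$ for $1\le|\alpha|\le 3$, and set $\phi_\sigma(z):=\sigma^{-n}\phi(z/\sigma)$. For $x\in B_1(x_0)$ with $\operatorname{dist}(x,\partial B_1(x_0))\ge\sigma$ and $t,s\in(t_0-1,t_0+1)$, I would decompose
\[
\nabla^l u(x,t)-\nabla^l u(x,s)=\big[\nabla^l u-\phi_\sigma*\nabla^l u\big](x,t)-\big[\nabla^l u-\phi_\sigma*\nabla^l u\big](x,s)+\big[(\phi_\sigma*\nabla^l u)(x,t)-(\phi_\sigma*\nabla^l u)(x,s)\big].
\]
For the first two (mollification-error) terms, Taylor-expanding $\nabla^l u(y,\tau)$ in $y$ around $x$ to order $4-l$ and using the vanishing moments of $\phi_\sigma$ will leave only the Hölder remainder, giving $|\nabla^l u(x,\tau)-\phi_\sigma*\nabla^l u(x,\tau)|\le C(n)[\nabla^4 u]^{\operatorname{space}}_{\gamma}\sigma^{4-l+\gamma}$. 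For the time-difference term I would commute derivatives with the convolution and differentiate in $t$ to get $\partial_\tau(\phi_\sigma*\nabla^l u)=(\nabla^l\phi_\sigma)*\dot u$; since $\int\nabla^l\phi_\sigma=0$ for $l\ge 1$, combining with the spatial Hölder regularity of $\dot u$ yields $|(\phi_\sigma*\nabla^l u)(x,t)-(\phi_\sigma*\nabla^l u)(x,s)|\le C(n)[\dot u]^{\operatorname{space}}_{\gamma}\sigma^{\gamma-l}|t-s|$. Choosing $\sigma:=|t-s|^{1/4}$ balances the two contributions at $C(n)[D^{4,1} u]^{(0)}_{\gamma}|t-s|^{(4-l+\gamma)/4}$, as desired.

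For $|t-s|$ too large for $\sigma=|t-s|^{1/4}$ to satisfy the mollification constraints, the estimate would follow directly from the triangle inequality and the $L^\infty$ interpolation of Theorem \ref{SchauderCh_InterpolationEstimates}. For $x$ near $\partial B_1(x_0)$ I would shift to an interior point $x'$ with $|x-x'|\le\sigma$ and $\operatorname{dist}(x',\partial B_1(x_0))\ge\sigma$, apply the interior estimate at $x'$, and control the shift error via Taylor expansion of $\nabla^l u$ around $x'$; the polynomial corrections then involve $[\nabla^{l+\alpha}u]^{\operatorname{time}}_{(4-l-|\alpha|+\gamma)/4}$ for $1\le|\alpha|\le 3-l$, which are available by a downward induction in $l$ starting from $l=3$ (the base case, for which no polynomial correction arises). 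The main obstacle will be the simultaneous construction of a kernel with vanishing moments up to order $3$ and the careful boundary handling without degrading the sharp exponent; both are standard in approximation theory but technical.
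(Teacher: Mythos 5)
Your proof is correct and, while it is conceptually parallel to the paper's argument, the technical realization is different. The paper does not use smooth mollification: it replaces $\nabla^l u$ by the $l$th-order finite difference quotient $D^\epsilon_{\alpha_1\cdots\alpha_l} u$ at step $\epsilon=|t-s|^{1/4}$, estimates the time difference of the discretization error via the \emph{temporal} Hölder seminorm $[\nabla^4 u]^{\operatorname{time}}_{\gamma/4}$ (for $l=3$) respectively the already-established seminorm $[\nabla^{l+1}u]^{\operatorname{time}}_{(3-l+\gamma)/4}$ (for $l=2,1$, a downward induction), and controls the time difference of the quotient itself by commuting $\partial_t$ with the finite difference and using $[\dot u]^{\operatorname{space}}_\gamma$ — exactly the discrete analogue of your $(\nabla^l\phi_\sigma)*\dot u$ computation. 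Your version buys a small simplification in the interior: by Taylor-expanding the mollification error directly against $[\nabla^4 u]^{\operatorname{space}}_\gamma$, your interior estimate works for each $l$ independently, with no induction (you only need the downward induction near $\partial B_1$, as does the paper). The cost is needing a kernel with vanishing moments up to order $3$ rather than plain difference quotients — a slightly heavier piece of machinery, but a routine one. For the boundary correction you propose expanding to order $3-l$; a first-order Taylor step (as in the paper) already suffices with the inductive hypothesis for $l+1$, but expanding higher is harmless. Finally, your remark on $l=0$ is well taken: the exponent $(4+\gamma)/4>1$ makes the seminorm as literally written problematic, and indeed the paper's own proof only treats $l=1,2,3$; your convention $[u]^{\operatorname{time}}_{1+\gamma/4}:=[\dot u]^{\operatorname{time}}_{\gamma/4}$ is the natural repair and aligns with what is actually available inside $[D^{4,1}u]^{(0)}_\gamma$.
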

\begin{proof}
By scaling and translation, we may assume $\rho=1$ and $p_0=0$. Let $(x,t)\neq(x,s)\in U_1$ and $\epsilon:=|t-s|^{\frac14}$. We assume, without loss of generality, that $t<s$. If $\epsilon\geq \frac16$ we use Theorem \ref{SchauderCh_InterpolationEstimates} to estimate
\begin{equation}\label{SchauderCh_HoldereSemiNorm4thorderTrivialEstimate}
\frac{|\nabla^l u(x,t)-\nabla u(x,s)|}{|t-s|^{\frac{4-l+\gamma}4}}
\leq 2\|\nabla ^l u\|_{L^\infty(U_1)}6^{\frac{4-l+\gamma}4}\leq C(n)\left([D^{4,1} u]_{\gamma, U_1}^{(0)}+\|u\|_{L^\infty(U_1)}\right).
\end{equation}
So we may also assume that $\epsilon\leq \frac16$. Given $i\in\set{1,...,n}$ and a function $f: B_{1-\epsilon}(0)\rightarrow\R$, we put 
 $$D_i^\epsilon f(x,t):= \frac{f(x+\epsilon e_i, t)- f(x,t)}\epsilon=\frac1\epsilon\int_0^\epsilon \partial_i f(x+\rho e_i,t)d\rho.$$
For higher difference quotients, we write e.g. $D_{ij}^\epsilon f:=D_i^\epsilon D_j^\epsilon f$. For $x\in B_{1-3\epsilon}(0)$, the difference quotients $D_i^\epsilon u(x,t)$, $D_{ij}^\epsilon u(x,t)$ and $D_{ijk}^\epsilon u(x,t)$ are all well-defined and we have the following formulas:
\begin{align}
    D_i^\epsilon u(x,t)&=\frac1\epsilon\int_0^\epsilon \partial_i u(x+\rho_1 e_i,t)d\rho_1\label{SchauderCh_BetterHoelderRegularityFormula1order}\\
    D_{ij}^\epsilon u(x,t)&=\frac1{\epsilon^2}\int_0^\epsilon \int_0^\epsilon \partial_{ij} u(x+\rho_1 e_i+\rho_2 e_j,t)d\rho_1d\rho_2\label{SchauderCh_BetterHoelderRegularityFormula1orde2}\\
    D_{ijk}^\epsilon u(x,t)&=\frac1{\epsilon^3}\int_0^\epsilon \int_0^\epsilon\int_0^\epsilon \partial_{ijk} u(x+\rho_1 e_i+\rho_2e_j+\rho_3e_k,t)d\rho_1d\rho_2d\rho_3\label{SchauderCh_BetterHoelderRegularityFormula1orde3}
\end{align}
\ \\\noindent
\textbf{Improved Hölder Continuity of $\nabla^3 u$}\ \\
Using Equation \eqref{SchauderCh_BetterHoelderRegularityFormula1orde3}, we write 
\begin{align*}
    &D_{ijk}^\epsilon u(x,t)-\partial_{ijk} u(x,t)\nonumber\\
    =&\frac1{\epsilon^3}\int_0^\epsilon \int_0^\epsilon\int_0^\epsilon u(x+\rho_1 e_i+\rho_2e_j+\rho_3e_k,t)d\rho_1d\rho_2d\rho_3- \partial_{ijk} u(x,t)\nonumber\\
    =&\frac1{\epsilon^3}\int_0^\epsilon \int_0^\epsilon\int_0^\epsilon\int_0^1  \langle\nabla \partial_{ijk} u(x+\mu\left(\rho_1 e_i+\rho_2e_j+\rho_3e_k\right),t),\rho_1 e_i+\rho_2e_j+\rho_3e_k\rangle d\mu d\rho_1d\rho_2d\rho_3.
\end{align*}
So, using the temporal Hölder continuity of the fourth derivative $\nabla^4 u$ and recalling that $\epsilon^4=|t-s|$, we may estimate 
\begin{align}
    &\left|\partial_{ijk} u(x,t)-D_{ijk}^\epsilon u(x,t)-\partial_{ijk} u(x,s)+D_{ijk}^\epsilon u(x,s)\right|\nonumber\\
 \leq & \frac1{\epsilon^3}\int_0^\epsilon\int_0^\epsilon\int_0^\epsilon\int_0^1 [\nabla^4 u]_{\frac\gamma4,U_1}^{\operatorname{time}}|t-s|^{\frac\gamma4}\mu(\rho_1+\rho_2+\rho_3)d\mu d\rho_1d\rho_2d\rho_3\nonumber\\
 \leq & \frac1{\epsilon^3} [\nabla^4 u]_{\frac\gamma4,U_1}^{\operatorname{time}}|t-s|^{\frac\gamma4} \int_0^\epsilon\int_0^\epsilon\int_0^\epsilon\int_0^13\epsilon d\mu d\rho_1d\rho_2d\rho_3\nonumber\\
    = &3 [\nabla^4 u]_{\frac\gamma4, U_1}^{\operatorname{time}}|t-s|^{\frac{1+\gamma}4}.\label{SchauderCh_BetterHoelderRegularityFormula1orde3_2}
\end{align}
Recalling that $t<s$, we estimate
\begin{align}
    |D_{ijk}^\epsilon u(x,t)- D_{ijk}^\epsilon u(x,s)|\leq &|t-s| \sup_{t\leq \tau\leq s}\sup_{x\in B_{3-\epsilon}(0)} |D_{ijk} \dot u(x,\tau)|\nonumber\\
                \leq & |t-s| \sup_{t\leq \tau\leq s} \sup_{x\in B_{1-3\epsilon}(0)} \frac1\epsilon \left(|D_{jk} \dot u(x+\epsilon e_i,\tau)|+|D_{jk} \dot u(x,\tau)|\right)\nonumber\\
                \leq & |t-s| \frac2\epsilon \sup_{t\leq \tau\leq s} \sup_{x\in B_{1-2\epsilon}(0)}|D_{jk} \dot u(x,\tau)|\nonumber\\
                \leq & |t-s| \frac4{\epsilon^2} \sup_{t\leq \tau\leq s} \sup_{x\in B_{1-\epsilon}(0)}|D_{k} \dot u(x,\tau)|\nonumber\\
                \leq & |t-s| \frac4{\epsilon^2} \sup_{t\leq \tau\leq s} \sup_{x\in B_{1-\epsilon}(0)}| \frac{|\dot u(x+\epsilon e_k,\tau)-\dot u(x,\tau)|}\epsilon\nonumber\\
                \leq & |t-s| \frac4{\epsilon^3}  [\dot u]^{\operatorname{space}}_{\frac\gamma4, U_1}\epsilon^\gamma\nonumber\\
                \leq &  4 [\dot u]^{\operatorname{space}}_{\frac\gamma4, U_1}|t-s|^{\frac{1+\gamma}4}.\label{SchauderCh_BetterHoelderRegularityFormula1orde3_3}
\end{align}
Combining Estimates \eqref{SchauderCh_BetterHoelderRegularityFormula1orde3_2} and \eqref{SchauderCh_BetterHoelderRegularityFormula1orde3_3} implies that for $x\in B_{1-3\epsilon}(0)$ and $|t-s|\leq\frac16$ we have 
\begin{equation}\label{SchauderCh_ClaimedTimeHölderForNabla3}
    \frac{|\nabla^3 u(x,t)-\nabla^3 u(x,s)|}{|t-s|^{\frac{1+\gamma}4}}\leq C(n)[D^{4,1} u]^{(0)}_{\gamma, U_1}.
\end{equation}
If now $x\not\in B_{1-3\epsilon}(0)$ but still $|t-s|\leq\frac16$, we can find $z\in B_{1-3\epsilon}(0)$ such that $|x-z|<3\epsilon$. We write
$$\partial_{ijk} u(x,t)=\partial_{ijk} u(z,t)+\int_0^1\langle\nabla \partial_{ijk} u(z+\rho(x-z),t), x-z\rangle d\rho.$$
Making use of Estimate \eqref{SchauderCh_ClaimedTimeHölderForNabla3} and $|z-x|\leq 3\epsilon=3|t-s|^{\frac14}$, we get 
\begin{align}
    \frac{|\nabla^3 u(x,t)-\nabla^3 u(x,s)|}{|t-s|^{\frac{1+\gamma}4}}
    \leq &C(n)[D^{4,1} u]^{(0)}_{\gamma, U_1}+\frac1{|t-s|^{\frac{1+\gamma}4}}\int_0^1[\nabla^4 u]_{\frac\gamma4, U_1}^{\operatorname{time}}|t-s|^{\frac\gamma4}|x-z|d\rho\nonumber\\
    \leq & C(n)[D^{4,1} u]^{(0)}_{\gamma, U_1}.\label{SchauderCh_ClaimedTimeHölderForNabla3FullDisk}
\end{align} 
Combining Estimates \eqref{SchauderCh_HoldereSemiNorm4thorderTrivialEstimate} and \eqref{SchauderCh_ClaimedTimeHölderForNabla3FullDisk} we deduce the lemma for $l=3$.\\

\noindent
\textbf{Improved Hölder Continuity of $\nabla^2 u$ and $\nabla u$}\ \\
Estimating the temporal seminorms for $\nabla^2 u$ and $\nabla u$ is very much analogous to the argument we have shown for $\nabla^3 u$. We begin with $\nabla^2 u$. Using Equation \eqref{SchauderCh_BetterHoelderRegularityFormula1orde2} instead of \eqref{SchauderCh_BetterHoelderRegularityFormula1orde3} as well as the already established Hölder regularity of $\nabla^3 u$, we can derive that for $x\in B_{1-3\epsilon}(0)$ and $|t-s|\leq\frac16$
\begin{equation}\label{SchauderCh_BetterHoelderRegularityFormula1orde2_2}
    \left|\partial_{ij} u(x,t)-D_{ij}^\epsilon u(x,t)-\partial_{ij} u(x,s)+D_{ij}^\epsilon u(x,s)\right|
    \leq 2[\nabla^3 u]_{\frac{1+\gamma}4, U_1}^{\operatorname{time}}|t-s|^{\frac{1+\gamma}4}
\end{equation}
instead of Estimate \eqref{SchauderCh_BetterHoelderRegularityFormula1orde3_2}. Next, following the analysis that led to Estimate \eqref{SchauderCh_BetterHoelderRegularityFormula1orde3_3}, we derive
\begin{equation}\label{SchauderCh_BetterHoelderRegularityFormula1orde2_3}
    |D_{ij}^\epsilon u(x,t)- D_{ij}^\epsilon u(x,s)|\leq   4 [\dot u]_{\frac\gamma4, U_1}^{\operatorname{space}}|t-s|^{\frac{2+\gamma}4}.
\end{equation}
Finally, we generalise to $x\in B_1(0)$; this time by using that 
$$\partial_{ij} u(x,t)=\partial_{ij} u(z,t)+\int_0^1 \langle\nabla\partial_{ij}u(z+\rho(x-z),t), x-z\rangle d\rho.$$
Combining with Estimate \eqref{SchauderCh_HoldereSemiNorm4thorderTrivialEstimate}, the lemma follows for $l=2$. Finally, for $l=1$, we first use Equation \eqref{SchauderCh_BetterHoelderRegularityFormula1order} and the now established Hölder continuity of $\nabla^2 u$ to establish the following analog of Estimate \eqref{SchauderCh_BetterHoelderRegularityFormula1orde2_2}. For $x\in B_{1-3\epsilon}(0)$ 
\begin{equation}\label{SchauderCh_BetterHoelderRegularityFormula1orde1_2}
    \left|\partial_{i} u(x,t)-D_{i}^\epsilon u(x,t)-\partial_{i} u(x,s)+D_{i}^\epsilon u(x,s)\right|\nonumber\\
\leq[\nabla^2 u]_{\frac{2+\gamma}4, U_1}^{\operatorname{time}}|t-s|^{\frac{3+\gamma}4}.
\end{equation}
Next, following the analysis that lead to Estimate \eqref{SchauderCh_BetterHoelderRegularityFormula1orde3_3}, we derive
\begin{equation}\label{SchauderCh_BetterHoelderRegularityFormula1orde1_3}
    |D_{i}^\epsilon u(x,t)- D_{i}^\epsilon u(x,s)|\leq   [\dot u]_{\frac\gamma4, U_1}^{\operatorname{space}}|t-s|^{\frac{3+\gamma}4}.
\end{equation}
We deduce the lemma for $l=1$ by following the same arguments as for $l=2,3$.
\end{proof}

\begin{lemma}[Interpolation Estimates for Temporal Seminorms]\label{SchauderCh_TemporalInterpolationLemma}\ \\
There exists $\epsilon_0(n)>0$ such that for $\epsilon\in(0,\epsilon_0)$, $p_0\in\R^n$, $\rho>0$, $U_\rho:=U_\rho(p_0)$, $u\in C^{4,1,\gamma}(U_\rho)$, $l\in\set{0,1,2,3}$ and $k\in\set{0,...,3-l}$
$$\rho^{l+k+\gamma}[\nabla^l u]^{\operatorname{time}}_{\frac{k+\gamma}4,U_\rho} \leq\epsilon\rho^{4+\gamma}[D^{4,1}u]^{(0)}_{\gamma, U_\rho}+C(n,\gamma)\epsilon^{-\frac{l+k+\gamma}{4-l-k}}\|u\|_{L^\infty(U_\rho)}.$$
\end{lemma}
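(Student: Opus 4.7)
The plan is to reduce to $\rho=1$ by the usual parabolic rescaling $u_\rho(x,t):=u(\rho x,\rho^4 t)$, under which $[\nabla^l u]^{\operatorname{time}}_{(k+\gamma)/4,U_\rho}$ scales like $\rho^{-(l+k+\gamma)}$, $[D^{4,1}u]^{(0)}_{\gamma,U_\rho}$ like $\rho^{-(4+\gamma)}$, and $\|u\|_{L^\infty(U_\rho)}$ is scale-invariant, so every power of $\rho$ in the statement is absorbed once the case $\rho=1$, $p_0=0$ is established.

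For this reduced case, I would start from the standard min-interpolation between Hölder exponents. Fixing $x$ and $t\neq s$ in the time slice, the trivial bound by $2\|\nabla^l u\|_{L^\infty(U_1)}$ together with the definition of the higher temporal seminorm gives
\begin{equation*}
\frac{|\nabla^l u(x,t)-\nabla^l u(x,s)|}{|t-s|^{(k+\gamma)/4}}\leq \min\!\Bigl(2\|\nabla^l u\|_{L^\infty(U_1)}\,|t-s|^{-(k+\gamma)/4},\ [\nabla^l u]^{\operatorname{time}}_{(4-l+\gamma)/4,U_1}\,|t-s|^{(4-l-k)/4}\Bigr),
\end{equation*}
and optimising the right-hand side in $|t-s|$ (possible because the two exponents of $|t-s|$ have opposite signs, as $k\leq 3-l$) yields
\begin{equation*}
[\nabla^l u]^{\operatorname{time}}_{(k+\gamma)/4,U_1}\leq C(n)\,\|\nabla^l u\|_{L^\infty(U_1)}^{1-\theta}\bigl([\nabla^l u]^{\operatorname{time}}_{(4-l+\gamma)/4,U_1}\bigr)^{\theta},\qquad\theta:=\tfrac{k+\gamma}{4-l+\gamma}\in(0,1).
\end{equation*}

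Next I would invoke the two results already established in this section: Lemma \ref{SchauderCh_D41Estimateslemma} bounds the higher seminorm by $C(n)\bigl([D^{4,1}u]^{(0)}_{\gamma,U_1}+\|u\|_{L^\infty(U_1)}\bigr)$, while Theorem \ref{SchauderCh_InterpolationEstimates} gives, for any sufficiently small $\eta>0$,
\begin{equation*}
\|\nabla^l u\|_{L^\infty(U_1)}\leq \eta\,[D^{4,1}u]^{(0)}_{\gamma,U_1}+C(n)\,\eta^{-l/(4-l+\gamma)}\,\|u\|_{L^\infty(U_1)}.
\end{equation*}
Setting $X:=[D^{4,1}u]^{(0)}_{\gamma,U_1}$, $Y:=\|u\|_{L^\infty(U_1)}$ and applying the weighted Young inequality $A^{1-\theta}B^\theta\leq \delta B+C(\theta)\,\delta^{-\theta/(1-\theta)}A$ to the previous display, the estimate reduces to
\begin{equation*}
[\nabla^l u]^{\operatorname{time}}_{(k+\gamma)/4,U_1}\leq C(n)\bigl(\delta+\delta^{-\theta/(1-\theta)}\eta\bigr)X+C(n,\gamma)\bigl(\delta+\delta^{-\theta/(1-\theta)}\eta^{-l/(4-l+\gamma)}\bigr)Y.
\end{equation*}

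Finally, I would pick $\delta$ proportional to $\epsilon$ and $\eta$ proportional to $\epsilon^{1/(1-\theta)}$ so that the coefficient of $X$ equals $\epsilon$. The coefficient of $Y$ then scales like $\epsilon^{-\theta/(1-\theta)-l/[(1-\theta)(4-l+\gamma)]}$, and the algebraic identities $\theta/(1-\theta)=(k+\gamma)/(4-l-k)$ and $(1-\theta)(4-l+\gamma)=4-l-k$ collapse this exponent to exactly $(l+k+\gamma)/(4-l-k)$. The only delicate point is this bookkeeping, together with the requirement that the chosen $\eta$ stays below the threshold $\epsilon_0(n)$ of Theorem \ref{SchauderCh_InterpolationEstimates}, which merely restricts the admissible range of $\epsilon$ in the new $\epsilon_0(n)>0$. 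Undoing the rescaling then produces the claimed inequality.
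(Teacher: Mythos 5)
Your proof is correct and follows the same essential strategy as the paper's: rescale to $\rho=1$, combine an $L^\infty$ bound on $\nabla^l u$ with the top-order temporal seminorm controlled by Lemma~\ref{SchauderCh_D41Estimateslemma}, and then convert $\|\nabla^l u\|_{L^\infty}$ into $\|u\|_{L^\infty}$ via Theorem~\ref{SchauderCh_InterpolationEstimates}. The only differences are presentational: the paper splits into the ranges $|t-s|\gtrless\sigma^4$ and applies Lemma~\ref{SchauderCh_D41Estimateslemma} on a small ball $U_\sigma$ (forcing a separate case when $B_\sigma(x)\not\subset B_1(0)$), whereas you package the same dichotomy as the Hölder-exponent interpolation inequality, apply Lemma~\ref{SchauderCh_D41Estimateslemma} globally on $U_1$, and close with weighted Young; your version avoids the boundary case distinction and is a bit more modular, but the underlying estimates and the final choice of the free parameter are the same.
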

\begin{proof}
Without loss of generality $\rho=1$ and $p_0=0$. Let $\sigma\in(0,1)$ and $(x,t), (x,s)\in U_1:=U_1(0)$. We first assume that $|t-s|\geq \sigma^4$. Using Theorem \ref{SchauderCh_InterpolationEstimates} we deduce that for $\sigma<\sigma_0(n)$ 
\begin{align}
    \frac{|\nabla^lu(x,t)-\nabla^lu(x,s)|}{|t-s|^ {\frac{k+\gamma}4}}&\leq 2\sigma^{-k-\gamma}\|\nabla^l u\|_{L^\infty(U_1)}\nonumber\\
    &\leq 2\sigma^{-k-\gamma}\left(
        \sigma^{4+\gamma-l}[D^{4,1} u]_{\gamma, U_1}^{(0)}+\sigma^{-l}\|u\|_{L^\infty(U_1)}
    \right)\nonumber\\
    &\leq C(n)\left(
        \sigma^{4-l-k}[D^{4,1} u]_{\gamma, U_1}^{(0)}+\sigma^{-l-k-\gamma}\|u\|_{L^\infty(U_1)}
    \right)\label{SchauderCh_Interpolation2Proof12}.
\end{align}
So, it suffices to prove the lemma for $|t-s|\leq \sigma^4$. Let $t_0\in(0,1)$ such that $t,s\in(t_0-\sigma^4, t_0+\sigma^4)\subset(0,1)$. If $B_\sigma(x)\subset B_1(0)$, we can use Lemma \ref{SchauderCh_D41Estimateslemma} and estimate
\begin{align}
    \frac{|\nabla^lu(x,t)-\nabla^lu(x,s)|}{|t-s|^ {\frac{k+\gamma}4}}\leq& \frac{|\nabla^lu(x,t)-\nabla^lu(x,s)|}{|t-s|^ {\frac{4-l+\gamma}4}}\frac{|t-s|^ {\frac{4-l+\gamma}4}}{|t-s|^ {\frac{k+\gamma}4}}\nonumber\\
    &\leq [\nabla^ l u]_{\frac{4-l+\gamma}4,U_\sigma((x,t_0))}^ {\operatorname{time}}|t-s|^ {\frac{4-l-k}4}\nonumber\\
     &\leq C(n)\left(\sigma^ {4-l-k}[D^ {4,1} u]_{\gamma, U_1}^{(0)}+\sigma^ {-l-k-\gamma}\|u\|_{L^ \infty(U_1)}\right).\label{SchauderCh_Interpolation2_CaseReduction}
\end{align}
If $B_\sigma(x)\not\subset B_1(0)$ there is a point $y\in B_1(0)$ such that $x\in B_\sigma(y)\subset B_1(0)$. Using Theorem \ref{SchauderCh_InterpolationEstimates}, we estimate for $\tau\in\set{t,s}$
$$\frac{|\nabla ^l u(x,\tau)-\nabla ^l u(y,\tau)|}{|t-s|^{\frac{k+\gamma}4}}
\leq 
\sigma^{-k-\gamma}[\nabla^l u]_{\gamma, U_1}^{(0)}|x-y|^\gamma
\leq
\sigma^{-k}\left(\sigma^{4-l}[D^{4,1} u]_{\gamma, U_1}^{(0)}+\sigma^{-l-k-\gamma}\|u\|_{L^\infty(U_1)}\right).
$$
Hence Estimate \eqref{SchauderCh_Interpolation2_CaseReduction} is valid for all $x\in B_1(0)$. Recalling Estimate \eqref{SchauderCh_Interpolation2Proof12}, the lemma follows.
\end{proof}

The following proposition can be established by following the same arguments that lead to Theorem \ref{SchauderCh_InterpolationEstimates} and Lemmas \ref{SchauderCh_D41Estimateslemma} and \ref{SchauderCh_TemporalInterpolationLemma}.
\begin{proposition}
    Theorem \ref{SchauderCh_InterpolationEstimates} and Lemmas \ref{SchauderCh_D41Estimateslemma} and \ref{SchauderCh_TemporalInterpolationLemma} are also true, when the parabolic balls $U_\rho$ are replaced by the truncated parabolic balls $U_\rho^\pm(p)$, $U_{\rho+}(p)$ and $U_{\rho+}^{\pm}(p)$. 
\end{proposition}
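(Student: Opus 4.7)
The plan is to revisit each step of the proofs of Theorem \ref{SchauderCh_InterpolationEstimates} and Lemmas \ref{SchauderCh_D41Estimateslemma}--\ref{SchauderCh_TemporalInterpolationLemma} and observe that they adapt to the truncated setting once the auxiliary balls $B_\sigma(x_1)$ and time intervals $(t_1-\sigma^4, t_1+\sigma^4)$ used in the averaging arguments are themselves truncated whenever the center lies near a flat face of the boundary. The only structural facts used in the full-ball arguments are (i) the covering property that every point $q'$ of the ambient domain lies in some parabolic ball $U_\sigma(q)$ of the same family still contained in the domain, and (ii) the uniform volume/surface-area ratios $|B_\sigma|\asymp\sigma^n$, $|\partial B_\sigma|\asymp\sigma^{n-1}$ and $|(t_1-\sigma^4, t_1+\sigma^4)|\asymp\sigma^4$. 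Both continue to hold when the Euclidean ball is replaced by the half-ball $B_\sigma^+(x_1):=B_\sigma(x_1)\cap(\R^{n-1}\times[0,\infty))$ and when the symmetric time interval is replaced by a one-sided one of length $\sigma^4$. Hence I would re-run each estimate in the original proofs verbatim, substituting these truncated auxiliaries whenever the center is within distance $\sigma$ of the respective flat face.

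Concretely, the starting inequality \eqref{SchauderCh_InterpolationProof01} retains its form on a half-ball. The divergence theorem on the Lipschitz domain $B_\sigma^+(x_1)$ yields an additional flat-face contribution of the form $\int_{B_\sigma(x_1)\cap\{x_n=0\}} f(-e_n)_i\,dS$, but this piece is again controlled by $C(n)\sigma^{n-1}\|f\|_{L^\infty}$, so the estimate goes through with only a larger constant. The subsequent algebraic bootstrap producing Estimates \eqref{SchauderCh_InterpolationProof04}--\eqref{SchauderCh_InterpolationProof10NEW} is formal and is unaffected. For the temporal estimate \eqref{SchauderCh_InterpolationProof11} one averages $\dot u$ over a one-sided interval $[t_1,t_1+\sigma^4)$ or $(t_1-\sigma^4,t_1]$ when $t_1$ is within $\sigma^4$ of the temporal floor; the prefactor doubles but nothing else changes.

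For Lemma \ref{SchauderCh_D41Estimateslemma} the only additional care concerns the difference quotients $D_i^\epsilon u$, $D_{ij}^\epsilon u$, $D_{ijk}^\epsilon u$, which were defined using forward increments in $e_i$. In the truncated spatial setting I would fix a convention: whenever $x_n<3\epsilon$ use forward increments in $+e_n$, otherwise either sign works; this keeps all evaluation points inside the half-space. With this choice the representations \eqref{SchauderCh_BetterHoelderRegularityFormula1order}--\eqref{SchauderCh_BetterHoelderRegularityFormula1orde3} hold verbatim and the subsequent Hölder-continuity analysis carries over without change. The same convention handles Lemma \ref{SchauderCh_TemporalInterpolationLemma}, whose argument is a mix of algebraic manipulations based on Theorem \ref{SchauderCh_InterpolationEstimates} together with the size comparisons for parabolic balls, all of which remain valid on the truncated family.

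The main obstacle is not conceptual but bookkeeping: one must keep track of which of the five truncated configurations ($U_\rho^\pm$, $U_{\rho+}$, $U_{\rho+}^\pm$) is in force and select compatible auxiliary half-balls, one-sided intervals, and increment directions consistently with the truncation. The cleanest way to avoid a case-by-case repetition is to formulate a single covering lemma — for every point $q'$ lying in any of these truncated parabolic balls one can find a smaller truncated parabolic ball of the same type containing $q'$ and still contained in the ambient domain — and then reuse the original proofs with $B_\sigma(x_1)$ and the symmetric time interval silently replaced by their truncated counterparts wherever necessary, yielding the stated estimates with constants still depending only on $n$ and $\gamma$.
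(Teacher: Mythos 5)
Your proposal is correct and takes exactly the route the paper has in mind: the paper itself offers no proof of this proposition beyond the one-sentence assertion that the preceding arguments carry over, and your write-up simply fills in the details of how — truncated half-balls in place of $B_\sigma(x_1)$ with the extra flat-face boundary term still of order $\sigma^{n-1}\|f\|_{L^\infty}$, one-sided time intervals with at most a doubled prefactor, and a forward-increment convention for the difference quotients near the spatial face — all of which is accurate. No gap.
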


\section{Hölder Spaces on Manifolds}\label{SchauderCh_SectiononHolderSpacesonMfds}
\subsection{Elliptic Hölder Spaces on Manifolds}
Throughout this subsection, we fix a smooth, compact, $n$-dimensional manifold $M$, possibly with boundary $\partial M\neq\emptyset$.
\begin{definition}[Good Atlas]
A finite atlas $\mathcal A=\set{(\varphi_i, U_i)\ |\ 1\leq i\leq m}$ of $M$ consisting of charts $\varphi_i:U_i\subset M\rightarrow V_i\subset\R^n$ or $\R^{n-1}\times[0,\infty)$ is called \emph{a good atlas of $M$}, if:
\begin{enumerate}[(1)]
    \item For all $1\leq i\leq m$ the sets $V_i$ are convex.
    \item For all $1\leq i\leq m$ there exist an open set $\tilde U_i\supset\supset U_i$, an open and convex set $\tilde V_i\supset\supset V_i$ and a smooth extension $\tilde\varphi_i:\tilde U_i\rightarrow\tilde V_i$ of $\varphi_i$.
\end{enumerate} 
\end{definition}

If $\partial M\neq\emptyset$, a good atlas on $M$ provides us with a good atlas on $\partial M$ by considering only those charts $U_i$ that contain boundary points i.e. $U_i\cap\partial M\neq\emptyset$ and restricting to the boundary. That is 
$$\varphi_i\big|_{U_i\cap \partial M}:U_i\cap\partial M\rightarrow V_i\cap (\R^{n-1}\times0).$$

In the following definition, we define Hölder spaces on $M$. Afterwards, we will have to justify this definition.
\begin{definition}[Hölder Spaces on Manifolds]\label{SchauderCh_Manifold_Elliptic_Def}
    Let $\mathcal A=\set{(\varphi_i, U_i)\ |\ 1\leq i\leq m}$ be a good atlas of $M$, $V_i:=\varphi_i(U_i)$ and $\gamma\in(0,1)$. We say that $u\in C^k(M)$ is in $C^{k,\gamma}_{\mathcal A}(M)$, if 
    $$\|u\|_{C^{k,\gamma}_\mathcal A(M)}:=\max_{1\leq i\leq m}\|u\circ\varphi_i^{-1}\|_{C^{k,\gamma}(V_i)}<\infty.$$ 
\end{definition}
 It is easy to see that all the usual properties of Hölder spaces remain true. For example $(C^{k,\gamma}_\mathcal A(M),\|\cdot\|_{C^{k,\gamma}_\mathcal A(M)})$ is a Banach space. In the next lemma, we show that the space $C^{k,\gamma}_\mathcal A(M)$ is independent of the choice of good atlas.

\begin{lemma}\label{SchauderCh_EquivalenceofHolderSpaceDefinition}
Let $\mathcal A$ and $\mathcal B$ be two good atlases on a smooth manifold $M$. Then $C^{k,\gamma}_{\mathcal A}(M)=C^{k,\gamma}_{\mathcal B}(M)$ for all $k\in \N_0$ and $\gamma\in (0,1)$. Additionally there exists a constant $C=C(\mathcal A, \mathcal B, k,\gamma)$ such that
$$
\|u\|_{C^ {k,\gamma}_{\mathcal A}(M)} \leq C(\mathcal A,\mathcal B,k,\gamma)\|u\|_{C^ {k,\gamma}_{\mathcal B}(M)}.
$$
\end{lemma}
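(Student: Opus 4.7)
The plan is to show one direction of the inequality, namely $\|u\|_{C^{k,\gamma}_{\mathcal A}(M)}\leq C\|u\|_{C^{k,\gamma}_{\mathcal B}(M)}$; the reverse follows by swapping the roles of $\mathcal A$ and $\mathcal B$. Fix a chart $(\varphi_i, U_i)\in\mathcal A$ with image $V_i$. For every $p\in \bar V_i$ there exists $(\psi_{j},W_{j})\in\mathcal B$ with $\varphi_i^{-1}(p)\in W_{j}$. The set $\varphi_i(U_i\cap W_{j})$ is then an open neighbourhood of $p$ in $V_i$. Choose an open convex neighbourhood $O_p\subset V_i$ of $p$ (a Euclidean ball intersected with $V_i$; this remains convex since $V_i$ is convex, and in boundary charts we intersect with the half-space) such that $\overline{O_p}$ is compact and contained in $\varphi_i(U_i\cap W_j)$. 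By compactness of $\bar V_i$, finitely many such sets $O_1,\ldots,O_N$ with associated charts $\psi_{j(\alpha)}$ cover $V_i$.

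On each $O_\alpha$ we have the transition map $\tau_\alpha:=\psi_{j(\alpha)}\circ\varphi_i^{-1}\colon O_\alpha\to Y_{j(\alpha)}$. Using the smooth extensions $\tilde\varphi_i$ and $\tilde\psi_{j(\alpha)}$ provided by the good-atlas condition, $\tau_\alpha$ extends smoothly to a neighbourhood of $\overline{O_\alpha}$, so all derivatives $\nabla^l\tau_\alpha$ for $0\leq l\leq k+1$ are uniformly bounded on $O_\alpha$ by a constant depending only on $\mathcal A$, $\mathcal B$ and $k$. Writing $u\circ\varphi_i^{-1}=(u\circ\psi_{j(\alpha)}^{-1})\circ\tau_\alpha$ on $O_\alpha$ and applying the Faà di Bruno / chain rule formula, all spatial derivatives of $u\circ\varphi_i^{-1}$ up to order $k$ on $O_\alpha$ are polynomial expressions in $\{\nabla^l(u\circ\psi_{j(\alpha)}^{-1})\}_{l\leq k}\circ\tau_\alpha$ and $\{\nabla^l\tau_\alpha\}_{l\leq k}$. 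This yields
$$\|u\circ\varphi_i^{-1}\|_{C^k(O_\alpha)}\leq C(\mathcal A,\mathcal B,k)\,\|u\circ\psi_{j(\alpha)}^{-1}\|_{C^k(Y_{j(\alpha)})}.$$
For the Hölder seminorm, one similarly estimates
$$[\nabla^k(u\circ\varphi_i^{-1})]_{\gamma,O_\alpha}\leq C(\mathcal A,\mathcal B,k,\gamma)\,\|u\circ\psi_{j(\alpha)}^{-1}\|_{C^{k,\gamma}(Y_{j(\alpha)})},$$
where the extra Hölder factor is generated either by the Hölder continuity of $\nabla^k(u\circ\psi_{j(\alpha)}^{-1})$ (composed with the Lipschitz map $\tau_\alpha$) or by Hölder differences of the smooth factors $\nabla^l\tau_\alpha$, which are trivially Hölder continuous on $\overline{O_\alpha}$.

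Because $V_i$ and each $O_\alpha$ are convex, Lemma \ref{SchauderCh_SemiNormSubbadditiveonConvexSpaces} applies and gives
$$[\nabla^k(u\circ\varphi_i^{-1})]_{\gamma,V_i}\leq \sum_{\alpha=1}^N[\nabla^k(u\circ\varphi_i^{-1})]_{\gamma,O_\alpha}\leq C(\mathcal A,\mathcal B,k,\gamma)\,\|u\|_{C^{k,\gamma}_{\mathcal B}(M)},$$
while the $C^k$ part is bounded analogously by covering with the $O_\alpha$. Taking the maximum over $i$ proves the claimed estimate, and exchanging $\mathcal A\leftrightarrow\mathcal B$ closes the argument, so in particular the two function spaces coincide as sets. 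The main technical obstacle is setting up the finite convex subcover compatibly with both atlases simultaneously (which is where the good-atlas hypothesis, the existence of smooth extensions, and the compactness of $\bar V_i$ are all needed); once that is in place, everything else is a routine chain-rule bookkeeping.
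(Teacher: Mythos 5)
Your proposal follows essentially the same route as the paper: express derivatives via the chain rule with the transition maps, use the good-atlas smooth extensions and compactness of $\bar V_i$ to obtain a finite convex cover of $V_i$ with uniform bounds on each transition map, and combine via Lemma \ref{SchauderCh_SemiNormSubbadditiveonConvexSpaces}. One small caveat in the write-up: for $p\in\bar V_i\setminus V_i$ the requirement $\overline{O_p}\subset\varphi_i(U_i\cap W_j)$ cannot be met (since $p\notin V_i\supset\varphi_i(U_i\cap W_j)$); one should instead take balls inside the extended overlap $\tilde\varphi_i(\tilde U_i\cap W_j)$ and then intersect with $V_i$, which is exactly how the paper sets up its sets $\Omega_i=\varphi(\tilde U\cap\hat U_i)$ and $B_{r_j}(x_j)\cap V$.
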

\begin{proof}
It suffices to prove the estimate. Let $\varphi:U\rightarrow V$ be a good chart from $\mathcal A$ and $\hat\varphi:\hat U\rightarrow\hat V$  from $\mathcal B$ such that $U\cap\hat U\neq\emptyset$. We put $V':=\varphi(U\cap\hat U)\subset V$ and put $\psi:=\hat\varphi\circ\varphi^{-1}:V'\rightarrow \hat V$. Then $\psi\in C^\infty(V')$ with bounded derivatives. For $x\in V'$ and a multi-index $\alpha$ of length $l$ we compute 
\begin{equation}\label{SchauderCh_HolderSpaceEstimate_GoodAtlas0}
\nabla_\alpha  (u\circ \varphi^{-1})(x)
=\nabla_\alpha (u\circ\hat\varphi^{-1}\circ\psi)(x)
=\sum_{|\beta|\leq l} c_\beta (\psi)(x)\nabla_\beta (u\circ \hat \varphi^{-1})(\psi(x)).
\end{equation}
Here $c_\beta(\psi)(x)$ are smooth functions that depend on $\psi$ and derivatives of $\psi$. Using this formula it is easily seen that 
\begin{equation}\label{SchauderCh_HolderSpaceEstimate_GoodAtlas1}
\|u\circ\varphi^{-1}\|_{C^{k}(V')}
\leq\hspace{-.1cm} C(\psi,k)\|u\circ\tilde \varphi^{-1}\|_{C^{k}(\psi(V'))}
\leq\hspace{-.1cm}  C(\psi,k)\|u\circ\hat \varphi^{-1}\|_{C^{k}(\hat V)}
\leq\hspace{-.1cm}  C(\psi,k)\|u\|_{C^{k}_{\mathcal B}(M)}.
\end{equation}
As $\mathcal B$ is a good atlas, we can cover $U$ by good chart domains $\hat U_1,...,\hat U_m$ in $\mathcal B$. Using Estimate \eqref{SchauderCh_HolderSpaceEstimate_GoodAtlas1} we then get
\begin{equation}\label{SchauderCh_NormEquivalence_NormPartResult}
\|u\circ\varphi^{-1}\|_{C^{k}(V)}\leq \sum_{i=1}^m\|u\circ\varphi^{-1}\|_{C^{k}(\varphi(U\cap\tilde U_i))}
\leq C(\mathcal A,\mathcal B,k)\|u\|_{C^{k}_{\mathcal B}(M)}.
\end{equation}
Since this is true for all good charts in $\mathcal A$, we have proven $\|u\|_{C^{k}_{\mathcal A}(M)}\leq C(\mathcal A,\mathcal B,k)\|u\|_{C^{k}_{\mathcal B}(M)}$. It remains to estimate the seminorms.\\

Let again $\varphi:U\rightarrow V$ be a good chart from $\mathcal A$, $f\in C^{0,\gamma}(V)$,  $\hat\varphi:\hat U\rightarrow\hat V$ be a good chart from $\mathcal B$ such that $\hat U\cap U\neq\emptyset$ and $S\subset\varphi( U\cap \hat U)$. We estimate
\begin{align}
[f]_{\gamma, S}=&\sup_{x\neq y\in S}
\frac{|f(x)-f(y)|}{|x-y|^\gamma}\nonumber\\
\leq &\sup_{x\neq y\in S}
\frac{|(f\circ\varphi\circ\hat \varphi^{-1})(\hat\varphi(\varphi^{-1}(x))-(f\circ\varphi\circ\hat \varphi^{-1})(\hat\varphi(\varphi^{-1}(y))|}{|x-y|^\gamma}\nonumber\\
\leq &[f\circ\varphi\circ\hat\varphi^{-1}]_{\gamma, \hat\varphi(\varphi^{-1}(S))}\sup_{x\neq y\in S}\left|\frac{\hat\varphi(\varphi^{-1}(x))-\hat\varphi(\varphi^{-1}(y))}{|x-y|}\right|^\gamma\nonumber\\
\leq & [f\circ\varphi\circ\hat\varphi^{-1}]_{\gamma, \hat\varphi(\varphi^{-1}(S))}\left(\|\nabla(\hat\varphi\circ\varphi^{-1})\|_{C^0(\varphi(U\cap\hat U))}\right)^\gamma\nonumber\\
\leq& C(\mathcal A,\mathcal B,\gamma) [f\circ\varphi\circ\hat\varphi^{-1}]_{\gamma, \hat\varphi(\varphi^{-1}(S))}.\label{SchauderCh_SchauderNromEquivBasicEstimate}
\end{align}
For general $x,y\in V$ there doesn't exist a good chart in $\mathcal B$ such that $\varphi^{-1}(x),\varphi^{-1}(y)\in \hat U$. Since $\mathcal A$ is a good atlas, there is a smooth extension $\varphi:\tilde U\rightarrow \tilde V$ with $U\subset\subset \tilde U$ and $V\subset\subset\tilde V$. Let $\mathcal B=\set{(\hat\varphi_i, \hat U_i)\ |\ 1\leq i\leq m}$ and put $\Omega_i:=\varphi( \tilde U\cap \hat U_i)$. Then $\Omega_i\subset \tilde V$ are open and cover $\bar V$. Additionally, we can write 
$$\Omega_i=\bigcup_{x\in\Omega_i} B_{r_{x,i}}(x)\hspace{.5cm}\textrm{with $r_{x,i}>0$ such that $B_{r_{x,i}}(x)\subset \Omega_i$}.$$ 
This implies that $\bar V\subset\bigcup_{i=1}^m\Omega_i\subset \bigcup_{i=1}^m\bigcup_{x\in\Omega_i} B_{r_{x,i}}(x)$. By compactness, we can select points $x_1,...., x_J$ and radii $r_j$ such that $V\subset\bar V\subset \bigcup_{j=1}^J B_{r_j}(x_j)$. Note that for each $1\leq j\leq J$ there exists $1\leq i(j)\leq m$ such that $B_{r_j}(x_j)\subset\Omega_{i(j)}$. Using Estimate \eqref{SchauderCh_SchauderNromEquivBasicEstimate}, we obtain
$$[f]_{\gamma, B_{r_j}(x_j)}
\leq \max_{1\leq i\leq m}[f]_{\Omega_i,\gamma}
\leq C(\mathcal A,\mathcal B,\gamma)\max_{1\leq i\leq m}[f\circ\varphi\circ\hat\varphi_i^{-1}]_{\gamma, \hat V_i}.$$
Since $V$ is convex, the sets $B_{r_j}(x_j)\cap V$ are convex and cover $V$. So, using Lemma \ref{SchauderCh_SemiNormSubbadditiveonConvexSpaces} and noting that $J$ is a number only depending on $\mathcal A$ and $\mathcal B$, we obtain 
\begin{equation}\label{SchauderCh_EquivalenceHolderNormsBASICestimate}
[f]_{\gamma, V}\leq \sum_{j=1}^J[f]_{\gamma, B_{r_j}(x_j)}
\leq C(\mathcal A,\mathcal B,\gamma)\max_{1\leq i\leq m}[f\circ\varphi\circ\hat\varphi_i^{-1}]_{\gamma, \hat V_i}.
\end{equation}
Let $\alpha\in \N_0^n$ be a multi-index of length $k$. We apply Estimate \eqref{SchauderCh_EquivalenceHolderNormsBASICestimate} with $f=\nabla_\alpha(u\circ\varphi^{-1})$. In view of Equation \eqref{SchauderCh_HolderSpaceEstimate_GoodAtlas0}, we deduce that for all good charts $\varphi:U\rightarrow V$ from $\mathcal A$
\begin{equation}\label{SchauderCh_SemiNormsestimaedResult_EquivalenceNorms}
[\nabla_\alpha (u\circ\varphi)^{-1}]_{\gamma, U}\leq C(\mathcal A,\mathcal B,k,\gamma)\max_{1\leq i\leq m}[\nabla_\alpha(u\circ\hat\varphi_i^{-1})]_{\gamma, \hat V_i}
.\end{equation}
The lemma follows by combining Estimates \eqref{SchauderCh_NormEquivalence_NormPartResult} and \eqref{SchauderCh_SemiNormsestimaedResult_EquivalenceNorms}.
\end{proof}
 \begin{tcolorbox}[colback=white!20!white,colframe=black!100!white,sharp corners, breakable]
We sometimes write $\|u\|_{C^{k,\gamma}(M)}$ without making the choice of good atlas explicit in the notation. This is now justified by
 Lemma \ref{SchauderCh_EquivalenceofHolderSpaceDefinition}.
 \end{tcolorbox}

\subsection{Parabolic Hölder Spaces on Manifolds}
Throughout this subsection we fix $T\in(0,\infty)$ and  a smooth, compact, $n$-dimensional manifold $M$, possibly with boundary $\partial M\neq\emptyset$. For $m\in\N_0$, we say that a function $u$ is $C^{m,[m/4]}(M\times(0,T))$, if for $k+4l$ the derivatives $\partial_t^l\nabla^k u$ exist and are continuous.

\begin{definition}[Parabolic Hölder Spaces on Manifolds]\label{SchauderCh_MANIFOLDParabolicHolderSpaceDef}
    Let $k\in\N_0$ and $\mathcal A=\set{(\varphi_i, U_i)\ |\ 1\leq i\leq m}$  be a good atlas on $M$. We say that a function $u\in C^{k,[k/4]}(M\times(0,T))$ is in $C^{k,[k/4],\gamma}(M\times[0,T])$,~if 
    $$\|u\|_{C^{k,[k/4],\gamma}_{\mathcal A}(M\times[0,T])}:=\max_{1\leq i\leq m}\|u\circ\varphi_i^{-1}\|_{C^{k,[k/4],\gamma}(V_i\times[0,T])}<\infty.$$
    Here, in a slight abuse of notation, $(u\circ\varphi_i^{-1})(x,t):=u(\varphi_i^{-1}(x),t).$
\end{definition}

As in the elliptic case, we need to justify that this definition does not depend on the choice of good atlas. 

\begin{lemma}\label{SchauderCh_NormEquivalenceParabolicHolderSpace}
Let $M$ be a smooth, $n$-dimensional manifold and let $\mathcal A$ and $\mathcal B$ be good atlases on $M$. There exists a constant $C(\mathcal A,\mathcal B,k,\gamma)$ such that for any $T>0$
$$
\|u\|_{C^ {k,[k/4],\gamma}_\mathcal A(M\times[0,T])}
\leq 
C(\mathcal A,\mathcal B,k,\gamma)
\|u\|_{C^ {k,[k/4],\gamma}_\mathcal B(M\times[0,T])}
.
$$
\end{lemma}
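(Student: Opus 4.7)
The plan is to follow the proof of Lemma \ref{SchauderCh_EquivalenceofHolderSpaceDefinition} essentially verbatim, exploiting the crucial fact that the transition maps between charts on $M$ depend only on the spatial variable and hence commute with every temporal derivative and every temporal difference quotient. Let $\varphi:U\to V$ be a good chart from $\mathcal A$ and $\hat\varphi:\hat U\to\hat V$ one from $\mathcal B$ with $U\cap\hat U\neq\emptyset$, and set $\psi:=\hat\varphi\circ\varphi^{-1}$ on $V':=\varphi(U\cap\hat U)$. Since $\mathcal A$ and $\mathcal B$ are good atlases, $\psi$ extends smoothly to a neighbourhood of $\overline{V'}$ with uniformly bounded derivatives of all orders. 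By the chain rule applied only to spatial derivatives, for $|\alpha|=k$ and $j\in\N_0$,
\begin{equation}\label{planeq:chain}
\partial_t^j\nabla_\alpha(u\circ\varphi^{-1})(x,t)=\sum_{|\beta|\leq k}c_{\alpha\beta}(x)\,\partial_t^j\nabla_\beta(u\circ\hat\varphi^{-1})(\psi(x),t),
\end{equation}
with smooth bounded coefficients $c_{\alpha\beta}$ that are independent of $t$.

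The $C^{k,[k/4]}$-part of the norm is then controlled by taking suprema in \eqref{planeq:chain} and covering $\overline U$ by finitely many good charts of $\mathcal B$, exactly as in the elliptic case. For the temporal seminorms, subtracting \eqref{planeq:chain} at times $t$ and $s$ and using that $c_{\alpha\beta}(x)$ and $\psi(x)$ are $t$-independent reduces $[\partial_t^j\nabla_\alpha(u\circ\varphi^{-1})]^{\operatorname{time}}_{\gamma/4}$ to a sum of $[\partial_t^j\nabla_\beta(u\circ\hat\varphi^{-1})]^{\operatorname{time}}_{\gamma/4}$ for $|\beta|\le k$. When $|\beta|=k$ this is one of the seminorms appearing in $\|u\|_{C^{k,[k/4],\gamma}_\mathcal B}$; when $|\beta|<k$ the natural temporal exponent $(m+\gamma-|\beta|)/4-j$ (with $m=4j+k$) is strictly larger than $\gamma/4$, so I absorb a power $|t-s|^{(k-|\beta|)/4}\le T^{(k-|\beta|)/4}$ against this larger exponent when it stays in $(0,1)$, and otherwise estimate by $\|\partial_t^{j+1}\nabla_\beta(u\circ\hat\varphi^{-1})\|_{L^\infty}\,T^{1-\gamma/4}$.

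For the spatial seminorms I split, writing $F:=\partial_t^j\nabla_\alpha(u\circ\varphi^{-1})$,
$$F(x,t)-F(y,t)=\sum_{|\beta|\le k}[c_{\alpha\beta}(x)-c_{\alpha\beta}(y)]\,\partial_t^j\nabla_\beta(u\circ\hat\varphi^{-1})(\psi(x),t)+\sum_{|\beta|\le k}c_{\alpha\beta}(y)\bigl[\partial_t^j\nabla_\beta(u\circ\hat\varphi^{-1})(\psi(x),t)-\partial_t^j\nabla_\beta(u\circ\hat\varphi^{-1})(\psi(y),t)\bigr].$$
The first sum is bounded by the Lipschitz norm of $c_{\alpha\beta}$ times $|x-y|^{1-\gamma}$ times the sup-norm already controlled above; the second by $\mathrm{Lip}(\psi)^\gamma\,[\partial_t^j\nabla^k(u\circ\hat\varphi^{-1})]^{\operatorname{space}}_\gamma$ when $|\beta|=k$, or by the mean value theorem and a $|x-y|^{1-\gamma}$-absorption when $|\beta|<k$. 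These bounds only hold on $\varphi(U\cap\hat U)\times(0,T)$, so to extend to all of $V\times(0,T)$ I cover $\overline V$ by finitely many convex balls $B_{r_j}(x_j)\subset V$ each contained in a single $\varphi(U\cap\hat U_i)$ and apply Proposition \ref{SchauderCh_SemiNormSubbadditiveonConvexSpaces_Parabolic} with intervals $I_i=(0,T)$. The main (and essentially only) subtlety is the bookkeeping for $|\beta|<k$ in the temporal seminorm, where the exponent $\gamma/4$ does not line up with the term in the $C^{k,[k/4],\gamma}_\mathcal B$-norm and must be reconciled either via powers of $T$ or via the $C^{k,[k/4]}$-bound established in the preceding step; apart from this, the proof is a verbatim parabolic transcription of the elliptic argument because time is a passive parameter under chart transitions.
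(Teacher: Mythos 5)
Your overall architecture — invoking the chain rule formula \eqref{SchauderCh_HolderSpaceEstimate_Parabolic_GoodAtlas0} with $t$-independent coefficients, then handling the $C^{k,[k/4]}$-part and the spatial seminorms exactly as in Lemma \ref{SchauderCh_EquivalenceofHolderSpaceDefinition}, and treating the temporal seminorms separately — is the same as the paper's. But your treatment of the temporal seminorms has a genuine gap: you bound the mismatched-exponent terms by introducing factors $T^{(k-|\beta|)/4}$ and $T^{1-\gamma/4}$, and you explicitly propose to ``absorb a power $|t-s|^{(k-|\beta|)/4}\le T^{(k-|\beta|)/4}$'' or to ``estimate by $\|\partial_t^{j+1}\nabla_\beta(u\circ\hat\varphi^{-1})\|_{L^\infty}\,T^{1-\gamma/4}$.'' This makes your constant depend on $T$, which is exactly what the lemma forbids: it asserts a constant $C(\mathcal A,\mathcal B,k,\gamma)$ that works uniformly for all $T>0$, and the boxed remark following the lemma emphasizes that this $T$-independence is the entire point (it is later what makes the covering argument in Corollary \ref{SchauderCh_InteriorSchauderEstimateCorollary} and the manifold estimate in Theorem \ref{SchauderCh_GoodAPrioriEstimateOnManifold} uniform in $T$).

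The missing idea is to split on $|t-s|$ rather than on $T$: if $|t-s|\geq 1$, the whole temporal difference quotient is bounded by $2\sup_\tau|\partial_t^j\nabla_\beta(u\circ\hat\varphi^{-1})(\cdot,\tau)|$ since $|t-s|^{-\eta}\leq 1$, and this is controlled by the $C^{k,[k/4]}$-bound from the first step. If $|t-s|\leq 1$, the absorption $|t-s|^{\eta'-\eta}\leq 1$ for $\eta'>\eta$ (when $k-4<|\beta|+4j\leq k$) and the mean-value bound $|t-s|^{1-\eta}\leq 1$ (when $|\beta|+4j\leq k-4$) hold with constant $1$, never introducing $T$. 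Without this split your argument only proves a $T$-dependent version of the lemma, which is strictly weaker and insufficient for the later applications. As a secondary point, your exponent bookkeeping (the factor ``$\gamma/4$'' and the formula ``$(m+\gamma-|\beta|)/4-j$ with $m=4j+k$'') conflates the target exponent $\eta=\frac{k+\gamma-|\alpha|}{4}-j$ with the special case $|\alpha|=k$, $j=0$; the argument needs to run for every pair $(j,l)$ with $\eta\in(0,1)$, as in the paper.
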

 \begin{tcolorbox}[colback=white!20!white,colframe=black!100!white,sharp corners, breakable]
The important observation in Lemma \ref{SchauderCh_NormEquivalenceParabolicHolderSpace} lies in the fact that the constant $C(\mathcal A,\mathcal B,k,\gamma)$ does not depend on $T$.
\end{tcolorbox}
\begin{proof}
Let $\varphi:U\rightarrow V$ be a good chart from $\mathcal A$ and $\hat \varphi:\hat U\rightarrow\hat V$ be a good chart from $\mathcal B$ with $U\cap\hat U\neq\emptyset$. We define $\psi:=\hat\varphi\circ\varphi^{-1}:\varphi(U\cap\hat U)\rightarrow \hat\varphi(U\cap\hat U)$. For $4j+l\leq k$ and a multi index $\alpha$ of length $l$, we note the analogue of Equation \eqref{SchauderCh_HolderSpaceEstimate_GoodAtlas0}
\begin{equation}\label{SchauderCh_HolderSpaceEstimate_Parabolic_GoodAtlas0}
\nabla_\alpha\partial_t^j  (u\circ \varphi^{-1})(x,t)
=\nabla_\alpha\partial_t^j (u\circ\hat\varphi^{-1}\circ\psi)(x,t)
=\sum_{|\beta|\leq l} c_\beta (\psi)(x)\nabla_\beta\partial_t^j (u\circ \hat \varphi^{-1})(\psi(x),t).
\end{equation}
For a multi-index $\alpha$, we can use Equation \eqref{SchauderCh_HolderSpaceEstimate_Parabolic_GoodAtlas0} to conclude
\begin{alignat}{2}
&\|\nabla_\alpha\partial_t^j  (u\circ \varphi^{-1})(x,t)\|_{C^{0}(V\times[0,T])}
\leq 
C(\mathcal A,\mathcal B,k,\gamma)\|u\|_{C^{k,[k/4]}_{\mathcal B}(M\times[0,T])}&\hspace{.3cm}\textrm{ if }|\alpha|+4j\leq k,\label{SchauderCh_HolderSpaceEstimate_Parabolic_GoodAtlas1}\\
&[\nabla_\alpha\partial_t^j  (u\circ \varphi^{-1})]_{\gamma,V\times[0,T]}^{\operatorname{space}}
\leq 
C(\mathcal A,\mathcal B,k,\gamma)\|u\|_{C^{k,[k/4],\gamma}_{\mathcal B}(M\times[0,T])}&\hspace{-2cm}\textrm{ if }|\alpha|+4j=k ,\label{SchauderCh_HolderSpaceEstimate_Parabolic_GoodAtlas2}
\end{alignat}
by following the same arguments that we presented in the proof of Lemma \ref{SchauderCh_EquivalenceofHolderSpaceDefinition}.
It remains to estimate the temporal Hölder seminorms. Let $t\neq s\in [0,T]$ and $j,l\in\N_0$ such that $\eta:=\frac{k+\gamma-l}4-j\in(0,1)$. Let $\varphi:U\rightarrow V$ be a good chart in $\mathcal A$ and $x\in V$. We aim to prove 
\begin{equation}\label{SchauderCh_HolderSpaceEstimate_Parabolic_GoodAtlas1GOAL}
     \left|\frac{\partial_t^j\nabla^l (u\circ\varphi^{-1})(x,t)-\partial_t^j\nabla^l (u\circ\varphi^{-1})(x,s)}{|t-s|^\eta}\right|
     \leq 
     C(\mathcal A,\mathcal B, k,\gamma)\|u\|_{C^{k,[k/4],\gamma}_{\mathcal B}(M\times[0,T])}.
\end{equation}
Once this is established, the lemma follows by combining Estimates \eqref{SchauderCh_HolderSpaceEstimate_Parabolic_GoodAtlas1}, \eqref{SchauderCh_HolderSpaceEstimate_Parabolic_GoodAtlas2} and \eqref{SchauderCh_HolderSpaceEstimate_Parabolic_GoodAtlas1GOAL}. If $|t-s|\geq 1$, we use Estimate \eqref{SchauderCh_HolderSpaceEstimate_Parabolic_GoodAtlas1} to estimate 
\begin{align}
    &\left|\frac{\partial_t^j\nabla^l (u\circ\varphi^{-1})(x,t)-\partial_t^j\nabla^l (u\circ\varphi^{-1})(x,s)}{|t-s|^\eta}\right|\nonumber\\
\leq &2\sup_{\tau\in[0,T]}\left|\partial_t^j\nabla^l (u\circ\varphi^{-1})(x,\tau)\right|\nonumber\\
\leq &2\|u\circ\varphi^{-1}\|_{C^{k,[k/4]}(V\times[0,T])}\nonumber\\
\leq &C(\mathcal A,\mathcal B,k)\|u\|_{C^{k,[k/4],\gamma}_{\mathcal B}(M\times[0,T])}.\label{SchauderCh_HolderSpaceEstimate_Parabolic_GoodAtlas4}
\end{align}
Next, we consider the case where $|t-s|\leq 1$. Let $\hat\varphi:\hat U\rightarrow\hat V$ be a good chart from $\mathcal B$ such that $\varphi^{-1}(x)\in\hat U$ and put $\psi:=\hat\varphi\circ\varphi$. Using Equation \eqref{SchauderCh_HolderSpaceEstimate_Parabolic_GoodAtlas0}, we estimate 
\begin{align}
    &\left|\frac{\partial_t^j\nabla^l (u\circ\varphi^{-1})(x,t)-\partial_t^j\nabla^l (u\circ\varphi^{-1})(x,s)}{|t-s|^\eta}\right|\nonumber\\
  \leq &\sum_{|\beta|\leq l} |c_\beta (\psi)(x)| \left|\frac{\partial_t^j\nabla_\beta (u\circ\hat \varphi^{-1})(\psi(x),t)-\partial_t^j\nabla_\beta (u\circ\hat \varphi^{-1})(\psi(x),s)}{|t-s|^\eta}\right|.\label{SchauderCh_HolderSpaceEstimate_Parabolic_GoodAtlas5}
\end{align}
We estimate the terms individually. If $|\beta|+4j\leq k-4$, then $\partial_t^j \nabla_\beta u$ is $C^1$ in time and hence, using $|t-s|\leq 1$ and Estimate \eqref{SchauderCh_HolderSpaceEstimate_Parabolic_GoodAtlas1}, we can estimate 
\begin{align}
&\left|\frac{\partial_t^j\nabla_\beta (u\circ\hat\varphi^{-1})(\psi(x),t)-\partial_t^j\nabla_\beta (u\circ\hat\varphi^{-1})(\psi(x),s)}{|t-s|^\eta}\right|\nonumber\\
\leq 
&\sup_{\tau\in[0,T]}\left|\partial_t^{j+1}\nabla_\beta (u\circ\hat\varphi^{-1})(\psi(x),\tau)\right|\nonumber\\
\leq 
&\|\partial_t^{j+1}\nabla_\beta (u\circ\hat\varphi^{-1})\|_{C^0(\hat V\times[0,T])}\nonumber\\
\leq &
\|u\|_{C^{k,[k/4],\gamma}_{\mathcal B}(M\times[0,T])}.\label{SchauderCh_HolderSpaceEstimate_Parabolic_GoodAtlas6}
\end{align}
Finally, we need to consider the case where $k-4<|\beta|+4j\leq k$ or, equivalently 
$1>\frac{k-|\beta|}4-j\geq 0$. Then $1>\frac{k+\gamma-|\beta|}4-j> 0$ and since $|\beta|\leq l$ we get 
$0<\eta\leq \frac{k+\gamma-|\beta|}4-j<1$. 
Using that $|t-s|\leq 1$ and the definition of the $C^{k,[k/4],\gamma}$-norm we then get 
\begin{align}
&\left|\frac{\partial_t^j\nabla_\beta (u\circ\hat\varphi^{-1})(\psi(x),t)-\partial_t^j\nabla_\beta (u\circ\hat\varphi^{-1})(\psi(x),s)}{|t-s|^\eta}\right|\nonumber\\
\leq &
\left|\frac{\partial_t^j\nabla_\beta (u\circ\hat\varphi^{-1})(\psi(x),t)-\partial_t^j\nabla_\beta (u\circ\hat\varphi^{-1})(\psi(x),s)}{|t-s|^{\frac{k+\gamma-|\beta|}4-j}}\right|\nonumber\\
\leq&
\|u\|_{C^{k,[k/4],\gamma}_{\mathcal B}(M\times[0,T])}.\label{SchauderCh_HolderSpaceEstimate_Parabolic_GoodAtlas7}
\end{align}
Estimate \eqref{SchauderCh_HolderSpaceEstimate_Parabolic_GoodAtlas1GOAL} and hence the lemma follows by combining Estimates
\eqref{SchauderCh_HolderSpaceEstimate_Parabolic_GoodAtlas4}
\eqref{SchauderCh_HolderSpaceEstimate_Parabolic_GoodAtlas5}, 
\eqref{SchauderCh_HolderSpaceEstimate_Parabolic_GoodAtlas6}
and 
\eqref{SchauderCh_HolderSpaceEstimate_Parabolic_GoodAtlas7}. 
\end{proof}

 \begin{tcolorbox}[colback=white!20!white,colframe=black!100!white,sharp corners, breakable]
We sometimes write $\|u\|_{C^{k,[k/4],\gamma}(M\times[0,T])}$ without making the choice of good atlas explicit in the notation. This is now justified by
 Lemma \ref{SchauderCh_NormEquivalenceParabolicHolderSpace}.
 \end{tcolorbox}

\section{Sobolev Spaces on Manifolds}
Throughout this section, let $M$ be a smooth, $n$-dimensional, compact manifold possibly with boundary $\partial M\neq\emptyset$. In this section, we define the notion of Sobolev spaces on $M$. 

\begin{definition}
We say that a set $A\subset M$ is measurable if $\phi(A)$ is Lebesgue measurable for all charts $\phi:U\subset M\rightarrow V\subset\R^n$. We say that $A\subset M$ is a null set if $\phi(A)$ is a Lebesgue null set for all charts $\phi:U\subset M\rightarrow V\subset\R^n$.
\end{definition}
We denote by $\sim$ the equivalence relation that identifies functions $u$ and $v$ if they agree apart from a null set. As usual, we will often not distinguish between a function $u$ and its equivalence class $[u]_\sim$.

\begin{definition}
A function $u:M\rightarrow\R$ is in $W^{k,p}_{\operatorname{loc}}(M)$ if $u\circ\phi^{-1}\in W^{k,p}_{\operatorname{loc}}(V)$ for all charts $\phi:U\subset M\rightarrow V\subset\R^n$.
\end{definition}
\noindent
\textbf{Remarks:}
\begin{enumerate}[(1)]
    \item In case of a boundary chart where $V\subset\R^ {n-1}\times[0,\infty)$ and $V\cap(\R^ {n-1}\times 0)\neq\emptyset$, we say that $v\in W^ {k,p}(V)$ if $v\in W^ {k,p}(V^+)$, where $V^+:=\set{x\in V\ |\ x_n>0}$. 
    \item We stress that for $V\subset\R^ {n-1}\times[0,\infty)$ and $V\cap(\R^ {n-1}\times 0)\neq\emptyset$ the `$\operatorname{loc}$' refers to sets $W\subset V$ open in the relative topology on $\R^ {n-1}\times[0,\infty)$ with closure $\bar W\subset V$. Such $W$ are allowed to contain points of the form $(x,0)$. 
\end{enumerate}

To define a sensible $W^{k,p}(M)$-norm, we can not use an arbitrary chart. For example we consider $M=(0,1)$ and $u:M\rightarrow\R,\ u(x)=x$. A sensible definition of $W^{k,p}(M)$ must surely include $u$. However, taking the chart $\varphi:(0,1)\rightarrow (0,\infty), \varphi(x)=\frac1x$ produces the chart representation $u(\varphi^{-1}(t))=\frac1t$, which is not even in $L^1((0,\infty))$. The issue is, of course, that the chosen chart is \emph{not good}. To eliminate this problem, we make the following definition:

\begin{definition}\label{SchauderCh_SobolevNormDefGoodTalas}
Let $\mathcal A=\set{(\varphi_i, U_i)\ |\ 1\leq i\leq m}$ be a good atlas on $M$. It is easy to see that for all $u\in W^{k,p}_{\operatorname{loc}}(M)$ the expression 
\begin{equation}\label{SchauderCh_SobolevSpaceDefinition}
\|u\|_{W^{k,p}_\mathcal A(M)}:=\sum_{i=1}^m\|u\circ\varphi_i^{-1}\|_{W^{k,p}(V_i)}
\end{equation}
is finite\footnotemark. We define $W^{k,p}(M):=W^{k,p}_{\operatorname{loc}}(M)$,  equipped with $\|\cdot \|_{W^{k,p}_\mathcal A(M)}$.
\end{definition}
\footnotetext{Indeed, any good chart $\varphi_i:U_i\rightarrow V_i$ can be extended to $\varphi_i:\tilde U_i\rightarrow\tilde V_i$ with $V_i\subset\subset \tilde V_i$ and by definition of $W^{k,p}_{\operatorname{loc}}(M)$ we get $u\circ\varphi_i^{-1}\in W^{k,p}(V_i)$.}

Similar to the proof of Lemma \ref{SchauderCh_EquivalenceofHolderSpaceDefinition}, we can establish the following proposition, which shows, that Definition \ref{SchauderCh_SobolevSpaceDefinition} does not depend on the choice of the good atlas. 

\begin{proposition}\label{SchauderCh_GoodatlasSobolevNorm}
Let $M$ be a compact $n$-dimensional manifold and $\mathcal A$ and $\mathcal B$ be two good atlases on $M$. Then, for all $u\in W^{k,p}_{\operatorname{loc}}(M)$ we have 
$$\|u\|_{W^{k,p}_\mathcal A(M)}\leq C(\mathcal A,\mathcal B,k,p)\|u\|_{W^{k,p}_{\mathcal B}(M)}.$$
\end{proposition}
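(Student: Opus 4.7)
The plan is to mirror the proof of Lemma \ref{SchauderCh_EquivalenceofHolderSpaceDefinition}, replacing the pointwise Hölder estimates by $L^p$ change-of-variables estimates. The argument is actually slightly cleaner than the Hölder case, because the role of Lemma \ref{SchauderCh_SemiNormSubbadditiveonConvexSpaces} is played by the trivial subadditivity $\|f\|_{L^p(V)} \leq \sum_l \|f\|_{L^p(S_l)}$, valid whenever $V \subset \bigcup_l S_l$. In particular, no convexity of the covering sets is needed.

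First, I would fix a good chart $\varphi : U \to V$ in $\mathcal A$ and a good chart $\hat\varphi : \hat U \to \hat V$ in $\mathcal B$ with $U \cap \hat U \neq \emptyset$, and set $\psi := \hat\varphi \circ \varphi^{-1}$ on $\varphi(U \cap \hat U)$. Because both atlases are good, $\varphi$ and $\hat\varphi$ extend smoothly to strictly larger open sets $\tilde U \supset\supset U$ and $\tilde{\hat U} \supset\supset \hat U$, so $\psi$ extends to a smooth diffeomorphism on an open neighbourhood of $\overline{\varphi(U \cap \hat U)}$. By compactness, $\psi$, $\psi^{-1}$, their derivatives of every order and $|\det D\psi|^{\pm 1}$ are bounded by constants depending only on $\mathcal A$ and $\mathcal B$. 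The same chain-rule identity \eqref{SchauderCh_HolderSpaceEstimate_GoodAtlas0}, now applied to weak derivatives, gives, for every multi-index $\alpha$ with $|\alpha| \leq k$,
\begin{equation*}
\nabla_\alpha(u \circ \varphi^{-1})(x) = \sum_{|\beta| \leq |\alpha|} c_\beta(\psi)(x)\, \nabla_\beta(u \circ \hat\varphi^{-1})(\psi(x))
\end{equation*}
for almost every $x \in \varphi(U \cap \hat U)$, with smooth, uniformly bounded coefficients $c_\beta(\psi)$. Applying the change of variables $y = \psi(x)$ yields, for any measurable $S \subset \varphi(U \cap \hat U)$,
\begin{equation*}
\|\nabla_\alpha(u \circ \varphi^{-1})\|_{L^p(S)} \leq C(\mathcal A, \mathcal B, k, p)\, \|u \circ \hat\varphi^{-1}\|_{W^{k, p}(\psi(S))}.
\end{equation*}

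Second, a covering argument globalises the estimate. Writing $\mathcal B = \{(\hat\varphi_j, \hat U_j)\ |\ 1 \leq j \leq \hat m\}$, the sets $\hat U_j$ cover $\bar U$ (being finite in number and covering $M$). Setting $S_j := \varphi(U \cap \hat U_j)$, we have $V = \bigcup_j S_j$ with at most $\hat m$ sets, and combining the local bound above with $\|f\|_{L^p(V)} \leq \sum_j \|f\|_{L^p(S_j)}$ and summing over multi-indices $|\alpha| \leq k$ gives
\begin{equation*}
\|u \circ \varphi^{-1}\|_{W^{k, p}(V)} \leq \sum_j \|u \circ \varphi^{-1}\|_{W^{k, p}(S_j)} \leq C(\mathcal A, \mathcal B, k, p)\, \|u\|_{W^{k, p}_{\mathcal B}(M)}.
\end{equation*}
Summing over the finitely many charts of $\mathcal A$ completes the proof.

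The only point requiring any care is ensuring that the constants arising from the transition maps $\psi$ (their derivatives and the Jacobian determinant) are uniformly bounded; this is precisely the purpose of the \emph{good atlas} assumption, which provides smooth extensions to strictly larger open sets so that $\overline{\varphi(U\cap\hat U)}$ lies in the interior of the smoothness domain of $\psi$. Boundary charts present no extra difficulty, since in that case the Sobolev norm is by definition interpreted on the intersection with the open upper half-space (as in the remarks following Definition of $W^{k,p}_{\operatorname{loc}}(M)$), and the transition maps respect the half-space structure so the change of variables goes through unchanged.
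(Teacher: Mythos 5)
Your proposal is correct and is precisely the argument the paper intends: the paper supplies no independent proof of this proposition, remarking only that it is ``similar to the proof of Lemma \ref{SchauderCh_EquivalenceofHolderSpaceDefinition},'' and your write-up carries that out faithfully. Your observation that the convexity hypothesis used in the Hölder-seminorm covering lemma becomes unnecessary here, because $\|f\|_{L^p(V)}\leq\sum_l\|f\|_{L^p(S_l)}$ holds for arbitrary covers, is a correct and genuine simplification over the Hölder case.
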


It is easy to see that the usual properties of Sobolev spaces, e.g. their completeness, carry over to the manifold setting. Also, we often use the following embedding theorems. For a proof in $\R^n$, we refer to Evan's book \cite{evans} (see Chapter 5, Section 7, Theorem 1 with the subsequent remark for Rellich's theorem and Chapter 5, Section 6 Theorem 4 for Morrey's inequality).
\begin{proposition}[Sobolev Embedding Theorem]\ \\
Let $M$ be a  compact, $n$-dimensional manifold, possibly with boundary. The following hold:
\begin{enumerate}[(1)]
    \item \emph{(Rellich's Theorem)}  The inclusion $W^{1,p}(M)\hookrightarrow L^p(M)$ is compact for all $1\leq p <\infty$.
    \item \emph{(Morrey's Inequality)} If $r\in\N_0$, $\alpha\in(0,1)$ and $k-\frac np=r+\alpha$, then the inclusion $W^{k,p}(M)\hookrightarrow C^{r,\alpha}(M)$ is continuous.
\end{enumerate}
\end{proposition}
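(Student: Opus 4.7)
The strategy is to reduce both statements to the known Euclidean Sobolev embeddings via a good atlas. Fix a good atlas $\mathcal{A}=\{(\varphi_i, U_i)\mid 1\le i\le m\}$ with images $V_i=\varphi_i(U_i)$. Each $V_i$ is open, bounded and convex, and hence a Lipschitz domain (with possibly a flat boundary piece if $(\varphi_i,U_i)$ is a boundary chart), so the classical Rellich theorem and Morrey inequality stated in Evans apply on each $V_i$. Moreover, the good-atlas definition provides a smooth extension $\tilde\varphi_i:\tilde U_i\to\tilde V_i$ with $V_i\subset\subset\tilde V_i$, which can be used as an intermediate domain if extra regularity of the boundary is convenient.

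\textbf{Morrey (part (2)).} Given $u\in W^{k,p}(M)$, the chart representations $u_i:=u\circ\varphi_i^{-1}$ lie in $W^{k,p}(V_i)$, and by Definition \ref{SchauderCh_SobolevNormDefGoodTalas} we have $\sum_{i=1}^m\|u_i\|_{W^{k,p}(V_i)}=\|u\|_{W^{k,p}_\mathcal{A}(M)}$. Applying the Euclidean Morrey inequality to each $u_i$ (on the Lipschitz domain $V_i$) gives
\[
\|u_i\|_{C^{r,\alpha}(V_i)}\le C(V_i,k,p)\,\|u_i\|_{W^{k,p}(V_i)}.
\]
Taking the maximum over $1\le i\le m$ and invoking Definition \ref{SchauderCh_Manifold_Elliptic_Def} (whose atlas-independence is guaranteed by Lemma \ref{SchauderCh_EquivalenceofHolderSpaceDefinition}) yields the claimed continuous embedding $W^{k,p}(M)\hookrightarrow C^{r,\alpha}(M)$.

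\textbf{Rellich (part (1)).} Let $(u_\ell)_{\ell\in\N}\subset W^{1,p}(M)$ be a bounded sequence. For each chart index $i$, $(u_\ell\circ\varphi_i^{-1})$ is bounded in $W^{1,p}(V_i)$, so the Euclidean Rellich theorem extracts a subsequence converging in $L^p(V_i)$. Since the atlas is finite, performing the extraction successively for $i=1,\dots,m$ (a finite diagonal argument) produces one subsequence $(u_{\ell_j})$ such that $(u_{\ell_j}\circ\varphi_i^{-1})$ converges in $L^p(V_i)$ for every $i$. By the definition of the $L^p(M)$ norm,
\[
\|u_{\ell_j}-u_{\ell_k}\|_{L^p(M)}=\sum_{i=1}^m\|u_{\ell_j}\circ\varphi_i^{-1}-u_{\ell_k}\circ\varphi_i^{-1}\|_{L^p(V_i)}\longrightarrow 0\quad\text{as }j,k\to\infty,
\]
so $(u_{\ell_j})$ is Cauchy in $L^p(M)$ and converges there by completeness.

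\textbf{Main obstacle.} The only delicate point is to certify that the Euclidean embeddings apply to each $V_i$ — in particular in the boundary-chart case where $V_i\subset\R^{n-1}\times[0,\infty)$. Convexity of $V_i$ makes it a Lipschitz domain, for which the proofs in Evans go through verbatim; alternatively, one can exploit the smooth extension $\tilde V_i\supset\supset V_i$ furnished by the good-atlas definition, apply the Euclidean result there (after extending $u_i$ to $\tilde V_i$ by a standard Sobolev extension), and then restrict back to $V_i$. Everything else is bookkeeping and the finiteness of the atlas.
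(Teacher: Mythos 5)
The paper does not actually prove this proposition: it states it, cites Evans for the Euclidean versions of Rellich and Morrey, and leaves the lift to $M$ to the reader. Your chart-by-chart reduction is exactly the standard argument the paper tacitly expects, and it is correct: the norm $\|u\|_{W^{k,p}_{\mathcal A}(M)}$ from Definition~\ref{SchauderCh_SobolevNormDefGoodTalas} is by construction a finite sum of Euclidean Sobolev norms over the chart images, the Euclidean embeddings apply on each convex bounded $V_i$ (and on $V_i^+$ in the boundary-chart case, which is again convex and hence Lipschitz), and the finiteness of the atlas makes the successive subsequence extraction in the Rellich part immediate and justifies taking the maximum in the Morrey part against the definition of $C^{r,\alpha}_{\mathcal A}(M)$. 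One small simplification to your ``main obstacle'' discussion: you do not need to invoke a Sobolev extension operator to pass to $\tilde V_i$. The good-atlas definition supplies an extended chart $\tilde\varphi_i:\tilde U_i\to\tilde V_i$, and the definition of $W^{k,p}_{\operatorname{loc}}(M)$ already yields $u\circ\tilde\varphi_i^{-1}\in W^{k,p}(V_i')$ for any intermediate convex $V_i\subset\subset V_i'\subset\subset\tilde V_i$; its restriction to $V_i$ is your $u_i$, so you can apply the Euclidean results on the nicer intermediate domain and restrict, bypassing any regularity discussion of $\partial V_i$ entirely (at the cost of invoking Proposition~\ref{SchauderCh_GoodatlasSobolevNorm} to compare the norms of the two good atlases).
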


Another result that carries over from $\R^n$ is the trace theorem. For $\R^n$, this result can be found in Evan's book \cite{evans} (see Chapter 5, Section 5, Theorem 1). 
\begin{proposition}[Trace Operator]\label{SchauderCh_TraceTheoremStandard}
Let $M$ be a compact manifold with boundary $\partial M\neq\emptyset$ and $p\in[1,\infty)$. There exists a linear and continuous map $T:W^{1,p}(M)\rightarrow L^p(\partial M)$ such that $Tu=u|_{\partial M}$ for $u\in C^0(M)$.
\end{proposition}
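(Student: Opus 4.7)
The plan is to reduce the statement to the Euclidean trace theorem (which we quote from Evans) via a good atlas and a subordinate partition of unity. Pick a good atlas $\mathcal{A}=\{(\varphi_i,U_i)\}_{i=1}^m$ on $M$, and let $\mathcal{B}\subset\{1,\dots,m\}$ index the boundary charts (those with $U_i\cap\partial M\neq\emptyset$). Choose a smooth partition of unity $\{\eta_i\}_{i=1}^m$ subordinate to $\{U_i\}$ with $\mathrm{supp}(\eta_i)\subset\subset U_i$. Since $\partial M\subset\bigcup_{i\in\mathcal{B}}U_i$, only the boundary charts contribute on the boundary.

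For each $i\in\mathcal{B}$, set $V_i=\varphi_i(U_i)\subset\R^{n-1}\times[0,\infty)$ and $\Gamma_i=V_i\cap(\R^{n-1}\times\{0\})$. Given $u\in W^{1,p}(M)$, define $u_i:=(\eta_i u)\circ\varphi_i^{-1}\in W^{1,p}(V_i)$, which has compact support in the relative topology of $V_i$. By the Euclidean trace theorem (Proposition \ref{SchauderCh_TraceTheoremStandard} on $V_i$, as guaranteed for Lipschitz/smooth domains in Evans), there exists a continuous linear trace $T_i:W^{1,p}(V_i)\to L^p(\Gamma_i)$ satisfying $\|T_i u_i\|_{L^p(\Gamma_i)}\leq C_i\|u_i\|_{W^{1,p}(V_i)}$. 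Now pull back to $\partial M$ via $\varphi_i|_{U_i\cap\partial M}$ and set
\begin{equation}\nonumber
Tu:=\sum_{i\in\mathcal{B}} (T_i u_i)\circ\varphi_i\big|_{U_i\cap\partial M}\in L^p(\partial M),
\end{equation}
where each summand is extended by zero outside $U_i\cap\partial M$ (which is legitimate since $T_i u_i$ has compact support in $\Gamma_i$). Linearity is immediate, and continuity follows by combining the Euclidean trace estimates with the Sobolev norm on $M$ given by Definition~\ref{SchauderCh_SobolevNormDefGoodTalas}: since $\eta_i$ and its first derivatives are bounded, $\|u_i\|_{W^{1,p}(V_i)}\leq C(\eta_i,\varphi_i)\|u\|_{W^{1,p}(M)}$, so altogether $\|Tu\|_{L^p(\partial M)}\leq C(\mathcal{A},\{\eta_i\},p)\|u\|_{W^{1,p}(M)}$.

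To verify that $Tu=u|_{\partial M}$ whenever $u\in C^0(M)\cap W^{1,p}(M)$ (which is the content intended by the statement, since otherwise $u|_{\partial M}$ need not make sense), observe that for continuous $u_i$ on $V_i$ the Euclidean trace $T_i u_i$ coincides with the pointwise restriction $u_i|_{\Gamma_i}$ (this is part of the standard trace theorem). Pulling back, $(T_i u_i)\circ\varphi_i=(\eta_i u)|_{U_i\cap\partial M}$, and summing over $i\in\mathcal{B}$ gives $Tu=\sum_i\eta_i u\big|_{\partial M}=u|_{\partial M}$ since $\sum_i\eta_i\equiv 1$ on $M$.

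The main technical point to double-check is that $Tu$ is independent of the choices of good atlas and partition of unity. The cleanest way is to argue by density: $C^\infty(M)\cap W^{1,p}(M)$ is dense in $W^{1,p}(M)$ (this follows by standard mollification in each chart combined with the partition of unity), and on smooth functions the two constructions agree because both reduce to pointwise restriction. Continuity of $T$ in the $W^{1,p}$-norm then extends the equality from a dense subset, so any two such constructions coincide on all of $W^{1,p}(M)$. This density step, together with care about the relative-topology subtleties at $\partial V_i$ pointed out in the remarks following Definition of $W^{k,p}_{\mathrm{loc}}(M)$, is the only nontrivial part; everything else is a direct transfer of the Euclidean result through a good atlas.
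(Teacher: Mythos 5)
The paper itself gives no proof of this proposition; it simply cites Evans for the Euclidean case (Chapter 5, Section 5, Theorem 1) and asserts that the result ``carries over'' to manifolds. Your proposal supplies the standard transfer argument --- good atlas, subordinate partition of unity, push forward, apply the Euclidean trace theorem chart by chart, pull back, sum --- and it is correct. A couple of small remarks. First, the chart images $V_i$ are bounded and convex but not a priori $C^1$ domains, so Evans's theorem does not directly apply to $V_i$; however, because $\operatorname{supp}\eta_i\subset\subset U_i$, the function $u_i$ has support compactly contained in $V_i$ in the relative topology, so you may extend $u_i$ by zero to a large half-ball $B_R\cap(\R^{n-1}\times[0,\infty))$ and apply the trace theorem there. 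You implicitly lean on this but it is worth flagging, since it is the only point where the geometry of $V_i$ could cause trouble. Second, the independence-of-choices discussion at the end is a nice observation but is not required by the statement, which asks only for the \emph{existence} of one continuous trace operator restricting continuous functions correctly; the density argument you sketch is the right way to prove uniqueness of any such operator, but you could omit it and still have a complete proof of the proposition as stated.
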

As usual, we write $u$ instead of $Tu$.

\subsection{The Weak Gradient}
Let $(M,g)$ be a compact, Riemannian manifold, $\mathcal A=\set{(\varphi_i, U_i)\ |\ 1\leq i\leq m}$ be a good atlas and $u\in W^ {1,p}(M)$. For each $1\leq i\leq m$, we can define the vector field
$$X_i:U_i\rightarrow TU_i,\ X_i(p):=g^ {ab}(\varphi_i(p))\partial_a (u\circ\varphi_i^ {-1})(\varphi_i(p))\frac{\partial}{\partial x^b}\bigg|_p.$$
Given two overlapping chart domains $U_i\cap U_j\neq\emptyset$, it is easy to see that $X_i$ and $X_j$ coincide on $U_i\cap U_j$ apart from perhaps a null set. So, we may define the weak gradient $\nabla u$ globally on $M$ by the local coordinates formula 
$$\nabla_g u(p)=g^ {ab}(\varphi_i(p))\partial_a (u\circ\varphi_i^ {-1})(\varphi_i(p))\frac{\partial}{\partial x^b}\bigg|_p.$$

\begin{proposition}\label{SchauderCh_MetricindeucedNormEquivalent}
Let $(M,g)$ be a compact Riemannian manifold, possibly with boundary. The norms 
$$
\|u\|_{L^p_g(M)}:=\left(\int_M |u|^p d\mu_g\right)^{\frac1p}
\hspace{.5cm}\textrm{and}\hspace{.5cm}
\|u\|_{W^{1,p}_g(M)}:=\left(\int_M |u|^p d\mu_g+\int_M \|\nabla_g u\|_g^p d\mu_g\right)^{\frac1p}
$$
provide norms on $L^p(M)$ and $W^{1,p}(M)$ that are equivalent to the $L^p(M)$- and $W^{1,p}(M)$-norm from Definition \ref{SchauderCh_SobolevSpaceDefinition}. Here $\|\nabla_g u\|_g:=(g(\nabla_g u,\nabla_g u))^ {\frac12}$.
\end{proposition}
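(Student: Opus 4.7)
The plan is to prove both norm equivalences chart by chart, using three key properties of a good atlas: the atlas is finite, each chart extends to a slightly larger domain, and consequently the metric components $g_{ab}$, the inverse $g^{ab}$, and the volume density $\sqrt{\det g_{ab}}$ are all smooth on the compact closure $\bar V_i$. Together with strict positive-definiteness of $g$, this yields constants $0 < c_i \leq C_i < \infty$ such that for all $x \in V_i$ and all $\xi \in \R^n$
$$c_i \leq \sqrt{\det g_{ab}(x)} \leq C_i \quad\text{and}\quad c_i |\xi|^2 \leq g^{ab}(x)\xi_a\xi_b \leq C_i |\xi|^2.$$
Taking maxima/minima over $1 \leq i \leq m$ produces uniform constants; this is the only substantive geometric input.

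\textbf{Step 1 ($L^p$ equivalence).} For the upper bound $\|u\|_{L^p_g(M)} \leq C \|u\|_{L^p_\mathcal A(M)}$, I would cover $M$ by the $U_i$ (overcounting if necessary) and compute in coordinates:
$$\int_M |u|^p d\mu_g \leq \sum_{i=1}^m \int_{U_i} |u|^p d\mu_g = \sum_{i=1}^m \int_{V_i} |u\circ\varphi_i^{-1}|^p \sqrt{\det g_{ab}}\, dx \leq C\sum_{i=1}^m \|u\circ\varphi_i^{-1}\|_{L^p(V_i)}^p.$$
For the reverse bound, each individual chart contributes $\int_{V_i} |u\circ\varphi_i^{-1}|^p dx \leq c^{-1} \int_{U_i} |u|^p d\mu_g \leq c^{-1}\int_M |u|^p d\mu_g$, and summing over the $m$ charts yields $\|u\|_{L^p_\mathcal A(M)}^p \leq C \|u\|_{L^p_g(M)}^p$.

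\textbf{Step 2 ($W^{1,p}$ equivalence).} The $L^p$-part of the norm is already handled, so the task reduces to comparing $\int_M \|\nabla_g u\|_g^p\, d\mu_g$ with $\sum_i \int_{V_i} |\nabla(u\circ\varphi_i^{-1})|^p dx$. In coordinates, $\|\nabla_g u\|_g^2 = g^{ab}\partial_a(u\circ\varphi_i^{-1})\partial_b(u\circ\varphi_i^{-1})$, which by the uniform two-sided bound on $g^{ab}$ satisfies
$$c\,|\nabla(u\circ\varphi_i^{-1})|^2 \leq \|\nabla_g u\|_g^2 \circ \varphi_i^{-1} \leq C\,|\nabla(u\circ\varphi_i^{-1})|^2$$
pointwise on $V_i$. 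Combined with the volume-density bound from Step 1, the same overcounting and per-chart arguments then transfer the equivalence to the integrated quantities.

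\textbf{Step 3 (Equivalence classes).} A small formal point: both norms are defined on the same underlying space $W^{1,p}(M) = W^{1,p}_{\operatorname{loc}}(M)$ by Definition \ref{SchauderCh_SobolevNormDefGoodTalas}, so once the inequalities are established, they automatically identify the spaces. I would close by noting that the weak gradient $\nabla_g u$ is well-defined independently of the chart (as sketched just before the statement), so the metric-induced expressions genuinely define norms on the already-fixed Banach spaces.

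The argument is essentially routine once the compactness-of-$\bar V_i$ reasoning is in place; the main thing to be careful about is that overcounting in chart overlaps only costs a factor depending on $m$, so no partition of unity is strictly required, though one could be introduced to tighten the estimate on the upper bound if desired. I expect no serious obstacle.
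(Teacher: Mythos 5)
The paper states Proposition~\ref{SchauderCh_MetricindeucedNormEquivalent} without supplying a proof, so there is no argument of the paper's to compare against. Your proposal is the natural one and it is correct: the only nontrivial input is that the extension condition $V_i\subset\subset\tilde V_i$ (with $\varphi_i$ extending smoothly to $\tilde V_i$) makes $g_{ab}$, $g^{ab}$ and $\sqrt{\det g_{ab}}$ continuous on the compact set $\bar V_i$, yielding uniform two-sided bounds; after that the chart-by-chart overcounting on the finitely many $U_i$ closes both estimates. One cosmetic remark: the $\mathcal A$-norm is defined as $\sum_i\|u\circ\varphi_i^{-1}\|_{W^{1,p}(V_i)}$ rather than as a $p$-th root of a sum of $p$-th powers, so you implicitly invoke the equivalence of $\ell^1$ and $\ell^p$ norms on $\R^m$ when passing between your per-chart estimates and the norm in Definition~\ref{SchauderCh_SobolevNormDefGoodTalas}; this costs only an $m$-dependent constant and is worth a sentence but is not a gap.
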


For $p=2$, Proposition \ref{SchauderCh_MetricindeucedNormEquivalent} provides us with scalar products on $L^2(M)$ and $W^{1,2}(M)$ that makes these spaces Hilbert spaces. Concretely
$$
\langle u,v\rangle_{L^2_g(M)}:=\int_M uv d\mu_g
\hspace{.5cm}\textrm{and}\hspace{.5cm}
\langle u,v\rangle_{W^{1,2}_g(M)}:=\int_M uv+g(\nabla_g u,\nabla_g v)d\mu_g.
$$

The weak gradient satisfies the following expected property.
\begin{proposition}[Divergence Theorem]
Let $(M,g)$ be a compact Riemannian manifold, possibly with boundary $\partial M$. We denote the induced measure on $M$ by $\mu_g$ and the induced measure on the boundary by $S_g$. For any function $u\in W^{1,p}(M)$ and vector field $Y\in C^1(TM)$ 
\begin{equation}\label{NeumannLaplacianCh_DivergenceTheorem}
\int_M g(\nabla_g u,Y)d\mu_g=-\int_M u\operatorname{div}_g Y d\mu_g+\int_{\partial M}u g(Y,\nu_g)dS_g.
\end{equation}
$\nu_g$ denotes the exterior unit normal along $\partial M$ with respect to $g$ if $\partial M\neq\emptyset$. If $\partial M=\emptyset$, the boundary integral in Equation \eqref{NeumannLaplacianCh_DivergenceTheorem} vanishes.
\end{proposition}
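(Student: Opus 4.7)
The plan is to prove the identity first for $u\in C^1(M)$ and then extend to $u\in W^{1,p}(M)$ by density using the trace theorem.

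\textbf{Step 1 (Smooth case).} For $u\in C^1(M)$ the vector field $uY$ is $C^1$. Applying the standard smooth divergence theorem on $(M,g)$ to $uY$ gives
$$\int_M \operatorname{div}_g(uY)\,d\mu_g=\int_{\partial M} u\,g(Y,\nu_g)\,dS_g,$$
and the Leibniz rule $\operatorname{div}_g(uY)=g(\nabla_g u,Y)+u\operatorname{div}_g Y$ rearranges this into the claim. This reduces the proposition to the smooth divergence theorem, which I take as classical.

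\textbf{Step 2 (Density of smooth functions).} Next I establish that $C^\infty(M)$ is dense in $W^{1,p}(M)$ with respect to the norm from Definition \ref{SchauderCh_SobolevNormDefGoodTalas}. Fix a good atlas $\mathcal{A}=\{(\varphi_i,U_i)\}_{i=1}^m$ and a smooth partition of unity $\{\chi_i\}$ subordinate to $\{U_i\}$. For $u\in W^{1,p}(M)$, each $\chi_i u$ is supported in $U_i$, so $(\chi_i u)\circ\varphi_i^{-1}$ defines an element of $W^{1,p}(V_i)$ with compact support in $V_i$ (extending by zero into the slightly larger convex domain $\tilde V_i$ provided by the good atlas definition). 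For interior charts I mollify in $\tilde V_i$; for boundary charts I first translate into the interior half-space and then mollify, which is the standard Meyers--Serrin/Lipschitz-boundary construction that also preserves convergence of traces in $L^p(\partial M)$. Summing over $i$ and transporting back by $\varphi_i$ yields a sequence $u_k\in C^\infty(M)$ with $u_k\to u$ in $W^{1,p}(M)$ and, by Proposition \ref{SchauderCh_TraceTheoremStandard}, $u_k|_{\partial M}\to u|_{\partial M}$ in $L^p(\partial M)$.

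\textbf{Step 3 (Passage to the limit).} For each $u_k\in C^\infty(M)$ Step 1 yields
$$\int_M g(\nabla_g u_k,Y)\,d\mu_g=-\int_M u_k\operatorname{div}_g Y\,d\mu_g+\int_{\partial M} u_k\,g(Y,\nu_g)\,dS_g.$$
Since $Y\in C^1(TM)$ on the compact manifold $M$, both $\|Y\|_g$ and $|\operatorname{div}_g Y|$ are bounded, so the two volume integrals converge to their limits by Hölder's inequality combined with $u_k\to u$ in $L^p(M)$ and $\nabla_g u_k\to\nabla_g u$ in $L^p(M;T^\ast M)$. The boundary integrand $g(Y,\nu_g)\in C^0(\partial M)$ is bounded, hence $u_k\,g(Y,\nu_g)\to u\,g(Y,\nu_g)$ in $L^1(\partial M)$ by Step 2. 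Taking $k\to\infty$ finishes the proof.

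\textbf{Main obstacle.} The nontrivial point is Step 2: producing globally smooth approximations of $u\in W^{1,p}(M)$ whose boundary traces converge in $L^p(\partial M)$. The good atlas gives convex chart images and smooth extensions $\tilde\varphi_i:\tilde U_i\to\tilde V_i$, but in a boundary chart the image $V_i$ lies in $\R^{n-1}\times[0,\infty)$, so naive mollification is not well-defined up to the boundary. The standard inward-shift-then-mollify argument resolves this, using that the smooth boundary makes $\partial M$ locally the graph of a flat hyperplane in chart coordinates, and the continuity of the trace operator then guarantees that the boundary values converge in $L^p(\partial M)$.
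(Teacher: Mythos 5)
Your proposal is correct and takes essentially the same approach as the paper: establish the identity for $C^1$ (resp.\ smooth) functions via the classical divergence theorem and the Leibniz rule, then pass to general $u\in W^{1,p}(M)$ by density, invoking the continuity of the trace operator from Proposition \ref{SchauderCh_TraceTheoremStandard} to handle the boundary term. The only difference is that you flesh out the density of $C^\infty(M)$ in $W^{1,p}(M)$ (partition of unity, inward shift plus mollification in boundary charts), a step the paper takes for granted.
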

\begin{proof}
    For $u\in C^1(M)$, this result is standard and can, for example, be found in \cite{lee2012smooth} (see Chapter 16, Theorem 16.32). For general $u\in W^{1,p}(M)$, we choose a sequence $(u_k)\subset C^1(M)$ such that $u_k\rightarrow u$ in $W^{1,p}(M)$. Equation \eqref{NeumannLaplacianCh_DivergenceTheorem} holds with $u$ replaced by $u_k$. By the continuity of the trace operator from Proposition \ref{SchauderCh_TraceTheoremStandard}, we can send $k\rightarrow\infty$ and obtain \eqref{NeumannLaplacianCh_DivergenceTheorem} for $u\in W^{1,p}(M)$.
\end{proof}

\begin{lemma}\label{SchauderCh_TraceInequalityLemma}
Let $(M,g)$ be a compact Riemannian manifold with boundary $\partial M\neq\emptyset$. Then, for any $\epsilon>0$ there exists a constant $C(\epsilon)$ such that for all $u\in W^ {1,2}(M)$
$$\|u\|_{L^2_g(\partial M)}^2\leq \epsilon\int_M \|\nabla_g u\|_g^2 d\mu_g+C(\epsilon)\|u\|_{L^2_g(M)}^2.$$
\end{lemma}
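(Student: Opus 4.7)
The plan is to use the divergence theorem from the preceding proposition with a cleverly chosen vector field, followed by Young's inequality.

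The first step is to construct a smooth vector field $Y \in C^\infty(TM)$ whose restriction to $\partial M$ equals the exterior unit normal $\nu$. Choose a good boundary atlas $\{(\varphi_i, U_i)\}$ covering a neighborhood of $\partial M$. On each such chart, the formula from the notation section, namely $\nu_i := -(g^{nn})^{-1/2} g^{ni} \partial_i$, extends smoothly from the boundary to all of $U_i$. Using a partition of unity $\{\chi_i\}$ subordinate to $\{U_i\}$ with $\sum_i \chi_i \equiv 1$ on a neighborhood of $\partial M$, set $Y := \sum_i \chi_i \nu_i$, extended smoothly by zero into the interior. By construction $Y|_{\partial M} = \nu$, and because $M$ is compact there exist constants $C_0, C_1$ with $\|Y\|_{g} \le C_0$ and $|\operatorname{div}_g Y| \le C_1$ on $M$.

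The second step is, for $u \in C^\infty(M)$, to apply the divergence theorem (Equation \eqref{NeumannLaplacianCh_DivergenceTheorem}) with $u$ replaced by $u^2$ (noting $\nabla_g u^2 = 2u\nabla_g u$) and the vector field $Y$ just constructed. Since $g(Y,\nu)|_{\partial M} = g(\nu,\nu) = 1$, this yields
\begin{equation*}
\int_{\partial M} u^2\,dS_g = 2\int_M u\,g(\nabla_g u, Y)\,d\mu_g + \int_M u^2 \operatorname{div}_g Y\,d\mu_g.
\end{equation*}
Applying Young's inequality $2|a||b| \le \epsilon a^2 + \epsilon^{-1} b^2$ with $a = \|\nabla_g u\|_g$ and $b = |u|\,\|Y\|_g$, together with the bounds on $Y$, gives
\begin{equation*}
\int_{\partial M} u^2\,dS_g \le \epsilon \int_M \|\nabla_g u\|_g^2\,d\mu_g + \Bigl(\tfrac{C_0^2}{\epsilon} + C_1\Bigr)\int_M u^2\,d\mu_g,
\end{equation*}
which is the claimed estimate for smooth $u$.

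The final step is to extend to $u \in W^{1,2}(M)$ by density: pick a sequence $u_k \in C^\infty(M)$ converging to $u$ in $W^{1,2}(M)$, note that by the continuity of the trace operator (Proposition \ref{SchauderCh_TraceTheoremStandard}) the traces $u_k|_{\partial M}$ converge to $u|_{\partial M}$ in $L^2(\partial M)$, and pass to the limit in the inequality. The only mildly delicate point is the construction of $Y$; once that is in place, the rest is a routine integration by parts combined with Young's inequality.
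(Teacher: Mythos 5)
Your proof is correct and follows essentially the same route as the paper's: extend the outward normal to a smooth vector field $\bar\nu$ (which you construct explicitly via local formulas and a partition of unity, whereas the paper simply asserts its existence), apply the divergence theorem to $u^2\bar\nu$ to obtain the identity
$$\int_{\partial M}u^2\,dS_g=\int_M u^2\operatorname{div}_g\bar\nu\,d\mu_g+2\int_M u\,g(\bar\nu,\nabla_g u)\,d\mu_g,$$
and then conclude with Young's inequality and density. The only cosmetic difference is the order of the last two steps (you apply Young first and then take the $W^{1,2}$ limit; the paper takes the limit first and then applies Young), which makes no difference.
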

\begin{proof}
Let $\nu$ denote the exterior unity normal along $\partial M$ and let $\bar\nu$ denote a smooth vector field on $M$ such that $\bar\nu|_{\partial M}=\nu$. Given any smooth function $u\in C^\infty(M)$ we have 
$$\int_{\partial M} u^2dS_g=\int_{\partial M}g(u^2\bar\nu,\nu)dS_g=\int_{M}\operatorname{div}_g(u^2\bar\nu)d\mu_g=\int_M u^2\operatorname{div}_g\bar\nu+2ug(\bar\nu,\nabla_g u)d\mu_g.$$
Consequently, there exists a constant $C(M)$ such that 
\begin{equation}\label{SchauderCh_TraceEstimateEq1}
\int_{\partial M} u^2 dS_g\leq C\left(\int_M u^2 d\mu_g+\int_M |u|\ \|\nabla_g u\|_gd\mu_g\right).
\end{equation}
By density, Estimate \eqref{SchauderCh_TraceEstimateEq1} is true for all $u\in W^{1,2}(M)$. The lemma follows by employing Young's inequality. 
\end{proof}

\chapter{The Neumann Laplacian on Riemannian Manifolds}\label{SchauderCh_Ch02NeumannLaplacian}
Throughout this chapter, let $(M,g)$ be a smooth compact connected Riemannian manifold, potentially with boundary. We denote the Laplace-Beltrami operator by $\Delta_g$. In local coordinates 
$$\Delta_g u=g^{ij}\left(\partial_{ij} u-\Gamma^k_{\ ij}\partial_k u\right).$$
If $\partial M\neq\emptyset$, we denote the exterior unit normal along $\partial M$ by $\nu$.\\

This chapter aims to establish the spectral decomposition of the Neumann Laplacian on $M$.

\section{Existence of Weak Eigenfunctions}
We consider the elliptic equation
\begin{equation}\label{SchauderCh_WeakProblemMotivation}
\left\{
\begin{aligned}
    -\Delta_g u&=f\hspace{.5cm}\textrm{in $M$},\\
    \frac{\partial u}{\partial\nu}&=h\hspace{.5cm}\textrm{along $\partial M$}.
\end{aligned}
\right.
\end{equation}
We assume that for given $f$ and $h$, we have a solution $u$ to Problem \eqref{SchauderCh_WeakProblemMotivation} and that $u$, $f$ and $h$ are all sufficiently smooth. For $\varphi\in C^\infty(M)$, we then compute 
\begin{align*} 
\int_M f\varphi d\mu_g&=-\int_M \Delta_g u \varphi d\mu_g=\int_M g(\nabla_g u,\nabla_g \varphi)d\mu_g-\int_{\partial M}\frac{\partial u}{\partial\nu}\varphi dS_g\\
&=
\int_M g(\nabla_g u,\nabla_g \varphi)d\mu_g-\int_{\partial M}h\varphi dS_g.
\end{align*}
This formula makes sense for $u,\varphi\in W^{1,2}(M)$, $f\in L^2(M)$ and $h\in L^2(\partial M)$. So, given $f\in L^2(M)$ and $h\in L^2(\partial M)$, we say that $u\in W^{1,2}(M)$ is a weak solution to \eqref{SchauderCh_WeakProblemMotivation}, if for all $\varphi\in W^{1,2}(M)$ 
\begin{equation}\label{SchauderCh_WeakSolutionDefinitionEq}
\int_M g(\nabla_g u,\nabla_g \varphi)d\mu_g=\int_M f\varphi d\mu_g+\int_{\partial M}h\varphi dS_g.
\end{equation}
\begin{lemma}[Poincaré Inequality]\label{SchauderCh_PoincareInequality}
There exists a constant $\Lambda\in(0,1)$ such that for all $u\in W^{1,2}(M)$ with $\int_M ud\mu_g=0$ we have 
$$\int_M \|\nabla u_g\|_g^2d\mu_g\geq\Lambda\int_M u^2d\mu_g.$$
\end{lemma}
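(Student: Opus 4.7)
The natural approach is a contradiction/compactness argument using Rellich's theorem, following the standard Euclidean pattern but invoking connectedness of $M$ at the key step.

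Suppose the inequality fails. Then for every $k \in \N$ there exists $u_k \in W^{1,2}(M)$ with $\int_M u_k \, d\mu_g = 0$ and
\[
\int_M \|\nabla_g u_k\|_g^2 \, d\mu_g < \tfrac{1}{k} \int_M u_k^2 \, d\mu_g.
\]
Since scaling preserves both the zero-mean condition and the ratio of the two integrals, I may normalize so that $\|u_k\|_{L^2_g(M)} = 1$. Then $\|\nabla_g u_k\|_{L^2_g(M)} \to 0$, so in particular $(u_k)$ is bounded in $W^{1,2}(M)$.

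By Rellich's theorem (the compact embedding $W^{1,2}(M) \hookrightarrow\hookrightarrow L^2(M)$ stated earlier in the excerpt), I can pass to a subsequence, still labeled $u_k$, such that $u_k \to u$ in $L^2_g(M)$ for some $u \in L^2_g(M)$. Since $\nabla_g u_k \to 0$ in $L^2_g(M)$, the sequence $(u_k)$ is actually Cauchy in $W^{1,2}(M)$, so $u \in W^{1,2}(M)$ with $\nabla_g u = 0$ almost everywhere. Passing to the limit in $\int_M u_k \, d\mu_g = 0$ gives $\int_M u \, d\mu_g = 0$, while $\|u\|_{L^2_g(M)} = \lim_k \|u_k\|_{L^2_g(M)} = 1$.

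The remaining step, and the only genuinely non-routine point, is to conclude that $u$ is constant on $M$. The plan is to work chart by chart in a good atlas: in each chart $\varphi_i \colon U_i \to V_i$ with $V_i$ convex, the pullback $u \circ \varphi_i^{-1}$ lies in $W^{1,2}(V_i)$ and has weak Euclidean gradient equal to zero (this follows from the local-coordinates formula for $\nabla_g$ together with invertibility of $g^{ab}$). By the standard Euclidean fact that a $W^{1,2}$-function with zero weak gradient on a connected open set is a.e.\ constant, $u$ is constant on each $U_i$. Using connectedness of $M$ and a chain of overlapping chart domains, these local constants agree, so $u$ is a.e.\ equal to a single constant $c$ on $M$. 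The zero-mean condition then forces $c = 0$, contradicting $\|u\|_{L^2_g(M)} = 1$. This completes the proof.
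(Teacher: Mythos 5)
Your proposal is correct and follows essentially the same compactness-by-contradiction route as the paper: normalize a hypothetical bad sequence, extract an $L^2$-convergent subsequence via Rellich, show the limit has vanishing weak gradient and hence is locally (and by connectedness, globally) constant, then derive a contradiction from the zero-mean condition. The only cosmetic difference is that you obtain $u \in W^{1,2}(M)$ with $\nabla_g u = 0$ by observing the subsequence is Cauchy in $W^{1,2}$, whereas the paper passes to a weakly convergent subsequence in $W^{1,2}$ — both are valid and equivalent here.
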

\begin{proof}
If this was not the case, we could find a sequence $(u_m)\subset W^{1,2}(M)$ that satisfies  $\int_M u_m d\mu_g=0$ and 
$$\int_M \|\nabla_g u_m\|_g^2 d\mu_g<\frac1m\int_M u_m^2 d\mu_g.$$
We put $v_m:=\|u_m\|_{L^2_g(M)}^{-1}u_m$ and obtain 
$$\int_M \|\nabla_g v_m\|_g^2 d\mu_g<\frac1m,
\hspace{.5cm}
\int_M v_m d\mu_g=0
\hspace{.5cm}\textrm{and}\hspace{.5cm} \|v_m\|_{L^2_g(M)}^2=1.$$
By Proposition \ref{SchauderCh_MetricindeucedNormEquivalent}, $(v_m)\subset W^{1,2}(M)$ is bounded. So, there exists a function $v^*\in W^{1,2}(M)$ such that $v_m\rightarrow v^*$ weakly in $W^{1,2}(M)$ and strongly in $L^2(M)$ along a subsequence which we again denote by $v_m$. As $\|\nabla_g v_m\|_{L^2_g(M)}^2<\frac1m$ we learn $\nabla v^*\equiv 0$. In local coordinates 
$$0=\nabla_g v^*= g^{ij}\partial_i (v^*\circ\phi^{-1})\frac{\partial}{\partial x^j}.$$
So $v^*$ is locally and by connectedness, therefore also globally constant, i.e. $v^*\equiv c$ for some $c\in\R$. Since 
$$|M|c=\int_M v^*d\mu_g=\lim_{m\rightarrow \infty}\int_M v_md\mu_g=0,$$
$c=0$. However, since $v_m\rightarrow v^*$ strongly in $L^2(M)$ and $\|v_m\|_{L^2_g(M)}^2\equiv 1$ we also get $\|v^*\|_{L^2_g(M)}^2=1$. This is a contradiction and so the lemma follows. 
\end{proof}

Next, we establish the existence of weak eigenfunctions.

\begin{lemma}\label{SchauderCh_ExistenceofNeumannEFLemma}
There exists $0=\lambda_0<\lambda_1\leq...\leq\lambda_k\rightarrow\infty$ and pairwise $L^2_g(M)$-orthonormal and $W^{1,2}_g(M)$-orthogonal functions $\phi_k\in W^{1,2}(M)$ that satisfy weakly 
\begin{equation}\label{SchauderCh_EigenFunctionEquation}
\left\{
\begin{array}{rll}
    \displaystyle-\Delta_g \phi_k&\displaystyle=\lambda_k\phi_k&\displaystyle\hspace{.5cm}\textrm{in $M$},\vspace{.1cm}\\   
    \displaystyle\frac{\partial \phi_k}{\partial\nu}&\displaystyle=0&\displaystyle\hspace{.5cm}\textrm{along $\partial M$}.
\end{array}
\right.
\end{equation}
The span of $\phi_k$ is dense in $L^2(M)$ and $W^{1,2}(M)$. Additionally, $\phi_0=|M|^{-\frac12}$. 
\end{lemma}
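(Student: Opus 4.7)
The plan is to realize the Neumann eigenvalue problem as the spectral decomposition of a compact self-adjoint operator on $L^2_g(M)$, obtained by inverting the bilinear form associated to the problem.

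First, I would fix the symmetric bilinear form $a(u,v) := \int_M uv + g(\nabla_g u, \nabla_g v)\, d\mu_g$ on $W^{1,2}(M)$. By Proposition \ref{SchauderCh_MetricindeucedNormEquivalent}, $a$ is equivalent to the usual $W^{1,2}$ inner product, so $(W^{1,2}(M), a)$ is a Hilbert space. For each $f \in L^2_g(M)$, the functional $v \mapsto \int_M fv\, d\mu_g$ is continuous on this Hilbert space (by Cauchy–Schwarz and $L^2 \subset W^{1,2}$-dual), so the Riesz representation theorem yields a unique $Tf \in W^{1,2}(M)$ with $a(Tf, v) = \int_M fv\, d\mu_g$ for all $v \in W^{1,2}(M)$. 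This defines $T : L^2_g(M) \to W^{1,2}(M)$, and post-composing with the inclusion, which is compact by Rellich's theorem, gives a compact operator $T : L^2_g(M) \to L^2_g(M)$. A direct computation using $a(Tf, Tg) = \int f\,Tg\, d\mu_g = \int g\,Tf\, d\mu_g = a(Tg, Tf)$ shows that $T$ is self-adjoint and positive, and if $Tf = 0$ then testing with $v=1$ gives $f=0$, so $\ker T = 0$.

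Next, I apply the spectral theorem for compact self-adjoint operators: there is an $L^2_g$-orthonormal basis $(\phi_k)_{k\geq 0}$ of $L^2_g(M)$ with $T\phi_k = \mu_k \phi_k$, $\mu_k > 0$, $\mu_k \to 0$. Since $\phi_k = \mu_k^{-1} T\phi_k \in W^{1,2}(M)$, the identity $a(\mu_k\phi_k, v) = \int \phi_k v\, d\mu_g$ rewrites as
\begin{equation*}
\int_M g(\nabla_g \phi_k, \nabla_g v)\, d\mu_g = \lambda_k \int_M \phi_k v\, d\mu_g,\qquad \lambda_k := \mu_k^{-1} - 1,
\end{equation*}
for all $v \in W^{1,2}(M)$, which is exactly the weak formulation of \eqref{SchauderCh_EigenFunctionEquation}. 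Testing against $v = \phi_k$ gives $\lambda_k = \int_M \|\nabla_g \phi_k\|_g^2\, d\mu_g \geq 0$, so $\lambda_k \to \infty$. After arranging the $\lambda_k$ in increasing order, $W^{1,2}$-orthogonality follows from the eigenvalue equation: $a(\phi_k, \phi_j) = (1+\lambda_k)\delta_{kj}$ when $\lambda_k = \lambda_j$, and for distinct eigenvalues it is immediate from $L^2_g$-orthogonality combined with the weak equation.

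To pin down $\lambda_0 = 0$: the constant $|M|^{-1/2}$ is clearly a weak eigenfunction with eigenvalue $0$, and Lemma \ref{SchauderCh_PoincareInequality} shows that any weak solution with $\lambda = 0$ and $\int_M u\, d\mu_g = 0$ must vanish, so the $0$-eigenspace is one-dimensional. Finally, density of $\operatorname{span}\{\phi_k\}$ in $L^2_g(M)$ is the spectral theorem itself; for density in $W^{1,2}(M)$ I take $u \in W^{1,2}(M)$ with $a(u, \phi_k) = 0$ for all $k$ and use the weak eigenvalue equation to deduce $(1+\lambda_k) \int u \phi_k\, d\mu_g = 0$, hence $\int u \phi_k\, d\mu_g = 0$ for every $k$, which by $L^2$-density forces $u = 0$. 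The main obstacle I anticipate is purely bookkeeping — verifying that $T$ really lands in $W^{1,2}(M)$ with the correct functional-analytic properties and that the spectral decomposition translates cleanly into the weak form \eqref{SchauderCh_EigenFunctionEquation}; the Poincaré inequality takes care of the delicate point that $\lambda_0 = 0$ is simple.
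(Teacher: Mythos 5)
Your argument is correct and takes a genuinely different route from the paper. The paper restricts to the subspace $X := \{u \in W^{1,2}(M) : \int_M u\,d\mu_g = 0\}$ and uses the pure Dirichlet form $B[u,v] := \int_M g(\nabla_g u, \nabla_g v)\,d\mu_g$; the Poincar\'e inequality (Lemma \ref{SchauderCh_PoincareInequality}) is then essential to make $B$ coercive on $X$ so that Riesz representation applies, and the constant eigenfunction $\phi_0$ must be adjoined by hand at the end because the resulting compact operator $K$ annihilates the constants. You instead work with the full $W^{1,2}_g$-inner product $a(u,v) = \int_M uv + g(\nabla_g u,\nabla_g v)\,d\mu_g$ on all of $W^{1,2}(M)$, for which coercivity is automatic (no Poincar\'e needed at this stage), the Riesz operator $T$ is injective, and \emph{all} eigenfunctions, including the constant, emerge uniformly from a single application of the spectral theorem via $\lambda_k = \mu_k^{-1} - 1$. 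Poincar\'e enters your proof only once, at the very end, to show that the $\lambda = 0$ eigenspace is one-dimensional. This is arguably cleaner bookkeeping than the paper's, at the cost of the less tidy formula $\lambda_k = \mu_k^{-1} - 1$ in place of the paper's $\lambda_k = \mu_k^{-1}$.

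One small slip in the details: the claim that ``$Tf = 0$ and testing with $v=1$ gives $f=0$'' is not quite right; testing with $v=1$ only yields $\int_M f\,d\mu_g = 0$. The correct argument is that $Tf = 0$ forces $a(Tf,v) = \int_M fv\,d\mu_g = 0$ for \emph{every} $v \in W^{1,2}(M)$, and since $W^{1,2}(M)$ is dense in $L^2_g(M)$ this gives $f = 0$. The conclusion $\ker T = 0$ is correct, so nothing else in the proof is affected.
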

\begin{proof}
We define $X:=\set{u\in W^{1,2}(M)\ |\ \int_M ud\mu_g=0}$ and consider the bilinear form 
$$B:X\times X\rightarrow \R,\ B[u,v]:=\int_M g(\nabla_g u,\nabla_g v) d\mu_g.$$
By Proposition \ref{SchauderCh_MetricindeucedNormEquivalent}, $B$ is continuous and in view of Lemma \ref{SchauderCh_PoincareInequality}, $B$ defines a scalar product on $X$ that induces the $W^{1,2}(M)$-topology. We equip $X$ with $B$ as a scalar product and denote the induced norm by $\|\cdot\|_X$. The Riesz–Fréchet representation theorem implies that for any $f\in L^2(M)$ there exists a unique $u_f\in X$ such that  
$$\langle f, v\rangle_{L^2_g(M)}=B[u_f,v]
\hspace{.5cm}\textrm{for all $v\in X$}.$$
We define $\tilde K:L^2(M)\rightarrow X$ by mapping $f\mapsto u_f$. It is easy to see that $\tilde K$ is linear. Again using Lemma \ref{SchauderCh_PoincareInequality} we estimate
$$\|\tilde K(f)\|_{X}^2=B[\tilde K(f),\tilde K(f)]=\langle f, \tilde K(f)\rangle_{L^2_g(M)}\leq \|f\|_{L^2_g(M)}\|\tilde K(f)\|_{L^2_g(M)}\leq C\|f\|_{L^2_g(M)}\|\tilde K(f)\|_X.$$
So $\tilde K$ is continuous. Let $\imath:X\hookrightarrow L^2(M)$ denote the inclusion. As $\imath$ is compact, the operator $K:=\imath\circ \tilde K:L^2(M)\rightarrow L^2(M)$ is compact. Next, we compute $\ker(K)$. If $K(f)=0$, then $\tilde K(f)=0$ and for any $v\in X$ 
\begin{equation}\label{NeumannChapter_KernelEq}
\langle f, v\rangle_{L^2_g(M)}=B[\tilde K(f),v]=0.
\end{equation}
Let $\varphi\in C^\infty(M)$ and $\sigma:=\fint_M\varphi d\mu_g$ so that $\varphi-\sigma\in X$. Testing Equation \eqref{NeumannChapter_KernelEq} with $\varphi-\sigma$ gives
$$
\int_M\varphi fd\mu_g
=\int_M \sigma fd\mu_g
=\int_Mf\left(\fint_M \varphi d\mu_g\right)d\mu_g
=\int_M\varphi\left(\fint_M fd\mu_g\right)d\mu_g.
$$
As $\varphi\in C^ \infty(M)$ is arbitrary we deduce $f=\fint fd\mu_g$. So $f$ is a constant and hence $\ker(K)\subset \operatorname{span}(1)$. It is readily seen that $K(1)=0$ and hence $\ker(K)= \operatorname{span}(1)$.
Next, for $f,g\in L^2(M)$ we have
$$
\langle K(f), g\rangle_{L^2_g(M)}
=
\langle g, \tilde K(f)\rangle_{L^2_g(M)}
=
B[\tilde K(g),\tilde K(f)]
=
B[\tilde K(f),\tilde K(g)]
=...=
\langle f, K(g)\rangle_{L^2_g(M)}.$$
So $K$ is a self-adjoint compact operator. By the spectral theorem, there exists $\mu_k\in\R\backslash\set0$ with $1\leq k\in\N$ and pairwise $L^2_g(M)$-orthonormal functions $\phi_k\in L^2(M)$ such that $K(\phi_k)=\mu_k \phi_k$ and such that $\operatorname{span}(\phi_k)\oplus\operatorname{span}(1)$ is dense in $L^2(M)$. Note that 
$$
\mu_k
=
\mu_k\|\phi_k\|_{L^2_g(M)}^2
=
\langle  \phi_k,K(\phi_k)\rangle_{L^2_g(M)}
=
B[\tilde K(\phi_k),\tilde K(\phi_k)]\geq 0.
$$
As $\mu_k\neq 0$, we get $\mu_k>0$ for all $k\geq 1$. So, after possibly relabelling the indices, we may assume that $\mu_1\geq \mu_2\geq...$. Finally, again by the spectral theorem, we have $\mu_k\rightarrow 0$ as $k\rightarrow\infty$. We note
$$\phi_k=\frac1{\mu_k}K(\phi_k)=\frac1{\mu_k}i(\tilde K(\phi_k))=\frac1{\mu_k}\tilde K(\phi_k)\in X.$$
Let $\varphi\in C^\infty(M)$ and put $\sigma:=\fint_M \varphi d\mu_g$ so that $\varphi-\sigma\in X$. Using $\phi_k\in X$ and the definition of $B$, we get 
\begin{align}
\int_M \langle \nabla_g \phi_k,\nabla_g\varphi\rangle d\mu_g
=&
B[\phi_k,\varphi-\sigma]
=
\frac1{\mu_k}B[\tilde K(\phi_k),\varphi-\sigma]\nonumber\\
=&
\frac1{\mu_k}\int_M \phi_k\varphi d\mu_g-\frac\sigma{\mu_k}\int_M \phi_k d\mu_g\nonumber\\
=&
\frac1{\mu_k}\int_M \phi_k\varphi d\mu_g.\label{SchauderCh_SpectralTheoremproofExistence}
\end{align}
For $k\geq 1$ we put $\lambda_k:=\mu_k^{-1}$, define $\lambda_0:=0$ and $\phi_0:=|M|^{-\frac12}$. In view of Equation \eqref{SchauderCh_WeakSolutionDefinitionEq}, Equation \eqref{SchauderCh_SpectralTheoremproofExistence} is precisely the definition of stating that for $k\geq1$ the function $\phi_k$ solves \eqref{SchauderCh_EigenFunctionEquation} weakly with eigenvalue $\lambda_k$. As $\phi_0$ is a constant, this is also true for $k=0$. Since $\mu_k\rightarrow 0^+$ we get $\lambda_k\rightarrow\infty$. Given $u\in C^\infty(M)$ we use Equation \eqref{SchauderCh_SpectralTheoremproofExistence} to get
\begin{equation}\label{NeumannChapter_ScalarProductRelations}
\langle u,\phi_k\rangle_{W^{1,2}_g(M)}=\langle u,\phi_k\rangle_{L^2_g(M)}+\int_M g(\nabla u,\nabla\phi_k)d\mu_g=(1+\lambda_k)\langle u,\phi_k\rangle_{L^2_g(M)}.
\end{equation}
After approximation, this identity is also true for $u\in W^{1,2}(M)$. So if  $\langle u,\phi_k\rangle_{W^{1,2}_g(M)}=0$ for some  $u\in W^{1,2}(M)$ and all $k\geq 0$, we deduce $\langle u,\phi_k\rangle_{L^2_g(M)}=0$ for all $k\geq 0$ and thus $u=0$ as $(\phi_k)_k$ is an $L^2_g(M)$-orthonormal basis of $L^2(M)$. Finally, choosing $u=\phi_l$ in Equation \eqref{NeumannChapter_ScalarProductRelations}, we obtain $\langle \phi_k,\phi_l\rangle_{W^{1,2}_g(M)}=(1+\lambda_k)\delta_{kl}$ and hence the functions $\phi_k$ constitute a $W^{1,2}_g(M)$-orthogonal basis of $W^{1,2}(M)$. 
\end{proof}

\section{Elliptic Regularity Theory for the Neumann Problem}
To deduce higher regularity for the eigenfunctions $\phi_k$ from Lemma \ref{SchauderCh_ExistenceofNeumannEFLemma}, we express Equation \eqref{SchauderCh_EigenFunctionEquation} in charts and use elliptic regularity theory for Neumann type problems in $\R^ n$. These come in the form of interior and boundary estimates. Interior estimates are easily found in the literature. The following theorem is, for example, proven in Evans' book (see Theorem 2 in Chapter 6, Section 3, Subsection 1)
\begin{theorem}[Interior Regularity]\label{SchauderCh_NeumannLaplacianINterior}
Let $\Omega\subset\R^n$ be a bounded domain, $m\in\N_0$,  $a^{ij}$, $b^i$, $c\in C^{m+1}(\bar\Omega)$ and $\theta$, $\Lambda>0$ such that $a^{ij}(x)\xi_i\xi_j\geq\theta|\xi|^2$ for all $x\in\Omega$ and $\|a^{ij}\|_{C^{m+1}(\bar\Omega)}$, $\|b^{i}\|_{C^{m+1}(\bar\Omega)}$, $\|c\|_{C^{m+1}(\bar\Omega)}\leq\Lambda$. Let further $f\in W^{m,2}(\Omega)$ and let $u\in W^{1,2}(\Omega)$ be a weak solution of $Lu:=a^{ij}\partial_{ij}u+b^i \partial_i u+cu=f$ in $\Omega$. Then $u\in W^{m+2,2}(\Omega')$ for all $\Omega'\subset\subset\Omega$ and
$$\|u\|_{W^{m+2,2}(\Omega')}\leq C(\Lambda,\theta, \Omega',\Omega)(\|u\|_{L^2(\Omega)}+\|f\|_{W^{m,2}(\Omega)}).$$
\end{theorem}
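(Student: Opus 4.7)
The plan is to follow the classical difference quotient approach (as in Evans, Chapter~6) and to induct on $m$.

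First I would handle the base case $m=0$. Pick nested sets $\Omega' \subset\subset \Omega'' \subset\subset \Omega$ and a cutoff $\zeta \in C_c^\infty(\Omega'')$ with $\zeta \equiv 1$ on $\Omega'$ and $0 \leq \zeta \leq 1$. For $k \in \{1,\dots,n\}$ and $|h| < \operatorname{dist}(\Omega'', \partial\Omega)$, set $D_k^h v(x) := (v(x+he_k) - v(x))/h$. After rewriting $a^{ij}\partial_{ij} u = \partial_j(a^{ij}\partial_i u) - (\partial_j a^{ij})\partial_i u$ and transferring a derivative onto the test function, the weak form of $Lu = f$ reads
$$\int_\Omega a^{ij} \partial_i u \, \partial_j \varphi \, dx = \int_\Omega \bigl[(\partial_j a^{ij} - b^i)\partial_i u - cu + f\bigr]\varphi \, dx$$
for every $\varphi \in W_0^{1,2}(\Omega)$. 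Choosing $\varphi := -D_k^{-h}(\zeta^2 D_k^h u)$, which is admissible since $|h|$ is small, and then applying the discrete Leibniz rule, the identity $\int (D_k^h v) w \, dx = -\int v \, (D_k^{-h} w)\, dx$, and the uniform ellipticity $a^{ij}\xi_i \xi_j \geq \theta|\xi|^2$, one extracts the principal contribution $\theta \int \zeta^2 |D_k^h \nabla u|^2 \, dx$ on the left-hand side. All remaining terms are absorbed into this principal contribution via Young's inequality with $\epsilon$ together with the standard bound $\|D_k^h v\|_{L^2(\Omega'')} \leq \|\nabla v\|_{L^2(\Omega)}$. This yields an $h$-uniform estimate; letting $h \to 0$ gives $\partial_k \nabla u \in L^2(\Omega')$ and hence $u \in W^{2,2}(\Omega')$. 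A preliminary Caccioppoli-type step (testing the equation against $\zeta^2 u$) lets one replace the $W^{1,2}(\Omega)$-norm of $u$ on the right-hand side by $\|u\|_{L^2(\Omega)}$.

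For the inductive step, assume the theorem holds at level $m$ and let $f \in W^{m+1,2}(\Omega)$ with coefficients in $C^{m+2}(\bar\Omega)$. The inductive hypothesis gives $u \in W^{m+2,2}_{\operatorname{loc}}(\Omega)$, so for any $1 \leq k \leq n$ and any $\Omega_1 \subset\subset \Omega$ the function $w := \partial_k u \in W^{m+1,2}(\Omega_1)$ is a weak solution on $\Omega_1$ of
$$Lw = \partial_k f - (\partial_k a^{ij})\partial_{ij} u - (\partial_k b^i)\partial_i u - (\partial_k c)u =: \tilde f.$$
Since multiplication by $C^{m+1}(\bar\Omega_1)$ functions preserves $W^{m,2}(\Omega_1)$, we have $\tilde f \in W^{m,2}(\Omega_1)$. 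Applying the inductive hypothesis to $w$ on nested subdomains $\Omega' \subset\subset \Omega_1 \subset\subset \Omega$ then yields $w \in W^{m+2,2}(\Omega')$, i.e. $u \in W^{m+3,2}(\Omega')$, with the claimed estimate obtained by chaining the inequalities.

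I expect the main obstacle to be the bookkeeping in the base case: one has to carefully track the commutator $[D_k^h, \zeta^2]$, the lower-order contributions from $b^i$, $c$, and $\partial_j a^{ij}$, and the cross terms produced when $\partial_j$ falls on $\zeta^2$ rather than on $D_k^h u$, and then verify that each of these can be absorbed into the principal ellipticity term via Young's inequality with a small enough $\epsilon$. A secondary issue is to keep the nesting $\Omega' \subset\subset \Omega'' \subset\subset \Omega$ consistent under the shift by $h$, so that $\varphi$ remains an admissible $W_0^{1,2}(\Omega)$ test function uniformly for all $|h|$ below a fixed positive distance.
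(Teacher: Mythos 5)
Your proposal is the standard difference–quotient argument from Evans's book, which is precisely what the paper cites for this theorem (the paper states the result without proof and refers to Evans, Chapter~6, Section~3, Theorem~2); the same technique is also carried out in detail in the paper's proof of the boundary analogue, Theorem~\ref{SchauderCh_BoundaryRegularityLemmaStep1}. The sketch is correct, and correctly inducts on $m$ by differentiating the equation once the base-level regularity is in hand. One cosmetic point: after integrating by parts, the weak identity should read $\int_\Omega a^{ij}\partial_i u\,\partial_j\varphi\,dx = \int_\Omega\bigl[(b^i-\partial_j a^{ij})\partial_i u + cu - f\bigr]\varphi\,dx$, i.e.\ the opposite sign on the right-hand side of what you wrote; this does not affect the structure of the proof (the coercive term $\theta\int\zeta^2\lvert D_k^h\nabla u\rvert^2$ still appears with the right sign once the test function's sign is chosen consistently), but it should be corrected if the argument is written out.
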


A similar statement is true for weak solutions near the boundary. A proof is, however, rather difficult to track down in the literature. For the reader's convenience, we present it here. Let us first introduce some notation. We consider a bounded domain $\Omega\subset\R^{n-1}\times [0,\infty)$ and put 
$$
\Omega^+:=\set{x\in\Omega\ |\ x_n>0}
\hspace{.5cm}\textrm{and}\hspace{.5cm}
\partial^+\Omega:=\Omega\cap(\R^{n-1}\times 0).$$
We assume that $\partial^+\Omega\neq \emptyset$ and study the boundary value problem
\begin{equation}\label{SchauderCh_EllipticEqatBoundary}
\left\{
\begin{array}{ l l}
a^{ij}\partial_{ij}u+b^i \partial_i u+cu=f&\textrm{in $\Omega$},\vspace{.1cm}\\
-a^{nj}\partial_j u=g&\textrm{along $\partial^+\Omega$}.
\end{array}
\right.
\end{equation}
To formulate a weak formulation, we consider test functions from the space
$$W^{k,2}_0(\Omega):=\set{\varphi\in W^{k,2}(\Omega^+)\ |\ \exists V\subset\subset\Omega\textrm{ open such that }\varphi\equiv0\textrm{ on }\Omega\backslash V\textrm{ almost everywhere}}.$$
We stress that $V\subset\Omega$ is open in the relative topology, so $\varphi \in W^{k,2}_0(\Omega)$ need not vanish along $\partial^+\Omega$. Putting $\tilde b^ i:=b^ i-\partial_j a^ {ij}$, we say that a function $u\in W^{1,2}(\Omega^ +)$ is a weak solution of \eqref{SchauderCh_EllipticEqatBoundary} if for all $\varphi\in W^ {1,2}_0(\Omega)$
$$
\int_{\Omega^+}-a^{ij}\partial_i u\partial_ j\varphi+\tilde b^i\partial_i u\varphi+cu\varphi=\int_{\Omega^+} f\varphi-\int_{\partial^+\Omega}g\varphi.
$$

The following analysis requires the concept of difference quotients. Given a function $u:\Omega\rightarrow \R$, $x\in\Omega$, $h\in\R$ and $1\leq l\leq n$, we put 
$$\left(D^ h_l u\right)(x):=\frac{u(x+he_l)-u(x)}h\hspace{.5cm}\textrm{when $x+he_l\in\Omega$}.$$
The following facts are readily available in the literature and we refer to Evans' book \cite{evans} for a proof (see Chapter 5, Section 8, Subsection2, Theorem 3).
\begin{lemma}[Properties of Difference Quotients]\label{SchauderCh_DiffrenceQuotLemma}\ \\
    Let $\Omega\subset\R^n$ be a domain, $\Omega'\subset\subset \Omega$ and $1\leq l\leq n$. 
    \begin{enumerate}[(1)]
        \item There exists $h_0(\Omega,\Omega')>0$ such that $D^h_lu\in L^2(\Omega')$ for all $u\in W^{1,2}(\Omega)$ and $0<|h|<h_0$. Additionally $\|D^h_l u\|_{L^2(\Omega')}\leq C\|\partial_l u\|_{L^2(\Omega)}.$
        \item If $u\in L^2(\Omega)$ and $\|D^h_l u\|_{L^2(\Omega')}\leq C<\infty$ for all $0<|h|<h_0$, then $u$ is weakly differentiable on $\Omega'$ with respect to $x_l$ and $\|\partial_l u\|_{L^2(\Omega')}\leq C$.
    \end{enumerate}
\end{lemma}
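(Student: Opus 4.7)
The plan is to handle the two parts in the standard way: part (1) by passing through smooth functions via the fundamental theorem of calculus, and part (2) by weak compactness combined with a discrete integration-by-parts identity. For part (1), I would set $h_0:=\operatorname{dist}(\Omega',\partial\Omega)>0$, which is positive by $\Omega'\subset\subset\Omega$, and first assume $u\in C^\infty(\Omega)\cap W^{1,2}(\Omega)$. For $x\in\Omega'$ and $0<|h|<h_0$ the segment $\{x+te_l\;:\;0\leq t\leq h\}$ lies in $\Omega$, so the fundamental theorem of calculus gives
$$D^h_l u(x)=\frac1h\int_0^h\partial_l u(x+te_l)\,dt.$$
Cauchy--Schwarz bounds $|D^h_l u(x)|^2$ by $\tfrac1h\int_0^h |\partial_l u(x+te_l)|^2 dt$, and integrating over $\Omega'$, applying Fubini, and using translation invariance of Lebesgue measure yields $\|D^h_l u\|_{L^2(\Omega')}^2\leq \|\partial_l u\|_{L^2(\Omega)}^2$. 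Density of $C^\infty(\Omega)\cap W^{1,2}(\Omega)$ in $W^{1,2}(\Omega)$ then extends both the membership $D^h_l u\in L^2(\Omega')$ and the estimate to every $u\in W^{1,2}(\Omega)$.

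For part (2) the idea is to extract a weak limit of the difference quotients and identify it with the weak derivative. Pick a sequence $h_k\to 0$ with $0<|h_k|<h_0$; by hypothesis $\|D^{h_k}_l u\|_{L^2(\Omega')}\leq C$, so by Banach--Alaoglu in the reflexive Hilbert space $L^2(\Omega')$ there is a subsequence (not relabelled) with $D^{h_k}_l u\rightharpoonup v$ weakly in $L^2(\Omega')$, and $\|v\|_{L^2(\Omega')}\leq C$ by weak lower semicontinuity. To show $v=\partial_l u$ weakly, take $\varphi\in C_c^\infty(\Omega')$ and use the discrete integration-by-parts identity
$$\int_{\Omega'}(D^{h_k}_l u)\,\varphi\,dx=-\int_{\Omega'}u\,(D^{-h_k}_l\varphi)\,dx,$$
which follows for $|h_k|$ small (depending on $\operatorname{supp}\varphi$) from the change of variables $x\mapsto x-h_k e_l$ applied to the shifted part of the difference quotient. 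Sending $k\to\infty$, the left-hand side tends to $\int_{\Omega'}v\varphi\,dx$ by weak convergence, and the right-hand side tends to $-\int_{\Omega'}u\,\partial_l\varphi\,dx$ because $D^{-h_k}_l\varphi\to\partial_l\varphi$ uniformly (with compact support inside $\Omega$) while $u\in L^2$. Thus $v$ is the weak $x_l$-derivative of $u$ on $\Omega'$ and satisfies the claimed bound.

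The main technical obstacle is purely bookkeeping: one must verify that all shifted arguments stay inside $\Omega$ so that both the integral representation in part (1) and the translation identity in part (2) make sense. The choice $h_0:=\operatorname{dist}(\Omega',\partial\Omega)$ is what makes all required shifts admissible; once this is arranged the rest is a routine combination of Fubini, Cauchy--Schwarz, weak compactness and density.
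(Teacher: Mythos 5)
Your proof is correct. The paper does not supply its own argument for this lemma but refers to Evans (\cite{evans}, Chapter 5, Section 8.2, Theorem 3), and your write-up is essentially the standard proof found there: fundamental theorem of calculus plus Cauchy--Schwarz, Fubini, and density for part (1), and weak compactness in $L^2$ plus the discrete integration-by-parts identity for part (2).
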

We note that Lemma \ref{SchauderCh_DiffrenceQuotLemma} also holds for domains in $\R^{n-1}\times[0,\infty)$ and $1\leq l\leq n-1$. 

\begin{theorem}[Boundary Regularity]\label{SchauderCh_BoundaryRegularityLemmaStep1}
Let $\Omega\subset\R^{n-1}\times[0,\infty)$ be a bounded domain, $a^{ij}\in C^1(\bar\Omega)$, $b^i$, $c\in C^0(\bar \Omega)$ and $\theta$, $\Lambda>0$ such that  $a^{ij}(x)\xi_i\xi_j\geq\theta|\xi|^2$ for all $x\in\Omega$ and $\|a^{ij}\|_{C^1(\bar\Omega)},$ $\|b^i\|_{C^0(\bar \Omega)}$, $\|c\|_{C^0(\bar \Omega)}\leq\Lambda$. Let further $f\in L^2(\Omega^+)$, $g\in W ^{1,2}(\partial^+\Omega)$ and let $u\in W^{1,2}(\Omega^+)$ be a weak solution of 
$$
\left\{\begin{array}{ l l}
a^{ij}\partial_{ij}u+b^i \partial_i u+cu=f&\textrm{in $\Omega^+$},\vspace{.1cm}\\
-a^{nj}\partial_j u=g&\textrm{along $\partial^+\Omega$}.
\end{array}
\right.$$
Then $u\in W^{2,2}(V^+)$  for all $V\subset\subset \Omega$ and
$$
\|u\|_{W^{2,2}(V^+)}\leq C(\Lambda,\theta, V,\Omega)(\|u\|_{L^2(\Omega^+)}+\|f\|_{L^{2}(\Omega^+)}+\|g\|_{W^{1,2}(\partial^+\Omega)}).
$$
\end{theorem}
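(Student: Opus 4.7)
The plan is to proceed by the classical difference-quotient argument, carefully chosen so that (i) the difference quotients respect the boundary $\partial^+\Omega$ and (ii) the Neumann-type boundary datum $g$ can be absorbed via its $W^{1,2}$-regularity. I first rewrite the PDE in the divergence-form weak formulation
\[
\int_{\Omega^+}\!-a^{ij}\partial_i u\,\partial_j\varphi+\tilde b^i\partial_i u\,\varphi+cu\varphi
=\int_{\Omega^+}\!f\varphi-\int_{\partial^+\Omega}\!g\varphi
\quad\forall\varphi\in W^{1,2}_0(\Omega),
\]
which is equivalent to the stated weak formulation after integrating $a^{ij}\partial_{ij}u$ by parts. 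Next, I fix $V\subset\subset\Omega$ and choose a cutoff $\eta\in C^\infty_c(\Omega)$ with $0\le\eta\le1$ and $\eta\equiv1$ on $V$; crucially $\eta$ is compactly supported in $\Omega$ but need not vanish on $\partial^+\Omega$, so multiplication by $\eta^2$ keeps us in $W^{1,2}_0(\Omega)$.

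The heart of the argument is tangential regularity. For $l\in\{1,\ldots,n-1\}$ and sufficiently small $|h|>0$, I test with $\varphi:=-D^{-h}_l(\eta^2 D^h_l u)\in W^{1,2}_0(\Omega)$; this is admissible because translations in the directions parallel to $\partial^+\Omega$ preserve $\Omega^+$ and $\partial^+\Omega$. Using the discrete integration-by-parts identity $\int\! fD^{-h}_l\psi = -\int\!(D^h_l f)\psi$ (valid on interior sets and on $\partial^+\Omega$ in the tangential direction), the principal term produces
\[
\int_{\Omega^+} a^{ij}\,\partial_i(D^h_l u)\,\partial_j\!\bigl(\eta^2 D^h_l u\bigr)+\text{(commutator terms with $D^h_l a^{ij}$)},
\]
and the boundary integral becomes
\[
-\int_{\partial^+\Omega} g\,D^{-h}_l\!\bigl(\eta^2 D^h_l u\bigr)
=\int_{\partial^+\Omega}(D^h_l g)\,\eta^2 D^h_l u .
\]
Expanding and using the uniform ellipticity $a^{ij}\xi_i\xi_j\ge\theta|\xi|^2$, I bound the good term from below by $\theta\int \eta^2|\nabla D^h_l u|^2$, while the commutator, lower-order, and right-hand side contributions are absorbed by Cauchy–Schwarz and Young's inequality $2ab\le\epsilon a^2+\epsilon^{-1}b^2$, using $\|a^{ij}\|_{C^1}$, $\|b^i\|_{C^0}$, $\|c\|_{C^0}\le\Lambda$. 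The boundary term is controlled by
$\|D^h_l g\|_{L^2(\partial^+\Omega)}\,\|\eta D^h_l u\|_{L^2(\partial^+\Omega)}\le \|g\|_{W^{1,2}(\partial^+\Omega)}\cdot C\|\eta D^h_l u\|_{W^{1,2}(\Omega^+)}$ via Lemma~1.3.14 and the trace inequality (Lemma~\ref{SchauderCh_TraceInequalityLemma}), again absorbing the $\nabla D^h_l u$-contribution into the left-hand side. The result is
\[
\|D^h_l \nabla u\|_{L^2(V^+)}
\le C\bigl(\|u\|_{L^2(\Omega^+)}+\|f\|_{L^2(\Omega^+)}+\|g\|_{W^{1,2}(\partial^+\Omega)}\bigr),
\]
uniformly in $h$. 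By Lemma~\ref{SchauderCh_DiffrenceQuotLemma}(2) (extended to $\R^{n-1}\!\times[0,\infty)$ for tangential directions), this yields $\partial_l\nabla u\in L^2(V^+)$ for $l=1,\ldots,n-1$, covering all second derivatives $\partial_{ij}u$ with $(i,j)\ne(n,n)$.

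Finally, I recover $\partial_{nn}u$ algebraically from the PDE. Taking $\xi=e_n$ in the ellipticity condition gives $a^{nn}\ge\theta>0$, so pointwise almost everywhere
\[
\partial_{nn}u=\frac{1}{a^{nn}}\Bigl(f-\sum_{(i,j)\ne(n,n)}a^{ij}\partial_{ij}u-b^i\partial_i u-cu\Bigr),
\]
which now lies in $L^2(V^+)$ with norm controlled by the same right-hand side; a routine integration-by-parts check confirms this is the weak $\partial_{nn}u$. The anticipated main obstacle is the correct handling of the boundary integral: it is precisely here that $g\in W^{1,2}(\partial^+\Omega)$ (rather than merely $L^2$) is needed, and one must verify that taking $D^h_l$ of $g$ in the tangential direction together with the trace of $\eta D^h_l u$ gives a bound linear in the quantity $\int\eta^2|\nabla D^h_l u|^2$ that can be absorbed. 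All other terms are standard perturbations of the interior case.
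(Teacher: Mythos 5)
Your approach is essentially the same as the paper's: tangential difference quotients with a compactly supported cutoff, discrete integration by parts that transfers $D^{-h}_l$ onto $g$, the trace inequality to control the boundary term, and algebraic recovery of $\partial_{nn}u$ from the PDE using $a^{nn}\ge\theta$. One step is glossed over: the difference-quotient argument, after absorption, bounds $\|D^h_l\nabla u\|_{L^2(V^+)}$ by $\|u\|_{W^{1,2}(W^+)}$ (plus $f$ and $g$ terms) for an intermediate set $W$, not directly by $\|u\|_{L^2(\Omega^+)}$ as you assert; the paper closes this gap with a separate Caccioppoli-type estimate obtained by testing the weak form with $\zeta^2 u$. (Minor: your test function has an extraneous minus sign that is inconsistent with the subsequent display, and the pointwise recovery of $\partial_{nn}u$ uses prior interior regularity on $\{x_n>\epsilon\}$ to justify the identity almost everywhere, which you do not mention.)
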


\begin{proof}
Throughout the proof, we will suppress the dependence of various constants on $\Lambda$ and $\theta$. Let $W\subset\Omega$ such that $V\subset\subset W\subset\subset\Omega$. For any $\varphi\in C^\infty_0(W)$ we have 
\begin{equation}\label{SchauderCh_WeakEquationonTheBoundary}
\int_{\Omega^+}-a^{ij}\partial_i u\partial_ j\varphi+\tilde b^i\partial_i u\varphi+cu\varphi=\int_{\Omega^+} f\varphi-\int_{\partial^+\Omega}g\varphi.
\end{equation}
We choose a domain $\tilde W$ such that $V\subset\subset\tilde W\subset\subset W$, a test function $\zeta\in C^\infty(\Omega)$ such that $\zeta\equiv 1$ on $V$ and $\zeta\equiv 0$ on $\Omega\backslash \tilde W$ and fix $1\leq l\leq n-1$. For small real numbers $|h|\leq h_0(V,W)$ we consider the test function 
$$
\varphi:=D^{-h}_l\left(\zeta^2 D_l^h u\right)
\hspace{.5cm}\textrm{which, for $|h|\leq h_0$, satisfies }\operatorname{supp}\varphi\subset W.
$$
We insert this choice of $\varphi$ into Equation \eqref{SchauderCh_WeakEquationonTheBoundary} and investigate the individual terms by themselves. First, using a discrete integration by parts, we compute 
\begin{align*}
    \int_{\Omega^+}-a^{ij}\partial_i u\partial_ j\varphi =& \int_{\Omega^+}-a^{ij}\partial_i uD^{-h}_l\left[\partial_ j\left(\zeta^2D^h_l u\right)\right]\\
    =&\int_{\Omega^+}D^h_l (a^{ij}\partial_ i u)\zeta^2 D^h_l\left( \partial_j u\right)
    +
    \int_{\Omega^+}D^h_l (a^{ij}\partial_ i u)2\zeta\partial_j\zeta D^h_l u\\
    =&\int_{\Omega^+}a^{ij}(x+he_l)\partial_ i\left(D_l^h u\right)\zeta^2 D^h_l\left( \partial_j u\right)
    +\int_{\Omega^+}\zeta^2D^h_l(a^{ij})\partial_i u D^h_l \left(\partial_ ju\right) \\
    &+2\int_{\Omega^+}D^h_l(a^{ij})\partial_i u\zeta\partial_j\zeta D^h_l u
    +2\int_{\Omega^+}a^{ij}(x+he_l)D^h_l(\partial_i u)\zeta\partial_j \zeta D^h_l u.
\end{align*}
We can lower bound the first term by exploiting the uniform ellipticity of the coefficients $a^{ij}$. To get a lower term for the entire expression, we use the triangle inequality and the regularity of the coefficients to bound $|a^{ij}|+|D^h_l(a^{ij})|\leq C$. So
\begin{align*}
    \int_{\Omega^+}-a^{ij}\partial_iu \partial_ j\varphi 
    \geq &\theta\int_{\Omega^+} |D^h_l\nabla u|^2 \zeta^2 
    -C\int_{\Omega^+}\zeta^2|\nabla u| |D^h_l\nabla u|\\
    &-C\int_{\Omega^+}|\nabla\zeta|\zeta |\nabla u||D^h_l u|
    -C\int_{\Omega^+}\zeta|\nabla\zeta||D^h_l\nabla u| |D^h_l u|.
\end{align*}
Let $\epsilon>0$ be a small parameter. Using Young's inequality, $\operatorname{supp}(\zeta)\subset W$ and $|\nabla\zeta|\leq C(V,W)$ we estimate 
\begin{align}
    \int_{\Omega^+}-a^{ij}\partial_i u\partial_ j\varphi 
    \geq &\theta\int_{\Omega^+} |D^h_l\nabla u|^2 \zeta^2 
    -\epsilon\int_{\Omega^+}\zeta^2 |D^h_l \nabla u|^2-C(\epsilon)\int_{\Omega^+}\zeta^2|\nabla u|^2\nonumber\\
    &-C\int_{\Omega^+}\zeta \left(|\nabla u|^2+|D^h_l u|^2\right)
    -C(\epsilon)\int_{\Omega^+}|\nabla\zeta|^2 |D^h_l u|^2\nonumber\\
    \geq & (\theta-\epsilon)\int_{\Omega^+}\zeta^2 |D^h_l \nabla u|^2-C(V,W,\epsilon)\int_{W^+}|\nabla u|^2\nonumber\\
    & -C(V,W,\epsilon)\int_{\operatorname{supp}(\zeta)^+}|D^h_l u|^2.\label{SchauderCh_EllipticBaseCaseBoundaryEstimateeq1}
\end{align}
For small enough $h$ we may use $\operatorname{supp}(\zeta)\subset W$ and Lemma \ref{SchauderCh_DiffrenceQuotLemma} to estimate
$$\int_{\operatorname{supp}(\zeta)^+}|D^h_l u|^2\leq C\int_{W^+}|\nabla u|^2.$$
Inserting into Estimate \eqref{SchauderCh_EllipticBaseCaseBoundaryEstimateeq1} and choosing $\epsilon=\frac12\theta$ we obtain 
\begin{equation}\label{SchauderCh_BoundaryReglemma1_FirstTerm}
     \int_{\Omega^+}-a^{ij}\partial_i \partial_ j\varphi 
    \geq \frac\theta2\int_{\Omega^+} |D^h_l\nabla u|^2 \zeta^2-C(V,W)\int_{W^+}|\nabla u|^2.
\end{equation}
Using $\operatorname{supp}(\varphi)\subset W$, the continuity of $\tilde b^i$ and $c$ and Young's inequality, we estimate 
\begin{align}
    \left|\int_{\Omega^+}f\varphi-\tilde b^i\partial_i u\varphi-cu\varphi\right|\leq & \epsilon\int_{\Omega^+}\varphi^2 +C(\epsilon)\int_{W^+}|f|^2+|\nabla u|^2+u^2.\label{SchauderCh_EllipticbaseCaseLemmaVarphiL20}
\end{align}
Next, we estimate the $L^2$-norm of $\varphi$. Using $\operatorname{supp}\varphi\subset W$, we may apply Lemma \ref{SchauderCh_DiffrenceQuotLemma} for small enough $h$ and estimate
\begin{align}
    \int_{\Omega^+}\varphi^2  &= \int_{W^+}\left|D^{-h}_l\left(\zeta^2 D_l^h u\right)\right|^2\nonumber\\
     &\leq  C \int_{W^+}\left|\partial_l\left(\zeta^2 D_l^h u\right)\right|^2\nonumber\\
    &\leq  C\int_{W^+}|\zeta\nabla\zeta|^2|D^h_l u|^2+\zeta^2|D^h_l\nabla u|^2\nonumber\\
    &\leq  C(V,W)\left(\int_{W^+}|\nabla u|^2+\int_{W^+}\zeta^2|D^h_l\nabla u|^2\right).\label{SchauderCh_EllipticbaseCaseLemmaVarphiL2}
\end{align}
Combining Estimates \eqref{SchauderCh_EllipticbaseCaseLemmaVarphiL20} and \eqref{SchauderCh_EllipticbaseCaseLemmaVarphiL2}, we obtain
\begin{equation}\label{SchauderCh_BoundaryRegLemma2_SecondTerm}
\left|\int_{\Omega^+}\hspace{-.1cm}f\varphi-\tilde b^i\partial_i u\varphi-cu\varphi\right|\leq C(\epsilon,V,W)\left(\int_{W^+}\hspace{-.1cm}|f|^2+|u|^2+|\nabla u|^2\right)+\epsilon\int_{W^+}\hspace{-.1cm}\zeta^2 |D^h_l\nabla u|^2.
\end{equation}
Finally, we estimate the boundary integral in Equation \eqref{SchauderCh_WeakEquationonTheBoundary}. Since $\partial^+\Omega\subset\R^{n-1}\times 0$ and $1\leq l\leq n-1$, we can move the difference quotient $D^{-h}_l$ from the test function $\varphi$ to $g$. Using $\operatorname{supp}(\varphi)\cap\partial^+\Omega\subset \partial^+W$ and $g\in W^{1,2}(\partial^+\Omega)$ we may then estimate  
\begin{equation}\label{SchauderCh_WeakEquationonTheBoundary29}
\left|\int_{\partial\Omega^+}gD^{-h}_l (\zeta^2 D^h_l u)\right|
=
\left|\int_{\partial\Omega^+}(D^{h}_lg)\zeta^2 D^h_l u\right|
\leq 
C(\epsilon)\int_{\partial^+W}|\nabla_{\R^{n-1}} g|^2+\epsilon\int_{\partial^+\Omega}\zeta^2|D^h_l u|^2.
\end{equation}
Since the trace operator $W^{1,2}(\Omega)\rightarrow L^2(\partial^+\Omega)$ is continuous, we may estimate
\begin{align}
    \int_{\partial\Omega^+}\zeta^2|D^h_l u|^2 \leq &C\int_{\Omega^+}\zeta^2|D^h_l u|^2+\zeta|\nabla\zeta||D^h_l u|^2+\zeta^2|D^h_l u|\ |D^h_l\nabla u |\nonumber\\
    \leq & C(V,W)\left(\int_{\operatorname{supp}(\zeta)^+}|D^h_l u|^2+\int_{\Omega^+}\zeta^2 |D^h_l\nabla u|^2\right).\label{SchauderCh_WeakEquationonTheBoundary40}
\end{align}
Inserting Estimate \eqref{SchauderCh_WeakEquationonTheBoundary40} into Estimate  \eqref{SchauderCh_WeakEquationonTheBoundary29} and using Lemma \ref{SchauderCh_DiffrenceQuotLemma}, we get
\begin{equation}\label{SchauderCh_BoundaryRegLemma1_Estiamte3}
\left|\int_{\partial\Omega^+}gD^{-h}_l (\zeta^2 D^h_l u)\right|
\leq 
C(\epsilon,V,W)\left(\|g\|_{W^{1,2}(\partial^+ W)}^2+\int_{W^+}|\nabla u|^2\right)+\epsilon\int_{\Omega^+}\zeta^2|D^h_l\nabla u|^2.
\end{equation}
Putting Estimates \eqref{SchauderCh_BoundaryReglemma1_FirstTerm}, \eqref{SchauderCh_BoundaryRegLemma2_SecondTerm} and \eqref{SchauderCh_BoundaryRegLemma1_Estiamte3} into Equation \eqref{SchauderCh_WeakEquationonTheBoundary} and choosing $\epsilon(\theta)$ small enough, we obtain 
\begin{equation}\label{SchauderCh_Reglemma1_Estimate4}
\frac\theta4\int_{\Omega^+}\zeta^2|D^h_l\nabla u|^2\leq C(V,W)\left(\|f\|_{L^2(W^+)}^2+\|g\|_{W^{1,2}(\partial^+W)}^2+\|u\|_{W^{1,2}(W^+)}^2\right).
\end{equation}
As this estimate holds uniformly for all small $h$ and $\zeta\equiv 1$ on $V$, we deduce that on $V^+$ the gradient $\nabla u$ is weakly differentiable in $l$-direction and 
\begin{equation}\label{SchauderCh_Reglemma1_Estimate5}
\int_{V^ +}|\partial_l \nabla u|^2\leq C(V,W)\left(\|f\|_{L^2(W^+)}^2+\|u\|_{W^{1,2}(W^+)}^2+\|g\|_{W^{1,2}(\partial^+W)}^2\right).
\end{equation}
Next, for $\epsilon>0$ small we consider $V_\epsilon:=\set{x\in V\ |\ x_n>\epsilon}$. By Theorem \ref{SchauderCh_NeumannLaplacianINterior}, we deduce that $u\in W^{2,2}(V_\epsilon)$ for every small $\epsilon>0$. So, it suffices to derive an estimate for $\|\partial_{ni} u\|_{L^2(V_\epsilon)}$ that is independent of $\epsilon$. First, we note that for $1\leq i\leq n-1$ we have $\|\partial_{ni} u\|_{L^2(V_\epsilon)}\leq \|\partial_i\nabla u\|_{L^2(V^+)}^2$, which is estimated by Estimate \eqref{SchauderCh_Reglemma1_Estimate5}. 
So it remains to establish a uniform estimate for $\|\partial_{nn} u\|_{L^2(V_\epsilon)}$.  As $u\in W^{2,2}(V_\epsilon)$ for all $\epsilon>0$, we deduce that pointwise almost everywhere in $V^+$
$$\partial_{nn} u=\frac1{a^{nn}}\left(f-\sum_{i+j\leq 2n-1}a^{ij}\partial_{ij} u-\sum_{i=1}^ nb^i\partial_i u-cu\right).$$
Again using Estimate \eqref{SchauderCh_Reglemma1_Estimate5}, this implies the uniform estimate
\begin{align*}
    \int_{V_\epsilon}|\partial_{nn} u|^2\leq & C(V,W)\left(\int_{V^+} f^2 +\sum_{i+j\leq 2n-1}|\partial_{ij} u|^2+|\nabla u|^2+u^2\right)\\
    \leq & C(V,W)\left(\|f\|_{L^2(W^+)}^2+\|u\|_{W^{1,2}(W^+)}^2+\|g\|_{W^{1,2}(\partial^+W)}^2\right).
\end{align*}
In total we have shown that $u\in W^{2,2}(V^+)$ and 
\begin{equation}\label{schauderCh_EllipticInductionStartMissing}
\int_{V^+}|\nabla^2 u|^2\leq  C(V,W)\left(\|f\|_{L^2(W^+)}^2+\|u\|_{W^{1,2}(W^+)}^2+\|g\|_{W^{1,2}(\partial^+W)}^2\right).
\end{equation}
This estimate is, however, worse than the one claimed in the theorem. We still need to derive an estimate for $\|u\|_{W^{1,2}(W^+)}$. To do so, let $W\subset\subset Z\subset\subset \Omega$. Using the same arguments as before, we also may prove that $u\in W^{2,2}(W^+)$. Now let $\zeta\in C^\infty_0(\Omega, [0,1])$ be another cutoff function such that $\zeta\equiv 1$ on $W$ and $\zeta\equiv 0$ outside of $Z$. Taking $\varphi=\zeta^2 u$ in Equation \eqref{SchauderCh_WeakEquationonTheBoundary}, we get 
\begin{equation}\label{SchauderCh_Reglemma1_Estimate6}
\int_{\Omega^+}a^{ij}\partial_ iu\partial_j (\zeta^2 u)
=\int_{\Omega^+}(\tilde b^i \partial_i u+cu-f)\zeta^2 u+\int_{\partial^+\Omega} g\zeta^2u.
\end{equation}
We first estimate the term on the left-hand side by writing
\begin{align}
    \int_{\Omega^+}a^{ij}\partial_i u\partial_j(\zeta^2 u)=&\int_{\Omega +}\zeta^2 a^{ij}\partial_i u\partial_j u+2\zeta a^{ij}\partial_i u\partial_j\zeta u\nonumber\\
    \geq & \theta\int_{\Omega^+}\zeta^2|\nabla u|^2-C(W,Z)\int_{Z^+}\zeta|\nabla u|\ |u|\nonumber\\
    \geq &(\theta-\epsilon)\int_{\Omega^+}\zeta^2|\nabla u|^2-C(W,Z,\epsilon)\int_{Z^+} |u|^2.\label{SchauderCh_Reglemma1_Estimate7}
\end{align}
Next, we estimate the right-hand side of Equation \eqref{SchauderCh_Reglemma1_Estimate6}. Using the Cauchy-Schwarz inequality, Young's inequality and the continuity of the trace operator $W^{1,2}(\Omega^+)\rightarrow L^2(\partial^+\Omega)$, we obtain
\begin{align}
    &\left|\int_{\Omega^+}(\tilde b^i \partial_i u+cu-f)\zeta^2 u+\int_{\partial^+\Omega} g\zeta^2 u\right|\nonumber\\
    \leq &C\left(\|f\|_{L^2(\Omega^+)}+\|\zeta\nabla u\|_{L^2(\Omega^+)}+\|u\|_{L^2(\Omega^+)}\right)\|u\|_{L^2(\Omega^+)}+\|g\|_{L^2(\partial^+\Omega)}\|\zeta u\|_{L^2(\partial^+\Omega)}\nonumber\\
    \leq &C\left(\|f\|_{L^2(\Omega^+)}+\|\zeta\nabla u\|_{L^2(\Omega^+)}+\|u\|_{L^2(\Omega^+)}\right)\|u\|_{L^2(\Omega^+)}+\|g\|_{L^2(\partial^+\Omega)}\|\zeta u\|_{W^{1,2}(\Omega^+)}\nonumber\\
    \leq &C(\epsilon)\left(\|f\|_{L^2(\Omega^+)}^2+\|g\|_{L^2(\partial^+\Omega)}^2+\|u\|_{L^2(\Omega^+)}^2\right) 
    +\epsilon\|\zeta\nabla u\|_{L^2(\Omega^+)}^2
    +\epsilon\|\zeta u\|_{W^{1,2}(\Omega    +)}^2.\label{SchauderCh_W21EstimateEq1}
\end{align}
We estimate 
$$\|\zeta u\|_{W^{1,2}(\Omega^+)}^2=\int_{\Omega^+}\zeta^2 u^2 +\int_{\Omega^+}(u\nabla\zeta+\zeta \nabla u)^2\leq C\int_{\Omega^+} u^2+\int_{\Omega^+}\zeta^2|\nabla u|^2.$$
Inserting into Estimate \eqref{SchauderCh_W21EstimateEq1}, we get 
\begin{align}
   & \left|\int_{\Omega^+}(\tilde b^i \partial_i u+cu-f)\zeta^2 u+\int_{\partial^+\Omega} g\zeta^2 u\right|\nonumber\\
    \leq &
    C(\epsilon)\left(\|f\|_{L^2(\Omega^+)}^2+\|g\|_{L^2(\partial^+\Omega)}^2+\|u\|_{L^2(\Omega^+)}^2\right) 
    +\epsilon\|\zeta\nabla u\|_{L^2(\Omega^+)}^2.\label{SchauderCh_Reglemma1_Estimate8}
\end{align}
Putting Estimates \eqref{SchauderCh_Reglemma1_Estimate7} and \eqref{SchauderCh_Reglemma1_Estimate8} into Equation \eqref{SchauderCh_Reglemma1_Estimate6} and choosing $\epsilon(\theta)$ small enough, we obtain
\begin{equation}\label{SchauderCh_Reglemma1_Estimate9}\int_{W^+}|\nabla u|^2\leq C\left(\|f\|_{L^2(\Omega^+)}^2+\|g\|_{L^2(\partial\Omega^+)}^2+\|u\|_{L^2(\Omega^+)}^2\right).
\end{equation}
The theorem follows by combining Estimates \eqref{schauderCh_EllipticInductionStartMissing} and \eqref{SchauderCh_Reglemma1_Estimate9}.
\end{proof}

Next, we prove higher regularity.
\begin{theorem}[Higher Boundary Regularity]\label{SchauderCh_NeumannLapalceBoundaryRegularity}
Let $\Omega\subset\R^{n-1}\times[0,\infty)$ be a bounded domain, $m\in\N_0$,  $a^{ij}$, $b^i$, $c\in C^{m+1}(\bar\Omega)$ and $\theta$, $\Lambda>0$ such that $a^{ij}(x)\xi_i\xi_j\geq\theta|\xi|^2$ for all $x\in\Omega$ and $\|a^{ij}\|_{C^{m+1}(\bar\Omega)}$, $\|b^{i}\|_{C^{m+1}(\bar\Omega)}$, $\|c\|_{C^{m+1}(\bar\Omega)}\leq\Lambda$. Let further $f\in W^{m,2}(\Omega^+)$, $g\in W ^{m+1,2}(\partial^+\Omega)$ and let $u\in W^{1,2}(\Omega^+)$ be a weak solution of 
$$
\left\{\begin{array}{ l l}
a^{ij}\partial_{ij}u+b^i \partial_i u+cu=f&\textrm{in $\Omega^+$},\vspace{.1cm}\\
-a^{nj}\partial_j u=g&\textrm{along $\partial^+\Omega$}.
\end{array}
\right.$$
Then $u\in W^{m+2,2}(V^+)$ for any $V\subset\subset\Omega$ and
$$\|u\|_{W^{m+2,2}(V^+)}\leq C(\Lambda,\theta, V,\Omega)(\|u\|_{L^2(\Omega^+)}+\|f\|_{W^{m,2}(\Omega^+)}+\|g\|_{W^{m+1,2}(\partial^+\Omega)}).$$
\end{theorem}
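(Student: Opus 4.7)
\textit{Proof Proposal:} The plan is to proceed by induction on $m$, with the base case $m=0$ already established in Theorem \ref{SchauderCh_BoundaryRegularityLemmaStep1}. For the inductive step, assume the theorem holds up to some $m \geq 0$ and consider data of regularity one step higher: $a^{ij},b^i,c \in C^{m+2}(\bar\Omega)$ with norms bounded by $\Lambda$, $f \in W^{m+1,2}(\Omega^+)$ and $g \in W^{m+2,2}(\partial^+\Omega)$. Fix nested sets $V \subset\subset W \subset\subset Z \subset\subset \Omega$.

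The key observation is that tangential derivatives of $u$ satisfy a Neumann-type problem of the same form. Concretely, by the inductive hypothesis applied on $Z \supset\supset W$, we already have $u \in W^{m+2,2}(W^+)$ with the corresponding estimate. Now fix $1 \leq l \leq n-1$. Using tangential difference quotients $D_l^h u$, exactly as in the base-case proof, and then passing $h \to 0$ using Lemma \ref{SchauderCh_DiffrenceQuotLemma}, one verifies that $v := \partial_l u \in W^{1,2}(W^+)$ weakly solves
\begin{equation}\nonumber
\left\{\begin{array}{ll}
a^{ij}\partial_{ij}v + b^i \partial_i v + cv = \tilde f & \textrm{in } W^+,\\
-a^{nj}\partial_j v = \tilde g & \textrm{along } \partial^+ W,
\end{array}\right.
\end{equation}
where $\tilde f := \partial_l f - (\partial_l a^{ij})\partial_{ij}u - (\partial_l b^i)\partial_i u - (\partial_l c)u$ and $\tilde g := \partial_l g + (\partial_l a^{nj})\partial_j u$. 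Since $u \in W^{m+2,2}(W^+)$, the inductive hypothesis on the regularity of coefficients and data gives $\tilde f \in W^{m,2}(W^+)$ and $\tilde g \in W^{m+1,2}(\partial^+ W)$, with norms controlled by $\|f\|_{W^{m+1,2}(\Omega^+)} + \|g\|_{W^{m+2,2}(\partial^+\Omega)} + \|u\|_{L^2(\Omega^+)}$. Applying the inductive hypothesis to $v$ on $W \supset\supset V$ yields $\partial_l u \in W^{m+2,2}(V^+)$ with the desired estimate, for each $l = 1,\dots,n-1$. This provides control of all mixed spatial derivatives of $u$ of order $m+3$ that contain at least one tangential differentiation.

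The remaining task is to control purely normal derivatives $\partial_n^k u$ for $k \leq m+3$. Since $a^{nn} \geq \theta > 0$ by uniform ellipticity, the PDE can be solved pointwise for $\partial_{nn} u$:
\begin{equation}\nonumber
\partial_{nn}u = \frac{1}{a^{nn}}\Bigl(f - \sum_{(i,j) \neq (n,n)} a^{ij}\partial_{ij} u - b^i \partial_i u - cu\Bigr).
\end{equation}
Differentiating this identity up to $m+1$ times in the normal direction $\partial_n$ and proceeding inductively on the number of normal derivatives, each $\partial_n^k u$ with $2 \leq k \leq m+3$ is expressed in terms of derivatives of $u$ that have strictly fewer normal derivatives (hence at least one tangential one) together with derivatives of $f$ and the coefficients. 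The $W^{m+3-k,2}(V^+)$-norms of these terms are already controlled by the previous step, so one obtains $\partial_n^k u \in L^2(V^+)$ with the required estimate for all $k \leq m+3$. Combining with the tangential estimate gives $u \in W^{m+3,2}(V^+)$ with the claimed bound, completing the induction.

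The main obstacle is really bookkeeping: one must justify carefully that the tangential difference-quotient argument from the base case produces a genuine weak solution of the differentiated problem, rather than only an estimate on $D_l^h u$, and one must track how norms of coefficients and data combine multiplicatively under the product rule so that the constants remain of the form $C(\Lambda,\theta,V,\Omega)$. No new analytic ideas are needed beyond those used for $m=0$; the proof is a routine induction leveraging the base case plus ellipticity to recover normal regularity.
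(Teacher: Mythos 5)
Your proposal follows the same two-stage induction as the paper: differentiate tangentially and re-apply the inductive hypothesis to $\partial_l u$, then recover the purely normal derivative $\partial_n^{m+3}u$ algebraically from the PDE. The one place where the paper does more work than your sketch is in the normal-derivative step: one cannot simply ``differentiate the pointwise identity $m+1$ times,'' because $\partial_n^{m+3}u$ is not yet known to exist up to the boundary. The paper resolves this by invoking interior regularity (Theorem~\ref{SchauderCh_NeumannLaplacianINterior}) to get $u\in W^{m+3,2}(V_\epsilon)$ on each interior strip $V_\epsilon=\{x_n>\epsilon\}$, testing the rewritten PDE against $(-1)^{m+1}\partial_n^{m+1}\varphi$ for $\varphi\in C^\infty_0(V_\epsilon)$, integrating by parts $m+1$ times using the already-established regularity of $\partial_i u$ ($i<n$), and obtaining an $\epsilon$-uniform bound on $\int_{V_\epsilon}\partial_n^{m+3}u\,\varphi$; the estimate is then upgraded to $V^+$ by letting $\epsilon\to 0^+$. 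This is the ``bookkeeping'' you flagged, and it is a genuine (if routine) step rather than a cosmetic one.
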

\begin{proof}
Throughout the proof, we will suppress the dependence of various constants on $\Lambda$, $\theta$, $V$ and $\Omega$. The proof is inductive. $m=0$ is taken care of in Theorem \ref{SchauderCh_BoundaryRegularityLemmaStep1}. We prove $m\rightarrow m+1$. Let $V\subset\subset W\subset\subset \Omega$. As $f\in W^{m+1,2}(\Omega^+)$, the inductive hypothesis implies $u\in W^{m+2,2}(W^+)$ and 
\begin{equation}\label{SchauderCh_HigherRegularityAtBoundaryEq1}
\|u\|_{W^{m+2,2}(W^+)}\leq C(W^+, \Omega^+)\left(\|f\|_{W^{m,2}(\Omega^+)}+\|g\|_{W^{m+1,2}(\partial^+\Omega)}+\|u\|_{L^2(\Omega^+)}\right).
\end{equation}
Let $1\leq a\leq n-1$. It is easy to see that the function $\partial_a u$ solves weakly
$$
\left\{
\begin{array}{ll} 
a^{ij}\partial_{ij}\left(\partial_a u\right)+b^i \partial_i \left(\partial_a u\right)+c\left(\partial_a u\right)=\partial_a f-\partial_aa^{ij}\partial_{ij}u-\partial_a b^i\partial_i u-\partial_a c u&\textrm{in $\Omega^+$,}\vspace{.15cm}\\
-a^{nj}\partial_j\left(\partial_a u\right)=\partial_a g+\partial_a a^{nj}\partial_j u&\textrm{along $\partial^+\Omega$.}
\end{array}\right.
$$
Applying the inductive hypothesis and Estimate \eqref{SchauderCh_HigherRegularityAtBoundaryEq1} to this problem for  $\partial_a u$ and using Estimate \eqref{SchauderCh_HigherRegularityAtBoundaryEq1}, we deduce $\partial_a u\in W^{m+2,2}(V^+)$ and the estimate
\begin{align} 
\|\partial_a u\|_{W^{m+2,2}(V^+)}
\leq &C\left(
\|\partial_a f\|_{W^{m,2}(W^+)}+\|\partial_a g\|_{W^{m1,2}(\partial^+W)}+\|\partial_a u\|_{L^2(W^+)}
\right)\nonumber\\
\leq &C\left(
\|f\|_{W^{m+1,2}(\Omega^+)}+\|g\|_{W^{m+2,2}(\partial^+\Omega)}+\|u\|_{L^2(\Omega^+)}
\right).\label{SchauderCh_BoundaryRegularityInductiveStepEq1}
\end{align}
For small $\epsilon>0$ we put $V_\epsilon:=\set{x\in V\ |\ x_n>\epsilon}$. Using Theorem \ref{SchauderCh_NeumannLaplacianINterior}
we have $\partial_n u\in W^{m+2,2}(V_\epsilon)$ for all $\epsilon>0$. We aim for a uniform estimate for $\|\partial_n u\|_{W^{m+2,2}}$ as $\epsilon\rightarrow 0^+$. To do this, we consider a multiindex $\alpha=(\alpha_1,...,\alpha_n)$ of length $|\alpha|=m+2$. First we suppose $\alpha_n\leq m+1$, and without loss of generality $\alpha_1\neq 0$. We can use $\partial_1u\in W^{m+2,2}(V^+)$, Schwarz's theorem and Estimate \eqref{SchauderCh_BoundaryRegularityInductiveStepEq1} to deduce the uniform estimate 
$$
\|\nabla_\alpha \partial_n u\|_{L^2(V_\epsilon)}\leq \|\partial_1 u\|_{W^{m+2,2}(V^+)}\leq C\left(
\|f\|_{W^{m+1,2}(\Omega^+)}+\|g\|_{W^{m+2,2}(\partial^+\Omega)}+\|u\|_{L^2(\Omega^+)}
\right).
$$
So, we are left with estimating $\partial_n^{m+3} u$ in $L^2(V_\epsilon)$. As $u\in W^{2,2}(V^+)$ we deduce that pointwise, almost everywhere in $V^+$
\begin{equation}\label{SchauderCh_HigherRegularityrewrittenPDE}
\partial_{nn} u=\frac1{a^{nn}}\left(f-\sum_{i+j\leq 2n-1}a^{ij}\partial_{ij} u-\sum_{i=1}^ n b^i\partial_i u-cu\right).
\end{equation}
Let $\epsilon>0$ and $\varphi\in C^\infty_0(V_\epsilon)$. We multiply Equation \eqref{SchauderCh_HigherRegularityrewrittenPDE} with $(-1)^{m+1}\partial_n^{m+1}\varphi$ and integrate. As $u\in W^ {m+3,2}(V_\epsilon)$ we may integrate by parts to obtain
\begin{align*} 
\int_{V^+}\partial_n^{m+3}u\varphi =& (-1)^{m+1}\int_{V^+}\partial_n^2u \partial_n^{m+1}\varphi\\
=& (-1)^{m+1}\int_{V^+}\frac1{a^{nn}}\left(f-\sum_{i+j\leq 2n-1}a^{ij}\partial_{ij} u-\sum_{i=1}^ n b^i\partial_i u-cu\right) \partial_n^{m+1}\varphi.
\end{align*}
Now we use that $f\in W^{m+1,2}(\Omega^+)$ and that for all $1\leq i\leq n-1$ we already have established that $\partial_i u\in W^{m+2,2}(V^+)$ as well as the smoothness of the coefficients to integrate by parts $m+1$-times. We then get that for all small $\epsilon>0$ and $\varphi\in C^\infty_0(V_\epsilon)$ we have 
$$\left|\int_{V_\epsilon}\partial_n^{m+3}u\varphi\right|\leq C\left(\|f\|_{W^{m+1,2}(\Omega^+)}+\sum_{1=1}^{n-1}\|\partial_i u\|_{W^{m+2,2}(\Omega^+)}\right)\|\varphi\|_{L^2(V^+_\epsilon)}.$$
By approximation, this estimate is valid for all $\varphi\in L^2(V_\epsilon)$. In particular, as $\partial_n^{m+3}u\in L^2(V_\epsilon)$ we can take $\varphi=\partial_n^{m+3}u\in L^2(V_\epsilon)$ and find
\begin{equation}\label{SchauderCh_BoundaryRegularityInductiveStepEq2}
\|\partial_n^{m+3}u\|_{L^2(V_\epsilon)}
\leq C\left(\|f\|_{W^{m+1,2}(\Omega^+)}+\sum_{1=1}^{n-1}\|\partial_i u\|_{W^{m+2,2}(\Omega^+)}\right).
\end{equation}
Estimate \eqref{SchauderCh_BoundaryRegularityInductiveStepEq2} is uniform over all small $\epsilon>0$. Combining Estimates \eqref{SchauderCh_BoundaryRegularityInductiveStepEq1} and \eqref{SchauderCh_BoundaryRegularityInductiveStepEq2}, we deduce $u\in W^{m+3,2}(V^+)$ and  
$$\|u\|_{W^{m+3,2}(V^+)}\leq C\left(\|f\|_{W^{m+1,2}(\Omega^+)}+\|g\|_{W^{m+2,2}(\partial^+\Omega)}+\|u\|_{L^2(\Omega^+)}
\right).$$
\end{proof}

\section{Elliptic Regularity on Manifolds}
\begin{lemma}\label{SchauderCh_LiftingRegularitytomfd}
Let $\partial M\neq 0$, $m\in\N_0$, $f\in W^{m,2}(M)$, $h\in W^{m+1,2}(\partial M)$ and and $u\in W^{1,2}(M)$ be a weak solution of 
$$\left\{
\begin{aligned}
    -\Delta_g u&=f\hspace{.5cm}\textrm{in $M$},\\
    \frac{\partial u}{\partial\nu}&=h\hspace{.5cm}\textrm{along $\partial M$}.
\end{aligned}
\right.$$
Then $u\in W^{m+2,2}(M)$ and 
$$\|u\|_{W^{m+2,2}(M)}\leq C(M,m,g)\left(\|f\|_{W^{m,2}(M)}+\|h\|_{W^{m+1,2}(\partial M)}+\|u\|_{L^2(M)}\right).$$
\end{lemma}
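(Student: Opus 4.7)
The strategy is to reduce to the Euclidean regularity theorems (Theorems \ref{SchauderCh_NeumannLaplacianINterior} and \ref{SchauderCh_NeumannLapalceBoundaryRegularity}) by expressing the weak equation in a good atlas of $M$. By compactness of $M$, fix a good atlas $\mathcal A = \{(\phi_i, U_i)\}_{i=1}^N$; by definition each $\phi_i$ extends smoothly to $\tilde\phi_i: \tilde U_i \to \tilde V_i$ with $U_i \subset\subset \tilde U_i$ and $\tilde V_i$ convex. I split the indices into interior charts ($U_i \cap \partial M = \emptyset$) and boundary charts.

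Fix a chart and write $u_i := u\circ\tilde\phi_i^{-1}$, and similarly $f_i$, $h_i$ (the last only where defined). Using the coordinate expressions $d\mu_g = \sqrt{|g|}\,dx$, $dS_g = \sqrt{|g_\partial|}\,dx'$, and $g(\nabla_g u, \nabla_g \varphi) = g^{ab}\partial_a u\,\partial_b \varphi$, and inserting test functions $\varphi$ supported in $\tilde U_i$ into \eqref{SchauderCh_WeakSolutionDefinitionEq}, one obtains
$$\int_{\tilde V_i^+}\hat a^{ab}\partial_a u_i\, \partial_b \psi\, dx = \int_{\tilde V_i^+}\sqrt{|g|}\,f_i\, \psi \,dx + \int_{\partial^+\tilde V_i}\sqrt{|g_\partial|}\,h_i\, \psi \,dx', \qquad \psi \in W^{1,2}_0(\tilde V_i),$$
with $\hat a^{ab} := \sqrt{|g|}\,g^{ab}$. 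Since $g$ is smooth and $\bar{\tilde U}_i$ compact, $\hat a^{ab} \in C^\infty(\overline{\tilde V_i})$ and is uniformly elliptic with constants depending only on $(M,g,\mathcal A)$. Up to a sign, this is precisely the weak formulation required by Theorem \ref{SchauderCh_NeumannLapalceBoundaryRegularity} (for boundary charts) or Theorem \ref{SchauderCh_NeumannLaplacianINterior} (for interior charts) with $a^{ij} = \hat a^{ab}$, $\tilde b^i = 0$, $c = 0$, right-hand side $-\sqrt{|g|}f_i$ and boundary datum $\sqrt{|g_\partial|}h_i$, all of which inherit the required Sobolev regularity via multiplication by smooth functions.

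Applying the relevant theorem to $V_i \subset\subset \tilde V_i$ gives
$$\|u_i\|_{W^{m+2,2}(V_i^+)} \leq C(M,m,g,\mathcal A)\left(\|u\|_{L^2(M)} + \|f\|_{W^{m,2}(M)} + \|h\|_{W^{m+1,2}(\partial M)}\right).$$
Summing over $i$, the left-hand side controls $\|u\|_{W^{m+2,2}(M)}$ by Definition \ref{SchauderCh_SobolevNormDefGoodTalas}, and the lemma follows.

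The main technical point is matching the two weak formulations in boundary charts. The identity $\sqrt{|g|}\sqrt{g^{nn}} = \sqrt{|g_\partial|}$ (which follows from the Schur-complement formula applied to the block $(g_{ab})$ with $x_n = 0$ as boundary slice) is what guarantees that the conormal-flux condition $-\hat a^{nb}\partial_b u = g_{\mathrm{thm}}$ appearing in Theorem \ref{SchauderCh_NeumannLapalceBoundaryRegularity} corresponds, via the chart, exactly to the manifold condition $\partial u/\partial\nu = h$ weighted by the boundary volume factor $\sqrt{|g_\partial|}$. Once this identification is in place, the remainder is routine bookkeeping for smooth multipliers and the norm-equivalence statement for good atlases (Proposition \ref{SchauderCh_GoodatlasSobolevNorm}).
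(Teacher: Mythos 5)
Your proof is correct and follows essentially the same path as the paper's: express the weak formulation \eqref{SchauderCh_WeakSolutionDefinitionEq} in chart coordinates, identify it with the weak formulation required by Theorems \ref{SchauderCh_NeumannLaplacianINterior} / \ref{SchauderCh_NeumannLapalceBoundaryRegularity} (using the Cramer/Schur identity $\sqrt{\det g}\sqrt{g^{nn}}=\sqrt{\det\hat g}$ to match the conormal boundary datum), apply the local regularity estimate on a shrunken chart domain, and sum over a good atlas. The only cosmetic difference is that you absorb $\sqrt{\det g}$ into the leading coefficient, taking $a^{ij}=\sqrt{\det g}\,g^{ij}$ and $\tilde b^i=0$, whereas the paper keeps $a^{ij}=g^{ij}$ and carries a nonzero lower-order term; both choices fit the theorem's hypotheses and yield the same estimate.
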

\begin{proof}
Let $\mathcal A=\set{(\varphi_i, U_i)\ |\ 1\leq i\leq m}$ be a good atlas. For each $1\leq i\leq m$ let $\tilde U_i\supset\supset  Ui$ such that $\varphi_i$ can be extended to a smooth map onto $\tilde U_i$ and put $\tilde V_i:=\varphi_i(\tilde U_i)$. We choose an open set $U_i'$ such that $U_i\subset\subset U_i'\subset\subset \tilde U_i$, put $V_i':=\varphi_i(U_i')$ and let $\mathcal A':=\set{(\varphi_i', U_i')\ |\ 1\leq i\leq m}$. $\mathcal A'$ is also a good atlas. Finally, we let $u_i:=u\circ\varphi_i^{-1}$, $h_i:=h\circ\varphi_i^{-1}$ and $f_i:= f\circ\varphi_i^{-1}$. For $\psi\in C^\infty_0( V_i')$, we have 
$$\int_M g (\nabla_g u,\nabla_g(\psi\circ\varphi_i))d\mu_g=\int_M f\cdot(\psi\circ\varphi_i) d\mu_g+\int_{\partial M} g\cdot (\psi\circ\varphi_i) d S_g.$$
Expressing these integrals in the chart $V_i'$ and denoting the induced metric on the boundary by $\hat g$, we obtain 
$$\int_{V_i'}g^{ab}\partial_ a u_i\partial_b \psi \sqrt{\det g} dx
=
\int_{V_i'} f_i\psi \sqrt{\det g}dx+\int_{\partial^+V_i'}h _i\psi\sqrt{\det{\hat g}}dS(x).$$
Here $dS(x)$ is the $(n-1)$-dimensional Hausdorff measure on $\R^{n-1}\times 0$. The boundary integral is absent if $\varphi_i$ is an interior chart. By Cramer's rule $\sqrt{g^{nn}}\sqrt{\det g}=\sqrt{\det{\hat g}}$. So, we deduce that for each $1\leq i\leq m$, $u_i$ is a weak solution of
$$
\left\{\begin{array}{lll}
    &\displaystyle -\left(g^{ab}\partial_{ab}-g^{ab}\Gamma^k_{\ ab}\partial_k\right)u_i=f_i\hspace{.5cm}&\displaystyle\textrm{in $V_i'$,}\\
    &\displaystyle-\frac{g^{na}}{\sqrt{g^{nn}}}\partial_a u_i=h_i\hspace{.5cm}&\displaystyle\textrm{along $\partial^+V_i'$ if $U_i$ is a boundary chart.}
\end{array}
\right.$$
Using Theorem \ref{SchauderCh_NeumannLaplacianINterior} or \ref{SchauderCh_NeumannLapalceBoundaryRegularity}, depending on whether $U_i$ is a boundary chart or not, we deduce that $u_i\in W^{m+2,2}(V_i)$ with 
\begin{equation}\label{SchauderCh_RegularityLemmaNeumannproblemonManifoldeq1}
\|u_i\|_{W^{m+2,2}(V_i)}\leq C(V_i,V_i',m,g)\left(\|f_i\|_{W^{m,2}(V_i')}+\|h_i\|_{W^{m+1,2}(\partial^+V_i')}\right).
\end{equation}
Here the last term on the right is absent if $U_i$ is an interior chart.
Summing Estimate \eqref{SchauderCh_RegularityLemmaNeumannproblemonManifoldeq1} over $1\leq i\leq m$,  and using the equivalence of the Sobolev norms induced by the good atlases $\mathcal A$ and $\mathcal A'$ from Proposition \ref{SchauderCh_GoodatlasSobolevNorm}, we deduce the theorem. 
\end{proof}
If $\partial M=\emptyset$, we can follow the proof of Lemma \ref{SchauderCh_LiftingRegularitytomfd} and obtain the following result.
\begin{proposition}\label{SchauderCh_RegularityMdfWITHOUTBOundary}
Let $\partial M=\emptyset$, $m\in\N_0$, $f\in W^{m,2}(M)$ and $u\in W^{1,2}(M)$ be a weak solution of $-\Delta_g u=f$ in $M$. Then $u\in W^{m+2,2}(M)$ and 
$$\|u\|_{W^{m+2,2}(M)}\leq C(M,m,g)\left(\|f\|_{W^{m,2}(M)}+\|u\|_{L^2(M)}\right).$$
\end{proposition}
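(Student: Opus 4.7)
The plan is to mirror the proof of Lemma \ref{SchauderCh_LiftingRegularitytomfd}, but now since $\partial M = \emptyset$ every chart in a good atlas is an interior chart and no boundary estimates are needed. First, I would fix a good atlas $\mathcal A = \{(\varphi_i, U_i) \mid 1 \leq i \leq m\}$ of $M$, and, using the good-atlas condition, enlarge each $U_i$ to an open set $U_i'$ with $U_i \subset\subset U_i' \subset\subset \tilde U_i$ on which $\varphi_i$ extends smoothly. Set $V_i := \varphi_i(U_i)$, $V_i' := \varphi_i(U_i')$, and let $\mathcal A'$ be the resulting good atlas. Define the local representatives $u_i := u \circ \varphi_i^{-1}$ and $f_i := f \circ \varphi_i^{-1}$ on $V_i'$.

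Next, I would express the weak formulation of $-\Delta_g u = f$ in each chart. For $\psi \in C_0^\infty(V_i')$, testing against $\psi \circ \varphi_i$ (extended by zero on $M \setminus U_i'$) and writing the integrals in local coordinates gives
$$
\int_{V_i'} g^{ab} \partial_a u_i \,\partial_b \psi \,\sqrt{\det g}\, dx = \int_{V_i'} f_i \psi \sqrt{\det g}\, dx.
$$
Hence $u_i \in W^{1,2}(V_i')$ is a weak solution of the uniformly elliptic equation
$$
-\bigl(g^{ab}\partial_{ab} - g^{ab}\Gamma^k_{\ ab}\partial_k\bigr) u_i = f_i \qquad \text{in } V_i',
$$
with smooth coefficients (uniformly bounded on $\overline{V_i'}$ by compactness of $M$ and smoothness of $g$).

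Then I would apply the interior regularity result (Theorem \ref{SchauderCh_NeumannLaplacianINterior}) with $\Omega = V_i'$ and $\Omega' = V_i$ to obtain $u_i \in W^{m+2,2}(V_i)$ together with
$$
\|u_i\|_{W^{m+2,2}(V_i)} \leq C(V_i, V_i', m, g)\bigl(\|u_i\|_{L^2(V_i')} + \|f_i\|_{W^{m,2}(V_i')}\bigr).
$$
Summing this inequality over $1 \leq i \leq m$ and invoking Proposition \ref{SchauderCh_GoodatlasSobolevNorm} to pass between the Sobolev norms induced by $\mathcal A$ and $\mathcal A'$ yields the desired estimate
$$
\|u\|_{W^{m+2,2}(M)} \leq C(M, m, g)\bigl(\|f\|_{W^{m,2}(M)} + \|u\|_{L^2(M)}\bigr).
$$

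I do not expect any serious obstacle: the boundary case, which is the technically delicate part, has already been handled in Lemma \ref{SchauderCh_LiftingRegularitytomfd}, and the boundaryless case is strictly easier because no boundary regularity theorem or trace argument is required. The only mild point to be careful about is retaining the $\|u_i\|_{L^2(V_i')}$ term from the interior estimate, which translates into the $\|u\|_{L^2(M)}$ term on the right-hand side.
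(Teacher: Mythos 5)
Your proposal is correct and follows essentially the same approach as the paper, which simply notes that the boundaryless case is obtained by repeating the proof of Lemma \ref{SchauderCh_LiftingRegularitytomfd} using only interior charts and Theorem \ref{SchauderCh_NeumannLaplacianINterior}. The chart-by-chart localization, the application of interior regularity, the summation over charts, and the appeal to Proposition \ref{SchauderCh_GoodatlasSobolevNorm} are all exactly as the paper intends.
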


As an easy consequence, we obtain the following lemma:
\begin{lemma}\label{SchauderCh_Wk2EstimateinTermsOfLaplace_Lemma}
Let $M$ be a compact, $n$-dimensional Riemannian manifold with boundary $\partial M\neq \emptyset$ and $k\geq 0$. If $u\in C^{2k+2}(M)$ satisfies
$\frac{\partial}{\partial\nu}\Delta_g^l u=0$ for all $0\leq l\leq k$, then 
$$\|u\|_{W^{2k,2}(M)}\leq C(M,g,k)\sum_{j=0}^k\|\Delta_g^j u\|_{L^2(M)}.$$
\end{lemma}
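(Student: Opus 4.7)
My plan is to prove the lemma by induction on $k$, with the inductive step applying the hypothesis to $\Delta_g u$ in order to shift one order of Laplacian out and then invoking Lemma \ref{SchauderCh_LiftingRegularitytomfd}.

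For the base case $k=0$, the inequality $\|u\|_{L^2(M)} \le C\|u\|_{L^2(M)}$ is trivial (take $C=1$; the boundary condition $\partial u/\partial\nu = 0$ is not even needed here).

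For the inductive step, assume the statement holds for $k-1$ and let $u \in C^{2k+2}(M)$ satisfy $\frac{\partial}{\partial\nu}\Delta_g^l u = 0$ for all $0 \le l \le k$. I would first observe that since $u \in C^{2k+2}(M)$, it is a classical (hence weak) solution of
\begin{equation*}
\left\{\begin{aligned}
-\Delta_g u &= -\Delta_g u && \text{in } M,\\
\partial u/\partial\nu &= 0 && \text{along } \partial M.
\end{aligned}\right.
\end{equation*}
Applying Lemma \ref{SchauderCh_LiftingRegularitytomfd} with $m = 2k-2$, $f = -\Delta_g u$ and $h = 0$ (provided $\Delta_g u \in W^{2k-2,2}(M)$, which I will verify next) yields
\begin{equation*}
\|u\|_{W^{2k,2}(M)} \le C(M,g,k)\left(\|\Delta_g u\|_{W^{2k-2,2}(M)} + \|u\|_{L^2(M)}\right).
\end{equation*}
To control $\|\Delta_g u\|_{W^{2k-2,2}(M)}$, I would apply the inductive hypothesis to the function $v := \Delta_g u$. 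Indeed, $v \in C^{2k}(M) = C^{2(k-1)+2}(M)$ and the boundary conditions $\frac{\partial}{\partial\nu}\Delta_g^l v = \frac{\partial}{\partial\nu}\Delta_g^{l+1} u = 0$ hold for $0 \le l \le k-1$ by assumption. Thus the inductive hypothesis gives
\begin{equation*}
\|\Delta_g u\|_{W^{2(k-1),2}(M)} \le C(M,g,k-1)\sum_{j=0}^{k-1}\|\Delta_g^j(\Delta_g u)\|_{L^2(M)} = C\sum_{j=1}^{k}\|\Delta_g^j u\|_{L^2(M)}.
\end{equation*}
Combining the two displayed inequalities yields the desired estimate $\|u\|_{W^{2k,2}(M)} \le C(M,g,k)\sum_{j=0}^k \|\Delta_g^j u\|_{L^2(M)}$, completing the induction.

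The proof is essentially bookkeeping; there is no genuine obstacle. The only point that deserves a moment of care is tracking how the boundary hypotheses shift under the substitution $u \leadsto \Delta_g u$: the full chain $\frac{\partial}{\partial\nu}\Delta_g^l u = 0$ for $0 \le l \le k$ is precisely what is needed so that (i) $u$ itself satisfies a homogeneous Neumann condition (enabling the outer application of Lemma \ref{SchauderCh_LiftingRegularitytomfd}) and (ii) $\Delta_g u$ inherits the full set of boundary conditions required by the inductive hypothesis for $k-1$. The regularity hypothesis $u \in C^{2k+2}(M)$ is exactly what is needed to guarantee that $\Delta_g u \in C^{2k}(M) = C^{2(k-1)+2}(M)$, matching the regularity premise at level $k-1$.
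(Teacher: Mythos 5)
Your proof is correct and takes essentially the same route as the paper: the paper also argues by induction, applying Lemma \ref{SchauderCh_LiftingRegularitytomfd} to $u$ (using $\partial_\nu u = 0$) to get $\|u\|_{W^{2k,2}(M)} \le C(\|\Delta_g u\|_{W^{2k-2,2}(M)} + \|u\|_{L^2(M)})$ and the inductive hypothesis to $\Delta_g u$ to control $\|\Delta_g u\|_{W^{2k-2,2}(M)}$. The only difference is a cosmetic shift in the indexing of the induction ($k\to k+1$ in the paper versus $k-1\to k$ here), and you spell out the bookkeeping of regularity and boundary conditions a bit more explicitly.
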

\begin{proof}
We prove the lemma by an inductive-type argument over $k$. For $k=0$, the lemma is trivial. Now assume that the lemma is already proven for some $k$ and let $u\in C^{2k+4}(M)$ such that 
$$\frac{\partial}{\partial\nu}\Delta_g^l u=0\hspace{.5cm}\textrm{for all $0\leq l\leq k+1$}.$$
Note that we may apply the inductive hypothesis on the function $\Delta_g u\in C^{2k+2}(M)$. Additionally, we apply lemma \ref{SchauderCh_LiftingRegularitytomfd} to the function $u$ (note that $\partial_\nu u=0$). This gives the following two estimates:
\begin{align*}
    \|\Delta_g u\|_{W^{2k,2}(M)}\leq & C(M,g,k)\sum_{j=0}^k\|\Delta_g^{j+1} u\|_{L^2(M)}\\
    \|u\|_{W^{2k+2,2}(M)}\leq &C(M,g)\left(\|\Delta_g u\|_{W^{2k,2}(M)}+\|u\|_{L^2(M)}\right)
\end{align*}
The lemma follows by combining these two estimates.
\end{proof}

\begin{korollar}\label{SchauderCh_RegularityofEigenfunctions}
Let $(M,g)$ be a compact, $n$-dimensional Riemannian manifold with $\partial M\neq \emptyset$, $\lambda\geq 0$ and $u\in W^{1,2}(M)$ be a weak solution of 
$$\left\{
\begin{aligned}
    -\Delta_g u&=\lambda u\hspace{.5cm}\textrm{in $M$},\\
    \frac{\partial u}{\partial\nu}&=0\hspace{.75cm}\textrm{along $\partial M$}.
\end{aligned}
\right.$$
Then $u\in C^\infty(M)$ and 
$$\|u\|_{W^{2m,2}(M)}\leq C(M,g,m)(1+\lambda^m)\|u\|_{L^2(M)}.$$
\end{korollar}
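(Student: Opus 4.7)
The plan is to proceed by induction on $m$, iteratively invoking Lemma \ref{SchauderCh_LiftingRegularitytomfd} on the Neumann problem viewed as $-\Delta_g u = f$ with $f := \lambda u$ and $h \equiv 0$. The base case $m = 0$ is tautological since $\|u\|_{W^{0,2}(M)} = \|u\|_{L^2(M)}$. For the inductive step, the key observation is that if $u \in W^{2m,2}(M)$, then $f = \lambda u \in W^{2m,2}(M)$ and $h = 0 \in W^{2m+1,2}(\partial M)$, so Lemma \ref{SchauderCh_LiftingRegularitytomfd} applied with index $2m$ promotes $u$ to $W^{2m+2,2}(M)$.

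To propagate the quantitative bound, I would combine the inductive hypothesis $\|u\|_{W^{2m,2}(M)} \leq C_m (1 + \lambda^m)\|u\|_{L^2(M)}$ with the estimate coming from the regularity lemma:
$$\|u\|_{W^{2m+2,2}(M)} \leq C\bigl(\lambda \|u\|_{W^{2m,2}(M)} + \|u\|_{L^2(M)}\bigr) \leq C\bigl(\lambda C_m(1 + \lambda^m) + 1\bigr)\|u\|_{L^2(M)}.$$
Since $\lambda + \lambda^{m+1} \leq 2(1 + \lambda^{m+1})$ (trivially, by separating the cases $\lambda \leq 1$ and $\lambda \geq 1$), the right-hand side is controlled by $C_{m+1}(1 + \lambda^{m+1})\|u\|_{L^2(M)}$, which closes the induction.

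The smoothness conclusion $u \in C^\infty(M)$ is then automatic: having just shown $u \in W^{k,2}(M)$ for every $k \in \N_0$, Morrey's inequality yields $u \in C^{r,\alpha}(M)$ for all $r \in \N_0$ and $\alpha \in (0,1)$. I do not anticipate any genuine obstacle here, since the argument is essentially a mechanical bootstrap built entirely on top of Lemma \ref{SchauderCh_LiftingRegularitytomfd}. The only point that requires a moment of care is the bookkeeping of the $\lambda$-dependence, which is handled by the elementary inequality above so that the constant in the final estimate grows polynomially in $\lambda$, rather than picking up spurious $(1+\lambda)$-type factors from the lower-order terms.
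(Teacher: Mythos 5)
Your proof is correct. The qualitative bootstrap (repeatedly invoking Lemma \ref{SchauderCh_LiftingRegularitytomfd} with $f=\lambda u$, $h=0$) and the conclusion of smoothness via Sobolev embedding are exactly what the paper does. Where you diverge is in obtaining the quantitative bound: you propagate the estimate inductively by feeding $\|u\|_{W^{2m,2}} \leq C_m(1+\lambda^m)\|u\|_{L^2}$ back into the regularity lemma and cleaning up the resulting factor $\lambda+\lambda^{m+1}$, which closes cleanly. The paper instead first establishes $u\in C^\infty(M)$, then observes $\partial_\nu \Delta_g^k u = (-\lambda)^k\partial_\nu u = 0$ and invokes Lemma \ref{SchauderCh_Wk2EstimateinTermsOfLaplace_Lemma} to write $\|u\|_{W^{2m,2}(M)} \leq C\sum_{j=0}^m \|\Delta_g^j u\|_{L^2} = C\sum_{j=0}^m \lambda^j\|u\|_{L^2}$ in a single step. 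Your route is more self-contained since it does not require Lemma \ref{SchauderCh_Wk2EstimateinTermsOfLaplace_Lemma} at all; the paper's route is slicker once that auxiliary lemma is available, and makes the exact $\lambda$-dependence visibly come from the identity $\Delta_g^j u = (-\lambda)^j u$ rather than from iterating the a priori estimate. Both give the stated bound.
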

\begin{proof}
If $u\in W^{k,2}(M)$ for some $k\geq 1$, then, using Theorem \ref{SchauderCh_LiftingRegularitytomfd}, we get $u\in W^{k+2,2}(M)$. Since $u\in W^{1,2}(M)$, we deduce $u\in\bigcap_{l\geq 0}W^{l,2}(M)$ and hence, by Sobolev embedding, $u\in C^\infty(M)$. Therefore we may compute $\partial_\nu\Delta_g^ku=(-1)^k\lambda^k \partial_\nu u=0$ for all $k\in\N_0$. Applying Lemma \ref{SchauderCh_Wk2EstimateinTermsOfLaplace_Lemma}, we get 
$$\|u\|_{W^ {2m,2}(M)}\leq C(M,g,m)\sum_{j=0}^m\lambda^ j\|u\|_{L^2(M)}\leq C(M,g,m)(m+1)\max(1,\lambda^ m)\|u\|_{L^2(M)}.$$
\end{proof}

Combining the results from Lemma \ref{SchauderCh_ExistenceofNeumannEFLemma} and Corollary \ref{SchauderCh_RegularityofEigenfunctions}, we get the following theorem.

\begin{theorem}[Spectral Decomposition of the Neumann Laplacian]\label{SchauderCh_neumannlaplacianResultsTheorem}
Let $(M,g)$ be a smooth compact, $n$-dimensional connected Riemannian manifold. There exists $0=\lambda_0<\lambda_1\leq\lambda_2\leq...\leq\lambda_k\rightarrow\infty$ and smooth $L^2_g(M)$-orthonormal and $W^{1,2}_g(M)$-orthogonal functions $\phi_k\in C^\infty(M)$ that satisfy 
$$\left\{
\begin{array}{rll}
    \displaystyle-\Delta_g \phi_k\hspace{-.2cm}&\displaystyle=\lambda_k \phi_k&\displaystyle\textrm{in $M$},\vspace{.2cm}\\
    \displaystyle\frac{\partial \phi_k}{\partial\nu}&\displaystyle=0\hspace{-.2cm}&\displaystyle\textrm{along $\partial M$}.
\end{array}
\right.$$
The span of the functions $\phi_k$ is dense in $L^2(M)$ and $W^{1,2}(M)$. Additionally, $\phi_0=|M|^{-\frac12}$ and 
$$\|\phi_k\|_{W^{2m,2}(M)}\leq C(M,m)(1+\lambda_k^m).$$
\end{theorem}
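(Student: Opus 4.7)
The plan is to obtain the theorem as a direct synthesis of the two principal results just established in this chapter, namely Lemma \ref{SchauderCh_ExistenceofNeumannEFLemma} (existence of weak eigenfunctions with the stated spectral and density properties) and Corollary \ref{SchauderCh_RegularityofEigenfunctions} (smoothness plus the $W^{2m,2}$ estimate for weak eigenfunctions). No new analytical work is required; the argument is purely assembly.

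First, I would apply Lemma \ref{SchauderCh_ExistenceofNeumannEFLemma} to produce a sequence $0=\lambda_0<\lambda_1\leq\lambda_2\leq\ldots\to\infty$ together with functions $\phi_k\in W^{1,2}(M)$ that are pairwise $L^2_g(M)$-orthonormal and $W^{1,2}_g(M)$-orthogonal, solve the Neumann eigenvalue problem weakly, have dense span in $L^2(M)$ and $W^{1,2}(M)$, and satisfy $\phi_0=|M|^{-1/2}$. This already covers every assertion of the theorem except the smoothness of the $\phi_k$ and the quantitative Sobolev estimate.

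Next, each individual $\phi_k$ is a weak solution of the Neumann problem with right-hand side $\lambda_k\phi_k$ and vanishing boundary data, so Corollary \ref{SchauderCh_RegularityofEigenfunctions} directly applies. This yields $\phi_k\in C^\infty(M)$ and
\begin{equation}\nonumber
\|\phi_k\|_{W^{2m,2}(M)}\leq C(M,g,m)\left(1+\lambda_k^m\right)\|\phi_k\|_{L^2_g(M)}.
\end{equation}
Using the normalization $\|\phi_k\|_{L^2_g(M)}=1$ established by Lemma \ref{SchauderCh_ExistenceofNeumannEFLemma}, this becomes the claimed bound $\|\phi_k\|_{W^{2m,2}(M)}\leq C(M,m)(1+\lambda_k^m)$.

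There is no genuinely hard step here; the only subtle point to double-check is that Corollary \ref{SchauderCh_RegularityofEigenfunctions} was proved for the problem on a manifold \emph{with} boundary, whereas the theorem is stated for a general compact connected Riemannian manifold. If $\partial M=\emptyset$, the boundary condition is vacuous and I would replace the appeal to Lemma \ref{SchauderCh_LiftingRegularitytomfd} inside the proof of Corollary \ref{SchauderCh_RegularityofEigenfunctions} by Proposition \ref{SchauderCh_RegularityMdfWITHOUTBOundary}; the bootstrap argument is otherwise identical and yields $\phi_k\in C^\infty(M)$ together with the same estimate. With that minor adjustment, the theorem follows.
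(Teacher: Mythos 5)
Your proposal matches the paper's own (implicit) proof exactly: the theorem is introduced with the sentence ``Combining the results from Lemma \ref{SchauderCh_ExistenceofNeumannEFLemma} and Corollary \ref{SchauderCh_RegularityofEigenfunctions}, we get the following theorem,'' which is precisely the assembly you describe, including the normalization $\|\phi_k\|_{L^2_g(M)}=1$ to absorb the last factor in the Sobolev estimate. Your remark about swapping in Proposition \ref{SchauderCh_RegularityMdfWITHOUTBOundary} when $\partial M=\emptyset$ is a correct and worthwhile clarification that the paper leaves implicit.
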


\chapter{Schauder Estimates in \texorpdfstring{$\R^n$}{Rn}}\label{SchauderCh_Ch03SchauderEstiamtesRn}
\section{Liouville Type Theorems}
Throughout this section, we fix a strongly elliptic matrix $\left(a^{ij}\right)_{ij}\in\R^{n\times n}$. That is there exists $\theta\in(0,\infty)$ such that
$$\forall\xi\in\R^n:\ a^{ij}\xi_i\xi_j\geq\theta|\xi|^2.$$
Throughout this section, we study 
\begin{align}
    &\dot u+a^{ij}a^{kl}\partial_{ijkl} u=0,\label{SchauderCh_ClassicalEquation}\\
    &a^{ni}\partial_i u=a^{ni}a^{kl}\partial_{ikl} u=0\hspace{.5cm}\textrm{along $x_n=0$},\label{SchauderCh_ClassicalBoundaryCondition}
\end{align}
on various domains and prove that solutions must be polynomials if they satisfy suitable growth estimates. 
\begin{definition}[Parabolic Polynomials]
A function $p:\R^n\times\R\rightarrow\R$ is called \emph{parabolic polynomial} of \emph{parabolic degree} $\operatorname{deg}(p)=d$, if 
$$p(x,t)=\sum_{|\alpha|+4j\leq d}a_{\alpha,j} x^\alpha t^j\hspace{.5cm}\textrm{for real parameters }a_{\alpha,j}.$$
\end{definition}

\begin{definition}[Parabolic Metric]\label{SchauderCh_parabolicMetricDefinition}
    The parabolic metric $d:(\R^n\times\R)\times(\R^n\times\R)\rightarrow [0,\infty)$ is defined as $d((x,t), (y,s)):=\max(|x-y|,|t-s|^{\frac14})$. 
\end{definition}

Note that the parabolic balls $U_\rho(p)$ from Definition \ref{SchauderCh_ParabolicBallDefnition} are precisely the metric balls with respect to the parabolic metric. 

\begin{theorem}\label{SchauderCh_BasicLiouvilleTheorem}
Let $u\in C^\infty(\R^n\times(-\infty,0])$ satisfy $\dot u=-a^{ij}a^{kl}\partial_{ijkl}u$ as well as $|u(x,t)|\leq C(1+d((x,t),0))^{4+\gamma}$ for some $\gamma\in(0,1)$, $C>0$ and all $(x,t)\in \R^n\times(-\infty,0]$. Then $u$ is a parabolic polynomial of parabolic degree $\deg(u)\leq 4$.  
\end{theorem}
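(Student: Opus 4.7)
The plan is to combine interior derivative estimates for the constant-coefficient fourth-order parabolic operator with a Fourier-analytic uniqueness argument, forcing every fifth parabolic-order derivative of $u$ to vanish.

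First I would establish the Bernstein-type interior estimate
\[
|\partial_t^j\partial^\alpha w(p_0)|\leq C(j,\alpha,n,a)\,R^{-(4j+|\alpha|)}\|w\|_{L^\infty(U_R(p_0))}
\]
for every smooth solution $w$ of $\dot w+a^{ij}a^{kl}\partial_{ijkl}w=0$ on a parabolic ball $U_R(p_0)$. Since the symbol $i\tau+P(\xi)^2$ (with $P(\xi):=a^{ij}\xi_i\xi_j\geq\theta|\xi|^2$) vanishes only at the origin, the operator is hypoelliptic and this bound follows from iterated cutoff energy estimates with Sobolev embedding. Fix now indices $(j_0,\alpha_0)$ with $4j_0+|\alpha_0|=5$, set $v:=\partial_t^{j_0}\partial^{\alpha_0}u$, and observe that the shifted ball $U_R(x_0,t_0-2R^4)$ is contained in $\R^n\times(-\infty,0]$ for every $R>0$, since $t_0-2R^4+R^4=t_0-R^4\leq 0$. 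Applying the interior estimate to $u$ at this centre and using the growth hypothesis gives
\[
|v(x_0,t_0-2R^4)|\leq C R^{-5}\bigl(1+|x_0|+|t_0|^{1/4}+R\bigr)^{4+\gamma}=O(R^{\gamma-1})\to 0\quad(R\to\infty).
\]
Hence $v(x_0,\tau)\to 0$ as $\tau\to-\infty$ for every $x_0$, and, pairing against Schwartz functions using the accompanying polynomial bound on $v$ in $x$, $v(\cdot,\tau)\to 0$ in $\mathcal S'(\R^n)$.

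I would then deduce $v\equiv 0$ via Fourier uniqueness. The growth hypothesis makes $u(\cdot,t)$ and $v(\cdot,t)$ tempered distributions with polynomial-in-$t$ seminorms, and the spatial Fourier transform of the PDE reads $\hat v(\cdot,t)=e^{|t|P^2}\hat v(\cdot,0)$ in $\mathcal S'$. For any $\phi\in C^\infty_c(\R^n\setminus\{0\})$, the function $\phi_\tau(\xi):=e^{-|\tau|P(\xi)^2}\phi(\xi)$ is Schwartz with seminorms decaying exponentially in $|\tau|$ (because $P^2\geq c>0$ on $\operatorname{supp}\phi$), so the identity
\[
\langle\hat v(\cdot,0),\phi\rangle=\langle\hat v(\cdot,\tau),\phi_\tau\rangle,
\]
combined with the $\mathcal S'$-decay of $\hat v(\cdot,\tau)$ and the exponential decay of $\|\phi_\tau\|_{\mathcal S}$, forces the constant left side to vanish. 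Since $\phi$ is arbitrary in $C^\infty_c(\R^n\setminus\{0\})$, $\hat v(\cdot,0)$ is supported at the origin; hence $v(\cdot,0)$ is a polynomial in $x$, and multiplication by the smooth factor $e^{|t|P^2}$ preserves this support so that $v(\cdot,t)$ is polynomial in $x$ for every $t\leq 0$. Substituting $v=\sum_{|\alpha|\leq 4}b_\alpha(t)x^\alpha$ into the PDE produces an ODE system with $b_\alpha$ constant for $|\alpha|\geq 1$ and $b_0$ affine in $t$; the pointwise decay of $v$ as $\tau\to-\infty$ then forces every $b_\alpha$ to vanish, so $v\equiv 0$.

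Running this for every $(j_0,\alpha_0)$ with $4j_0+|\alpha_0|=5$ shows that $u(\cdot,t)$ is polynomial in $x$ of degree $\leq 4$ for each $t$, and the mixed identity $\partial_t\partial_iu\equiv 0$ together with the PDE forces $u$'s $t$-dependence to be affine, so $u$ is a parabolic polynomial of parabolic degree at most $4$. I expect the main obstacle to be the Fourier uniqueness step, where one must verify that $\hat v(\cdot,\tau)$ stays polynomially controlled in $\mathcal S'$ in $|\tau|$ and that $\phi_\tau$ has all Schwartz seminorms decaying exponentially in $|\tau|$, so that the duality pairing rigorously tends to zero; a purely real-variable alternative would iterate the interior estimate and invoke a backward unique-continuation principle, but the Fourier route seems more transparent given the constant coefficients.
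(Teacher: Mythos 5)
Your proposal takes a genuinely different route from the paper's. The paper proves this theorem by a self-contained $L^2$-energy argument: multiplying the equation by $\eta^4 u$ for a space–time cutoff $\eta$, integrating by parts on the backward truncated cylinder $U_\rho^-(0)$ (where the temporal boundary term at $t=0$ comes with the favorable sign $-\int\eta^4 u(\cdot,0)^2\leq 0$, a point the authors explicitly flag), iterating to obtain Caccioppoli estimates $\|\partial_t^l\nabla^k u\|_{L^2(U_1^-(0))}\leq C\|u\|_{L^\infty(U_2^-(0))}$, and then applying parabolic rescaling $u_R(x,t)=u(Rx,R^4 t)$ to conclude $\partial_t^l\nabla^k u\equiv 0$ for $4l+k$ large, whence $u$ is a parabolic polynomial of degree $\leq 4$ by the growth bound. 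Your route instead invokes a pointwise Bernstein estimate plus a tempered-distribution Fourier argument — precisely the hypoellipticity/$L^2$-theory machinery the authors state in the introduction they deliberately avoid.

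Two concrete concerns. First, the Bernstein estimate $|\partial_t^j\partial^\alpha w(p_0)|\leq CR^{-(4j+|\alpha|)}\|w\|_{L^\infty(U_R(p_0))}$ is the crux and you assert it without proof: the paper's entire proof is in effect the derivation of (the $L^2$ version of) this bound, so "iterated cutoff energy estimates with Sobolev embedding" is a non-trivial gap, not a citation. Second, your choice to state the estimate on the two-sided ball $U_R(p_0)$ is what forces the Fourier detour. Since $U_R(x_0,t_0)\subset\R^n\times(-\infty,0]$ requires $t_0+R^4\leq 0$, you cannot let $R\to\infty$ for fixed $t_0$, so you shift the center by $-2R^4$ and only obtain $v(x_0,\tau)\to 0$ as $\tau\to-\infty$, after which you still need the $\mathcal S'$-flow identity $\langle\hat v(\cdot,0),\phi\rangle=\langle\hat v(\cdot,\tau),e^{-|\tau|P^2}\phi\rangle$ and the ODE for the polynomial coefficients to close the argument. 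Had you proved the Bernstein estimate on the backward cylinder $B_R(x_0)\times(t_0-R^4,t_0]$ (which is what the paper does, and what actually reflects the one-sided regularizing nature of the equation), you could let $R\to\infty$ directly at every point of $\R^n\times(-\infty,0]$ and conclude $\partial_t^j\partial^\alpha u\equiv 0$ immediately, with no Fourier step at all. The $\mathcal S'$ argument itself has further points requiring care — $t$-differentiability of $\hat v(\cdot,t)$ in $\mathcal S'$, and the justification that $\hat v(\cdot,t)$ is supported at $\{0\}$ for all $t$ rather than only $t=0$ — all of which can be patched, but they make the proof substantially heavier than the paper's purely real-variable argument, without buying any extra generality.
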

\begin{proof}
Throughout this proof, we will suppress the dependence of various constants on $a^{ij}$ and $\theta$. Let $0<\rho_1<{\rho_2}$ and $\eta\in C^\infty_0(U_{\rho_2}(0),[0,1])$ such that $\eta\equiv 1$ on $U_{\rho_1}^-(0)$. We compute
\begin{align}
    0=&\int_{\R^ n\times(-\infty,0]}\eta^4 u(\dot u+a^{ij}a^{kl}\partial_{ijkl}u)dxdt\nonumber\\
        &=\frac12\int_{\R^ n\times(-\infty,0]}\eta^4 \frac \partial{\partial t}(u^2)dxdt+\int_{\R^ n\times(-\infty,0]}\eta^4 ua^{ij}a^{kl}\partial_{ijkl}u dx dt.\label{SchauderCh_Integral1}
\end{align}
We first simplify the first integral. Since $\operatorname{supp}(\eta)\subset U_{\rho_2}(0)$, we get
\begin{align}
    &\int_{U_{\rho_2}^-(0)}\eta^4 \frac \partial{\partial t}(u^2)dxdt\nonumber\\
=&\int_{B_{\rho_2}(0)}\left[\int_{-\rho_2^4}^0 \frac\partial{\partial t}(\eta^4u^2)dt\right]dx-\int_{U_{\rho_2}^-(0)}4\eta^3\dot\eta u^2dxdt\nonumber\\
=&\int_{B_2(0)}\eta^4(x,0)^2u(x,0)^2-\eta(x,-\rho_2^4)^4u(x,-\rho_2^4)^2dx-\int_{U_{\rho_2}^-(0)}4\eta^3\dot\eta u^2dxdt\nonumber\\
=&\int_{B_2(0)}\eta^4(x,0)^2u(x,0)^2dx-\int_{U_{\rho_2}^-(0)}4\eta^3\dot\eta u^2dxdt.
\label{SchauderCh_Integral2}
\end{align}
The last step is justified as $\eta$ has compact support in $U_{\rho_2}(0)$. Next, we simplify the second integral in Equation \eqref{SchauderCh_Integral1}. Since $\eta\equiv 0$ on $\partial B_{\rho_2}(0)\times [-\rho_2^4,\rho_2^4]$, we have
\begin{align}
    &\int_{U_{\rho_2}^-(0)}\eta^4 ua^{ij}a^{kl}\partial_{ijkl }u dx dt\nonumber\\
= & \int_{U_{\rho_2}^-(0)}a^{ij}\partial_{ij}\left(\eta^4 u\right)a^{kl}\partial_{kl } u dx dt\nonumber\\
= & \int_{U_{\rho_2}^-(0)}\eta^4a^{ij}\partial_{ij}u\,a^{kl}\partial_{kl}u dx dt
+\int_{U_{\rho_2}^-(0)}a^{ij}(u\partial_{ij}(\eta^4)+8\eta^3\partial_i\eta\partial_j u) a^{kl}\partial_{kl}u dx dt.\label{SchauderCh_Integral3}
\end{align}
Inserting Equations \eqref{SchauderCh_Integral2} and \eqref{SchauderCh_Integral3} into Equation \eqref{SchauderCh_Integral1} and rearranging, we obtain
\begin{align}
     &\int_{U_{\rho_2}^-(0)}\eta^4a^{ij}\partial_{ij}ua^{kl}\partial_{kl}u dx dt\nonumber\\
=&-\int_{U_{\rho_2}^-(0)}a^{ij}(u\partial_{ij}(\eta^4)+8\eta^3\partial_i\eta\partial_j u) a^{kl}\partial_{kl}u dx dt\nonumber\\
&-\frac12\int_{B_{\rho_2}(0)}\eta^4(x,0)^2u(x,0)^2dx+\int_{U_{\rho_2}^-(0)}2\eta^3\dot\eta u^2dxdt.
\label{SchauderCh_Integral4}
\end{align}
First note that $-\int_{B_{\rho_2}(0)}\eta^4(x,0)^2u(x,0)^2dx\leq 0$ and can therefore be dropped in favor of an estimate. It is precisely this step where the same proof breaks down if we were to work on $\R^n\times[0,\infty)$ instead. Additionally, we estimate 
\begin{equation}\label{SchauderCh_Integral5}
    \left|\int_{U_{\rho_2}^-(0)}2\eta^3\dot\eta u^2\right|
    \leq C(\eta)\int_{U_{\rho_2}^-(0)}u^2.
\end{equation}
Let $\epsilon>0$ be a small but arbitrary parameter. Using Young's inequality, we estimate 
\begin{align}
   &\left| \int_{U_{\rho_2}^-(0)}a^{ij}u\partial_{ij}(\eta^4)a^{kl}\partial_{kl}u dx dt\right|\nonumber\\
   =&\left| \int_{U_{\rho_2}^-(0)}a^{ij}u\left(4\eta^3\partial_{ij}\eta+12\eta^2\partial_i\eta\partial_j\eta\right)
   a^{kl}\partial_{kl}u dx dt\right|\nonumber\\
    =&\left| \int_{U_{\rho_2}^-(0)}a^{ij}u\left(4\eta\partial_{ij}\eta+12\partial_i\eta\partial_j\eta\right)
   \left(\eta^2 a^{kl}\partial_{kl}u\right) dx dt\right|\nonumber\\
\leq & C(\eta,\epsilon)\int_{U_{\rho_2}^-(0)}u^2dxdt+\epsilon\int_{U_{\rho_2}^-(0)}\eta^4\left(a^{ij}\partial_{ij} u\right)^2 dxdt.\label{SchauderChIntegral6}
\end{align}
Using the same arguments, we estimate 
\begin{align}
   &\left| \int_{U_{\rho_2}^-(0)}a^{ij}\eta^3\partial_i\eta\partial_j u a^{kl}\partial_{kl}u dx dt\right|\nonumber\\
  \leq &\left| \int_{U_{\rho_2}^-(0)}a^{ij}\eta\partial_i\eta\partial_j u \left(\eta^2 a^{kl}\partial_{kl}u \right)dx dt\right|\nonumber\\
  \leq &C(\epsilon)\int_{U_{\rho_2}^-(0)}\left(a^{ij}\eta\partial_i\eta\partial_j u \right)^2 +\epsilon\int_{U_{\rho_2}^-(0)}\left(\eta^2 a^{kl}\partial_{kl}u \right)^2dx dt.
   \label{SchauderChIntegral7}
\end{align}
Putting Estimates \eqref{SchauderCh_Integral5}, \eqref{SchauderChIntegral6} and \eqref{SchauderChIntegral7} into Estimate \eqref{SchauderCh_Integral4} and choosing $\epsilon>0$ small enough, we obtain
\begin{equation}\label{SchauderCh_Integral7.5}
\int_{U_{\rho_2}^-(0)}\left(\eta^2 a^{kl}\partial_{kl}u \right)^2dx dt
\leq 
C(\eta)\int_{U_{\rho_2}^-(0)}\hspace{0cm}u^2dxdt+C(\eta)\int_{U_{\rho_2}^-(0)}\hspace{0cm}\eta^2\left(a^{ij}\eta\partial_i\eta\partial_j u \right)^2dxdt.
\end{equation}
We further estimate the second integral. Let $A:=(a^{ij})_{ij}$. $A$ is a positive, symmetric matrix and hence $\langle x,y\rangle_A:=a^{ij}x_iy_j$ is a scalar product. Using the Cauchy-Schwarz inequality, we estimate
\begin{equation}\label{SchauderCh_CSTypeTrick}
\left(a^{ij}\partial_i u\partial_j\eta\right)^2
=\langle \nabla u,\nabla\eta\rangle_A^2
\leq \langle \nabla u,\nabla u\rangle_A
\langle \nabla \eta,\nabla\eta\rangle_A
=\left(a^{ij}\partial_i u\partial_j u\right)\left(a^{kl}\partial_k \eta\partial_l\eta\right).
\end{equation}
We multiply Estimate \eqref{SchauderCh_CSTypeTrick} by $\eta^2$ and integrate to get
\begin{align}
&\int_{U_{\rho_2}^-(0)}\eta^2 \left(a^{ij}\partial_i\eta\partial_ju\right)^2dxdt\nonumber\\
\leq  &
\int_{U_{\rho_2}^-(0)}\eta^2 \left(a^{ij}\partial_iu\partial_j u\right)
\left(a^{kl}\partial_k\eta\partial_l\eta\right)dxdt\nonumber\\
=&-\int_{U_{\rho_2}^-(0)} u\partial_i\left(\eta^2a^{ij}\partial_j ua^{kl}\partial_k\eta\partial_l\eta\right)dxdt\nonumber\\
= & -\int_{U_{\rho_2}^-(0)}\eta^2 a^{ij} a^{kl}\left(u\partial_{ij}u\partial_k\eta\partial_l\eta+2u\partial_j u\partial_{ik}\eta\partial_l\eta\right)dxdt\nonumber\\
&-2\int_{U_{\rho_2}(0)}u\eta\partial_i\eta a^{ij}\partial_j ua^{kl}\partial_k\eta\partial_l\eta dxdt.\label{SchauderChIntegral7AndAHalf}
\end{align}
We first focus on the second integral. Let $\epsilon>0$ be a small parameter. Using Young's inequality, we estimate 
\begin{align*}
    \left|\int_{U_{\rho_2}(0)}u\eta\partial_i\eta a^{ij}\partial_j ua^{kl}\partial_k\eta\partial_l\eta dxdt\right|
    \leq &\left|\int_{U_{\rho_2}(0)}(\eta a^{ij}\partial_i\eta\partial_ ju)(ua^{kl}\partial_k\eta\partial_l\eta)dxdt\right|\\
     &\hspace{-2cm} \leq\epsilon\int_{U_{\rho_2}(0)}\eta^2( a^{ij}\partial_i\eta\partial_j u)^2dxdt+C(\eta,\epsilon)\int_{U_{\rho_2}(0)}u^2dxdt.
\end{align*}
Inserting into Estimate \eqref{SchauderChIntegral7AndAHalf} and choosing $\epsilon>0$ small enough yields
\begin{align}
&\int_{U_{\rho_2}^-(0)}\eta^2 \left(a^{ij}\partial_i\eta\partial_ju\right)^2dxdt\nonumber\\
&\leq C\int_{U_{\rho_2}^-(0)}\eta^2 a^{ij} a^{kl}\left(u\partial_{ij}u\partial_k\eta\partial_l\eta+2u\partial_j u\partial_{ik}\eta\partial_l\eta\right)dxdt+C(\eta)\int_{U_{\rho_2}(0)}u^2dxdt.\label{SchauderChIntegral8}
\end{align}
Let $\epsilon>0$ denote a small parameter. Using Young's inequality, we estimate
\begin{equation}\label{SchauderChIntegral9}
    \left|\int_{U_{\rho_2}^-(0)}\hspace{-.1cm}\eta^2 a^{ij}\partial_{ij}ua^{kl}\partial_k \eta\partial_l \eta u dxdt\right|
    \leq 
    \epsilon\int_{U_{\rho_2}^-(0)}\hspace{-.2cm}\left(\eta^2a^{ij}\partial_{ij} u\right)^2dxdt +C(\eta,\epsilon)\int_{U_{\rho_2}^-(0)}\hspace{-.3cm}u^2dxdt.
\end{equation}
Next, we write $2u\partial_j u=\partial_j (u^2)$. This allows us to perform another partial integration and obtain 
\begin{align}
    \left|-2\int_{U_{\rho_2}^-(0)}\eta^2a^{ij}a^{kl}u\partial_j u\partial_{ik}\eta\partial_l\eta dxdt\right|
    =&\left|-\int_{U_{\rho_2}^-(0)}\eta^2a^{ij}a^{kl}\partial_j (u^2)\partial_{ik}\eta\partial_l\eta dxdt\right|\nonumber\\
    =&\left|\int_{U_{\rho_2}^-(0)}\partial_j\left(\eta^2a^{ij}a^{kl}\partial_{ik}\eta\partial_l\eta\right) (u^2)dxdt\right|\nonumber\\
    \leq &C(\eta)\int_{U_{\rho_2}^-(0)} u^2dxdt.\label{SchauderCh_Integral10}
\end{align}
Putting Estimates \eqref{SchauderChIntegral9} and \eqref{SchauderCh_Integral10} into Estimate \eqref{SchauderChIntegral8} and subsequently into Estimate \eqref{SchauderCh_Integral7.5} and choosing $\epsilon>0$ small enough, we obtain 
\begin{equation}\label{SchauderCh_Integral11}
\int_{U_{\rho_2}^-(0)}\left(\eta^2 a^{kl}\partial_{kl}u \right)^2dx dt
\leq 
C(\eta)\int_{U_{\rho_2}^-(0)}u^2dx dt.
\end{equation}
The matrix $A=\left(a^{ij}\right)_{ij}$ is positive and symmetric and therefore has a positive and symmetric square root $B:=\left(b^{ij}\right)_{ij}$ -- that is $B^2=A$. We estimate 
$$ a^{ij}a^{kl}\partial_{ik}u\partial_{jl}u
  =  a^{ij}b^{kc}b^{lc}\partial_{ik}u\partial_{jl}u
    =  a^{ij}\partial_{i}(b^{kc}\partial_ku)\partial_{j}(b^{lc}\partial_l u).
    $$
As $a^{ij}$ is elliptic with ellipticity constant $\theta>0$, we get 
$$ a^{ij}a^{kl}\partial_{ik}u\partial_{jl}u
\geq\theta\sum_{c=1}^n|\nabla (b^{kc}\partial_k u)|^2
=\theta\sum_{c,s=1}^n|(b^{kc}\partial_{ks} u)|^2.
$$
Note that for fixed indices $s$ and $c$ we have $(b^{kc}\partial_{ks} u)=
  (B\nabla^2 u)^c_{\ s}$ and hence 
  $$ a^{ij}a^{kl}\partial_{ik}u\partial_{jl}u
\geq\theta\sum_{c,s=1}^n(B\nabla^2 u)^c_{\ s}(B\nabla^2 u)^c_{\ s}
=\theta\operatorname{tr}(\left(B\nabla^2 u\right)^TB\nabla^2 u)
=\theta\operatorname{tr}(\nabla^2 u A\nabla^2 u).
$$
Finally, we need to compute $\operatorname{tr}(\nabla^2 u A\nabla^2 u)$. Using the ellipticity of $a^{ij}$, we obtain
\begin{equation}\label{SchauderCh_Ellipticity}
a^{ij}a^{kl}\partial_{ik}u\partial_{jl}u
\geq
\theta \sum_{k=1}^n a^{ij}\partial_{ik}u\partial_{jk} u
\geq 
\theta^2 \sum_{k=1}^n |\partial_k\nabla u|^2
=\theta^2|\nabla^2 u|^2.
\end{equation}
Using $\eta\equiv 1$ on $U_{\rho_1}^-(0)$ and Estimate \eqref{SchauderCh_Ellipticity}, we can use Estimate \eqref{SchauderCh_Integral11} to deduce
\begin{equation}\label{SchauderCh_Integral12}
\int_{U_{\rho_1}^-(0)}|\nabla^2 u|^2 dx dt
\leq 
\int_{U_{\rho_2}^-(0)}\eta^4|\nabla^2 u|^2 dx dt
\leq 
C(\eta)\int_{U_{\rho_2}^-(0)}u^2dxdt.
\end{equation}
Note that $|\nabla u|^2=\frac12\Delta(u^2)-u\Delta u$. Using this identity and partial integration, we may estimate 
\begin{align*}
    \int_{U_{\rho_2}^-(0)}\eta^4 |\nabla u|^2dx dt=&\int_{U_{\rho_2}^-(0)}\eta^4\left(\frac12\Delta (u^2)-u\Delta u\right)dx dt\\
    \leq & \frac12\int_{U_{\rho_2}^-(0)}\Delta(\eta^4) u^2dx dt
    +\frac12\int_{U_{\rho_2}^-(0)}\eta^4 u^2dx dt+\frac12\int_{U_{\rho_2}^-(0)}\eta^4(\Delta u)^2dx dt\\
    \leq & C(\eta)\int_{U_{\rho_2}^-(0)}u^2dx dt +C\int_{U_{\rho_2}^-(0)}\eta^2|\nabla^2 u|^2dx dt.
\end{align*}
Combining this estimate with Estimate \eqref{SchauderCh_Integral12}, we deduce that for $k=0,1,2$
$$\int_{U_{\rho_1}^ -(0)}|\nabla^ k u|^2dx dt\leq C(\rho_1,\rho_2,k,\eta)\int_{U_{{\rho_2}}^ -(0)}u^2dx dt.$$
This estimate is valid for any solution of $\dot u+a^ {ij}a^ {kl}\partial_{ijkl} u=0$. In particular, it is also valid for $u\rightarrow \nabla^m u$. Using an inductive argument and suitable choices for $\rho_1$ and $\rho_2$, we deduce that
\begin{equation}\label{SchauderCh_L2EstimatesforSpatialDerivatives}
\int_{U_1^-(0)}|\nabla^ k u|^2dx dt
\leq C(k)\int_{U_2^-(0)}u^2dx dt.
\end{equation}
Note that by $\partial_t u=-a^ {ij}a^ {kl}\partial_{ijkl}u$, we can trade a time derivative for a fourth-order spatial derivative. Therefore we may use Estimate \eqref{SchauderCh_L2EstimatesforSpatialDerivatives} and obtain
\begin{equation}\label{SchauderCh_GeneralL2EstimateLiouville}
\int_{U_1^ -(0)}|\nabla^ k\partial_t^ l u|^2dx dt\leq 
C(k,l)
\int_{U_{2}^ -(0)}u^2dx dt
\leq C(k,l)\|u\|_{L^\infty(U_2^-(0))}^2.
\end{equation}
Estimate \eqref{SchauderCh_GeneralL2EstimateLiouville} is valid for all solutions of $\dot v+a^{ij}a^{kl}\partial_{ijkl}v=0$. In particular, taking $R>1$ and putting $u_R(x,t):=u(Rx, R^4 t)$, Estimate \eqref{SchauderCh_GeneralL2EstimateLiouville} is also valid for $u_R$. Hence
$$
C(k,l)\|u\|_{L^\infty(U_{2R}^-(0))}
=
C(k,l)\|u_R\|_{L^\infty(U_{2}^-(0))}
\geq 
\|\nabla^k \partial_t^l u_R\|_{L^2(U_1^-(0))}
=
\frac{R^{4l+k}}{R^{\frac{n+4}2}}\|\partial_t^l\nabla^k u\|_{L^2(U_R^-(0))}.
$$
Letting $R\rightarrow\infty$, this shows that $\partial_t^l\nabla^k u\equiv 0$ for $4l+k>\frac n2+2$ and hence $u$ is a parabolic polynomial. Due to the growth estimate $|u(x,t)|\leq C(1+d((x,t),0))^{4+\gamma}$ we deduce that $u$ must have parabolic degree 4 or less. 
\end{proof}

We can extend this theorem to the situation where $u$ is not smooth by using convolution. Let $\phi\in C^\infty(\R^n\times\R)$ be a smooth function with values in $[0,1]$, $\operatorname{supp}(\phi)\subset B_1(0)\times (-1,1)$ and 
$$\int_{B_1(0)\times(-1,1)}\phi(y,s) dyds=1.$$
For $\epsilon>0$ we define $\phi_\epsilon(y,s):=\frac1{\epsilon^{4+n}}\phi\left(\frac y\epsilon, \frac s{\epsilon^4}\right)$. As usual, the mollification of a function $v$ is defined as 
$$(\phi_\epsilon*v)(x,t):=\int_{\R^n\times\R}\phi_\epsilon(x-y,t-s) v(y,s)dyds.$$
We require the following standard lemma:
\begin{lemma}\label{SchauderCh_ConvolutionLemma}
Let $\Omega\subset \R^n\times\R$ be open and $u\in C^0(\Omega)$. Let $\Omega_\epsilon:=\set{p\in\Omega\ |\  U_\epsilon(p)\subset\Omega}$. Then $\phi_\epsilon* u\in C^\infty(\Omega_\epsilon)$ and $\phi_\epsilon* u\rightarrow u$ in $C^0_{\operatorname{loc}}(\Omega)$. Additionally, $\partial_t^k\nabla_\alpha u_\epsilon=\phi_\epsilon*(\partial_t^k\nabla_\alpha u)$ if $\partial_t^k\nabla_\alpha u$ exists and is continuous.
\end{lemma}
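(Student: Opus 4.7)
The proof is entirely standard mollification theory, adapted to the parabolic scaling; I would split it into three parts corresponding to the three assertions.

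For the first claim, $\phi_\epsilon * u \in C^\infty(\Omega_\epsilon)$, fix $p_0 \in \Omega_\epsilon$. By definition the closed parabolic ball $\overline{U_\epsilon(p_0)}$ lies in $\Omega$, and since $\Omega$ is open a slightly larger ball $\overline{U_{\epsilon + \delta}(p_0)}$ still lies in $\Omega$ for some $\delta > 0$. On this compact set $u$ is continuous and hence bounded. For $p$ in a neighborhood of $p_0$ the map $q \mapsto \phi_\epsilon(p - q) u(q)$ is continuous with compact support contained in $\overline{U_{\epsilon+\delta}(p_0)}$, and all derivatives of $\phi_\epsilon$ are bounded with the same support. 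The standard difference-quotient argument together with dominated convergence justifies differentiation under the integral, yielding
\[
\partial_t^k \nabla_\alpha (\phi_\epsilon * u)(p) = \big((\partial_t^k \nabla_\alpha \phi_\epsilon) * u\big)(p).
\]
Iterating this to arbitrary order gives $\phi_\epsilon * u \in C^\infty(\Omega_\epsilon)$.

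For the $C^0_{\operatorname{loc}}$ convergence, fix a compact set $K \subset \Omega$. There is an $\epsilon_0 > 0$ such that $K \subset \Omega_\epsilon$ and $K' := \{p : \operatorname{dist}_d(p, K) \leq \epsilon_0\} \subset \Omega$, where $\operatorname{dist}_d$ is the parabolic distance from Definition \ref{SchauderCh_parabolicMetricDefinition}. Since $u$ is uniformly continuous on $K'$, for any $\eta > 0$ we can choose $\epsilon < \epsilon_0$ so small that $|u(p) - u(q)| < \eta$ whenever $p, q \in K'$ with $d(p, q) < \epsilon$. Using $\int \phi_\epsilon = 1$ and $\operatorname{supp}(\phi_\epsilon) \subset U_\epsilon(0)$, I would write
\[
(\phi_\epsilon * u)(p) - u(p) = \int \phi_\epsilon(q)\big(u(p - q) - u(p)\big) \, dq
\]
and bound the right-hand side by $\eta$ uniformly for $p \in K$, giving $\|\phi_\epsilon * u - u\|_{C^0(K)} \to 0$.

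For the commutation formula, assume $\partial_t^k \nabla_\alpha u$ exists and is continuous on $\Omega$. Fix $p \in \Omega_\epsilon$; since $q \mapsto \phi_\epsilon(p - q)$ is smooth with compact support contained in $\overline{U_\epsilon(p)} \subset \Omega$, no boundary terms arise when shifting derivatives between the two factors. Combining integration by parts in each variable with the formula from the first paragraph yields
\[
\partial_t^k \nabla_\alpha (\phi_\epsilon * u)(p) = \big((\partial_t^k \nabla_\alpha \phi_\epsilon) * u\big)(p) = \big(\phi_\epsilon * (\partial_t^k \nabla_\alpha u)\big)(p).
\]
The only mildly delicate point is justifying the integrations by parts when $u$ itself is only assumed continuous and the derivatives in question are the classical ones; but because all objects are continuous on a compact set strictly inside $\Omega$ and $\phi_\epsilon$ vanishes near the boundary of that set, a one-variable-at-a-time application of the fundamental theorem of calculus combined with Fubini is enough, and no distributional machinery is required.
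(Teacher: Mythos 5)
The paper states this lemma without proof, treating it as a standard fact about mollification, so there is no paper argument to compare against; your proposal is precisely the expected standard argument, correctly adapted to the parabolic scaling (parabolic balls $U_\epsilon(p)$, the parabolic metric $d$ for the uniform-continuity step, and the $\epsilon^{n+4}$ normalization of $\phi_\epsilon$). One small simplification worth noting: for the commutation formula it is shorter to write $(\phi_\epsilon * u)(p) = \int \phi_\epsilon(q)\,u(p-q)\,dq$ and differentiate in $p$ under the integral sign, which places the derivatives onto $u(p-q)$ immediately and yields $\phi_\epsilon * (\partial_t^k\nabla_\alpha u)$ without any integration by parts; your two-step route (first $(\partial_t^k\nabla_\alpha\phi_\epsilon)*u$, then transfer the derivatives back onto $u$) arrives at the same identity but is a detour, since both routes rely on the same hypothesis that the intermediate partials of $u$ up to $\partial_t^k\nabla_\alpha u$ exist and are continuous. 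In the only place the lemma is invoked in the paper ($u\in C^{4,1}$), this hypothesis is satisfied in full, so either version is fine.
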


\begin{korollar}\label{SchauderCh_LiouvilleHalbraumNachUnten}
Let $u\in C^{4,1}(\R^n\times(-\infty,0])$ satisfy Equation \eqref{SchauderCh_ClassicalEquation} on $\R^n\times(-\infty,0]$ as well as $|u(x,t)|\leq C(1+d((x,t),0))^{4+\gamma}$ for some $\gamma\in(0,1)$, $C>0$ and all $(x,t)\in\R^n\times(-\infty,0]$. Then $u$ is a parabolic polynomial of parabolic degree $\operatorname{deg}(u)\leq 4$. 
\end{korollar}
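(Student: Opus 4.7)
The plan is to reduce the statement to the smooth case (Theorem \ref{SchauderCh_BasicLiouvilleTheorem}) by a mollification argument. For $\epsilon \in (0,1]$ I set $u_\epsilon := \phi_\epsilon * u$ in the sense of Lemma \ref{SchauderCh_ConvolutionLemma}. Since the kernel has support in $B_\epsilon(0)\times(-\epsilon^4,\epsilon^4)$, the convolution $u_\epsilon(x,t)$ is defined precisely when $t\leq-\epsilon^4$, so I compose with a time shift and put $\tilde u_\epsilon(x,t):=u_\epsilon(x,t-\epsilon^4)$. This is a smooth function defined on all of $\R^n\times(-\infty,0]$, which is the domain on which Theorem \ref{SchauderCh_BasicLiouvilleTheorem} will be applied.

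Two checks are required before applying that theorem. First, $\tilde u_\epsilon$ solves the PDE: as $u\in C^{4,1}$, the last clause of Lemma \ref{SchauderCh_ConvolutionLemma} lets me move each derivative up to $\partial_t$ and $\nabla^4$ across the convolution, so
\[
\dot{\tilde u}_\epsilon(x,t)+a^{ij}a^{kl}\partial_{ijkl}\tilde u_\epsilon(x,t)=\bigl(\phi_\epsilon*(\dot u+a^{ij}a^{kl}\partial_{ijkl}u)\bigr)(x,t-\epsilon^4)=0.
\]
Second, $\tilde u_\epsilon$ satisfies the growth bound with a constant independent of $\epsilon$: for $(y,s)$ in the support of $\phi_\epsilon(x-\cdot,t-\epsilon^4-\cdot)$ one has $d((y,s),0)\leq d((x,t),0)+2^{1/4}$, and since $\phi_\epsilon$ is a probability density this yields a bound $|\tilde u_\epsilon(x,t)|\leq C'(1+d((x,t),0))^{4+\gamma}$ with $C'$ depending only on $C$ and $\gamma$.

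With these two facts in hand, Theorem \ref{SchauderCh_BasicLiouvilleTheorem} applies to each $\tilde u_\epsilon$ and gives that $\tilde u_\epsilon$ is a parabolic polynomial of parabolic degree at most $4$, say $\tilde u_\epsilon(x,t)=\sum_{|\alpha|+4j\leq 4}c^\epsilon_{\alpha,j}x^\alpha t^j$. Now I pass to the limit $\epsilon\to 0^+$. By Lemma \ref{SchauderCh_ConvolutionLemma} and the continuity of $u$, the shifted mollifications $\tilde u_\epsilon$ converge to $u$ locally uniformly on $\R^n\times(-\infty,0]$. The space of parabolic polynomials of parabolic degree at most $4$ is finite dimensional, so by evaluating $\tilde u_\epsilon$ at a set of points where the corresponding monomials are linearly independent (a Vandermonde-type system) one concludes that each coefficient $c^\epsilon_{\alpha,j}$ converges to a real number $c_{\alpha,j}$, and hence $u(x,t)=\sum_{|\alpha|+4j\leq 4}c_{\alpha,j}x^\alpha t^j$, which is the desired conclusion. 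The main delicate point is only bookkeeping: verifying that the time shift produces a function defined on the full half-space and that the growth constant can be taken uniform in $\epsilon$; everything else is a direct application of the already-established smooth Liouville theorem.
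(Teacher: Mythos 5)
Your proof is correct and takes essentially the same approach as the paper's: mollify $u$, verify the PDE and the uniform growth bound, apply the smooth Liouville theorem (Theorem \ref{SchauderCh_BasicLiouvilleTheorem}), and pass to the limit. The only differences are cosmetic — the paper works on $\R^n\times(-\infty,-\epsilon]$ with a second mollification parameter $\delta<\epsilon$ instead of shifting time by $\epsilon^4$, and identifies the limiting coefficients via explicit pointwise formulas rather than your finite-dimensionality/Vandermonde observation.
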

\begin{proof}
Let $\epsilon\in(0,1)$. For $\delta<\epsilon$ we consider the convolution $u_\delta:\R^n\times(-\infty,-\epsilon]\rightarrow\R,\ u_\delta(x,t):=(\phi_\delta*u)(x,t)$. By Lemma \ref{SchauderCh_ConvolutionLemma}
$u_\delta\in C^\infty(\R^n\times(-\infty,-\epsilon])$ and satisfies $\dot u_\delta=-a^{ij}a^{kl}\partial_{ijkl} u_\delta$. Additionally, since $\delta\in(0,1)$, we may estimate 
\begin{align*} 
|u_\delta(x,t)|\leq &C\int_{U_\delta(0)}\phi_\delta(y,s)(1+d((x,t)- (y,s),0))^{4+\gamma}dyds\\
\leq &C\int_{U_\delta(0)}\phi_\delta(y,s)\left(1+d((x,t),0)\right)^{4+\gamma}dyds\\
\leq & C(1+d((x,t),0))^{4+\gamma}.
\end{align*}
So, by Theorem \ref{SchauderCh_BasicLiouvilleTheorem}, $u_\delta$ is a parabolic polynomial of degree at most 4 for all $\delta<\epsilon$. That is 
$$u_\delta(x,t)=a(\delta) t+\sum_{|\alpha|\leq 4}b_\alpha(\delta) x^\alpha\hspace{.5cm}\textrm{for all $x\in\R^n$ and $t\leq -\epsilon$.}$$
Noting the formulas
\begin{align*}
    b_\alpha(\delta)&=\frac1{\alpha!}\nabla_\alpha u_\delta(0,-2)=\frac1{\alpha!}\int_{U_\delta(0)}\phi_\delta(y,s) \nabla_\alpha u(0-y,-2-s)dyds,\\
    a(\delta)&=\frac \partial{\partial t}u_\delta(0,t)\bigg|_{t=-2}=\int_{U_\delta(0)}\phi_\delta(y,s)  \dot u(0-y,-2-s)dyds,
\end{align*}
it is readily seen that $b_\alpha(\delta)\rightarrow \frac1{\alpha!}\nabla_\alpha u(0,-2)=:b_\alpha^*$ and $a(\delta)\rightarrow \dot u(0,-2)=:a^*$ as $\delta\rightarrow 0^+$.  Consequently $u_{\delta}\rightarrow a^*t+\sum_{|\alpha|\leq 4}b^*_\alpha x^\alpha$ in $C^0_{\operatorname{loc}}(\R^n\times(-\infty,-\epsilon])$. However, we also know that $u_{\delta}\rightarrow u$ in $C^{0}_{\operatorname{loc}}(\R^n\times(-\infty,-\epsilon))$. Therefore
$$u(x,t)=a^*t+\sum_{|\alpha|\leq 4}b^*_\alpha x^\alpha\hspace{.5cm}\textrm{for all }(x,t)\in\R^n\times(-\infty,-\epsilon).$$
Since this is true for all $\epsilon>0$, the corollary follows.
\end{proof}

\begin{korollar}\label{SchauderCh_LiouvilleGanzraum}
Let $u\in C^{4,1}(\R^n\times\R)$ satisfy Equation \eqref{SchauderCh_ClassicalEquation} on $\R^n\times\R$ as well as $|u(x,t)|\leq C(1+d((x,t),0))^{4+\gamma}$ for some $\gamma\in(0,1)$, $C>0$ and all $(x,t)\in\R^n\times\R$. Then $u$ is a parabolic polynomial of parabolic degree $\operatorname{deg}(u)\leq 4$. 
\end{korollar}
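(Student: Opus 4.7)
The plan is to reduce to Corollary \ref{SchauderCh_LiouvilleHalbraumNachUnten} by time translation, and then patch the resulting local polynomial descriptions of $u$ together using the uniqueness of polynomial coefficients.

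More concretely, fix any $T\in\R$ and define $v_T(x,t):=u(x,t+T)$ on $\R^n\times(-\infty,0]$. Then $v_T\in C^{4,1}(\R^n\times(-\infty,0])$ and, since Equation \eqref{SchauderCh_ClassicalEquation} has constant coefficients and is invariant under translation in $t$, $v_T$ still satisfies $\dot v_T+a^{ij}a^{kl}\partial_{ijkl}v_T=0$. The growth estimate is preserved up to a $T$-dependent constant: using the triangle inequality for the parabolic metric from Definition \ref{SchauderCh_parabolicMetricDefinition}, $d((x,t+T),0)\leq d((x,t),0)+|T|^{1/4}$, so
\begin{equation*}
|v_T(x,t)|\leq C\bigl(1+d((x,t),0)+|T|^{1/4}\bigr)^{4+\gamma}\leq C(T)\bigl(1+d((x,t),0)\bigr)^{4+\gamma}.
\end{equation*}
Corollary \ref{SchauderCh_LiouvilleHalbraumNachUnten} thus applies to $v_T$, showing that $u$ coincides on $\R^n\times(-\infty,T]$ with some parabolic polynomial $p_T$ of parabolic degree at most $4$.

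To conclude, I would patch these polynomials together. Fix $T_0=0$ and set $p:=p_0$, so that $u\equiv p$ on $\R^n\times(-\infty,0]$. For any $T>0$, the polynomial $p_T$ satisfies $p_T\equiv u\equiv p$ on the open set $\R^n\times(-\infty,0)$. Since two parabolic polynomials of degree $\leq 4$ agreeing on a non-empty open subset of $\R^n\times\R$ must have identical coefficients (their difference is a polynomial vanishing on an open set, hence identically zero), we get $p_T=p$. Therefore $u\equiv p$ on $\R^n\times(-\infty,T]$ for every $T\in\R$, which exhausts $\R^n\times\R$.

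No step here looks hard: everything is already contained in Corollary \ref{SchauderCh_LiouvilleHalbraumNachUnten} and elementary facts about polynomials. The only point that requires a line of care is checking that the growth assumption survives time translation with a constant depending on $T$ — but since Corollary \ref{SchauderCh_LiouvilleHalbraumNachUnten} only asks for \emph{some} constant, this is harmless.
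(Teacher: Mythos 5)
Your proof is correct and follows essentially the same route as the paper: time-translate $u$ to the backward half-space, invoke Corollary \ref{SchauderCh_LiouvilleHalbraumNachUnten}, and exhaust $\R^n\times\R$ by letting $T\to\infty$. The only cosmetic difference is in step of showing the polynomials agree: the paper reads the coefficients off at the origin, while you invoke the identity theorem for polynomials on an open set — the same idea in different clothes.
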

\begin{proof}
For $T>0$, we put $v_T:\R^n\times(-\infty,0]\rightarrow \R$, $v_T(x,t):=u(x, t+T)$. It is easy to see that Corollary \ref{SchauderCh_LiouvilleHalbraumNachUnten} is applicable and hence, $v_T$ is a parabolic polynomial of degree at most 4. Consequently there exist $a(T), b_\alpha(T)\in\R$ such that
$$u(x,t)=v(x, t-T)=a(T)t+\sum_{|\alpha|\leq 4}b_\alpha(T)x^\alpha\hspace{.5cm}\textrm{for all }(x,t)\in\R^n\times(-\infty,T].$$
This formula implies $a(T)=\dot u(0,0)$ and $b_\alpha=\frac1{\alpha!}\nabla_\alpha u(0,0)$ and so the coefficients $a(T)$ and $b_\alpha(T)$ do not depend on $T$.  Letting $T\rightarrow\infty$, we deduce the corollary. 
\end{proof}

\begin{korollar}\label{SchauderCh_LiouvilleHalbraumNachOben}
Let $u\in C^{4,1}(\R^n\times[0,\infty))$ satisfy Equation \eqref{SchauderCh_ClassicalEquation} on $\R^n\times[0,\infty)$ and as well as $|u(x,t)|\leq C(1+d((x,t),0))^{4+\gamma}$ for some $\gamma\in(0,1)$, $C>0$ and all $(x,t)\in\R^n\times[0,\infty)$. If $u(\cdot,0)$ is a polynomial of degree 4 or less, then $u$ is a parabolic polynomial of parabolic degree $\operatorname{deg}(u)\leq 4$. 
\end{korollar}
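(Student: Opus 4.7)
The plan is to reduce this corollary to the whole-space Liouville result, Corollary \ref{SchauderCh_LiouvilleGanzraum}, by first subtracting off an explicit parabolic polynomial (so that the new initial datum is zero) and then extending trivially by zero backwards in time.

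First I would set $p(x) := u(x,0)$, which by hypothesis is a polynomial in $x$ of degree at most $4$. Since every fourth spatial derivative of $p$ is then a constant, the number
$$c := a^{ij}a^{kl}\partial_{ijkl} p$$
is well-defined, and the function $q(x,t) := p(x) - ct$ is a parabolic polynomial of parabolic degree at most $4$ that solves $\dot q + a^{ij}a^{kl}\partial_{ijkl} q = -c + c = 0$ on $\R^n\times\R$ with $q(\cdot,0) = p$. Setting $v := u - q$ on $\R^n\times[0,\infty)$, I obtain a $C^{4,1}$-function that solves Equation \eqref{SchauderCh_ClassicalEquation} on the upper half-space, satisfies $v(\cdot,0) \equiv 0$, and inherits a bound of the form $|v(x,t)| \leq C(1 + d((x,t),0))^{4+\gamma}$ (since $|q(x,t)| \leq C(1 + d((x,t),0))^4$).

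Next, I would extend $v$ by zero: define $\tilde v(x,t) := v(x,t)$ for $t \geq 0$ and $\tilde v(x,t) := 0$ for $t < 0$. The step that needs care is verifying $\tilde v \in C^{4,1}(\R^n\times\R)$. Because $v(\cdot,0)$ is identically zero as a function of $x$, all spatial derivatives $\nabla^k v(\cdot,0)$ for $0 \leq k \leq 4$ vanish identically. Moreover, the equation gives
$$\dot v(x,0) = -a^{ij}a^{kl}\partial_{ijkl} v(x,0) = 0.$$
Hence each of the derivatives $\partial_t^j \nabla^k$ with $4j+k \leq 4$ that defines the space $C^{4,1}$, namely $v,\nabla v,\nabla^2 v, \nabla^3 v, \nabla^4 v$ and $\dot v$, matches continuously across the slice $t=0$ with the identically-zero function on $t<0$. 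Therefore $\tilde v \in C^{4,1}(\R^n\times\R)$, it satisfies Equation \eqref{SchauderCh_ClassicalEquation} on $\R^n\times\R$ pointwise, and it retains the growth bound.

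Corollary \ref{SchauderCh_LiouvilleGanzraum} then forces $\tilde v$ to be a parabolic polynomial of parabolic degree at most $4$. But $\tilde v$ vanishes on the open set $\R^n\times(-\infty,0)$, and a polynomial on $\R^{n+1}$ that vanishes on an open subset is identically zero. Hence $\tilde v \equiv 0$, so $v \equiv 0$ on $\R^n\times[0,\infty)$ and $u = q$ is the desired parabolic polynomial of parabolic degree at most $4$. The only nontrivial obstacle is the $C^{4,1}$-regularity of $\tilde v$ at $t=0$, which is handled by the zero initial data together with the single observation that the equation expresses $\dot v$ in terms of spatial derivatives of $v$.
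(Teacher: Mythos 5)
Your proof is correct and follows essentially the same route as the paper's: subtract a parabolic polynomial to reduce to zero initial data, extend by zero to negative times, verify $C^{4,1}$-regularity of the extension across $t=0$ using the vanishing initial slice and the PDE, and then invoke the whole-space Liouville theorem (Corollary \ref{SchauderCh_LiouvilleGanzraum}). The only cosmetic difference is that the paper subtracts the Taylor polynomial of $u(\cdot,0)$ at the origin (which coincides with $u(\cdot,0)$ under the hypothesis) and concludes directly that the extension is a parabolic polynomial, whereas you additionally note the extension must vanish identically since it is a polynomial vanishing on the open set $\R^n\times(-\infty,0)$ — a valid but unnecessary refinement.
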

\begin{proof}
We first assume that $u(\cdot,0)=0$. In this case, we define a new function $v:\R^n\times\R\rightarrow\R$ by mapping $v(x,t):=u(x,t)$ for $t\geq 0$ and $v(x,t):=0$ for $t<0$. We claim that $v\in C^{4,1}(\R^n\times\R)$. Suppose for the moment this is already shown. Then clearly $\dot v+a^{ij}a^{kl}\partial_{ijkl} v=0$ on $\R^n\times\R$ and still $|v(x,t)|\leq C(1+d((x,t),0))^{4+\gamma}$. So, by Corollary \ref{SchauderCh_LiouvilleGanzraum}, the corollary follows.\\

To prove $v\in C^{4,1}(\R^n\times\R)$, it suffices to check that for each $x_0\in\R^n$ the function $v$ is $C^{4,1}$ near $(x_0,0)$. To see this first note that $v(x_0+\xi, 0)\equiv 0$, so at $(x_0,0)$ the function $v$ is four times differentiable in space with $\nabla^k v(x_0,0)=0$. For $\epsilon>0$ we compute 
$$\frac{v(x_0,\epsilon)-v(x_0,0)}\epsilon=\frac{u(x_0,\epsilon)-u(x_0,0)}\epsilon=\int_0^1 \dot u(x_0, s\epsilon)ds\rightarrow \dot u(x_0,0)=a^{ij}a^{kl}\partial_{ijkl} u(x_0,0)=0.$$
Also, since $v(x_0,-\epsilon)=v(x_0,0)=0$ we have $\frac{v(x_0,-\epsilon)-v(x_0,0)}{-\epsilon}\equiv 0$.
So, with respect to time $v$ is differentiable at $(x_0,0)$  and $\dot v(x_0,0)=0$.\\

It remains to check that the partial derivatives are continuous. Again it suffices to check this at points $(x_0,0)$. First we use that $u\in C^{4,1}(\R^n\times[0,\infty))$ to deduce
$$\nabla^k v(x_0,0)=\nabla^k u(x_0,0)=\lim_{\substack{(x,t)\rightarrow (x_0, 0)\\ t>0}}\nabla^k u(x,t)=\lim_{\substack{(x,t)\rightarrow (x_0, 0)\\ t>0}}\nabla^k v(x,t).$$
Next we note that $\nabla^k v(x_0,0)=0$ and as $v\equiv 0$ for $t<0$ we clearly also have 
$\nabla^k v(x,t)\rightarrow \nabla^k v(x_0,0)$ when $(x,t)\rightarrow (x_0,0)$ and $t<0$. For the time derivative, we argue similarly. $\dot v(x_0,0)=0$ and hence it is clear that $\dot v(x,t)\rightarrow \dot v(x_0,0)$ when $(x,t)\rightarrow (x_0,0)$ and $t<0$. For $t>0$ we have $\dot v(x,t)=\dot u(x,t)=-a^{ij}a^{kl}\partial_{ijkl} u(x,t)$. Letting $(x,t)\rightarrow (x_0,0)$ we use $u\in C^{4,1}(\R^n\times[0,\infty))$ to deduce 
$a^{ij}a^{kl}\partial_{ijkl} u(x,t)\rightarrow a^{ij}a^{kl}\partial_{ijkl} u(x_0,0)=0$. The last step is justified by $u(x,0)\equiv 0$. So $\dot v(x,t)\rightarrow \dot v(x_0,0)$ when $(x,t)\rightarrow (x_0, 0)$ and $t>0$.\\

Finally, we justify that we can assume $u(\cdot,0)=0$. Indeed, in the general situation, we define
$$p(x):=\sum_{|\alpha|\leq 4}\frac{\nabla_\alpha u(0,0)}{\alpha!}x^\alpha\hspace{.5cm}\textrm{and put}\hspace{.5cm}
\tilde u(x,t):=u(x,t)-p(x)+ta^{ij}a^{kl}\partial_{ijkl} p(x).$$
$\tilde u -u$ is a parabolic polynomial with $\operatorname{deg}(\tilde u -u)\leq 4$. In particular, $\tilde u $ satisfies the same growth estimate as $u$ and if $\tilde u $ is a parabolic polynomial with degree $\operatorname{deg}(\tilde u )\leq 4$ then $u$ is as well. We compute
$$\dot{\tilde u } +a^{ij}a^{kl}\partial_{ijkl} \tilde u =\dot u+a^{ij}a^{kl}\partial_{ijkl} p+a^{ij}a^{kl}\partial_{ijkl}(u-p)=0.$$
Since $u(\cdot,0)$ is a polynomial of degree $4$ or less, the same is true for $\tilde u $. Since additionally $\nabla_\alpha \tilde u (0,0)=0$ for all $|\alpha|\leq 4$, we deduce that 
$$
\tilde u(x,0)=\sum_{\alpha\in\N_0^{n}:\ |\alpha|\leq 4}\frac{\nabla_\alpha \tilde u(0,0)}{\alpha!} x^\alpha=0.$$
So, by the special case considered in the beginning of the proof, $\tilde u $ is a parabolic polynomial of degree $\operatorname{deg}(\tilde u )\leq 4$, which implies the corollary.
\end{proof}
It is possible to extend these Liouville type theorems to the situation where Equation \eqref{SchauderCh_ClassicalEquation} is not satisfied on all of $\R^n$ but only on $\R^{n-1}\times[0,\infty)$ when the boundary conditions \eqref{SchauderCh_ClassicalBoundaryCondition} are imposed. The fundamental observation is the following lemma:

\begin{lemma}\label{SchauderCh_ReflectionLemma}
Let $I\subset\R$ be an interval and $u\in C^{4,1}(\R^{n-1}\times[0,\infty)\times I)$ satisfy Equation \eqref{SchauderCh_ClassicalEquation} and the boundary conditions from Equation \eqref{SchauderCh_ClassicalBoundaryCondition}. Let $v:=\sum_{i=1}^n\frac{a^{ni}}{a^{nn}}e_i\in\R^n$ and
$$
\left\{
\begin{array}{rll}
\tilde u(x,t)\hspace{-.2cm}&:=u(x,t)&\textrm{if $x_n\geq 0$},\\
\tilde u(x,t)\hspace{-.2cm}&:=u(x-2x_n v,t)&\textrm{if $x_n< 0$}.\\
\end{array}\right.
$$
 Then $\tilde u\in C^{4,1}(\R^n\times I)$ and $\tilde u$ is a classical solution to Equation \eqref{SchauderCh_ClassicalEquation}.
\end{lemma}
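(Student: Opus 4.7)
The plan is to change variables so that the reflection becomes the standard Euclidean reflection across the hyperplane $\{y_n = 0\}$; once this normalization is in place, the claim reduces to the classical fact that an even reflection produces a $C^4$-function exactly when the first and third normal derivatives vanish on the boundary. Concretely, I define coordinates $y_i := x_i - x_n v_i$ for $i < n$ and $y_n := x_n$. Since $v_n = 1$, a short computation shows that the map $x \mapsto x - 2x_n v$ becomes $(y', y_n) \mapsto (y', -y_n)$ in these coordinates and that $\partial_{y_i} = \partial_{x_i}$ for $i < n$ while $\partial_{y_n} = D_v := v^i \partial_{x_i}$. Expanding $L := a^{ij}\partial_{x_i}\partial_{x_j}$ in $y$-coordinates and using the defining identity $a^{nn}v_i = a^{ni}$, the coefficient of the first-order term $\partial_{y_n}$ cancels exactly, leaving
\[
L = a^{nn}\,\partial_{y_n}^2 + C,
\]
where $C$ is a constant-coefficient purely tangential operator.

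With this decomposition in hand I translate the two boundary conditions. The first, $a^{ni}\partial_i u = a^{nn} D_v u = 0$, becomes $\partial_{y_n} u = 0$ along $y_n = 0$ (using $a^{nn} \geq \theta > 0$). The second, $a^{ni}a^{kl}\partial_{ikl}u = a^{nn}\partial_{y_n}(Lu) = 0$, expands as
\[
\partial_{y_n}(Lu) = a^{nn}\,\partial_{y_n}^3 u + C(\partial_{y_n}u) = 0 \quad \text{along } y_n = 0.
\]
Because $C$ is tangential and $\partial_{y_n}u$ vanishes identically on the hyperplane $\{y_n = 0\}$, the term $C(\partial_{y_n}u)$ also vanishes there, and ellipticity forces $\partial_{y_n}^3 u = 0$ along $y_n = 0$.

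Having shown that the first and third $y_n$-derivatives of $u$ vanish on the boundary, I would invoke the standard Taylor-expansion argument: the even reflection (in $y_n$) of a function on $\{y_n \geq 0\} \times I$ whose odd $y_n$-derivatives of orders $\leq 3$ vanish on $\{y_n = 0\}$ extends to a spatially $C^4$ function on $\R^n \times I$. Applying this to $u$ itself takes care of the spatial regularity. Since $\partial_t u$ inherits the two boundary conditions by differentiating in $t$, the same argument applied to $\partial_t u$ shows that $\partial_t \tilde u$ is continuous across $y_n = 0$, hence $\tilde u \in C^{4,1}(\R^n \times I)$. Finally, $\partial_{y_n}^2$, the tangential operator $C$, and $\partial_t$ all commute with even reflection, so for $y_n < 0$ one has $L\tilde u(y,t) = (Lu)(\hat y, t)$ and $\dot{\tilde u}(y,t) = \dot u(\hat y, t)$, where $\hat y$ is the reflected point. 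Iterating, $\dot{\tilde u} + L^2\tilde u = 0$ on $\{y_n < 0\}\times I$, and continuity of the derivatives extends the identity to the whole space.

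The main obstacle, and the one step that genuinely uses the specific form of the boundary conditions, is the algebraic miracle of the previous paragraph: the cancellation of the $\partial_{y_n}$-coefficient in $L$ is precisely what makes the apparently \emph{third}-order Neumann-type condition $a^{ni}a^{kl}\partial_{ikl}u = 0$ encode the vanishing of the pure third normal derivative $\partial_{y_n}^3 u$ that even reflection requires. Without the first boundary condition eliminating the cross term $C(\partial_{y_n}u)$, the second boundary condition alone would be insufficient; and without the choice $v = \sum_i (a^{ni}/a^{nn}) e_i$ producing the decomposition $L = a^{nn}\partial_{y_n}^2 + C$, the reflection would not commute with $L$ at all. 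I expect this algebraic interplay — together with a careful bookkeeping of the mixed derivatives $\partial_t^j \nabla_\alpha u$ with $4j + |\alpha| \leq 4$ when verifying $\tilde u \in C^{4,1}$ — to be the most delicate part of the argument.
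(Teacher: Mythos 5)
Your argument is correct and takes a genuinely different route from the paper's. The paper works directly in the original $x$-coordinates: it proves $\tilde u\in C^{4,1}$ by computing each one-sided limit $\lim_{x_n\to 0^\pm}\partial_n^k\tilde u$ for $k=0,\dots,4$ and showing they agree, which entails chain-rule expansions of $\nabla^k u$ along $x\mapsto x-2x_n v$ together with a cascade of identities among directional third derivatives ($\nabla^3 u[v,v,v]=0$, $\nabla^3 u[v,v,e_n]+\nabla^3 u[v,v^\parallel,e_n]=0$, etc.), and the PDE itself is substituted to dispatch the $k=4$ case. Your change of variables $y_i:=x_i-x_nv_i$ ($i<n$), $y_n:=x_n$ conjugates the oblique reflection $x\mapsto x-2x_nv$ to the orthogonal one $(y',y_n)\mapsto(y',-y_n)$; the computation $2a^{in}-2a^{nn}v_i=0$ (for $i<n$) then kills the mixed second-order terms $\partial_{y_i}\partial_{y_n}$ in $L$, giving $L=a^{nn}\partial_{y_n}^2+C$ with $C$ purely tangential, and this decomposition converts the two boundary conditions into $\partial_{y_n}u=\partial_{y_n}^3 u=0$ on $\{y_n=0\}$ — precisely the classical criterion for even reflection to preserve $C^4$, with $k=4$ now automatic since even reflection preserves even-order normal derivatives. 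Your version is more modular: it isolates the single nontrivial algebraic cancellation and outsources the regularity argument to a standard even-reflection lemma, whereas the paper keeps that algebra implicit inside the derivative-matching calculations. Two minor points, neither a gap: the cancellation you describe is of the second-order cross terms $\partial_{y_i}\partial_{y_n}$ in $L$, not of a ``first-order term $\partial_{y_n}$'' (there are no first-order terms); and matching one-sided limits of $\partial_y^\alpha\tilde u$ across $\{y_n=0\}$ gives $C^4$-regularity of $\tilde u$ only after the standard mean-value-theorem lemma you allude to (one-sided $C^1$ with continuous extension and matching one-sided derivative implies $C^1$ across the interface), iterated up to order four.
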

\begin{proof}
It is clear that $\tilde u\in C^{4,1}(\R^{n-1}\times[0,\infty)\times I)$ and $\tilde u\in C^{4,1}(\R^{n-1}\times(-\infty,0]\times I)$. We may deduce $\tilde u\in C^{4,1,\gamma}(\R^n\times I)$ once we have shown that for $0\leq k\leq 4$
\begin{equation}\label{SchauderCh_DerivativesMatchUpCondition}
\lim_{x_n\rightarrow0^-}\partial_n^k \tilde u(x,t)=\lim_{x_n\rightarrow0^+}\partial_n^k \tilde u(x,t).
\end{equation}
Clearly  $\tilde u$ solves Equation \eqref{SchauderCh_ClassicalEquation} on $\R^{n-1}\times [0,\infty)\times I$. We now prove, that it also satisfies \eqref{SchauderCh_ClassicalEquation} on $\R^{n-1}\times (-\infty,0]\times I$. For $(x,t)\in \R^n\times I$ with $x_n<0$ we compute
\begin{equation}\label{SchauderCh_ReflectionDerivativeFormula}
    \partial_i \tilde u(x,t)=\partial_a u(x-2x_n v, t)\left(\delta^i_a-2\delta^i_n v_a\right).
\end{equation}
Using $v_i=\frac{a^{ni}}{a^{nn}}$, we compute
\begin{align*}
    &a^{ij}\left(\delta^i_a-2\delta^i_n v_a\right)\left(\delta^j_b-2\delta^j_n v_b\right)\\
    =&\left(a^{aj}-2a^{nj}v_a \right)\left(\delta^j_b-2\delta^j_n v_b\right)\\
    =&a^{ab}-2a^{an}v_b-2a^{nb}v_a+4a^{nn}v_a v_b\\
    =&a^{ab}-2a^{an}\frac{a^{nb}}{a^{nn}}-2a^{nb}\frac{a^{na}}{a^{nn}}+4a^{nn}\frac{a^{na}a^{nb}}{(a^{nn})^2}\\
    =&a^{ab}.
\end{align*}
Combining with Equation \eqref{SchauderCh_ReflectionDerivativeFormula}, we obtain 
$a^{ij}\partial_{ij}\tilde u(x,t)=a^{ab}\partial_{ab}u(x-2x_n v,t)$ and hence
$$\dot{\tilde u}(x,t)+a^{ij}a^{kl}\partial_{ijkl} \tilde u(x,t)
=
(\dot u+a^{ij}a^{kl}\partial_{ijkl} u)(x-2x_n v, t)=0\hspace{.5cm}\textrm{in }\R^{n-1}\times(-\infty,0]\times I.$$
It remains to establish Equation \eqref{SchauderCh_DerivativesMatchUpCondition}. The case $k=0$ is trivial. Indeed, if $x_n\rightarrow0^-$ we get
$\tilde u((\vec x, x_n), t)=u((\vec x-2x_n \vec v,-x_n),t)\rightarrow u((\vec x, 0),t)$. Next, we consider 
$$\frac{\partial}{\partial x_n}\tilde u((\vec x, x_n),t)=\frac{\partial}{\partial x_n}u(x-2x_n v, t)=\frac{\partial u}{\partial x_n}(x-2x_n v, t)-2\langle \nabla u(x-2x_n v, t), v\rangle.$$
As $x_n\rightarrow 0^-$, the first term converges to $\partial_n u((\vec x, 0),t)$. Using the first order boundary condition from Equation \eqref{SchauderCh_ClassicalBoundaryCondition}, the second term converges to 
$$-2\langle \nabla u((\vec x, 0),t), v\rangle)=-2\frac{a^{ni}\partial_i u((\vec x, 0),t)}{a^{nn}}=0.$$
For the second derivative, we argue similarly. For $x=(\vec x, x_n)$ with $x_n<0$ we compute 
\begin{align*} 
\frac{\partial^2 \tilde u}{\partial x_n^2}(x,t)&=D^2 u(x-2x_n v,t)[e_n-2v, e_n-2v]\\
&=\frac{\partial ^2 u}{\partial x_n^2}(x-2x_n v,t)+4\left[D^2 u(x-2x_n v)[v,v]-D^2 u(x-2x_n v)[v,e_n]\right]\\
&=\frac{\partial ^2 u}{\partial x_n^2}(x-2x_n v,t)+4D^2 u(x-2x_n v)[v,v-e_n].
\end{align*}
When $x_n\rightarrow 0^-$, the first term converges to $\partial_n^2 u((\vec x, 0),t)$. We put $v^\parallel:=v-e_n\in\R^{n-1}\times 0$. Differentiating the first order boundary condition in Equation \eqref{SchauderCh_ClassicalBoundaryCondition} in direction $v^\parallel$, we obtain
\begin{align*}
    \lim_{x_n\rightarrow0^-}4D^2 u(x-2x_n v)[v,v-e_n]=&4 D^2 u((\vec x, 0),t)(v,v^\parallel)\\
            =&4\nabla_{v^\parallel}\left( (\nabla_v u)((\vec x,0),t) \right)\\
            =&4\nabla_{v^\parallel}\left( \frac{a^{ni}}{a^{nn}}\partial_i u((\vec x,0),t) \right)\\
            =&0.
\end{align*}
Thus, Equation \eqref{SchauderCh_DerivativesMatchUpCondition} has also been established for $k=2$. Next, we take care of $k=3$. Let $\alpha,\beta\in\set{1,...,n-1}$. We differentiate the first order boundary condition $a^{ni}\partial_i u(\vec x,0,t)=0$ with respect to $x_\alpha$ and $x_\beta$ to obtain $a^{ni}\partial_{i\alpha\beta}u(\vec x,0,t)=0$. Inserting into the third order boundary condition from Equation \eqref{SchauderCh_ClassicalBoundaryCondition}, we get
$$0=\sum_{i,k,l=1}^n a^{ni}a^{kl}\partial_{ijk} u
=\sum_{i=1}^n\left[a^{ni}a^{nn}\partial_{inn}u+2\sum_{\alpha=1}^{n-1}a^{ni}a^{n\alpha}\partial_{in\alpha}u\right].$$
We divide by $(a^{nn})^2$ and use that $v^i=\frac{a^{ni}}{a^{nn}}$ as well as $v^\parallel=\sum_{\alpha=1}^{n-1}v^\alpha e_\alpha$. This shows 
\begin{equation}\label{SchauderCh_ThirdDerivativeIdentity1}
\nabla^3 u((\vec x,0),t)[v,e_n,e_n]+2\nabla^3 u((\vec x,0),t)[v,v^\parallel, e_n]=0.
\end{equation}
Equation \eqref{SchauderCh_ThirdDerivativeIdentity1} and the first boundary condition $\nabla_v u((\vec x, 0),t)\equiv0$ imply
\begin{align}
    \nabla^3 u((\vec x,0),t)[v,v,v]&=\nabla^3 u((\vec x,0),t)[v,v^\parallel+e_n,v^\parallel+e_n]\nonumber\\
    &=\nabla^3 u((\vec x,0),t)[v,v^\parallel,v^\parallel]
    +2\nabla^3 u((\vec x,0),t)[v,e_n,v^\parallel]
    +\nabla^3 u((\vec x,0),t)[v,e_n,e_n]\nonumber\\
    &=\nabla_{v^\parallel}\nabla_{v^\parallel}\left((\nabla_v u)((\vec x, 0),t)\right)\nonumber\\
    &=0.\label{SchauderCh_ThirdDerivativeIdentity2}
\end{align}
We rewrite Equation \eqref{SchauderCh_ThirdDerivativeIdentity1} as
\begin{align}
    0&=\nabla^3 u((\vec x,0),t)[v,e_n,e_n]+2\nabla^3 u((\vec x,0),t)[v,v^\parallel, e_n]\nonumber\\
    &=\nabla^3 u((\vec x,0),t)[v,e_n,e_n]+\nabla^3 u((\vec x,0),t)[v,v^\parallel, e_n]+\nabla^3 u((\vec x,0),t)[v,v^\parallel, e_n]\nonumber\\
    &=\nabla^3 u((\vec x,0),t)[v,e_n+v^\parallel,e_n]+\nabla^3 u((\vec x,0),t)[v,v^\parallel, e_n]\nonumber\\
    &=\nabla^3 u((\vec x,0),t)[v,v,e_n]+\nabla^3 u((\vec x,0),t)[v,v^\parallel, e_n].\label{SchauderCh_ThirdDerivativeIdentity3}
\end{align}
Combining Equations \eqref{SchauderCh_ThirdDerivativeIdentity1}, \eqref{SchauderCh_ThirdDerivativeIdentity2} and \eqref{SchauderCh_ThirdDerivativeIdentity3}, we deduce
\begin{align*}
   \lim_{x_n\rightarrow 0^-} \frac{\partial^3 \tilde u}{\partial x_n^3}((\vec x, x_n),t)=&\lim_{x_n\rightarrow 0^-}\bigg(
    \frac{\partial^3  u}{\partial x_n^3}(x-2x_n v,t)
    +3\nabla^3 u(x-2x_n v,t)[e_n,e_n,-2v]\\
    &\hspace{.6cm} +3\nabla^3 u(x-2x_n v,t)[e_n,-2v,-2v]
    +\nabla^3 u(x-2x_n v,t)[-2v,-2v,-2v]\bigg)\\
    =&\frac{\partial^3  u}{\partial x_n^3}((\vec x, 0),t)
    -6\nabla^3 u((\vec x, 0),t)[e_n,e_n,v]
    +12\nabla^3 u((\vec x, 0),t)[e_n,v,v]\\
    &  \hspace{.6cm}   -8\nabla^3 u((\vec x, 0),t)[v,v,v]\\
    =&\frac{\partial^3  u}{\partial x_n^3}((\vec x, 0),t)
    -6\left(\nabla^3 u((\vec x, 0),t)[e_n,e_n,v]
    -2\nabla^3 u((\vec x, 0),t)[e_n,v,v]\right)\\
    =&\frac{\partial^3  u}{\partial x_n^3}((\vec x, 0),t).
\end{align*}
So, Equation \eqref{SchauderCh_DerivativesMatchUpCondition} holds for $k=3$. Finally, for $k=4$ we use that in $\R^{n-1}\times(-\infty,0)\times I$, $\tilde u$ solves
$$\partial_{nnnn}\tilde u(x,t)=-\frac{\dot{\tilde u}(x,t)}{(a^{nn})^2}-\frac1{(a^{nn})^2}\sum_{\substack{i,j,k,l=1\\ i+j+k+l\neq 4n}}^na^{ij}a^{kl}\partial_{ijkl} \tilde u(x,t).$$
As the right-hand side only contains derivatives with respect to $x_n$ of order $\leq 3$, and Equation \eqref{SchauderCh_DerivativesMatchUpCondition} is already proven for $k=0,1,2,3$, we deduce that it also holds for $k=4$. 
\end{proof}

 Lemma \ref{SchauderCh_ReflectionLemma} allows us to deduce the following corollaries to Corollaries \ref{SchauderCh_LiouvilleHalbraumNachUnten},\ref{SchauderCh_LiouvilleGanzraum} and \ref{SchauderCh_LiouvilleHalbraumNachOben}. We only prove the first of the following three corollaries, as the strategy for proving them is always the same. 

\begin{korollar}\label{SchauderCh_LiouvilleGanzraumBC}
Let $u\in C^{4,1}(\R^{n-1}\times[0,\infty)\times\R)$ satisfy Equation \eqref{SchauderCh_ClassicalEquation} as well as  $|u(x,t)|\leq C(1+d((x,t),0))^{4+\gamma}$ for some $\gamma\in(0,1)$, $C>0$ and all $(x,t)\in\R^{n-1}\times[0,\infty)\times\R$. If $a^{ni}\partial_i u((\vec x,0),t)$ and $a^{ni}a^{kl}\partial_{ikl}u((\vec x,0),t)$ are parabolic polynomials of parabolic degrees at most 3 and 1 respectively, then $u$ is a parabolic polynomial of parabolic degree $\operatorname{deg}(u)\leq 4$. 
\end{korollar}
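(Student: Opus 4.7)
The plan is to reduce to Corollary \ref{SchauderCh_LiouvilleGanzraum} via the following two-step procedure. First, I would construct a parabolic polynomial $p$ of parabolic degree $\leq 4$ that solves Equation \eqref{SchauderCh_ClassicalEquation} and whose boundary data match $f_1(\vec x,t):=a^{ni}\partial_i u((\vec x,0),t)$ and $f_2(\vec x,t):=a^{ni}a^{kl}\partial_{ikl}u((\vec x,0),t)$. Setting $v:=u-p$, the function $v$ then satisfies Equation \eqref{SchauderCh_ClassicalEquation} together with the \emph{homogeneous} boundary conditions \eqref{SchauderCh_ClassicalBoundaryCondition}, so Lemma \ref{SchauderCh_ReflectionLemma} extends it to $\tilde v$ on $\R^n\times\R$ to which Corollary \ref{SchauderCh_LiouvilleGanzraum} applies. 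An elementary but useful observation: since $f_1,f_2$ have parabolic degrees at most $3<4$ and $1<4$ respectively, neither can contain $t$, so both are genuine polynomials in $\vec x$.

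To construct $p$, I make the ansatz
\[
p(x,t)=\sum_{k=0}^{4}\frac{x_n^k}{k!}\,p_k(\vec x,t),
\]
where $p_k$ is a parabolic polynomial in $(\vec x,t)$ of parabolic degree $\leq 4-k$; the degree bound forces $p_k$ to be $t$-independent for $k\geq 1$, while $p_0$ may carry a single monomial $c_0 t$. Introducing $A:=a^{nn}$, $B:=\sum_{\alpha<n}a^{n\alpha}\partial_\alpha$, and $C:=\sum_{\alpha,\beta<n}a^{\alpha\beta}\partial_{\alpha\beta}$, so that $a^{ij}\partial_{ij}=A\partial_{nn}+2B\partial_n+C$, a direct expansion gives the first boundary condition as
\[
Ap_1+Bp_0=f_1,
\]
which determines $p_1$ uniquely once $p_0$ is chosen. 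Setting $q:=a^{ij}\partial_{ij}p$ and using $a^{ni}\partial_i=A\partial_n+B$ again, the second boundary condition becomes $Aq_1+Bq_0=f_2$; expanding $q_0=Ap_2+2Bp_1+Cp_0$ and $q_1=Ap_3+2Bp_2+Cp_1$ makes this a linear equation determining $p_3$ from $p_0,p_1,p_2,f_2$. The PDE $\dot p+a^{ij}\partial_{ij}q=0$ would \emph{a priori} impose many conditions, but because $q$ has parabolic degree $\leq 2$, only the constant coefficient of $a^{ij}\partial_{ij}q$ survives; the higher powers of $x_n$ vanish automatically from the degree bookkeeping. The remaining scalar identity fixes $c_0$. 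The free data — $p_2$, $p_4$, and the non-$c_0$ part of $p_0$ — may be chosen arbitrarily, e.g.\ $p_2=p_4=0$ and $p_0=c_0 t$.

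With $p$ in hand, $v:=u-p$ lies in $C^{4,1}(\R^{n-1}\times[0,\infty)\times\R)$, satisfies Equation \eqref{SchauderCh_ClassicalEquation} and the homogeneous boundary conditions \eqref{SchauderCh_ClassicalBoundaryCondition}, and still obeys $|v(x,t)|\leq C'(1+d((x,t),0))^{4+\gamma}$ because $p$ has at most quartic parabolic growth. Lemma \ref{SchauderCh_ReflectionLemma} then yields $\tilde v\in C^{4,1}(\R^n\times\R)$ solving Equation \eqref{SchauderCh_ClassicalEquation} globally. Since the reflection $x\mapsto x-2x_n v^*$ with $v^*=\sum_i(a^{ni}/a^{nn})e_i$ is a bounded linear map, $d((x-2x_n v^*,t),0)\leq C\,d((x,t),0)$, so $\tilde v$ retains a polynomial growth bound of exponent $4+\gamma$. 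Corollary \ref{SchauderCh_LiouvilleGanzraum} now forces $\tilde v$ to be a parabolic polynomial of parabolic degree $\leq 4$, hence so is $v$, and therefore $u=v+p$.

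The main obstacle is the construction of $p$: one must verify that the PDE constraint and the two boundary conditions are simultaneously consistent inside the finite-dimensional space of parabolic polynomials of parabolic degree $\leq 4$. What makes it work is the \emph{degree collapse} — after expanding in powers of $x_n$, the PDE reduces to a single scalar identity, while the boundary equations triangulate into explicit formulas for $p_1$ in terms of $p_0$ and for $p_3$ in terms of the preceding coefficients. Once $p$ exists, the reflection and Liouville steps are routine applications of the tools already developed.
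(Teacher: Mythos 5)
Your proof is correct, and the overall architecture (reduce to homogeneous boundary traces, reflect via Lemma \ref{SchauderCh_ReflectionLemma}, invoke Corollary \ref{SchauderCh_LiouvilleGanzraum}) is the same as the paper's. The difference lies entirely in how the reduction polynomial $p$ is produced. You build $p$ from scratch by a triangular ansatz in powers of $x_n$, with the first boundary condition fixing $p_1$ from $p_0$, the second fixing $p_3$ from $p_0,p_1,p_2$, and the one surviving scalar component of the PDE (after the degree collapse) fixing $c_0$. This is a valid and self-contained construction, and your consistency checks (each $p_k$ lands in the right degree class and stays $t$-independent, the higher $x_n$-coefficients of $\dot p + a^{ij}a^{kl}\partial_{ijkl}p$ vanish automatically) are exactly the points that need verification. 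The paper instead takes $p(x)=\sum_{|\alpha|\leq 4}\frac{\nabla_\alpha u(0,0)}{\alpha!}x^\alpha$ to be the spatial Taylor polynomial of $u$ at the origin and sets $\tilde u:=u-p+t\,a^{ij}a^{kl}\partial_{ijkl}p$, precisely as in Corollary \ref{SchauderCh_LiouvilleHalbraumNachOben}; the key observation that replaces all your degree bookkeeping is that since $B_1u|_{x_n=0}$ and $B_2u|_{x_n=0}$ are themselves polynomials of the stated degrees, they coincide with their Taylor polynomials at the origin, which are exactly $B_1p|_{x_n=0}$ and $B_2p|_{x_n=0}$, so the subtraction kills the boundary traces automatically. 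The paper's route is shorter and reuses the same formula that already appeared; your route is more explicit and avoids the ``why do the Taylor coefficients match?'' step, at the cost of a longer linear-algebra argument for the solvability of the ansatz.
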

\begin{proof}
Using the same argument as in the proof of Corollary \ref{SchauderCh_LiouvilleHalbraumNachOben}, we may assume without loss of generality that $a^{ni}\partial_i u((\vec x,0),t)=a^{ni}a^{kl}\partial_{ikl}u((\vec x,0),t)\equiv 0$. So, we can apply Lemma \ref{SchauderCh_ReflectionLemma} and define a solution $\tilde u(x,t)\in C^{4,1}(\R^n\times\R)$ of Equation \eqref{SchauderCh_ClassicalEquation}. For $x_n<0$, we estimate 
$$|\tilde u(x,t)=|u(x-2x_n v, t)|\leq C(1+ d((x-2x_nv, t),0))^{4+\gamma}\leq C(1+d((x,t),0))^{4+\gamma}.$$
We may therefore apply Corollary \ref{SchauderCh_LiouvilleGanzraum} and deduce the corollary.

\end{proof}

The following two corollaries can be established by essentially repeating the same proof.
\begin{korollar}\label{SchauderCh_LiouvilleHalbraumNachObenBC}
Let $u\in C^{4,1}(\R^{n-1}\times[0,\infty)\times[0,\infty))$ satisfy Equation \eqref{SchauderCh_ClassicalEquation} and $|u(x,t)|\leq C(1+d((x,t),0))^{4+\gamma}$ for some $\gamma\in(0,1)$, $C>0$ and all $(x,t)\in\R^{n-1}\times[0,\infty)\times[0,\infty)$. If $u(x,0)$ is a polynomial of degree at most 4 and
$a^{ni}\partial_i u((\vec x,0),t)$ and $a^{ni}a^{kl}\partial_{ikl}u((\vec x,0),t)$ are parabolic polynomials of parabolic degrees at most 3 and 1 respectively, then $u$ is a parabolic polynomial of parabolic degree $\operatorname{deg}(u)\leq 4$. 
\end{korollar}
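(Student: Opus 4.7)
The plan is to reduce the corollary to Corollary \ref{SchauderCh_LiouvilleHalbraumNachOben} via the reflection trick of Lemma \ref{SchauderCh_ReflectionLemma}, precisely as in the proof of Corollary \ref{SchauderCh_LiouvilleGanzraumBC}. The only novelty is that the reduction must accommodate both polynomial initial data and polynomial boundary data simultaneously.

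First I would construct a parabolic polynomial $q(x,t)$ of parabolic degree at most $4$ that (a) solves $\dot q + a^{ij}a^{kl}\partial_{ijkl}q=0$ identically on $\R^n\times\R$ and (b) matches the two prescribed boundary traces, i.e.\ $a^{ni}\partial_i q((\vec x,0),t) = a^{ni}\partial_i u((\vec x,0),t)$ and $a^{ni}a^{kl}\partial_{ikl}q((\vec x,0),t) = a^{ni}a^{kl}\partial_{ikl}u((\vec x,0),t)$. The existence of such a $q$ is a finite-dimensional linear-algebra problem: one parametrises $q$ by its (finitely many) coefficients, uses (a) to eliminate the $t$-coefficients in favour of the spatial ones, and then solves (b) by induction on parabolic degree using that strong ellipticity forces $a^{nn}\geq\theta>0$, so the leading coefficients in the two trace equations are nonvanishing and the resulting triangular system is solvable.

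Setting $\bar u := u-q$, property (a) preserves the PDE, property (b) forces both boundary data of $\bar u$ to vanish identically, the trace $\bar u(\,\cdot\,,0)$ remains a polynomial of degree at most $4$ (since $q(\,\cdot\,,0)$ is), and the growth bound $|\bar u(x,t)|\le C(1+d((x,t),0))^{4+\gamma}$ persists because $q$ has parabolic degree at most $4$. I would then apply Lemma \ref{SchauderCh_ReflectionLemma} with $I=[0,\infty)$ to extend $\bar u$ by reflection across $\{x_n=0\}$ to a function $\widetilde{\bar u}\in C^{4,1}(\R^n\times[0,\infty))$ which solves \eqref{SchauderCh_ClassicalEquation} on the whole half-space. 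The growth bound carries over verbatim as in the proof of Corollary \ref{SchauderCh_LiouvilleGanzraumBC}, and since the reflection is composition with an affine map in the $x$-variable, the initial trace $\widetilde{\bar u}(\,\cdot\,,0)$ is again a polynomial of degree at most $4$.

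Corollary \ref{SchauderCh_LiouvilleHalbraumNachOben} then applies to $\widetilde{\bar u}$ and yields that it is a parabolic polynomial of parabolic degree at most $4$. Restricting back to $x_n\ge 0$ and adding $q$ finishes the proof. The main technical obstacle is the construction of the auxiliary polynomial $q$; once that is in place, the remaining steps are direct applications of the reflection lemma and the preceding Liouville-type corollary.
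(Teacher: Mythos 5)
Your proposal is correct and follows essentially the same reduction the paper has in mind: subtract a parabolic polynomial of degree at most $4$ solving the equation with matching boundary data, reflect across $\{x_n=0\}$ via Lemma~\ref{SchauderCh_ReflectionLemma}, and then invoke Corollary~\ref{SchauderCh_LiouvilleHalbraumNachOben}. One simplification worth noting, mirroring the normalisation used in the proofs of Corollaries~\ref{SchauderCh_LiouvilleHalbraumNachOben} and~\ref{SchauderCh_LiouvilleGanzraumBC}, is that you may simply take $q$ to be the parabolic Taylor polynomial of $u$ at $(0,0)$ of parabolic degree $4$: it automatically satisfies $\dot q+a^{ij}a^{kl}\partial_{ijkl}q=0$ because $u$ does at the origin, and the polynomiality hypotheses on the boundary traces (degrees $3$ and $1$) and the initial trace (degree $4$) force those traces of $q$ to coincide with those of $u$ exactly, so the finite-dimensional linear system you describe never needs to be solved, and the reflected initial trace is then identically zero, sparing you the separate argument that the two affinely reflected degree-$4$ pieces of $\bar u(\cdot,0)$ glue into a single polynomial.
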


\begin{korollar}\label{SchauderCh_LiouvilleHalbraumNachUntenBC}
Let $u\in C^{4,1}(\R^{n-1}\times[0,\infty)\times(-\infty,0])$ satisfy Equation \eqref{SchauderCh_ClassicalEquation} and  $|u(x,t)|\leq C(1+d((x,t),0))^{4+\gamma}$ for some $\gamma\in(0,1)$, $C>0$ and all $(x,t)\in\R^{n-1}\times[0,\infty)\times(-\infty,0]$. If 
$a^{ni}\partial_i u((\vec x,0),t)$ and $a^{ni}a^{kl}\partial_{ikl}u((\vec x,0),t)$ are parabolic polynomials of parabolic degrees at most 3 and 1 respectively, then $u$ is a parabolic polynomial of parabolic degree $\operatorname{deg}(u)\leq 4$. 
\end{korollar}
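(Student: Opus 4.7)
The plan is to imitate the proof of Corollary \ref{SchauderCh_LiouvilleGanzraumBC} verbatim, changing only the Liouville theorem invoked at the final step: instead of Corollary \ref{SchauderCh_LiouvilleGanzraum} (whole space-time) we will invoke Corollary \ref{SchauderCh_LiouvilleHalbraumNachUnten} (lower half-time). First I would reduce to the homogeneous boundary case. Write $\varphi(\vec x, t) := a^{ni}\partial_i u((\vec x, 0), t)$ and $\psi(\vec x, t) := a^{ni}a^{kl}\partial_{ikl} u((\vec x, 0), t)$, which by hypothesis are parabolic polynomials of parabolic degree $\le 3$ and $\le 1$ respectively. I would exhibit a parabolic polynomial $p(x,t)$ of parabolic degree $\le 4$ that both solves $\dot p + a^{ij}a^{kl}\partial_{ijkl} p = 0$ and realizes the boundary data $(\varphi, \psi)$. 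Setting $u' := u - p$, $u'$ still solves \eqref{SchauderCh_ClassicalEquation}, now with homogeneous \eqref{SchauderCh_ClassicalBoundaryCondition}, and since $p$ has parabolic degree $\le 4$ the growth bound $|u'(x,t)| \le C(1 + d((x,t),0))^{4+\gamma}$ persists (with a larger constant).

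Second, I would apply Lemma \ref{SchauderCh_ReflectionLemma} with interval $I = (-\infty, 0]$ to the now-homogeneous $u'$. This yields a reflected extension $\widetilde{u'} \in C^{4,1}(\R^n \times (-\infty, 0])$ which is a classical solution of Equation \eqref{SchauderCh_ClassicalEquation} on the full lower half-time slab. The reflection is given by $\widetilde{u'}(x,t) = u'(x - 2x_n v, t)$ for $x_n < 0$, where $v = \sum_i (a^{ni}/a^{nn})e_i$ is a fixed vector depending only on $a^{ij}$; consequently $|x - 2x_n v| \le (1 + 2|v|)|x|$, so the growth estimate transfers (up to a multiplicative constant depending only on $a^{ij}$) to all of $\R^n \times (-\infty, 0]$. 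Corollary \ref{SchauderCh_LiouvilleHalbraumNachUnten} then forces $\widetilde{u'}$ to be a parabolic polynomial of parabolic degree $\le 4$. Restricting to $\R^{n-1} \times [0,\infty) \times (-\infty, 0]$ and adding $p$ back shows the same for $u$.

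The only real obstacle is the reduction in the first step: one must verify that any pair $(\varphi, \psi)$ of parabolic polynomials of degrees $\le 3$ and $\le 1$ can be matched by the two boundary traces of some parabolic polynomial $p$ of degree $\le 4$ in the kernel of $\partial_t + a^{ij}a^{kl}\partial_{ijkl}$. Since parabolic polynomials of degree $\le 4$ form a finite-dimensional space and the two trace maps are linear, this reduces to a dimension count together with an explicit ansatz (e.g.\ building $p$ layer-by-layer in powers of $x_n$ and $t$ and using the PDE to solve for the higher $x_n$-coefficients in terms of the lower ones); the computation is essentially the same one implicit in the analogous step of Corollary \ref{SchauderCh_LiouvilleGanzraumBC}, and I would simply refer to it.
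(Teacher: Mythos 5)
Your proposal matches the paper's approach: reduce to homogeneous boundary data, reflect via Lemma~\ref{SchauderCh_ReflectionLemma} with $I=(-\infty,0]$, and invoke Corollary~\ref{SchauderCh_LiouvilleHalbraumNachUnten}; the growth-transfer estimate for $\tilde u'$ is exactly the one appearing in the proof of Corollary~\ref{SchauderCh_LiouvilleGanzraumBC}. The one place where you take a heavier route than the paper is the reduction step. You frame it as a surjectivity question — show that the two trace maps send the space of degree-$\le 4$ parabolic polynomial solutions onto all pairs of boundary data of degree $\le 3$ and $\le 1$ — and propose a dimension count plus an explicit ansatz. That would work, but it is more than is needed, and a naive dimension count alone does not give surjectivity. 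The paper's WLOG step (borrowed from the proof of Corollary~\ref{SchauderCh_LiouvilleHalbraumNachOben}) sidesteps surjectivity entirely: set $p(x):=\sum_{|\alpha|\le 4}\frac{\nabla_\alpha u(0,0)}{\alpha!}x^\alpha$ and $P(x,t):=p(x)-t\,a^{ij}a^{kl}\partial_{ijkl}p(x)$, so that $P$ solves the PDE, has parabolic degree $\le 4$, and shares with $u$ all parabolic Taylor coefficients at the origin up to order $4$. Then $\tilde u:=u-P$ satisfies $\partial_t^j\nabla_\alpha\tilde u(0,0)=0$ whenever $|\alpha|+4j\le 4$, and the traces $B_1\tilde u((\vec x,0),t)$, $B_2\tilde u((\vec x,0),t)$ are (by hypothesis on $u$ and by inspection for $P$) parabolic polynomials of degree $\le 3$ and $\le 1$ whose Taylor coefficients at the origin all vanish — hence they are identically zero. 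The polynomial hypothesis on the traces of $u$ is precisely what makes the Taylor-polynomial subtraction kill the boundary data automatically, so no matching problem needs to be solved.
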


\section{Simon's Blow-up Argument}
Throughout this section, let $a^{ij}$ denote the entries of a strictly elliptic matrix. We consider the parabolic operator 
\begin{equation}\label{SchauderCh_BlowUpSectionAssumption01}Pu:=\partial_t u+a^ {ij}a^ {kl}\partial_{ijkl}u.
\end{equation}
Let further $\Lambda>0$ and $\theta>0$ such that 
\begin{equation}\label{SchauderCh_BlowUpSectionAssumption02}
|a^{ij}|\leq\Lambda\textrm{ for all $1\leq i,j\leq n$}
\hspace{.5cm}\textrm{and}\hspace{.5cm}
a^{ij}\xi_i\xi_j\geq\theta|\xi|^2\textrm{ for all }\xi\in\R^n.
\end{equation}

\begin{lemma}\label{SchauderCh_BlowUpGanzraumLemma}
There exists a constant $C(n,\gamma,\Lambda,\theta)$ such that for all constant coefficient operators $P$ of the form \eqref{SchauderCh_BlowUpSectionAssumption01} satisfying Assumption \eqref{SchauderCh_BlowUpSectionAssumption02} and all $u\in C^{4,1,\gamma}(\R^n\times\R)$
$$[D^{4,1}u]_{\gamma,\R^n\times\R}^{(0)}\leq C(n,\gamma, \Lambda,\theta)[Pu]_{\gamma,\R^n\times\R}^{(0)}.$$
\end{lemma}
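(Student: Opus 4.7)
My plan is to argue by contradiction using Simon's blow-up technique (cf.\ \cite{simon}). Assuming the estimate fails, for each $k\geq 1$ there is a constant-coefficient operator $P_k$ satisfying \eqref{SchauderCh_BlowUpSectionAssumption01}--\eqref{SchauderCh_BlowUpSectionAssumption02} and a function $u_k\in C^{4,1,\gamma}(\R^n\times\R)$ with $[D^{4,1}u_k]^{(0)}_{\gamma} > k\,[P_k u_k]^{(0)}_{\gamma}$. After normalizing $[D^{4,1}u_k]^{(0)}_\gamma = 1$, I would observe that one of the four constituents of this seminorm---the spatial and temporal H\"older seminorms of $\nabla^4 u_k$ and of $\dot u_k$---must remain bounded below by a constant $c(n)>0$ along a subsequence. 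I will describe the representative case $[\nabla^4 u_k]^{\mathrm{space}}_\gamma \geq c$; the three remaining cases are handled identically.

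\paragraph{Blow-up.} In this case I select a multi-index $\alpha_k$ of length $4$ and points $(x_k,t_k)$, $(y_k,t_k)$ realizing (up to a universal factor) this piece of the seminorm, set $\rho_k:=|x_k-y_k|$ and $\xi_k:=(y_k-x_k)/\rho_k\in S^{n-1}$, and let $\pi_k$ denote the parabolic Taylor polynomial of $u_k$ at $(x_k,t_k)$ of parabolic degree $\leq 4$. The blow-up
$$w_k(x,t):=\rho_k^{-(4+\gamma)}\bigl[u_k(x_k+\rho_k x,\,t_k+\rho_k^4 t)-\pi_k(x_k+\rho_k x,\,t_k+\rho_k^4 t)\bigr]$$
is designed so that the parabolic scaling $(x,t)\mapsto(\rho_k x,\rho_k^4 t)$ leaves both $[D^{4,1}\cdot]^{(0)}_\gamma$ and $[P_k\cdot]^{(0)}_\gamma$ homogeneous of the same degree, yielding $[D^{4,1}w_k]^{(0)}_\gamma=1$ and $[P_k w_k]^{(0)}_\gamma=[P_k u_k]^{(0)}_\gamma<1/k$. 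By construction $w_k$ together with all its spatial derivatives of order $\leq 4$ and $\dot w_k$ vanish at the origin, while the lower bound $|\nabla_{\alpha_k}w_k(\xi_k,0)-\nabla_{\alpha_k}w_k(0,0)|\geq c$ is preserved (since $\nabla_{\alpha_k}\pi_k$ is constant and thus cancels).

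\paragraph{Compactness and Liouville.} The vanishing of the low-order derivatives at $(0,0)$ combined with $[D^{4,1}w_k]^{(0)}_\gamma=1$ yields $|\nabla^4 w_k(x,t)|+|\dot w_k(x,t)|\leq C\,d((x,t),0)^\gamma$, which upon successive integration in space (using the vanishing of $\nabla^m w_k(0,0)$ for $m\leq 4$) and then in time (using $\dot w_k(0,0)=0$) yields the pointwise bound $|w_k(x,t)|\leq C(n,\gamma)\,d((x,t),0)^{4+\gamma}$. Theorem \ref{SchauderCh_InterpolationEstimates} applied on each parabolic ball $U_R(0)$ then controls the full $C^{4,1,\gamma}$-norm of $w_k$ on every bounded set. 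Since the matrices $a^{ij}_k$ lie in a compact subset of $\R^{n\times n}$ and the multi-indices $\alpha_k$ and unit vectors $\xi_k$ take values in finite or compact spaces, I pass to a subsequence with $a^{ij}_k\to a^{ij}_\infty$ (still satisfying \eqref{SchauderCh_BlowUpSectionAssumption02}), $\alpha_k\equiv\alpha$, and $\xi_k\to\xi_\infty\in S^{n-1}$; Arzel\`a--Ascoli gives $w_k\to w_\infty$ in $C^{4,1,\gamma'}_{\mathrm{loc}}$ for every $\gamma'<\gamma$. The limit is a classical solution of $P_\infty w_\infty=0$ on $\R^n\times\R$ with $|w_\infty(x,t)|\leq C\,d((x,t),0)^{4+\gamma}$, so Corollary \ref{SchauderCh_LiouvilleGanzraum} forces $w_\infty$ to be a parabolic polynomial of parabolic degree $\leq 4$. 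In particular $\nabla_\alpha w_\infty$ is constant in $(x,t)$, contradicting $|\nabla_\alpha w_\infty(\xi_\infty,0)-\nabla_\alpha w_\infty(0,0)|\geq c>0$.

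\paragraph{Main obstacle.} The delicate step I expect to spend the most care on is the simultaneous choice of the scaling factor $\rho_k^{-(4+\gamma)}$ and the subtracted parabolic polynomial $\pi_k$: this must preserve both the normalization $[D^{4,1}w_k]^{(0)}_\gamma=1$ (so that the defect survives the limit) and the pointwise growth $|w_k|\lesssim d^{4+\gamma}$ (required to apply Corollary \ref{SchauderCh_LiouvilleGanzraum}). The fact that $[D^{4,1}\pi_k]^{(0)}_\gamma=0$ for any parabolic polynomial of degree $\leq 4$, together with the exact $(4+\gamma)$-homogeneity of the seminorm under the parabolic scaling, is what makes these two requirements compatible and constitutes the conceptual heart of the argument.
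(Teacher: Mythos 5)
Your proposal is correct and follows essentially the same blow-up argument as the paper: contradiction, normalization, realization points, subtraction of the parabolic Taylor polynomial at the base point, the growth bound $|w_k|\lesssim d^{4+\gamma}$, interpolation for local compactness, passage to a limit, and Corollary \ref{SchauderCh_LiouvilleGanzraum} to force the defect to vanish. The only stylistic difference is that you treat the four constituents of $[D^{4,1}u]^{(0)}_\gamma$ separately and select a representative case, whereas the paper tracks $D^{4,1}u$ as a package and picks realization points for the combined seminorm directly; the two formulations are interchangeable.
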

\begin{proof}
For the duration of the proof, we write $[\cdot]_\gamma:=[\cdot]_{\gamma,\R^n\times\R}^{(0)}$. The proof is by contradiction. If the lemma were false we could find operators $P_k$ of the form \eqref{SchauderCh_BlowUpSectionAssumption01} that satisfy Assumption \eqref{SchauderCh_BlowUpSectionAssumption02} and functions $u_k$ such that $[D^{4,1} u_k]_\gamma>k[P_ku_k]_\gamma$. We define
$$v_k:=\frac{u_k}{[D^{4,1} u_k]_\gamma}\hspace{.5cm}\textrm{so that}\hspace{.5cm}[D^{4,1} v_k]_\gamma=1\textrm{ and }[P_k v_k]_\gamma<\frac1k.$$
$[D^{4,1} v_k]_\gamma=1$ implies the existence of $p_k=(x_k, t_k)\neq q_k=(y_k, s_k)\in\R^n\times\R$ and $\epsilon_0(n)\in(0,1)$ such that 
\begin{equation}\label{SchauderCh_BlowUpGanzraum1}
    |D^{4,1}v_k(p_k)-D^{4,1}v_k(q_k)|\geq\epsilon_0(n)d(p_k,q_k)^\gamma.
\end{equation}
Here $d$ is the parabolic metric from Definition \ref{SchauderCh_parabolicMetricDefinition}. We put $\lambda_k:=d(p_k,q_k)$ and let 
\begin{equation}\label{SchauderCh_BlowUpGanzraum11}
w_k(x,t):=\lambda_k^{-4-\gamma}v_k(q_k+(\lambda_k x,\lambda_k^4 t)).
\end{equation}
Note that $\dot w_k(x,t)=\lambda_k^{-\gamma}\dot v_k(q_k+(\lambda_k x,\lambda_k^4 t))$ and $\nabla_\alpha w_k(x,t)=\lambda_k^{-\gamma}(\nabla_\alpha v_k)(q_k+(\lambda_k x,\lambda_k^4 t))$ for all multi indices $\alpha\in\N_0^n$ of length 4. This implies 
\begin{equation}\label{SchauderCh_BlowUpGanzraum2}
    [D^{4,1} w_k]_\gamma=[D^{4,1}v_k]_\gamma=1
    \hspace{.5cm}\textrm{and}\hspace{.5cm}
    [P_k w_k]_\gamma=[P_kv_k]_\gamma<\frac1k.
\end{equation}
Let $\tilde p_k:=(\lambda_k^{-1}(x_k-y_k),\lambda_k^{-4}(t_k-s_k))$. In view of Estimate \eqref{SchauderCh_BlowUpGanzraum1} and the definition of $w_k$ we get 
\begin{equation}\label{SchauderCh_BlowUpGanzraum3}
    |D^{4,1} w_k(\tilde p_k)- D^{4,1}w_k(0)|\geq\epsilon_0(n).
\end{equation}
In a final step, we modify $w_k$ by subtracting the parabolic Taylor polynomial 
$$\tilde w_k(x,t):=w_k(x,t)-\dot w_k(0,0)t-\sum_{|\alpha|\leq 4}\frac{\nabla_\alpha w_k(0,0)}{\alpha!}x^\alpha.$$
This ensure that $\partial_t^ k\nabla_\alpha \tilde w_k(0,0)=0$ whenever $0\leq 4k+|\alpha|\leq 4$. Note that $D^{4,1}\tilde w_k$ differs from $D^{4,1}w_k$ only by a constant. So Estimates \eqref{SchauderCh_BlowUpGanzraum2} and \eqref{SchauderCh_BlowUpGanzraum3} transform into 
\begin{equation}\label{SchauderCh_BlowUpGanzraum4}
    [D^{4,1}\tilde w_k]_\gamma=1,\hspace{.5cm}
    [P_k\tilde w_k]_\gamma<\frac1k\hspace{.5cm}\textrm{and}\hspace{.5cm}
    |D^{4,1}\tilde w_k(\tilde p_k) |\geq\epsilon_0(n).
\end{equation}
We claim that $(\tilde p_k)\subset\R^n\times\R$ is bounded and that $(\tilde w_k)\subset C^{4,1,\gamma}(\R^n\times\R)$ is locally bounded. The first claim follows from
$$d(\tilde p_k, 0)=d\left(\left(
\frac{x_k-y_k}{\lambda_k},\frac{t_k-s_k}{\lambda_k^4}
,0\right)\right)
=\frac1{\lambda_k}d(( x_k-y_k,t_k-s_k),0)
=\frac1{\lambda_k}d(p_k,q_k)
=1.$$
Next, note that $[D^{4,1}\tilde w_k]_\gamma=1$ and $D^{4,1}\tilde w_k(0,0)=0$ imply
\begin{equation}\label{SchauderCh_D41BoundedMISS} 
|D^{4,1}\tilde w_k(x,t)|\leq |D^{4,1}\tilde w_k(0,0)|+C(|x|^\gamma+|t|^{\frac\gamma4})\leq C(|x|+|t|^{\frac14})^\gamma.
\end{equation}
Using $\dot{\tilde w}_k(0,0)=\nabla_\alpha \tilde w_k(0,0)=0$ for $0\leq |\alpha|\leq 4$ and the intermediate value theorem, we deduce that for any multi-index $|\alpha|\leq 3$
\begin{equation}\label{SchauderCh_D41BoundedMISS2} 
    |\nabla_\alpha\tilde w_k(x,t)|\leq C(n)(|x|+|t|^{\frac14})^{4-|\alpha|+\gamma}.
\end{equation}
Estimates \eqref{SchauderCh_D41BoundedMISS} and \eqref{SchauderCh_D41BoundedMISS2} imply that $(w_k)\subset C^{4,1,\gamma}(K\times[0,T])$ is bounded for every compact set $K\times[0,T]\subset\R^n\times \R$ and hence we can pass to a subsequence along which $\tilde p_k\rightarrow\tilde p$ and $\tilde w_k\rightarrow\tilde w$ in $C^{4,1}_{\operatorname{loc}}$. As the operators $P_k$ satisfy Assumption \eqref{SchauderCh_BlowUpSectionAssumption02}, we may, after potentially passing to another subsequence, assume that $P_k\rightarrow P$, where $P$ is an operator of the form \eqref{SchauderCh_BlowUpSectionAssumption01} that satisfies Assumption \eqref{SchauderCh_BlowUpSectionAssumption02}. As  $\dot {\tilde w}_k(0,0)=\nabla_\alpha \tilde w_k(0,0)=0$ for all $|\alpha|\leq 4$ the same remains true for $\tilde w$. Taking $k\rightarrow\infty$ in Estimate \eqref{SchauderCh_BlowUpGanzraum4}, we deduce 
\begin{align}
    &[P\tilde w]_\gamma\leq \liminf_{k\rightarrow\infty}[P_k\tilde w_k]_\gamma=0\hspace{.5cm}\textrm{ and, using $P\tilde w(0)=0$, we get }P\tilde w=0,\\
    &|\tilde w(x,t)|=\lim_{k\rightarrow\infty}|\tilde w_k(x,t)|\leq C(n)(|x|+|t|^{\frac14})^{4+\gamma}.
\end{align}
By Corollary \ref{SchauderCh_LiouvilleGanzraum}, $\tilde w$ is a parabolic polynomial of degreee $\operatorname{deg}(\tilde w)\leq 4$. In particular $D^{4,1}\tilde w$ is a constant and therefore $D^{4,1}\tilde w(\tilde p)=D^{4,1}\tilde w(0)=0$. However, taking $k\rightarrow\infty$ in Estimate \eqref{SchauderCh_BlowUpGanzraum4}, we find
$$|D^{4,1}\tilde w(\tilde p)|=\lim_{k\rightarrow\infty}|D^{4,1}\tilde w_k(\tilde p_k)|\geq\epsilon_0(n).$$
Thus we have arrived at a contradiction and the lemma follows. 
\end{proof}

Following essentially the same argument, we can obtain similar estimates when we have either a temporal or a spatial boundary or both. In the following, we give the respective results and comment on the required changes in the proofs.
\begin{lemma}\label{SchauderCh_BlowUpHalbraumLemma}
There exists a constant $C(n,\gamma,\Lambda,\theta)$ such that for all constant coefficient operators $P$ of the form \eqref{SchauderCh_BlowUpSectionAssumption01} satisfying Assumption \eqref{SchauderCh_BlowUpSectionAssumption02} and all $u\in C^{4,1,\gamma}(\R^n\times[0,\infty))$
$$[D^{4,1}u]_{\gamma, \R^n\times[0,\infty)}^{(0)}\leq C(n,\gamma, \Lambda,\theta)\left([Pu]_{\gamma,\R^n\times[0,\infty)}^{(0)}+[\nabla^4u(\cdot,0)]_{\gamma,\R^n}\right).$$
\end{lemma}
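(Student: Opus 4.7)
The plan is to mimic the blow-up proof of Lemma \ref{SchauderCh_BlowUpGanzraumLemma}, but now keep track of the temporal boundary $t=0$ and use the extra seminorm $[\nabla^4 u(\cdot,0)]_{\gamma,\R^n}$ on the right to kill any blow-up profile whose initial datum is not polynomial. Arguing by contradiction, I would produce operators $P_k$ satisfying \eqref{SchauderCh_BlowUpSectionAssumption02} and functions $u_k$ with $[D^{4,1}u_k]_\gamma > k\bigl([P_ku_k]_\gamma+[\nabla^4 u_k(\cdot,0)]_{\gamma,\R^n}\bigr)$. After normalising by $[D^{4,1}u_k]_\gamma$, choose near-maximising points $p_k=(x_k,t_k)\neq q_k=(y_k,s_k)\in\R^n\times[0,\infty)$, set $\lambda_k:=d(p_k,q_k)$, $\tau_k:=s_k/\lambda_k^4\ge 0$, and rescale
$$w_k(x,t):=\lambda_k^{-4-\gamma}v_k\bigl(y_k+\lambda_k x,\;s_k+\lambda_k^4 t\bigr)$$
on $\R^n\times[-\tau_k,\infty)$. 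A direct computation using constant coefficients gives $[D^{4,1}w_k]_\gamma=1$, $[P_kw_k]_\gamma<1/k$, and crucially $[\nabla^4 w_k(\cdot,-\tau_k)]_{\gamma,\R^n}=[\nabla^4 v_k(\cdot,0)]_{\gamma,\R^n}<1/k$; the exponents match because the $\lambda_k^{-4-\gamma}$ prefactor is the unique choice that makes all three seminorms invariant. Subtract the parabolic Taylor polynomial of $w_k$ at the origin to obtain $\tilde w_k$ whose derivatives of parabolic order $\le 4$ vanish at $(0,0)$; adding a polynomial of parabolic degree $\le 4$ affects neither $D^{4,1}\tilde w_k$ nor the $\gamma$-seminorm of $\nabla^4\tilde w_k(\cdot,-\tau_k)$.

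From the vanishing of low-order derivatives at the origin and $[D^{4,1}\tilde w_k]_\gamma=1$, I would derive (exactly as in Lemma \ref{SchauderCh_BlowUpGanzraumLemma}) the growth bound $|\tilde w_k(x,t)|\le C(n)(|x|+|t|^{1/4})^{4+\gamma}$, which gives local $C^{4,1}$-bounds on any fixed compact subset of the domain. Passing to a subsequence along which $P_k\to P$ (admissible by \eqref{SchauderCh_BlowUpSectionAssumption02}), $\tilde p_k\to\tilde p$ with $d(\tilde p,0)=1$, and either (A) $\tau_k\to\infty$ or (B) $\tau_k\to\tau^*\in[0,\infty)$, one obtains a limit $\tilde w$ solving $P\tilde w=0$ on the corresponding limit domain with the same growth bound. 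In Case (A) the domain is all of $\R^n\times\R$ and Corollary \ref{SchauderCh_LiouvilleGanzraum} forces $\tilde w$ to be a parabolic polynomial of degree $\le 4$, so $D^{4,1}\tilde w$ is constant, contradicting the retained inequality $|D^{4,1}\tilde w(\tilde p)-D^{4,1}\tilde w(0)|\ge\epsilon_0(n)$ inherited from \eqref{SchauderCh_BlowUpGanzraum4}.

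In Case (B) the limit $\tilde w\in C^{4,1}(\R^n\times[-\tau^*,\infty))$ inherits the extra condition $[\nabla^4\tilde w(\cdot,-\tau^*)]_{\gamma,\R^n}=0$, i.e.\ $\nabla^4\tilde w(\cdot,-\tau^*)$ is constant, so $\tilde w(\cdot,-\tau^*)$ is a polynomial of degree $\le 4$. Translating in time by $\tau^*$ and applying Corollary \ref{SchauderCh_LiouvilleHalbraumNachOben} (the growth bound survives the shift up to a larger constant) again forces $\tilde w$ to be a parabolic polynomial of degree $\le 4$, producing the same contradiction. The main obstacle is therefore the bookkeeping in Case (B): one must choose the rescaling normalisation so that the \emph{initial} fourth-order seminorm is precisely scale invariant (not merely vanishing), and one must verify that the blow-up limit falls inside the hypothesis of Corollary \ref{SchauderCh_LiouvilleHalbraumNachOben}. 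A secondary technical point is ensuring local $C^{4,1}$ compactness up to the temporal boundary $t=-\tau_k$, which follows from interior regularity combined with the growth bound because the $\tilde w_k$ remain uniformly bounded in $C^{4,1,\gamma}$ on compact subsets of $\R^n\times[-\tau^*-1,\infty)$ for $k$ large.
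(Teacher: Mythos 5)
Your proposal is correct and follows essentially the same blow-up argument as the paper: the same normalisation and rescaling centred at $q_k$, the same growth bound from the vanishing Taylor data at the origin, the same dichotomy on $\tau_k=s_k/\lambda_k^4$, and the same invocation of Corollary \ref{SchauderCh_LiouvilleGanzraum} in Case (A) and Corollary \ref{SchauderCh_LiouvilleHalbraumNachOben} (after a time shift) in Case (B). The only minor slip is the statement that the $\tilde w_k$ are bounded on compact subsets of $\R^n\times[-\tau^*-1,\infty)$; since $\tilde w_k$ is defined only for $t\ge-\tau_k$, one should instead extract local $C^{4,1}$-limits on compact subsets of $\R^n\times(-\tau^*,\infty)$ (or do a diagonal argument with a shrinking temporal margin), and then use the uniform Hölder bounds to extend the limit continuously to the boundary $t=-\tau^*$ so that $[\nabla^4\tilde w(\cdot,-\tau^*)]_{\gamma,\R^n}=0$ can be concluded.
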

\begin{proof}
Throughout the proof, we only write $[\cdot]_\gamma$ for the various Hölder seminorms. We follow the proof of Lemma \ref{SchauderCh_BlowUpGanzraumLemma} and assume the lemma was false. Then we could find operators $P_k$ of the form \eqref{SchauderCh_BlowUpSectionAssumption01} that satisfy Assumption \eqref{SchauderCh_BlowUpSectionAssumption02} and functions $u_k$ such that 
$$[D^{4,1}u_k]_{\gamma}>k\left([P_ku_k]_{\gamma}+[\nabla^4 u_k(\cdot,0)]_{\gamma}\right).$$
Next, we choose $p_k=(x_k,t_k)$, $q_k=(y_k,s_k)$  as in the proof of Lemma \ref{SchauderCh_BlowUpGanzraumLemma}. Without loss of generality $t_k\geq s_k$. We define $\tilde p_k$ and the functions $\tilde w_k$ as we did in the proof of Lemma \ref{SchauderCh_BlowUpGanzraumLemma}. However, in view of Equation \eqref{SchauderCh_BlowUpGanzraum11}, as $u_k$ are only defined on $\R^n\times [0,\infty)$, the functions $w_k$ and $\tilde w_k$ are only defined for $t\geq -\lambda_k^{-4}s_k$. Following the arguments in the proof of Lemma \ref{SchauderCh_BlowUpGanzraumLemma}, we obtain
\begin{align}
    &[D^{4,1}\tilde w_k]_\gamma=1,\hspace{.5cm} |\tilde w_k(x,t)|\leq C(n)(|x|+|t|^{\frac14})^{4+\gamma},\label{SchauderCh_BlowUpObererHalbraum0}\\
    &|D^{4,1}\tilde w_k(\tilde p_k)-D^{4,1}\tilde w_k(0) |\geq\epsilon_0(n),\hspace{.5cm}\tilde p_k\in\R^n\times[0,\infty),\label{SchauderCh_BlowUpObererHalbraum1}\\
    &[\nabla^4\tilde w_k(\cdot,-\lambda_k^{-4}s_k)]_\gamma<\frac1k\hspace{.5cm}\textrm{and}\hspace{.5cm}    [P_k\tilde w_k]_\gamma<\frac1k,\label{SchauderCh_BlowUpObererHalbraum2}\\
    &\partial_t^l\nabla_\alpha\tilde w_k(0,0)=0\hspace{.5cm}\textrm{whenever}\hspace{.5cm}   0\leq 4l+|\alpha|\leq 4.\label{SchauderCh_BlowUpObererHalbraum3}
\end{align}
 After passing to a subsequence, we may assume that $\lambda_k^{-4}s_k\rightarrow\tau\in[0,\infty]$, $\tilde p_k\rightarrow\tilde p$, $P_k\rightarrow P$ and $\tilde w_k\rightarrow \tilde w$ in $C^{4,1}_{\operatorname{loc}}$. Now there are two cases to consider: First, if $\tau=\infty$, the limiting function $\tilde w$ is defined on $\R^n\times \R$ and we can derive the same contradiction as in the proof of Lemma \ref{SchauderCh_BlowUpGanzraumLemma}. If, however, $\tau<\infty$, we obtain a limiting function $\tilde w\in C^{4,1}(\R^n\times[-\tau,\infty))$. Letting $k\rightarrow\infty$ in \eqref{SchauderCh_BlowUpObererHalbraum3} yields $\partial_t^l\nabla_\alpha\tilde w_k(0,0)=0$ whenever $0\leq 4l+|\alpha|\leq 4$. Letting $k\rightarrow\infty$ in Estimates \eqref{SchauderCh_BlowUpObererHalbraum0}, \eqref{SchauderCh_BlowUpObererHalbraum1} and \eqref{SchauderCh_BlowUpObererHalbraum2}, we get 
\begin{align*}
    &[P\tilde w]_\gamma\leq \liminf_{k\rightarrow\infty}[P_k\tilde w_k]_\gamma=0\hspace{.5cm}\textrm{ and, using $P\tilde w(0)=0$, we get }P\tilde w=0,\\
    &[\nabla^4\tilde w(\cdot, -\tau)]_\gamma\leq \liminf_{k\rightarrow\infty}[\nabla^4\tilde w_k(\cdot,-\lambda_k^{-4}s_k)]_\gamma=0\hspace{.25cm}\textrm{ and hence $\tilde w(\cdot, -\tau)$ is a polynomial of degree $\leq 4$,}\\
    &|\tilde w(x,t)|=\lim_{k\rightarrow\infty}|\tilde w_k(x,t)|\leq C(n)(|x|+|t|^{\frac14})^{4+\gamma}\hspace{.5cm}\textrm{and}\hspace{.5cm}|D^{4,1}\tilde w(\tilde p)-D^{4,1}\tilde w(0,0)|\geq\epsilon_0(n).
\end{align*}
Now we may apply Corollary \ref{SchauderCh_LiouvilleHalbraumNachOben} to deduce that $\tilde w$ is a parabolic polynomial of degree at most 4. We can now produce the same contradiction as in the proof of Lemma \ref{SchauderCh_BlowUpGanzraumLemma}: By Corollary \ref{SchauderCh_LiouvilleHalbraumNachOben} $D^{4,1}\tilde w$ must be a constant, which contradicts $D^{4,1}\tilde w(\tilde p)\neq D^{4,1}\tilde w(0)$.
\end{proof}

\begin{lemma}\label{SchauderCh_BlowUpHalbRaumUntenLemma}
 There exists a constant $C(n,\gamma,\Lambda,\theta)$ such that for all constant coefficient operators $P$ of the form \eqref{SchauderCh_BlowUpSectionAssumption01} satisfying Assumption \eqref{SchauderCh_BlowUpSectionAssumption02} and all $u\in C^{4,1,\gamma}(\R^n\times(-\infty,0])$
$$[D^{4,1}u]^{(0)}_{\gamma, \R^n\times(-\infty,0]}\leq C(n,\gamma, \Lambda,\theta)[Pu]^{(0)}_{\gamma, \R^n\times(-\infty,0]}.$$
\end{lemma}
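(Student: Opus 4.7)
The plan is to imitate Simon's blow-up contradiction exactly as executed in Lemmas \ref{SchauderCh_BlowUpGanzraumLemma} and \ref{SchauderCh_BlowUpHalbraumLemma}, replacing the ambient space by the lower time half-space. I would assume the lemma fails and produce operators $P_k$ satisfying \eqref{SchauderCh_BlowUpSectionAssumption02} together with functions $u_k \in C^{4,1,\gamma}(\R^n \times (-\infty,0])$ with $[D^{4,1} u_k]_\gamma > k [P_k u_k]_\gamma$. After normalizing $v_k := u_k / [D^{4,1} u_k]_\gamma$, I would select almost-maximizing pairs $p_k = (x_k,t_k)$, $q_k = (y_k,s_k) \in \R^n\times(-\infty,0]$ with (say) $t_k \leq s_k \leq 0$ satisfying $|D^{4,1} v_k(p_k) - D^{4,1} v_k(q_k)| \geq \epsilon_0(n) \, d(p_k,q_k)^\gamma$ for some dimensional constant $\epsilon_0(n)$.

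Setting $\lambda_k := d(p_k,q_k)$ I would rescale about $q_k$ via $w_k(x,t) := \lambda_k^{-4-\gamma} v_k(y_k + \lambda_k x,\, s_k + \lambda_k^4 t)$. Since $s_k \leq 0$, each $w_k$ is defined on $\R^n \times (-\infty, T_k]$ where $T_k := -\lambda_k^{-4} s_k \geq 0$. The familiar scaling yields $[D^{4,1} w_k]_\gamma = 1$ and $[P_k w_k]_\gamma < 1/k$, while the rescaled point $\tilde p_k := (\lambda_k^{-1}(x_k - y_k),\, \lambda_k^{-4}(t_k - s_k)) \in \R^n \times (-\infty, 0]$ has $d(\tilde p_k, 0) = 1$ and $|D^{4,1} w_k(\tilde p_k) - D^{4,1} w_k(0)| \geq \epsilon_0(n)$. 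Subtracting the parabolic Taylor polynomial of $w_k$ at the origin produces $\tilde w_k$ with every derivative of parabolic order at most $4$ vanishing at $(0,0)$, which by the mean value theorem delivers the uniform bound $|\tilde w_k(x,t)| \leq C(n)(|x| + |t|^{1/4})^{4+\gamma}$.

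Extracting a subsequence, I may assume $P_k \to P$, $\tilde p_k \to \tilde p$ with $d(\tilde p, 0) = 1$, $T_k \to \tau \in [0, \infty]$, and $\tilde w_k \to \tilde w$ in $C^{4,1}_{\operatorname{loc}}$. The key step now is a case split on $\tau$. If $\tau = \infty$, then $\tilde w \in C^{4,1}(\R^n \times \R)$ solves $P\tilde w = 0$ with the above growth estimate and Corollary \ref{SchauderCh_LiouvilleGanzraum} forces $\tilde w$ to be a parabolic polynomial of degree at most $4$. If $\tau < \infty$, then $\tilde w \in C^{4,1}(\R^n \times (-\infty, \tau])$ and the time shift $(x,t) \mapsto \tilde w(x, t + \tau)$ delivers a solution on $\R^n \times (-\infty, 0]$ satisfying the same growth bound; Corollary \ref{SchauderCh_LiouvilleHalbraumNachUnten} applies directly and yields the same conclusion. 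In either case $D^{4,1}\tilde w$ is constant, contradicting the passage to the limit of $|D^{4,1} \tilde w_k(\tilde p_k) - D^{4,1} \tilde w_k(0)| \geq \epsilon_0(n)$.

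The crucial structural point — and the reason no initial-data term $[\nabla^4 u(\cdot, 0)]_\gamma$ appears on the right-hand side (in contrast with Lemma \ref{SchauderCh_BlowUpHalbraumLemma}) — is that Corollary \ref{SchauderCh_LiouvilleHalbraumNachUnten} requires \emph{no} hypothesis on the boundary slice at $t = 0$, whereas Corollary \ref{SchauderCh_LiouvilleHalbraumNachOben} does. Consequently, there is really no substantive obstacle here: the only care needed is the routine bookkeeping in the $\tau < \infty$ case to verify that the time-shifted limit lands in the hypothesis class of Corollary \ref{SchauderCh_LiouvilleHalbraumNachUnten}. Everything else is a line-by-line translation of the proof of Lemma \ref{SchauderCh_BlowUpGanzraumLemma}.
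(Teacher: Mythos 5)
Your proof is correct and follows the paper's argument essentially line by line: the same blow-up normalization, the same WLOG choice $t_k\leq s_k$, the same case split on the limit of $-\lambda_k^{-4}s_k$ (only a sign convention differs), the same invocation of Corollary \ref{SchauderCh_LiouvilleGanzraum} when that limit is infinite and Corollary \ref{SchauderCh_LiouvilleHalbraumNachUnten} when it is finite. The only cosmetic difference is that you make the time shift explicit in the finite-$\tau$ case, whereas the paper applies Corollary \ref{SchauderCh_LiouvilleHalbraumNachUnten} directly to the shifted domain $\R^n\times(-\infty,-\tau]$; both are fine.
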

\begin{proof}
Throughout the proof, we write $[\cdot]_\gamma$ for the Hölder seminorms. We follow the proof of Lemma \ref{SchauderCh_BlowUpGanzraumLemma} and assume the lemma was false. Then there exist operators $P_k$ of the form \eqref{SchauderCh_BlowUpSectionAssumption01} that satisfy Assumption \eqref{SchauderCh_BlowUpSectionAssumption02} and functions $u_k$ such that 
$$[D^{4,1}u_k]_{\gamma}>k[P_ku_k]_\gamma.$$
Next, we define $p_k=(x_k,t_k)$, $q_k=(y_k,s_k)$ as we did in the proof of Lemma \ref{SchauderCh_BlowUpGanzraumLemma}. Additionally, we assume without loss of generality that $t_k\leq s_k$. We define $\tilde p_k$ and the functions $\tilde w_k$ as we did in the proof of Lemma \ref{SchauderCh_BlowUpGanzraumLemma}. In view of Equation \eqref{SchauderCh_BlowUpGanzraum11} the functions $\tilde w_k$ are only defined for $t\leq -\lambda_k^{-4}s_k$. Following the arguments in the proof of Lemma \ref{SchauderCh_BlowUpGanzraumLemma}, we obtain
\begin{align}
    &[D^{4,1}\tilde w_k]_\gamma=1,\hspace{.5cm} |\tilde w_k(x,t)|\leq C(n)(|x|+|t|^{\frac14})^{4+\gamma},\hspace{.5cm} [P_k\tilde w_k]_\gamma<\frac1k,\label{SchauderCh_BlowUpUntererHalbraum1}\\
    &|D^{4,1}\tilde w_k(\tilde p_k)-D^{4,1}\tilde w_k(0) |\geq\epsilon_0(n),\hspace{.5cm}\tilde p_k\in\R^n\times(-\infty,0],\label{SchauderCh_BlowUpUntererHalbraum2}\\
    &\partial_t^l\nabla_\alpha\tilde w_k(0,0)=0\hspace{.5cm}\textrm{whenever}\hspace{.5cm}   0\leq 4l+|\alpha|\leq 4.\label{SchauderCh_BlowUpUntererHalbraum3}
\end{align}
 After passing to a subsequence, we may assume that $\lambda_k^{-4}s_k\rightarrow\tau\in[-\infty,0]$, $\tilde p_k\rightarrow\tilde p$, $P_k\rightarrow P$ and $\tilde w_k\rightarrow \tilde w$ in $C^{4,1}_{\operatorname{loc}}$. Now there are two cases to consider: First, if $\tau=-\infty$, the limiting function $\tilde w$ is defined on $\R^n\times \R$ and we can derive the same contradiction as in the proof of Lemma \ref{SchauderCh_BlowUpGanzraumLemma}. If, however, $\tau>-\infty$, we obtain a limiting function $\tilde w\in C^{4,1}(\R^n\times(-\infty,-\tau])$. Letting $k\rightarrow\infty$ in \eqref{SchauderCh_BlowUpUntererHalbraum3} yields $\partial_t^l\nabla_\alpha\tilde w(0,0)=0$ whenever $0\leq 4l+|\alpha|\leq 4$ and letting $k\rightarrow\infty$ in Estimates \eqref{SchauderCh_BlowUpUntererHalbraum1} and \eqref{SchauderCh_BlowUpUntererHalbraum2}, we get 
\begin{align*}
    &[P\tilde w]_\gamma\leq \liminf_{k\rightarrow\infty}[P_k\tilde w_k]_\gamma=0\hspace{.5cm}\textrm{ and, using $P\tilde w(0)=0$, we get }P\tilde w=0,\\
    &|\tilde w(x,t)|=\lim_{k\rightarrow\infty}|\tilde w_k(x,t)|\leq C(n)(|x|+|t|^{\frac14})^{4+\gamma}\hspace{.5cm}\textrm{and}\hspace{.5cm}|D^{4,1}\tilde w(\tilde p)-D^{4,1}\tilde w(0)|\geq\epsilon_0(n).
\end{align*}
By Corollary \ref{SchauderCh_LiouvilleHalbraumNachUnten}, we deduce that $D^{4,1}\tilde w$ is constant, which contradicts $D^{4,1}\tilde w(\tilde p)\hspace{-.05cm}\neq\hspace{-.05cm} D^{4,1}\tilde w(0)$. 
\end{proof}

Next, we formulate similar lemmas that allow for a spatial boundary. For that sake, we introduce the constant coefficient boundary operators \emph{induced by $P$} from \eqref{SchauderCh_BlowUpSectionAssumption01}  
\begin{equation}\label{SchauderCh_BlowUpSectionAssumption03}
    B_1u:=a^{ni}\partial_i u\hspace{.5cm}\textrm{and}\hspace{.5cm}
    B_2 u=a^{ni}a^{kl}\partial_{ikl}u.
\end{equation}

\begin{lemma}\label{SchauderCh_BlowUpGanzraumLemmaBC}
 There exists a constant $C(n,\gamma,\Lambda,\theta)$ such that for all constant coefficient operators $P$ of the form \eqref{SchauderCh_BlowUpSectionAssumption01} satisfying Assumption \eqref{SchauderCh_BlowUpSectionAssumption02} with induced boundary operators $B_1$ and $B_2$ and all $u\in C^{4,1,\gamma}(\R^{n-1}\times[0,\infty)\times\R)$
$$[D^{4,1}u]_{\gamma,\R^{n-1}\times[0,\infty)\times\R}^{(0)}\leq C(n,\gamma, \Lambda,\theta)\left(
[Pu]_{\gamma,\R^{n-1}\times[0,\infty)\times\R}^{(0)}+[B_1 u]^{(3)}_{\gamma,\R^{n-1}\times0\times\R}+[B_2u]^{(1)}_{\gamma,\R^{n-1}\times0\times\R}
\right).$$
\end{lemma}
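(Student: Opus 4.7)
The plan is to imitate the blow-up arguments of Lemmas \ref{SchauderCh_BlowUpGanzraumLemma}--\ref{SchauderCh_BlowUpHalbRaumUntenLemma}, tracking the spatial boundary $\R^{n-1}\times 0\times\R$ and the induced boundary conditions through the rescaling. Suppose toward contradiction the lemma fails. Then there are operators $P_k$ satisfying \eqref{SchauderCh_BlowUpSectionAssumption01}--\eqref{SchauderCh_BlowUpSectionAssumption02} with induced $B_1^{(k)},B_2^{(k)}$ and functions $u_k\in C^{4,1,\gamma}(\R^{n-1}\times[0,\infty)\times\R)$ with
\[ [D^{4,1}u_k]_\gamma>k\bigl([P_k u_k]_\gamma+[B_1^{(k)} u_k]_\gamma^{(3)}+[B_2^{(k)} u_k]_\gamma^{(1)}\bigr). \]
After normalizing $v_k:=u_k/[D^{4,1}u_k]_\gamma$ and choosing points $p_k=(x_k,t_k)\neq q_k=(y_k,s_k)$ in the half-space at which the seminorm $[D^{4,1}v_k]_\gamma=1$ is nearly realized, I would rescale as in the proof of Lemma \ref{SchauderCh_BlowUpGanzraumLemma}, setting $\lambda_k:=d(p_k,q_k)$ and
\[ w_k(x,t):=\lambda_k^{-4-\gamma}v_k\bigl(q_k+(\lambda_k x,\lambda_k^4 t)\bigr), \]
defined on $\R^{n-1}\times[-(y_k)_n/\lambda_k,\infty)\times\R$. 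Finally, subtract the parabolic Taylor polynomial of degree $\le 4$ at the origin to obtain $\tilde w_k$ with $\partial_t^j\nabla_\alpha\tilde w_k(0,0)=0$ for $4j+|\alpha|\le 4$.

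The crucial scaling observation is that the chosen parabolic seminorms $[\cdot]_\gamma^{(3)}$ and $[\cdot]_\gamma^{(1)}$ applied to $B_1\tilde w_k$ and $B_2\tilde w_k$ are \emph{invariant} under this rescaling (the factor $\lambda_k^{-4-\gamma}$ from $w_k$ exactly cancels against the gained derivatives plus the $\lambda_k^\gamma$ from the Hölder quotient). Consequently, the normalized problem satisfies
\[ [D^{4,1}\tilde w_k]_\gamma=1,\quad [P_k\tilde w_k]_\gamma+[B_1^{(k)}\tilde w_k]_\gamma^{(3)}+[B_2^{(k)}\tilde w_k]_\gamma^{(1)}<\tfrac1k, \]
together with the non-vanishing condition $|D^{4,1}\tilde w_k(\tilde p_k)-D^{4,1}\tilde w_k(0)|\ge\epsilon_0(n)$ at the translated point $\tilde p_k:=(\lambda_k^{-1}(x_k-y_k),\lambda_k^{-4}(t_k-s_k))$, and the polynomial growth $|\tilde w_k(x,t)|\le C(n)(|x|+|t|^{1/4})^{4+\gamma}$ derived as in \eqref{SchauderCh_D41BoundedMISS}--\eqref{SchauderCh_D41BoundedMISS2}.

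Passing to a subsequence, I would extract limits $\tilde p_k\to\tilde p$, $P_k\to P$ (with $B_1^{(k)},B_2^{(k)}\to B_1,B_2$), $(y_k)_n/\lambda_k\to\tau\in[0,\infty]$, and $\tilde w_k\to\tilde w$ in $C^{4,1}_{\operatorname{loc}}$. The limit $\tilde w$ is a classical solution of $P\tilde w=0$ with $|\tilde w(x,t)|\le C(n)(|x|+|t|^{1/4})^{4+\gamma}$, and by the vanishing of the boundary seminorms in the limit, $B_1\tilde w$ and $B_2\tilde w$ are parabolic polynomials of parabolic degree at most $3$ and $1$ respectively. Two cases arise:
\begin{itemize}
\item[(i)] If $\tau=\infty$, then $\tilde w$ is defined on all of $\R^n\times\R$, and Corollary \ref{SchauderCh_LiouvilleGanzraum} applies directly, showing $\tilde w$ is a parabolic polynomial of degree $\le 4$.
\item[(ii)] If $\tau<\infty$, a translation in the $x_n$-direction puts $\tilde w$ on $\R^{n-1}\times[0,\infty)\times\R$, and since the boundary polynomial condition is preserved, Corollary \ref{SchauderCh_LiouvilleGanzraumBC} yields the same conclusion.
\end{itemize}
In either case $D^{4,1}\tilde w$ is constant, contradicting $|D^{4,1}\tilde w(\tilde p)-D^{4,1}\tilde w(0)|\ge\epsilon_0(n)$.

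The main obstacle I foresee is the bookkeeping for case (ii): verifying cleanly that the rescaled boundary seminorms $[B_1^{(k)}\tilde w_k]_\gamma^{(3)}$ and $[B_2^{(k)}\tilde w_k]_\gamma^{(1)}$ are indeed scale-invariant (this is the reason the parabolic seminorm \emph{orders} were chosen to be $3$ and $1$, matching the natural parabolic differentiation orders of the first- and third-order boundary operators), and that the polynomial-limit conclusion for $B_1\tilde w$, $B_2\tilde w$ passes to the translated half-space so that Corollary \ref{SchauderCh_LiouvilleGanzraumBC} is applicable. Once this scaling compatibility is checked, the remainder of the argument reduces to the template already established in Lemmas \ref{SchauderCh_BlowUpGanzraumLemma}--\ref{SchauderCh_BlowUpHalbRaumUntenLemma}.
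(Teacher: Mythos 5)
Your proposal is correct and follows essentially the same route as the paper: the same normalize/rescale/subtract-Taylor-polynomial blow-up, the same scale-invariance observation for the boundary seminorms $[\cdot]^{(3)}_\gamma$ and $[\cdot]^{(1)}_\gamma$, the same dichotomy on whether the rescaled spatial boundary escapes to infinity, and the same Liouville corollaries. The detail you flag as the remaining obstacle — that vanishing of the limiting boundary seminorms forces $B_1\tilde w$ and $B_2\tilde w$ to be parabolic polynomials of degrees $\leq 3$ and $\leq 1$ — is handled in the paper by reading the spatial part of $[\cdot]^{(m)}_\gamma$ as forcing a spatial polynomial of degree $\leq m$ and the temporal part as forcing time-independence, exactly as you anticipate.
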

The proof is essentially the same as for Lemma \ref{SchauderCh_BlowUpGanzraumLemma}. The important new observation is that $[a^{ni}\partial_i u]^{(3)}_{\gamma,\R^{n-1}\times0\times\R}$ and $[a^{ni}a^{kl}\partial_{ikl} u]^{(1)}_{\gamma,\R^{n-1}\times0\times\R}$ scale just like $[Pu]_{\gamma,\R^{n-1}\times[0,\infty)\times\R}^{(0)}$ under the parabolic scaling $(x,t)\mapsto (\lambda x,\lambda^4 t)$. We have shown this in the motivation directly after Definition \ref{SchauderCh_DefinitionofParabolicHölderSpaces}.
\begin{proof}
Throughout the proof, we write $[\cdot]_\gamma$ for the Hölder seminorms. We follow the proof of Lemma \ref{SchauderCh_BlowUpGanzraumLemma} and assume the lemma was false. Then there exist operators $P_k$ of the form \eqref{SchauderCh_BlowUpSectionAssumption01} that satisfy Assumption \eqref{SchauderCh_BlowUpSectionAssumption02} with associated boundary operators $B_{1k}$, $B_{2k}$ as in \eqref{SchauderCh_BlowUpSectionAssumption03} and functions $u_k$ such that 
$$[D^{4,1}u_k]_{\gamma}>k\left([P_ku_k]_\gamma+[B_{1k}u_k]_\gamma+[B_{2k}u_k]_\gamma\right).$$
We choose $p_k=(\vec x_k, \zeta_k, t_k)\in\R^{n-1}\times[0,\infty)\times\R$ and $q_k=(\vec y_k,\xi_k, s_k)\in\R^{n-1}\times[0,\infty)\times\R$ as in the proof of Lemma \ref{SchauderCh_BlowUpGanzraumLemma}. Additionally, we assume without loss of generality that $\zeta_k\geq \xi_k$. We define $\tilde p_k\in\R^{n-1}\times[0,\infty)\times\R$ and $\tilde w_k$ as in the proof of Lemma \ref{SchauderCh_BlowUpGanzraumLemma}. As $u_k$ are only defined for $x_n\geq 0$, the functions $\tilde w_k$ are only defined for $x_n\geq -\lambda_k^{-1}\xi_k$. Following the proof of Lemma \ref{SchauderCh_BlowUpGanzraumLemma}, we get
\begin{align}
    &[D^{4,1}\tilde w_k]_\gamma=1,\hspace{.5cm} |\tilde w_k(x,t)|\leq C(n)(|x|+|t|^{\frac14})^{4+\gamma},\label{SchauderCh_BlowUpGanzraumBC0}\\
    & [P_k\tilde w_k]_\gamma+[B_{1k}\tilde w_k]_\gamma+[B_{2k}\tilde w_k]_\gamma <\frac1k,\label{SchauderCh_BlowUpGanzraumBC1}\\
    &|D^{4,1}\tilde w_k(\tilde p_k)-D^{4,1}\tilde w_k(0) |\geq\epsilon_0(n),\hspace{.5cm}\tilde p_k\in\R^{n-1}\times[0,\infty)\times\R ,\label{SchauderCh_BlowUpGanzraumBC2}\\
    &\partial_t^l\nabla_\alpha\tilde w_k(0,0)=0\hspace{.5cm}\textrm{whenever}\hspace{.5cm}   0\leq 4l+|\alpha|\leq 4.\label{SchauderCh_BlowUpGanzraumBC3}
\end{align}
As $k\rightarrow\infty$ we have $\lambda_k^{-1}\xi_k\rightarrow z\in[0,\infty]$, $\tilde p_k\rightarrow\tilde p$, $P_k\rightarrow P$ and $\tilde w_k\rightarrow \tilde w$ in $C^{4,1}_{\operatorname{loc}}$. Additionally, by Assumption \eqref{SchauderCh_BlowUpSectionAssumption02} and Equation \eqref{SchauderCh_BlowUpSectionAssumption03}, we deduce that $B_{1k}\rightarrow B_1$ and $B_{2k}\rightarrow B_2$, where $B_1$ and $B_2$ are the boundary operators induced by $P$. If $z=\infty$, the limiting function $\tilde w$ is defined on all of $\R^n\times\R$ and we deduce the same contradiction as in the proof of Lemma \ref{SchauderCh_BlowUpGanzraumLemma}.  If $z<\infty$, we obtain a limiting function $\tilde w\in C^{4,1}(\R^{n-1}\times[-z,\infty)\times\R)$. Letting $k\rightarrow\infty$ in Estimates \eqref{SchauderCh_BlowUpGanzraumBC0}, \eqref{SchauderCh_BlowUpGanzraumBC1} and \eqref{SchauderCh_BlowUpGanzraumBC2}, and using \eqref{SchauderCh_BlowUpGanzraumBC3}, we deduce that $\tilde w$ satisfies 
\begin{align*}
    &[P\tilde w]_\gamma\leq \liminf_{k\rightarrow\infty}[P_k\tilde w_k]_\gamma=0\hspace{.5cm}\textrm{ and hence }P\tilde w=0,\\
    &[B_1\tilde w(\cdot, -z,\cdot)]_\gamma+[B_2\tilde w(\cdot, -z,\cdot)]_\gamma
    \leq \liminf_{k\rightarrow\infty}\left(
    [B_1\tilde w_k(\cdot,-\lambda_k^{-1}\xi_k,\cdot)]_\gamma+[B_2\tilde w_k(\cdot,-\lambda_k^{-1}\xi_k,\cdot)]_\gamma\right)
    =0,\\
    &|\tilde w(x,t)|=\lim_{k\rightarrow\infty}|\tilde w_k(x,t)|\leq C(n)(|x|+|t|^{\frac14})^{4+\gamma}\hspace{.5cm}\textrm{and}\hspace{.5cm}|D^{4,1}\tilde w(\tilde p)-D^{4,1}\tilde w(0)|\geq\epsilon_0(n).
\end{align*}
We claim that $B_1 w$ and $B_2 w$ are parabolic polynomials of degree 3 or less and 1 or less, respectively.  Once this is shown, we can use Corollary \ref{SchauderCh_LiouvilleGanzraumBC} to deduce that $\tilde w$ is a parabolic polynomial of degree at most 4. This produces the usual contradiction of $D^{4,1}\tilde w$ being constant while simultaneously $|D^{4,1}\tilde w(\tilde p)-D^{4,1}\tilde w(0)|\geq\epsilon_0(n)$.\\

We prove that $B_1\tilde w$ is a parabolic polynomial of degree 3 or less. Proving that $B_2 \tilde w$ is a parabolic polynomial of degree 1 or less is then achieved by a similar argument. By definition of the $[\cdot]_\gamma^{(3)}$-seminorm $[\nabla^3 B_1\tilde w]_{\gamma, \R^{n-1}\times (-z)\times\R}^{\operatorname{space}}=0$. So $\nabla^3 B_1w(\vec x, -z,0)$ is constant and hence $B_1w(\vec x, -z,0)$ a polynomial in $\vec x$ of degree 3 or less. Also by definition of the $[\cdot]_\gamma^{(3)}$-seminorm, we deduce that $[B_1\tilde w]^{\operatorname{time}}_{\frac{3+\gamma}4,\R^{n-1}\times(-z)\times\R}=0$ and thus $B_1\tilde w(\vec x, -z,t)=B_1\tilde w(\vec x, -z,0)$. Combining these two observations, we deduce that $B_1\tilde w(\vec x, -z,t)$ is a parabolic polynomial in $(\vec x, t)$ of degree 3 or less. 
\end{proof}

Finally, we consider the cases where we have a spatial and a temporal boundary. 
\begin{lemma}\label{SchauderCh_BlowUpHalbraumObenLemmaBC}
 There exists a constant $C(n,\gamma,\Lambda,\theta)$ such that for all constant coefficient operators $P$ of the form \eqref{SchauderCh_BlowUpSectionAssumption01} satisfying Assumption \eqref{SchauderCh_BlowUpSectionAssumption02} with induced boundary operators $B_1$ and $B_2$ and all $u\in C^{4,1,\gamma}(\R^{n-1}\times[0,\infty)\times[0,\infty))$
\begin{align*} 
[D^{4,1}u]^{(0)}_{\gamma,\R^{n-1}\times[0,\infty)\times[0,\infty)}\leq 
C(n,\gamma, \Lambda,\theta)\big(&
[Pu]_{\gamma,\R^{n-1}\times[0,\infty)\times[0,\infty)}^{(0)}
+[B_1 u]^{(3)}_{\gamma,\R^{n-1}\times0\times[0,\infty)}\\
&+[B_2u]^{(1)}_{\gamma,\R^{n-1}\times0\times[0,\infty)}
+[\nabla^4 u(\cdot,0)]_{\gamma,\R^{n-1}\times[0,\infty)}
\big).
\end{align*}
\end{lemma}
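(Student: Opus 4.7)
The plan is to follow Simon's blow-up scheme exactly as in Lemmas \ref{SchauderCh_BlowUpGanzraumLemma}--\ref{SchauderCh_BlowUpGanzraumLemmaBC}, now accounting for a spatial \emph{and} a temporal boundary simultaneously. Assume the estimate fails. Then there exist operators $P_k$ satisfying \eqref{SchauderCh_BlowUpSectionAssumption02}, with induced boundary operators $B_{1k}$, $B_{2k}$ as in \eqref{SchauderCh_BlowUpSectionAssumption03}, and functions $u_k \in C^{4,1,\gamma}(\R^{n-1}\times[0,\infty)\times[0,\infty))$ such that $[D^{4,1} u_k]_\gamma$ strictly exceeds $k$ times the sum of the four right-hand-side seminorms. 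Normalize so that $[D^{4,1} u_k]_\gamma = 1$, select points $p_k = (\vec x_k,\zeta_k,t_k)$ and $q_k=(\vec y_k,\xi_k,s_k)$ that realize at least a fixed fraction $\epsilon_0(n)$ of this seminorm, and assume without loss of generality that $\zeta_k\geq\xi_k$ and $t_k\geq s_k$. Set $\lambda_k := d(p_k,q_k)$, and define the rescaled and translated functions $\tilde w_k$ exactly as in Lemma \ref{SchauderCh_BlowUpGanzraumLemma}, subtracting the parabolic Taylor polynomial of degree $\leq 4$ at $q_k$ so that $\partial_t^l\nabla_\alpha \tilde w_k(0,0)=0$ whenever $4l+|\alpha|\leq 4$. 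These live on the shifted domain $\R^{n-1}\times[-\lambda_k^{-1}\xi_k,\infty)\times[-\lambda_k^{-4}s_k,\infty)$.

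By exactly the computations carried out in the previous blow-up lemmas, one obtains $[D^{4,1}\tilde w_k]_\gamma = 1$, the growth bound $|\tilde w_k(x,t)|\leq C(n)(|x|+|t|^{1/4})^{4+\gamma}$, a point $\tilde p_k$ in the shifted domain with $|D^{4,1}\tilde w_k(\tilde p_k)-D^{4,1}\tilde w_k(0)|\geq\epsilon_0(n)$, and the decay
\[
[P_k\tilde w_k]_\gamma + [B_{1k}\tilde w_k]^{(3)}_\gamma + [B_{2k}\tilde w_k]^{(1)}_\gamma + [\nabla^4\tilde w_k(\cdot,-\lambda_k^{-4}s_k)]_\gamma < \tfrac{1}{k}.
\]
Pass to a subsequence along which $\lambda_k^{-1}\xi_k\to z\in[0,\infty]$, $\lambda_k^{-4}s_k\to\tau\in[0,\infty]$, $\tilde p_k\to\tilde p$, $P_k\to P$ (with induced boundary operators $B_1$, $B_2$), and $\tilde w_k\to \tilde w$ in $C^{4,1}_{\operatorname{loc}}$.

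Depending on whether $z$ and $\tau$ are finite, the limit $\tilde w$ is defined on one of the four domains $\R^n\times\R$, $\R^{n-1}\times[-z,\infty)\times\R$, $\R^n\times[-\tau,\infty)$, or $\R^{n-1}\times[-z,\infty)\times[-\tau,\infty)$. In every case, $P\tilde w=0$ on the interior, the growth bound $|\tilde w(x,t)|\leq C(|x|+|t|^{1/4})^{4+\gamma}$ persists, and the vanishing of the four seminorms in the limit forces (by the argument at the end of the proof of Lemma \ref{SchauderCh_BlowUpGanzraumLemmaBC}) the initial data $\tilde w(\cdot,-\tau)$ to be a polynomial of degree $\leq 4$ and the boundary expressions $B_1\tilde w$, $B_2\tilde w$ restricted to $\R^{n-1}\times\{-z\}\times[-\tau,\infty)$ to be parabolic polynomials of parabolic degree $\leq 3$ and $\leq 1$, respectively. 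Applying the appropriate Liouville-type statement among Corollaries \ref{SchauderCh_LiouvilleGanzraum}, \ref{SchauderCh_LiouvilleGanzraumBC}, \ref{SchauderCh_LiouvilleHalbraumNachOben} and \ref{SchauderCh_LiouvilleHalbraumNachObenBC} yields that $\tilde w$ is a parabolic polynomial of parabolic degree $\leq 4$; hence $D^{4,1}\tilde w$ is constant, contradicting $|D^{4,1}\tilde w(\tilde p) - D^{4,1}\tilde w(0)|\geq \epsilon_0(n)$.

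The main obstacle is purely organizational: one has to keep track of four subcases according to whether $z$ and $\tau$ are finite or infinite, and check in each subcase that the right Liouville theorem applies. The genuinely new ingredient compared to the single-boundary lemmas is the invocation of Corollary \ref{SchauderCh_LiouvilleHalbraumNachObenBC}, whose hypotheses simultaneously require polynomial initial data at $t=-\tau$ (supplied by $[\nabla^4\tilde w_k(\cdot,-\lambda_k^{-4}s_k)]_\gamma\to 0$) and polynomial boundary data along $x_n=-z$ (supplied by the vanishing seminorms of $B_{1k}\tilde w_k$ and $B_{2k}\tilde w_k$). No new analytic estimate is needed beyond a careful combination of the techniques already used in Lemmas \ref{SchauderCh_BlowUpHalbraumLemma} and \ref{SchauderCh_BlowUpGanzraumLemmaBC}.
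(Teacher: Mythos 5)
Your argument mirrors the paper's proof exactly: contradiction via Simon's blow-up, subsequential limits $\lambda_k^{-1}\xi_k\to z$ and $\lambda_k^{-4}s_k\to\tau$, and a four-way case split on the finiteness of $z$ and $\tau$, each case dispatched by the corresponding Liouville result (Corollaries \ref{SchauderCh_LiouvilleGanzraum}, \ref{SchauderCh_LiouvilleHalbraumNachOben}, \ref{SchauderCh_LiouvilleGanzraumBC} and \ref{SchauderCh_LiouvilleHalbraumNachObenBC}, matching the paper). One small inaccuracy: you cannot in general assume \emph{both} $\zeta_k\geq\xi_k$ and $t_k\geq s_k$ without loss of generality, since swapping $p_k\leftrightarrow q_k$ flips both inequalities simultaneously; fortunately neither assumption is actually needed (the rescaled point $\tilde p_k$ always lies in the shifted domain because $\zeta_k\geq 0$ and $t_k\geq 0$), and the paper itself asserts only the spatial inequality.
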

\begin{proof}
Throughout the proof, we write $[\cdot]_\gamma$ for the Hölder seminorms. We follow the proof of Lemma \ref{SchauderCh_BlowUpGanzraumLemma} and assume the lemma was false. Then there exist operators $P_k$ of the form \eqref{SchauderCh_BlowUpSectionAssumption01} that satisfy Assumption \eqref{SchauderCh_BlowUpSectionAssumption02} with associated boundary operators $B_{1k}$, $B_{2k}$ as in \eqref{SchauderCh_BlowUpSectionAssumption03} and functions $u_k$ such that
$$[D^{4,1}u_k]_{\gamma}>k\left([P_ku_k]_\gamma+[B_{1k} u_k]_\gamma+[B_{2k} u_k]_\gamma+[\nabla^4 u_k(\cdot,0)]_\gamma\right).$$
Next, we define $p_k=(\vec x_k,\zeta_k,t_k)$, $q_k=(\vec y_k,\xi_k,s_k)$ as we did in the proof of Lemma \ref{SchauderCh_BlowUpGanzraumLemma}. Without loss of generality, we assume that $\zeta_k\geq\xi_k$. We define $\tilde p_k$ and the functions $\tilde w_k$ as we did in the proof of Lemma \ref{SchauderCh_BlowUpGanzraumLemma}. In view of Equation \eqref{SchauderCh_BlowUpGanzraum11} the functions $\tilde w_k$ are only defined on $\R^{n-1}\times[-\lambda_k^{-1}\xi_k, \infty)\times[-\lambda_k^{-4}s_k,\infty)$. After passing to some subsequence we may assume that $\lambda_k^{-1}\xi_k\rightarrow z\in[0,\infty]$, $\lambda_k^{-4}s_k\rightarrow \tau\in[0,\infty]$, $\tilde p_k\rightarrow\tilde p$, $P_k\rightarrow P$, $B_{1k}\rightarrow B_1$, $B_{2k}\rightarrow B_2$ and $\tilde w_k\rightarrow \tilde w$ in $C^{4,1}_{\operatorname{loc}}$. We have the following cases:

\begin{enumerate}[(1)]
    \item If $\tau=z=\infty$, we obtain a limit function $\tilde w\in C^{4,1}(\R^n\times \R)$ and arrive at the same contradiction as in the proof of Lemma \ref{SchauderCh_BlowUpGanzraumLemma}.
    \item If $z=\infty$ but $\tau<\infty$, we obtain a limit function $\tilde w\in  C^{4,1}(\R^n\times [-\tau,\infty))$ and arrive at the same contradiction as in the proof of Lemma \ref{SchauderCh_BlowUpHalbraumLemma}.
    \item If $\tau=\infty$ but $z<\infty$, we obtain a limit function $\tilde w\in C^{4,1}(\R^{n-1}\times[-z,\infty)\times \R)$ and arrive at the same contradiction as in the proof of Lemma \ref{SchauderCh_BlowUpGanzraumLemmaBC}.
    \item If $z,\tau<\infty$, we obtain a limit function $\tilde w\in C^{4,1}(\R^{n-1}\times [-z,\infty)\times[-\tau,\infty))$. Following the arguments in the proofs of Lemmas \ref{SchauderCh_BlowUpGanzraumLemma},  \ref{SchauderCh_BlowUpHalbraumLemma} and \ref{SchauderCh_BlowUpGanzraumLemmaBC}, the limit $\tilde w$ satisfies:
    $$\begin{array}{lll}
        &P\tilde w=0&\textrm{ in }\R^{n-1}\times[-z,\infty)\times[-\tau,\infty),\\
        &B_1 \tilde w(\vec x, -z, t)&\textrm{ is a parabolic polynomial in $(\vec x, t)$ of degree 3 or less,}\\
        &B_2 \tilde w(\vec x, -z, t)&\textrm{ is a parabolic polynomial in $(\vec x, t)$ of degree 1 or less,}\\
        &\tilde w(x,-\tau)&\textrm{ is a polynomial of degree 4 or less,}
    \end{array}
    $$
    $$\textrm{as well as}\hspace{.5cm}
    |D^{4,1}\tilde w(\tilde p)-D^{4,1}\tilde w(0)|\geq\frac12.\hspace{4.2cm}
    $$
    Using Corollary \ref{SchauderCh_LiouvilleHalbraumNachObenBC}, $\tilde w$ is a parabolic polynomial of degree 4 or less and hence $D^{4,1}\tilde w$ is constant, which contradicts $D^{4,1}\tilde w(0)\neq D^{4,1}\tilde w(\tilde p)$.
\end{enumerate}
\end{proof}

Using the same arguments but replacing 
Lemma \ref{SchauderCh_BlowUpHalbraumLemma}
with Lemma \ref{SchauderCh_BlowUpHalbRaumUntenLemma}
in the second and fourth case and
Corollary \ref{SchauderCh_LiouvilleHalbraumNachObenBC}
with Corollary \ref{SchauderCh_LiouvilleHalbraumNachUntenBC} in the fourth case,
we can derive the following lemma:

\begin{lemma}\label{SchauderCh_BlowUpHalbraumUntenLemmaBC}
 There exists a constant $C(n,\gamma,\Lambda,\theta)$ such that for all constant coefficient operators $P$ of the form \eqref{SchauderCh_BlowUpSectionAssumption01} satisfying Assumption \eqref{SchauderCh_BlowUpSectionAssumption02} with induced boundary operators $B_1$ and $B_2$ and all $u\in C^{4,1,\gamma}(\R^{n-1}\times[0,\infty)\times(-\infty,0] )$
\begin{align*} 
[D^{4,1}u]^{(0)}_{\gamma,\R^{n-1}\times[0,\infty)\times(-\infty,0]}\leq 
C(n,\gamma, \Lambda,\theta)\big(&
[Pu]_{\gamma,\R^{n-1}\times[0,\infty)\times(-\infty,0]}^{(0)}
+[B_1 u]^{(3)}_{\gamma,\R^{n-1}\times0\times(-\infty,0]}\\
&+[B_2u]^{(1)}_{\gamma,\R^{n-1}\times0(-\infty,0]}
\big).
\end{align*}
\end{lemma}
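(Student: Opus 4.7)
The plan is to mimic the blow-up argument from the proof of Lemma \ref{SchauderCh_BlowUpHalbraumObenLemmaBC} line by line, with the single structural change that the temporal half-space points downward rather than upward. Suppose the lemma fails; then there exist operators $P_k$ of the form \eqref{SchauderCh_BlowUpSectionAssumption01} satisfying \eqref{SchauderCh_BlowUpSectionAssumption02} with induced boundary operators $B_{1k}$, $B_{2k}$ as in \eqref{SchauderCh_BlowUpSectionAssumption03}, and functions $u_k\in C^{4,1,\gamma}(\R^{n-1}\times[0,\infty)\times(-\infty,0])$ for which
$$[D^{4,1}u_k]_\gamma > k\bigl([P_ku_k]_\gamma+[B_{1k}u_k]_\gamma+[B_{2k}u_k]_\gamma\bigr).$$
After normalizing by $[D^{4,1}u_k]_\gamma=1$, one picks $p_k=(\vec x_k,\zeta_k,t_k)$, $q_k=(\vec y_k,\xi_k,s_k)$ nearly realizing the seminorm, sets $\lambda_k:=d(p_k,q_k)$, and (assuming without loss of generality $\zeta_k\geq\xi_k$) defines the rescaled functions $w_k(x,t):=\lambda_k^{-4-\gamma}v_k(q_k+(\lambda_k x,\lambda_k^4t))$, subtracting a parabolic Taylor polynomial of degree $4$ to produce $\tilde w_k$ with $\partial_t^l\nabla_\alpha\tilde w_k(0,0)=0$ for $4l+|\alpha|\leq 4$. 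By construction $\tilde w_k$ is defined on $\R^{n-1}\times[-\lambda_k^{-1}\xi_k,\infty)\times[-\lambda_k^{-4}s_k,0]$ (note the upper endpoint $0$ in time rather than $+\infty$).

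Next I pass to subsequences so that $\lambda_k^{-1}\xi_k\to z\in[0,\infty]$, $\lambda_k^{-4}s_k\to\tau\in[0,\infty]$, $P_k\to P$ (and hence $B_{jk}\to B_j$), $\tilde p_k\to\tilde p$, and $\tilde w_k\to\tilde w$ in $C^{4,1}_{\mathrm{loc}}$. Using the uniform growth bound $|\tilde w_k(x,t)|\leq C(n)(|x|+|t|^{1/4})^{4+\gamma}$ and the vanishings $[P_k\tilde w_k]_\gamma,[B_{jk}\tilde w_k]_\gamma\to 0$, the limit $\tilde w$ satisfies $P\tilde w=0$ on its domain, $B_1\tilde w(\vec x,-z,t)$ and $B_2\tilde w(\vec x,-z,t)$ are parabolic polynomials of parabolic degrees $\leq 3$ and $\leq 1$ respectively (by the same argument as in Lemma \ref{SchauderCh_BlowUpGanzraumLemmaBC}, working with the vanishing of $[\nabla^3B_1\tilde w]^{\operatorname{space}}$ and $[B_1\tilde w]^{\operatorname{time}}_{(3+\gamma)/4}$ at the rescaled boundary), the growth bound passes to the limit, and $|D^{4,1}\tilde w(\tilde p)-D^{4,1}\tilde w(0)|\geq\epsilon_0(n)$.

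I then split into four cases according to $(z,\tau)$:
\begin{enumerate}[(1)]
\item $z=\infty$, $\tau=\infty$: $\tilde w\in C^{4,1}(\R^n\times\R)$ and Corollary \ref{SchauderCh_LiouvilleGanzraum} applies;
\item $z=\infty$, $\tau<\infty$: $\tilde w\in C^{4,1}(\R^n\times(-\infty,-\tau])$ and Corollary \ref{SchauderCh_LiouvilleHalbraumNachUnten} applies (this is the key replacement — we use the downward half-space Liouville theorem instead of the upward one);
\item $z<\infty$, $\tau=\infty$: $\tilde w\in C^{4,1}(\R^{n-1}\times[-z,\infty)\times\R)$ and Corollary \ref{SchauderCh_LiouvilleGanzraumBC} applies;
\item $z<\infty$, $\tau<\infty$: $\tilde w\in C^{4,1}(\R^{n-1}\times[-z,\infty)\times(-\infty,-\tau])$ and Corollary \ref{SchauderCh_LiouvilleHalbraumNachUntenBC} applies.
\end{enumerate}
In every case $\tilde w$ is a parabolic polynomial of parabolic degree $\leq 4$, so $D^{4,1}\tilde w$ is constant, contradicting $|D^{4,1}\tilde w(\tilde p)-D^{4,1}\tilde w(0)|\geq\epsilon_0(n)$.

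I do not anticipate a serious obstacle: essentially all the work has been done in Lemmas \ref{SchauderCh_BlowUpGanzraumLemma}, \ref{SchauderCh_BlowUpHalbRaumUntenLemma}, \ref{SchauderCh_BlowUpGanzraumLemmaBC}, and the Liouville-type Corollary \ref{SchauderCh_LiouvilleHalbraumNachUntenBC} is tailored precisely to the downward, half-space, with-boundary setting. The only delicate point to check is that in case (4) the boundary data $B_j\tilde w$ at $x_n=-z$ are indeed parabolic polynomials of the correct degree on the rescaled time half-line $(-\infty,-\tau]$ — but this follows from the same seminorm-vanishing argument as in Lemma \ref{SchauderCh_BlowUpGanzraumLemmaBC}, since the $[\cdot]^{(3)}_\gamma$ and $[\cdot]^{(1)}_\gamma$ seminorms control both spatial polynomial growth in $\vec x$ and temporal polynomial growth in $t$ simultaneously.
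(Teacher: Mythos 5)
Your proposal follows the paper's approach exactly: the paper's own proof of Lemma \ref{SchauderCh_BlowUpHalbraumUntenLemmaBC} is the one-line remark preceding the statement, which says to repeat the proof of Lemma \ref{SchauderCh_BlowUpHalbraumObenLemmaBC} with Lemma \ref{SchauderCh_BlowUpHalbRaumUntenLemma} in place of Lemma \ref{SchauderCh_BlowUpHalbraumLemma} in cases (2) and (4), and Corollary \ref{SchauderCh_LiouvilleHalbraumNachUntenBC} in place of Corollary \ref{SchauderCh_LiouvilleHalbraumNachObenBC} in case (4) --- precisely your four-case split with the downward-temporal Liouville theorems. One small slip worth correcting: since $s_k\leq 0$, the rescaled $\tilde w_k$ is defined in time on $(-\infty,-\lambda_k^{-4}s_k]$ (unbounded below, with $-\lambda_k^{-4}s_k\geq 0$), not on $[-\lambda_k^{-4}s_k,0]$ as you wrote, and correspondingly $\lambda_k^{-4}s_k\to\tau\in[-\infty,0]$ rather than $[0,\infty]$; your subsequent case analysis (limit domain $\R^n\times(-\infty,-\tau]$, etc.) already implicitly uses the correct unbounded-below description, so the rest of the argument stands.
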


\section{Localized Schauder Estimates}\label{SchauderCh_LocalizedEstimatesSection}
To localize the results from Lemmas 
\ref{SchauderCh_BlowUpGanzraumLemma},
\ref{SchauderCh_BlowUpHalbraumLemma},
\ref{SchauderCh_BlowUpHalbRaumUntenLemma},
\ref{SchauderCh_BlowUpGanzraumLemmaBC},
\ref{SchauderCh_BlowUpHalbraumObenLemmaBC} and
\ref{SchauderCh_BlowUpHalbraumUntenLemmaBC},
we require a result due to Simon \cite{simon}. To formulate it, we recall the following definition.
\begin{definition}
    Let $\mathcal C$ be a class of sets. We say that a map $S:\mathcal C\rightarrow[0,\infty)$ is 
    \begin{enumerate}[(1)]
        \item \emph{monotone}, if $S(A)\leq S(B)$ whenever $A,B\in\mathcal C$ with $A\subset B$.
        \item \emph{subadditive}, if 
    $S(A)\leq\sum_{i=1}^N S(A_i)$
    whenever $A,A_1,...,A_N\in\mathcal C$ with $A\subset\bigcup_{i=1}^N A_i$.
    \end{enumerate}
\end{definition}
\begin{theorem}[Simon's Absorption Lemma]\label{SchauderCh_SimonsCoveringLemma}\ \\
Let $R>0$, $p_0\in\R^n\times\R$, $A\subset\R^n\times\R$ be convex, put $\Omega_\rho(p):=U_\rho(p)\cap A$ and let $S$ be a nonnegative, monotone and subadditive function on the class $\set{\Omega_\rho(p)\ |\ \rho>0\textrm{ and }p\in\R^n\times\R}$. Let $k\in\R$, $\nu\in(0,1]$, $\theta\in(0,1)$ and $E>0$. There exists $\delta(n,\theta)>0$ and a constant $C=C(n,k,\nu,\theta)$ with the following property:\\

If  for all $\Omega_\rho(y)\subset \Omega_R(p_0)$  with radius $\rho\leq\nu R$
$$\rho^k S(\Omega_{\theta\rho}(y))\leq \delta\rho^k S(\Omega_\rho(y))+E,\hspace{.5cm}\textrm{then}\hspace{.5cm}
R^k S(\Omega_{\theta R}(p_0))\leq C E.$$
\end{theorem}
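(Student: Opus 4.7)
My plan is to execute Simon's classical covering-and-absorption argument. The proof splits into two main stages: an absorption step that bounds a supremum quantity at scales up to $\nu R$, followed by a finite-covering step that transfers this small-scale bound to the target scale $R$.

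\emph{Setup.} First I reduce to the case $\nu \le 1-\theta$ by replacing $\nu$ with $\min(\nu, 1-\theta)$; the dependence on the original $\nu$ is absorbed into the final constant. I then introduce the supremum
$$M := \sup\bigl\{\rho^k S(\Omega_{\theta\rho}(y)) \,:\, U_\rho(y) \subset U_R(p_0),\ \rho \le \nu R\bigr\}.$$
By monotonicity, $M \le (\nu R)^k S(\Omega_R(p_0))$, so $M$ is finite provided $S(\Omega_R(p_0)) < \infty$ (otherwise a standard truncation handles it). The aim of the first stage is to show $M \le 2E$ with absorption constant $\delta = \delta(n,\theta)$.

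\emph{Absorption step.} For any admissible pair $(\rho, y)$, the hypothesis gives $\rho^k S(\Omega_{\theta\rho}(y)) \le \delta\rho^k S(\Omega_\rho(y)) + E$. The main work is to control $\rho^k S(\Omega_\rho(y))$ by $M$ itself. Covering $U_\rho(y)$ by $N = N(n,\theta)$ parabolic balls of radius $\theta\rho$ and applying subadditivity of $S$ gives
$$\rho^k S(\Omega_\rho(y)) \le N \max_i \rho^k S(\Omega_{\theta\rho}(y_i)),$$
and the right-hand side is at most $N M$ provided the covering is chosen so that each pair $(\rho, y_i)$ is itself admissible. Inserting this into the hypothesis and taking the supremum over admissible $(\rho, y)$ yields $M \le N\delta M + E$, and choosing $\delta(n,\theta) := 1/(2N)$ absorbs the first term, giving $M \le 2E$.

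\emph{Passage to scale $R$.} Using $\theta + \nu \le 1$, I cover $U_{\theta R}(p_0)$ by $N' = N'(n,\nu,\theta)$ parabolic balls $U_{\theta\nu R}(y_j)$ with $y_j \in U_{\theta R}(p_0)$; for each $j$ the inclusion $U_{\nu R}(y_j) \subset U_R(p_0)$ holds, so $(\nu R, y_j)$ is admissible for $M$, giving $S(\Omega_{\theta\nu R}(y_j)) \le 2E(\nu R)^{-k}$. Subadditivity then delivers
$$R^k S(\Omega_{\theta R}(p_0)) \le R^k \sum_j S(\Omega_{\theta\nu R}(y_j)) \le 2 N' \nu^{-k} E,$$
which is the stated bound with $C(n, k, \nu, \theta) := 2 N' \nu^{-k}$.

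The main obstacle lies in the absorption step, namely arranging the covering so that the centers $y_i$ remain admissible. A naive choice with $y_i \in U_\rho(y)$ only forces $d(y_i, p_0) < R$, not $d(y_i, p_0) \le R - \rho$, so some $y_i$ may drift out of the admissibility set when $y$ is near $\partial U_R(p_0)$. The remedy is a careful bookkeeping: either slightly shrink the covering radius and enlarge $N$ (still depending only on $n$ and $\theta$), or introduce a slightly more restrictive admissibility condition into the definition of $M$ and iterate consistently. Handling this cleanly — while keeping $\delta$ dependent only on $(n,\theta)$ and $C$ dependent only on $(n,k,\nu,\theta)$ — is the only subtle part of the argument; everything else is bookkeeping.
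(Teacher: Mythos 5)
Your proposal follows the same two-stage covering-and-absorption strategy as the paper: introduce a supremum $Q$ over admissible pairs $(\rho,p)$, derive an absorption inequality $Q\le CE$, and then cover $\Omega_{\theta R}(p_0)$ by finitely many small balls. The one structural difference is the ordering inside the absorption step. You apply the hypothesis at $(\rho,y)$ and then cover $\Omega_\rho(y)$ by balls $\Omega_{\theta\rho}(y_i)$ to control $S(\Omega_\rho(y))$ by $NQ$; the paper instead applies the hypothesis at the shifted pair $(\theta\rho,q)$, so that the error term $\rho^k S(\Omega_{\theta\rho}(q))$ is controlled directly by the definition of $Q$ with no covering at all, and a covering enters only afterwards when comparing $\Omega_{\theta\rho}(p)$ to $\Omega_{\theta^2\rho}(q_i)$. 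Both routes yield $Q\le N\delta Q+O(E)$; the paper's variant keeps the covering centers closer to $p$ (they drift by at most $\theta\rho$ rather than $\rho$), which gives slightly more room in the bookkeeping you identify as delicate.

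Your explicit flagging of the covering-admissibility issue is well-placed: the covering centers need not satisfy $\Omega_\rho(y_i)\subset\Omega_R(p_0)$ when $y$ sits near the boundary of $\Omega_R(p_0)$, so one cannot blindly invoke the definition of $Q$ at $(\rho,y_i)$. It is worth noting that the paper's own proof does not explicitly treat this either: its preparatory covering consideration is designed only to keep centers inside $A$ (so that the truncated balls $\Omega_\rho$ make sense), not to keep them admissible relative to $\Omega_R(p_0)$, and the key intermediate estimate is then applied at pairs $(\rho,q_i)$ whose admissibility is not checked. Likewise your reduction to $\nu\le 1-\theta$ is implicitly needed in the paper's final covering step to make $(\nu R,\tilde q_i)$ admissible for $\tilde q_i\in\Omega_{\theta R}(p_0)$, but is not stated there. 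So your plan is at least as careful as the paper's; carrying out the covering bookkeeping --- for instance by shrinking the covering radius so the centers have the required clearance, at the cost of adjusted constants --- would make your write-up more complete than what the paper presents.
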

\begin{tcolorbox}[colback=white!20!white,colframe=black!100!white,sharp corners, breakable]
The set $A$ should be thought of as $\R^n\times\R$, $R^{n-1}\times[0,\infty)\times(-\infty,0]$ etc. so that each $\Omega_\rho(p)$ is a truncated parabolic ball as in Definition \ref{SchauderCh_ParabolicBallDefnition}. 
\end{tcolorbox}
\footnotetext{Open means open in the relative topology.}
\begin{proof}
Let $p\in\R^n$ and $r_2>r_1>0$. We claim that there exists $N(n, \frac{r_2}{r_1})$ and points $q_1,...,q_N\in  \Omega_{r_2}(p)$ such that $\Omega_{r_2}(p)\subset\bigcup_{i=1}^N \Omega_{r_1}(q_i)$.\\

To see that this is the case, first, note that there are points $\tilde q_1,..., \tilde q_{\tilde N}\in \bar U_{r_2}(p)$ such that $U_{r_2}(p)\subset\bigcup_{i=1}^{\tilde N}U_{\frac{r_1}2}(\tilde q_i)$ where $\tilde N=\tilde N(n, \frac{r_2}{r_1})$. For each $1\leq i\leq \tilde N$ we do the following: If $\tilde q_i\in A$, we put $q_i:=\tilde q_i$. If $\tilde q_i\not\in A$ but $U_{\frac{r_1}2}(\tilde q_i)\cap A\neq\emptyset$ we select any $q_i\in U_{\frac{r_1}2}(\tilde q_i)\cap A$. Finally, if $U_{\frac{r_1}2}(\tilde q_i)\cap A=\emptyset$ we don't define a point $q_i$. Note that for any $1\leq i\leq \tilde N$ that falls into either the first or the second case, we have $U_{\frac{r_1}2}(\tilde q_i)\subset U_{r_1}(q_i)$.\\

This produces $N\leq \tilde N(n,\frac{r_2}{r_1})$ and points $q_1,..., q_N\in \Omega_{r_2}(p)$ such that $\Omega_{r_2}(p)\subset\bigcup_{i=1}^N\Omega_{r_1}(q_i)$.\\

Having finished this preparatory consideration, we now start the proof of the theorem.  Note that for all $p\in\R^n$ and $\rho\leq\nu R$ we have $S(\Omega_\rho(p))\leq S(\Omega_R(p_0))<\infty$ since $S$ is monotone and hence
$$Q:=\sup\set{\rho^k S(\Omega_{\theta\rho}(p))\ |\ \Omega_\rho(p)\subset \Omega_R(p_0)\textrm{ and }\rho\leq\nu R}<\infty.$$
Let $\delta>0$ and assume that for all $\rho\leq \nu R$ and $\Omega_\rho(p)\subset \Omega_R(p_0)$ we have 
\begin{equation}\label{SchauderCh_SimonCovering0}
    \rho^kS(\Omega_{\theta\rho}(p))\leq \delta\rho^kS(\Omega_\rho(p))+E.    
\end{equation}
Later in the proof, we will derive a suitable condition on the smallness of $\delta$. Given any $q\in \Omega_R(p_0)$ and $\rho\in(0,\nu R]$ such that $\Omega_\rho(q)\subset \Omega_R(p_0)$ we can apply Estimate \eqref{SchauderCh_SimonCovering0} with $\Omega_{\theta\rho}(q)$ and obtain 
$$\left(\theta\rho\right)^kS(\Omega_{\theta^2\rho}(q))
\leq \delta \left(\theta\rho\right)^kS(\Omega_{\theta\rho}(q))+E
= \delta\theta^k\rho^kS(\Omega_{\theta\rho}(q))+E
\leq \delta\theta^k Q+E.$$
The last step is justified by the definition of $Q$. Rearranging, we have proven that for any $\Omega_\rho(q)\subset \Omega_R(p_0)$ with $\rho\leq\nu R$
\begin{equation}\label{SchauderCh_SimonCovering1}
\rho^kS(\Omega_{\theta^2\rho}(q))\leq \delta Q+\theta^{-k}E.
\end{equation}
Now let $p\in \Omega_R(p_0)$ and $\rho\in(0,\nu R]$ such that $\Omega_{\rho}(p)\subset \Omega_R(p_0)$. By the preparatory consideration, we may cover $\Omega_{\theta\rho}(p)$ with balls $\Omega_{\theta^2\rho}(q_i)$ where $q_i\in \Omega_{\theta\rho}(p)$ and $1\leq i\leq N(n,\theta)$. By first using that $S$ is subadditive and then applying Estimate \eqref{SchauderCh_SimonCovering1}, we get
$$
\rho^k S(\Omega_{\theta\rho}(p))
\leq \sum_{i=1}^N\rho^kS(\Omega_{\theta^2\rho}(q_i))
\leq N(n,\theta)(\delta Q+ \theta^{-k}E).
$$
We now demand that $\delta N(n,\theta)\leq\frac12$. This imposes a constraint on $\delta$ that only depends on $n$ and $\theta$. Taking the supremum over all $\Omega_\rho(p)$ we get
\begin{equation}\label{SchauderCh_SimonLemmaMISSING}
Q\leq \frac12 Q+\frac{N(n,\theta)}{\theta^k}E\hspace{.5cm}\textrm{and hence}\hspace{.5cm} Q\leq \frac{2 N(n,\theta)}{\theta^k}E.
\end{equation}
By the preparatory consideration, there exists $M=M(n,\nu,\theta)\in\N$ and $\tilde q_1$,...., $\tilde q_M\in  \Omega_{\theta R}(p_0)$ such that $\bigcup_{i=1}^M \Omega_{\theta\nu R}(\tilde q_i)\supset \Omega_{\theta R}(p_0)$. Using the subadditivity of $S$ and Estimate \eqref{SchauderCh_SimonLemmaMISSING}, we get 
$$R^k S(\Omega_{\theta R}(p_0))
\leq \left(\frac R{\nu R}\right)^k\sum_{i=1}^M (\nu R)^k S(\Omega_{\theta\nu R}(\tilde q_i))
\leq \nu^{-k}M(n,\nu,\theta)Q
\leq  C(n,\nu,\theta)E.$$
\end{proof}

\subsection{Localized Estimates with Constant Coefficients}
Throughout this subsection, let $a^{ij}$ denote a strictly elliptic matrix and $\theta>0$, $\Lambda>0$ such that 
$$|a^{ij}|\leq\Lambda
\hspace{.5cm}
\textrm{and}
\hspace{.5cm}
a^{ij}\xi_i\xi_j\geq\theta|\xi|^2\textrm{ for all $\xi\in\R^n$.}
$$
Additionally, we define the following operators
$$
P:=\partial_t +a^{ij}a^{kl}\partial_{ijkl},\hspace{.5cm}
B_1:=a^{ni}\partial_i\hspace{.5cm}\textrm{and}\hspace{.5cm}
B_2:=a^{ni}a^{kl}\partial_{ikl}.
$$
\begin{lemma}
\label{SchauderCh_GanzraumLokalisierungFixeCoeff}
  Let $q\in\R^n\times\R$, $R>0$, $U_R:=U_R(q)$ and $u\in C^{4,1,\gamma}(U_R(q))$. Then 
 $$[D^{4,1}u]_{\gamma,U_{\frac R2}}^{(0)}\leq C(\Lambda,\theta,n,\gamma)\left([Pu]_{\gamma,U_R}^{(0)}+R^{-4-\gamma}\|u\|_{L^\infty(U_R)}\right).$$
\end{lemma}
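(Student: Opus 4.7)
The strategy is to combine a cutoff argument with Lemma \ref{SchauderCh_BlowUpGanzraumLemma} (the global estimate) and Simon's absorption lemma (Theorem \ref{SchauderCh_SimonsCoveringLemma}). First, I localize to an arbitrary parabolic sub-ball $U_\rho(y)\subset U_R(q)$: choose $\eta\in C^\infty_c(U_\rho(y))$ with $\eta\equiv 1$ on $U_{\rho/2}(y)$ and $|\partial_t^j\nabla^k\eta|\leq C\rho^{-(4j+k)}$, and set $w:=\eta u$, extended by zero to all of $\R^n\times\R$. Since $w\in C^{4,1,\gamma}(\R^n\times\R)$ and $w=u$ on $U_{\rho/2}(y)$, Lemma \ref{SchauderCh_BlowUpGanzraumLemma} yields
\[ [D^{4,1}u]^{(0)}_{\gamma,U_{\rho/2}(y)}\leq [D^{4,1}w]^{(0)}_{\gamma,\R^n\times\R}\leq C(n,\gamma,\Lambda,\theta)\,[Pw]^{(0)}_{\gamma,\R^n\times\R}. \]

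Next I decompose $Pw=\eta\,Pu+[P,\eta]u$, where the commutator is a sum of Leibniz terms of the form $(\partial_t^j\nabla^k\eta)\cdot(\partial_t^{j'}\nabla^{k'}u)$ with $4j+k+4j'+k'=4$ and $4j+k\geq 1$. For $[\eta Pu]_\gamma^{(0)}$, Leibniz together with the interpolation bound $\|Pu\|_{L^\infty(U_\rho(y))}\leq C_\Lambda\bigl(\epsilon\rho^\gamma[D^{4,1}u]^{(0)}_{\gamma,U_\rho(y)}+C(\epsilon)\rho^{-4}\|u\|_{L^\infty(U_\rho(y))}\bigr)$ from Theorem \ref{SchauderCh_InterpolationEstimates} produces the correct structure. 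For each commutator term, the $\rho^{-(4j+k)}$ factors coming from derivatives of $\eta$ pair with the interpolation estimates of Theorem \ref{SchauderCh_InterpolationEstimates} and Lemma \ref{SchauderCh_TemporalInterpolationLemma} applied to $\|\partial_t^{j'}\nabla^{k'}u\|_\infty$ and $[\partial_t^{j'}\nabla^{k'}u]^{(0)}_\gamma$; the $\rho$-powers cancel, leaving a contribution of the form $C\epsilon\,[D^{4,1}u]^{(0)}_{\gamma,U_\rho(y)}+C(\epsilon)\rho^{-4-\gamma}\|u\|_{L^\infty(U_\rho(y))}$. Summing over all terms and combining with the bound on $[\eta Pu]_\gamma$ yields
\[ [Pw]^{(0)}_{\gamma,\R^n\times\R}\leq C\,[Pu]^{(0)}_{\gamma,U_\rho(y)}+C\epsilon\,[D^{4,1}u]^{(0)}_{\gamma,U_\rho(y)}+C(\epsilon)\,\rho^{-4-\gamma}\|u\|_{L^\infty(U_\rho(y))}. \]

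Finally, define $S(\Omega):=[D^{4,1}u]^{(0)}_{\gamma,\Omega}$, which is monotone by inspection and subadditive on parabolic balls by the parabolic analogue of Lemma \ref{SchauderCh_SemiNormSubbadditiveonConvexSpaces} (in the spirit of Proposition \ref{SchauderCh_SemiNormSubbadditiveonConvexSpaces_Parabolic}). Multiplying the two preceding displays by $\rho^{4+\gamma}$ and using monotonicity of $\|u\|_{L^\infty}$ and of $[Pu]^{(0)}_\gamma$ to upgrade the $\rho$-dependent error terms to an $R$-based quantity, I obtain for every $U_\rho(y)\subset U_R(q)$,
\[ \rho^{4+\gamma}S(U_{\rho/2}(y))\leq C\epsilon\,\rho^{4+\gamma}S(U_\rho(y))+E,\qquad E:=C R^{4+\gamma}[Pu]^{(0)}_{\gamma,U_R(q)}+C(\epsilon)\|u\|_{L^\infty(U_R(q))}. \]
I then fix $\epsilon$ so small that $C\epsilon\leq \delta(n,1/2)$ in Theorem \ref{SchauderCh_SimonsCoveringLemma} and apply that theorem with $k=4+\gamma$, $\theta=1/2$, $\nu=1$ and $A=\R^n\times\R$; the conclusion $R^{4+\gamma}S(U_{R/2}(q))\leq CE$, divided by $R^{4+\gamma}$, is precisely the claimed estimate. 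The main technical obstacle is the second paragraph: organizing the cutoff calculation so that every term in $[Pw]^{(0)}_\gamma$ other than $C[Pu]^{(0)}_\gamma$ is either a \emph{small} multiple of $[D^{4,1}u]^{(0)}_{\gamma,U_\rho(y)}$ (absorbable through Simon) or a controlled $\rho^{-(4+\gamma)}\|u\|_{L^\infty}$ remainder, which requires invoking the full strength of both the spatial and the temporal interpolation inequalities in concert.
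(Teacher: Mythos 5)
Your proposal is correct and follows essentially the same route as the paper's own proof: cutoff $\eta$ supported in $U_\rho(y)$, application of Lemma \ref{SchauderCh_BlowUpGanzraumLemma} to $\eta u$, a Leibniz/product-rule expansion of $P(\eta u)$ combined with the interpolation bounds of Theorem \ref{SchauderCh_InterpolationEstimates} (Lemma \ref{SchauderCh_TemporalInterpolationLemma} is actually not needed here, as Theorem \ref{SchauderCh_InterpolationEstimates} already controls the full $[\nabla^l u]^{(0)}_\gamma$-seminorms), and absorption via Theorem \ref{SchauderCh_SimonsCoveringLemma} with $S(\Omega)=[D^{4,1}u]^{(0)}_{\gamma,\Omega}$, $k=4+\gamma$, $\theta=1/2$, $\nu=1$.
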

\begin{proof}
Throughout this proof, we will suppress the dependence of various constants $C$ on $\Lambda$, $\theta$, $n$ and $\gamma$. Let $p\in U$ and $\rho>0$ such that $U_\rho(p)\subset U_R(q)$. Let $\eta\in C^\infty_0(U_\rho(p),[0,1])$ with $\eta\equiv 1$ on $U_{\frac\rho2}(p)$ such that for $j\in\N$ and $\alpha\in\N_0^n$ 
\begin{equation}\label{SchauderCh_CutOffLocaliteEstimates}
\|\partial_t^j\nabla_\alpha \eta\|_{L^\infty}\leq C\rho^{-4j-|\alpha|}
\hspace{.5cm}\textrm{and}\hspace{.5cm}
[\partial_t^j\nabla_\alpha \eta]_{\gamma,\R^n\times\R}^{(0)} \leq C\rho^{-4j-|\alpha|-\gamma}.
\end{equation}
Extending by zero outside of $U_\rho(p)$, we define $v:=\eta u\in C^{4,1,\gamma}(\R^n\times\R)$. By Lemma \ref{SchauderCh_BlowUpGanzraumLemma}, we have 
\begin{equation}\label{SchauderCh_ExtendedLemmaToLocalizeEquation}
[D^{4,1} v]_{\gamma,\R^n\times\R}^{(0)}\leq C[Pv]_{\gamma,\R^n\times\R}^{(0)}.
\end{equation}
To translate this back into an estimate for $u$, we compute 
\begin{equation}\label{SchauderCh_PvFormula}
Pv=\eta Pu+\dot\eta u+\sum_{|\alpha|\leq 3}\sum_{\substack{|\beta|\leq 4\\ |\alpha|+|\beta|=4}}c(\alpha,\beta)\nabla_\alpha u\nabla_\beta \eta.
\end{equation}
For functions $f$ and $g$ we have the estimate 
\begin{equation}\label{SchauderCh_LocalizationProductRuileEstimate}
[fg]_{\gamma,\R^n\times\R}^{(0)}\leq C\left(
\|f\|_{L^\infty(\R^n\times\R)}[g]_{\gamma,\R^n\times\R}^{(0)}
+
[f]_{\gamma,\R^n\times\R}^{(0)}\|g\|_{L^\infty(\R^n\times\R)}
\right).
\end{equation}
Using the formula for $Pv$ in Equation \eqref{SchauderCh_PvFormula} and Estimate \eqref{SchauderCh_CutOffLocaliteEstimates}, we deduce
\begin{align*}
    [Pv]_{\gamma,\R^n\times\R}^{(0)}
    \leq & C\bigg([ Pu]_{\gamma,U_\rho(p)}^{(0)}+\rho^{-\gamma}\|Pu\|_{L^\infty(U_\rho(p))}
            +\rho^{-4}[ u]_{\gamma,U_\rho(p)}^{(0)}+\rho^{-4-\gamma}\|u\|_{L^\infty(U_\rho(p))}\\
            &\hspace{2cm}+\sum_{|\alpha|\leq 3}\sum_{\substack{|\beta|\leq 4\\ |\alpha|+|\beta|=4}}
                    \rho^{-|\beta|}[\nabla_\alpha u]_{\gamma,U_\rho(p)}^{(0)} 
                      +    \rho^{-|\beta|-\gamma}\|\nabla_\alpha u\|_{L^\infty(U_\rho(p))} \bigg)\\
     &\hspace{-1.5cm}\leq  C\bigg([ Pu]_{\gamma,U_\rho(p)}^{(0)}+\rho^{-\gamma}\|\dot u\|_{L^\infty(U_\rho)}+\sum_{|\alpha|\leq 4}\rho^{-4+|\alpha|-\gamma}\|\nabla_\alpha u\|_{L^\infty(U_\rho)}+\sum_{|\alpha|\leq 3}\rho^{-4+|\alpha|}[\nabla_\alpha u]_{\gamma,U_\rho(p)}^{(0)}\bigg).
\end{align*}
Inserting into Estimate \eqref{SchauderCh_ExtendedLemmaToLocalizeEquation}, recalling that $v\equiv u$ on $U_{\frac\rho2}(p)$ and using the interpolation estimates from Theorem \ref{SchauderCh_InterpolationEstimates}, we obtain 
\begin{equation}\label{SchauderCh_LocalizedEstimateConstCoeffToSimon}
   \rho^{4+\gamma}[D^{4,1} u]_{\gamma,U_{\frac\rho2}(p)}^{(0)}\leq C(\epsilon)\bigg(\rho^{4+\gamma}[ Pu]_{\gamma,U_{\rho}(p)}^{(0)}+\|u\|_{L^\infty(U_\rho(p))}\bigg)+C\epsilon \rho^{4+\gamma}[D^{4,1} u]_{\gamma,U_{\rho(p)}}^{(0)}.
\end{equation}
This estimate is true for all $U_\rho(p)\subset U_R(q)$. We put $A:=\R^n\times\R$, $\theta_S:=\frac12$, $\nu:=1$, $k:=4+\gamma$ and 
$$E(\epsilon):= C(\epsilon)\bigg(R^{4+\gamma}[ Pu]_{\gamma,U_R(q)}^{(0)}+\|u\|_{L^\infty(U_R(q))}\bigg).$$
Finally, we define $S(U_\rho(p)):=[D^{4,1}u]_{\gamma,U_\rho(p)}^{(0)}$. By Proposition \ref{SchauderCh_SemiNormSubbadditiveonConvexSpaces_Parabolic}, $S$ is subadditive and Estimate \eqref{SchauderCh_LocalizedEstimateConstCoeffToSimon} becomes
$$\rho^kS(U_{\theta_S\rho}(p))\leq \rho^k\epsilon S(U_\rho(p))+E(\epsilon)\hspace{.5cm}\textrm{for all }U_\rho(p)\subset U_R(q).$$
We now fix $\epsilon=\epsilon(n,k):=\frac12\delta(n,k)$ from Simon's Absorption Lemma \ref{SchauderCh_SimonsCoveringLemma} and may deduce $R^kS(U_{\theta_SR}(0))\leq C(n,k,\theta_S,\nu)E(\epsilon(n,k))$. Recalling the definition of $S$, $E$, $\theta_S$ and $k$, we obtain 
$$R^{4+\gamma}[D^{4,1}u]_{\gamma,U_{\frac R2}(0)}^{(0)}\leq C\left(R^{4+\gamma}[D^{4,1}u]_{\gamma,U_R(q)}^{(0)}+\|u\|_{L^\infty(U_R(q))}\right),$$
where $C=C(n,\gamma,\Lambda,\theta)$. 
\end{proof}

\begin{lemma}\label{SchauderCh_HalbraumObenLokalisierungFixeCoeff}
 Let $q\in\R^n\times[0,\infty)$, $R>0$, $U_R^+:=U_R^+(q)$,  $u\in C^{4,1,\gamma}(U_R^+)$ and $u_0:=u(\cdot,0)$. Then 
 $$[D^{4,1}]^{(0)}_{\gamma,U_{\frac R2}^+(q)}\leq C(\Lambda,\theta,n,\gamma)\left([Pu]^{(0)}_{\gamma,U_R^+(q)}+[\nabla^4 u_0]_{\gamma, \partial_F U_R^+(q)}+R^{-4-\gamma}\|u\|_{L^\infty(U_R^+(q))}\right).$$
\end{lemma}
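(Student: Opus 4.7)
The plan is to mimic the proof of Lemma \ref{SchauderCh_GanzraumLokalisierungFixeCoeff} almost verbatim, with two modifications: we use the half-space blow-up estimate (Lemma \ref{SchauderCh_BlowUpHalbraumLemma}) instead of the full-space one (Lemma \ref{SchauderCh_BlowUpGanzraumLemma}), and we have to handle the additional term $[\nabla^4 v(\cdot,0)]_{\gamma,\R^n}$ coming from the initial slice. Throughout, all constants are allowed to depend on $\Lambda,\theta,n,\gamma$.

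First, for $p\in U_R^+(q)$ and $\rho>0$ with $U_\rho^+(p)\subset U_R^+(q)$, choose a cutoff $\eta\in C^\infty_0(U_\rho(p),[0,1])$ satisfying $\eta\equiv 1$ on $U_{\rho/2}(p)$ and the derivative bounds \eqref{SchauderCh_CutOffLocaliteEstimates}. Extend $v:=\eta u$ by zero to obtain $v\in C^{4,1,\gamma}(\R^n\times[0,\infty))$. Apply Lemma \ref{SchauderCh_BlowUpHalbraumLemma} to get
$$[D^{4,1}v]_{\gamma,\R^n\times[0,\infty)}^{(0)}\leq C\left([Pv]_{\gamma,\R^n\times[0,\infty)}^{(0)}+[\nabla^4 v(\cdot,0)]_{\gamma,\R^n}\right).$$
The $[Pv]$ term is handled by the identical product-rule computation as in Lemma \ref{SchauderCh_GanzraumLokalisierungFixeCoeff}: expand via \eqref{SchauderCh_PvFormula}, use \eqref{SchauderCh_LocalizationProductRuileEstimate} together with \eqref{SchauderCh_CutOffLocaliteEstimates}, which produces $[Pu]_{\gamma,U_\rho^+(p)}^{(0)}$ plus lower-order seminorms and sup-norms of spatial derivatives of $u$ on $U_\rho^+(p)$, each with the appropriate power of $\rho^{-1}$.

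For the new boundary term, write $v(\cdot,0)=\eta(\cdot,0)u_0$ and expand $\nabla^4(\eta(\cdot,0)u_0)$ by the Leibniz rule. The leading piece gives $\eta(\cdot,0)\nabla^4 u_0$, whose Hölder seminorm is controlled by $[\nabla^4 u_0]_{\gamma,\partial_F U_\rho^+(p)}+\rho^{-\gamma}\|\nabla^4 u_0\|_{L^\infty}$. The remaining pieces involve $\nabla^k\eta(\cdot,0)\,\nabla^{4-k}u_0$ for $0\le k\le 3$, which contribute terms of the form $\rho^{k-4}[\nabla^{4-k}u_0]_{\gamma}+\rho^{k-4-\gamma}\|\nabla^{4-k}u_0\|_{L^\infty}$. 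Since $u_0=u(\cdot,0)$, each of these is bounded by the corresponding spatial quantities on $U_\rho^+(p)$, which in turn may be controlled through the interpolation estimates of Theorem \ref{SchauderCh_InterpolationEstimates} (valid on truncated parabolic balls by the subsequent Proposition) by $\epsilon\rho^{-\gamma}[D^{4,1}u]_{\gamma,U_\rho^+(p)}^{(0)}+C(\epsilon)\rho^{-4-\gamma}\|u\|_{L^\infty(U_\rho^+(p))}$.

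Assembling these bounds and multiplying by $\rho^{4+\gamma}$, we arrive at the key pointwise inequality
\begin{equation*}
\rho^{4+\gamma}[D^{4,1}u]^{(0)}_{\gamma,U^+_{\rho/2}(p)}\leq C(\epsilon)\Big(\rho^{4+\gamma}[Pu]^{(0)}_{\gamma,U_\rho^+(p)}+\rho^{4+\gamma}[\nabla^4 u_0]_{\gamma,\partial_F U_\rho^+(p)}+\|u\|_{L^\infty(U_\rho^+(p))}\Big)+C\epsilon\,\rho^{4+\gamma}[D^{4,1}u]^{(0)}_{\gamma,U_\rho^+(p)},
\end{equation*}
valid for every $U_\rho^+(p)\subset U_R^+(q)$. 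Finally, apply Simon's Absorption Lemma \ref{SchauderCh_SimonsCoveringLemma} with $A=\R^n\times[0,\infty)$, $\theta_S=\tfrac12$, $\nu=1$, $k=4+\gamma$, $S(U_\rho^+(p)):=[D^{4,1}u]^{(0)}_{\gamma,U_\rho^+(p)}$ (subadditive by Proposition \ref{SchauderCh_SemiNormSubbadditiveonConvexSpaces_Parabolic}) and
$$E:=C(\epsilon)\Big(R^{4+\gamma}[Pu]^{(0)}_{\gamma,U_R^+(q)}+R^{4+\gamma}[\nabla^4 u_0]_{\gamma,\partial_F U_R^+(q)}+\|u\|_{L^\infty(U_R^+(q))}\Big),$$
using monotonicity of the two seminorms under $U_\rho^+(p)\subset U_R^+(q)$ to bound the middle terms by $E$. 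Choosing $\epsilon$ to match the threshold $\delta(n,k)$ of Simon's lemma yields the desired estimate.

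The only genuinely new step compared with the full-space case is the product-rule expansion of $\nabla^4(\eta(\cdot,0)u_0)$; the main obstacle is simply keeping track of the lower-order terms in $u_0$ and recognising that, because $u_0$ is a spatial trace of $u$, the interpolation estimates of Theorem \ref{SchauderCh_InterpolationEstimates} on truncated parabolic balls already provide the correct absorption structure (part of the seminorm going into $D^{4,1}u$, and the remainder into $\|u\|_{L^\infty}$) needed to match the hypothesis of Simon's Absorption Lemma.
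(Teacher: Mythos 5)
Your proposal follows the paper's proof essentially verbatim — cut off to $v=\eta u$, invoke Lemma \ref{SchauderCh_BlowUpHalbraumLemma}, expand $\nabla^4(\eta(\cdot,0)u_0)$ by Leibniz, absorb the lower-order traces through the interpolation estimates of Theorem \ref{SchauderCh_InterpolationEstimates}, then finish with Simon's Absorption Lemma on $A=\R^n\times[0,\infty)$. There is a small bookkeeping slip in the Leibniz contributions: the term $\nabla^{|\beta|}\eta\,\nabla^{|\alpha|}u_0$ with $|\alpha|+|\beta|=4$ and $|\beta|\geq 1$ scales via Estimate \eqref{SchauderCh_CutOffLocaliteEstimates} as $\rho^{-|\beta|}[\nabla^{|\alpha|}u_0]_\gamma+\rho^{-|\beta|-\gamma}\|\nabla^{|\alpha|}u_0\|_{L^\infty}$, whereas your formula $\rho^{k-4}[\nabla^{4-k}u_0]_\gamma+\cdots$ with $0\le k\le 3$ pairs the powers of $\rho$ with the wrong derivative counts; with the correction, the interpolation step yields exactly the paper's Estimate \eqref{SchauderCh_LokhalbRaumOben05} and the rest of your argument is the paper's.
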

 \begin{tcolorbox}[colback=white!20!white,colframe=black!100!white,sharp corners, breakable]
If $\partial_F U_R^+(q)=\emptyset$, the estimate holds with $[\nabla^4 u_0]_{\gamma, \partial_F U_R^+(q)}=0$. Indeed, when $\partial_F U_R^+(q)=\emptyset$, we deduce $U_R(q)\subset\R^n\times(0,\infty)$ and we are in the position where Lemma \ref{SchauderCh_GanzraumLokalisierungFixeCoeff} applies. 
\end{tcolorbox}
\begin{proof}
Throughout this proof, we will suppress the dependence of various constants $C$ on $\Lambda$, $\theta$, $n$ and $\gamma$. Let $p\in U_R^+(q)$ and $\rho>0$ such that $U_\rho^+(p)\subset U_R^+(q)$. We wish to prove that for any $\epsilon>0$
 \begin{align}
 [D^{4,1}u]^{(0)}_{\gamma,U_{\frac \rho2}(p)^+}\leq & C(\epsilon)\left([Pu]_{\gamma,U_\rho^+(p)}^{(0)}+[\nabla^4 u_0]_{\gamma, \partial_F U_\rho^+(p)}+\rho^{-4-\gamma}\|u\|_{L^\infty(U_\rho^+)}\right)\nonumber\\
 &+C\epsilon[D^{4,1}u]^{(0)}_{\gamma,U_{ \rho}^+(p)}.\label{SchauderCh_LokhalbRaumOben01}
 \end{align}
Once this is established, the lemma follows from Simons's Absorption Lemma \ref{SchauderCh_SimonsCoveringLemma} with $A=\R^n\times[0,\infty)$ by the same arguments that we gave in the proof of Lemma \ref{SchauderCh_GanzraumLokalisierungFixeCoeff}. To establish Estimate \eqref{SchauderCh_LokhalbRaumOben01}, we essentially repeat the argument from Lemma \ref{SchauderCh_GanzraumLokalisierungFixeCoeff}. We choose a cutoff function $\eta\in C^\infty_0(U_\rho(p),[0,1])$ with $\eta\equiv 1$ on $U_{\frac\rho2}(p)$ that satisfies Estimate \eqref{SchauderCh_CutOffLocaliteEstimates} and define $v:=\eta u\in C^{4,1,\gamma}(\R^n\times[0,\infty))$ by extending with $0$ outside of $U_\rho^+(p)$. By Lemma \ref{SchauderCh_BlowUpHalbraumLemma} we have 
\begin{equation}\label{SchauderCh_LokhalbRaumOben03}
[D^{4,1}v]^{(0)}_{\gamma,\R^n\times[0,\infty)}\leq C\left([Pv]^{(0)}_{\gamma, \R^n\times[0,\infty)}+[\nabla^4 v(\cdot,0)]_{\gamma,\R^n}\right).
\end{equation}
Following the derivation of Estimate \eqref{SchauderCh_LocalizedEstimateConstCoeffToSimon}, we obtain 
\begin{equation}\label{SchauderCh_LokhalbRaumOben04}
[P v]^{(0)}_{\gamma, \R^n\times[0,\infty)}\leq C(\epsilon)\left([Pu]^{(0)}_{\gamma, U_\rho^+(p)}+\rho^{-4-\gamma}\|u\|_{L^\infty(U_\rho^+(p))}\right)+C\epsilon[D^{4,1}u]_{\gamma, U_\rho^+(p)}^{(0)}.
\end{equation}
Next, using the analogue of Estimate \eqref{SchauderCh_LocalizationProductRuileEstimate} for the seminorm $[\cdot]_{\gamma,\R^n}$, the formula 
$$\nabla^4 (\eta u_0)=\eta \nabla^4 u_0+\sum_{|\alpha|\leq 3}\sum_{\substack{|\beta|\leq 4\\ |\alpha|+|\beta|=4}}c(\alpha,\beta)\nabla_\beta\eta\nabla_\alpha u_0$$
and Estimate \eqref{SchauderCh_CutOffLocaliteEstimates}, we estimate 
\begin{align}
    [\nabla ^4 (\eta u_0)]_{\gamma,\R^n}\leq C\bigg(
     & [\nabla^4 u_0]_{\gamma, \partial_FU_\rho^+(p)}
    +
    \rho^{-\gamma} \|\nabla^4 u_0\|_{L^\infty(\partial_F U_\rho^+(p))}\nonumber\\
    &\hspace{-2cm}+\sum_{|\alpha|\leq 3}\sum_{\substack{|\beta|\leq 4\\ |\alpha|+|\beta|=4}}
    \rho^{-|\beta|}[\nabla_\alpha u_0]_{\gamma, \partial_FU_\rho^+(p)}
    +
    \rho^{-|\beta|-\gamma}\|\nabla_\alpha u_0\|_{L^\infty(\partial_F U_\rho^+(p))}\bigg).\label{SchauderCh_InitialvalueLocalFixCoefEst1}
\end{align}
Inserting the interpolation estimates from Theorem \ref{SchauderCh_InterpolationEstimates}, we get 
\begin{equation}\label{SchauderCh_LokhalbRaumOben05}
     [\nabla ^4 (\eta u_0)]_{\gamma, \R^n}\leq
     C(\epsilon)\left([\nabla^4 u_0]_{\gamma, \partial_F U_\rho^+(p)}+\rho^{-4-\gamma}\|u\|_{L^\infty(U_\rho^+(p))}
     \right)+C\epsilon[D^{4,1}u]_{\gamma, U_\rho^+(p)}^{(0)}.
\end{equation}
Inserting Estimates \eqref{SchauderCh_LokhalbRaumOben04} and \eqref{SchauderCh_LokhalbRaumOben05} into Estimate \eqref{SchauderCh_LokhalbRaumOben03} and using that $v\equiv u$ on $U_{\frac\rho2}^+(p)$, we obtain Estimate \eqref{SchauderCh_LokhalbRaumOben01} and thereby the lemma.
\end{proof}

\begin{lemma}\label{SchauderCh_HalbraumUntenLokalisierungFixeCoeff}
Let $q\in\R^n\times(-\infty,0]$, $R>0$, $U_R^-:=U_R^-(q)$ and  $u\in C^{4,1,\gamma}(U_R^-)$. Then
 $$[D^{4,1}u]^{(0)}_{\gamma,U_{\frac R2}^-}\leq C(\Lambda,\theta,n,\gamma)\left([Pu]^{(0)}_{\gamma,U_R^-}+R^{-4-\gamma}\|u\|_{L^\infty(U_R^-)}\right).$$
\end{lemma}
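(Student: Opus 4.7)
The plan is to adapt the proof of Lemma \ref{SchauderCh_GanzraumLokalisierungFixeCoeff} essentially verbatim, replacing the whole-space blow-up input (Lemma \ref{SchauderCh_BlowUpGanzraumLemma}) by the half-space-below blow-up input (Lemma \ref{SchauderCh_BlowUpHalbRaumUntenLemma}). Unlike the $U_R^+$ case (Lemma \ref{SchauderCh_HalbraumObenLokalisierungFixeCoeff}), no initial-value seminorm appears here, which is consistent with Corollary \ref{SchauderCh_LiouvilleHalbraumNachUnten} needing no polynomial side condition on the top face.

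Concretely, I would fix $p\in U_R^-(q)$ and $\rho>0$ with $U_\rho^-(p)\subset U_R^-(q)$, and pick a cutoff $\eta\in C^\infty_0(U_\rho(p),[0,1])$ with $\eta\equiv 1$ on $U_{\rho/2}(p)$ and the standard scaled derivative bounds from Equation \eqref{SchauderCh_CutOffLocaliteEstimates}. Setting $v:=\eta u$ and extending by $0$ to all of $\R^n\times(-\infty,0]$ yields $v\in C^{4,1,\gamma}(\R^n\times(-\infty,0])$, and crucially $v$ vanishes near the spatial boundary of $U_\rho(p)$, so no boundary contributions arise when we invoke Lemma \ref{SchauderCh_BlowUpHalbRaumUntenLemma}:
\begin{equation}\nonumber
[D^{4,1}v]^{(0)}_{\gamma,\R^n\times(-\infty,0]}\leq C(n,\gamma,\Lambda,\theta)\,[Pv]^{(0)}_{\gamma,\R^n\times(-\infty,0]}.
\end{equation}

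Next I would expand
\begin{equation}\nonumber
Pv=\eta Pu+\dot\eta\, u+\sum_{|\alpha|\leq 3}\sum_{\substack{|\beta|\leq 4\\ |\alpha|+|\beta|=4}}c(\alpha,\beta)\,\nabla_\alpha u\,\nabla_\beta\eta,
\end{equation}
and estimate $[Pv]^{(0)}_{\gamma,\R^n\times(-\infty,0]}$ exactly as in the derivation of Estimate \eqref{SchauderCh_LocalizedEstimateConstCoeffToSimon}, using the product-rule bound \eqref{SchauderCh_LocalizationProductRuileEstimate}, the cutoff bounds \eqref{SchauderCh_CutOffLocaliteEstimates}, and the interpolation estimates of Theorem \ref{SchauderCh_InterpolationEstimates} (which remain valid on truncated parabolic balls by the proposition just after Lemma \ref{SchauderCh_TemporalInterpolationLemma}). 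Since $v\equiv u$ on $U_{\rho/2}^-(p)$, this gives, for every $\epsilon>0$,
\begin{equation}\nonumber
[D^{4,1}u]^{(0)}_{\gamma,U_{\rho/2}^-(p)}\leq C(\epsilon)\!\left([Pu]^{(0)}_{\gamma,U_\rho^-(p)}+\rho^{-4-\gamma}\|u\|_{L^\infty(U_\rho^-(p))}\right)+C\epsilon\,[D^{4,1}u]^{(0)}_{\gamma,U_\rho^-(p)}.
\end{equation}

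Finally, I would apply Simon's Absorption Lemma \ref{SchauderCh_SimonsCoveringLemma} with $A=\R^n\times(-\infty,0]$, $\Omega_\rho(p):=U_\rho^-(p)$, $S(\Omega_\rho(p)):=[D^{4,1}u]^{(0)}_{\gamma,\Omega_\rho(p)}$ (which is subadditive by Proposition \ref{SchauderCh_SemiNormSubbadditiveonConvexSpaces_Parabolic}), $k:=4+\gamma$, $\theta_S:=\tfrac12$, $\nu:=1$, and $E:=C(\epsilon)\bigl(R^{4+\gamma}[Pu]^{(0)}_{\gamma,U_R^-(q)}+\|u\|_{L^\infty(U_R^-(q))}\bigr)$, after choosing $\epsilon=\tfrac12\delta(n,\theta_S)$. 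There is no real obstacle here beyond bookkeeping; the only thing one must verify (and it is immediate from $v$ being compactly supported in the spatial ball) is that the extension $v$ lies in $C^{4,1,\gamma}(\R^n\times(-\infty,0])$ so that the blow-up lemma applies with no boundary term, and that the truncated-ball versions of Theorem \ref{SchauderCh_InterpolationEstimates} and Proposition \ref{SchauderCh_SemiNormSubbadditiveonConvexSpaces_Parabolic} still hold, both of which have already been recorded.
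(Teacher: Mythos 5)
Your proof is correct and follows the same route as the paper's: cut off with $\eta$, extend $v=\eta u$ by zero to $\R^n\times(-\infty,0]$, invoke the half-space blow-up Lemma~\ref{SchauderCh_BlowUpHalbRaumUntenLemma}, repeat the product-rule and interpolation estimates from Lemma~\ref{SchauderCh_GanzraumLokalisierungFixeCoeff} to arrive at the localized $\epsilon$-absorbing inequality, and conclude with Simon's Absorption Lemma on $A=\R^n\times(-\infty,0]$. The paper's version is more terse (it simply says ``by following the estimates in the proof of Lemma~\ref{SchauderCh_GanzraumLokalisierungFixeCoeff}''), but the content and the choice of parameters in the absorption lemma are identical.
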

\begin{proof}
Throughout this proof, we will suppress the dependence of various constants $C$ on $\Lambda$, $\theta$, $n$ and $\gamma$. Let $p\in U_R^-(q)$ and $\rho>0$ such that $U_\rho^-(p)\subset U_R^-(q)$. Let $\eta\in C^\infty_0(U_\rho(0),[0,1])$ with $\eta\equiv 1$ on $U_{\frac\rho2}(0)$ that satisfies Estimate \eqref{SchauderCh_CutOffLocaliteEstimates}.
We define $v:=\eta u\in C^{4,1,\gamma}(\R^n\times(-\infty,0])$ by extending with $0$ outside of $U_\rho^-(p)$. Then by Lemma \ref{SchauderCh_BlowUpHalbRaumUntenLemma}
$$[D^{4,1} v]_{\gamma,\R^n\times(-\infty,0]}^{(0)}\leq C[Pv]_{\gamma,\R^n\times(-\infty,0]}^{(0)}.$$ 
By following the estimates in the proof of Lemma \ref{SchauderCh_GanzraumLokalisierungFixeCoeff}, we get that for all small $\epsilon>0$
 \begin{equation}\label{SchauderCh_LokhalbRaumUnten01}
 [D^{4,1}u]_{\gamma,U_{\frac \rho2}^-(p)}^{(0)}\leq C(\epsilon)\left([Pu]_{\gamma,U_\rho^-(p)}^{(0)}+\rho^{-4-\gamma}\|u\|_{L^\infty(U_\rho^-(p))}\right)+C\epsilon[D^{4,1}u]_{\gamma,U_{ \rho}^-(p)}^{(0)}.
 \end{equation}
Simons's Absorption Lemma \ref{SchauderCh_SimonsCoveringLemma} with $A=\R^n\times(-\infty,0]$ implies the lemma.
\end{proof}

\begin{lemma}\label{SchauderCh_HalbraumBCLokalisierungFixeCoeff}
 Let $q\in\R^{n-1}\times[0,\infty)\times\R$, $R>0$, $U_{R+}:=U_{R+}(q)$ and  $u\in C^{4,1,\gamma}(U_{R+})$. Then
 $$
 [D^{4,1}u]_{\gamma,U_{\frac R2+}}^{(0)}\leq C(\Lambda,\theta,n,\gamma)\left([Pu]_{\gamma,U_{R+}}^{(0)}
 +[B_1 u]^{(3)}_{\gamma,\partial_S U_{R+}}
 +[B_2 u]^{(1)}_{\gamma,\partial_S U_{R+}}
 +R^{-4-\gamma}\|u\|_{L^\infty(U_{R+})}\right).
 $$
\end{lemma}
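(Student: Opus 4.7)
\textit{Proof Plan.}
The strategy follows precisely the pattern of Lemmas \ref{SchauderCh_GanzraumLokalisierungFixeCoeff}, \ref{SchauderCh_HalbraumObenLokalisierungFixeCoeff}, and \ref{SchauderCh_HalbraumUntenLokalisierungFixeCoeff}, but with Lemma \ref{SchauderCh_BlowUpGanzraumLemmaBC} playing the role of the global estimate and Simon's Absorption Lemma \ref{SchauderCh_SimonsCoveringLemma} applied on the convex set $A=\R^{n-1}\times[0,\infty)\times\R$. Concretely, for $p\in U_{R+}(q)$ and $\rho>0$ with $U_{\rho+}(p)\subset U_{R+}(q)$, I would choose $\eta\in C_0^\infty(U_\rho(p),[0,1])$ with $\eta\equiv 1$ on $U_{\rho/2}(p)$ satisfying the scaling bounds \eqref{SchauderCh_CutOffLocaliteEstimates}, set $v:=\eta u$ extended by zero, and apply Lemma \ref{SchauderCh_BlowUpGanzraumLemmaBC} to obtain
$$[D^{4,1}v]^{(0)}_{\gamma,\R^{n-1}\times[0,\infty)\times\R}\leq C\left([Pv]^{(0)}_{\gamma}+[B_1v]^{(3)}_{\gamma,\R^{n-1}\times 0\times\R}+[B_2v]^{(1)}_{\gamma,\R^{n-1}\times 0\times\R}\right).$$
Since $v\equiv u$ on $U_{\rho/2+}(p)$, the left-hand side controls $[D^{4,1}u]^{(0)}_{\gamma,U_{\rho/2+}(p)}$.

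The interior term $[Pv]^{(0)}_\gamma$ is handled exactly as in Lemma \ref{SchauderCh_GanzraumLokalisierungFixeCoeff}: expanding $P(\eta u)$ via the product rule produces $\eta Pu$ plus a finite combination of terms of the form $c(\alpha,\beta)\nabla_\beta\eta\,\nabla_\alpha u$ with $|\alpha|\leq 3$ and $|\alpha|+|\beta|=4$ together with $\dot\eta u$; applying \eqref{SchauderCh_LocalizationProductRuileEstimate} and the cutoff bounds \eqref{SchauderCh_CutOffLocaliteEstimates} reduces the estimate to Hölder and $L^\infty$ norms of lower-order derivatives of $u$ on $U_{\rho+}(p)$, which are then absorbed using Theorem \ref{SchauderCh_InterpolationEstimates} at the cost of a small $\epsilon[D^{4,1}u]^{(0)}_{\gamma,U_{\rho+}(p)}$ plus $C(\epsilon)\rho^{-4-\gamma}\|u\|_{L^\infty(U_{\rho+}(p))}$.

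The boundary terms require the analogous, but more delicate, product-rule analysis. For $B_1$ I write $B_1(\eta u)=\eta B_1 u+u\,a^{ni}\partial_i\eta$ and for $B_2$ I expand $a^{ni}a^{kl}\partial_{ikl}(\eta u)$ into $\eta B_2 u$ plus a sum of terms $c(\alpha,\beta)\nabla_\beta\eta\,\nabla_\alpha u$ with $|\alpha|\leq 2$ and $|\alpha|+|\beta|=3$. Because the seminorms $[\cdot]^{(3)}_\gamma$ and $[\cdot]^{(1)}_\gamma$ on $\R^{n-1}\times 0\times\R$ involve both spatial Hölder norms of $\nabla^3$ (respectively $\nabla$) and temporal Hölder norms of $\nabla^k$ for $0\leq k\leq 3$ (respectively $0\leq k\leq 1$), I would apply a product-rule estimate analogous to \eqref{SchauderCh_LocalizationProductRuileEstimate} for each such seminorm, exploit the scaling \eqref{SchauderCh_CutOffLocaliteEstimates} of derivatives of $\eta$, and then invoke the interpolation estimates of Theorem \ref{SchauderCh_InterpolationEstimates} together with Lemma \ref{SchauderCh_TemporalInterpolationLemma} on $U_{\rho+}(p)$ to dominate each stray term by $\epsilon[D^{4,1}u]^{(0)}_{\gamma,U_{\rho+}(p)}+C(\epsilon)\rho^{-4-\gamma}\|u\|_{L^\infty(U_{\rho+}(p))}$. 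The surviving genuine boundary contributions yield $C[B_1 u]^{(3)}_{\gamma,\partial_SU_{\rho+}(p)}+C[B_2u]^{(1)}_{\gamma,\partial_S U_{\rho+}(p)}$.

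Combining these bounds gives, for every $U_{\rho+}(p)\subset U_{R+}(q)$ and every small $\epsilon>0$,
$$\rho^{4+\gamma}[D^{4,1}u]^{(0)}_{\gamma,U_{\rho/2+}(p)}\leq C(\epsilon)\,E_\rho(p)+C\epsilon\,\rho^{4+\gamma}[D^{4,1}u]^{(0)}_{\gamma,U_{\rho+}(p)},$$
where $E_\rho(p):=\rho^{4+\gamma}[Pu]^{(0)}_{\gamma,U_{\rho+}(p)}+\rho^{4+\gamma}[B_1u]^{(3)}_{\gamma,\partial_SU_{\rho+}(p)}+\rho^{4+\gamma}[B_2u]^{(1)}_{\gamma,\partial_SU_{\rho+}(p)}+\|u\|_{L^\infty(U_{\rho+}(p))}$. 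Setting $S(\Omega_\rho(p)):=[D^{4,1}u]^{(0)}_{\gamma,\Omega_\rho(p)}$ with $\Omega_\rho(p)=U_{\rho+}(p)$, the map $S$ is monotone and, by Proposition \ref{SchauderCh_SemiNormSubbadditiveonConvexSpaces_Parabolic}, subadditive. Choosing $\epsilon$ equal to half the threshold $\delta(n,4+\gamma)$ of Theorem \ref{SchauderCh_SimonsCoveringLemma} (with $\theta_S=\frac12$, $\nu=1$, $k=4+\gamma$, and $A=\R^{n-1}\times[0,\infty)\times\R$) yields the claimed estimate.

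The main obstacle is the second step above: the seminorms $[\cdot]^{(3)}_\gamma$ and $[\cdot]^{(1)}_\gamma$ mix spatial and temporal Hölder pieces of several different orders, so for each term produced by the product rule one must identify precisely which interpolation estimate from Theorem \ref{SchauderCh_InterpolationEstimates} and Lemma \ref{SchauderCh_TemporalInterpolationLemma} applies and verify that the weight in $\rho$ balances correctly so that absorbing the small-$\epsilon$ terms is possible. Once this bookkeeping is carried out, the passage from the localized absorption inequality to the conclusion via Theorem \ref{SchauderCh_SimonsCoveringLemma} is entirely routine.
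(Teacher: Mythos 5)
Your plan matches the paper's proof essentially verbatim: localize with a cutoff $\eta$ satisfying \eqref{SchauderCh_CutOffLocaliteEstimates}, apply Lemma \ref{SchauderCh_BlowUpGanzraumLemmaBC} to $v=\eta u$, expand the commutator terms for $P$, $B_1$, $B_2$ via product rule, dominate the resulting lower-order pieces with Theorem \ref{SchauderCh_InterpolationEstimates} and Lemma \ref{SchauderCh_TemporalInterpolationLemma}, and close with Simon's Absorption Lemma \ref{SchauderCh_SimonsCoveringLemma} on $A=\R^{n-1}\times[0,\infty)\times\R$ using $S(U_{\rho+}(p))=[D^{4,1}u]^{(0)}_{\gamma,U_{\rho+}(p)}$, $\theta_S=\frac12$, $k=4+\gamma$. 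The bookkeeping you flag as the main obstacle is exactly what the paper carries out explicitly in Equations \eqref{SchauderCh_LokBCFicCoeffHelp01}--\eqref{SchauderCh_LokBCFicCoeffHelp04} and the surrounding displays, and your identification of which interpolation tools are needed for the mixed spatial-temporal boundary seminorms is correct.
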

 \begin{tcolorbox}[colback=white!20!white,colframe=black!100!white,sharp corners, breakable]
If $\partial_S U_{R+}=\emptyset$, the estimate holds with $[B_1 u]^{(3)}_{\gamma,\partial_S U_{R+}}
 =[B_2 u]^{(1)}_{\gamma,\partial_S U_{R+}}=0$.
 \end{tcolorbox}
\begin{proof}
Throughout this proof, we will suppress the dependence of various constants $C$ on $\Lambda$, $\theta$, $n$ and $\gamma$. Let $p\in U_{R+}(q)$ and $\rho>0$ such that $U_{\rho+}(p)\subset U_{R+}(q)$. Let $\eta\in C^\infty_0(U_\rho(p),[0,1])$ with $\eta\equiv 1$ on $U_{\frac\rho2}(p)$ that satisfies Estimate \eqref{SchauderCh_CutOffLocaliteEstimates}.
We define $v:=\eta u\in C^{4,1,\gamma}(\R^{n-1}\times[0,\infty)\times\R)$ by extending with zero outside of $U_{\rho+}(p)$. By Lemma \ref{SchauderCh_BlowUpGanzraumLemmaBC}
 \begin{equation}\label{SchauderCh_LokhalbRaumBC01}
 \hspace{-.1cm}[D^{4,1} v]_{\gamma,\R^{n-1}\times[0,\infty)\times\R}^{(0)}\leq C\left(
[Pv]_{\gamma,\R^{n-1}\times[0,\infty)\times\R}^{(0)}+[B_1 v]^{(3)}_{\gamma,\R^{n-1}\times0\times\R}
 +[B_2v]^{(1)}_{\gamma,\R^{n-1}\times0\times\R}
\right).
\end{equation}
Following the derivation of Estimate \eqref{SchauderCh_LocalizedEstimateConstCoeffToSimon}, we get that for all small $\epsilon>0$
 \begin{equation}\label{SchauderCh_LokhalbRaumBC02}
 [Pv]_{\gamma,\R^{n-1}\times[0,\infty)\times\R}^{(0)} \leq C(\epsilon)
 \left([Pu]^{(0)}_{\gamma, U_{\rho+}(p)}+\rho^{-4-\gamma}\|u\|_{L^\infty(U_{\rho+}(p))}\right)+C\epsilon[D^{4,1}u]_{\gamma,U_{\rho+}(p)}^{(0)}.
 \end{equation}
 To estimate $[B_1 v]^{(3)}_{\gamma,\R^{n-1}\times0\times\R}$, we note the formulas 
 \begin{align}
 [f]_{\gamma, \R^{n-1}\times0\times\R}^{(3)}&=[\nabla^3 f]_{\gamma,\R^{n-1}\times 0\times\R}^{\operatorname{space}}+\sum_{k=0}^3[\nabla^k f]_{\frac{3+\gamma-k}4,\R^{n-1}\times 0\times\R}^{\operatorname{time}},\label{SchauderCh_LokBCFicCoeffHelp01}\\
\nabla^k(B_1 v)&=\eta\nabla^k (B_1 u)+\sum_{|\alpha|\leq k}\sum_{\substack{|\beta|\leq k+1\\ |\alpha|+|\beta|=k+1}}c(\alpha,\beta, a^{ij})\nabla_\beta \eta\nabla_\alpha u\label{SchauderCh_LokBCFicCoeffHelp02}.
 \end{align}
We insert Equation \eqref{SchauderCh_LokBCFicCoeffHelp02} into Equation \eqref{SchauderCh_LokBCFicCoeffHelp01}. Using the analogue of Estimate \eqref{SchauderCh_LocalizationProductRuileEstimate} for the various Hölder seminorms in Equation \eqref{SchauderCh_LokBCFicCoeffHelp01} and Estimate \eqref{SchauderCh_CutOffLocaliteEstimates}, we estimate
\begin{align*}
    &[a^{ni}\partial_i v]^{(3)}_{\gamma,\R^{n-1}\times0\times\R}\\
    \leq &C\bigg( [\nabla^3(a^{ni}\partial_i  u)]_{\gamma,\partial_S U_{\rho+}(p)}^{\operatorname{space}}
    +\rho^{-\gamma}\|\nabla^4u\|_{L^\infty(U_{\rho+}(p))}\\
    &\hspace{1cm}+\sum_{|\alpha|\leq 3}\sum_{\substack{|\beta|\leq 4\\ |\alpha|+|\beta|= 4}}\rho^{-|\beta|-\gamma}\|\nabla_\alpha u\|_{L^\infty(U_{\rho+}(p))}+\rho^{-|\beta|}[\nabla_\alpha u]_{\gamma,U_{\rho+}(p)}^{\operatorname{space}}\\
    &\hspace{.5cm}+ \sum_{k=0}^3[\nabla^k(a^{ni}\partial_i  u)]_{\frac{3+\gamma-k}4,\partial_S U_{\rho+}(p)}^{\operatorname{time}}
    +\rho^{-3-\gamma+k}\|\nabla^{k+1}u\|_{L^\infty(U_{\rho+}(p))}\\ 
    &\hspace{1cm}+
    \sum_{k=0}^3\sum_{|\alpha|\leq k}\sum_{\substack{|\beta|\leq k+1\\ |\alpha|+|\beta|=k+1}}
    \rho^{-|\beta|-3-\gamma+k}\|\nabla_\alpha u\|_{L^\infty(U_{\rho+}(p))}
    +
    \rho^{-|\beta|}[\nabla_\alpha u]_{\frac{3+\gamma-k}4,\partial_S U_{\rho+}(p)}^{\operatorname{time}}\bigg).
    \end{align*}
Using the definition of the $[\cdot]^{(3)}_{\gamma,\R^{n-1}\times0\times\R}$ seminorm and collecting terms, we obtain
    \begin{align*}
   &[a^{ni}\partial_i v]^{(3)}_{\gamma,\R^{n-1}\times0\times\R}\\
    \leq &C\bigg(
    [a^{ni}\partial_i u]_{\gamma,\partial_SU_{\rho+}(p)}^{(3)}+\sum_{k=0}^4\rho^{-4-\gamma+k}\|\nabla^{k}u\|_{L^\infty(U_{\rho+}(p))}\\
    &\hspace{.5cm}+\sum_{|\alpha|\leq 3}\sum_{\substack{|\beta|\leq 4\\ |\alpha|+|\beta|= 4}}
    \rho^{-|\beta|}[\nabla_\alpha u]_{\gamma,U_{\rho+}(p)}^{\operatorname{space}}
    +
    \sum_{k=0}^3\sum_{|\alpha|\leq k}\sum_{\substack{|\beta|\leq k+1\\ |\alpha|+|\beta|=k+1}}
    \rho^{-|\beta|}[\nabla_\alpha u]_{\frac{3+\gamma-k}4, U_{\rho+}(p)}^{\operatorname{time}}\bigg).
\end{align*}
We insert the interpolation estimates from Theorem \ref{SchauderCh_InterpolationEstimates} and Lemma \ref{SchauderCh_TemporalInterpolationLemma}. This shows that for all small $\epsilon$
 \begin{align}
[a^{ni}\partial_i v]^{(3)}_{\gamma,\R^{n-1}\times0\times\R}
    \leq &C(\epsilon)\left([a^{ni}\partial_i u]_{\gamma, \partial_SU_{\rho+}(p)}^{(3)}+\rho^{-4-\gamma}\|u\|_{L^\infty(U_{\rho+}(p))}\right)\nonumber\\
    &+C\epsilon[D^{4,1}u]_{\gamma,U_{\rho+}(p)}^{(0)}.\label{SchauderCh_LokhalbRaumBC03}
 \end{align}
 To estimate $[B_2 v]^{(1)}_{\gamma,\R^{n-1}\times0\times\R}$, we essentially repeat the same estimates. First, we note the formulas 
 \begin{align}
 [f]_{\gamma, \R^{n-1}\times0\times\R}^{(1)}&=[\nabla f]_{\gamma,\R^{n-1}\times 0\times\R}^{\operatorname{space}}+\sum_{k=0}^1[\nabla^k f]_{\frac{1+\gamma-k}4,\R^{n-1}\times 0\times\R}^{\operatorname{time}},\label{SchauderCh_LokBCFicCoeffHelp03}\\
\nabla^k(B_2 v)&=\eta\nabla^k (B_2 u)+\sum_{|\alpha|\leq k+2}\sum_{\substack{|\beta|\leq k+3\\ |\alpha|+|\beta|=k+3}}c(\alpha,\beta, a^{ij})\nabla_\beta \eta\nabla_\alpha u\label{SchauderCh_LokBCFicCoeffHelp04}.
 \end{align}
We insert Equation \eqref{SchauderCh_LokBCFicCoeffHelp04} into Equation \eqref{SchauderCh_LokBCFicCoeffHelp03} and use the analog of Estimate \eqref{SchauderCh_LocalizationProductRuileEstimate} for the various Hölder seminorms in Equation \eqref{SchauderCh_LokBCFicCoeffHelp03} and Estimate \eqref{SchauderCh_CutOffLocaliteEstimates}. Afterwards, we insert the interpolation estimates from Theorem \ref{SchauderCh_InterpolationEstimates} and Lemma \ref{SchauderCh_TemporalInterpolationLemma} and eventually obtain that for all small $\epsilon$
 \begin{align}
 [a^{ni}a^{kl}\partial_{ikl} v]^{(1)}_{\gamma,\R^{n-1}\times0\times\R}
 \leq &C(\epsilon)\left([a^{ni}a^{kl}\partial_{ikl} u]^{(1)}_{\gamma,\partial_SU_{\rho+}(p)}+\rho^{-4-\gamma}\|u\|_{L^\infty(U_{\rho+}(p))}\right)\nonumber\\
 &+C\epsilon[D^{4,1}u]_{\gamma,U_{\rho+}(p)}^{(0)}.\label{SchauderCh_LokhalbRaumBC04}
 \end{align}
 Inserting Estimates \eqref{SchauderCh_LokhalbRaumBC02}, \eqref{SchauderCh_LokhalbRaumBC03} and \eqref{SchauderCh_LokhalbRaumBC04} into Estimate \eqref{SchauderCh_LokhalbRaumBC01} and using that $v\equiv u$ on $U_{\frac\rho2+}(p)$ we get that for all small $\epsilon$
\begin{align*}
     [D^{4,1}u]_{\gamma, U_{\frac\rho2+}(p)}^{(0)}\leq &C(\epsilon)\left(
     [Pu]_{\gamma, U_{\rho+}(p)}^{(0)}
     +[B_1u]_{\gamma,\partial_S U_{\rho+}(p)}^{(3)}
     +[B_2u]_{\gamma,\partial_S U_{\rho+}(p)}^{(1)}
     +\rho^{-4-\gamma}\|u\|_{L^\infty(U_{\rho+}(p))}
     \right)\\
     &+C\epsilon [D^{4,1}u]_{\gamma, U_{\rho+}(p)}^{(0)}.
\end{align*}
 Using the same arguments as in the proof of Lemma \ref{SchauderCh_GanzraumLokalisierungFixeCoeff}, the lemma follows by Simon's Absorption Lemma \ref{SchauderCh_SimonsCoveringLemma} with $A=\R^{n-1}\times[0,\infty)\times\R$.
\end{proof}

\begin{lemma}\label{SchauderCh_UntenHalbraumBCLokalisierungFixeCoeff}
Let $q\in\R^{n-1}\times[0,\infty)\times(-\infty,0]$, $R>0$, $U_{R+}^-:=U_{R+}^-(q)$ and $u\in C^{4,1,\gamma}(U_{R+}^-)$. Then
 $$[D^{4,1}u]_{\gamma,U_{\frac R2+}^-}^{(0)}\leq C(\Lambda,\theta,n,\gamma)\left([Pu]^{(0)}_{\gamma,U_{R+}^ -}
 +[B_1 u]^{(3)}_{\gamma,\partial_S U_{R+}^ -}
 +[B_2 u]^{(1)}_{\gamma,\partial_S U_{R+}^ -}
 +R^{-4-\gamma}\|u\|_{L^\infty(U_{R+}^ -)}\right).$$
\end{lemma}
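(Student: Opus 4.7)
The plan is to mimic the proof of Lemma \ref{SchauderCh_HalbraumBCLokalisierungFixeCoeff}, replacing the global-in-time blow-up estimate from Lemma \ref{SchauderCh_BlowUpGanzraumLemmaBC} with the one from Lemma \ref{SchauderCh_BlowUpHalbraumUntenLemmaBC}, which is tailored to the half-space $\R^{n-1}\times[0,\infty)\times(-\infty,0]$. The output of the argument will be a pointwise inequality of Campanato-type that is accessible to Simon's Absorption Lemma \ref{SchauderCh_SimonsCoveringLemma}, applied with $A=\R^{n-1}\times[0,\infty)\times(-\infty,0]$ so that the truncated balls $\Omega_\rho(p)=U_\rho(p)\cap A$ coincide with the $U_{\rho+}^-(p)$ appearing in the statement.

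To set this up, I would fix $p\in U_{R+}^-(q)$ and $\rho>0$ with $U_{\rho+}^-(p)\subset U_{R+}^-(q)$, pick a cutoff $\eta\in C_0^\infty(U_\rho(p),[0,1])$ with $\eta\equiv 1$ on $U_{\rho/2}(p)$ that satisfies the scaled derivative and Hölder-seminorm bounds in Estimate \eqref{SchauderCh_CutOffLocaliteEstimates}, and put $v:=\eta u$, extended by zero to obtain $v\in C^{4,1,\gamma}(\R^{n-1}\times[0,\infty)\times(-\infty,0])$. Lemma \ref{SchauderCh_BlowUpHalbraumUntenLemmaBC} then yields
\begin{equation}\nonumber
[D^{4,1}v]^{(0)}_{\gamma,\R^{n-1}\times[0,\infty)\times(-\infty,0]}
\leq C\left([Pv]_{\gamma}^{(0)}+[B_1 v]^{(3)}_{\gamma}+[B_2 v]^{(1)}_{\gamma}\right),
\end{equation}
where all right-hand seminorms are taken on the appropriate truncated half-spaces.

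The next step is to translate this back into an estimate for $u$. Expanding $Pv$, $B_1 v$ and $B_2 v$ by the Leibniz rule produces $\eta$ times the corresponding operator applied to $u$, plus a sum of terms involving derivatives of $\eta$ multiplied by derivatives of $u$ of order at most three in space and zero in time. Applying the product-rule-type Hölder estimate \eqref{SchauderCh_LocalizationProductRuileEstimate} (now on the appropriate truncated domain), the cutoff bounds \eqref{SchauderCh_CutOffLocaliteEstimates}, and then the interpolation estimates of Theorem \ref{SchauderCh_InterpolationEstimates} together with Lemma \ref{SchauderCh_TemporalInterpolationLemma} — which, by the proposition following Lemma \ref{SchauderCh_TemporalInterpolationLemma}, remain valid on the truncated balls $U_{\rho+}^-$ — one absorbs the lower-order terms into a small multiple of $[D^{4,1}u]_{\gamma,U_{\rho+}^-(p)}^{(0)}$ plus a constant multiple of $\rho^{-4-\gamma}\|u\|_{L^\infty(U_{\rho+}^-(p))}$. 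Using $v\equiv u$ on $U_{\rho/2+}^-(p)$ one arrives at
\begin{equation}\nonumber
\begin{aligned}
{}[D^{4,1}u]_{\gamma,U_{\rho/2+}^-(p)}^{(0)}
\leq{}&C(\epsilon)\Big([Pu]_{\gamma,U_{\rho+}^-(p)}^{(0)}+[B_1u]_{\gamma,\partial_SU_{\rho+}^-(p)}^{(3)}+[B_2u]_{\gamma,\partial_SU_{\rho+}^-(p)}^{(1)}\\
&\hphantom{C(\epsilon)\Big(}+\rho^{-4-\gamma}\|u\|_{L^\infty(U_{\rho+}^-(p))}\Big)+C\epsilon\,[D^{4,1}u]_{\gamma,U_{\rho+}^-(p)}^{(0)}.
\end{aligned}
\end{equation}

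Finally, define $S(\Omega_\rho(p)):=[D^{4,1}u]^{(0)}_{\gamma,\Omega_\rho(p)}$ on $A=\R^{n-1}\times[0,\infty)\times(-\infty,0]$, which is monotone and, by Proposition \ref{SchauderCh_SemiNormSubbadditiveonConvexSpaces_Parabolic} together with its extension to truncated balls, subadditive. With $\theta_S=\tfrac12$, $\nu=1$, $k=4+\gamma$ and $E$ given by the $C(\epsilon)$-bracket on the right of the displayed estimate, choosing $\epsilon(n,\gamma,\Lambda,\theta)$ smaller than $\tfrac12\delta(n,\theta_S)$ from Lemma \ref{SchauderCh_SimonsCoveringLemma} reduces the above to the hypothesis of Simon's Absorption Lemma. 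Its conclusion $R^k S(\Omega_{R/2}(q))\leq C\,E$ is exactly the claimed estimate. The main subtlety — once the correct Liouville-type result is identified — is to verify that each auxiliary result invoked (the interpolation estimates, the subadditivity of the parabolic Hölder seminorm, and Simon's lemma) continues to hold on the doubly truncated half-balls $U_{\rho+}^-$; this is precisely what the proposition following Lemma \ref{SchauderCh_TemporalInterpolationLemma} and the convexity of $A$ guarantee, so no genuinely new obstacle appears.
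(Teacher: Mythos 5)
Your proposal follows the paper's proof exactly: cut off $u$, extend by zero, apply the blow-up estimate of Lemma \ref{SchauderCh_BlowUpHalbraumUntenLemmaBC} on $\R^{n-1}\times[0,\infty)\times(-\infty,0]$, translate back via the Leibniz rule, the cutoff bounds \eqref{SchauderCh_CutOffLocaliteEstimates}, the product-rule Hölder estimate \eqref{SchauderCh_LocalizationProductRuileEstimate}, and the interpolation estimates, and finish with Simon's Absorption Lemma on $A=\R^{n-1}\times[0,\infty)\times(-\infty,0]$. The intermediate absorption of lower-order terms that you describe is precisely what the paper cites from the proofs of Lemmas \ref{SchauderCh_GanzraumLokalisierungFixeCoeff} and \ref{SchauderCh_HalbraumBCLokalisierungFixeCoeff} (Estimates \eqref{SchauderCh_LokFixCeoffThree01}--\eqref{SchauderCh_LokFixCeoffThree03}), so there is no gap and no real divergence in method.
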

 \begin{tcolorbox}[colback=white!20!white,colframe=black!100!white,sharp corners, breakable]
If $\partial_S U_{R+}^-=\emptyset$, the estimate holds with $[B_1 u]^{(3)}_{\gamma,\partial_S U_{R+}^-}
 =[B_2 u]^{(1)}_{\gamma,\partial_S U_{R+}^-}=0$.
 \end{tcolorbox}
\begin{proof}
Throughout this proof, we will suppress the dependence of various constants $C$ on $\Lambda$, $\theta$, $n$ and $\gamma$. Let $p\in U_{R+}^ -(q)$ and $\rho>0$ such that $U_{\rho+}^ -(p)\subset U_{R+}^-(q)$. Let $\eta\in C^\infty_0(U_\rho(p),[0,1])$ with $\eta\equiv 1$ on $U_{\frac\rho2}(p)$ that satisfies Estimate \eqref{SchauderCh_CutOffLocaliteEstimates}.
We define $v:=\eta u\in C^{4,1,\gamma}(\R^{n-1}\times[0,\infty)\times(-\infty,0])$ by extending with $0$ outside of $U_{\rho+}^-(p)$. Then by Lemma \ref{SchauderCh_BlowUpHalbraumUntenLemmaBC}
 \begin{align}
 [D^{4,1} v]_{\gamma,\R^{n-1}\times[0,\infty)\times(-\infty,0]}^{(0)}\leq
 C\bigg(
&[Pv]_{\gamma,\R^{n-1}\times[0,\infty)\times(-\infty,0]}^{(0)}+[B_1 v]^{(3)}_{\gamma,\R^{n-1}\times0\times(-\infty,0]}\nonumber\\
& \hspace{.5cm}+[B_2 v]^{(1)}_{\gamma,\R^{n-1}\times0\times(-\infty,0]}
\bigg).\label{SchauderCh_LokUntenhalbRaumBC01}
\end{align}
By following the estimates from Lemma \ref{SchauderCh_GanzraumLokalisierungFixeCoeff}, the estimates that lead to estimate \eqref{SchauderCh_LokhalbRaumBC03} and the estimates that lead to Estimate \eqref{SchauderCh_LokhalbRaumBC04}, we deduce that for small $\epsilon>0$
 \begin{align}
   &\hspace{-.51cm}[Pv]^{(0)}_{\gamma,\R^{n-1}\times[0,\infty)\times(-\infty,0]}
  \leq 
  C(\epsilon)\left([Pu]_{\gamma,U_{\rho+}^ -(p)}^{(0)}+\rho^{-4-\gamma}\|u\|_{L^\infty(U_{\rho+}^ -(p))}\right)+C\epsilon[D^{4,1}u]^{(0)}_{\gamma,U_{\rho+}^ -(p)},\label{SchauderCh_LokFixCeoffThree01}\\
 &\hspace{-.51cm}[B_1 v]^{(3)}_{\gamma,\R^{n-1}\times0\times(-\infty,0]}
    \leq 
    C(\epsilon)\left([B_1 u]^{(3)}_{\gamma, U_{\rho+}^ -(p)}+\rho^{-4-\gamma}\|u\|_{L^\infty(U_{\rho+}^ -(p))}\right)+C\epsilon[D^{4,1}u]^{(0)}_{\gamma,U_{\rho+}^ -(p)},\label{SchauderCh_LokFixCeoffThree02}\\
&\hspace{-.51cm} [B_2 v]^{(1)}_{\gamma,\R^{n-1}\times0\times(-\infty,0]}
 \leq 
 C(\epsilon)\left([B_2 u]^{(1)}_{\gamma,U_{\rho+}^ -(p)}+\rho^{-4-\gamma}\|u\|_{L^\infty(U_{\rho+}^ -(p))}\right)+C\epsilon[D^{4,1}u]^{(0)}_{\gamma,U_{\rho+}^ -(p)}.\label{SchauderCh_LokFixCeoffThree03}
 \end{align}
 The lemma follows by inserting these Estimates \eqref{SchauderCh_LokFixCeoffThree01}, \eqref{SchauderCh_LokFixCeoffThree02} and \eqref{SchauderCh_LokFixCeoffThree03} into Estimate \eqref{SchauderCh_LokUntenhalbRaumBC01}, using $v\equiv u$ on $U_{\frac\rho2+}^-(p)$ and applying Simon's Absorption Lemma \ref{SchauderCh_SimonsCoveringLemma} with $A=\R^{n-1}\times[0,\infty)\times(-\infty,0]$.
\end{proof}

\begin{lemma}\label{SchauderCh_ObenHalbraumBCLokalisierungFixeCoeff}
Let $q\in\R^{n-1}\times[0,\infty)\times[0,\infty)$, $R>0$, $U_{R+}^+:=U_{R+}^+(q)$, $u\in C^{4,1,\gamma}(U_{R+}^+)$ and $u_0:=u(\cdot,0)$. Then
 \begin{align*} 
 \hspace{-.5cm}[D^{4,1}u]^{(0)}_{\gamma,U^+_{\frac R2+}}\leq C(\Lambda,\theta,n,\gamma)&\bigg([Pu]_{\gamma,U_{R+}^ +(0)}
 +[B_1 u]^{(3)}_{\gamma,\partial_S U_{R+}^ +}
 +[B_2 u]^{(1)}_{\gamma,\partial_S U_{R+}^ +}\\
 &\hspace{1cm}+[\nabla^4 u_0]_{\gamma,\partial_F U_{R+}^ +}
 +R^{-4-\gamma}\|u\|_{L^\infty(U_{R+}^+)}\bigg).
 \end{align*}
\end{lemma}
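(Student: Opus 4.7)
The plan is to mimic the argument used in the previous four lemmas of Section \ref{SchauderCh_LocalizedEstimatesSection} by combining the strategies of Lemma \ref{SchauderCh_HalbraumObenLokalisierungFixeCoeff} (which handles a temporal floor) with Lemma \ref{SchauderCh_HalbraumBCLokalisierungFixeCoeff} (which handles the spatial boundary and the two boundary operators $B_1,B_2$). Concretely, I fix $p\in U_{R+}^+(q)$ and $\rho>0$ with $U_{\rho+}^+(p)\subset U_{R+}^+(q)$, choose a cutoff $\eta\in C^\infty_0(U_\rho(p),[0,1])$ with $\eta\equiv 1$ on $U_{\rho/2}(p)$ satisfying Estimate \eqref{SchauderCh_CutOffLocaliteEstimates}, and define $v:=\eta u\in C^{4,1,\gamma}(\R^{n-1}\times[0,\infty)\times[0,\infty))$ by extension with zero outside $U_{\rho+}^+(p)$. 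Lemma \ref{SchauderCh_BlowUpHalbraumObenLemmaBC} then gives
\begin{equation}\label{SchauderCh_ObenPlan01}
[D^{4,1}v]^{(0)}_{\gamma,\R^{n-1}\times[0,\infty)\times[0,\infty)}\leq C\Big([Pv]_\gamma^{(0)}+[B_1v]_\gamma^{(3)}+[B_2v]_\gamma^{(1)}+[\nabla^4 v(\cdot,0)]_\gamma\Big),
\end{equation}
where the seminorms on the right are taken on the appropriate portions of the boundary of $\R^{n-1}\times[0,\infty)\times[0,\infty)$.

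Next, I translate the four terms on the right of \eqref{SchauderCh_ObenPlan01} back into quantities involving $u$ on $U_{\rho+}^+(p)$. For $[Pv]_\gamma^{(0)}$ I use the product rule expansion as in Equation \eqref{SchauderCh_PvFormula}, for $[B_1v]_\gamma^{(3)}$ and $[B_2v]_\gamma^{(1)}$ I use the formulas \eqref{SchauderCh_LokBCFicCoeffHelp01}--\eqref{SchauderCh_LokBCFicCoeffHelp04} exactly as in the proof of Lemma \ref{SchauderCh_HalbraumBCLokalisierungFixeCoeff}, and for $[\nabla^4 v(\cdot,0)]_\gamma$ I use the analogue of \eqref{SchauderCh_InitialvalueLocalFixCoefEst1} from Lemma \ref{SchauderCh_HalbraumObenLokalisierungFixeCoeff}. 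In each case, using the analogue of \eqref{SchauderCh_LocalizationProductRuileEstimate}, the cutoff bounds \eqref{SchauderCh_CutOffLocaliteEstimates}, and the interpolation estimates from Theorem \ref{SchauderCh_InterpolationEstimates} and Lemma \ref{SchauderCh_TemporalInterpolationLemma}, I obtain that for every small $\epsilon>0$
\begin{align*}
[Pv]_\gamma^{(0)}&\leq C(\epsilon)\Big([Pu]^{(0)}_{\gamma,U_{\rho+}^+(p)}+\rho^{-4-\gamma}\|u\|_{L^\infty(U_{\rho+}^+(p))}\Big)+C\epsilon [D^{4,1}u]^{(0)}_{\gamma,U_{\rho+}^+(p)},\\
[B_1v]_\gamma^{(3)}&\leq C(\epsilon)\Big([B_1u]^{(3)}_{\gamma,\partial_S U_{\rho+}^+(p)}+\rho^{-4-\gamma}\|u\|_{L^\infty(U_{\rho+}^+(p))}\Big)+C\epsilon [D^{4,1}u]^{(0)}_{\gamma,U_{\rho+}^+(p)},\\
[B_2v]_\gamma^{(1)}&\leq C(\epsilon)\Big([B_2u]^{(1)}_{\gamma,\partial_S U_{\rho+}^+(p)}+\rho^{-4-\gamma}\|u\|_{L^\infty(U_{\rho+}^+(p))}\Big)+C\epsilon [D^{4,1}u]^{(0)}_{\gamma,U_{\rho+}^+(p)},\\
[\nabla^4 v(\cdot,0)]_\gamma&\leq C(\epsilon)\Big([\nabla^4 u_0]_{\gamma,\partial_F U_{\rho+}^+(p)}+\rho^{-4-\gamma}\|u\|_{L^\infty(U_{\rho+}^+(p))}\Big)+C\epsilon [D^{4,1}u]^{(0)}_{\gamma,U_{\rho+}^+(p)}.
\end{align*}

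Inserting these into \eqref{SchauderCh_ObenPlan01} and using $v\equiv u$ on $U_{\rho/2+}^+(p)$ yields, for every sufficiently small $\epsilon>0$, the localized absorption inequality
\begin{align*}
[D^{4,1}u]^{(0)}_{\gamma,U_{\rho/2+}^+(p)}\leq\; &C(\epsilon)\Big([Pu]^{(0)}_{\gamma,U_{\rho+}^+(p)}+[B_1u]^{(3)}_{\gamma,\partial_SU_{\rho+}^+(p)}+[B_2u]^{(1)}_{\gamma,\partial_SU_{\rho+}^+(p)}\\
&\phantom{C(\epsilon)\Big(}+[\nabla^4u_0]_{\gamma,\partial_FU_{\rho+}^+(p)}+\rho^{-4-\gamma}\|u\|_{L^\infty(U_{\rho+}^+(p))}\Big)+C\epsilon[D^{4,1}u]^{(0)}_{\gamma,U_{\rho+}^+(p)}.
\end{align*}
Finally, I apply Simon's Absorption Lemma \ref{SchauderCh_SimonsCoveringLemma} with $A=\R^{n-1}\times[0,\infty)\times[0,\infty)$, $\theta_S=\tfrac12$, $\nu=1$, $k=4+\gamma$, $S(\Omega):=[D^{4,1}u]_{\gamma,\Omega}^{(0)}$ (subadditive by Proposition \ref{SchauderCh_SemiNormSubbadditiveonConvexSpaces_Parabolic}) and
\[E:=C(\epsilon)\Big(R^{4+\gamma}[Pu]^{(0)}_{\gamma,U_{R+}^+(q)}+R^{4+\gamma}[B_1u]^{(3)}_{\gamma,\partial_SU_{R+}^+(q)}+R^{4+\gamma}[B_2u]^{(1)}_{\gamma,\partial_SU_{R+}^+(q)}+R^{4+\gamma}[\nabla^4u_0]_{\gamma,\partial_FU_{R+}^+(q)}+\|u\|_{L^\infty(U_{R+}^+(q))}\Big),\]
choosing $\epsilon=\tfrac12\delta(n,\theta_S)$. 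Since no genuinely new idea is required — the proof is a straightforward amalgamation of the ingredients already developed — I expect no obstacle beyond careful bookkeeping of the four product-rule expansions and verifying that the sets $U_{\rho+}^+(p)$ are convex (being intersections of the convex set $A$ with parabolic balls), which is needed so that Proposition \ref{SchauderCh_SemiNormSubbadditiveonConvexSpaces_Parabolic} applies.
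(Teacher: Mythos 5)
Your proof is correct and follows essentially the same route the paper takes: cut off with $\eta$, apply Lemma \ref{SchauderCh_BlowUpHalbraumObenLemmaBC} to $v=\eta u$, translate the four terms back to $u$ via the product-rule expansions and interpolation, and finish with Simon's Absorption Lemma on $A=\R^{n-1}\times[0,\infty)\times[0,\infty)$. The paper phrases the translation step by referring to the analogues of Estimates \eqref{SchauderCh_LokFixCeoffThree01}--\eqref{SchauderCh_LokFixCeoffThree03} (from Lemma \ref{SchauderCh_UntenHalbraumBCLokalisierungFixeCoeff}) and \eqref{SchauderCh_LokhalbRaumOben05} (from Lemma \ref{SchauderCh_HalbraumObenLokalisierungFixeCoeff}), which is exactly what you spell out.
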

 \begin{tcolorbox}[colback=white!20!white,colframe=black!100!white,sharp corners, breakable]
If $\partial_S U_{R+}^+=\emptyset$ or $\partial_F U_{R+}^+=\emptyset$, the estimate holds with $[B_1 u]^{(3)}_{\gamma,\partial_S U_{R+}^+}
 =[B_2 u]^{(1)}_{\gamma,\partial_S U_{R+}^+}=0$ or $[\nabla^4 u_0]_{\gamma,\partial_F U_{R+}^ +}=0$ respectively.
 \end{tcolorbox}
\begin{proof}
Throughout this proof, we will suppress the dependence of various constants $C$ on $\Lambda$, $\theta$, $n$ and $\gamma$. Let $p\in U_{R+}^ +(q)$ and $\rho>0$ such that $U_{\rho+}^+ (p)\subset U_{R+}^+(q)$. Let $\eta\in C^\infty_0(U_\rho(p),[0,1])$ with $\eta\equiv 1$ on $U_{\frac\rho2+}^+(p)$ that satisfies Estimate \eqref{SchauderCh_CutOffLocaliteEstimates}. We define $v:=\eta u\in C^{4,1,\gamma}(\R^{n-1}\times[0,\infty)\times[0,\infty))$ by extending with zero outside of $U_{\rho+}^+(p)$. By Lemma \ref{SchauderCh_BlowUpHalbraumObenLemmaBC}
 \begin{align}
 [D^{4,1} v]_{\gamma,\R^{n-1}\times[0,\infty)\times[0,\infty)}^{(0)}\leq &C\bigg(
[Pv]_{\gamma,\R^{n-1}\times[0,\infty)\times[0,\infty)}^{(0)}
+[B_1 v]^{(3)}_{\gamma,\R^{n-1}\times0\times[0,\infty)}
\nonumber\\
 &\hspace{1cm} +[B_2 v]^{(1)}_{\gamma,\R^{n-1}\times0\times[0,\infty)}
 +[\nabla^4 v(\cdot,0)]_{\gamma,\R^{n-1}\times[0,\infty)}\bigg).\label{SchauderCh_LokObenhalbRaumBC01}
\end{align}
As in the proof of Lemma \ref{SchauderCh_UntenHalbraumBCLokalisierungFixeCoeff}, we may deduce the analog of Estimates \eqref{SchauderCh_LokFixCeoffThree01}, \eqref{SchauderCh_LokFixCeoffThree02} and \eqref{SchauderCh_LokFixCeoffThree03}. Additionally, following the estimates that lead to Estimate \eqref{SchauderCh_LokhalbRaumOben05}, we may derive the analog estimate on $U_{\rho+}^+(p)$.
Inserting these into Estimate \eqref{SchauderCh_LokObenhalbRaumBC01}, the lemma follows by Simon's Absorption Lemma \ref{SchauderCh_SimonsCoveringLemma} with $A=\R^{n-1}\times[0,\infty)\times[0,\infty)$.
\end{proof}

\subsection{Localized Estimates with Variable Coefficients}
Throughout this subsection let $a^{ij}\in C^{0,0,\gamma}(\Omega_R(q))$, where depending on the situation on $\Omega_R(q)=U_R(q)$, $U_R^{\pm}(q)$, $U_{R+}^\pm(q)$ for suitable $q\in\R^n\times\R$ and $R>0$.  We assume that for some $\theta>0$ 
$$
a^{ij}(x,t)\xi_i\xi_j\geq\theta|\xi|^2\hspace{.5cm}\textrm{for all $(x,t)\in \Omega_R(q)$ and $\xi\in\R^n$}.
$$
For a multiindex $\alpha\in\N_0^n$ of length $|\alpha|=4$, we define $A^\alpha$ by $\sum_{|\alpha|=4}A^\alpha\nabla_\alpha=a^{ij}a^{kl}\partial_{ijkl}$. Additionally, for $|\alpha|\leq 3$ let $A^\alpha\in C^{0,0,\gamma}(\Omega_R(q))$ be given functions. We assume that for some $M>0$ and all $|\alpha|\leq 4$
\begin{equation}\label{SchauderCh_VariableCoeff_Assumption01}
\|A^\alpha\|_{L^\infty(\Omega_R(q))}\leq M R^{-4+|\alpha|}
\hspace{.5cm}\textrm{and}\hspace{.5cm}
[A^\alpha]_{\gamma,\Omega_R(q)}^{(0)}\leq MR^{-4-\gamma+|\alpha|}.
\end{equation}
In the situations, where we $\Omega_\rho(q)$ has a spatial boundary --  that is for $U_{R+}(q)$ and $U_{R+}^\pm(q)$, we also require the following: For a multiindex $\alpha\in\N_0^n$ of length $|\alpha|=3$, we define $b^\alpha$ by $\sum_{|\alpha|= 3}b^\alpha\nabla_\alpha:=a^{ni}a^{kl}\partial_{ikl}$ and assume that for $|\alpha|\leq 2$, we are given functions $b^\alpha\in C^{1,0,\gamma}(\partial_S \Omega_{R}(q))$. We further assume that for all $|\alpha|\leq 3$, $k\in\set{0,1}$ and $k\leq l\leq 1$
\begin{align}
&\|\nabla ^k b^\alpha\|_{L^\infty(\partial_S \Omega_{R}(q))}\leq M R^{-3-k+|\alpha|},\hspace{.5cm}[\nabla ^kb^\alpha]_{\gamma, \partial_S \Omega_{R}(q)}^{(0)}\leq M R^{-3-k-\gamma+|\alpha|},\label{SchauderCh_VariableCoeff_Assumption02}\\
&\textrm{and}\hspace{.5cm}[\nabla^k b^\alpha]_{\frac{1-l+\gamma}4,\partial_S \Omega_{R}(q)}^{\operatorname{time}}\leq MR^{-4-\gamma-k+l+|\alpha|}.\label{SchauderCh_VariableCoeff_Assumption03}
\end{align}
Similarly,  for a multiindex $\alpha\in\N_0^n$ of length $|\alpha|=1$, we define $c^\alpha$ by $\sum_{|\alpha|=1}c^\alpha\nabla_\alpha:=a^{ni}\partial_{i}$ and assume that for $|\alpha|<1$ (that is $|\alpha|=0$), we are given functions (a function) $c^\alpha\in C^{3,0,\gamma}(\partial_S \Omega_{R}(q))$. We further assume that for $|\alpha|\leq 1$, $k\in\set{0,1,2,3}$ and $k\leq l\leq 3$
\begin{align}
&\|\nabla ^k c^\alpha\|_{L^\infty(\partial_S\Omega_R(q))}\leq M R^{-1-k+|\alpha|} ,\hspace{.5cm}
[\nabla ^kc^\alpha]_{\gamma, \partial_S\Omega_R(q)}^{(0)}\leq MR^{-1-k-\gamma+|\alpha|}\label{SchauderCh_VariableCoeff_Assumption04},\\
&\textrm{and}\hspace{.5cm}[\nabla^k c^\alpha]_{\frac{3-l+\gamma}4,\partial_S\Omega_R(q)}^{\operatorname{time}}\leq MR^{-4-\gamma-k+l+|\alpha|}.\label{SchauderCh_VariableCoeff_Assumption05}
\end{align}

Finally, we define the operators 
$$
P:=\partial_t+A^\alpha\nabla_\alpha,\hspace{.5cm}
B_1:=c^\beta\nabla_\beta\hspace{.5cm}\textrm{and}\hspace{.5cm}
B_2:=b^\alpha\nabla_\alpha.
$$

\begin{bemerkung}
Assumptions \eqref{SchauderCh_VariableCoeff_Assumption01}-\eqref{SchauderCh_VariableCoeff_Assumption05} should rather be seen as the definition of $M$. Indeed, since we assume $A^\alpha\in C^{0,0,\gamma}(\Omega_R(q))$, $b^\alpha\in C^{1,0,\gamma}(\partial_S\Omega_R(q))$ and $c^\alpha\in C^{3,0,\gamma}(\partial_S\Omega_R(q))$, all left hand sides in Assumptions \eqref{SchauderCh_VariableCoeff_Assumption01}-\eqref{SchauderCh_VariableCoeff_Assumption05} are finite and hence \eqref{SchauderCh_VariableCoeff_Assumption01}-\eqref{SchauderCh_VariableCoeff_Assumption05} are always satisfied for some $M$.
\end{bemerkung}

\begin{lemma}\label{SchauderCh_GanzraumLokalisierungVariableCoeff}
Let $q\in\R^n\times\R$, $R>0$, $U_R:=U_R(q)$ and $u\in C^{4,1,\gamma}(U_R)$. There exists a constant $C=C(M,\gamma, n,\theta)$ such that  
\begin{align*}
    [D^{4,1}u]_{\gamma,U_{\frac R2}}^{(0)}&\leq C\left([Pu]_{\gamma,U_R}^{(0)}+R^{-4-\gamma}\|u\|_{L^\infty(U_R)}\right),\\
    \|u\|_{C^{4,1,\gamma}(U_{\frac R2})}&\leq C(R)\left([Pu]_{\gamma,U_R}^{(0)}+\|u\|_{L^\infty(U_R)}\right).
\end{align*}

\end{lemma}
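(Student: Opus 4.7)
The plan is to reduce the variable-coefficient case to the constant-coefficient Lemma \ref{SchauderCh_GanzraumLokalisierungFixeCoeff} by freezing the leading-order coefficients at a base point, and then to conclude via Simon's Absorption Lemma \ref{SchauderCh_SimonsCoveringLemma}, in complete parallel with the proof of \ref{SchauderCh_GanzraumLokalisierungFixeCoeff}. Fix $p \in U_R(q)$ and $\rho>0$ with $U_\rho(p)\subset U_R(q)$, and introduce the constant-coefficient operator $P_p := \partial_t + \sum_{|\alpha|=4}A^\alpha(p)\nabla_\alpha$. By Assumption \eqref{SchauderCh_VariableCoeff_Assumption01}, the frozen leading-order matrix $(a^{ij}(p))$ is elliptic with the same $\theta$ and bounded by $\Lambda = \Lambda(M)$, so Lemma \ref{SchauderCh_GanzraumLokalisierungFixeCoeff} applied to $P_p$ on $U_\rho(p)$ yields
\begin{equation*}
[D^{4,1}u]_{\gamma,U_{\rho/2}(p)}^{(0)}\leq C\left([P_p u]_{\gamma,U_\rho(p)}^{(0)}+\rho^{-4-\gamma}\|u\|_{L^\infty(U_\rho(p))}\right)
\end{equation*}
with $C=C(M,n,\gamma,\theta)$.

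The heart of the argument is then to compare $P_p u$ with $Pu$. Writing
\begin{equation*}
P_p u = Pu - \sum_{|\alpha|=4}\bigl(A^\alpha - A^\alpha(p)\bigr)\nabla_\alpha u - \sum_{|\alpha|\leq 3}A^\alpha\nabla_\alpha u
\end{equation*}
and using the product-rule estimate $[fg]_\gamma^{(0)}\leq \|f\|_\infty[g]_\gamma^{(0)}+[f]_\gamma^{(0)}\|g\|_\infty$ together with Assumption \eqref{SchauderCh_VariableCoeff_Assumption01}, the top-order perturbation ($|\alpha|=4$) satisfies
\begin{equation*}
[(A^\alpha-A^\alpha(p))\nabla_\alpha u]_{\gamma,U_\rho(p)}^{(0)}\leq CM(\rho/R)^\gamma[\nabla^4 u]_{\gamma,U_\rho(p)}^{(0)}+MR^{-\gamma}\|\nabla^4 u\|_{L^\infty(U_\rho(p))},
\end{equation*}
while the lower-order terms ($|\alpha|\leq 3$) are controlled using $\|A^\alpha\|_\infty\leq MR^{|\alpha|-4}$ and $[A^\alpha]_\gamma^{(0)}\leq MR^{|\alpha|-4-\gamma}$. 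After multiplying through by $\rho^{4+\gamma}$, invoking $\rho\leq R$ to trade $R^{|\alpha|-4}$ for $\rho^{|\alpha|-4}$ in the lower-order contributions, and applying the interpolation estimates of Theorem \ref{SchauderCh_InterpolationEstimates} to every occurrence of $\|\nabla^k u\|_\infty$ and $[\nabla^k u]_\gamma^{(0)}$ with $k\leq 3$, every non-leading contribution is absorbed into $\epsilon\rho^{4+\gamma}[D^{4,1}u]_{\gamma,U_\rho(p)}^{(0)}+C(\epsilon,M)\|u\|_{L^\infty(U_\rho(p))}$.

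Collecting terms I expect to arrive at an inequality
\begin{equation*}
\rho^{4+\gamma}[D^{4,1}u]_{\gamma,U_{\rho/2}(p)}^{(0)} \leq \bigl(C\epsilon+CM(\rho/R)^\gamma\bigr)\rho^{4+\gamma}[D^{4,1}u]_{\gamma,U_\rho(p)}^{(0)}+C(\epsilon,M)\Bigl(\rho^{4+\gamma}[Pu]_{\gamma,U_\rho(p)}^{(0)}+\|u\|_{L^\infty(U_\rho(p))}\Bigr),
\end{equation*}
valid for all $U_\rho(p)\subset U_R(q)$. Fixing first $\epsilon$ small and then restricting to $\rho\leq\nu R$ with $\nu=\nu(M,n,\gamma,\theta)$ chosen so that $C\epsilon+CM\nu^\gamma\leq\delta(n,\frac12)$, where $\delta$ is the threshold from Simon's Absorption Lemma \ref{SchauderCh_SimonsCoveringLemma}, and taking $S(\Omega):=[D^{4,1}u]_{\gamma,\Omega}^{(0)}$ (which is subadditive by Proposition \ref{SchauderCh_SemiNormSubbadditiveonConvexSpaces_Parabolic}), $k=4+\gamma$, $\theta_S=\frac12$ and $A=\R^n\times\R$, that lemma delivers the first claimed estimate. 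The second estimate is then immediate from the first together with another application of Theorem \ref{SchauderCh_InterpolationEstimates} to bound the lower-order norms of $u$ in $\|u\|_{C^{4,1,\gamma}(U_{R/2})}$. The main obstacle is the careful bookkeeping that ensures the coefficient of $\rho^{4+\gamma}[D^{4,1}u]_{\gamma,U_\rho(p)}^{(0)}$ genuinely becomes small after the two-stage choice of $\epsilon$ and $\nu$, and that every lower-order contribution involving $A^\alpha$ ($|\alpha|\leq 3$) is absorbed cleanly by interpolation despite its permitted $R$-dependence.
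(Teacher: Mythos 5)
Your proposal is correct and follows essentially the same route as the paper: freeze the top-order coefficients at the base point $p$, compare $P$ and $P_p$ using the Hölder continuity of $A^\alpha$ to extract a $(\rho/R)^\gamma$ factor at top order and interpolation for the lower-order terms, and then apply Simon's Absorption Lemma \ref{SchauderCh_SimonsCoveringLemma} after the two-stage choice of $\epsilon$ and $\nu$. The bookkeeping you flag as the main obstacle is exactly what the paper carries out in Estimates \eqref{SchauderCh_LoaklGanzRaumMissingEst01}--\eqref{SchauderCh_ganzraumVariableCoeff_100}, and it goes through as you expect.
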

\begin{proof}
Throughout the proof, we will suppress the dependence of various constants $C$ on $M$, $\gamma$, $n$ and $\theta$. Let $\nu\in(0,1]$, $p_0=(x_0,t_0)\in U_R(q)$ and $\rho\in(0,\nu R)$ such that $U_\rho(p_0)\subset U_R(q)$. We define the operator $P_0:=\partial_t+\sum_{|\alpha|= 4} A^\alpha(p_0)\nabla_\alpha$.
Using Lemma \ref{SchauderCh_GanzraumLokalisierungFixeCoeff}, we have 
\begin{align} 
[D^{4,1}u]_{\gamma,U_{\frac\rho2}(p_0)}^{(0)}&\leq C\left([P_0 u]^{(0)}_{\gamma,U_\rho(p_0)}+\rho^{-4-\gamma}\|u\|_{L^\infty(U_\rho(p_0))}\right)\nonumber\\
&\leq C\left([P u]^{(0)}_{\gamma,U_\rho(p_0)}+[(P-P_0) u]_{\gamma,U_\rho(p_0)}^{(0)}+\rho^{-4-\gamma}\|u\|_{L^\infty(U_\rho(p_0))}\right).\label{SchauderCh_GanzraumLokVarCoeff1}
\end{align}
We derive a suitable estimate for $[(P-P_0) u]_{\gamma,U_\rho(p_0)}^{(0)}$. To do so, we note
\begin{equation}\label{SchauderCh_LoaklGanzRaumMissingEst00}
(P-P_0) u=\sum_{|\alpha|= 4}(A^\alpha-A^\alpha(p_0))\nabla_\alpha u+\sum_{|\alpha|\leq 3}A^\alpha\nabla_\alpha u.
\end{equation}
For a multiindex $\alpha$ of length $|\alpha|\leq 3$, we use Estimate \eqref{SchauderCh_LocalizationProductRuileEstimate} and Assumption \eqref{SchauderCh_VariableCoeff_Assumption01} to estimate 
\begin{align}
    [A^\alpha\nabla_\alpha u]_{\gamma,U_\rho(p_0)}^{(0)}
    \leq & C\left(
    \|A^\alpha\|_{L^\infty(U_\rho(p_0))}[\nabla_\alpha u]_{\gamma,U_\rho(p_0)}^{(0)}
    +
    [A^\alpha]_{\gamma,U_\rho(p_0)}^{(0)}\|\nabla_\alpha u\|_{L^\infty(U_\rho(p_0))}\right)\nonumber\\
    \leq & CM\left(R^{-4+|\alpha|} [\nabla_\alpha u]_{\gamma,U_\rho(p_0)}^{(0)}
    +R^{-4-\gamma+|\alpha|}\|\nabla_\alpha u\|_{L^\infty(U_\rho(p_0))}\right).\label{SchauderCh_LoaklGanzRaumMissingEst01}
\end{align}
Similarly, if $|\alpha|=4$, we estimate 
\begin{align}
    [(A^\alpha-A^\alpha(p_0))\nabla_\alpha u]_{\gamma,U_\rho(p_0)}^{(0)}
    \leq & C\left(
  \|A^\alpha-A^\alpha(p_0)\|_{L^\infty(U_\rho(p_0))}  [\nabla_\alpha u]_{\gamma,U_\rho(p_0)}^{(0)}
    +
    [A^\alpha]_{\gamma,U_\rho(p_0)}^{(0)}\|\nabla_\alpha u\|_{L^\infty(U_\rho(p_0))}\right)\nonumber\\
    \leq & C\left( [A^\alpha]_{\gamma, U_R}^{(0)}\rho^\gamma [\nabla_\alpha u]_{\gamma,U_\rho(p_0)}^{(0)}
    +
    MR^{-\gamma}\|\nabla_\alpha u\|_{L^\infty(U_\rho(p_0))}\right)\nonumber\\
    \leq & CM\left( R^{-\gamma}\rho^\gamma [D^{4,1}u]_{\gamma,U_\rho(p_0)}^{(0)}
    +
    R^{-\gamma}\|\nabla_\alpha u\|_{L^\infty(U_\rho(p_0))}\right).\label{SchauderCh_LoaklGanzRaumMissingEst02}
\end{align}
Considering Equation \eqref{SchauderCh_LoaklGanzRaumMissingEst00}, we use Estimates \eqref{SchauderCh_LoaklGanzRaumMissingEst01} and \eqref{SchauderCh_LoaklGanzRaumMissingEst02} and the interpolations estimates from Theorem \ref{SchauderCh_InterpolationEstimates}, to deduce
\begin{align*}
     &[(P-P_0) u]_{\gamma,U_\rho(p_0)}^{(0)} \nonumber\\
     \leq &C\bigg[MR^{-\gamma}\rho^\gamma [D^{4,1}u]_{\gamma,U_\rho(p_0)}^{(0)}
     +
     \sum_{|\alpha|=4}MR^{-\gamma}\rho^{-4}\left(\epsilon \rho^{4+\gamma}[D^{4,1}u]_{\gamma,U_\rho(p_0)}^{(0)}+C(\epsilon)\|u\|_{L^\infty(U_\rho(p_0))}\right)
      \\
      & +\sum_{|\alpha|\leq 3} MR^{-4+|\alpha|}\rho^{-|\alpha|-\gamma} \left(\epsilon \rho^{4+\gamma}[D^{4,1}u]_{\gamma,U_\rho(p_0)}^{(0)}+C(\epsilon)\|u\|_{L^\infty(U_\rho(p_0))}\right)\\
        & + \sum_{|\alpha|\leq 3}MR^{-4-\gamma+|\alpha|}\rho^{-|\alpha|}\left(\epsilon \rho^{4+\gamma}[D^{4,1}u]_{\gamma,U_\rho(p_0)}^{(0)}+C(\epsilon)\|u\|_{L^\infty(U_\rho(p_0))}\right)        
        \bigg].
\end{align*}
To estimate the first term, we use $\rho\leq\nu R$ to estimate $R^{-\gamma}\rho^\gamma\leq\nu^\gamma$. In all other terms, we estimate the terms $R^{-\kappa}$ with $\kappa\geq 0$ by $R^{-\kappa}\leq\rho^{-\kappa}$. Hence
\begin{equation}\label{SchauderCh_ganzraumVariableCoeff_100}
[(P-P_0) u]_{\gamma,U_\rho(p_0)}^{(0)} \leq CM(\epsilon+\nu^\gamma)[D^{4,1}u]_{\gamma,U_\rho(p_0)}^{(0)}+C(\epsilon)\rho^{-4-\gamma}\|u\|_{L^\infty(U_\rho(p_0))}.
\end{equation}
Inserting this estimate into Estimate \eqref{SchauderCh_GanzraumLokVarCoeff1} we get that for all small $\epsilon$ and $\nu$ 
\begin{align}
\rho^{4+\gamma}[D^{4,1}u]_{\gamma,U_{\frac\rho2}(p_0)}^{(0)}
\leq &
C(\epsilon,\nu)\left(\rho^{4+\gamma}[Pu]_{\gamma,U_\rho(p_0)}^{(0)}+\|u\|_{L^\infty(U_\rho(p_0))}\right)\nonumber\\
&+
CM(\epsilon+\nu^\gamma)\rho^{4+\gamma}[D^{4,1}u]^{(0)}_{U_{\rho}(p_0)}.\label{SchauderCh_LokGanzraumMissing04}
\end{align}
Estimate \eqref{SchauderCh_LokGanzraumMissing04} is analogue to Estimate \eqref{SchauderCh_LocalizedEstimateConstCoeffToSimon}. Choosing $\epsilon$ and $\nu$ small enough, we may apply Simon's Absorption Lemma\footnote{This is possible since the parameter $\delta$ from Simon's Absorption Lemma does not depend on $\nu$.} and deduce the first estimate claimed in the lemma. 
The second estimate is a direct consequence of the interpolation estimates in Theorem \ref{SchauderCh_InterpolationEstimates}
\end{proof}

\begin{lemma}\label{SchauderCh_HalbraumObenLokalisierungVariableCoeff}
Let $R>0$, $q\in\R^n\times[0,\infty)$, $U_R^+:=U_R^+(q)$, $u\in C^{4,1,\gamma}(U_R^+)$ and $u_0:=u(\cdot,0)$. There exists a constant $C=C(M,\gamma, n,\theta)$ such that 
\begin{align*}
[D^{4,1}u]_{\gamma,U^+_{\frac R2}}^{(0)}&\leq C\left([Pu]_{\gamma,U_R^+}^{(0)}+[\nabla^4 u_0]_{\gamma,\partial_F U_R^+}+R^{-4-\gamma}\|u\|_{L^\infty(U_R^+)}\right),\\
\|u\|_{C^{4,1,\gamma}(U_{\frac R2})}&\leq C(R)\left([Pu]_{\gamma,U_R^+}^{(0)}+[\nabla^4 u_0]_{\gamma,\partial_F U_R^+}+\|u\|_{L^\infty(U_R^+)}\right).
\end{align*}
\end{lemma}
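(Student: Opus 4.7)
The plan is to imitate the proof of Lemma \ref{SchauderCh_GanzraumLokalisierungVariableCoeff}, replacing the interior constant-coefficient estimate of Lemma \ref{SchauderCh_GanzraumLokalisierungFixeCoeff} by its boundary counterpart, Lemma \ref{SchauderCh_HalbraumObenLokalisierungFixeCoeff}. Concretely, fix $\nu\in(0,1]$ (to be chosen), a point $p_0=(x_0,t_0)\in U_R^+(q)$ and $\rho\in(0,\nu R]$ with $U_\rho^+(p_0)\subset U_R^+(q)$, and freeze the leading coefficients at $p_0$ by
$$P_0:=\partial_t+\sum_{|\alpha|=4}A^\alpha(p_0)\nabla_\alpha.$$
Since $P_0$ has constant coefficients satisfying the hypotheses of Lemma \ref{SchauderCh_HalbraumObenLokalisierungFixeCoeff} (with $\Lambda$ determined by $M$ via Assumption \eqref{SchauderCh_VariableCoeff_Assumption01}), that lemma applied on $U_\rho^+(p_0)$ gives
$$[D^{4,1}u]_{\gamma,U_{\rho/2}^+(p_0)}^{(0)}\leq C\bigl([P_0u]_{\gamma,U_\rho^+(p_0)}^{(0)}+[\nabla^4u_0]_{\gamma,\partial_FU_\rho^+(p_0)}+\rho^{-4-\gamma}\|u\|_{L^\infty(U_\rho^+(p_0))}\bigr).$$

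Next I would write $P_0u=Pu-(P-P_0)u$, decompose
$$(P-P_0)u=\sum_{|\alpha|=4}(A^\alpha-A^\alpha(p_0))\nabla_\alpha u+\sum_{|\alpha|\leq 3}A^\alpha\nabla_\alpha u,$$
and bound $[(P-P_0)u]^{(0)}_{\gamma,U_\rho^+(p_0)}$ by repeating verbatim the computation following Equation \eqref{SchauderCh_LoaklGanzRaumMissingEst00} in the proof of Lemma \ref{SchauderCh_GanzraumLokalisierungVariableCoeff}, using the product-rule type bound \eqref{SchauderCh_LocalizationProductRuileEstimate}, Assumption \eqref{SchauderCh_VariableCoeff_Assumption01}, the trivial $\|A^\alpha-A^\alpha(p_0)\|_{L^\infty(U_\rho^+(p_0))}\leq[A^\alpha]_{\gamma,U_R^+(q)}^{(0)}\rho^\gamma$, and the interpolation estimates from Theorem \ref{SchauderCh_InterpolationEstimates}. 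The analog of the estimate \eqref{SchauderCh_ganzraumVariableCoeff_100} reads
$$[(P-P_0)u]_{\gamma,U_\rho^+(p_0)}^{(0)}\leq CM(\epsilon+\nu^\gamma)[D^{4,1}u]_{\gamma,U_\rho^+(p_0)}^{(0)}+C(\epsilon)\rho^{-4-\gamma}\|u\|_{L^\infty(U_\rho^+(p_0))},$$
for every $\epsilon\in(0,1)$. Combining the last two displays and multiplying by $\rho^{4+\gamma}$ yields
\begin{align*}
\rho^{4+\gamma}[D^{4,1}u]^{(0)}_{\gamma,U_{\rho/2}^+(p_0)}
\leq&\,C(\epsilon,\nu)\bigl(\rho^{4+\gamma}[Pu]^{(0)}_{\gamma,U_\rho^+(p_0)}+\rho^{4+\gamma}[\nabla^4u_0]_{\gamma,\partial_FU_\rho^+(p_0)}+\|u\|_{L^\infty(U_\rho^+(p_0))}\bigr)\\
&+CM(\epsilon+\nu^\gamma)\rho^{4+\gamma}[D^{4,1}u]^{(0)}_{\gamma,U_\rho^+(p_0)}.
\end{align*}

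To apply Simon's Absorption Lemma \ref{SchauderCh_SimonsCoveringLemma}, I take $A:=\R^n\times[0,\infty)$, $\theta_S:=\tfrac12$, $k:=4+\gamma$, $S(\Omega_\rho(p)):=[D^{4,1}u]^{(0)}_{\gamma,U_\rho^+(p)}$, which is monotone and subadditive by Proposition \ref{SchauderCh_SemiNormSubbadditiveonConvexSpaces_Parabolic}, and
$$E:=C(\epsilon,\nu)\bigl(R^{4+\gamma}[Pu]^{(0)}_{\gamma,U_R^+(q)}+R^{4+\gamma}[\nabla^4u_0]_{\gamma,\partial_FU_R^+(q)}+\|u\|_{L^\infty(U_R^+(q))}\bigr),$$
where I used the monotonicity of the $[\nabla^4u_0]$-seminorm (on balls contained in $U_R^+(q)$) together with $\rho\leq R$ to absorb the $[\nabla^4u_0]$-term on the right-hand side into $E$. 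Choosing first $\nu$ then $\epsilon$ small enough (depending only on $n$, $\gamma$, $\theta$, $M$) so that $CM(\epsilon+\nu^\gamma)\leq\tfrac12\delta(n,\tfrac12)$, Simon's lemma yields the first estimate claimed. The second estimate is then a direct consequence of the interpolation estimates from Theorem \ref{SchauderCh_InterpolationEstimates} applied on $U_{R/2}^+(q)$.

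The only mildly delicate step is keeping the $[\nabla^4u_0]$-term under control throughout the iteration: since it has the same parabolic scaling $\rho^{4+\gamma}$ as $[D^{4,1}u]^{(0)}_{\gamma}$ and is monotone in the underlying ball, it plays precisely the role of a scale-invariant data term and fits cleanly into the $E$ side of Simon's lemma without modifying $S$. Everything else is formally identical to Lemma \ref{SchauderCh_GanzraumLokalisierungVariableCoeff}.
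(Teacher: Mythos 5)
Your proposal follows the same route as the paper: freeze the leading coefficients at $p_0$, invoke the boundary constant-coefficient estimate (Lemma \ref{SchauderCh_HalbraumObenLokalisierungFixeCoeff}), bound $[(P-P_0)u]$ via the interpolation estimates to get the analogue of \eqref{SchauderCh_ganzraumVariableCoeff_100}, and then conclude with Simon's Absorption Lemma on $A=\R^n\times[0,\infty)$ followed by interpolation for the full norm. You are if anything slightly more explicit than the paper about why the scale-invariant $[\nabla^4u_0]$-term sits cleanly in the $E$-side of Simon's lemma (monotonicity in the ball plus the matching $\rho^{4+\gamma}$ scaling), which the paper leaves implicit with "by the same argument as in the proof of Lemma \ref{SchauderCh_GanzraumLokalisierungVariableCoeff}".
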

\begin{tcolorbox}[colback=white!20!white,colframe=black!100!white,sharp corners, breakable]
If $\partial_F U_R^+(q)=\emptyset$, the estimate holds with $[\nabla^4 u_0]_{\gamma, \partial_F U_R^+(q)}=0$.
\end{tcolorbox}
\begin{proof}
Throughout the proof, we will suppress the dependence of various constants $C$ on $M$, $\gamma$, $n$ and $\theta$. Let $\nu\in(0,1]$ $p_0=(x_0,t_0)\in U_R^+(q)$ and $\rho\in(0,\nu R)$ such that $U_\rho^+(p_0)\subset U_R^+(q)$. We establish that for small $\epsilon>0$ 
\begin{align}
[D^{4,1}u]^{(0)}_{\gamma,U_{\frac \rho2}^+(p_0)}\leq &C(\epsilon,\nu)\left([Pu]^{(0)}_{\gamma,U_{ \rho}^+(p_0)}
+[\nabla^4 u_0]_{\gamma,\partial_F U_\rho^+(p_0)}
+\rho^{-4-\gamma}\|u\|_{L^\infty(U_\rho^+(p_0))}\right)\nonumber\\
&+CM(\epsilon+\nu^\gamma)[D^{4,1}u]_{\gamma,U_\rho^+(p_0)}^{(0)}.\label{SchauderCh_HalbraumObenLokalisierungVariableCoeff01}
\end{align}
To prove Estimate \eqref{SchauderCh_HalbraumObenLokalisierungVariableCoeff01}, we essentially follow the proof of Lemma \ref{SchauderCh_GanzraumLokalisierungVariableCoeff}.
We define the operator $P_0:=\partial_t+\sum_{|\alpha|= 4} A^\alpha(p_0)\nabla_\alpha$. 
Using Lemma \ref{SchauderCh_HalbraumObenLokalisierungFixeCoeff}, we have 
\begin{align} 
[D^{4,1}u]^{(0)}_{\gamma,U_{\frac\rho2}^+(p_0)}&\leq C\left([P_0 u]^{(0)}_{\gamma, U_\rho^+(p_0)}+[\nabla^4 u_0]_{\gamma, \partial_F U_\rho^+(p_0)}+\rho^{-4-\gamma}\|u\|_{L^\infty(U_\rho^+(p_0))}\right)\nonumber\\
&\hspace{-2.4cm}\leq C\bigg([P u]^{(0)}_{\gamma, U_\rho^+(p_0)}+[(P-P_0) u]^{(0)}_{\gamma, U_\rho^+(p_0)}+[\nabla^4 u_0]_{\gamma, \partial_F U_\rho^+(p_0)}+\rho^{-4-\gamma}\|u\|_{L^\infty(U_\rho^+(p_0))}\bigg).\label{SchauderCh_halbraumLokVarCoeff1}
\end{align}
Following the estimates that lead to Estimate \eqref{SchauderCh_ganzraumVariableCoeff_100}, we can derive an analogue estimate $[(P-P_0) u]^{(0)}_{\gamma, U_\rho^+(p_0)}$ and deduce Estimate \eqref{SchauderCh_HalbraumObenLokalisierungVariableCoeff01}. The first estimate claimed in the lemma follows from Simon's Absorption Lemma \ref{SchauderCh_SimonsCoveringLemma} by the same argument as in the proof of Lemma \ref{SchauderCh_GanzraumLokalisierungVariableCoeff}. The second estimate is a direct consequence of the interpolation estimates in Theorem \ref{SchauderCh_InterpolationEstimates}
\end{proof}

\begin{lemma}\label{SchauderCh_HalbraumUntenLokalisierungVariableCoeff}
Let $q\in\R^n\times(-\infty,0]$, $R>0$, $U_R^-:=U_R^-(q)$ and $u\in C^{4,1,\gamma}(U_R^-)$. There exists a constant $C=C(M,\gamma, n,\theta)$ such that 
\begin{align*} 
[D^{4,1}u]^{(0)}_{\gamma,U_{\frac R2}^-}&\leq C\left([Pu]_{\gamma,U_R^-}^{(0)}+R^{-4-\gamma}\|u\|_{L^\infty(U_R^-)}\right),\\
\|u\|_{C^{4,1,\gamma}(U_{\frac R2}^-)}&\leq C(R)\left([Pu]_{\gamma,U_R^-}^{(0)}+\|u\|_{L^\infty(U_R^-)}\right).
\end{align*}
\end{lemma}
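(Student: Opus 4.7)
The plan is to mimic directly the proofs of Lemmas \ref{SchauderCh_GanzraumLokalisierungVariableCoeff} and \ref{SchauderCh_HalbraumObenLokalisierungVariableCoeff}, substituting the constant coefficient ingredient with Lemma \ref{SchauderCh_HalbraumUntenLokalisierungFixeCoeff} and applying Simon's Absorption Lemma \ref{SchauderCh_SimonsCoveringLemma} on the set $A = \R^n\times(-\infty,0]$. As in those lemmas, throughout I suppress the dependence of constants on $M,\gamma,n,\theta$.

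First I fix $\nu\in(0,1]$, a base point $p_0\in U_R^-(q)$ and a radius $\rho\in(0,\nu R]$ such that $U_\rho^-(p_0)\subset U_R^-(q)$. I freeze the principal symbol at $p_0$ by setting
$$P_0:=\partial_t+\sum_{|\alpha|=4}A^\alpha(p_0)\nabla_\alpha,$$
and I apply the constant-coefficient boundary estimate from Lemma \ref{SchauderCh_HalbraumUntenLokalisierungFixeCoeff} to obtain
$$[D^{4,1}u]^{(0)}_{\gamma,U_{\frac\rho2}^-(p_0)}\leq C\bigl([P_0u]^{(0)}_{\gamma,U_\rho^-(p_0)}+\rho^{-4-\gamma}\|u\|_{L^\infty(U_\rho^-(p_0))}\bigr).$$
Writing $P_0u=Pu-(P-P_0)u$ reduces the task to estimating $[(P-P_0)u]^{(0)}_{\gamma,U_\rho^-(p_0)}$.

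For that estimate I split
$$(P-P_0)u=\sum_{|\alpha|=4}\bigl(A^\alpha-A^\alpha(p_0)\bigr)\nabla_\alpha u+\sum_{|\alpha|\leq 3}A^\alpha\nabla_\alpha u,$$
and bound each term exactly as in the derivation of estimate \eqref{SchauderCh_ganzraumVariableCoeff_100}: I use Estimate \eqref{SchauderCh_LocalizationProductRuileEstimate}, the coefficient bounds \eqref{SchauderCh_VariableCoeff_Assumption01}, the relation $\rho\leq\nu R$ (to produce the factor $\nu^\gamma$ in the top order term) and the interpolation estimates from Theorem \ref{SchauderCh_InterpolationEstimates}, which hold verbatim on the truncated parabolic ball $U_\rho^-(p_0)$ by the Proposition immediately following Lemma \ref{SchauderCh_TemporalInterpolationLemma}. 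The result is, for every small $\epsilon>0$,
$$[(P-P_0)u]^{(0)}_{\gamma,U_\rho^-(p_0)}\leq CM(\epsilon+\nu^\gamma)[D^{4,1}u]^{(0)}_{\gamma,U_\rho^-(p_0)}+C(\epsilon)\rho^{-4-\gamma}\|u\|_{L^\infty(U_\rho^-(p_0))}.$$
Inserting this yields the key local-to-local inequality
$$\rho^{4+\gamma}[D^{4,1}u]^{(0)}_{\gamma,U_{\frac\rho2}^-(p_0)}\leq C(\epsilon,\nu)\bigl(\rho^{4+\gamma}[Pu]^{(0)}_{\gamma,U_\rho^-(p_0)}+\|u\|_{L^\infty(U_\rho^-(p_0))}\bigr)+CM(\epsilon+\nu^\gamma)\rho^{4+\gamma}[D^{4,1}u]^{(0)}_{\gamma,U_\rho^-(p_0)}.$$

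Finally I choose $\epsilon$ and $\nu$ small enough that $CM(\epsilon+\nu^\gamma)\leq\delta(n,4+\gamma)/2$, where $\delta$ is the threshold from Simon's Absorption Lemma \ref{SchauderCh_SimonsCoveringLemma}. Setting $S(\Omega):=[D^{4,1}u]^{(0)}_{\gamma,\Omega}$, which is monotone and subadditive by Proposition \ref{SchauderCh_SemiNormSubbadditiveonConvexSpaces_Parabolic}, with $A=\R^n\times(-\infty,0]$, $\theta_S=1/2$, $k=4+\gamma$ and $E=C(\epsilon,\nu)(R^{4+\gamma}[Pu]^{(0)}_{\gamma,U_R^-(q)}+\|u\|_{L^\infty(U_R^-(q))})$, the absorption lemma gives the first claimed estimate. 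The crucial point -- and the only step that is not entirely mechanical -- is verifying that the parameter $\delta$ in Theorem \ref{SchauderCh_SimonsCoveringLemma} is independent of $\nu$, which is precisely how the proof of Lemma \ref{SchauderCh_GanzraumLokalisierungVariableCoeff} justifies the absorption and applies unchanged here. The second, $C^{4,1,\gamma}$-norm estimate then follows directly from the first by another application of the interpolation estimates from Theorem \ref{SchauderCh_InterpolationEstimates} on $U_{R/2}^-(q)$.
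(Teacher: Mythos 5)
Your proposal is correct and follows essentially the same route as the paper's own proof: freeze the coefficients at $p_0$ to define $P_0$, invoke Lemma \ref{SchauderCh_HalbraumUntenLokalisierungFixeCoeff}, estimate $[(P-P_0)u]^{(0)}_{\gamma,U_\rho^-(p_0)}$ via the argument behind Estimate \eqref{SchauderCh_ganzraumVariableCoeff_100}, absorb with Simon's lemma on $A=\R^n\times(-\infty,0]$, and conclude the $C^{4,1,\gamma}$ bound by interpolation. Nothing is missing.
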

\begin{proof}
Throughout the proof, we will suppress the dependence of various constants $C$ on $M$, $\gamma$, $n$ and $\theta$. The proof follows the proof of Lemma \ref{SchauderCh_GanzraumLokalisierungVariableCoeff}.
 Let $\nu\in(0,1]$, $p_0=(x_0,t_0)\in U_R^-(q)$ and $\rho\in(0,\nu R)$ such that $U_\rho^-(p_0)\subset U_R^-(q)$. We define the operator $P_0:=\partial_t+\sum_{|\alpha|= 4} A^\alpha(p_0)\nabla_\alpha$.
Using Lemma \ref{SchauderCh_HalbraumUntenLokalisierungFixeCoeff}, we deduce
\begin{align} 
[D^{4,1}u]^{(0)}_{\gamma,U_{\frac\rho2}^-(p_0)}&\leq C\left([P_0 u]^{(0)}_{\gamma,U_\rho^-(p_0)}+\rho^{-4-\gamma}\|u\|_{L^\infty(U_\rho^-(p_0))}\right)\nonumber\\
&\leq C\left([P u]^{(0)}_{\gamma,U_\rho^-(p_0)}+[(P-P_0) u]^{(0)}_{\gamma,U_\rho^-(p_0)}+\rho^{-4-\gamma}\|u\|_{L^\infty(U^-_\rho(p_0))}\right).\nonumber
\end{align}
Following the estimates the lead to Estimate \eqref{SchauderCh_ganzraumVariableCoeff_100}, we can derive an analogue estimate for $[(P-P_0) u]^{(0)}_{\gamma,U_\rho^-(p_0)}$ and get
$$
[D^{4,1}u]^{(0)}_{\gamma,U_{\frac\rho2}^-(p_0)}
\leq 
C(\epsilon,\nu)\left([P u]^{(0)}_{\gamma,U_\rho^-(p_0)}+\rho^{-4-\gamma}\|u\|_{L^\infty(U_\rho^-(p_0))}\right)
+
CM(\epsilon+\nu^\gamma)[D^{4,1}u]^{(0)}_{\gamma,U_{\rho}^-(p_0)}.$$
As in the proof of Lemma \ref{SchauderCh_GanzraumLokalisierungVariableCoeff}, the first estimate claimed in the lemma follows from Simon's Absorption Lemma \ref{SchauderCh_SimonsCoveringLemma}. The second estimate is a direct consequence of the interpolation estimates in Theorem \ref{SchauderCh_InterpolationEstimates}
\end{proof}

\begin{lemma}\label{SchauderCh_HalbraumBCLokalisierungVariableCoeff}
Let $q\in\R^{n-1}\times[0,\infty)\times\R$, $R>0$, $U_{R+}:=U_{R+}(q)$ and $u\in C^{4,1,\gamma}(U_{R+})$. There exists a constant $C=C(M,\gamma, n,\theta)$ such that 
\begin{align*} 
[D^{4,1}u]^{(0)}_{\gamma, U_{\frac R2+}}\leq &C\left([Pu]^{(0)}_{\gamma, U_{R+}}+[B_1u]^{(3)}_{\gamma, \partial_S U_{R+}}+[B_2u]^{(1)}_{\gamma, \partial_S U_{R+}}+R^{-4-\gamma}\|u\|_{L^\infty(U_{R+})}\right),\\
\|u\|_{C^{4,1,\gamma}(U_{\frac R2+})}\leq &C(R)\left([Pu]^{(0)}_{\gamma, U_{R+}}+[B_1u]^{(3)}_{\gamma, \partial_S U_{R+}}+[B_2u]^{(1)}_{\gamma, \partial_S U_{R+}}+\|u\|_{L^\infty(U_{R+})}\right).
\end{align*}
\end{lemma}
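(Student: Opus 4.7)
The plan is to mimic the strategy used in Lemmas \ref{SchauderCh_GanzraumLokalisierungVariableCoeff} and \ref{SchauderCh_HalbraumObenLokalisierungVariableCoeff}, combining the fixed-coefficient version of the estimate (here Lemma \ref{SchauderCh_HalbraumBCLokalisierungFixeCoeff}) with a perturbation argument and Simon's Absorption Lemma \ref{SchauderCh_SimonsCoveringLemma}. Fix $\nu\in(0,1]$, a point $p_0\in U_{R+}(q)$ and a radius $\rho\in(0,\nu R)$ with $U_{\rho+}(p_0)\subset U_{R+}(q)$. Freeze the top-order coefficients of all three operators at $p_0$, setting
\[
P_0 := \partial_t + \sum_{|\alpha|=4} A^\alpha(p_0)\nabla_\alpha,\qquad
B_{1,0} := \sum_{|\beta|=1} c^\beta(p_0)\nabla_\beta,\qquad
B_{2,0} := \sum_{|\alpha|=3} b^\alpha(p_0)\nabla_\alpha,
\]
so that $P_0$, $B_{1,0}$, $B_{2,0}$ are induced by the constant strongly elliptic matrix $a^{ij}(p_0)$ in the sense of \eqref{SchauderCh_BlowUpSectionAssumption03}. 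Applying Lemma \ref{SchauderCh_HalbraumBCLokalisierungFixeCoeff} to $u$ on $U_{\rho+}(p_0)$ with $P_0$, $B_{1,0}$, $B_{2,0}$ and splitting $Pu = P_0 u + (P-P_0)u$ (analogously for $B_1$, $B_2$) gives
\[
\rho^{4+\gamma}[D^{4,1}u]^{(0)}_{\gamma,U_{\frac\rho2+}(p_0)} \leq C\Big(\rho^{4+\gamma}\big([Pu]^{(0)}_{\gamma}+[B_1 u]^{(3)}_{\gamma}+[B_2 u]^{(1)}_{\gamma}\big) + \|u\|_{L^\infty} + \mathcal E(p_0,\rho)\Big),
\]
where $\mathcal E(p_0,\rho)$ collects the perturbation seminorms $\rho^{4+\gamma}[(P-P_0)u]^{(0)}_{\gamma,U_{\rho+}(p_0)}$, $\rho^{4+\gamma}[(B_1-B_{1,0})u]^{(3)}_{\gamma,\partial_SU_{\rho+}(p_0)}$ and $\rho^{4+\gamma}[(B_2-B_{2,0})u]^{(1)}_{\gamma,\partial_SU_{\rho+}(p_0)}$.

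The core work is to bound $\mathcal E(p_0,\rho)$ by $CM(\epsilon+\nu^\gamma)\rho^{4+\gamma}[D^{4,1}u]^{(0)}_{\gamma,U_{\rho+}(p_0)}+C(\epsilon)\|u\|_{L^\infty(U_{\rho+}(p_0))}$. For the bulk term $[(P-P_0)u]^{(0)}_\gamma$ the computation is identical to the one leading to \eqref{SchauderCh_ganzraumVariableCoeff_100}: the top-order pieces are estimated via $\|A^\alpha-A^\alpha(p_0)\|_{L^\infty}\leq [A^\alpha]_\gamma \rho^\gamma\leq MR^{-\gamma}\rho^\gamma$ combined with $\rho\leq\nu R$, while the lower-order pieces use Assumption \eqref{SchauderCh_VariableCoeff_Assumption01} together with the interpolation estimates from Theorem \ref{SchauderCh_InterpolationEstimates}. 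For the boundary terms I will expand, for each multiindex $\alpha$ of length $\leq|\alpha_{\max}|$,
\[
(c^\beta-c^\beta(p_0))\nabla_\beta u,\qquad (b^\alpha-b^\alpha(p_0))\nabla_\alpha u,
\]
and for the top-order case use $\|c^\beta-c^\beta(p_0)\|_{L^\infty}\leq C\rho^\gamma[c^\beta]^{(0)}_\gamma \leq CMR^{-\gamma}\rho^\gamma$ (respectively for $b^\alpha$), treating the spatial and temporal Hölder seminorms $[\cdot]^{(3)}_\gamma$ and $[\cdot]^{(1)}_\gamma$ term by term as in the derivations of Estimates \eqref{SchauderCh_LokhalbRaumBC03} and \eqref{SchauderCh_LokhalbRaumBC04}. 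The necessary bounds on $\nabla^k c^\alpha$, $\nabla^k b^\alpha$ in both $[\cdot]_\gamma^{\operatorname{space}}$ and $[\cdot]_{\cdot}^{\operatorname{time}}$ are precisely what Assumptions \eqref{SchauderCh_VariableCoeff_Assumption02}--\eqref{SchauderCh_VariableCoeff_Assumption05} supply, with the scaling tailored so that every resulting term fits the pattern $R^{-\kappa}\leq\rho^{-\kappa}$ after using $\rho\leq\nu R$ on the top-order remainders.

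Combining these estimates and applying the interpolation bounds of Theorem \ref{SchauderCh_InterpolationEstimates} and Lemma \ref{SchauderCh_TemporalInterpolationLemma} to absorb every $\|\nabla^k u\|_{L^\infty}$, $[\nabla^k u]^{\operatorname{space}}_\gamma$, $[\nabla^k u]^{\operatorname{time}}_{\cdot}$ into $\epsilon\rho^{4+\gamma}[D^{4,1}u]^{(0)}_{\gamma,U_{\rho+}(p_0)}$ plus an $\|u\|_{L^\infty}$ term, one arrives at
\begin{align*}
\rho^{4+\gamma}[D^{4,1}u]^{(0)}_{\gamma,U_{\frac\rho2+}(p_0)} \leq{}& C(\epsilon,\nu)\Big(\rho^{4+\gamma}\big([Pu]^{(0)}_{\gamma,U_{\rho+}(p_0)}+[B_1 u]^{(3)}_{\gamma,\partial_S U_{\rho+}(p_0)}\\
&\hspace{1.7cm}+[B_2 u]^{(1)}_{\gamma,\partial_S U_{\rho+}(p_0)}\big)+\|u\|_{L^\infty(U_{\rho+}(p_0))}\Big)\\
&+CM(\epsilon+\nu^\gamma)\rho^{4+\gamma}[D^{4,1}u]^{(0)}_{\gamma,U_{\rho+}(p_0)}.
\end{align*}
Choosing $\epsilon$ and $\nu$ small enough so that $CM(\epsilon+\nu^\gamma)\leq\delta(n,\gamma,\tfrac12)$ from Theorem \ref{SchauderCh_SimonsCoveringLemma} and applying Simon's Absorption Lemma with $A=\R^{n-1}\times[0,\infty)\times\R$, $\theta_S=\tfrac12$, $k=4+\gamma$ and the subadditive functional $S(U_{\rho+}(p))=[D^{4,1}u]^{(0)}_{\gamma,U_{\rho+}(p)}$ (whose subadditivity comes from Proposition \ref{SchauderCh_SemiNormSubbadditiveonConvexSpaces_Parabolic}) yields the first claimed inequality. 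The second claim, the full $C^{4,1,\gamma}$ bound, then follows mechanically from the interpolation estimates of Theorem \ref{SchauderCh_InterpolationEstimates}. The main obstacle will be the bookkeeping for the boundary perturbation seminorms $[(B_j-B_{j,0})u]^{(k)}_\gamma$: one must carefully track the spatial and temporal Hölder seminorms of $c^\alpha$, $b^\alpha$ at several orders, verify that each resulting power of $R^{-1}$ can be absorbed into a power of $\rho^{-1}$, and isolate an arbitrarily small multiple of $[D^{4,1}u]^{(0)}_{\gamma,U_{\rho+}(p_0)}$ so that the absorption lemma can close the argument.
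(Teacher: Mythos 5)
Your proposal matches the paper's proof in both strategy and mechanism: freeze the top-order coefficients at $p_0$ to get constant-coefficient operators, invoke Lemma \ref{SchauderCh_HalbraumBCLokalisierungFixeCoeff}, bound the perturbation seminorms $[(P-P_0)u]^{(0)}_\gamma$, $[(B_1-B_{1,0})u]^{(3)}_\gamma$, $[(B_2-B_{2,0})u]^{(1)}_\gamma$ via Assumptions \eqref{SchauderCh_VariableCoeff_Assumption01}--\eqref{SchauderCh_VariableCoeff_Assumption05} plus the interpolation estimates, and close with Simon's Absorption Lemma on $A=\R^{n-1}\times[0,\infty)\times\R$. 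The only small inaccuracy is that the boundary perturbation estimates you model on \eqref{SchauderCh_LokhalbRaumBC03}--\eqref{SchauderCh_LokhalbRaumBC04} (which concern the cutoff localization in the constant-coefficient case) are actually carried out afresh in the paper as Estimates \eqref{SchauderCh_variableBCEstimate01}--\eqref{SchauderChBT2VariableCoef03}, but the underlying technique is the same and your plan would go through.
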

\begin{tcolorbox}[colback=white!20!white,colframe=black!100!white,sharp corners, breakable]
If $\partial_S U_{R+}=\emptyset$, the estimate holds with $[B_1 u]^{(3)}_{\gamma,\partial_S U_{R+}}
 =[B_2 u]^{(1)}_{\gamma,\partial_S U_{R+}}=0$.
 \end{tcolorbox}
\begin{proof}
Throughout the proof, we will suppress the dependence of various constants $C$ on $M$, $\gamma$, $n$ and $\theta$. We follow the proof of Lemma \ref{SchauderCh_GanzraumLokalisierungVariableCoeff}. Let $\nu\in(0,1]$, $p_0=(x_0,t_0)\in U_{R+}(q)$ and $\rho\in(0,\nu R)$ such that $U_{\rho+}(p_0)\subset U_{R+}(q)$. We put $A^\alpha_0:=A^\alpha(p_0)$, $b^\alpha_0:=b^\alpha(p_0)$ and $c^\alpha_0:=c^\alpha(p_0)$ and consider the constant coefficient operators $P_0:=\sum_{|\alpha|=4} A^\alpha_0\nabla_\alpha$, $B_{1,0}:=\sum_{|\alpha|=1}c_0^\alpha\nabla_\alpha$ and $B_{2,0}:=\sum_{|\alpha|= 3}b_0^\alpha\nabla_\alpha$. By Lemma \ref{SchauderCh_HalbraumBCLokalisierungFixeCoeff} we get 
\begin{align}
    \hspace{-.3cm}[D^{4,1} u]^{(0)}_{\gamma,U_{\frac\rho2+}(p_0)}
    \leq &C\left(
[P_0 u]_{\gamma,U_{\rho+}(p_0)}^{(0)}+[B_{1,0} u]_{\gamma,\partial_S U_{\rho+}(p_0)}^{(3)}
+[B_{2,0} u]_{\gamma,\partial_S U_{\rho+}(p_0)}^{(1)}+\rho^ {-4-\gamma}\|u\|_{L^ \infty(U_{\rho+}(p_0))}\right)\nonumber\\
&  \hspace{-2.4cm}\leq C\left(
[Pu]_{\gamma,U_{\rho+}(p_0)}^{(0)}+[B_1 u]_{\gamma,\partial_S U_{\rho+}(p_0)}^{(3)}+[B_2 u]_{\gamma,\partial_S U_{\rho+}(p_0)}^{(1)}
+\rho^ {-4-\gamma}\|u\|_{L^ \infty(U_{\rho+}(p_0))}\right)\nonumber
\\& \hspace{-2.2cm}+
C\left(
[(P-P_0)u]_{\gamma,U_{\rho+}(p_0)}^{(0)}+[(B_1-B_{1,0}) u]_{\gamma,\partial_S U_{\rho+}(p_0)}^{(3)}+[(B_2-B_{2,0}) u]_{\gamma,\partial_S U_{\rho+}(p_0)}^{(1)}
\right).\label{SchauderCh_HalbraumBCLokalisierungVariableCoeff_Est01Anfang}
\end{align}
We can use the analysis leading to Estimate \eqref{SchauderCh_ganzraumVariableCoeff_100} to estimate $[(P-P_0)u]_{\gamma,U_{\rho+}(p_0)}^{(0)}$. Next, we estimate $ [(B_1-B_{1,0}) u]_{\gamma,\partial_S U_{\rho+}(p_0)}^{(3)}$. To do so, we note that for $1\leq N\leq 3$
\begin{equation}\label{SchauderCh_BoundaryCOpProductRule}
\begin{aligned}
\nabla^N\left[(B_1-B_{1,0})u\right]
=&
\sum_{|\alpha|=1}(c^\alpha-c_0^\alpha)\nabla^N\nabla_\alpha u
+
\sum_{|\alpha|<1}c^\alpha\nabla^N\nabla_\alpha u\\
&+\sum_{|\alpha|\leq 1}\sum_{|\beta|\leq N-1}\sum_{\substack{|\kappa|\leq N\\ |\beta|+|\kappa|=N}}c(\alpha,\beta,\kappa)\nabla_\kappa c^\alpha\nabla_{\alpha+\beta} u.
\end{aligned}
\end{equation}
First, we consider $[\nabla^3 ((B_1-B_{1,0})u)]_{\gamma,\partial_S U_{\rho+}(p_0)}^{\operatorname{space}}$. We estimate the leading term from Equation \eqref{SchauderCh_BoundaryCOpProductRule}. To do so, let $\alpha$ be a multi-index of length $|\alpha|=1$. We estimate $\|c^\alpha-c^\alpha_0\|_{L^\infty(\partial_S U_{\rho+}(p_0))}\leq [c^\alpha]_{\gamma,\partial_S U_{\rho+}(p_0))}^{\operatorname{space}}\rho^\gamma$. Additionally, we use Assumption \eqref{SchauderCh_VariableCoeff_Assumption04}, $\rho\leq \nu R$ and an analogue of Estimate \eqref{SchauderCh_LocalizationProductRuileEstimate}, to estimate 
\begin{align}
    &[(c^\alpha-c_0^\alpha)\nabla^3\nabla_\alpha u]_{\gamma,\partial_S U_{\rho+}(p_0)}^{\operatorname{space}}\nonumber\\
    \leq & C\left(\|c^\alpha-c_0^\alpha\|_{L^\infty(\partial_S U_{\rho+}(p_0))}
    [\nabla^4 u]_{\gamma,\partial_S U_{\rho+}(p_0)}^{\operatorname{space}}
    +[c^{\alpha}]_{\gamma,\partial_S U_{\rho+}(p_0)}^{\operatorname{space}}\|\nabla^4 u\|_{L^\infty(\partial_S U_{\rho+}(p_0))}\right)\nonumber\\
    \leq & C\left( [c^{\alpha}]_{\gamma,\partial_S U_{\rho+}(p_0)}^{(0)} \rho^\gamma[\nabla^4 u]_{\gamma,\partial_S U_{\rho+}(p_0)}^{\operatorname{space}}
    +[c^{\alpha}]_{\gamma,\partial_S U_{\rho+}(p_0)}^{\operatorname{space}}\|\nabla^4 u\|_{L^\infty(\partial_S U_{\rho+}(p_0))}\right)\nonumber\\
    \leq & CM\left(R^{-\gamma}\rho^\gamma [\nabla^4 u]_{\gamma,\partial_S U_{\rho+}(p_0)}^{\operatorname{space}}+R^{-\gamma}\|\nabla^4 u\|_{L^\infty(\partial_S U_{\rho+}(p_0))}\right)\nonumber\\
    \leq & CM\left(\nu^\gamma [D^{4,1}u]_{\gamma, U_{\rho+}(p_0)}^{(0)}+R^{-\gamma}\|\nabla^4 u\|_{L^\infty( U_{\rho+}(p_0))}\right).\label{SchauderCh_variableBCEstimate01}
\end{align}
Next, for a multiindex $\alpha$ of length $|\alpha|<1$ (that is $|\alpha|=0)$, we use Assumption \eqref{SchauderCh_VariableCoeff_Assumption04} and an analogue of Estimate \eqref{SchauderCh_LocalizationProductRuileEstimate} to estimate
\begin{align}
    &[c^\alpha\nabla^3\nabla_\alpha u]_{\gamma,\partial_S U_{\rho+}(p_0)}^{\operatorname{space}}\nonumber\\
    \leq & C\left(\|c^\alpha\|_{L^\infty(\partial_S U_{\rho+}(p_0))}
    [\nabla^{3+|\alpha|} u]_{\gamma,\partial_S U_{\rho+}(p_0)}^{\operatorname{space}}
    +[c^{\alpha}]_{\gamma,\partial_S U_{\rho+}(p_0)}^{\operatorname{space}}\|\nabla^{3+|\alpha|} u\|_{L^\infty(\partial_S U_{\rho+}(p_0))}\right)\nonumber\\
    \leq & CM\left(R^{-1+|\alpha|} [\nabla^{3+|\alpha|} u]_{\gamma,\partial_S U_{\rho+}(p_0)}^{\operatorname{space}}+R^{-1+|\alpha|-\gamma}\|\nabla^{3+|\alpha|} u\|_{L^\infty( U_{\rho+}(p_0))}\right)\label{SchauderCh_variableBCEstimate01MISSING}.
\end{align}
Next, we estimate the remaining terms in Equation \eqref{SchauderCh_BoundaryCOpProductRule} ($N=3$). Let $|\alpha|\leq 1$, $|\beta|\leq 3-1$ and $|\kappa|=3-|\beta|$. Using Assumption \eqref{SchauderCh_VariableCoeff_Assumption04}, we estimate
\begin{align}
    &[\nabla_\kappa c^\alpha\nabla_{\alpha+\beta} u]^{\operatorname{space}}_{\gamma,\partial_S U_{\rho+}(p_0)} \nonumber\\
    \leq & C\left(
    \|\nabla_\kappa c^\alpha\|_{L^\infty(\partial_S U_{\rho+}(p_0) )} [\nabla_{\alpha+\beta} u]^{\operatorname{space}}_{\gamma,\partial_S U_{\rho+}(p_0)} 
    +
    [\nabla_\kappa c^\alpha]^{\operatorname{space}}_{\gamma,\partial_S U_{\rho+}(p_0)} \|\nabla_{\alpha+\beta} u\|_{L^\infty(\partial_S U_{\rho+}(p_0) )}\right)\nonumber\\
    \leq & CM\left(
    R^{-1-|\kappa|+|\alpha|} [\nabla^{|\alpha|+|\beta|} u]^{\operatorname{space}}_{\gamma,\partial_S U_{\rho+}(p_0)} 
    +
     R^{-1-|\kappa|-\gamma+|\alpha|} \|\nabla^{|\alpha|+|\beta|} u\|_{L^\infty(U_{\rho+}(p_0) )}\right).\label{SchauderCh_variableBCEstimate02}
\end{align}
Considering the terms $R^{(...)}$ appearing in Estimates \eqref{SchauderCh_variableBCEstimate01}, \eqref{SchauderCh_variableBCEstimate01MISSING} and \eqref{SchauderCh_variableBCEstimate02} , we see that $(...)\leq 0$ in all cases. Hence $R^{(...)}\leq \rho^{(...)}$. Inserting the interpolation estimates from Theorem \ref{SchauderCh_InterpolationEstimates} as we did in the derivation of Estimate \eqref{SchauderCh_ganzraumVariableCoeff_100}, we deduce
\begin{equation}\label{SchauderChBT1VariableCoef01}
[\nabla^3 \big((B_1-B_{1,0}) u\big)]_{\gamma,\partial_S U_{\rho+}(p_0)}^{\operatorname{space}}
\leq 
CM(\nu^\gamma+\epsilon)[D^{4,1}u]_{\gamma, U_{\rho+}(p_0)}^{(0)}+C(\epsilon)\|u\|_{L^\infty(U_{\rho+}(p_0))}.
\end{equation}
For $0\leq k\leq 3$, we consider $[\nabla^k ((B_1-B_{1,0})u)]_{\frac{3-k+\gamma}4,\partial_S U_{\rho+}(p_0)}^{\operatorname{time}}$. We first estimate the leading term from Equation \eqref{SchauderCh_BoundaryCOpProductRule}. Let $|\alpha|=1$. Using Assumptions \eqref{SchauderCh_VariableCoeff_Assumption04} and \eqref{SchauderCh_VariableCoeff_Assumption05}, $\rho\leq\nu R$ and an analogue of Estimate \eqref{SchauderCh_LocalizationProductRuileEstimate}, we estimate 
\begin{align}
    &[(c^\alpha-c_0^\alpha)\nabla^k\nabla_\alpha u]_{\frac{3-k+\gamma}4,\partial_S U_{\rho+}(p_0)}^{\operatorname{time}}\nonumber\\
    \leq & C\left(\|c^\alpha-c_0^\alpha\|_{L^\infty(\partial_S U_{\rho+}(p_0))}[\nabla^{k+1} u]_{\frac{3-k+\gamma}4,\partial_S U_{\rho+}(p_0)}^{\operatorname{time}}
    +
    [c^{\alpha}]_{\frac{3-k+\gamma}4,\partial_S U_{\rho+}(p_0)}^{\operatorname{time}}\|\nabla^{k+1} u\|_{L^\infty(\partial_S U_{\rho+}(p_0))}\right)\nonumber\\
    \leq & C\left( [c^{\alpha}]_{\gamma, \partial_S U_{\rho+}(p_0)}^{(0)}\rho^\gamma[\nabla^{k+1} u]_{\frac{3-k+\gamma}4,\partial_S U_{\rho+}(p_0)}^{\operatorname{time}}
    +
    [c^{\alpha}]_{\frac{3-k+\gamma}4,\partial_S U_{\rho+}(p_0)}^{\operatorname{time}}\|\nabla^{k+1} u\|_{L^\infty(\partial_S U_{\rho+}(p_0))}\right)\nonumber\\
    \leq & CM\left(R^{-\gamma}\rho^\gamma [\nabla^{k+1} u]_{\frac{3-k+\gamma}4,\partial_S U_{\rho+}(p_0)}^{\operatorname{time}}+R^{-3+k-\gamma}\|\nabla^{k+1} u\|_{L^\infty(\partial_S U_{\rho+}(p_0))}\right)\nonumber\\
    \leq & CM\left(\nu^\gamma [D^{4,1}u]_{\gamma, U_{\rho+}(p_0)}^{(0)}+R^{-3+k-\gamma}\|\nabla^{k+1} u\|_{L^\infty(\partial_S U_{\rho+}(p_0))}\right).\label{SchauderCh_BT1VariableTempporalSemiNorm}
\end{align}
Next, for $|\alpha|\leq 1$, $|\beta|\leq k-1$ and $|\kappa|=k-|\beta|$, we use Assumptions \eqref{SchauderCh_VariableCoeff_Assumption04} and \eqref{SchauderCh_VariableCoeff_Assumption05}, to estimate
\begin{align}
    &[\nabla_\kappa c^\alpha\nabla_{\alpha+\beta} u]^{\operatorname{time}}_{\frac{3+\gamma-k}4,\partial_S U_{\rho+}(p_0)}\nonumber \\
    \leq & C\left(
    \|\nabla_\kappa c^\alpha\|_{L^\infty(\partial_S U_{\rho+}(p_0) )} [\nabla_{\alpha+\beta} u]^{\operatorname{time}}_{\frac{3+\gamma-k}4,\partial_S U_{\rho+}(p_0)} 
    +
    [\nabla_\kappa c^\alpha]^{\operatorname{time}}_{\frac{3+\gamma-k}4,\partial_S U_{\rho+}(p_0)} \|\nabla_{\alpha+\beta} u\|_{L^\infty(\partial_S U_{\rho+}(p_0) )}\right)\nonumber\\
    \leq & CM\left(
    R^{-1-|\kappa|+|\alpha|} [\nabla^{|\alpha|+|\beta|} u]^{\operatorname{time}}_{\frac{3+\gamma-k}4,\partial_S U_{\rho+}(p_0)} 
    +
     R^{-4-\gamma-|\kappa|+|\alpha|+k} \|\nabla^{|\alpha|+|\beta|} u\|_{L^\infty(\partial_S U_{\rho+}(p_0) )}\right)\nonumber\\
    \leq & CM\bigg(
    R^{-1-k+|\alpha|+|\beta|} [\nabla^{|\alpha|+|\beta|} u]^{\operatorname{time}}_{\frac{3+\gamma-k}4,\partial_S U_{\rho+}(p_0)}\nonumber 
    \\
    &\hspace{2cm}+
     R^{-4-\gamma+|\alpha|+|\beta|} \|\nabla^{|\alpha|+|\beta|} u\|_{L^\infty(\partial_S U_{\rho+}(p_0) )}\bigg).\label{SchauderCh_BT1VariableTempporalSemiNorm2}
\end{align}
Using the same arguments; for $|\alpha|<1$ (that is $|\alpha|=0)$, we estimate 
\begin{align}
    &[c^\alpha\nabla^k\nabla_\alpha u]_{\frac{3-k+\gamma}4,\partial_S U_{\rho+}(p_0)}^{\operatorname{time}}\nonumber\\
    \leq & C\left(\|c^\alpha\|_{L^\infty(\partial_S U_{\rho+}(p_0))}[\nabla^{k} u]_{\frac{3-k+\gamma}4,\partial_S U_{\rho+}(p_0)}^{\operatorname{time}}
    +
    [c^{\alpha}]_{\frac{3-k+\gamma}4,\partial_S U_{\rho+}(p_0)}^{\operatorname{time}}\|\nabla^{k} u\|_{L^\infty(\partial_S U_{\rho+}(p_0))}\right)\nonumber\\
    \leq & CM\left(R^{-1} [\nabla^{k} u]_{\frac{3-k+\gamma}4,\partial_S U_{\rho+}(p_0)}^{\operatorname{time}}+R^{-4+k-\gamma}\|\nabla^{k} u\|_{L^\infty(\partial_S U_{\rho+}(p_0))}\right).\label{SchauderCh_BT1VariableTempporalSemiNormNEO}
\end{align}
Considering the terms $R^{(...)}$ appearing in Estimates \eqref{SchauderCh_BT1VariableTempporalSemiNorm}, \eqref{SchauderCh_BT1VariableTempporalSemiNorm2} and \eqref{SchauderCh_BT1VariableTempporalSemiNormNEO}, we see that $(...)\leq 0$ in all cases. Hence $R^{(...)}\leq \rho^{(...)}$. Inserting the interpolation estimates from Theorem \ref{SchauderCh_InterpolationEstimates} and Lemma \ref{SchauderCh_TemporalInterpolationLemma}, we deduce
\begin{equation}\label{SchauderChBT1VariableCoef03}
\hspace{-.3cm}\sum_{k=0}^3[\nabla^k\left( (B_1-B_{1,0})u\right)]^{\operatorname{time}}_{\frac{3-k+\gamma}4,\partial_S U_{\rho+}(p_0)}
\leq\hspace{-.1cm}
CM(\nu^\gamma+\epsilon)[D^{4,1}u]_{\gamma, U_{\rho+}(p_0)}^{(0)}+C(\epsilon)\|u\|_{L^\infty(U_{\rho+}(p_0))}.
\end{equation}
Combining Estimates \eqref{SchauderChBT1VariableCoef01} and \eqref{SchauderChBT1VariableCoef03}, we obtain
\begin{equation}\label{SchauderChBT1VariableCoef03EndResult}
    [(B_1-B_{1,0}) u]_{\gamma,\partial_S U_{\rho+}(p_0)}^{(3)}\leq CM(\nu^\gamma+\epsilon)[D^{4,1}u]_{\gamma,U_{\rho+}(p_0)}^{(0)}+C(\epsilon)\rho^{-4-\gamma}\|u\|_{L^\infty(U_{\rho+}(p_0))}.
\end{equation}
Following the exact same arguments, the following estimate can be established by using Assumptions \eqref{SchauderCh_VariableCoeff_Assumption02} and \eqref{SchauderCh_VariableCoeff_Assumption03}:
\begin{equation}\label{SchauderChBT2VariableCoef03}
     [(B_2-B_{2,0}) u]_{\gamma,\partial_S U_{\rho+}(p_0)}^{(1)}
    \leq CM(\nu^\gamma+\epsilon)[D^{4,1}u]_{\gamma,U_{\rho+}(p_0)}^{(0)}+C(\epsilon)\rho^{-4-\gamma}\|u\|_{L^\infty(U_{\rho+}(p_0))}.
\end{equation}
Inserting the analogue of Estimate \eqref{SchauderCh_ganzraumVariableCoeff_100} and Estimates \eqref{SchauderChBT1VariableCoef03EndResult} and \eqref{SchauderChBT2VariableCoef03} into Estimate \eqref{SchauderCh_HalbraumBCLokalisierungVariableCoeff_Est01Anfang}, we have shown that for any $p_0\in U_{R+}(q)$ and $\rho\leq\nu R$ such that $U_{\rho+}(p_0)\subset U_{R+}(q)$, we have 
\begin{align*}
[D^{4,1} u]_{\gamma,U_{\frac\rho2+}(p_0)}^{(0)}\leq & C\left(
[Pu]_{\gamma,U_{\rho+}(p_0)}^{(0)}+[B_1 u]_{\gamma,\partial_S U_{\rho+}(p_0)}^{(3)}+[B_2 u]_{\gamma,\partial_S U_{\rho+}(p_0)}^{(1)}
+\rho^ {-4-\gamma}\|u\|_{L^ \infty(U_{\rho+}(p_0))}\right)\\
&+CM\left(\epsilon+\nu^\gamma\right)[D^{4,1} u]^{(0)}_{\gamma,U_{\rho+}(p_0)}+C(\epsilon)\rho^{-4-\gamma}\|u\|_{L^\infty(U_{\rho+}(p_0))}.
\end{align*}
The first estimate in the lemma follows from Simon's Absorption Lemma \ref{SchauderCh_SimonsCoveringLemma}. The second estimate is a direct consequence of the interpolation estimates in Theorem \ref{SchauderCh_InterpolationEstimates}
\end{proof}

\begin{lemma}\label{SchauderCh_HalbraumUntenBCLokalisierungVariableCoeff}
Let $q\in\R^{n-1}\times[0,\infty)\times(-\infty,0]$, $R>0$, $U_{R+}^-:=U_{R+}^-(q)$ and  $u\in C^{4,1,\gamma}(U_{R+}^-)$.
There exists a constant $C=C(M,\gamma, n,\theta)$ such that
\begin{align*} 
[D^{4,1}u]_{\gamma,U_{\frac R2+}^ -}^{(0)}\leq &C\left([Pu]^{(0)}_{\gamma,U_{R+}^-}+[B_1u]^{(3)}_{\gamma, \partial_SU_{R+}^-}+[B_2u]^{(1)}_{\gamma, \partial_S U_{R+}^-}+R^{-4-\gamma}\|u\|_{L^\infty(U_{R+}^-)}\right),\\
\|u\|_{C^{4,1,\gamma}(U_{\frac R2+}^ -)}\leq &C(R)\left([Pu]^{(0)}_{\gamma,U_{R+}^-}+[B_1u]^{(3)}_{\gamma, \partial_SU_{R+}^-}+[B_2u]^{(1)}_{\gamma, \partial_S U_{R+}^-}+\|u\|_{L^\infty(U_{R+}^-)}\right).
\end{align*}
\end{lemma}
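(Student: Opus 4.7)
The proof mirrors the proof of Lemma \ref{SchauderCh_HalbraumBCLokalisierungVariableCoeff} almost verbatim, the only change being that we replace Lemma \ref{SchauderCh_HalbraumBCLokalisierungFixeCoeff} with its "lower truncated" counterpart Lemma \ref{SchauderCh_UntenHalbraumBCLokalisierungFixeCoeff} and apply Simon's Absorption Lemma \ref{SchauderCh_SimonsCoveringLemma} with the set $A=\R^{n-1}\times[0,\infty)\times(-\infty,0]$. The plan is therefore to freeze coefficients at a point and to estimate the resulting perturbation terms.

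Throughout, suppress dependence of $C$ on $M$, $\gamma$, $n$, $\theta$. Let $\nu\in(0,1]$ be a parameter to be chosen, fix $p_0\in U_{R+}^-(q)$ and $\rho\in(0,\nu R)$ with $U_{\rho+}^-(p_0)\subset U_{R+}^-(q)$, and introduce the constant-coefficient operators
$$P_0:=\partial_t+\sum_{|\alpha|=4}A^\alpha(p_0)\nabla_\alpha,\qquad B_{1,0}:=\sum_{|\alpha|=1}c^\alpha(p_0)\nabla_\alpha,\qquad B_{2,0}:=\sum_{|\alpha|=3}b^\alpha(p_0)\nabla_\alpha.$$
Applying Lemma \ref{SchauderCh_UntenHalbraumBCLokalisierungFixeCoeff} to $u$ on $U_{\rho+}^-(p_0)$ with these frozen operators gives a baseline estimate controlling $[D^{4,1}u]^{(0)}_{\gamma,U_{\frac\rho2+}^-(p_0)}$ by $[P_0u]^{(0)}_{\gamma,U_{\rho+}^-(p_0)}$, $[B_{1,0}u]^{(3)}_{\gamma,\partial_SU_{\rho+}^-(p_0)}$, $[B_{2,0}u]^{(1)}_{\gamma,\partial_SU_{\rho+}^-(p_0)}$, and $\rho^{-4-\gamma}\|u\|_{L^\infty}$, up to the error terms $[(P-P_0)u]$, $[(B_1-B_{1,0})u]$, $[(B_2-B_{2,0})u]$.

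The core of the argument is then to show that these three error terms each admit an estimate of the form
$$CM(\epsilon+\nu^\gamma)[D^{4,1}u]_{\gamma,U_{\rho+}^-(p_0)}^{(0)}+C(\epsilon)\rho^{-4-\gamma}\|u\|_{L^\infty(U_{\rho+}^-(p_0))}.$$
For $[(P-P_0)u]^{(0)}_{\gamma,U_{\rho+}^-(p_0)}$ this is literally the computation leading to Estimate \eqref{SchauderCh_ganzraumVariableCoeff_100}: split $P-P_0$ into the sum of $(A^\alpha-A^\alpha(p_0))\nabla_\alpha$ (length four) plus the lower order terms $A^\alpha\nabla_\alpha$ (length $\leq 3$), apply the product-rule Hölder estimate \eqref{SchauderCh_LocalizationProductRuileEstimate}, use the bound $\|A^\alpha-A^\alpha(p_0)\|_{L^\infty}\leq[A^\alpha]^{(0)}_\gamma\rho^\gamma$, Assumption \eqref{SchauderCh_VariableCoeff_Assumption01}, $\rho\leq\nu R$, and finally the interpolation estimates from Theorem \ref{SchauderCh_InterpolationEstimates}. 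For the two boundary contributions, the calculation is exactly the one carried out in the proof of Lemma \ref{SchauderCh_HalbraumBCLokalisierungVariableCoeff}, culminating in Estimates \eqref{SchauderChBT1VariableCoef03EndResult} and \eqref{SchauderChBT2VariableCoef03}; the only differences are that every seminorm is now taken over the truncated set $U_{\rho+}^-(p_0)$ (respectively $\partial_SU_{\rho+}^-(p_0)$) and that the interpolation estimates are used in the form valid for such truncated parabolic balls. Since those estimates are available (Theorem \ref{SchauderCh_InterpolationEstimates} and Lemma \ref{SchauderCh_TemporalInterpolationLemma} carry over to truncated balls), the verbatim reuse of Assumptions \eqref{SchauderCh_VariableCoeff_Assumption02}-\eqref{SchauderCh_VariableCoeff_Assumption05} goes through.

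Combining these perturbation estimates with the frozen-coefficient baseline inequality and choosing first $\epsilon$, then $\nu$ small enough (in terms of $M$, $n$, $\gamma$, $\theta$), yields for every $U_{\rho+}^-(p_0)\subset U_{R+}^-(q)$ with $\rho\leq\nu R$ an inequality of the form
$$\rho^{4+\gamma}[D^{4,1}u]^{(0)}_{\gamma,U_{\frac\rho2+}^-(p_0)}\leq \delta\,\rho^{4+\gamma}[D^{4,1}u]^{(0)}_{\gamma,U_{\rho+}^-(p_0)}+E,$$
where $\delta=\delta(n,\gamma,\theta)$ is as small as required by Theorem \ref{SchauderCh_SimonsCoveringLemma} and
$$E:=C\Big(R^{4+\gamma}\big([Pu]^{(0)}_{\gamma,U_{R+}^-}+[B_1u]^{(3)}_{\gamma,\partial_SU_{R+}^-}+[B_2u]^{(1)}_{\gamma,\partial_SU_{R+}^-}\big)+\|u\|_{L^\infty(U_{R+}^-)}\Big).$$
Setting $S(\Omega):=[D^{4,1}u]^{(0)}_{\gamma,\Omega}$ (monotone and, by Proposition \ref{SchauderCh_SemiNormSubbadditiveonConvexSpaces_Parabolic}, subadditive on the family of truncated parabolic balls inside $A$), Simon's Absorption Lemma \ref{SchauderCh_SimonsCoveringLemma} applied with $k=4+\gamma$, $\theta_S=\tfrac12$, yields the first claimed estimate. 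The second estimate is then an immediate consequence of the interpolation inequalities in Theorem \ref{SchauderCh_InterpolationEstimates}, which bound all lower order Hölder norms by $[D^{4,1}u]^{(0)}_\gamma$ plus $\|u\|_{L^\infty}$. The only nonroutine point, and therefore the main thing to check carefully, is that the powers of $R$ appearing in the perturbation estimates are always nonpositive so that $R^{(\cdots)}\leq\rho^{(\cdots)}$; this is exactly the bookkeeping already done in Lemma \ref{SchauderCh_HalbraumBCLokalisierungVariableCoeff} and requires no modification in the present truncated setting.
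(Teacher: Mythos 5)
Your proof is correct and takes essentially the same approach as the paper: freeze coefficients at $p_0$, apply the constant-coefficient Lemma \ref{SchauderCh_UntenHalbraumBCLokalisierungFixeCoeff}, bound the perturbation terms $[(P-P_0)u]$, $[(B_1-B_{1,0})u]$, $[(B_2-B_{2,0})u]$ via the arguments behind Estimates \eqref{SchauderCh_ganzraumVariableCoeff_100}, \eqref{SchauderChBT1VariableCoef03EndResult}, \eqref{SchauderChBT2VariableCoef03}, then invoke Simon's Absorption Lemma \ref{SchauderCh_SimonsCoveringLemma} with $A=\R^{n-1}\times[0,\infty)\times(-\infty,0]$, and finish with the interpolation estimates from Theorem \ref{SchauderCh_InterpolationEstimates}. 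No gaps.
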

\begin{tcolorbox}[colback=white!20!white,colframe=black!100!white,sharp corners, breakable]
If $\partial_S U_{R+}^-=\emptyset$, the estimate holds with $[B_1 u]^{(3)}_{\gamma,\partial_S U_{R+}^-}
 =[B_2 u]^{(1)}_{\gamma,\partial_S U_{R+}^-}=0$.
 \end{tcolorbox}
\begin{proof}
Throughout the proof, we will suppress the dependence of various constants $C$ on $M$, $\gamma$, $n$ and $\theta$. Let $\nu\in(0,1]$, $p_0=(x_0,t_0)\in U_{R+}^ -(q)$ and $\rho\in (0,\nu R)$ such that $U_{\rho+}^ -(p_0)\subset U_{R+}^ -(q)$. We put $A^\alpha_0:=A^\alpha(p_0)$, $b^\alpha_0:=b^\alpha(p_0)$ and $c^\alpha_0:=c^\alpha(p_0)$. Additionally, we put $P_0:=\sum_{|\alpha|=4} A^\alpha_0\nabla_\alpha$, $B_{1,0}:=\sum_{|\alpha|= 1}c_0^\alpha\nabla_\alpha$ and $B_{2,0}:=\sum_{|\alpha|=3}b_0^\alpha\nabla_\alpha$. By Lemma \ref{SchauderCh_UntenHalbraumBCLokalisierungFixeCoeff} we get 
\begin{align}
    [D^{4,1} u]^{(0)}_{\gamma,U_{\frac\rho2+}^-(p_0)}\leq &C\left(
[P_0 u]_{\gamma,U_{\rho+}^-(p_0)}^{(0)}+[B_{1,0} u]_{\gamma,\partial_S U_{\rho+}^-(p_0)}^{(3)}+[B_{2,0} u ]_{\gamma,\partial_S U_{\rho+}^-(p_0)}^{(1)}
+\rho^ {-4-\gamma}\|u\|_{L^ \infty(U_{\rho+}^-(p_0))}\right)\nonumber\\
& \hspace{-2.4cm}\leq C\left(
[Pu]_{\gamma,U_{\rho+}^-(p_0)}^{(0)}+[B_1 u]_{\gamma,\partial_S U_{\rho+}^-(p_0)}^{(3)}+[B_2 u]_{\gamma,\partial_S U_{\rho+}^-(p_0)}^{(1)}
+\rho^ {-4-\gamma}\|u\|_{L^ \infty(U_{\rho+}^-(p_0))}
\right)\nonumber
\\
&\hspace{-2.2cm}+
C\left(
[(P-P_0)u]_{\gamma,U_{\rho+}^-(p_0)}^{(0)}\hspace{-.2cm}+[(B_1-B_{1,0}) u]_{\gamma,\partial_S U_{\rho+}^-(p_0)}^{(3)}\hspace{-.2cm}+[(B_2-B_{2,0}) u]_{\gamma,\partial_S U_{\rho+}^-(p_0)}^{(1)}
\right).\label{SchauderCh_UntenHalbraumBCLokalisierungVariableCoeff_Est01Anfang}
\end{align}

Following the arguments that lead to Estimates \eqref{SchauderCh_ganzraumVariableCoeff_100},
\eqref{SchauderChBT1VariableCoef03EndResult} and
\eqref{SchauderChBT2VariableCoef03}, we obtain suitable estimates for 
$[(P-P_0)u]_{\gamma,U_{\rho+}^-(p_0)}^{(0)}$, 
$[(B_1-B_{1,0}) u]_{\gamma,\partial_S U_{\rho+}^-(p_0)}^{(3)}$ and 
$[(B_2-B_{2,0}) u]_{\gamma,\partial_S U_{\rho+}^-(p_0)}^{(1)}$. 
Inserting these into Estimate \eqref{SchauderCh_UntenHalbraumBCLokalisierungVariableCoeff_Est01Anfang}, we get
\begin{align}
[D^{4,1} u]^{(0)}_{\gamma,U_{\frac\rho2+}^-(p_0)}
\leq &C(\epsilon)\left(
[P u]_{\gamma,U_{\rho+}^-(p_0)}^{(0)}+[B_1 u]_{\gamma,\partial_S U_{\rho+}^-(p_0)}^{(3)}+[B_2 u]_{\gamma,\partial_S U_{\rho+}^-(p_0)}^{(1)}+\rho^{-4-\gamma}\|u\|_{L^\infty(U_{\rho+}^-(p_0))}\right)\nonumber\\
&\hspace{.5cm}+CM\left(\epsilon+\nu^\gamma\right)[D^{4,1} u]^{(0)}_{U_{\rho+}^-(p_0)}.\nonumber
\end{align}
The first estimate claimed in the lemma follows from Simon's Absorption Lemma \ref{SchauderCh_SimonsCoveringLemma}. The second estimate is a direct consequence of the interpolation estimates in Theorem \ref{SchauderCh_InterpolationEstimates}
\end{proof}

Following the proof of Lemma \ref{SchauderCh_HalbraumUntenBCLokalisierungVariableCoeff} and swapping Lemma \ref{SchauderCh_UntenHalbraumBCLokalisierungFixeCoeff} for Lemma \ref{SchauderCh_ObenHalbraumBCLokalisierungFixeCoeff}, the following lemma can be established.
\begin{lemma}\label{SchauderCh_HalbraumObenBCLokalisierungVariableCoeff}
Let $q\in\R^{n-1}\times[0,\infty)\times[0,\infty)$, $R>0$, $U_{R+}^+:=U_{R+}^+(q)$, $u\in C^{4,1,\gamma}(U_{R+}^+)$ and $u_0:=u(\cdot,0)$. There exists a constant $C=C(M,\gamma, n,\theta)$ such that 
\begin{align*} 
[D^{4,1}u]^{(0)}_{\gamma,U_{\frac R2+}^ +}\leq& C\left([Pu]^{(0)}_{\gamma,U_{R+}^+}+[B_1u]^{(3)}_{\gamma, \partial_SU_{R+}^+}+[B_2u]^{(1)}_{\gamma, \partial_S U_{R+}^+}
+[\nabla^4 u_0]_{\gamma,\partial_F U_{R+}^ +(0)}
+R^{-4-\gamma}\|u\|_{L^\infty(U_{R+}^+)}\right),\\
\|u\|_{C^{4,1,\gamma}(U_{\frac R2+}^ +)}\leq& C(R)\left([Pu]_{U_{R+}^+}^{(0)}+[B_1u]^{(3)}_{\gamma, \partial_SU_{R+}^+}+[B_2u]^{(1)}_{\gamma, \partial_S U_{R+}^+}
+[\nabla^4 u_0]_{\gamma,\partial_F U_{R+}^ +(0)}
+\|u\|_{L^\infty(U_{R+}^+)}\right).
\end{align*}
\end{lemma}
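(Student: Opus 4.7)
The plan is to mimic verbatim the proofs of Lemmas \ref{SchauderCh_HalbraumBCLokalisierungVariableCoeff} and \ref{SchauderCh_HalbraumUntenBCLokalisierungVariableCoeff}, but starting from the constant-coefficient input Lemma \ref{SchauderCh_ObenHalbraumBCLokalisierungFixeCoeff}, so that the new floor-boundary contribution $[\nabla^4 u_0]_{\gamma,\partial_F U_{R+}^+}$ simply propagates through each step. Throughout, the dependence of constants on $M$, $\gamma$, $n$, $\theta$ is suppressed. Fix $\nu\in(0,1]$, a point $p_0\in U_{R+}^+(q)$ and a radius $\rho\in(0,\nu R)$ with $U_{\rho+}^+(p_0)\subset U_{R+}^+(q)$. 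Set $A_0^\alpha:=A^\alpha(p_0)$, $b_0^\alpha:=b^\alpha(p_0)$, $c_0^\alpha:=c^\alpha(p_0)$ and form the frozen operators $P_0:=\sum_{|\alpha|=4}A_0^\alpha\nabla_\alpha$, $B_{1,0}:=\sum_{|\alpha|=1}c_0^\alpha\nabla_\alpha$, $B_{2,0}:=\sum_{|\alpha|=3}b_0^\alpha\nabla_\alpha$. Applying Lemma \ref{SchauderCh_ObenHalbraumBCLokalisierungFixeCoeff} to $u$ on $U_{\rho+}^+(p_0)$ and then writing $P_0=P-(P-P_0)$ and likewise for $B_{1,0}$, $B_{2,0}$ yields
\begin{align*}
[D^{4,1}u]^{(0)}_{\gamma,U_{\frac\rho2+}^+(p_0)}
\leq C\Big(&[Pu]^{(0)}_{\gamma,U_{\rho+}^+(p_0)}+[B_1u]^{(3)}_{\gamma,\partial_S U_{\rho+}^+(p_0)}+[B_2u]^{(1)}_{\gamma,\partial_S U_{\rho+}^+(p_0)}\\
&+[\nabla^4u_0]_{\gamma,\partial_F U_{\rho+}^+(p_0)}+\rho^{-4-\gamma}\|u\|_{L^\infty(U_{\rho+}^+(p_0))}\Big)\\
+C\Big(&[(P-P_0)u]^{(0)}_{\gamma,U_{\rho+}^+(p_0)}+[(B_1-B_{1,0})u]^{(3)}_{\gamma,\partial_S U_{\rho+}^+(p_0)}+[(B_2-B_{2,0})u]^{(1)}_{\gamma,\partial_S U_{\rho+}^+(p_0)}\Big).
\end{align*}

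The three error terms in the last line are estimated by exactly the computations already carried out in the proofs of Lemmas \ref{SchauderCh_GanzraumLokalisierungVariableCoeff} and \ref{SchauderCh_HalbraumBCLokalisierungVariableCoeff}: on the highest-order part of each operator one writes $A^\alpha-A^\alpha(p_0)$ (respectively $c^\alpha-c^\alpha(p_0)$, $b^\alpha-b^\alpha(p_0)$), whose $L^\infty$-norm on $U_{\rho+}^+(p_0)$ is controlled by $[\cdot]_\gamma\cdot \rho^\gamma$, so that Assumptions \eqref{SchauderCh_VariableCoeff_Assumption01}--\eqref{SchauderCh_VariableCoeff_Assumption05} combined with $\rho\leq\nu R$ produce the contraction factor $R^{-\gamma}\rho^\gamma\leq\nu^\gamma$; the resulting lower-order terms are tamed by the interpolation estimates in Theorem \ref{SchauderCh_InterpolationEstimates} and Lemma \ref{SchauderCh_TemporalInterpolationLemma}, which contribute a factor $\epsilon$ in front of $[D^{4,1}u]^{(0)}$ at the cost of $C(\epsilon)\rho^{-4-\gamma}\|u\|_{L^\infty}$. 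Altogether (reusing the analogues of Estimates \eqref{SchauderCh_ganzraumVariableCoeff_100}, \eqref{SchauderChBT1VariableCoef03EndResult}, \eqref{SchauderChBT2VariableCoef03} verbatim on the truncated ball $U_{\rho+}^+(p_0)$):
\begin{align*}
[D^{4,1}u]^{(0)}_{\gamma,U_{\frac\rho2+}^+(p_0)}
\leq\ & C(\epsilon)\Big([Pu]^{(0)}_{\gamma,U_{\rho+}^+(p_0)}+[B_1u]^{(3)}_{\gamma,\partial_S U_{\rho+}^+(p_0)}+[B_2u]^{(1)}_{\gamma,\partial_S U_{\rho+}^+(p_0)}\\
&\hspace{1.3cm}+[\nabla^4u_0]_{\gamma,\partial_F U_{\rho+}^+(p_0)}+\rho^{-4-\gamma}\|u\|_{L^\infty(U_{\rho+}^+(p_0))}\Big)\\
&+CM(\epsilon+\nu^\gamma)[D^{4,1}u]^{(0)}_{\gamma,U_{\rho+}^+(p_0)}.
\end{align*}

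Choose $\epsilon$ and $\nu$ so small that $CM(\epsilon+\nu^\gamma)$ undercuts the threshold $\delta(n,1/2)$ of Simon's Absorption Lemma \ref{SchauderCh_SimonsCoveringLemma}, which is then applied with $A:=\mathbb R^{n-1}\times[0,\infty)\times[0,\infty)$, $\theta:=1/2$, $k:=4+\gamma$, and the monotone, subadditive quantity $S(\Omega):=[D^{4,1}u]^{(0)}_{\gamma,\Omega}$ (subadditivity on truncated parabolic balls is Proposition \ref{SchauderCh_SemiNormSubbadditiveonConvexSpaces_Parabolic}). This yields the first claimed estimate; the second follows immediately from Theorem \ref{SchauderCh_InterpolationEstimates}. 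The only real point of care -- and thus the main ``obstacle'' -- is purely notational: keeping track of the fact that the new contribution $[\nabla^4 u_0]_{\gamma,\partial_F U_{\rho+}^+(p_0)}$ comes from the initial datum and is therefore entirely independent of the frozen operator, so that it simply passes through the coefficient-freezing step and is bounded by its counterpart on $\partial_F U_{R+}^+(q)$ via monotonicity. No new analytic difficulty arises because both Lemma \ref{SchauderCh_ObenHalbraumBCLokalisierungFixeCoeff} and Simon's Absorption Lemma already accommodate the simultaneous spatial and temporal truncation.
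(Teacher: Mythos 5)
Your proof is correct and takes essentially the same route as the paper, which dispatches this lemma in one sentence by instructing the reader to repeat the proof of Lemma \ref{SchauderCh_HalbraumUntenBCLokalisierungVariableCoeff} with Lemma \ref{SchauderCh_UntenHalbraumBCLokalisierungFixeCoeff} replaced by Lemma \ref{SchauderCh_ObenHalbraumBCLokalisierungFixeCoeff}. Your write-up simply makes explicit the freezing-of-coefficients step, the reuse of Estimates \eqref{SchauderCh_ganzraumVariableCoeff_100}, \eqref{SchauderChBT1VariableCoef03EndResult}, \eqref{SchauderChBT2VariableCoef03} on $U_{\rho+}^+(p_0)$, the inert passage of the $[\nabla^4 u_0]_{\gamma,\partial_F U_{\rho+}^+(p_0)}$ term through the coefficient-perturbation, and the final invocation of Simon's Absorption Lemma with $A=\mathbb{R}^{n-1}\times[0,\infty)\times[0,\infty)$, which is exactly what the paper's terse remark intends.
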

 \begin{tcolorbox}[colback=white!20!white,colframe=black!100!white,sharp corners, breakable]
If $\partial_S U_{R+}^+=\emptyset$ or $\partial_F U_{R+}^+=\emptyset$, the estimate holds with $[B_1 u]^{(3)}_{\gamma,\partial_S U_{R+}^+}
 =[B_2 u]^{(1)}_{\gamma,\partial_S U_{R+}^+}=0$ or $[\nabla^4 u_0]_{\gamma,\partial_F U_{R+}^ +}=0$ respectively.
 \end{tcolorbox}

\section{Covering Arguments}
In this section, we provide suitable covering arguments that allow us to extend the results from Section \ref{SchauderCh_LocalizedEstimatesSection} to a more general setting. 

\begin{lemma}\label{SchauderCh_InteriorCoveringLemma}
Let $V\subset\subset  V'\subset\R^n$ be open and bounded sets, $T_0>0$ and $T\geq T_0$. There exists $\rho>0$, $x_1,...,x_N\in V$ and $0=t_0<t_1<...<t_{k_0-1}<t_{k_0}=T$ such that the sets 
$$U_{jk,r}:=U_r(x_j, t_k)\cap (\R^n\times[0,T])
\hspace{.5cm}\textrm{where $1\leq j\leq N$, $0\leq k\leq k_0$ and $r>0$}$$
have the following properties:
\begin{enumerate}[(1)]
    \item For all $1\leq j\leq N$ and $0\leq k\leq k_0$ we have $U_{jk,\rho}\subset U_{jk,2\rho}\subset V'\times[0,T]$.
    \item Each $U_{jk, 2\rho}$ is of one of the following forms: $U_{2\rho}(x_j, t_k)$, $U^+_{2\rho}(x_j, t_k)$ or $U_{2\rho}^-(x_j, t_k)$.
    \item $V\times[0,T]\subset \bigcup_{j=1}^N\bigcup_{k=0}^{k_0}U_{jk,\rho}$.
\end{enumerate}
The parameters $N$ and $\rho$ depend only on the sets $V$, $ V'$ and the parameter $T_0$ but not on $T$. Moreover, for a function $\varphi\in C^{0,0,\gamma}( V'\times[0,T])$ we have 
\begin{align*} 
&[\varphi]^{\operatorname{space}}_{\gamma, V}\leq N\max_{j,k}[\varphi]^{\operatorname{space}}_{\gamma, U_{jk,\rho}}+C(\rho,\gamma)\max_{j,k}\|\varphi\|_{L^\infty(U_{jk,\rho})},\\
&[\varphi]^{\operatorname{time}}_{\frac\gamma4, V}\leq 4\max_{j,k}[\varphi]^{\operatorname{time}}_{\frac\gamma4, U_{jk,\rho}}+C(\rho,\gamma)\max_{j,k}\|\varphi\|_{L^\infty(U_{jk,\rho})}.
\end{align*}
\end{lemma}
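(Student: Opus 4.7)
The plan is to make an explicit choice of $\rho$ together with lattices $\{x_j\}\subset V$ and $\{t_k\}\subset[0,T]$ so that properties (1)–(3) hold by construction, and then to derive the two seminorm estimates by a Lebesgue-number dichotomy.

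The key observation is that $U_{2\rho}(x_j,t_k)\cap(\R^n\times[0,T])$ fails to be one of the three prescribed shapes exactly when the time interval is doubly truncated, i.e.\ when $T-(2\rho)^4<t_k<(2\rho)^4$; this range is empty as soon as $T\ge 32\rho^4$. I would therefore set
\[
\rho:=\min\!\Bigl(\tfrac{1}{2}\operatorname{dist}(\bar V,\R^n\setminus V'),\,(T_0/32)^{1/4}\Bigr),
\]
depending only on $V,V',T_0$. The first factor yields $B_{2\rho}(x_j)\subset V'$ for every $x_j\in\bar V$, securing (1), while the second factor yields $32\rho^4\le T_0\le T$, which by the observation above secures (2). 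Using compactness of $\bar V$, I would pick $x_1,\dots,x_N\in V$ with $\bar V\subset\bigcup_j B_\rho(x_j)$, so that $N$ depends only on $V,V'$. For the time grid I would set $k_0:=\lceil T/\rho^4\rceil$ and $t_k:=(k/k_0)T$, so $t_0=0$, $t_{k_0}=T$ and consecutive spacing lies in $[\rho^4/2,\rho^4]$; property (3) is then immediate, since the spatial balls cover $V$ and the intervals $(t_k-\rho^4,t_k+\rho^4)\cap[0,T]$ cover $[0,T]$. Only $k_0$ depends on $T$; neither $N$ nor $\rho$ does.

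For the spatial seminorm I would use a large/small split. Let $\delta\in(0,\rho]$ be the Lebesgue number of the spatial cover $\{B_\rho(x_j)\}$ of $\bar V$. Given distinct $x,y\in V$ and $t\in[0,T]$, pick $(j,k)$ with $(x,t)\in U_{jk,\rho}$: if $|x-y|\ge\delta$, bound the difference quotient crudely by $2\delta^{-\gamma}\|\varphi\|_{L^\infty(U_{jk,\rho})}$, and if $|x-y|<\delta$ then $y\in B_\rho(x_j)$ as well, so both $(x,t),(y,t)\in U_{jk,\rho}$ and the quotient is at most $[\varphi]^{\operatorname{space}}_{\gamma,U_{jk,\rho}}$. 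Taking suprema on the left and maxima over $(j,k)$ on the right yields the first inequality, with $\delta^{-\gamma}$ absorbed into $C(\rho,\gamma)$ and $N$ a loose overestimate (the sharp constant is $1$). The temporal estimate is entirely analogous, using that the time grid with spacing $\le\rho^4$ has Lebesgue number $\rho^4$: for any $s,t\in[0,T]$ with $|t-s|<\rho^4$, the index $k$ closest to $(s+t)/2$ satisfies $|t_k-s|,|t_k-t|<\rho^4$, so $s,t\in(t_k-\rho^4,t_k+\rho^4)$; the $4$ is again a loose overestimate.

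The only mildly delicate step is the Lebesgue-number argument in the spatial case, since $V$ need not be convex; but it follows directly from the Lebesgue number lemma applied to the finite open cover $\{B_\rho(x_j)\}$ of the compact set $\bar V$, and the resulting $\delta=\delta(V,V')$ can be absorbed into the constant $C(\rho,\gamma)$ since $\rho$ itself is determined by $V,V',T_0$.
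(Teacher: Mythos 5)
Your proposal is correct and takes a genuinely different route in the seminorm estimates. The paper covers $\bar V$ by $B_\rho(x_j)$, uses $t_k=k\rho^4$ with a special last interval $t_{k_0}=T$, and then invokes Lemma \ref{SchauderCh_SemiNormSubbadditiveonConvexSpaces} (subadditivity of Hölder seminorms over covers by convex sets) on $B_{2\rho}(x_1)$ and on $\bigcup_{a=k}^l I_a$; this is where the constants $N$ and $4$ come from. You instead use a uniform time grid (so the grid's Lebesgue number is exactly the spacing) and the Lebesgue number of the spatial cover, obtaining the sharper factor $1$ in both cases; since the claimed bound is only $\leq N\max(\dots)+\dots$ and $\leq 4\max(\dots)+\dots$, this is more than enough. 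Your threshold $T\ge 32\rho^4$ is also sharper than the paper's $T_0>1000\rho^4$, and your observation that (2) fails exactly in the doubly-truncated case pins down why any such threshold suffices. Two small points to tighten: in the spatial case you fix $j$ (with $x\in B_\rho(x_j)$) \emph{before} invoking the Lebesgue number, but the Lebesgue number only gives $\{x,y\}\subset B_\rho(x_{j'})$ for some possibly different $j'$, so one must re-select $j$ after seeing $y$ (the intended argument is clear, but as written "$y\in B_\rho(x_j)$ as well" does not follow); and the constant $\delta^{-\gamma}$ you produce depends on the Lebesgue number $\delta(V,V',T_0)$ rather than being an explicit function of $\rho$ alone, which is harmless since $\rho$ is itself a function of $(V,V',T_0)$, but it means the constant is more honestly $C(V,V',T_0,\gamma)$ than a closed formula in $\rho$ and $\gamma$ — the paper's argument, using only the crude dichotomy $|x-y|\gtrless\rho$, keeps the constant as a literal function of $\rho$.
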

\begin{proof}
Let $\rho\in(0,1)$ be a fixed number such that $1000\rho^4<T_0$ and such that for all $x\in \bar V$ we have $B_{2\rho}(x)\subset V'$.
By compactness we can select finitely many points $x_j\in V$ with $1\leq j\leq N(V, V', \rho(V,V',T_0))$ such that $V\subset \bigcup_{j=1}^N B_{\rho}(x_j)$.

\paragraph{Definition of the Covering}\ \\
Let $k_0\in\N$ be the unique natural number that satisfies $k_0\rho^4 \leq T< (k_0+1)\rho^4$. By definition of $\rho$, we have $k_0\geq 1000$. We define $t_{k_0}:=T$ and $t_k:= k\rho^4$ for all $0\leq k\leq k_0-1$. Clearly $0=t_0<t_1<...<t_{k_0}=T$. For $1\leq k\leq k_0-1$, we define
\begin{align*}
    I_0&:=(t_0-\rho^4,t_0+\rho^4)\cap[0,T]=[0,\rho^4),\\
    I_{k_0}&:=(t_{k_0}-\rho^4,t_{k_0}+\rho^4)\cap[0,T]=(T-\rho^4, T],\\
    I_{k}&:=(t_{k}-\rho^4,t_{k}+\rho^4)\cap[0,T]=((k-1)\rho^4,(k+1)\rho^4).
\end{align*}
By definition of $k_0$, we have $[0,T]=I_0\cup...\cup I_{k_0}$.
For $1\leq j\leq N$, $0\leq k\leq k_0$ and $r=\rho,2\rho$ we define
\begin{align*} 
U_{jk,r}&:=U_r(x_j, t_k)\cap(\R^ n\times[0,T])=B_r(x_j)\times (t_k-r^4, t_k+r^4).
\end{align*}
By construction, the sets $U_{jk,\rho}$ cover $V\times[0,T]$ and $U_{jk,2\rho}\subset V'\times[0,T]$. Finally, since $k_0\geq 1000$, $U_{jk,2\rho}$ cannot have a nonempty intersection with $\R^n\times 0$ and $\R^n\times T$. So, sets $U_{jk,r}$ satisfy the claimed properties.

\paragraph{Spatial Hölder Norm}\ \\
Let $x\neq y\in V$ and $t\in[0,T]$. Let $0\leq i\leq k_0$ such that $t\in I_i$. Further, we assume without loss of generality that $x\in B_{\rho}(x_1)$. If $y\not\in B_{2\rho}(x_1)$, then $|x-y|\geq\rho$ and hence 
\begin{equation}\label{SchauderCh_CoveringLemmaFullSpace1}
\frac{|\varphi(x,t)-\varphi(y,t)|}{|x-y|^\gamma}\leq 
\frac{1}{\rho^\gamma}(|\varphi(x,t)|+|\varphi(y,t)|)\leq 
\frac{2}{\rho^\gamma}\max_{j,k}\|\varphi\|_{L^\infty(U_{jk,\rho})}.
\end{equation}
If $y\in B_{2\rho}(x_1)$, we use Lemma \ref{SchauderCh_SemiNormSubbadditiveonConvexSpaces} Indeed $B_{2\rho}(x_1)$ is convex and covered by the convex sets $B_{2\rho}(x_1)\cap B_{\rho}(x_j)$ with $1\leq j\leq N$. So, applying Lemma \ref{SchauderCh_SemiNormSubbadditiveonConvexSpaces} to the function $\varphi(\cdot, t)$ gives 
\begin{equation}\label{SchauderCh_CoveringLemmaFullSpace2}
\frac{|\varphi(x,t)-\varphi(y,t)|}{|x-y|^\gamma}
\leq
\sum_{j=1}^N[\varphi(\cdot,t)]_{\gamma, B_{\rho}(x_j)}
\leq
N\max_{j,k}[\varphi(\cdot,t)]_{\gamma, B_{\rho}(x_j)}
\leq
N\max_{j,k}[\varphi]_{\gamma,U_{jk,\rho}}^{\operatorname{space}}.
\end{equation}
Combining Estimates \eqref{SchauderCh_CoveringLemmaFullSpace1} and \eqref{SchauderCh_CoveringLemmaFullSpace2}, we obtain the claimed estimate.

\paragraph{Temporal seminorm}\ \\
Let $x\in V$ and $0\leq t\neq s\leq T$. Without loss of generality we assume that $x\in B_{\rho}(x_1)$ and $t<s$. First, we assume that $|t-s|\geq \rho^4$. Then 
\begin{equation}\label{SchauderCh_CoveringLemmaFullSpace3}
\frac{|\varphi(x,t)-\varphi(x,s)|}{|t-s|^{\frac\gamma4}}\leq \frac{1}{\rho^{\gamma}}(|\varphi(x,t)|+|\varphi(x,s)|)\leq \frac{2}{\rho^\gamma} \max_{j,k}\|\varphi\|_{L^\infty(U_{jk,\rho})}.
\end{equation}
In the last step, we used that the sets $U_{jk,\rho}$ cover $V\times[0,T]$. 
Now we consider the case $|t-s|<\rho^4$. Let $0\leq k\leq k_0$ be the largest $k$ such that $t\in I_k$ and $l\geq k$ be the smallest $l\geq k$ such that $s\in I_l$. Note that the convex set $\bigcup_{a=k}^lI_a$ is covered by the convex sets $I_a$ with $k\leq a\leq l$. So, applying Lemma \ref{SchauderCh_SemiNormSubbadditiveonConvexSpaces} to the function $\varphi(x,\cdot)$, we obtain 
\begin{equation}\label{SchauderCh_CoveringLemmaFullSpace4}
\frac{|\varphi(x,t)-\varphi(x,s)|}{|t-s|^{\frac\gamma4}}\leq
[\varphi(x,\cdot)]_{\frac\gamma4,\bigcup_{a=k}^l I_a}\leq
\sum_{a=k}^l[\varphi(x,\cdot)]_{\frac\gamma4,I_a}\leq 
(l-k+1)\max_{j,k}[\varphi]_{\frac\gamma4, U_{jk,\rho}}^{\operatorname{time}}.
\end{equation}
If $k\leq k_0-4$, then $t\in I_k$ implies $s\in I_{k}\cup I_{k+1}\cup I_{k+2}$, since $\sup I_k=(k+1)\rho^4$ and $\inf I_{k+3}=(k+2)\rho^4$. So $l-k+1\leq 3$. If $k\geq k_0-3$, then $l\in\set{k_0-3,k_0-2,k_0-1,k_0}$ and so $l-k+1\leq 4$. 
Combining Estimates \eqref{SchauderCh_CoveringLemmaFullSpace3}, \eqref{SchauderCh_CoveringLemmaFullSpace4} and $l-k+1\leq 4$ we deduce the claimed estimate.
\end{proof}

\begin{bemerkung}[On the Lower Bound $T_0$]
    In the proof of Lemma \ref{SchauderCh_InteriorCoveringLemma}, we have used the lower bound $T_0$ to ensure that $k_0\geq 1000$. This is because we need to avoid the situation where a parabolic ball has nonempty intersection with $\R^n\times 0$ and $\R^n\times T$. 
\end{bemerkung}

We can repeat the same arguments to obtain a good covering for domains in $\R^{n-1}\times[0,\infty)$. 

\begin{lemma}\label{SchauderCh_BoundaryCoveringLemma}
Let $V\subset\subset  V'\subset\R^{n-1}\times[0,\infty)$ be open and bounded sets such that $V\cap(\R^{n-1}\times 0)\neq\emptyset$, $T_0>0$ and $T\geq T_0$. There exists $\rho>0$, $x_1,...,x_N$ and $0=t_0<t_1<...<t_{k_0}=T$ such that the sets 
$$U_{jk,r}:=U_r(x_j, t_k)\cap(\R^ {n-1}\times[0,\infty)\times[0,T])\hspace{.5cm}\textrm{where $1\leq j\leq N$, $0\leq k\leq k_0$ and $r>0$}$$
have the following properties:
\begin{enumerate}[(1)]
    \item For all $1\leq j\leq N$ and $0\leq k\leq k_0$ we have $U_{jk,\rho}\subset U_{jk,2\rho}\subset V'\times[0,T]$.
    \item Each $U_{jk, 2\rho}$ is of one of the following forms: $U_{2\rho}(x_j, t_k)$, $U^+_{2\rho}(x_j, t_k)$, $U_{2\rho}^-(x_j, t_k)$, $U_{2\rho+}(x_j, t_k)$, $U^+_{2\rho+}(x_j, t_k)$ or $U_{2\rho+}^-(x_j, t_k)$.
    \item $V\times[0,T]\subset \bigcup_{j=1}^N\bigcup_{k=0}^{k_0}U_{jk,\rho}$.
\end{enumerate}
The parameters $N$ and $\rho$ depend only on the sets $V$ and $ V'$ and the parameter $T_0$ but not on $T$. Moreover, for a function $\varphi\in C^{0,0,\gamma}( V'\times[0,T])$ we have 
\begin{align*} 
&[\varphi]^{\operatorname{space}}_{\gamma, V}\leq N\max_{j,k}[\varphi]^{\operatorname{space}}_{\gamma, U_{jk,\rho}}+C(\rho,\gamma)\max_{j,k}\|\varphi\|_{L^\infty(U_{jk,\rho})},\\
&[\varphi]^{\operatorname{time}}_{\frac\gamma4, V}\leq 4\max_{j,k}[\varphi]^{\operatorname{time}}_{\frac\gamma4, U_{jk,\rho}}+C(\rho,\gamma)\max_{j,k}\|\varphi\|_{L^\infty(U_{jk,\rho})}.
\end{align*}
\end{lemma}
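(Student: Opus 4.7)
\textit{Plan of Proof.}
The strategy is to mimic the proof of Lemma \ref{SchauderCh_InteriorCoveringLemma} almost verbatim, making only the modifications needed to accommodate a spatial boundary. First, I would choose $\rho\in(0,1)$ small enough that $1000\rho^4<T_0$ and such that $B_{2\rho}(x)\cap(\R^{n-1}\times[0,\infty))\subset V'$ for every $x\in\bar V$ (closure in the relative topology of $\R^{n-1}\times[0,\infty)$). By compactness of $\bar V$, there exist finitely many points $x_1,\dots,x_N\in V$, with $N=N(V,V',\rho)$, such that the sets $B_\rho(x_j)\cap(\R^{n-1}\times[0,\infty))$ cover $V$. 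Crucially, \emph{some} of the $x_j$ may lie in $\R^{n-1}\times0$, since $V\cap(\R^{n-1}\times0)\neq\emptyset$; this is precisely where the new boundary shapes $U_{2\rho+}$, $U_{2\rho+}^\pm$ will arise. The temporal grid $0=t_0<t_1<\dots<t_{k_0}=T$ and the intervals $I_k$ are defined exactly as in the proof of Lemma \ref{SchauderCh_InteriorCoveringLemma}, using $k_0\rho^4\leq T<(k_0+1)\rho^4$, and the lower bound $1000\rho^4<T_0$ again guarantees $k_0\geq 1000$.

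With these choices, I set $U_{jk,r}:=U_r(x_j,t_k)\cap(\R^{n-1}\times[0,\infty)\times[0,T])$. Verifying properties (1) and (3) is identical to the earlier proof. For (2) I would go through the six cases arising from whether $x_j$ lies in the interior of $\R^{n-1}\times[0,\infty)$ or on $\R^{n-1}\times0$, combined with whether $t_k\in\{0\}$, $t_k\in(0,T)$, or $t_k=T$; the condition $k_0\geq1000$ ensures that no single $U_{jk,2\rho}$ simultaneously touches both $\R^n\times0$ and $\R^n\times T$, so exactly one of the six listed forms applies. Note that $N$ and $\rho$ depend only on $V$, $V'$ and $T_0$, since $k_0$ only affects how many time slabs are stacked, not the shape or number of spatial balls.

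For the spatial seminorm estimate, fix $x\neq y\in V$ and $t\in[0,T]$; without loss of generality $x\in B_\rho(x_1)$. If $y\notin B_{2\rho}(x_1)$, then $|x-y|\geq\rho$ and I obtain the trivial bound $\tfrac{2}{\rho^\gamma}\max_{j,k}\|\varphi\|_{L^\infty(U_{jk,\rho})}$. Otherwise $y\in B_{2\rho}(x_1)\cap(\R^{n-1}\times[0,\infty))$, and this set is convex because it is the intersection of two convex sets; it is covered by the convex sets $B_{2\rho}(x_1)\cap B_\rho(x_j)\cap(\R^{n-1}\times[0,\infty))$ for $1\leq j\leq N$. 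Applying Lemma \ref{SchauderCh_SemiNormSubbadditiveonConvexSpaces} to $\varphi(\cdot,t)$ on these convex sets yields the bound $N\max_{j,k}[\varphi]^{\operatorname{space}}_{\gamma,U_{jk,\rho}}$, and combining the two cases gives the claim. The temporal seminorm estimate is proven exactly as in Lemma \ref{SchauderCh_InteriorCoveringLemma}: the only ingredient used there is the convexity of intervals $I_k\subset[0,T]$ and the fact that any two neighbouring intervals overlap, both of which are unchanged in the present setting.

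The only genuinely new ingredient compared to Lemma \ref{SchauderCh_InteriorCoveringLemma} is the bookkeeping in step (2) and the observation that intersections of open balls with the half-space remain convex, which is what makes Lemma \ref{SchauderCh_SemiNormSubbadditiveonConvexSpaces} applicable. I do not anticipate any genuine obstacle; the proof is a straightforward adaptation of the interior case, and the main care lies in enumerating the six boundary shapes correctly.
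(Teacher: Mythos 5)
Your proposal is correct and takes the same approach as the paper, which itself introduces this lemma with the remark that one can "repeat the same arguments" from the interior covering Lemma~\ref{SchauderCh_InteriorCoveringLemma}. You correctly identify the one genuinely new observation — that the intersection of a ball with the half-space $\R^{n-1}\times[0,\infty)$ is again convex, so Lemma~\ref{SchauderCh_SemiNormSubbadditiveonConvexSpaces} remains applicable — and the rest (the temporal grid, $k_0\geq 1000$, the case split for the spatial Hölder quotient, and the unchanged temporal estimate) is carried over verbatim. One small imprecision worth noting: the six forms in property (2) are not determined by whether $x_j$ lies \emph{on} $\R^{n-1}\times 0$ or whether $t_k\in\{0,T\}$, but rather by whether $B_{2\rho}(x_j)$ meets the spatial boundary and whether the interval $(t_k-(2\rho)^4,\,t_k+(2\rho)^4)$ sticks out below $0$ or above $T$; for instance $t_1=\rho^4>0$ still yields a set of the form $U^+_{2\rho}(x_j,t_1)$ because $(2\rho)^4>\rho^4$. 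The $k_0\geq 1000$ bound is what rules out the ball touching both temporal faces simultaneously, and you do invoke it correctly for that purpose.
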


Using Lemmas \ref{SchauderCh_InteriorCoveringLemma} and \ref{SchauderCh_BoundaryCoveringLemma}, we can now extend the estimated from Section \ref{SchauderCh_LocalizedEstimatesSection} to general domains. 

\begin{korollar}\label{SchauderCh_InteriorSchauderEstimateCorollary}
Let $V\subset\subset V'\subset\R^ n$ be open and bounded sets, $T_0>0$, $T\geq T_0$ and $\Lambda,\theta>0$. There exists a constant $C= C(V,V', T_0,\gamma, \Lambda,\theta)$ such that for all 
$$P:=\partial_t+a^{ij}a^{kl}\partial_{ijkl}+\sum_{|\alpha|\leq 3}A^\alpha\nabla_\alpha $$
with coefficients $a^{ij},A^\alpha\in C^{0,0,\gamma}(V'\times[0,T])$ satisfying 
\begin{align}
    &a^{ij}(x,t)\xi_i\xi_j\geq\theta|\xi|^2\hspace{.5cm}\textrm{for all $(x,t)\in V'\times[0,T]$ and $\xi\in\R^n$},\label{SchauderCh_interiorgloablieAss01}\\
    &\|a^{ij}\|_{C^{0,0,\gamma}(V'\times[0,T])},\ \|A^\alpha\|_{C^{0,0,\gamma}(V'\times[0,T])}\leq \Lambda,\label{SchauderCh_interiorgloablieAss02}
\end{align}
and all $u\in C^ {4,1,\gamma}(V'\times[0,T])$ 
$$\|u\|_{C^ {4,1,\gamma}(V\times[0,T])}\leq C\left(\|Pu\|_{C^ {0,0,\gamma}(V'\times[0,T])}+\|u(\cdot,0)\|_{C^ {4,\gamma}(V')}+\|u\|_{L^ \infty(V'\times[0,T])}\right).$$
\end{korollar}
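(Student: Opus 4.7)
The proof is a covering argument that reduces the global estimate to the localized variable-coefficient estimates already at our disposal. First I would invoke Lemma~\ref{SchauderCh_InteriorCoveringLemma} with data $V, V', T_0$ to obtain a radius $\rho = \rho(V, V', T_0) > 0$, a number $N = N(V, V', T_0)$ of spatial centers $x_1, \ldots, x_N$, and time markers $0 = t_0 < t_1 < \cdots < t_{k_0} = T$ so that the parabolic balls $U_{jk, \rho}$ cover $V \times [0,T]$, every doubled ball $U_{jk, 2\rho}$ lies inside $V' \times [0,T]$, and each $U_{jk, 2\rho}$ is of exactly one of the three admissible shapes $U_{2\rho}(x_j, t_k)$, $U_{2\rho}^+(x_j, t_k)$, or $U_{2\rho}^-(x_j, t_k)$. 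These are precisely the three shapes handled by Lemmas~\ref{SchauderCh_GanzraumLokalisierungVariableCoeff}, \ref{SchauderCh_HalbraumObenLokalisierungVariableCoeff}, and \ref{SchauderCh_HalbraumUntenLokalisierungVariableCoeff} respectively.

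Next I would apply the matching localized estimate on each doubled ball. For balls hitting $t = 0$, the extra seminorm $[\nabla^4 u(\cdot,0)]_{\gamma, \partial_F U_{jk, 2\rho}^+}$ is controlled by $\|u(\cdot,0)\|_{C^{4,\gamma}(V')}$ since $\partial_F U_{jk, 2\rho}^+ \subset V'$. The scaling assumption~\eqref{SchauderCh_VariableCoeff_Assumption01} required by the variable-coefficient lemmas is met automatically: because $R = 2\rho$ is a positive constant depending only on $V, V', T_0$, the hypothesis $\|a^{ij}\|_{C^{0,0,\gamma}}, \|A^\alpha\|_{C^{0,0,\gamma}} \leq \Lambda$ trivially yields~\eqref{SchauderCh_VariableCoeff_Assumption01} with some $M = M(\Lambda, \rho)$. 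Each localized lemma then delivers
\[
\|u\|_{C^{4,1,\gamma}(U_{jk, \rho})} \leq C \bigl(\|Pu\|_{C^{0,0,\gamma}(U_{jk, 2\rho})} + \|u(\cdot,0)\|_{C^{4,\gamma}(V')} + \|u\|_{L^\infty(U_{jk, 2\rho})}\bigr)
\]
with a constant $C = C(V, V', T_0, \gamma, \Lambda, \theta)$ independent of $(j,k)$ and, crucially, of $T$.

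Finally I would reassemble these local bounds into the global $C^{4,1,\gamma}(V \times [0,T])$-norm. The sup-norms $\|\partial_t^j \nabla^k u\|_{L^\infty(V \times [0,T])}$ for $4j + k \leq 4$ are bounded by the maximum over $(j,k)$ of the corresponding local sup-norms, since the $U_{jk, \rho}$ cover $V \times [0,T]$; because $U_{jk, 2\rho} \subset V' \times [0,T]$, these maxima in turn are absorbed into the right-hand side of the claimed estimate. For the spatial and temporal Hölder seminorms appearing in $[u]^{(4)}_{\gamma, V \times [0,T]}$, I would apply the seminorm subadditivity inequalities from Lemma~\ref{SchauderCh_InteriorCoveringLemma} to each relevant derivative $\varphi = \partial_t^j \nabla^k u$. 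The decisive feature is that the multipliers $N$ and $4$ in those inequalities depend only on $V, V', T_0$ and not on $T$. The only real obstacle is precisely this $T$-independence of the constant; once it is guaranteed by Lemma~\ref{SchauderCh_InteriorCoveringLemma}, the remainder is careful bookkeeping rather than any new analytic input.
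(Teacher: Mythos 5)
Your proposal follows exactly the same strategy as the paper's proof: invoke Lemma \ref{SchauderCh_InteriorCoveringLemma} to get a covering of $V\times[0,T]$ by truncated parabolic balls $U_{jk,\rho}$ with doubled balls $U_{jk,2\rho}\subset V'\times[0,T]$ of the three admissible shapes, apply the corresponding localized variable-coefficient estimate (Lemma \ref{SchauderCh_GanzraumLokalisierungVariableCoeff}, \ref{SchauderCh_HalbraumObenLokalisierungVariableCoeff} or \ref{SchauderCh_HalbraumUntenLokalisierungVariableCoeff}) on each doubled ball, and reassemble the sup-norms by taking maxima and the H\"older seminorms via the subadditivity estimates of Lemma \ref{SchauderCh_InteriorCoveringLemma}, with $T$-independence of the constant coming from the $T$-independence of $\rho$ and $N$. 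The proof is correct and matches the paper's argument, including the observation that the constant $M$ in the hypotheses of the localized lemmas may be taken as $M(\Lambda,\rho)$ since $R=2\rho$ is fixed.
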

 \begin{tcolorbox}[colback=white!20!white,colframe=black!100!white,sharp corners, breakable]
The important observation in Lemma \ref{SchauderCh_InteriorSchauderEstimateCorollary} lies in the fact that the constant $C(V,V',T_0,\gamma, \Lambda,\theta)$ does not depend on $T$.
\end{tcolorbox}
\begin{proof}
Throughout the proof, we suppress the dependence of constants on $V$, $V'$, $T_0$, $\gamma$, $\Lambda$ and $\theta$. We choose $\rho>0$ and $U_{jk,r}$ as in Lemma \ref{SchauderCh_InteriorCoveringLemma}. Each $U_{jk,2\rho}$ is of the form $U_{2\rho}(q)$, $U_{2\rho}^+(q)$ or $U_{2\rho}^-(q)$ for some $q$. Depending on which is the case we can use Lemmas \ref{SchauderCh_GanzraumLokalisierungVariableCoeff}, \ref{SchauderCh_HalbraumObenLokalisierungVariableCoeff} or \ref{SchauderCh_HalbraumUntenLokalisierungVariableCoeff} to  get the following estimates for all $1\leq j\leq N$ and $0\leq k\leq k_0$:
\begin{align}
   \|u\|_{C^ {4,1,\gamma}(U_{jk,\rho})}
    \leq &C\left(\|Pu\|_{C^ {0,0,\gamma}(U_{jk,2\rho})}+\|u(\cdot,0)\|_{C^ {4,\gamma}(\partial_F U_{jk,2\rho})}+\|u\|_{L^ \infty(U_{jk,2\rho})}\right)\label{SchauderCh_CoverInteriorBottom}
\end{align}
Here we have condensed notation by putting $\|u(\cdot,0)\|_{C^ {4,\gamma}(\partial_F U_{jk,2\rho})}:=0$, if $\partial_F U_{jk,2\rho}=\emptyset$. First, as $U_{jk,\rho}$ cover $V\times[0,T]$ and $U_{jk,2\rho}\subset V'\times[0,T]$ we have the estimate 
\begin{align} 
\|u\|_{C^ {4,1}(V\times[0,T])}\leq &\max_{j,k}\|u\|_{C^ {4,1}(U_{jk,\rho})}\nonumber\\
&\hspace{-2cm}\leq
C(\rho)\left(\|Pu \|_{C^ {0,0,\gamma}(V'\times[0,T])}+\|u(\cdot,0)\|_{C^ {4,\gamma}(V')}+\|u\|_{L^ \infty(V'\times[0,T])}\right).\label{SchauderCh_CoveringInteriorResult1}
\end{align}
To estimate the Hölder seminorms, we use the estimates for the spatial and temporal seminorms from Lemma \ref{SchauderCh_InteriorCoveringLemma}. Inserting  Estimate \eqref{SchauderCh_CoverInteriorBottom}, we deduce
\begin{align}
    [D^ {4,1} u]^{(0)}_{\gamma,V\times[0,T]} &\leq C(\rho)\left(\max_{j,k}[D^{4,1} u]^{(0)}_{\gamma, U_{jk, \rho}}+\max_{j,k}\|u\|_{L^\infty(U_{jk,\rho})}\right)\nonumber\\
  &\hspace{-2cm} \leq  C(\rho)\left(\|Pu\|_{C^ {0,0,\gamma}(V'\times[0,T])}+\|u(\cdot,0)\|_{C^ {4,\gamma}(V')}+\|u\|_{L^ \infty(V'\times[0,T])}\right).\label{SchauderCh_CoveringInteriorResult2}
\end{align} 
The corollary follows by combining Estimates \eqref{SchauderCh_CoveringInteriorResult1} and \eqref{SchauderCh_CoveringInteriorResult2} and recalling that $\rho$ from Lemma \ref{SchauderCh_InteriorCoveringLemma} only depends on $V$, $V'$ and $T_0$.
\end{proof}

We establish the analogue fact for $V\subset\subset V'\subset \R^{n-1}\times[0,\infty)$

\begin{korollar}\label{SchauderCh_BoundarySchauderEstimateCorollary}
Let $V\subset\subset V'\subset\R^{n-1}\times[0,\infty)$ be open and bounded with $V\cap(\R^{n-1}\times 0)\neq\emptyset$, $T_0>0$, $T\geq T_0$ and $\Lambda,\theta>0$. We put $\partial_S V:=V\cap(\R^{n-1}\times 0)$, $\partial_S V':=V'\cap(\R^{n-1}\times 0)$. There exists a constant $C=C(V,V',T_0,\gamma, \Lambda,\theta)$ such that for all
\begin{align*} 
&P:=\partial_t+a^{ij}a^{kl}\partial_{ijkl}+\sum_{|\alpha|\leq 3}A^\alpha\nabla_\alpha \\
&B_1:=a^{ni}\partial_i +\sum_{|\alpha|\leq 0}c^\alpha\nabla_\alpha
\hspace{.5cm}\textrm{and}\hspace{.5cm}
B_2:=a^{ni}a^{kl}\partial_{ikl} +\sum_{|\alpha|\leq2}b^\alpha\nabla_\alpha
\end{align*}
with coefficients $a^{ij},A^\alpha\in C^{0,0,\gamma}(V'\times[0,T])$, $a^{ij}, c^\alpha\in C^{3,0,\gamma}(\partial_S V'\times[0,T])$ and $b^\alpha\in C^{1,0,\gamma}(\partial_S V'\times[0,T])$ satisfying 
\begin{align}
    &a^{ij}(x,t)\xi_i\xi_j\geq\theta|\xi|^2\hspace{.5cm}\textrm{for all $(x,t)\in V'\times[0,T]$ and $\xi\in\R^n$},\label{SchauderCh_BoundarygloablieAss01}\\
    &\|a^{ij}\|_{C^{0,0,\gamma}(V'\times[0,T])},\ \|A^\alpha\|_{C^{0,0,\gamma}(V'\times[0,T])}\leq \Lambda,\label{SchauderCh_BoundarygloablieAss02}\\
    &\|a^{ij}\|_{C^{3,0,\gamma}(\partial_S V'\times[0,T])},
    \ \|c^\alpha\|_{C^{3,0,\gamma}(\partial_S V'\times[0,T])},
    \ \|b^\alpha\|_{C^{1,0,\gamma}(\partial_S V'\times[0,T])}
    \leq \Lambda,\label{SchauderCh_BoundarygloablieAss03}
\end{align}
and all $u\in C^ {4,1,\gamma}(V'\times[0,T])$ 
\begin{align*} 
\|u\|_{C^ {4,1,\gamma}(V\times[0,T])}\leq C&\left(
\|Pu\|_{C^ {0,0,\gamma}(V'\times[0,T])}
+\|u(\cdot,0)\|_{C^ {4,\gamma}(V')}
+\|B_1 u\|_{C^{3,0,\gamma}(\partial_S V'\times[0,T])}\right.\\
&\left.+\|B_2 u\|_{C^{1,0,\gamma}(\partial_S V'\times[0,T])}
+\|u\|_{L^ \infty(V'\times[0,T])}
\right).
\end{align*}
\end{korollar}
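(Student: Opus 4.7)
The plan is to mirror the proof of Corollary \ref{SchauderCh_InteriorSchauderEstimateCorollary}, but replace Lemma \ref{SchauderCh_InteriorCoveringLemma} with the boundary covering Lemma \ref{SchauderCh_BoundaryCoveringLemma} and use all six of the localized variable-coefficient estimates, namely Lemmas \ref{SchauderCh_GanzraumLokalisierungVariableCoeff}, \ref{SchauderCh_HalbraumObenLokalisierungVariableCoeff}, \ref{SchauderCh_HalbraumUntenLokalisierungVariableCoeff}, \ref{SchauderCh_HalbraumBCLokalisierungVariableCoeff}, \ref{SchauderCh_HalbraumUntenBCLokalisierungVariableCoeff} and \ref{SchauderCh_HalbraumObenBCLokalisierungVariableCoeff}, depending on the type of the truncated parabolic ball that appears in the covering. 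Throughout, we will suppress the dependence of constants on $V$, $V'$, $T_0$, $\gamma$, $\Lambda$ and $\theta$.

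First, apply Lemma \ref{SchauderCh_BoundaryCoveringLemma} to obtain $\rho=\rho(V,V',T_0)>0$ and finitely many sets $U_{jk,\rho}\subset U_{jk,2\rho}\subset V'\times[0,T]$ covering $V\times[0,T]$, each $U_{jk,2\rho}$ being of one of the six truncated parabolic ball types. For each such set one of Lemmas \ref{SchauderCh_GanzraumLokalisierungVariableCoeff}--\ref{SchauderCh_HalbraumObenBCLokalisierungVariableCoeff} yields an estimate of the form
\begin{equation*}
\|u\|_{C^{4,1,\gamma}(U_{jk,\rho})}\leq C\left(
\|Pu\|_{C^{0,0,\gamma}(U_{jk,2\rho})}
+\|B_1 u\|_{C^{3,0,\gamma}(\partial_SU_{jk,2\rho})}
+\|B_2 u\|_{C^{1,0,\gamma}(\partial_SU_{jk,2\rho})}
+\|u_0\|_{C^{4,\gamma}(\partial_FU_{jk,2\rho})}
+\|u\|_{L^\infty(U_{jk,2\rho})}
\right),
\end{equation*}
with the convention that absent boundary pieces contribute zero. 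Here the constant $M$ appearing in Assumptions \eqref{SchauderCh_VariableCoeff_Assumption01}--\eqref{SchauderCh_VariableCoeff_Assumption05} can be bounded by $C(\rho,\Lambda)$, because the fixed radius $2\rho$ converts the $C^{0,0,\gamma}$-, $C^{1,0,\gamma}$- and $C^{3,0,\gamma}$-norms of the coefficients, each controlled by $\Lambda$ by hypothesis \eqref{SchauderCh_BoundarygloablieAss02}--\eqref{SchauderCh_BoundarygloablieAss03}, into the weighted inequalities \eqref{SchauderCh_VariableCoeff_Assumption01}--\eqref{SchauderCh_VariableCoeff_Assumption05}. This is the first step to double-check carefully, since it is the mechanism by which the $T$-independence propagates.

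Second, taking the maximum over $j,k$ immediately gives the $C^{4,1}$-part of the target estimate. For the Hölder seminorms, apply the seminorm estimates in Lemma \ref{SchauderCh_BoundaryCoveringLemma} to $\partial_t^j\nabla^k u$ with $4j+k=4$ in the spatial case, and to those temporal derivatives appearing in $[D^{4,1}u]_{\gamma,V\times[0,T]}^{(0)}$. Each gives
\begin{equation*}
[D^{4,1}u]^{(0)}_{\gamma,V\times[0,T]}\leq C(\rho)\left(\max_{j,k}[D^{4,1}u]^{(0)}_{\gamma,U_{jk,\rho}}+\max_{j,k}\|u\|_{L^\infty(U_{jk,\rho})}\right),
\end{equation*}
and inserting the localized estimate from the first step closes the argument exactly as in the proof of Corollary \ref{SchauderCh_InteriorSchauderEstimateCorollary}.

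The main obstacle, conceptually, is bookkeeping rather than analysis: one must verify that each of the six local lemmas can indeed be invoked on the corresponding $U_{jk,2\rho}$, that the constants do not depend on $T$ (this is guaranteed because $\rho$ from Lemma \ref{SchauderCh_BoundaryCoveringLemma} depends only on $V,V',T_0$, and because $M$ depends only on $\Lambda$ and $\rho$), and that the boundary norms $\|B_1u\|_{C^{3,0,\gamma}(\partial_SU_{jk,2\rho})}$ and $\|B_2u\|_{C^{1,0,\gamma}(\partial_SU_{jk,2\rho})}$ are legitimately dominated by the corresponding global norms on $\partial_SV'\times[0,T]$. The only mildly subtle point is that a $U_{jk,2\rho}$ of the type $U_{2\rho+}^\pm$ will produce a side-boundary contribution living on $\partial_SU_{jk,2\rho}\subset\partial_SV'\times[0,T]$, and a $\partial_F$-contribution appears only when $U_{jk,2\rho}$ has nontrivial intersection with $\R^n\times\{0\}$; both cases are already handled by the corresponding local lemma with the stated convention. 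Assembling everything yields the claimed estimate with $C=C(V,V',T_0,\gamma,\Lambda,\theta)$ independent of $T$.
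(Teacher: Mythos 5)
Your proposal is correct and follows exactly the same route as the paper: replace the interior covering lemma by the boundary covering Lemma \ref{SchauderCh_BoundaryCoveringLemma}, apply the matching localized variable-coefficient estimate from Lemmas \ref{SchauderCh_GanzraumLokalisierungVariableCoeff}--\ref{SchauderCh_HalbraumObenBCLokalisierungVariableCoeff} to each $U_{jk,2\rho}$ according to its type, and sum via the seminorm estimates of Lemma \ref{SchauderCh_BoundaryCoveringLemma}. Your added remarks about how $M$ is controlled by $\Lambda$ and $\rho$, and hence how $T$-independence propagates, are accurate and are precisely what makes the reduction to Corollary \ref{SchauderCh_InteriorSchauderEstimateCorollary} legitimate.
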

 \begin{tcolorbox}[colback=white!20!white,colframe=black!100!white,sharp corners, breakable]
The important observation in Lemma \ref{SchauderCh_BoundarySchauderEstimateCorollary} lies in the fact that the constant $C(V,V',T_0,\gamma, \Lambda,\theta)$ does not depend on $T$.
\end{tcolorbox}
\begin{proof}
The proof is the same as the one for Corollary \ref{SchauderCh_InteriorSchauderEstimateCorollary}. Instead of Lemma \ref{SchauderCh_InteriorCoveringLemma} we use the covering provided by Lemma \ref{SchauderCh_BoundaryCoveringLemma} and then use either Lemma 
\ref{SchauderCh_GanzraumLokalisierungVariableCoeff},
\ref{SchauderCh_HalbraumObenLokalisierungVariableCoeff}, 
\ref{SchauderCh_HalbraumUntenLokalisierungVariableCoeff},
\ref{SchauderCh_HalbraumBCLokalisierungVariableCoeff},
\ref{SchauderCh_HalbraumUntenBCLokalisierungVariableCoeff}
or
\ref{SchauderCh_HalbraumObenBCLokalisierungVariableCoeff}
 depending on whether $U_{kj,2\rho}$ is of the form $U_{2\rho}(q)$, $U_{2\rho}^+(q)$, $U_{2\rho}^-(q)$, $U_{2\rho+}(q)$, $U_{2\rho+}^+(q)$ or $U_{2\rho+}^-(q)$.
\end{proof}

\chapter{Schauder Theory on Manifolds}\label{SchauderCh_Ch04SchauderTheorMfd}
Throughout this chapter, let $M$ be a smooth, compact, connected $n$-dimensional manifold with boundary $\partial M\neq\emptyset$.

\section{Schauder Estimates on Manifolds}\label{SchauderCh_Ch4Section1}
We first establish the definition of elliptic and parabolic operators and suitable boundary conditions on $M$. To do so, we fix a good atlas $\mathcal A=\set{(\varphi_\nu, U_\nu)\ |\ 1\leq \nu\leq m}$ on $M$. Additionally, we put $V_\nu:=\varphi_\nu(U_\nu)$. The following definitions make reference to this choice. However, following arguments similar to the ones in the proofs of Lemmas \ref{SchauderCh_EquivalenceofHolderSpaceDefinition} and \ref{SchauderCh_NormEquivalenceParabolicHolderSpace}, it is easy to see that these definitions are independent of the choice of good atlas. 

\begin{definition}[2-Elliptic Operators on $M$]
    Let $L:C^ 4(M)\rightarrow C^ 0(M)$ be a linear, differential operator. We say that $L$ is a \emph{2-elliptic operator of class $C^{0,\gamma}(M)$}, if for all $u\in C^4(M)$
\begin{equation}\label{SchauderCh_LDefEq}
(Lu)\circ\varphi_\nu^{-1}=a_{\varphi_\nu}^ {ij}a_{\varphi_\nu}^ {kl}\partial_{ijkl}(u\circ\varphi_\nu^{-1})+\sum_{|\alpha|\leq 3}A_{\varphi_\nu}^ \alpha\nabla_\alpha (u\circ\varphi_\nu^{-1})\vspace{-.2cm}
\end{equation}
with coefficients $a^{ij}_{\varphi_\nu}, A^\alpha_{\varphi_\nu}\in C^{0,\gamma}(V_\nu)$ and strictly elliptic coefficients $a^ {ij}_{\varphi_\nu}$. 
\end{definition}
In the following, we require a measure for the \emph{quality} of 2-elliptic operators. For that purpose, we define 
\begin{align*} 
&\|L\|_{C^{0,\gamma}_{\mathcal A}(M)}:=\max_{1\leq \nu\leq m}\left[\max_{|\alpha|\leq 3}\|A^\alpha_{\varphi_\nu}\|_{C^{0,\gamma}(V_\nu)}+\max_{1\leq i,j\leq n}\|a^{ij}_{\varphi_\nu}\|_{C^{0,\gamma}(V_\nu)}\right],\\
&\Theta_{\mathcal A}(L):=\sup\{\theta>0\ |\ a^{ij}_{\varphi_\nu}(x)\xi_i\xi_j\geq\theta|\xi|^2\textrm{ for all }1\leq \nu\leq m,\ x\in V_\nu,\ \xi\in\R^n\}.
\end{align*}
Following arguments similar to the proof of Lemma \ref{SchauderCh_EquivalenceofHolderSpaceDefinition}, it is easy to see that for two good atlases $\mathcal A$ and $\mathcal B$ there is a constant $C=C(\mathcal A,\mathcal B,\gamma)>0$ such that 
\begin{equation}\label{SchauderCh_QualityEstimat01}
\|L\|_{C^{0,\gamma}_{\mathcal B}(M)}\leq C\|L\|_{C^{0,\gamma}_{\mathcal A}(M)}
\hspace{.5cm}\textrm{and}\hspace{.5cm}
\Theta_{\mathcal B}(L)\geq \frac1C\Theta_{\mathcal A}(L).
\end{equation}
While e.g. stating $\Theta(L)\geq 1$ does not make sense without making reference to a concrete good atlas $\mathcal A$, it is a meaningful thing to write $\Theta(L_s)\geq \theta_0>0$ for a family of operators $L_s$ with $s\in[0,1]$ without having to resort to a good atlas. However, it has to be understood that the concrete value of $\theta_0$ depends on the choice of good atlas.\\ 

Next, we introduce $L$-compatible boundary conditions. To motivate this definition, we consider a model problem. Assuming we are given a Riemannian metric $g$ on $M$, we can consider the operator $L=\Delta_g^2$. Denoting the exterior unit normal along $\partial M$ by $\nu$ and the Laplace-Beltrami operator by $\Delta_g$, we can define natural boundary operators by
$$B_1u:=\frac{\partial u}{\partial\nu}\hspace{.5cm}\textrm{and}\hspace{.5cm}B_2 u:=\frac{\partial\Delta_g u}{\partial\nu}.$$
In local coordinates, they are given by the formulas 
\begin{equation}\label{SchauderCh_canonicalExample}
B_1u=-\left(g^{nn}\right)^{-\frac12}g^{ni}\partial_i u
\hspace{.5cm}\textrm{and}\hspace{.5cm}
B_2 u=-\left(g^{nn}\right)^{-\frac12}g^{ni}g^{kl}\partial_{ikl}u+\textrm{`lower order terms'}.
\end{equation}
Finally, we recall that for a domain $V\subset\R^{n-1}\times[0,\infty)$ we put $\partial_SV:=V\cap(\R^{n-1}\times 0)$. 

\begin{definition}[$L$-compatible Boundary Conditions]
Let $L$ be a 2-elliptic operator of class $C^{0,\gamma}(M)$. We say that a linear operator $B=(B_1,B_2):C^ 4(M)\rightarrow C^ 3(\partial M)\times C^ 1(\partial M)$ defines \emph{$L$-compatible boundary conditions of class} $C^{3,\gamma}(\partial M)\times C^{1,\gamma}(\partial M)$, if for all $u\in C^4(M)$ 
\begin{align*} 
(B_1 u)\circ\varphi_\nu^{-1}&=\left(a^{nn}_{\varphi_\nu}\right)^{-\frac12}a^ {ni}_{\varphi_\nu}\partial_i (u\circ\varphi_\nu^{-1})+c_{\varphi_\nu} (u\circ\varphi_\nu^{-1}),\\
(B_2 u)\circ\varphi_\nu^{-1}&=\left(a^{nn}\right)^{-\frac12}a^ {ni}a^ {kl}\partial_{ikl} (u\circ\varphi_\nu^{-1})+\sum_{|\alpha|\leq 2}b_{\varphi_\nu}^ \alpha \nabla_\alpha (u\circ\varphi_\nu^{-1}),
\end{align*}
with $a^{ij}_{\varphi_\nu}$ as in Equation \eqref{SchauderCh_LDefEq} and $a^{ij}_{\varphi_\nu}, c_{\varphi_\nu}\in C^{3,\gamma}(\partial_S V_\nu)$ and $b^\alpha_{\varphi_\nu}\in C^{1,\gamma}(\partial_S V_\nu)$ .
\end{definition}

We introduce a measure for the quality of such an operator $B$ by defining 
\begin{align*} 
&\|B_1\|_{C^{3,\gamma}_{\mathcal A}(\partial M)}:=\max_{1\leq\nu\leq m}\left[\max_{1\leq i\leq n}\|\left(a^{nn}_{\varphi_\nu}\right)^{-\frac12}a^ {ni}_{\varphi_\nu}\|_{C^{3,\gamma}(\partial_SV_\nu)}+\|c_{\varphi_\nu}\|_{C^{3,\gamma}(\partial_SV_\nu)}\right],\\
&\|B_2\|_{ C^{1,\gamma}_{\mathcal A}(\partial M)}:=\max_{1\leq\nu\leq m}\left[\max_{1\leq i,k,l\leq n}\|\left(a^{nn}_{\varphi_\nu}\right)^{-\frac12}a^ {ni}_{\varphi_\nu}a^ {kl}_{\varphi_\nu}\|_{C^{1,\gamma}(\partial_SV_\nu)}+\max_{|\alpha|\leq 2}\|b^\alpha_{\varphi_\nu}\|_{C^{1,\gamma}(\partial_SV_\nu)}\right].
\end{align*}
Additionally, we note that the analog of the first estimate in Estimate \eqref{SchauderCh_QualityEstimat01} is true.\\

Given a $2$-elliptic operator $L$, we can consider the associated parabolic operator $P:=\partial_t+L$. However, we also want to allow for parabolic operators with time-dependent coefficients. 

\begin{definition}[2-Parabolic Operators on \textrm{$M\times[0,T]$}]
Let $T>0$ and $P:C^ {4,1}(M\times[0,T])\rightarrow C^{0,0}(M\times[0,T])$ be a linear, differential operator. We say that $P$ is a \emph{2-parabolic operator of class} $C^{0,0,\gamma}(M\times[0,T])$, if for all $u\in C^{4,1}(M)$
\begin{equation}\label{SchauderCh_PDefEq}
(Pu)\circ\varphi_\nu^{-1}=
\partial_t(u\circ\varphi_\nu^{-1})
+
a_{\varphi_\nu}^ {ij}a_{\varphi_\nu}^ {kl}\partial_{ijkl}(u\circ\varphi_\nu^{-1})+\sum_{|\alpha|\leq 3}A_{\varphi_\nu}^ \alpha\nabla_\alpha (u\circ\varphi_\nu^{-1})
\end{equation}
with coefficients $a^{ij}_{\varphi_\nu}, A^\alpha_{\varphi_\nu}\in C^{0,0,\gamma}(V_\nu\times[0,T])$ and strictly elliptic $a^ {ij}_{\varphi_\nu}$.
\end{definition}
As in the elliptic case, we introduce a measure for the quality of a 2-parabolic operator. 
\begin{align*} 
&\|P\|_{C^{0,0,\gamma}_{\mathcal A}(M\times[0,T])}:=\max_{1\leq \nu\leq m}\left[\max_{|\alpha|\leq 3}\|A^\alpha_{\varphi_\nu}\|_{C^{0,0,\gamma}(V_\nu\times[0,T])}+\max_{1\leq i,j\leq n}\|a^{ij}_{\varphi_\nu}\|_{C^{0,0,\gamma}(V_\nu\times[0,T])}\right],\\
&\Theta_{\mathcal A}(P):=\sup\{\theta>0\ |\ a^{ij}_{\varphi_\nu}(x,t)\xi_i\xi_j\geq\theta|\xi|^2\textrm{ for all }1\leq \nu\leq m,\ (x,t)\in V_\nu\times[0,T],\ \xi\in\R^n\}.
\end{align*}
Additionally, we note that the analogs of the estimates in Estimate \eqref{SchauderCh_QualityEstimat01} are true. As in the elliptic case, we now introduce $P$-compatible boundary conditions. 

\begin{definition}[$P$-compatible Boundary Conditions]
Let $T>0$ and $P$ be a 2-parabolic operator. We say that a linear operator $B=(B_1,B_2):C^ {4,1}(M\times[0,T])\rightarrow C^ {3,0}(\partial M\times[0,T])\times C^{1,0}(\partial M\times[0,T])$ defines $P$-compatible boundary conditions of class $C^{3,0,\gamma}(\partial M\times[0,T])\times C^{1,0,\gamma}(\partial M\times[0,T])$, if for all $u\in C^{4,1}(M\times[0,T])$ 
\begin{align*} 
(B_1 u)\circ\varphi_\nu^{-1}&=\left(a^{nn}_{\varphi_\nu}\right)^{-\frac12}a^ {ni}_{\varphi_\nu}\partial_i (u\circ\varphi_\nu^{-1})+c_{\varphi_\nu} (u\circ\varphi_\nu^{-1}),\\
(B_2 u)\circ\varphi_\nu^{-1}&=\left(a^{nn}_{\varphi_\nu}\right)^{-\frac12}a^ {ni}_{\varphi_\nu}a^ {kl}_{\varphi_\nu}\partial_{ikl} (u\circ\varphi_\nu^{-1})+\sum_{|\alpha|\leq 2}b_{\varphi_\nu}^ \alpha \nabla_\alpha (u\circ\varphi_\nu^{-1}),
\end{align*}
with $a^{ij}_{\varphi_\nu}$ as in Equation \eqref{SchauderCh_PDefEq} and $a^{ij}_{\varphi_\nu}, c_{\varphi_\nu}\in C^{3,0,\gamma}(\partial_S V_\nu\times[0,T])$, $b^\alpha_{\varphi_\nu}\in C^{1,0,\gamma}(\partial_S V_\nu\times[0,T])$.
\end{definition}

Again, we introduce a measure for the quality of such a boundary operator $B$ by defining 
\begin{align*} 
&\|B_1\|_{C^{3,0,\gamma}_{\mathcal A}(\partial M\times[0,T])}:=\max_{1\leq\nu\leq m}\left[\max_{1\leq i\leq n}\|\left(a^{nn}_{\varphi_\nu}\right)^{-\frac12}a^ {ni}_{\varphi_\nu}\|_{C^{3,0,\gamma}(\partial_SV_\nu\times[0,T])}+\|c_{\varphi_\nu}\|_{C^{3,0,\gamma}(\partial_SV_\nu\times[0,T])}\right],\\
&\|B_2\|_{ C^{1,0,\gamma}_{\mathcal A}(\partial M\times[0,T])}:=\max_{1\leq\nu\leq m}\left[\max_{1\leq i,k,l\leq n}\|\left(a^{nn}_{\varphi_\nu}\right)^{-\frac12}a^ {ni}_{\varphi_\nu}a^ {kl}_{\varphi_\nu}\|_{C^{1,0,\gamma}(\partial_SV_\nu\times[0,T])}+\max_{|\alpha|\leq 2}\|b^\alpha_{\varphi_\nu}\|_{C^{1,0,\gamma}(\partial_SV_\nu\times[0,T])}\right].
\end{align*}
Additionally, we note that the analog of the first estimate in Estimate \eqref{SchauderCh_QualityEstimat01} is true.\\

To proceed, we require Ehrling's Lemma. For a proof, we refer to \cite{renardy2006introduction}, Chapter 7, Section 2, Theorem 7.30. 

\begin{proposition}[Ehrling's Lemma]    
Let $X$, $Y$ and $Z$ be Banach spaces such that $X\hookrightarrow\hookrightarrow Y$ compactly and $Y\hookrightarrow Z$ continuously. Then for all $\epsilon>0$ there exists a constant $C(\epsilon)>0$ such that for all $x\in X$
    $$\|x\|_Y\leq \epsilon \|x\|_X+C(\epsilon) \|x\|_Z.$$
\end{proposition}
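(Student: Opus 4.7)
The plan is to argue by contradiction, normalizing the hypothetical counterexamples to unit norm in $Y$ and then extracting a subsequence via the compact embedding.

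Suppose the statement fails. Then there exists $\epsilon_0>0$ and a sequence $(x_n)\subset X$ with
$$\|x_n\|_Y>\epsilon_0\|x_n\|_X+n\|x_n\|_Z.$$
First I would normalize by replacing $x_n$ with $x_n/\|x_n\|_Y$ (note $x_n\neq 0$ since the right-hand side above is nonnegative and the left positive), so WLOG $\|x_n\|_Y=1$. The inequality then becomes $1>\epsilon_0\|x_n\|_X+n\|x_n\|_Z$, which yields the two key bounds
$$\|x_n\|_X<\frac{1}{\epsilon_0}\hspace{.5cm}\textrm{and}\hspace{.5cm}\|x_n\|_Z<\frac1n.$$

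Next I would exploit the compact embedding $X\hookrightarrow\hookrightarrow Y$. Since $(x_n)$ is bounded in $X$, along a subsequence (not relabelled) $x_n\to x^*$ strongly in $Y$. In particular $\|x^*\|_Y=\lim\|x_n\|_Y=1$, so $x^*\neq 0$ in $Y$. On the other hand, the continuous embedding $Y\hookrightarrow Z$ gives $x_n\to x^*$ in $Z$ as well, while the second bound above forces $x_n\to 0$ in $Z$. By uniqueness of limits in $Z$, the image of $x^*$ under the embedding into $Z$ is zero, and since the embedding $Y\hookrightarrow Z$ is (by assumption) an injective continuous linear map, this forces $x^*=0$ in $Y$, contradicting $\|x^*\|_Y=1$.

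The main (and really only) obstacle is the conceptual one of recognising that the compact embedding is exactly what converts the a priori weak information ``$\|x_n\|_Z\to 0$'' into a strong contradiction: without compactness, bounded sequences in $X$ need not cluster in $Y$, and the argument collapses. Everything else is a routine normalisation. No calculations beyond the three displayed inequalities are needed, and the proof works verbatim in the setting of the paper, where $X$, $Y$, $Z$ are typically Hölder or Sobolev spaces on $M$ and the compactness comes from the Rellich-type embeddings already recorded earlier in Chapter 1.
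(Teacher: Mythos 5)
Your proof is correct, and it is the standard contradiction argument for Ehrling's lemma. The paper itself does not prove this proposition; it simply cites \cite{renardy2006introduction} (Chapter 7, Section 2, Theorem 7.30), so there is no proof in the paper to compare against. One small remark on the normalization step: $x_n\neq 0$ because $x_n=0$ would make both sides of the strict inequality vanish, yielding the false statement $0>0$ (rather than ``the left positive,'' which presupposes $x_n\neq 0$); the conclusion is of course the same. The injectivity of $Y\hookrightarrow Z$, which you invoke to pass from $x^*=0$ in $Z$ to $x^*=0$ in $Y$, is indeed implicit in calling it an embedding, and this matches the conventions used in the paper.
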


Using Corollaries \ref{SchauderCh_InteriorSchauderEstimateCorollary} and \ref{SchauderCh_BoundarySchauderEstimateCorollary}, we can now deduce Schauder estimates on $M$. The following estimate contains an $L^2$-norm. Since we have not equipped $M$ with a Riemannian metric, we stress that this norm should be understood in the sense of Definition \ref{SchauderCh_SobolevNormDefGoodTalas} and not the $L^2$-norm induced by a Riemannian metric. 
\begin{theorem}\label{SchauderCh_GoodAPrioriEstimateOnManifold}
Let $T_0>0$, $T\geq T_0$, $\mathcal A$ be a good atlas on $M$, $P$ be a 2-parabolic operator of class $C^{0,0,\gamma}(M\times[0,T])$ and let $B$ define $P$-compatible boundary conditions of class $C^{3,0,\gamma}(\partial M\times[0,T])\times C^{1,0,\gamma}(\partial M\times[0,T])$. Then, for any $u\in C^{4,1,\gamma}(M\times[0,T])$
\begin{align*} 
\|u\|_{C^{4,1,\gamma}_{\mathcal A}(M\times[0,T])}
\leq &
C\bigg(\|Pu\|_{C^{0,0,\gamma}_{\mathcal A}(M\times[0,T])}+\|u(\cdot,0)\|_{C^{4,\gamma}_{\mathcal A}(M)}+\|B_1u\|_{C^{3,0,\gamma}_{\mathcal A}(\partial M\times[0,T])}\\
&\hspace{2cm}+\|B_2u\|_{C^{1,0,\gamma}_{\mathcal A}(\partial M\times[0,T])}+\sup_{0\leq t\leq T}\|u(\cdot,t)\|_{L^2_{\mathcal A}(M)}\bigg).
\end{align*}
Here $C$ depends on $M$, $T_0$, $\gamma$, $\Lambda:=\|P\|_{C^{0,0,\gamma}_{\mathcal A}(M\times[0,T])}+\|B_1\|_{C^{3,0,\gamma}_{\mathcal A}(\partial M\times[0,T])}+\|B_2\|_{C^{1,0,\gamma}_{\mathcal A}(\partial M\times[0,T])}$ and $\theta:=\Theta_{\mathcal A}(P)$ but not on $T$.
\end{theorem}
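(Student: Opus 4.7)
The plan is to localize the estimate to the Euclidean setting using the good atlas $\mathcal{A}$, apply the chart-wise Schauder estimates from Corollaries \ref{SchauderCh_InteriorSchauderEstimateCorollary} and \ref{SchauderCh_BoundarySchauderEstimateCorollary}, and then trade the resulting $L^\infty$-term for an $L^2$-term by Ehrling's Lemma.

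First I would fix the good atlas $\mathcal A = \{(\varphi_\nu, U_\nu)\}$ together with the smooth extensions $\tilde\varphi_\nu:\tilde U_\nu\to\tilde V_\nu$ supplied by the definition of a good atlas, and choose intermediate open convex sets $V'_\nu$ with $V_\nu\subset\subset V'_\nu\subset\subset \tilde V_\nu$; let $U'_\nu:=\tilde\varphi_\nu^{-1}(V'_\nu)$, so that $\mathcal A':=\{(\tilde\varphi_\nu|_{U'_\nu},U'_\nu)\}$ is again a good atlas on $M$. Writing $u_\nu := u\circ\tilde\varphi_\nu^{-1}:\tilde V_\nu\times[0,T]\to \R$, the hypothesis that $P$ is 2-parabolic and $B_1,B_2$ are $P$-compatible says exactly that on each $V'_\nu$ (respectively $\partial_S V'_\nu$) the pushed-forward operators have the precise algebraic form demanded in Corollaries \ref{SchauderCh_InteriorSchauderEstimateCorollary} and \ref{SchauderCh_BoundarySchauderEstimateCorollary}, with Hölder norms of the coefficients bounded by a multiple of $\Lambda$ and ellipticity constant bounded below by a multiple of $\theta$ (these conversion constants depend only on $\mathcal A$, $\mathcal A'$ and $\gamma$ in view of the chain-rule arguments already used in Lemmas \ref{SchauderCh_EquivalenceofHolderSpaceDefinition} and \ref{SchauderCh_NormEquivalenceParabolicHolderSpace}).

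Next, for each $\nu$ I would apply Corollary \ref{SchauderCh_InteriorSchauderEstimateCorollary} if $U_\nu$ is an interior chart and Corollary \ref{SchauderCh_BoundarySchauderEstimateCorollary} if it is a boundary chart, with the pair $(V_\nu, V'_\nu)$ and the lower bound $T_0$. Since both corollaries yield a constant $C(V_\nu, V'_\nu, T_0, \gamma, \Lambda, \theta)$ that is independent of $T\geq T_0$, summing the resulting chart-wise inequalities over $1\leq\nu\leq m$ produces
\begin{align*}
\|u\|_{C^{4,1,\gamma}_{\mathcal A}(M\times[0,T])}
\leq C\Big( & \|Pu\|_{C^{0,0,\gamma}_{\mathcal A'}(M\times[0,T])}+\|u(\cdot,0)\|_{C^{4,\gamma}_{\mathcal A'}(M)}\\
 & +\|B_1 u\|_{C^{3,0,\gamma}_{\mathcal A'}(\partial M\times[0,T])}+\|B_2 u\|_{C^{1,0,\gamma}_{\mathcal A'}(\partial M\times[0,T])}\\
 & +\|u\|_{L^\infty(M\times[0,T])}\Big),
\end{align*}
with $C$ depending on $\mathcal A$, $\mathcal A'$, $T_0$, $\gamma$, $\Lambda$, $\theta$ but not on $T$. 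Invoking Lemma \ref{SchauderCh_NormEquivalenceParabolicHolderSpace} (whose constant is also $T$-independent) together with its obvious elliptic analogue for $\|u(\cdot,0)\|_{C^{4,\gamma}}$ allows me to replace every $\mathcal A'$-norm on the right by the corresponding $\mathcal A$-norm at the cost of enlarging $C$.

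Finally, I would remove the $\|u\|_{L^\infty(M\times[0,T])}$ term. The compactness of $M$ yields the chain of embeddings $C^{4,\gamma}_{\mathcal A}(M)\hookrightarrow\hookrightarrow C^0(M)=L^\infty(M)\hookrightarrow L^2_{\mathcal A}(M)$, so Ehrling's Lemma gives, for every $\epsilon>0$, a constant $C(\epsilon)$ with
\[
\|v\|_{L^\infty(M)}\leq \epsilon\|v\|_{C^{4,\gamma}_{\mathcal A}(M)}+C(\epsilon)\|v\|_{L^2_{\mathcal A}(M)}\qquad\text{for every }v\in C^{4,\gamma}_{\mathcal A}(M).
\]
Applying this pointwise in $t$ to $u(\cdot,t)$, taking the supremum over $t\in[0,T]$ and using $\|u(\cdot,t)\|_{C^{4,\gamma}_{\mathcal A}(M)}\leq \|u\|_{C^{4,1,\gamma}_{\mathcal A}(M\times[0,T])}$, I can, with $\epsilon$ chosen small enough to absorb the Hölder term into the left-hand side of the preceding estimate, replace $\|u\|_{L^\infty(M\times[0,T])}$ by $\sup_{0\leq t\leq T}\|u(\cdot,t)\|_{L^2_{\mathcal A}(M)}$; this yields the claimed inequality. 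The main obstacle of the argument is not any single step but the careful bookkeeping that every constant introduced (chart conversion constants, covering constants in the Euclidean Schauder estimates, Ehrling constants on $M$) is a function of $M$, $T_0$, $\gamma$, $\Lambda$, $\theta$ alone and in particular is independent of $T$, since this $T$-independence is the content that makes the estimate genuinely useful for the long-time existence arguments in the subsequent sections.
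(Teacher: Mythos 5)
Your proposal is correct and follows essentially the same route as the paper's own proof: localize via a shrunken good atlas $\mathcal A'$ inside the smooth extensions, apply Corollaries \ref{SchauderCh_InteriorSchauderEstimateCorollary} and \ref{SchauderCh_BoundarySchauderEstimateCorollary} chart-wise to get a $T$-uniform bound in terms of $\mathcal A'$-norms plus $\|u\|_{L^\infty}$, convert back to $\mathcal A$-norms via Lemmas \ref{SchauderCh_EquivalenceofHolderSpaceDefinition} and \ref{SchauderCh_NormEquivalenceParabolicHolderSpace}, and then replace $\|u\|_{L^\infty}$ by $\sup_t\|u(\cdot,t)\|_{L^2}$ via Ehrling's Lemma applied at each fixed $t$ with absorption. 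The reasoning and ordering of steps match the paper's argument in all essential respects.
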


\begin{proof}
Let $\mathcal A=\set{(\varphi_\nu,U_\nu)\ |\ 1\leq\nu\leq m}$. By definition, there exist open sets $\tilde U_\nu$ such that $U_\nu\subset\subset \tilde U_\nu$ and such that $\varphi_\nu$ has a smooth extension onto $\tilde U_\nu$. For each $1\leq\nu\leq m$, we choose an open set $U_\nu'$ such that $U_\nu\subset\subset U_\nu'\subset\subset \tilde U_\nu$. Note that $\mathcal A':=\set{(\varphi_\nu,U_\nu')\ |\ 1\leq\nu\leq m}$ is a good atlas on $M$. By Estimate \eqref{SchauderCh_QualityEstimat01} and the analog estimates, the operator $P$ and the boundary operator $B$ have norms bonded by $C\Lambda$ and an ellipticity constant bounded from below by $C^{-1}\theta$ with respect to $\mathcal A'$. Here $C$ depends only on $\mathcal A$, $\mathcal A'$ and $\gamma$. We put $V_\nu:=\varphi_\nu(U_\nu)$ and $V_\nu':=\varphi_\nu(U_\nu')$.\\

Let $u\in C^{4,1,\gamma}(M\times[0,T])$. For $1\leq\nu\leq m$ we put $v_\nu:=u\circ\varphi_\nu^{-1}$ and denote the local representation of $P$ by $P_\nu$. If $U_\nu$ is a boundary chart, we denote the local representation of the boundary operators by $(B_{1\nu},B_{2\nu})$.\\

Depending on whether $U_\nu$ is an interior or a boundary chart, we use Corollary \ref{SchauderCh_InteriorSchauderEstimateCorollary} or \ref{SchauderCh_BoundarySchauderEstimateCorollary} respectively to deduce that with a constant $C=C(V_\nu, V_\nu', T_0,\Lambda,\theta,\gamma)$ 
\begin{align*} 
\|v_\nu\|_{C^{4,1,\gamma}(V_\nu\times[0,T])}
\leq &C
\left(
\|P_\nu v_\nu\|_{C^{0,0,\gamma}(V_\nu'\times[0,T])}+\|v_\nu(\cdot, 0)\|_{C^{4,\gamma}(V_\nu')}+\|v_\nu\|_{L^\infty(V_\nu'\times[0,T])}
\right),\\
\|v_\nu\|_{C^ {4,1,\gamma}(V_\nu\times[0,T])}\leq &C\left(
\|P_\nu v_\nu\|_{C^ {0,0,\gamma}(V_\nu'\times[0,T])}
+\|v_\nu(\cdot,0)\|_{C^ {4,\gamma}(V_\nu')}
\right.\\
&\left.+\|B_{1\nu}v_\nu\|_{C^{3,0,\gamma}(\partial_S V_\nu'\times[0,T])}+\|B_{2\nu}v_\nu\|_{C^{1,0,\gamma}(\partial_S V_\nu'\times[0,T])}
+\|v_\nu\|_{L^ \infty(V_\nu'\times[0,T])}
\right).
\end{align*}
Taking the maximum over all $1\leq\nu\leq m$, we deduce with $C=C(\mathcal A,\mathcal A', T_0,\Lambda,\theta,\gamma)$
\begin{align*} 
\|u\|_{C^ {4,1,\gamma}_{\mathcal A}(M\times[0,T])}
\leq&C\left(
\|Pu\|_{C^ {0,0,\gamma}_{\mathcal A'}(M\times[0,T])}
+\|u(\cdot,0)\|_{C^ {4,\gamma}_{\mathcal A'}(M)}
+\|B_1 u\|_{C^{3,0,\gamma}_{\mathcal A'}(\partial M\times[0,T])}\right.\\
&\left.+\|B_2 u\|_{C^{1,0,\gamma}_{\mathcal A'}(\partial M\times[0,T])}
+\|u\|_{L_{\mathcal A'}^ \infty(M\times[0,T])}
\right).
\end{align*}
As $\mathcal A$ and $\mathcal A'$ are both good atlases, we can use Lemmas \ref{SchauderCh_EquivalenceofHolderSpaceDefinition} and \ref{SchauderCh_NormEquivalenceParabolicHolderSpace} to deduce that with $C=C(\mathcal A,\mathcal A', T_0, \Lambda,\theta,\gamma)$ \emph{independent of $T$}
\begin{align} 
\|u\|_{C^ {4,1,\gamma}_{\mathcal A}(M\times[0,T])}
\leq C&\left(
\|Pu\|_{C^ {0,0,\gamma}_{\mathcal A}(M\times[0,T])}
+\|u(\cdot,0)\|_{C^ {4,\gamma}_{\mathcal A}(M)}
+\|B_1 u\|_{C^{3,0,\gamma}_{\mathcal A}(\partial M\times[0,T])}\right.\nonumber\\
&\left.+\|B_2 u\|_{C^{1,0,\gamma}_{\mathcal A}(\partial M\times[0,T])}
+\|u\|_{L_{\mathcal A}^ \infty(M\times[0,T])}\label{SchauderCh_AlmostLiftedEstimat}
\right).
\end{align}
As $\mathcal A'$ depends only on $\mathcal A$, we can drop the $\mathcal A'$ dependence in the constant. Finally, we note that $C_{\mathcal A}^{4,\gamma}(M)$ embeds compactly into $L_{\mathcal A}^\infty(M)$ and $L_{\mathcal A}^\infty(M)$ embeds continuously into $L_{\mathcal A}^2(M)$. So, by Ehrling's lemma, we know that for any small $\epsilon>0$
$$
\|u\|_{L^\infty_{\mathcal A}(M\times[0,T])}
=\sup_{t\in[0,T]}\left(\|u\|_{L^\infty_{\mathcal A}(M\times t)}\right)
\leq \sup_{t\in[0,T]}\left(\epsilon \|u(\cdot, t)\|_{C^{4,\gamma}_{\mathcal A}(M)}+C(\epsilon)\|u(\cdot, t)\|_{L^2_{\mathcal A}(M)}\right).
$$
Inserting into Estimate \eqref{SchauderCh_AlmostLiftedEstimat} and choosing $\epsilon$ small enough we can absorb $\sup_{t\in[0,T]}\|u(\cdot, t)\|_{C^{4,\gamma}_{\mathcal A}(M)}$ to the left side and the theorem follows.
\end{proof}

\section{Canonical Boundary Conditions}\label{SchauderCh_Ch4Sec2}
Throughout this section, we assume that $L$ is a $2$-elliptic operator with smooth coefficients and that $B$ defines $L$-compatible boundary conditions with smooth coefficients. Considering Theorem \ref{SchauderCh_GoodAPrioriEstimateOnManifold}, our aim is to derive an a priori estimate of the form 
\begin{align*}
\sup_{0\leq t\leq T}\|u(\cdot,t)\|_{L^2_{\mathcal A}(M)}\leq &
C\bigg(\|\dot u+Lu\|_{C^{0,0,\gamma}_{\mathcal A}(M\times[0,T])}+\|u(\cdot,0)\|_{C^{4,\gamma}_{\mathcal A}(M)}+\|B_1u\|_{C^{3,0,\gamma}_{\mathcal A}(\partial M\times[0,T])}\\
&\hspace{2cm}+\|B_2u\|_{C^{1,0,\gamma}_{\mathcal A}(\partial M\times[0,T])}\bigg).
\end{align*}
To do this, we will specialize to \emph{canonical boundary conditions}. To derive these, we collect some well-known facts from Riemannian geometry. For a thorough introduction, we refer to Lee's book \cite{LeeRiem} (see Chapter 4 and Appendix B).
\begin{bemerkung}[Tensor Fields]
Let $(N,s)$ be a Riemannian manifold. 
\begin{enumerate}[(1)]
\item A smooth $(p,q)$ tensor field is a $C^\infty(N)$-multilinear map 
$$T:\underbrace{T^*N\times...\times T^*N}_{\textrm{$p$ copies}}\times \underbrace{TN\times...\times TN}_{\textrm{$q$ copies}}\rightarrow C^\infty(N).$$
Such a tensor field $T$ has the local representation
$$T=T^{i_1,...,i_p}_{\ \ \ \ \ \ \ k_1...k_q}\ \frac{\partial }{\partial x^{i_1}}\otimes...\otimes \frac{\partial}{\partial x^{i_p}}\otimes dx^{k_1}\otimes...\otimes dx^{k_q}.$$
\item Let $T$ be a smooth $(p,q)$ tensor field on $N$ and let $\nabla$ denote the Levi-Cevita connection induced by $s$. Then $\nabla T$ is a smooth $(p,q+1)$ tensor field defined by 
$$(\nabla T)(\omega^1,...,\omega^p, X_1,...,X_q,Y)=(\nabla_Y T)(\omega^1,...,\omega^p,X_1,...,X_q).$$
The divergence $\nabla^*T$ of $T$ is a smooth $(p,q-1)$ tensor field defined by
$$\nabla^*T(\omega^1,...,\omega^p,X_1,...,X_{q-1}):=s^{ij}\nabla_{\frac\partial{\partial x ^i}}T(\omega^1,...,\omega^p, X_1,...,X_{q-1},\frac{\partial}{\partial x^j}).$$
Here $\frac\partial{\partial x^i}$ is the basis induced by an arbitrary chart and $s^{ij}$ is the inverse metric tensor expressed in the same chart. 
\item If $T$ and $S$ are smooth tensor fields of types $(p,0)$ and $(0,p)$ respectively, we define their \emph{pairing} as $\langle T,S\rangle\in C^\infty(N)$ by the local formula
$$\langle T,S\rangle=T^{i_1...i_p}S_{i_1...i_p}.$$
\end{enumerate}
\end{bemerkung}
After this preparation, we present the following lemma.
\begin{lemma}\label{SchauderCh_LGeometricFormula}
There exists a smooth metric $g$ on $M$, a smooth $(3,0)$-tensor field $J$ on $M$ and a second order linear elliptic operator $\mathcal L$ with smooth coefficients such that
$$Lu=\Delta_g^2u+\langle J,\nabla^3u\rangle+\mathcal Lu .$$
\end{lemma}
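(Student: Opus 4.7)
My plan is to extract the metric $g$ from the leading fourth-order symbol of $L$, then peel off the third-order piece as $\langle J,\nabla^3\cdot\rangle$, and finally collect whatever remains as the second-order operator $\mathcal L$.

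\emph{Step 1: construction of $g$.} In a chart $\varphi_\nu$ the fourth-order part of $L$ reads $a^{ij}_{\varphi_\nu}a^{kl}_{\varphi_\nu}\partial_{ijkl}$; since this is contracted against the fully symmetric tensor $\partial_{ijkl}$, I would first assume without loss of generality that $a^{ij}_{\varphi_\nu}$ is symmetric in $i,j$. The crucial claim is that these local symmetric matrices glue into a smooth, positive-definite, symmetric $(2,0)$-tensor field $g^{-1}$; the metric $g$ is then obtained by inversion. To prove this claim, on a chart overlap I would write $\psi:=\varphi_\mu\circ\varphi_\nu^{-1}$, expand the chain rule for the fourth derivative under $\psi$, and match the fourth-order parts of the two local expressions for $Lu$. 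After contraction with an arbitrary covector $\xi$, this reduces to
$$(\hat a^{ab}\xi_a\xi_b)^2=(\tilde a^{ab}\xi_a\xi_b)^2,\qquad\text{where }\hat a^{ab}:=a^{ij}_{\varphi_\nu}\partial_i\psi^a\partial_j\psi^b$$
and $\tilde a^{ab}:=a^{ab}_{\varphi_\mu}$. Strict ellipticity makes both quadratic forms strictly positive, so their positive square roots agree for every $\xi$, forcing $\hat a^{ab}=\tilde a^{ab}$ pointwise. This is precisely the $(2,0)$-tensor transformation law and produces the global tensor field $g^{-1}$; positive-definiteness of $g^{-1}$, hence of $g$, comes directly from strict ellipticity of $L$.

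\emph{Step 2: extracting $J$ and defining $\mathcal L$.} In local coordinates $\Delta_g u=g^{ij}\partial_{ij}u+\text{(first order)}$, so $\Delta_g^2 u=g^{ij}g^{kl}\partial_{ijkl}u+\text{(third order)}$. By construction the fourth-order parts of $L$ and $\Delta_g^2$ now coincide, so $\mathcal D:=L-\Delta_g^2$ is a linear differential operator of order at most three with smooth coefficients. Its principal symbol at each $p\in M$ is a homogeneous cubic on $T_p^*M$, equivalently a smooth symmetric $(3,0)$-tensor field $J$; in charts $\mathcal Du=J^{ijk}\partial_{ijk}u+\text{(at most second order)}$. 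Since $(\nabla^3u)_{ijk}-\partial_{ijk}u$ is a smooth combination of $\partial^2u$ and $\partial u$ with Christoffel-dependent coefficients, the operator
$$\mathcal Lu:=\mathcal Du-\langle J,\nabla^3u\rangle$$
is a linear differential operator of order at most two with smooth coefficients, completing the decomposition $Lu=\Delta_g^2u+\langle J,\nabla^3u\rangle+\mathcal Lu$.

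\emph{Main obstacle.} I expect Step 1 to be the hard part: the operator $L$ is defined intrinsically, whereas the local form singles out the matrix $a^{ij}$ rather than merely its rank-four square $a^{ij}a^{kl}$, and one must see that the former is itself coordinate-invariant. The core technical observation — that a positive quadratic form is uniquely determined by its square — is exactly where strict ellipticity enters crucially; after that, Step 2 is essentially bookkeeping. One last subtlety concerns the ellipticity of $\mathcal L$ asserted in the statement: the natural construction above gives only a second-order operator with smooth coefficients, not necessarily an elliptic one, so if strict ellipticity of $\mathcal L$ is truly required I would absorb a sufficiently large positive multiple of $\Delta_g$ into $\mathcal L$ while adjusting lower-order contributions elsewhere to preserve the identity.
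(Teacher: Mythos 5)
Your proof follows the paper's own strategy: read off $g$ from the fourth-order symbol, peel off the third-order part as $\langle J,\nabla^3\cdot\rangle$, and collect the remainder as $\mathcal L$. Your Step~1 is actually more careful than the paper's at the crucial point --- the paper simply asserts the tensor law $a^{ij}_{\tilde\varphi}=\partial_a\chi^i\partial_b\chi^j a^{ab}_\varphi$ from matching the fourth-order coefficients, whereas you supply the positive-square-root-of-a-quadratic-form argument actually needed to factor $a^{ij}a^{kl}$ into $a^{ij}$. Your worry about the word ``elliptic'' is also well-placed: neither your construction nor the paper's shows $\mathcal L$ is elliptic, and your proposed repair of shifting $\lambda\Delta_g$ into $\mathcal L$ cannot preserve the identity (the compensating $-\lambda\Delta_g$ has nowhere to go among $\Delta_g^2$ and $\langle J,\nabla^3\cdot\rangle$); the paper, however, only ever invokes that $\mathcal L$ is a second-order operator with smooth coefficients (e.g.\ in the coercivity estimate of Lemma~\ref{SchauderCh_ApproximateCoercivityLemma}), so the ellipticity claimed in the statement is superfluous rather than something your argument has to supply.
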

\begin{proof}
Let $\varphi:U\rightarrow V$ and $\tilde\varphi:\tilde U\rightarrow\tilde V$ be good charts such that $U\cap\tilde U\neq\emptyset$. For $u\in C^\infty(M)$, we put $v:=u\circ\varphi^{-1}$, $\tilde v:=u\circ\tilde \varphi^{-1}$ and $\chi:=\tilde\varphi\circ\varphi^{-1}:\varphi(U\cap\tilde U)\rightarrow\tilde\varphi(U\cap\tilde U)$. Denoting derivatives of $v$ and $\tilde v$ of order $\leq 3$ by `lower order terms', we compute
\begin{align*}
    a_\varphi^{ij}a_\varphi^{kl}\partial_{ijkl} v +\textrm{`lower order terms'}
    =&(Lu)\circ\varphi^{-1}\\
    =& (Lu)\circ(\tilde \varphi^{-1}\circ\chi)\\
    =&\left( a^{ij}_{\tilde\varphi} a^{kl}_{\tilde\varphi}\partial_{ijkl}\tilde v +\textrm{`lower order terms'}\right)\circ\chi
\end{align*}
Using $v=\tilde v\circ\chi$, we deduce 
$$ a^{ij}_{\tilde\varphi} a^{kl}_{\tilde\varphi}\partial_{ijkl}\tilde v=\partial_i\chi^a\partial_j\chi^b\partial_k\chi^c\partial_l\chi^da^{ij}_{\varphi}a^{kl}_{\varphi}\partial_{abcd}\tilde v+\textrm{`lower order terms'}.$$
As $\tilde v$ is arbitrary we deduce that $ a_{\tilde\varphi}^{ij}=\partial_a\chi^i\partial_b\chi^j a_\varphi^{ab}$ and hence we can define a smooth $(2,0)$-tensor field in local coordinates by $g^{ij}:=a^{ij}_\varphi$. As $a^{ij}_\varphi$ is an elliptic matrix, we can define a Riemannian metric using the inverse $g_{ij}$. Since in local coordinates $\Delta_g=g^{ij}\partial_{ij}+\textrm{`lower order terms'}$, we deduce that $L=\Delta_g^2+\textrm{`lower order terms'}$. We consider the operator $T:=L-\Delta_g^2$. This is a third-order operator and so locally $T=J^{ijk}\partial_{ijk} +\textrm{`lower order terms'}$. Using the formula $T=L-\Delta_g^2$, we also see that $T$ is a geometric object. 
Using the same arguments we have used before, we can use $J^{ijk}$ to define a smooth $(3,0)$-tensor field $J$.
Using that in local coordinates 
$(\nabla_g^3 u)_{ijk}=\partial_{ijk}u+\textrm{`lower order terms'}$, we deduce
$$Tu=J^{ijk}(\nabla^3 u)_{ijk}+\textrm{lower order terms}=\langle J,\nabla^3 u\rangle +\textrm{lower order terms}.$$
As $T$ is a geometric operator and $\langle J,\nabla^3 u\rangle$ is geometric, we deduce that $\mathcal Lu:=Tu-\langle J,\nabla^3 u\rangle$ defines a geometric second order operator and so the lemma follows. 
\end{proof}

Using Lemma \ref{SchauderCh_LGeometricFormula}, we can now formulate \emph{canonical boundary conditions} for $L$.

\begin{definition}[Canonical Boundary Conditions]\label{SchauderCh_CanonicalBCDef}
    Let $g$ and $J$ be as in Lemma \ref{SchauderCh_LGeometricFormula} and denote by $\nu$ the exterior unit normal along $\partial M$ with respect to $g$. The boundary conditions 
    $$
\left\{
\begin{aligned}
    B_1(u):=&\frac{\partial u}{\partial\nu},\\
    B_2(u):=&\frac{\partial\Delta_g u}{\partial\nu}+\langle J,\nabla^2 u\otimes\nu^\flat\rangle,
\end{aligned}
\right.
$$
are called \emph{canonical boundary conditions associated to $L$}. Here $\nu^\flat:=g(\nu,\cdot)\in T^*\partial M$.
\end{definition}

To justify the name canonical, we recall the following fact from Riemannian geometry. For a guided proof, we refer to an exercise in Lee's book \cite{LeeRiem} (see Chapter 5, Problem 5-16). 

\begin{proposition}
    Let $(N,s)$ be a Riemannian manifold, $\nu$ denote the exterior unit normal along $\partial M$ and let $T$ and $S$ be smooth tensor fields of types $(p,0)$ and $(0,p-1)$ respectively. Then 
    $$\int_N \langle T,\nabla S\rangle d\mu_s=-\int_N\langle \nabla^*T,S\rangle d\mu_s+\int_{\partial N}\langle T,S\otimes\nu^\flat\rangle dS_s.$$
    Here $\nu^\flat:=s(\nu,\cdot)\in T^*\partial N$.
\end{proposition}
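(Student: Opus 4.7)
The plan is to reduce the identity to the divergence theorem \eqref{NeumannLaplacianCh_DivergenceTheorem} by introducing a single vector field that encodes both sides. Concretely, I would define a smooth vector field $X$ on $N$ in any chart by
$$X^{j} \;:=\; T^{i_{1}\ldots i_{p-1} j}\, S_{i_{1}\ldots i_{p-1}}.$$
This is a globally well-defined, coordinate-independent object: it is the natural contraction of the $(p,0)$-tensor $T$ against the $(0,p-1)$-tensor $S$ along the first $p-1$ upper indices of $T$, leaving one upper slot free. Smoothness of $X$ follows from that of $T$ and $S$.

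The next step is to compute $\operatorname{div}_{s} X = \nabla_{j} X^{j}$ via the Leibniz rule in local coordinates:
$$\nabla_{j} X^{j} \;=\; \bigl(\nabla_{j} T^{i_{1}\ldots i_{p-1} j}\bigr) S_{i_{1}\ldots i_{p-1}} \;+\; T^{i_{1}\ldots i_{p-1} j}\bigl(\nabla_{j} S_{i_{1}\ldots i_{p-1}}\bigr).$$
With the excerpt's convention that $\nabla$ appends a new covariant slot at the end, the second term equals $T^{i_{1}\ldots i_{p}}(\nabla S)_{i_{1}\ldots i_{p}} = \langle T, \nabla S\rangle$. For the first term, one reads $\nabla^{*}T$ in the $(p,0)$-case as the natural contraction over the last upper index, $(\nabla^{*}T)^{i_{1}\ldots i_{p-1}} = \nabla_{j} T^{i_{1}\ldots i_{p-1} j}$ (obtained from the definition in the excerpt by flattening the last upper slot via $s$ and then contracting). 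Hence
$$\operatorname{div}_{s} X \;=\; \langle \nabla^{*} T, S\rangle \;+\; \langle T, \nabla S\rangle.$$

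Finally, I would apply the divergence theorem \eqref{NeumannLaplacianCh_DivergenceTheorem} with test function $u \equiv 1$, which yields the standard Riemannian form $\int_{N} \operatorname{div}_{s} X\, d\mu_{s} = \int_{\partial N} s(X,\nu)\, dS_{s}$. On $\partial N$ we compute
$$s(X, \nu) \;=\; X^{j} s_{jk} \nu^{k} \;=\; T^{i_{1}\ldots i_{p-1} j}\, S_{i_{1}\ldots i_{p-1}}\, \nu^{\flat}_{j} \;=\; \langle T, S\otimes \nu^{\flat}\rangle,$$
and substituting the divergence identity into the divergence theorem and rearranging gives exactly the claimed formula. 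There is no real obstacle in this proof: the only care required is in the index bookkeeping and in committing to the correct reading of $\nabla^{*}$ on a purely contravariant tensor, both of which are mechanical once set up.
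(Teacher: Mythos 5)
Your proof is correct. The paper itself does not prove this proposition; it refers the reader to a guided exercise in Lee's book, and what you have written is exactly the argument that exercise guides one through: contract $T$ against $S$ along the shared indices to produce a vector field $X$, apply the Leibniz rule to compute $\operatorname{div}_s X$, and then invoke the divergence theorem with $u\equiv 1$. You were right to pause over $\nabla^*$ acting on a $(p,0)$ tensor, since the paper's stated definition of $\nabla^*$ only literally applies when $q\ge 1$; the reading you adopt (flatten the last contravariant slot via $s$, then contract with the derivative index, yielding $(\nabla^*T)^{i_1\ldots i_{p-1}}=\nabla_j T^{i_1\ldots i_{p-1}j}$) is both the natural one and the one the paper implicitly uses when it writes $\nabla_g^*(vJ)$ in Lemma \ref{SchauderCh_CanonicalBDLemma}, so the identity as you prove it is the one actually needed downstream.
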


\begin{lemma}\label{SchauderCh_CanonicalBDLemma}
The canonical boundary conditions $B=(B_1,B_2)$ are $L$-compatible. Additionally, putting 
\begin{equation}\label{SchauderCh_BetaDefinition}
\beta[u,v]:=\int_M \Delta_gv \Delta_g ud\mu_g-\int_M\langle \nabla_g^*(vJ),\nabla^2 u\rangle d\mu_g+\int_M v\mathcal L ud\mu_g,
\end{equation}
for $u, v\in C^4(M)$, we have the formula
$$\int_M vLu d\mu_g=\beta[u,v]+\int_{\partial M}B_2(u)v-\Delta_g v B_1(u)dS_g.$$
\end{lemma}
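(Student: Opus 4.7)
The plan is to prove $L$-compatibility by reading off the top-order coefficients of $B_1$ and $B_2$ in local coordinates, and then to derive the integral identity by two successive integration-by-parts arguments---one standard Green's identity for the biharmonic piece and one tensor-valued divergence theorem for the $\langle J,\nabla^3 u\rangle$ piece.

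For the compatibility claim, I would fix a chart from the good atlas, use Lemma \ref{SchauderCh_LGeometricFormula} to identify $a^{ij}_\varphi = g^{ij}$, and then rewrite $B_1 u=\partial_\nu u$ using the formula $\nu=-(g^{nn})^{-1/2}g^{ni}\partial_i$ from the notation section. This gives $B_1 u=-(g^{nn})^{-1/2}g^{ni}\partial_i u$, which is of the required form (the sign convention being absorbed into $a^{ni}$) with $c_\varphi\equiv 0$. For $B_2$, the potentially dangerous term is $\partial_\nu\Delta_g u$; its leading-order part is $-(g^{nn})^{-1/2}g^{ni}g^{kl}\partial_{ikl}u$, matching the required top-order coefficient, whereas the Christoffel correction hidden inside $\Delta_g$ and the entire pairing $\langle J,\nabla^2 u\otimes\nu^\flat\rangle$ contribute only derivatives of $u$ of order at most two and can therefore be absorbed into $\sum_{|\alpha|\leq 2}b^\alpha_\varphi\nabla_\alpha$. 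Smoothness of all resulting coefficients follows directly from smoothness of $g$, $J$, and $\mathcal L$.

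For the integral identity I would split $Lu=\Delta_g^2u+\langle J,\nabla^3 u\rangle+\mathcal L u$ and handle the three summands separately. Applying Green's second identity twice to $\int_M v\Delta_g^2 u\, d\mu_g$ yields $\int_M\Delta_g u\,\Delta_g v\, d\mu_g$ plus a boundary contribution of the form $\int_{\partial M}\bigl(v\,\partial_\nu\Delta_g u\;-\;\Delta_g u\,\partial_\nu v\bigr)\,dS_g$. For the cubic term I would invoke the tensor divergence formula with $T=vJ$ and $S=\nabla^2 u$: since $\nabla S=\nabla^3 u$, this gives
$$\int_M v\,\langle J,\nabla^3 u\rangle\,d\mu_g=-\int_M\langle \nabla_g^*(vJ),\nabla^2 u\rangle\,d\mu_g+\int_{\partial M}v\,\langle J,\nabla^2 u\otimes\nu^\flat\rangle\,dS_g.$$
The lowest-order summand $\int_M v\,\mathcal Lu\,d\mu_g$ stays as it is.

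Summing the three contributions, the interior integrals reassemble exactly into $\beta[u,v]$, while the two boundary contributions combine to give $\int_{\partial M}v\bigl(\partial_\nu\Delta_g u+\langle J,\nabla^2 u\otimes\nu^\flat\rangle\bigr)\,dS_g$ together with the leftover Laplacian--normal-derivative term; the first bracket is precisely $B_2(u)$ by Definition \ref{SchauderCh_CanonicalBCDef}, yielding the claimed identity. I expect the only genuine bookkeeping obstacle to be aligning the index conventions of Lemma \ref{SchauderCh_LGeometricFormula} (where $J$ arises as $J^{ijk}\partial_{ijk}$ modulo lower-order terms) with those of Definition \ref{SchauderCh_CanonicalBCDef}, so that the boundary pairing $\langle vJ,\nabla^2 u\otimes\nu^\flat\rangle$ reproduces exactly the correction term added to $\partial_\nu\Delta_g u$ in $B_2$; once that matching is verified, the identity reduces to a routine rearrangement.
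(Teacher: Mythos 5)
Your decomposition of $Lu$ and your use of the tensor divergence theorem for the $\langle J,\nabla^3 u\rangle$ term are exactly what the paper does. However, there is a concrete mismatch at the final step that you gloss over. The boundary contribution you correctly extract from the biharmonic piece is
$$\int_{\partial M}\bigl(v\,\partial_\nu\Delta_g u-\Delta_g u\,\partial_\nu v\bigr)\,dS_g,$$
i.e.\ the second summand is $\Delta_g u$ times $\partial_\nu v$. But the lemma asserts the boundary integrand is $B_2(u)v-\Delta_g v\,B_1(u)$, whose second summand is $\Delta_g v$ times $\partial_\nu u$. These two expressions are not equal for generic $u\neq v$: taking $M=\bar B_1(0)\subset\R^2$ with the flat metric, $u=x_1^3$, $v=x_1$, one has $\Delta_g^2 u=0$ and $\Delta_g u\,\Delta_g v=0$, yet $\int_{\partial M}\Delta_g u\,\partial_\nu v\,dS=6\pi$ while $\int_{\partial M}\Delta_g v\,\partial_\nu u\,dS=0$. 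So when you write that the leftover term ``yields the claimed identity,'' you are silently swapping the roles of $u$ and $v$ in that term without justification, and no justification is available because the two quantities differ.

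In fact, the Green's identity computation you performed is the correct one; the formula in the lemma statement, and the paper's own intermediate line $\int_{\partial M}\bigl(v\,\partial_\nu\Delta_g u-\Delta_g v\,\partial_\nu u\bigr)dS_g$, appear to carry a $u\leftrightarrow v$ transcription error in that second boundary summand. Since the lemma is only ever invoked downstream with $v=u$ (in Lemma \ref{SchauderCh_ApproximateCoercivityLemma} and Corollary \ref{SchauderCh_SchauderEstimateonmanifolds}), where $\Delta_g u\,\partial_\nu v$ and $\Delta_g v\,\partial_\nu u$ coincide, nothing in the rest of the paper breaks. But a proof of the lemma as literally stated must either flag this and correct the statement to $-\Delta_g u\,B_1(v)$, or restrict to $u=v$; asserting that the integration by parts reproduces the printed identity is a gap, because it does not.
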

\begin{proof}
The fact that the canonical boundary conditions are $L$-compatible follows from the local formulas in Equation \eqref{SchauderCh_canonicalExample}. To prove the formula, we use Lemma \ref{SchauderCh_LGeometricFormula} and compute
    \begin{align*}
    \int_M v Lud\mu_g=&\int_M v \Delta_g^2 ud\mu_g+\int_M\langle vJ,\nabla^3 u\rangle d\mu_g+\int_M v\mathcal L ud\mu_g\\
    =&\int_M \Delta_gv \Delta_g ud\mu_g-\int_M\langle \nabla_g^*(vJ),\nabla^2 u\rangle d\mu_g+\int_M v\mathcal L ud\mu_g\\
    &+\int_{\partial M}\left(v\frac{\partial\Delta_g u}{\partial\nu}-\Delta_g v\frac{\partial u}{\partial\nu}\right) dS_g+\int_{\partial M}\langle v J, \nabla^2 u\otimes\nu^\flat \rangle dS_g.
\end{align*}
\end{proof}

Next, we establish an estimate for $\beta[u,u]$ that we will use as a form of a coercivity estimate. To do this, we require the following definition.

\begin{definition}
    Let $T$ be a $(p,q)$ tensor field on $M$ of class $C^k$ and $\mathcal A=\set{(\varphi_\nu, U_\nu)\ |\ 1\leq\nu\leq m}$ be a good atlas on $M$. Let $T^{i_1...i_p}_{\varphi_\nu,\ \ \ j_1...j_q}$ be the coordinates of $T$ in the chart $\varphi_\nu$. We define 
    $$\|T\|_{C^k_{\mathcal A}(M)}:=\max_{1\leq\nu\leq m}\max_{1\leq i_1,...,i_p\leq n}\max_{1\leq j_1,...,j_q\leq n}\|T^{i_1...i_p}_{\varphi_\nu,\ \ \ j_1...j_q}\|_{C^k(V_\nu)}.$$
\end{definition} 

\begin{tcolorbox}[colback=white!20!white,colframe=black!100!white,sharp corners, breakable]
 \begin{bemerkung}[Conventions for the Rest of this Section]\label{SchauderCh_ConventionsL2AprioriSection}
For the rest of this section, we fix a good atlas $\mathcal A$, denote by $B=(B_1,B_2)$ the canonical boundary conditions associated to $L$ and by $J$ the $(3,0)$ tensor field provided by Lemma \ref{SchauderCh_LGeometricFormula}. We define
\begin{align*}
    &\Lambda:=\|L\|_{C^{0,\gamma}_{\mathcal A}(M)}+\|B_1\|_{C^{3,\gamma}_{\mathcal A}(M)}+\|B_2\|_{C^{1,\gamma}_{\mathcal A}(M)}+\|J\|_{C^1_{\mathcal A}(M)}.
\end{align*}
Additionally, we denote the metric from Lemma \ref{SchauderCh_LGeometricFormula} by $g$. Note that $\Theta_{\mathcal A}(L)$ depends only on $g$.
\end{bemerkung}
\end{tcolorbox}

\begin{lemma}[Approximate Coercivity]\label{SchauderCh_ApproximateCoercivityLemma}
There exist $C=C(M,\gamma,g,\Lambda)>0$ and $\kappa=\kappa(M,\gamma,g,\Lambda)>0$ such that for all $u\in C^4(M)$
$$
\beta[u,u]\geq\kappa \|u\|_{W^{2,2}_{\mathcal A}(M)}^2-C\left(\|u\|_{L^2_{\mathcal A}(M)}^2+\|B_1u\|_{L^2_{\mathcal A}(\partial M)}^2+\|u\|_{C^2_{\mathcal A}(M)}\|B_1u\|_{C^1_{\mathcal A}(\partial M)}\right).
$$
\end{lemma}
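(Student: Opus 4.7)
The plan is to extract coercivity from the leading term $\int_M(\Delta_g u)^2\,d\mu_g$ in the definition~\eqref{SchauderCh_BetaDefinition} of $\beta[u,u]$ and show that, modulo lower-order and boundary contributions, this dominates $\int_M|\nabla_g^2 u|_g^2\,d\mu_g$. The starting point is the Bochner identity
\[
\tfrac12\Delta_g|\nabla u|_g^2 = |\nabla^2 u|_g^2 + \langle\nabla\Delta_g u,\nabla u\rangle_g + \operatorname{Ric}_g(\nabla u,\nabla u),
\]
which, after integration over $M$ using the divergence theorem (with $\partial_\nu u = B_1u$) together with $\int_M\langle\nabla\Delta_g u,\nabla u\rangle_g\,d\mu_g = -\int_M(\Delta_g u)^2\,d\mu_g + \int_{\partial M}\Delta_g u\cdot B_1u\,dS_g$, yields the Green-type formula
\[
\int_M(\Delta_g u)^2\,d\mu_g = \int_M|\nabla^2 u|_g^2\,d\mu_g + \int_M\operatorname{Ric}(\nabla u,\nabla u)\,d\mu_g + \mathcal{B}(u),
\]
with $\mathcal{B}(u) = \int_{\partial M}\Delta_g u\cdot B_1u\,dS_g - \int_{\partial M}\nabla^2 u(\nabla u,\nu)\,dS_g$.

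Next I would carefully control $\mathcal{B}(u)$. The first summand is bounded pointwise by $\|u\|_{C^2_{\mathcal A}}|B_1u|$, contributing at most $C\|u\|_{C^2_{\mathcal A}(M)}\|B_1u\|_{C^1_{\mathcal A}(\partial M)}$. For the second, decompose $\nabla u=(B_1u)\nu+\nabla^\top u$ along $\partial M$, so that
\[
\nabla^2 u(\nabla u,\nu) = (B_1u)\,\nabla^2u(\nu,\nu) + \nabla^2u(\nabla^\top u,\nu),
\]
and rewrite $\nabla^2 u(X,\nu) = X(B_1 u) - (\nabla_X\nu)\cdot\nabla u$ for tangent vectors $X$ along $\partial M$. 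This recasts the integral as a sum of terms of the shape $\int_{\partial M}\nabla^\top u\cdot\nabla^\top(B_1u)\,dS_g$, bounded by $C\|u\|_{C^2_{\mathcal A}}\|B_1u\|_{C^1_{\mathcal A}}$, plus second-fundamental-form expressions quadratic in $\nabla u$, which by Lemma~\ref{SchauderCh_TraceInequalityLemma} are absorbed into $\epsilon\|\nabla^2u\|_{L^2_g(M)}^2+C(\epsilon)\|\nabla u\|_{L^2_g(M)}^2$. For the remaining pieces of $\beta[u,u]$, $\nabla_g^*(uJ)$ is linear in $u$ and $\nabla u$ with smooth coefficients, so Cauchy--Schwarz and Young give $\bigl|\int_M\langle\nabla_g^*(uJ),\nabla^2u\rangle\,d\mu_g\bigr|\le \epsilon\|\nabla^2u\|_{L^2_g}^2+C(\epsilon)\|u\|_{W^{1,2}_g}^2$, while one integration by parts converts $\int_M u\,\mathcal{L}u\,d\mu_g$ into a first-order quadratic form in $u$ plus a boundary integral handled by Lemma~\ref{SchauderCh_TraceInequalityLemma} and the permitted $\|B_1u\|_{L^2}^2$ and $\|u\|_{C^2_{\mathcal A}}\|B_1u\|_{C^1_{\mathcal A}}$ terms.

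Assembling these estimates and choosing $\epsilon$ small yields
\[
\beta[u,u] \ge \tfrac12\int_M|\nabla^2u|_g^2\,d\mu_g - C\|u\|_{W^{1,2}_g(M)}^2 - C\bigl(\|B_1u\|_{L^2_g(\partial M)}^2+\|u\|_{C^2_{\mathcal A}(M)}\|B_1u\|_{C^1_{\mathcal A}(\partial M)}\bigr).
\]
Since by Proposition~\ref{SchauderCh_MetricindeucedNormEquivalent} the norms $\|\cdot\|_{L^2_g}$ and $\|\cdot\|_{W^{1,2}_g}$ are equivalent to their atlas-based counterparts, it only remains to promote $\int_M|\nabla^2u|_g^2\,d\mu_g$ to a full $\|u\|_{W^{2,2}_{\mathcal A}}^2$-term and to absorb the residual $\|u\|_{W^{1,2}}^2$. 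Both are effected by Ehrling's Lemma applied to the chain $W^{2,2}(M)\hookrightarrow\hookrightarrow W^{1,2}(M)\hookrightarrow L^2(M)$, yielding $\|u\|_{W^{1,2}}^2\le\delta\|u\|_{W^{2,2}}^2+C(\delta)\|u\|_{L^2}^2$ for arbitrary $\delta>0$; choosing $\delta$ small completes the proof and fixes $\kappa>0$.

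The main obstacle I anticipate is the bookkeeping in $\mathcal{B}(u)$, especially the term $\int_{\partial M}\nabla^2u(\nabla^\top u,\nu)\,dS_g$: its analysis intertwines the intrinsic geometry of $\partial M\subset(M,g)$ through the second fundamental form, the interpretation of $B_1u$ as normal derivative, and the trace inequality, all of which must conspire to route quadratic tangential quantities into small multiples of $\|\nabla^2u\|_{L^2}^2$; everything else is routine Young and Cauchy--Schwarz.
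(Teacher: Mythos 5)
Your proposal follows essentially the same route as the paper: Bochner's identity, the orthogonal decomposition $\nabla u = (B_1u)\nu + \nabla^\top u$ along $\partial M$, the rewrite $\nabla^2 u(X,\nu) = X(B_1u) - g(\nabla u,\nabla_X\nu)$ to route the quadratic tangential terms through the trace inequality (Lemma~\ref{SchauderCh_TraceInequalityLemma}), and Cauchy--Schwarz/Young for the lower-order parts of $\beta$. One small inaccuracy to flag: the \emph{promotion} of $\int_M|\nabla^2_g u|^2\,d\mu_g$ to a multiple of $\|u\|_{W^{2,2}_{\mathcal A}(M)}^2$ is not furnished by Ehrling's Lemma but by the coordinate identity $\nabla^2_g u = \partial_{ij}u - \Gamma^k_{\ ij}\partial_k u$, which shows that the covariant and coordinate Hessians differ only by first-order terms (this is the content of the paper's Estimate~\eqref{SchauderCh_Coercivity02}); only the subsequent absorption of $\|u\|_{W^{1,2}}^2$ is an interpolation step, and there the paper uses an explicit integration-by-parts bound (Estimate~\eqref{SchauderCh_GradientL2Estimate}) rather than Ehrling, though your Ehrling-based absorption works just as well. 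Also, integrating $\int_M u\,\mathcal{L}u\,d\mu_g$ by parts is unnecessary (and adds extra boundary terms to control); since $\mathcal{L}$ is second order, Cauchy--Schwarz gives $|\int_M u\,\mathcal{L}u|\leq C\|u\|_{W^{1,2}}\|u\|_{W^{2,2}}$ directly, which is what the paper does.
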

\begin{proof}
Throughout the proof, let $C$ denote a constant that is allowed to depend only on $M$,$\gamma$, $g$ and $\Lambda$. Throughout the proof, we will drop the subscript $\mathcal A$ on all norms. Additionally, we will use the equivalence of the norms to use the norms $\|\cdot\|_{L^2_{\mathcal A}(M)}$ and $\|\cdot\|_{L^2_g(M)}$ interchangeably. Let $u\in C^4(M)$ and $h:=\frac{\partial u}{\partial\nu}$. Noting that $\mathcal L$ is a second-order operator, we estimate  
$$
\left|
\int_M\langle\nabla_g^*(uJ),\nabla^2 u\rangle-u\mathcal Lud\mu_g
\right|
\leq 
C\|u\|_{W^{1,2}(M)}\|u\|_{W^{2,2}(M)}
$$
and deduce 
\begin{equation}\label{SchauderCh_BetaEstimate01}
\beta[u,u]\geq \int_M(\Delta_ gu)^2d\mu_g-C\|u\|_{W^{1,2}(M)}\|u\|_{W^{2,2}(M)}.
\end{equation}
We now derive suitable estimates for $\|u\|_{W^{1,2}(M)}$ and $\|u\|_{W^{2,2}(M)}$. Using Lemma \ref{SchauderCh_TraceInequalityLemma}, we estimate 
\begin{align*}
    \int_M|\nabla u|^2d\mu_g&=\int_{\partial M}\frac{\partial u}{\partial\nu} udS_g-\int_M u\Delta_g u d\mu_g\\
    &\leq \|h\|_{L^2(\partial M)}\|u\|_{L^2(\partial M)}+\|u\|_{L^2(M)}\|\Delta_g u\|_{L^2(M)}\\
    &\leq C\left(\|h\|_{L^2(\partial M)}^2+\|u\|_{L^2(M)}\|\Delta_g u\|_{L^2(M)}\right)+C(\epsilon)\|u\|_{L^2( M)}^2+\epsilon\|\nabla u\|_{L^2(M)}^2.
\end{align*}
Absorbing $\|\nabla u\|_{L^2(M)}^2$ to the left, we deduce 
\begin{equation}\label{SchauderCh_GradientL2Estimate}
\|u\|_{W^{1,2}(M)}^2
\leq C\left(\|h\|_{L^2(\partial M)}^2+\|u\|_{L^2( M)}^2+\|u\|_{L^2(M)}\|\Delta_g u\|_{L^2(M)}\right).
\end{equation}
Next, we derive an estimate for $\|u\|_{W^{2,2}(M)}^2$. To do this, we use Bochner's identity\footnote{For a guided proof, we refer to Lee's book \cite{LeeRiem} (see Chapter 7, Problem 7-7)} 
$$\frac12\Delta_g |\nabla u|^2=g(\nabla \Delta_g u,\nabla u)+|\nabla^2 u|^2+\operatorname{Ric}_g(\nabla u,\nabla u).$$
Integrating over $M$ and recalling $h:=\frac{\partial u}{\partial\nu}$, we get 
\begin{equation}\label{SchauderCh_Coercivity01}
\int_M(\Delta_g u)^2d\mu_g
=
\int_M|\nabla^2 u|^2+\operatorname{Ric}_g(\nabla u,\nabla u)d\mu_g+\int_{\partial M}\Delta_g uh-g(\nabla_\nu\nabla u,\nabla u)dS_g.
\end{equation}
In local coordinates $\nabla^2 u=\partial_{ij} u-\Gamma^k_{\ ij}\partial_k u$ and $\operatorname{Ric}_g(\nabla u,\nabla u)=\operatorname{Ric}_g^{ij}\partial_ iu\partial_j u$. Hence, there exists (a perhaps small) constant $c_0>0$ and a constant $C>0$ such that 
\begin{align} 
&\int_M|\nabla^2 u|^2d\mu_g\geq c_0\|u\|_{W^{2,2}(M)}^2-C\|u\|_{W^{1,2}(M)}^2,\label{SchauderCh_Coercivity02}\\
&\left|\int_M\operatorname{Ric}_g(\nabla u,\nabla u)d\mu_g\right|\leq C\int_M|\nabla u|^2 d\mu_g \leq C\|u\|_{W^{1,2}(M)}^2,\label{SchauderCh_Coercivity03}\\
&\left|\int_{\partial M}\Delta_g u hdS_g\right|\leq C\|u\|_{C^2(M)}\|h\|_{L^2(\partial M)}.\label{SchauderCh_Coercivity04}
\end{align}
Next, we estimate the remaining boundary integral in Equation \eqref{SchauderCh_Coercivity01}. Let $p\in\partial M$ and denote the $g_p$-orthogonal projection from $T_p M$ onto $T_p\partial M$ by $\pi_{T_p\partial M}$. Using $\frac{\partial u}{\partial\nu}=h$, we write
\begin{align} 
\nabla u(p)=&\nabla u(p)-g_p(\nabla u(p),\nu(p))\nu(p) +g(\nabla u(p),\nu(p))\nu(p)\nonumber\\
=&\pi_{T_p\partial M}(\nabla u(p))+\frac{\partial u}{\partial\nu}\bigg|_p\nu(p)\nonumber\\
=&\nabla^{\partial M} u(p)+h(p)\nu(p).\label{SchauderCh_TangentialGradientComputation}
\end{align}
We recall the formulas $\nabla^2 u[X,Y]=g(\nabla_X\nabla u, Y)=\nabla_X(\nabla_Y u) -du(\nabla_XY)$. Since $\nabla^{\partial M}u\in T\partial M$ is a tangential vector field along $\partial M$, we can use Equation \eqref{SchauderCh_TangentialGradientComputation} to compute
\begin{align*} 
g(\nabla_\nu\nabla u,\nabla u)=&g(\nabla_\nu\nabla u,\nabla^{\partial M}u)+h g(\nabla_\nu\nabla u,\nu)\\
=&\nabla^2 u[\nu,\nabla^{\partial M}u]+h \nabla^2 u[\nu,\nu]\\
=&\nabla_{\nabla^{\partial M}u}\nabla_\nu u-du(\nabla_{\nabla^{\partial M} u}\nu)+h\nabla^2 u[\nu,\nu]\\
=&\nabla_{\nabla^{\partial M}u}h -du(\nabla_{\nabla^{\partial M} u}\nu)+h\nabla^2 u[\nu,\nu].
\end{align*}
The first and third terms contain products of derivatives of $u$ and $h$ or orders at most 2 and 1, respectively. The second term is quadratic in $\nabla u$. Hence
\begin{align}
    \int_{\partial M}|g(\nabla_\nu\nabla u,\nabla u)|dS_g\leq & C\|h\|_{C^1(\partial M)}\|u\|_{C^2(M)}+C\int_{\partial M}|\nabla u|^2 dS_g.\label{SchauderCh_Coercivity05}
\end{align}
Inserting Estimates \eqref{SchauderCh_Coercivity02}, \eqref{SchauderCh_Coercivity03}, \eqref{SchauderCh_Coercivity04} and \eqref{SchauderCh_Coercivity05} into Estimate \eqref{SchauderCh_Coercivity01} and subsequently inserting the trace estimate from Lemma \ref{SchauderCh_TraceInequalityLemma}, we deduce that for all small $\epsilon>0$ 
\begin{align*}
    c_0\|u\|_{W^{2,2}(M)}^2\leq &\|\Delta_gu\|_{L^2(M)}^2+C\left(\|u\|_{W^{1,2}(M)}^2+\|u\|_{C^2(M)}\|h\|_{C^1(\partial M)}+\|\nabla u\|_{L^2(\partial M)}^2\right)\\
    \leq & \|\Delta_gu\|_{L^2(M)}^2+C\|u\|_{C^2(M)}\|h\|_{C^1(\partial M)}+C(\epsilon)\|u\|_{W^{1,2}(M)}^2+\epsilon\|u\|_{W^{2,2}(M)}^2.
\end{align*}
For small enough $\epsilon$, we deduce that with a (perhaps small) constant $c_1>0$
\begin{equation}\label{SchauderCh_Coercivity06}
    \|\Delta_gu\|_{L^2(M)}^2\geq c_1 \|u\|_{W^{2,2}(M)}^2-C\left(\|u\|_{C^2(M)}\|h\|_{C^1(\partial M)}+\|u\|_{W^{1,2}(M)}^2\right).
\end{equation}
Let $\delta>0$ be a small parameter. Using Estimates \eqref{SchauderCh_BetaEstimate01} and \eqref{SchauderCh_Coercivity06} and choosing $\delta>0$ small enough, we estimate  
\begin{align*} 
\beta[u,u]\geq&\int_M(\Delta_gu)^2d\mu_g -C(\delta)\|u\|_{W^{1,2}(M)}^2-\delta\|u\|_{W^{2,2}(M)}^2\\
\geq&c_1 \|u\|_{W^{2,2}(M)}^2-C\|u\|_{C^2(M)}\|h\|_{C^1(\partial M)}-C(\delta)\|u\|_{W^{1,2}(M)}^2-\delta\|u\|_{W^{2,2}(M)}^2\\
\geq&\frac{2c_1}3 \|u\|_{W^{2,2}(M)}^2-C\|u\|_{C^2(M)}\|h\|_{C^1(\partial M)}-C\|u\|_{W^{1,2}(M)}^2.
\end{align*}
Let $\mu>0$ be another small parameter. Inserting Estimate \eqref{SchauderCh_GradientL2Estimate}, using $\|\Delta_g u\|_{L^2(M)}^2\leq C\|u\|_{W^{2,2}(M)}^2$ and choosing $\mu$ small enough, we deduce 
\begin{align*} 
\beta[u,u]
\geq&\frac{2c_1}3 \|u\|_{W^{2,2}(M)}^2-C\left(\|u\|_{C^2(M)}\|h\|_{C^1(\partial M)}+\|h\|_{L^2(\partial M)}^2\right)-C(\mu)\|u\|_{L^2(M)}^2-\mu\|\Delta_g u\|_{L^2(M)}^2\\
\geq&\frac{c_1}2 \|u\|_{W^{2,2}(M)}^2-C\left(\|u\|_{C^2(M)}\|h\|_{C^1(\partial M)}+\|h\|_{L^2(\partial M)}^2+\|u\|_{L^2(M)}^2\right).
\end{align*}
\end{proof}

Using the approximate coercivity of $\beta$, we can now eliminate the $L^2$-norm of $u$ in Theorem \ref{SchauderCh_GoodAPrioriEstimateOnManifold}.
\begin{korollar}\label{SchauderCh_SchauderEstimateonmanifolds}
Let $T>0$, $L$ be a 2-elliptic operator with smooth coefficients and $P:=\partial_t+L$. Let further $B=(B_1,B_2)$ denote the canonical boundary conditions associated to $L$. Using the conventions from Remark \ref{SchauderCh_ConventionsL2AprioriSection}, there exists a constant $C=C(M,T,\gamma,g,\Lambda)$ such that for all $u\in C^{4,1,\gamma}(M\times[0,T])$
\begin{align*} 
\|u\|_{C^{4,1,\gamma}_{\mathcal A}(M\times[0,T])}
\leq &
C\bigg(\|Pu\|_{C^{0,0,\gamma}_{\mathcal A}(M\times[0,T])}+\|u(\cdot,0)\|_{C^{4,\gamma}_{\mathcal A}(M)}+\|B_1u\|_{C^{3,0,\gamma}_{\mathcal A}(\partial M\times[0,T])}\\
&\hspace{2cm}+\|B_2u\|_{C^{1,0,\gamma}_{\mathcal A}(\partial M\times[0,T])}\bigg).
\end{align*}
 
\end{korollar}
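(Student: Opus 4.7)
The plan is to start from Theorem \ref{SchauderCh_GoodAPrioriEstimateOnManifold} and eliminate the extraneous $\sup_{t\in[0,T]}\|u(\cdot,t)\|_{L^2_{\mathcal A}(M)}$ by a parabolic energy estimate. Abbreviate the desired right-hand side of the Corollary by $E$, and set $y(t):=\|u(\cdot,t)\|_{L^2_g(M)}^2$. Since $u\in C^{4,1,\gamma}$, the function $y$ is differentiable and
\begin{equation*}
y'(t)=2\int_M u\,\dot u\,d\mu_g=2\int_M u(Pu)\,d\mu_g-2\int_M u\,Lu\,d\mu_g.
\end{equation*}
Applying Lemma \ref{SchauderCh_CanonicalBDLemma} with $v=u(\cdot,t)$ turns the last integral into
\begin{equation*}
\int_M u\,Lu\,d\mu_g=\beta[u,u]+\int_{\partial M}\bigl(B_2(u)\,u-\Delta_g u\cdot B_1(u)\bigr)dS_g.
\end{equation*}

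I would then bound each piece. Cauchy--Schwarz and Young give $2|\int uPu|\le y(t)+CE^2$. The form $\beta[u,u]$ is controlled from below by Lemma \ref{SchauderCh_ApproximateCoercivityLemma}, producing a favourable $-2\kappa\|u\|_{W^{2,2}_g(M)}^2$ plus harmless terms in $y$, $\|B_1u\|_{L^2(\partial M)}^2$ (which is majorised by $E^2$) and $\|u(\cdot,t)\|_{C^2_{\mathcal A}(M)}\|B_1u\|_{C^1_{\mathcal A}(\partial M)}$. For the boundary integral $\int_{\partial M}B_2(u)\,u\,dS_g$, I apply Young's inequality together with the trace inequality of Lemma \ref{SchauderCh_TraceInequalityLemma} to turn $\|u\|_{L^2(\partial M)}^2$ into $\delta\|u\|_{W^{2,2}}^2+C(\delta)\,y$, which for $\delta$ sufficiently small is absorbed by the $-2\kappa\|u\|_{W^{2,2}}^2$ coming from the coercivity.

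The main obstacle is the remaining boundary term $\int_{\partial M}\Delta_g u\cdot B_1(u)\,dS_g$: since the trace of $\Delta_g u$ is not controlled by $\|u\|_{W^{2,2}(M)}$, it cannot be absorbed into the coercivity. I would instead estimate it crudely by
\begin{equation*}
\Bigl|\int_{\partial M}\Delta_g u\cdot B_1(u)\,dS_g\Bigr|\le C\|u(\cdot,t)\|_{C^2_{\mathcal A}(M)}\|B_1u\|_{C^0_{\mathcal A}(\partial M)}\le C\|u(\cdot,t)\|_{C^2_{\mathcal A}(M)}\,E,
\end{equation*}
and then invoke Ehrling's lemma applied to the compact chain $C^{4,\gamma}_{\mathcal A}(M)\hookrightarrow\hookrightarrow C^2_{\mathcal A}(M)\hookrightarrow L^2_{\mathcal A}(M)$ to get, for every $\epsilon>0$,
\begin{equation*}
\|u(\cdot,t)\|_{C^2_{\mathcal A}(M)}\le\epsilon\,\|u\|_{C^{4,1,\gamma}_{\mathcal A}(M\times[0,T])}+C(\epsilon)\sqrt{y(t)}.
\end{equation*}
The same interpolation handles the $\|u\|_{C^2}\|B_1u\|_{C^1}$ contribution coming from Lemma \ref{SchauderCh_ApproximateCoercivityLemma}. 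Collecting all of the estimates and using Young once more yields a differential inequality of the form
\begin{equation*}
y'(t)\le C\,y(t)+C(\epsilon)\,E^2+\epsilon\,\|u\|_{C^{4,1,\gamma}_{\mathcal A}(M\times[0,T])}^2.
\end{equation*}

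Since $y(0)\le C\|u(\cdot,0)\|_{C^{4,\gamma}_{\mathcal A}(M)}^2\le CE^2$, Gronwall's inequality on the fixed interval $[0,T]$ produces
\begin{equation*}
\sup_{t\in[0,T]}y(t)\le C(T,\epsilon)E^2+C(T)\,\epsilon\,\|u\|_{C^{4,1,\gamma}_{\mathcal A}(M\times[0,T])}^2.
\end{equation*}
Plugging this into Theorem \ref{SchauderCh_GoodAPrioriEstimateOnManifold} gives
\begin{equation*}
\|u\|_{C^{4,1,\gamma}_{\mathcal A}(M\times[0,T])}\le C\bigl(E+C(T,\epsilon)^{1/2}E+C(T)^{1/2}\sqrt{\epsilon}\,\|u\|_{C^{4,1,\gamma}_{\mathcal A}(M\times[0,T])}\bigr),
\end{equation*}
and taking $\epsilon$ small enough that $C\cdot C(T)^{1/2}\sqrt{\epsilon}\le\tfrac12$ allows me to absorb the last term onto the left, yielding the stated Schauder estimate with constant depending on $M$, $T$, $\gamma$, $g$ and $\Lambda$.
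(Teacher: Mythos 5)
Your proof is correct and follows essentially the same route as the paper: a parabolic energy estimate for $y(t)=\|u(\cdot,t)\|_{L^2_g(M)}^2$ via Lemma \ref{SchauderCh_CanonicalBDLemma} and the approximate coercivity of Lemma \ref{SchauderCh_ApproximateCoercivityLemma}, leading to a differential inequality which is integrated (Gronwall, respectively an integrating factor in the paper), and then plugged into Theorem \ref{SchauderCh_GoodAPrioriEstimateOnManifold} to absorb the small $\epsilon\|u\|_{C^{4,1,\gamma}}$ term. The only cosmetic difference is your detour through Ehrling's lemma for $\|u(\cdot,t)\|_{C^2_{\mathcal A}(M)}$ — the paper simply uses $\|u(\cdot,t)\|_{C^2_{\mathcal A}(M)}\le\|u\|_{C^{4,1,\gamma}_{\mathcal A}(M\times[0,T])}$ together with Young, which is shorter but produces the same differential inequality.
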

 \begin{tcolorbox}[colback=white!20!white,colframe=black!100!white,sharp corners, breakable]
By eliminating the $L^2$-norm from Theorem \ref{SchauderCh_GoodAPrioriEstimateOnManifold}, the constant $C$ in Corollary \ref{SchauderCh_SchauderEstimateonmanifolds} does generally depend on $T$ and not only on a lower bound $T_0$. Also note, that $C$ depends on $\Theta_{\mathcal A}(L)$ via $g$.
\end{tcolorbox}
\begin{proof}
Throughout the proof, we will only make the dependence on $T$ explicit in the notation. Also we will use the equivalence of the norms $\|\cdot\|_{L^2_{\mathcal A}(M)}$ and $\|\cdot\|_{L^2_g(M)}$ to use them interchangeably. Finally, we will drop the subscripts $\mathcal A$ or $g$ on all norms.\\

We define $f:=\dot u+Lu$, $h_1:=B_1 u$ and $h_2:=B_2 u$. Using Lemmas \ref{SchauderCh_CanonicalBDLemma} and \ref{SchauderCh_ApproximateCoercivityLemma}, we estimate 
\begin{align*}
&\frac12\frac d{dt} \int_M u(t)^2 d\mu_g=\int_M u(t)(f(t)-Lu(t))d\mu_g\\
    =&\int_M f(t)u(t) d\mu_g-\beta[u(t),u(t)]-\int_{\partial M}h_2(t)u(t) -h_1(t)\Delta_g u(t)dS_g\\
    \leq & -\int_{\partial M}u(t)h_2(t) -h_1(t)\Delta_g u(t)dS_g-\kappa\|u(t)\|_{W^{2,2}(M)}^2\\
    &+C\bigg(\|f(t)\|_{L^2(M)}^2+\|u(t)\|_{L^2(M)}^2+\|h_1(t)\|_{L^2(\partial M)}^2+\|u(t)\|_{C^2(M)}\|h_1(t)\|_{C^1(\partial M)}\bigg).
\end{align*}
Let $\epsilon>0$ be a small parameter and $Q_0:=\|f\|_{C^{0,0,\gamma}(M\times[0,T])}+\|h_1\|_{C^{3,0,\gamma}(\partial M\times[0,T])}+\|h_2\|_{C^{1,0,\gamma}(\partial M\times[0,T])}$. Using Young's inequality we get that for some constant $C_1$ independent of $\epsilon$ and another constant $C(\epsilon)$
$$\frac12 \frac d{dt}\|u(t)\|_{L^2(M)}^2\leq C_1\|u(t)\|_{L^2(M)}^2+C(\epsilon)Q_0^2+\epsilon\|u\|_{C^{4,1,\gamma}(M\times[0,T])}^2.$$
Multiplying with the integrating factor $e^{-2C_1 t}\leq 1$, we deduce
\begin{align*} 
\frac{d}{dt}\left[e^{-2C_1 t}\|u(t)\|_{L^2(M)}^2\right]
\leq &2\left(C(\epsilon)Q_0^2+\epsilon\|u\|_{C^{4,1,\gamma}(M\times[0,T])}^2\right).
\end{align*}
Integrating this inequality, we obtain the estimate
$$
\sup_{t\in[0,T]}\|u(t)\|_{L^2(M)}^2\leq e^{2C_1 T}\|u_0\|_{L^2(M)}^2+T e^{2C_1T}C(\epsilon)Q_0^2+\epsilon Te^{2C_1T}\|u\|_{C^{4,1,\gamma}(M\times[0,T])}^2.
$$
Inserting into the estimate from Theorem \ref{SchauderCh_GoodAPrioriEstimateOnManifold}, we get
\begin{align*} 
\|u\|_{C^{4,1,\gamma}(M\times[0,T])}\leq & C(T,\epsilon)\bigg(\|Pu\|_{C^{0,0,\gamma}(M\times[0,T])}+\|u(\cdot,0)\|_{C^{4,\gamma}(M)}+\|B_1u\|_{C^{3,0,\gamma}(\partial M\times[0,T])}\\
&\hspace{2cm}+\|B_2u\|_{C^{1,0,\gamma}(\partial M\times[0,T])}\bigg)+\epsilon C(T)\|u\|_{C^{4,1,\gamma}(M\times[0,T])}.
\end{align*}
Choosing $\epsilon=\epsilon(T)$ small enough, we can absorb $\|u\|_{C^{4,1,\gamma}}$ to the left-hand side and deduce the corollary. 
\end{proof}

\section{Existence of Solutions}\label{SchauderCh_Ch04Sec3}
Throughout this section, we assume that $g$ is a smooth  Riemannian metric on $M$. We denote the Laplace-Beltrami operator by $\Delta_g$ and the exterior unit normal along $\partial M$ by $\nu$.
\subsection{The Galerkin Approximation}
We wish to construct solutions by expanding into Neumann eigenfunctions. To deduce convergence of the resulting sequences, we require the Arzelà-Ascoli theorem. This result is well-known in the literature. As an example, we point to \cite{hirzebruch}, Chapter 1, Section 3, Theorem 3.10.
\begin{theorem}[Arzelà-Ascoli Theorem]
Let $(A,d)$ be a compact metric space and $(B,d')$ be a metric space. Let $f_k:A\rightarrow B$ be a sequence of maps and assume the following: 
\begin{enumerate}[(1)]
    \item $f_k$ is uniformly bounded. That is there exists $b_0\in B$ and $R>0$ such that $f_k(A)\subset B_R^{d'}(b_0)$ for all $k$.
    \item $f_k$ is equicontinuous. That is: For every $\epsilon>0$, there exists $\delta>0$ such that whenever $d(a,\tilde a)<\delta$ we have $d'(f_k(a), f_k(\tilde a))<\epsilon$ for all $k$.
    \item For each $a\in A$ the sequence $f_k(a)$ has a convergent subsequence.
\end{enumerate}
Then there exists $f\in C^0(A,B)$ such that along a subsequence $f_k\rightarrow f$ in $C^0(A,B)$ as $k\rightarrow\infty$. 
\end{theorem}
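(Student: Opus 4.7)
The plan is to combine a Cantor diagonal subsequence extraction on a countable dense set of $A$ with the equicontinuity hypothesis to upgrade pointwise convergence to uniform convergence. Since $(A,d)$ is compact, it is separable, so I would fix a countable dense subset $\{a_n\}_{n\in\N}\subset A$. Applying hypothesis (3) to $a_1$ yields a subsequence $(f_{k^{(1)}(j)})$ with $f_{k^{(1)}(j)}(a_1)$ convergent. Applying (3) to $(f_{k^{(1)}(j)})$ at $a_2$ yields a further subsequence $(f_{k^{(2)}(j)})$ convergent at both $a_1$ and $a_2$. Iterating and passing to the diagonal $g_j:=f_{k^{(j)}(j)}$, one obtains a single subsequence converging in $B$ at every $a_n$.

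Next I would show that $(g_j(a))$ is Cauchy for every $a\in A$. Given $\epsilon>0$, choose $\delta>0$ from equicontinuity so that $d(a,\tilde a)<\delta$ implies $d'(g_j(a),g_j(\tilde a))<\epsilon/3$ for all $j$. Pick $a_n$ with $d(a,a_n)<\delta$; since $(g_j(a_n))_j$ converges in $B$, it is Cauchy, so $d'(g_j(a_n),g_l(a_n))<\epsilon/3$ for $j,l$ large. The triangle inequality gives $d'(g_j(a),g_l(a))<\epsilon$, so $(g_j(a))$ is Cauchy in $B$. Because $B$ is not assumed complete, Cauchyness alone does not yield a limit; this is the key place where hypothesis (3) is used again: it provides a convergent subsequence of $(g_j(a))$, and any Cauchy sequence with a convergent subsequence converges, defining $f(a):=\lim_j g_j(a)\in B$.

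Finally I would upgrade pointwise to uniform convergence. With $\delta$ chosen as above for $\epsilon/3$, compactness of $A$ provides finitely many points $a_{n_1},\ldots,a_{n_N}$ such that the $\delta$-balls around them cover $A$. For $j$ sufficiently large, $d'(g_j(a_{n_i}),f(a_{n_i}))<\epsilon/3$ simultaneously for $i=1,\ldots,N$. Given any $a\in A$, choose $i$ with $d(a,a_{n_i})<\delta$; equicontinuity gives $d'(g_j(a),g_j(a_{n_i}))<\epsilon/3$ for all $j$, and passing to the limit yields $d'(f(a),f(a_{n_i}))\leq \epsilon/3$. The triangle inequality then gives $d'(g_j(a),f(a))<\epsilon$ uniformly in $a$, which is uniform convergence; the limit $f$ is continuous as a uniform limit of equicontinuous maps.

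The main obstacle is the possible non-completeness of $(B,d')$: one cannot simply invoke completeness to extract the pointwise limit from the Cauchy property, which is why hypothesis (3) is needed twice — once to diagonalize on the dense set and once more to promote Cauchyness at each point to actual convergence.
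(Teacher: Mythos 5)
The paper does not give its own proof of this theorem; it cites Hirzebruch. Your argument is the standard diagonalization proof and is correct under the reading of hypothesis~(3) in which, for each $a$, the set $\{f_k(a):k\in\N\}$ has compact closure in $B$ --- equivalently, \emph{every} subsequence of $(f_k(a))_k$ admits a further convergent subsequence. You rely on this stronger reading twice without saying so: once in the Cantor diagonalization (at the $n$-th step the already-extracted subsequence, evaluated at $a_n$, must itself have a convergent subsequence, which the literal~(3) does not supply), and once when you say that~(3) ``provides a convergent subsequence of $(g_j(a))$'' --- but $(g_j(a))$ is a proper subsequence of $(f_k(a))$, so the literal wording of~(3) does not apply to it.

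This is not mere pedantry, because under the literal reading the theorem is false. Take $A$ to be two points at mutual distance one, $B=\Q$; let $f_k$ equal $0$ at the first point for even $k$ and a rational approximation of $\sqrt2$ for odd $k$, with the parities swapped and $\sqrt3$ in place of $\sqrt2$ at the second point. Then (1), (2), (3) all hold in the literal sense, but any infinite index set $(k_j)$ either contains infinitely many odd indices --- forcing $(f_{k_j})$ to diverge in $\Q$ at the first point --- or is eventually even --- forcing divergence at the second point. So no subsequence converges even pointwise. You should therefore state explicitly that~(3) is being read as relative compactness of $\{f_k(a):k\}$; this is what the paper's application supplies (a sequence bounded in $X$, hence with relatively compact image in $Y$ by the compact embedding). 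Two smaller remarks: hypothesis~(1) is never used in your argument and is in fact superfluous under the relative-compactness reading of~(3); and the final sentence should say ``uniform limit of continuous maps'' rather than ``of equicontinuous maps,'' since it is the continuity of each $g_j$ (granted by~(2)) that the uniform limit preserves.
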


We require the following corollary:
\begin{korollar}\label{SchauderCh_CompactEmbeddingCorollary}
Let $X$ and $Y$ be Banach spaces such that $X\hookrightarrow\hookrightarrow Y$ compactly, $T>0$ and $m\in\N_0$. Then $C^{m+1}([0,T],X)\hookrightarrow\hookrightarrow C^m([0,T], Y)$ compactly.
\end{korollar}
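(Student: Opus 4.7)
The strategy is a standard Arzelà--Ascoli diagonal extraction combined with the Banach-valued fundamental theorem of calculus. Let $(f_k) \subset C^{m+1}([0,T],X)$ be a bounded sequence, say $\|f_k\|_{C^{m+1}([0,T],X)} \leq C_0$. The plan is to produce a subsequence such that for every $0 \leq j \leq m$, the derivative $f_k^{(j)}$ converges in $C^0([0,T],Y)$, and then to verify that the common limit lies in $C^m([0,T],Y)$ with the expected derivatives.

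First I would verify the hypotheses of Arzelà--Ascoli for each fixed order $0 \leq j \leq m$, treating $f_k^{(j)}$ as a sequence of maps $[0,T] \to Y$. Equicontinuity: since $f_k^{(j+1)} \in C^0([0,T],X)$ and $\|f_k^{(j+1)}\|_{C^0([0,T],X)} \leq C_0$, the Banach-valued fundamental theorem of calculus gives
\begin{equation*}
f_k^{(j)}(t) - f_k^{(j)}(s) = \int_s^t f_k^{(j+1)}(\tau)\, d\tau \qquad \text{in } X,
\end{equation*}
so $\|f_k^{(j)}(t) - f_k^{(j)}(s)\|_X \leq C_0 |t-s|$. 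Since $X \hookrightarrow Y$ is continuous, the same Lipschitz bound holds in $Y$ uniformly in $k$, giving equicontinuity in $Y$. Pointwise precompactness: for each fixed $t \in [0,T]$, the set $\{f_k^{(j)}(t)\}_k$ is bounded in $X$, hence precompact in $Y$ by the compact embedding $X \hookrightarrow\hookrightarrow Y$. Arzelà--Ascoli then yields a subsequence along which $f_k^{(j)}$ converges in $C^0([0,T],Y)$.

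Applying this successively for $j=0,1,\dots,m$ and passing to a diagonal subsequence (still denoted $f_k$), I obtain functions $g_0, g_1, \dots, g_m \in C^0([0,T],Y)$ such that $f_k^{(j)} \to g_j$ in $C^0([0,T],Y)$ for every $0 \leq j \leq m$. It remains to show $g_0 \in C^m([0,T],Y)$ with $g_0^{(j)} = g_j$. For this I would pass to the limit in the identity
\begin{equation*}
f_k^{(j)}(t) = f_k^{(j)}(0) + \int_0^t f_k^{(j+1)}(\tau)\, d\tau,
\end{equation*}
valid in $X$ and hence in $Y$. Uniform convergence in $C^0([0,T],Y)$ of both sides yields
\begin{equation*}
g_j(t) = g_j(0) + \int_0^t g_{j+1}(\tau)\, d\tau \qquad \text{for } 0 \leq j \leq m-1,
\end{equation*}
which by continuity of $g_{j+1}$ and the Banach-valued fundamental theorem implies that $g_j$ is differentiable with $g_j' = g_{j+1}$. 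Iterating gives $g_0 \in C^m([0,T],Y)$ with $g_0^{(j)} = g_j$, so $f_k \to g_0$ in $C^m([0,T],Y)$, completing the proof.

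The argument is essentially routine; the only mild subtlety is keeping the bookkeeping straight when iterating Arzelà--Ascoli across the $m+1$ derivative orders and justifying that uniform convergence of a sequence together with uniform convergence of its derivatives transfers differentiability to the limit, which the integral identity above makes transparent.
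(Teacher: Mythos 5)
Your proof is correct, but it takes a genuinely different route from the paper's. The paper argues by induction on $m$: it establishes the base case $m=0$ via Arzelà--Ascoli exactly as you do, then in the inductive step applies the hypothesis separately to $f_k$ and $f_k'$ to get limits $\tilde f$ and $\tilde g$ in $C^m([0,T],Y)$, and finally shows $\tilde f' = \tilde g$ by a scalar reduction: it composes with arbitrary $\varphi \in Y'$, applies a second-order Taylor bound to the real-valued functions $\varphi \circ f_k$ to get a uniform estimate on $\bigl\| \frac{f_k(t+h)-f_k(t)}{h} - f_k'(t)\bigr\|_Y$, and passes to the limit in $k$. Your argument instead skips the induction entirely -- you run Arzelà--Ascoli for each order $0 \leq j \leq m$, diagonalize, and then transfer differentiability to the limit by passing to the limit in the Bochner-integral identity $f_k^{(j)}(t) = f_k^{(j)}(0) + \int_0^t f_k^{(j+1)}$. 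The trade-off: your version is shorter and makes the identification $g_j' = g_{j+1}$ transparent at the cost of invoking vector-valued (Bochner/Riemann) integration and its fundamental theorem of calculus; the paper's duality argument stays within scalar calculus and therefore doesn't need to justify any vector-valued integral, at the cost of the slightly heavier induction-plus-Taylor scaffolding. Both are fine -- neither has a gap.
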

\begin{proof}
We first consider the case $m=0$. Let $f_k\subset C^1([0,T], X)$ be bounded. We prove that the sequence $(f_k)\subset C^0([0,T],Y)$ satisfies the conditions from the Arzelà-Ascoli theorem. Clearly $(f_k)\subset C^0([0,T],Y)$ is uniformly bounded. Let $s,t\in[0,T]$. We estimate 
$$\|f_k(t)-f_k(s)\|_Y\leq C\|f_k(t)-f_k(s)\|_X\leq \sup_{t\in[0,T]}\|f_k'(t)\|_X |t-s|\leq C\sup_{k\in\N}\|f_k\|_{C^1([0,T],X)}|t-s|.$$
Consequently, the uniform continuity of $f_k$ follows. Finally, let $t\in[0,T]$. Since $f_k(t)$ is bounded in $X$ and $X\hookrightarrow\hookrightarrow Y$ compactly, there exists a subsequence $k_l$ such that $f_{k_l}(t)$ converges in $Y$.\\

We now argue by induction. If the corollary is established for some $m$ and $(f_k)\subset C^{m+2}([0,T],X)$ is bounded then $(f_k)\subset C^{m}([0,T],X)$ is bounded and $(f_k')\subset C^{m}([0,T],X)$ is also bounded. By the inductive hypothesis, we can pass to a subsequence that we again denote by $k$ such that $f_k\rightarrow \tilde f$ in $C^{m}([0,T], Y)$ and $f_k'\rightarrow \tilde g$ in $C^{m}([0,T], Y)$. We now prove that $\tilde f'=\tilde g$. Let $\varphi\in Y'$ with $\|\varphi\|_{Y'}=1$. Since $m\geq 0$ we have $\varphi\circ f_k\in C^2([0,T],\R)$ with $(\varphi\circ f_k)'=\varphi\circ f_k'$ and $(\varphi\circ f_k)''=\varphi\circ f_k''$. Given $t\in (0,T)$ and small $h\in\R$ we compute
\begin{align} 
\left|\varphi\left(\frac{f_k(t+h)-f_k(t)}h-f_k'(t)\right)\right|=&\left|\int_0^1\int_0^{sh} \varphi (f_k''(t+r))drds\right|\nonumber\\
\leq& \|\varphi\|_{Y'}\sup_{t\in[0,T]}\|f_k''(t)\|_Y\left|\int_0^1\int_0^{sh} drds\right|\nonumber\\
=&\frac12 \sup_{t\in[0,T]}\|f_k''(t)\|_Y|h|.\label{SchauderCh_ArzelaAscoliCorollaryeq1}
\end{align}
Note that $\sup_{t\in[0,T]}\|f_k''(t)\|_Y\leq\sup_k\|f_k\|_{C^2([0,T],Y)}=:\Lambda<\infty$. So, Estimate \eqref{SchauderCh_ArzelaAscoliCorollaryeq1} is uniform over all $k$. Taking the supremum over all $\varphi\in Y'$ with $\|\varphi\|_{Y'}=1$ and subsequently letting $k\rightarrow\infty$ shows 
$$\left\|\frac{\tilde f(t+h)-\tilde f(t)}h-\tilde g(t)\right\|_Y=\lim_{k\rightarrow\infty}\left\|\frac{f_k(t+h)-f_k(t)}h-f_k'(t)\right\|_Y\leq \Lambda |h|.$$
This shows that as a map from $(0,T)$ into $Y$, $\tilde f$ is differentiable with $\tilde f'=\tilde g$. Since $\tilde g\in C^{m}([0,T], Y)$ we deduce that $\tilde f\in C^{m+1}([0,T], Y)$. 
Finally, using that 
$f_k\rightarrow \tilde f$ in $C^{m}([0,T], Y)$ and $f_k'\rightarrow \tilde g=\tilde f'$ in $C^{m}([0,T], Y)$ we deduce that $f_k\rightarrow\tilde f$ in $C^{m+1}([0,T], Y)$.
\end{proof}

\begin{theorem}\label{SchauderCh_GalerkinExistenceTheorem}
There exists $N(n)$ with the following property: For all $f\in C^\infty(M\times[0,T])$ satisfying
$$\frac{\partial}{\partial\nu}\Delta_g^m f=0\hspace{.5cm}\textrm{for all $0\leq m\leq N$}$$
there exists a function $u\in C^{4,1,\gamma}(M\times[0,T])$ such that 
\begin{equation}\label{SchauderCh_GalerkinProjectedProblem}
\left\{
\begin{array}{ll}
\displaystyle\dot u+\Delta_g^2u=f&\displaystyle\textrm{in $M\times[0,T]$},\vspace{.2cm}\\
\displaystyle\frac{\partial u}{\partial\nu}=\frac{\partial\Delta_g u}{\partial\nu}=0&\displaystyle\textrm{along $\partial M\times[0,T]$},\vspace{.2cm}\\
\displaystyle u(\cdot,0)=0& \displaystyle\textrm{on $M$}.
\end{array}
\right.
\end{equation}
\end{theorem}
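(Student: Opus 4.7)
The plan is to build $u$ by Galerkin approximation against the Neumann eigenbasis $\{\phi_k\}_{k\geq 0}$ with eigenvalues $\lambda_k$ provided by Theorem \ref{SchauderCh_neumannlaplacianResultsTheorem}. Since $\Delta_g\phi_k = -\lambda_k\phi_k$ and $\partial_\nu\phi_k = 0$, each $\phi_k$ automatically satisfies both boundary conditions of \eqref{SchauderCh_GalerkinProjectedProblem}. I would define the Fourier coefficients
\[
f_k(t) := \int_M f(\cdot,t)\phi_k\,d\mu_g
\]
and set $u_k(t) := \int_0^t e^{-\lambda_k^2(t-s)}f_k(s)\,ds$, the solution of the scalar ODE $\dot u_k + \lambda_k^2 u_k = f_k$, $u_k(0) = 0$. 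The partial sums
\[
u^N(x,t) := \sum_{k=0}^N u_k(t)\phi_k(x),\qquad f^N(x,t) := \sum_{k=0}^N f_k(t)\phi_k(x)
\]
are smooth, satisfy $\partial_\nu u^N = \partial_\nu\Delta_g u^N = 0$ along $\partial M\times[0,T]$, have $u^N(\cdot,0) = 0$, and solve $\dot u^N + \Delta_g^2 u^N = f^N$. Applying Corollary \ref{SchauderCh_SchauderEstimateonmanifolds} to $u^N - u^M$ yields
\[
\|u^N - u^M\|_{C^{4,1,\gamma}(M\times[0,T])}\;\leq\; C\,\|f^N - f^M\|_{C^{0,0,\gamma}(M\times[0,T])},
\]
so matters reduce to proving that $f^N \to f$ in $C^{0,0,\gamma}(M\times[0,T])$ for a suitable $N(n)$.

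This spectral convergence is the main obstacle and is where the hypothesis $\partial_\nu\Delta_g^m f = 0$ for $0\leq m\leq N(n)$ enters. Since $f^N$ is a finite linear combination of $\phi_k$, it satisfies $\partial_\nu\Delta_g^m f^N = 0$ for \emph{every} $m\geq 0$, so the compatibility conditions propagate to $f - f^N$. Lemma \ref{SchauderCh_Wk2EstimateinTermsOfLaplace_Lemma} therefore gives
\[
\|f(\cdot,t) - f^N(\cdot,t)\|_{W^{2N(n),2}(M)}\;\leq\; C\sum_{j=0}^{N(n)}\|\Delta_g^j(f-f^N)(\cdot,t)\|_{L^2(M)}.
\]
Using $\Delta_g^j(f-f^N)(\cdot,t) = \sum_{k>N}(-\lambda_k)^j f_k(t)\phi_k$ together with Parseval, the right-hand side equals $C\sum_{j=0}^{N(n)}\bigl(\sum_{k>N}\lambda_k^{2j}f_k(t)^2\bigr)^{1/2}$. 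For each fixed $j$ the tail is the monotone remainder of the convergent series $\|\Delta_g^j f(\cdot,t)\|_{L^2(M)}^2$, which is continuous in $t$; Dini's theorem on the compact interval $[0,T]$ then upgrades pointwise-in-$t$ decay to uniform decay. Choosing $N(n)$ large enough that $2N(n) > n/2 + \gamma$, Morrey's inequality converts this into uniform-in-$t$ convergence $f^N(\cdot,t) \to f(\cdot,t)$ in $C^{0,\gamma}(M)$, which controls the spatial part of the parabolic Hölder norm. Because $\partial_t$ commutes with $\partial_\nu$ and $\Delta_g$, the smooth function $\dot f$ satisfies the same compatibility conditions as $f$, so the identical argument applied to $\dot f$ produces $\dot f^N \to \dot f$ uniformly on $M \times [0,T]$; via $[f - f^N]^{\operatorname{time}}_{\gamma/4,\,M\times[0,T]} \leq T^{1-\gamma/4}\|\dot f - \dot f^N\|_{C^0}$ this controls the temporal part. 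Combining the two ingredients yields $f^N \to f$ in $C^{0,0,\gamma}(M\times[0,T])$.

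With this in hand, the Schauder estimate makes $(u^N)$ a Cauchy sequence in $C^{4,1,\gamma}(M\times[0,T])$. Its limit $u$ lies in this space, inherits the boundary conditions and the initial datum from the approximants, and passing to the limit in $\dot u^N + \Delta_g^2 u^N = f^N$ produces the desired PDE. The only genuinely delicate point is the uniform-in-$t$ Dini step combined with the eigenfunction-expansion-to-Sobolev bootstrapping; everything else is an orchestration of previously established results.
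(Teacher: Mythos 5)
Your proof is correct, but it takes a genuinely different route from the paper's, so a comparison is worthwhile. Both arguments perform the same reduction: using the Schauder estimate of Corollary \ref{SchauderCh_SchauderEstimateonmanifolds} on differences of Galerkin approximants, the problem boils down to showing that $f^N\to f$ in $C^{0,0,\gamma}(M\times[0,T])$, and both use Lemma \ref{SchauderCh_Wk2EstimateinTermsOfLaplace_Lemma} to convert powers of $\Delta_g$ into Sobolev control. From there the paths diverge. The paper works with the truncations $f_\mu$ themselves (which satisfy $\partial_\nu\Delta_g^k f_\mu=0$ for \emph{every} $k$), shows they are uniformly bounded in $C^2([0,T],W^{2N,2}(M))$, and extracts a subsequence convergent in $C^{0,0,\gamma}$ via the compact embedding of Corollary \ref{SchauderCh_CompactEmbeddingCorollary}; the limit is then identified by $L^2$-convergence. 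You instead establish convergence of the \emph{full} sequence directly: you apply Lemma \ref{SchauderCh_Wk2EstimateinTermsOfLaplace_Lemma} to the error $f-f^N$ (legitimate, since the hypothesis $\partial_\nu\Delta_g^m f=0$ for $m\leq N(n)$ is exactly what the difference inherits from $f$), reduce to monotone spectral tails, and upgrade pointwise-in-$t$ decay to uniform decay by Dini's theorem, with Morrey supplying the $C^{0,\gamma}$ control. The temporal seminorm you handle separately by repeating the argument for $\dot f$ and invoking the mean value theorem. Your approach is more elementary in that it avoids the compactness/subsequence machinery and uses only one time derivative of $f$ rather than two; the cost is more explicit bookkeeping of the spatial and temporal Hölder seminorms, which the paper hides inside the abstract embedding $C^2([0,T],W^{2N,2}(M))\hookrightarrow\hookrightarrow C^{0,0,\gamma}(M\times[0,T])$. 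Both require the same $N(n)$, dictated by the Morrey exponent.
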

\begin{proof}
By Sobolev embedding, we can fix $N(n)$ such that $W^{2N,2}(M)\hookrightarrow\hookrightarrow C^{0,\gamma}(M)$ compactly. Using Corollary \ref{SchauderCh_CompactEmbeddingCorollary} we get $C^2([0,T], W^{2N,2})\hookrightarrow\hookrightarrow C^1([0,T], C^{0,\gamma}(M))$ compactly and by the definition of the space $C^{0,0,\gamma}(M\times[0,T])$ we have the continuous inclusion $C^1([0,T], C^{0,\gamma}(M))\hookrightarrow C^{0,0,\gamma}(M\times[0,T])$. Hence
\begin{equation}\label{SchauderCh_CompactEmbeddinggParabolic}
C^2([0,T], W^{2N,2}(M))\hookrightarrow\hookrightarrow C^{0,0,\gamma}(M\times[0,T])\hspace{.5cm}\textrm{compactly}.
\end{equation}
Let $\lambda_k$ and $\phi_k$ denote the Neumann eigenvalues and eigenfunctions from Theorem \ref{SchauderCh_neumannlaplacianResultsTheorem}. For $k\geq 0$, we define $c_k(t):=\langle \phi_k, f(t)\rangle_{L^2_g(M)}$ and for $\mu\in\N$ we define $f_\mu:=\sum_{j=0}^\mu c_j(t)\phi_j$ as well as
$$u_\mu(p,t):=\sum_{j=0}^\mu e^{-\lambda_j^2 t}\int_0^t e^{\lambda_j^2 s}c_j(s)ds\phi_j(p).$$
A direct computation shows that
\begin{equation}\label{SchauderCh_ParabolicApproxProb}
\left\{
\begin{array}{ll}
\displaystyle\dot u_\mu+\Delta_g^2u_\mu=f_\mu&\displaystyle\textrm{in $M\times[0,T]$},\vspace{.2cm}\\
\displaystyle\frac{\partial u_\mu}{\partial\nu}=\frac{\partial\Delta_g u_\mu}{\partial\nu}=0&\displaystyle\textrm{along $\partial M\times[0,T]$},\vspace{.2cm}\\
\displaystyle u_\mu(\cdot,0)=0& \displaystyle\textrm{on $M$}.
\end{array}
\right.
\end{equation}
We note that $B_1u:=\frac\partial{\partial\nu}$ and $B_2u:=\frac{\partial\Delta_g u}{\partial\nu}$ are the canonical boundary conditions associated to $\Delta_g^2$. We may therefore apply the Schauder estimate from Corollary \ref{SchauderCh_SchauderEstimateonmanifolds} to the functions $u_\mu-u_\nu$ and get
$$\|u_\mu-u_\nu\|_{C^{4,1,\gamma}(M\times[0,T])}
\leq C\|f_\mu-f_\nu\|_{C^{0,0,\gamma}(M\times[0,T])}.$$
We prove that up to a subsequence, which we again denote by $\mu$, $f_\mu\rightarrow f$ in $C^{0,0,\gamma}(M\times[0,T])$. Once this is shown, we deduce that $u_\mu$ is Cauchy in $C^{4,1,\gamma}(M\times[0,T])$ and hence convergent in $C^{4,1,\gamma}(M\times[0,T])$. Putting $u:=\lim_{\mu\rightarrow\infty} u_\mu$ and passing to the limit in Equation \eqref{SchauderCh_ParabolicApproxProb}, we verify that $u$ is indeed a solution of Equation \eqref{SchauderCh_GalerkinProjectedProblem}.\\

First note that for each fixed $t\in[0,T]$ we may use Theorem \ref{SchauderCh_neumannlaplacianResultsTheorem} to deduce that $f_\mu(\cdot,t)\rightarrow f(\cdot, t)$ in $L^2(M)$. So, if along some subsequence $f_\mu$ is convergent in $C^{0,0,\gamma}(M\times[0,T])$, the limit has to be $f$ as well. Using \eqref{SchauderCh_CompactEmbeddinggParabolic}, it is sufficient to show that $f_\mu$ is bounded in $C^2([0,T],W^{2N,2}(M))$ to deduce that $f_\mu$ is convergent along a subsequence in $C^{0,0,\gamma}(M\times[0,T])$. For $l=0,1,2$ and $k\leq N$ we may use the assumption on $f$ and integration by parts to compute
\begin{align*}
    \lambda_j^{2k}\partial_t^l c_j(t)=&\langle (-\Delta_g)^ k\phi_j, \partial_t^lf(t)\rangle_{L^2_g(M)}
    =\langle \phi_j, (-\Delta_g)^ k\partial_t^lf(t)\rangle_{L^2_g(M)}.
\end{align*}
Consequently, we learn that for all $k\leq N$ and $l=0,1,2$ we have the estimate 
\begin{align}
\sum_{j=0}^ \infty \sup_{t\in[0,T]}|\lambda_j^ {2k}\partial_t^lc_j(t)|^2\nonumber
\leq & \sup_{t\in[0,T]}\sum_{l=0}^2\sum_{j=0}^\infty \langle \phi_j, (-\Delta_g)^ k\partial_t^lf(t)\rangle_{L^2_g(M)}^2\nonumber\\
= & \sup_{t\in[0,T]}\sum_{l=0}^2 \|\Delta_g^k\partial_t^l f(t)\|_{L^2_g(M)}^2\nonumber\\
\leq &C(M,k)\|f\|_{C^2([0,T], W^{2N,2}(M))}^2.\label{SchauderCh_ParabolicGalerkinlambdapowerestimate}
\end{align}
For $0\leq k\leq N$ and $l=0,1,2$ we can use Estimate \eqref{SchauderCh_ParabolicGalerkinlambdapowerestimate} and the fact that $\phi_j$ is an $L^2$-orthonormal system to estimate 
$$
\|\partial_t^l\Delta_g^k f_\mu(t)\|_{L^2(M)}^2
=\|\sum_{j=0}^\mu(-\lambda_j)^k\partial_t^lc_l(t) \phi_j\|_{L^2(M)}^2=\sum_{j=0}^\mu\lambda_j^{2k}(\partial_t^l c_j(t))^2 \leq C(M,N)\|f\|_{C^2([0,T], W^{2N,2}(M))}^2.
$$
Since $\phi_j\in C^\infty(M)$ and $f\in C^\infty(M\times[0,T])$ we get $f_\mu\in C^\infty(M\times[0,T])$. Additionally, we can use that  $\partial_\nu\Delta^k\phi_j=0$ for all $j,k\in\N_0$ to deduce that $\partial_\nu\Delta^k\partial_t^l f_\mu=0$ for all $k,l\in\N_0$ and $\mu\in\N$. Hence, we can use Lemma \ref{SchauderCh_Wk2EstimateinTermsOfLaplace_Lemma} to estimate
$$ 
\|\partial_t^l f_\mu(t)\|_{W^{2N,2}(M)}
\leq C(M,N)\sum_{k=0}^N \|\partial_t^l \Delta_g^kf_\mu(t)\|_{L^2(M)}\leq C(M,N)\|f\|_{C^2([0,T], W^{2N,2}(M))}^2.
$$
Therefore $f_\mu$ is bounded in $C^2([0,T], W^{2N,2}(M))$ and the theorem follows. 
\end{proof}

\subsection{Initial and Boundary Values}\label{SchauderCh_SubSection_InitialAndBoundaryValues}
Next, we wish to extend the existence result from Theorem \ref{SchauderCh_GalerkinExistenceTheorem} to allow for general $f\in C^{0,0,\gamma}(M\times[0,T])$ as well as initial and boundary values. To do this, we require some preparational results. The following theorem can, for example, be found in \cite{lee2012smooth} (see Chapter 9, Theorem 9.25).

\begin{theorem}[Collar Neighbourhood Theorem]\label{SchauderCh_CollarNeighbourhoodTheorem}
There exists a neighbourhood $U\subset M$ of $\partial M$ and a smooth diffeomorphism $F:\partial M\times[0,1)\rightarrow U$ with $\Psi(p)=(p,0)$ for $p\in\partial M$.
\end{theorem}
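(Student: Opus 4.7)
The plan is to construct $F$ as the time-$t$ flow of a smooth vector field on $M$ that is inward-pointing along $\partial M$, and then use compactness of $\partial M$ together with the inverse function theorem to promote the resulting local diffeomorphism into a global one, after a suitable rescaling of the time parameter.

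First I would construct an inward-pointing vector field $X \in C^{\infty}(TM)$ along $\partial M$. Locally, in any boundary chart $\varphi:U\to V \subset \R^{n-1}\times[0,\infty)$, the coordinate field $\partial_n$ pulls back to a smooth vector field $X_\varphi$ on $U$ that is inward-pointing along $U\cap\partial M$. Covering $\partial M$ by finitely many such boundary charts $\{U_i\}$, choosing an interior chart $U_0 \subset M \setminus \partial M$ to cover the rest, and picking a smooth partition of unity $\{\chi_i\}$ subordinate to $\{U_i\}\cup\{U_0\}$, we set $X := \sum_i \chi_i X_{\varphi_i}$, with the convention $X_{\varphi_0}=0$. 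Since inward-pointing vectors at a boundary point form a convex cone, the convex combination $X$ is smooth on $M$ and still inward-pointing along $\partial M$.

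Next, I would consider the maximal flow $\phi_t$ of $X$. Because $X$ points strictly inward along $\partial M$, for any $p \in \partial M$ there exists $\varepsilon_p > 0$ such that $\phi_t(p)$ is defined and lies in the interior of $M$ for all $t \in (0, \varepsilon_p)$. Define
\[
F:\partial M\times[0,\varepsilon)\to M,\qquad F(p,t):=\phi_t(p),
\]
where $\varepsilon>0$ is a uniform time obtained from compactness of $\partial M$ and the openness of the flow's domain. At any point $(p,0)$, the differential $dF_{(p,0)}:T_p\partial M\oplus \R\partial_t\to T_p M$ sends tangent vectors of $\partial M$ identically into themselves and sends $\partial_t$ to $X(p)$, which is transverse to $T_p\partial M$. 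Thus $dF_{(p,0)}$ is an isomorphism, and by the inverse function theorem $F$ is a local diffeomorphism near $\partial M\times\{0\}$.

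The main obstacle is upgrading this to an injective (global) diffeomorphism onto an open neighborhood of $\partial M$. I would argue by contradiction: if no such uniform collar width existed, one could find sequences $(p_k, t_k) \neq (q_k, s_k)$ in $\partial M \times [0,\varepsilon)$ with $F(p_k,t_k) = F(q_k,s_k)$ and $t_k, s_k \to 0$. By compactness of $\partial M$, pass to subsequences with $p_k\to p$, $q_k\to q$. Continuity of $F$ forces $p = q$, and then local injectivity near $(p,0)$ from the inverse function theorem yields a contradiction for $k$ large. This produces a uniform $\varepsilon_0>0$ on which $F$ is injective; its image $U := F(\partial M\times[0,\varepsilon_0))$ is then open in $M$ (since $F$ is a local diffeomorphism and injective), and $F$ is a diffeomorphism onto $U$. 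Finally, reparametrizing the time variable via $t\mapsto t/\varepsilon_0$ gives the desired map $F:\partial M\times[0,1)\to U$ with $F(p,0)=p$.
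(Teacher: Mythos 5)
The paper itself does not prove this statement; it simply cites Lee \cite{lee2012smooth} (Theorem 9.25). Your proposal is a full working-out of that reference, and it follows the same standard route Lee takes: patch together coordinate vector fields $\partial_n$ via a partition of unity to get a smooth inward-pointing vector field $X$, use its flow to define $F(p,t) = \phi_t(p)$, check that $dF_{(p,0)}$ is an isomorphism, and upgrade local injectivity to global injectivity on a uniform strip by a compactness/contradiction argument. This is the canonical construction and the argument as written is correct. Your observation that the injectivity failure must accumulate on $\partial M \times \{0\}$ (because you take $t_k, s_k < 1/k$) is precisely the right way to force the two sequences into a single local-injectivity neighborhood of $(p,0)$.

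One small technical remark worth flagging: you invoke the inverse function theorem at points $(p,0)$, which lie on the boundary of the domain $\partial M \times [0,1)$. The IFT in its textbook form is stated at interior points, so strictly one should either appeal to the version for manifolds with boundary, or note that the flow of $X$ extends smoothly a little past $t=0$ (e.g.\ by embedding $M$ in a slightly larger open manifold and extending $X$), apply the ordinary IFT there, and restrict back. This is standard and does not affect the argument, but since the paper's intended audience is working through proofs carefully, it is worth mentioning explicitly. Similarly, the openness of $U = F(\partial M\times[0,\varepsilon_0))$ in $M$ relies on $F$ being a local diffeomorphism of manifolds \emph{with boundary} that maps boundary to boundary; it is true, but it is a slightly different statement from openness of a local diffeomorphism between boundaryless manifolds.
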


\begin{lemma}\label{SchauderCh_ExistenceofphijfunctionsLemma}
For each $\mu\in\N_0$ there exists a smooth function $\varphi_\mu\in C^\infty(M)$ such that 
$$\frac{\partial}{\partial\nu}\Delta_g^\mu\varphi_\mu=1\hspace{.5cm}\textrm{and}\hspace{.5cm}\nabla^k\varphi_\mu|_{\partial M}\equiv 0\hspace{.5cm}\textrm{for all $k\leq 2\mu$}.$$
\end{lemma}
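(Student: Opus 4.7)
The plan is to construct $\varphi_\mu$ explicitly near $\partial M$ as (minus) a specific power of the normal distance, extended globally by a cutoff, and then verify the two conditions by a direct computation in Fermi coordinates.

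Upgrading Theorem \ref{SchauderCh_CollarNeighbourhoodTheorem} via the normal exponential map, I would first obtain $\epsilon>0$ and a smooth diffeomorphism $\Psi\colon \partial M\times[0,\epsilon)\to U\subset M$ such that $s\mapsto\Psi(p,s)$ is the unit-speed inward geodesic from $p$ normal to $\partial M$. In these coordinates the metric has the product form $g=ds^2+g_s$ for a smooth family $(g_s)$ of metrics on $\partial M$, the outward unit normal along $\partial M$ is $\nu=-\partial_s$, and a standard calculation gives
$$\Delta_g u=\partial_s^2 u+H(p,s)\,\partial_s u+\Delta_{g_s}u$$
for some $H\in C^\infty$. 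Fixing a cutoff $\chi\in C^\infty(\R,[0,1])$ that equals $1$ on $[0,\epsilon/3]$ and vanishes outside $[0,\epsilon/2]$, I set
$$\varphi_\mu(\Psi(p,s)):=-\chi(s)\,\frac{s^{2\mu+1}}{(2\mu+1)!}, \qquad \varphi_\mu\equiv 0\ \text{on}\ M\setminus U.$$
Then $\varphi_\mu\in C^\infty(M)$, and near $\partial M$ it equals $-s^{2\mu+1}/(2\mu+1)!$; since this has no tangential dependence and vanishes to order $2\mu+1$ in $s$, the chain rule in any boundary chart immediately gives $\nabla^k\varphi_\mu|_{\partial M}\equiv 0$ for all $k\leq 2\mu$.

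For the first identity I would prove by induction on $0\leq k\leq\mu$ that, in a neighbourhood of $\partial M$,
$$\Delta_g^k\varphi_\mu=-\frac{s^{2\mu+1-2k}}{(2\mu+1-2k)!}+R_k(p,s), \qquad R_k=O\bigl(s^{2\mu+2-2k}\bigr) \text{ as } s\to 0^+.$$
The base case $k=0$ is the definition. For the inductive step, setting $N:=2\mu+1-2k$, the leading monomial $-s^N/N!$ has no tangential dependence, so $\Delta_{g_s}$ annihilates it and the displayed formula for $\Delta_g$ yields $\Delta_g[-s^N/N!]=-s^{N-2}/(N-2)!+O(s^{N-1})$; meanwhile $\Delta_g R_k=O(s^{N-1})$ because each of the three summands of $\Delta_g$ lowers the $s$-order by at most $2$. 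Taking $k=\mu$ produces $\Delta_g^\mu\varphi_\mu=-s+O(s^2)$, whence $\partial_\nu\Delta_g^\mu\varphi_\mu|_{\partial M}=-\partial_s[-s+O(s^2)]|_{s=0}=1$.

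The main technical point is the remainder bookkeeping: because $\Delta_{g_s}$ annihilates any purely $s$-dependent function (as $s$ is constant on each level set), the leading part of $\Delta_g^k\varphi_\mu$ remains a pure, rational-constant function of $s$ throughout the iteration, with all $y$-dependence absorbed into $R_k$; the normalization $-1/(2\mu+1)!$ is chosen exactly so that this leading coefficient collapses to $-1$ after $\mu$ applications of $\Delta_g$, making the identity $\partial_\nu\Delta_g^\mu\varphi_\mu=1$ immediate.
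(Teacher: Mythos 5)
Your proof is correct, and it takes a genuinely cleaner route than the paper's. The paper works in an \emph{arbitrary} boundary chart, where the coordinate $x_n$ is not the geodesic distance to $\partial M$; consequently the ansatz must be corrected by the factor $(g^{nn})^{-\mu-\frac12}$ so that $\partial_\nu$ and $\Delta_g$ interact correctly with powers of $x_n$, and since such a chart only covers a piece of $\partial M$, the paper then glues the local constructions with a partition of unity (observing that the lower-order Leibniz terms all vanish at $\partial M$ because $\nabla^k\varphi^{(j)}_\mu|_{\partial M}=0$ for $k\leq 2\mu$). You instead pass to Fermi (boundary normal) coordinates induced by the normal exponential map, where $g=ds^2+g_s$, $\nu=-\partial_s$, $\Delta_g=\partial_s^2+H\,\partial_s+\Delta_{g_s}$, and $g^{nn}\equiv 1$; this normalizes away the $(g^{nn})^{-\mu-\frac12}$ factor and, because the Fermi collar covers all of $\partial M$ at once, lets you write a single global formula $\varphi_\mu=-\chi(s)\,s^{2\mu+1}/(2\mu+1)!$ with no patching needed. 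The inductive bookkeeping $\Delta_g^k\varphi_\mu=-s^{2\mu+1-2k}/(2\mu+1-2k)!+O(s^{2\mu+2-2k})$ is sound — the key observation that $\Delta_{g_s}$ annihilates pure functions of $s$ is exactly what keeps the leading term a rational multiple of a monomial in $s$, and the $O$-estimate for the remainder should be read as vanishing to the stated order in $s$, which is preserved since $\partial_s^2$, $H\partial_s$, $\Delta_{g_s}$ lower the vanishing order by $2$, $1$, $0$ respectively. What your route buys is transparency of the computation; what the paper's route buys is that it needs only the smooth collar neighbourhood theorem already quoted in the text, whereas you quietly invoke the geodesic (metric) version of the collar via the normal exponential map — a perfectly standard fact, but a slightly stronger input than the paper uses.
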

\begin{proof}
Let $U\subset M$ be a chart with $U\cap\partial M\neq\emptyset$ and $\phi:U\rightarrow V\subset \R^{n-1}\times[0,\infty)$ be the chart map. On $V$ we define 
$$f_\mu:V\rightarrow\R,\ f_\mu(x):=\frac{-1}{(g^{nn})^{\mu+\frac12}}\frac{x_n^{2\mu+1}}{(2\mu+1)!}.$$
Using the formulas 
$\frac{\partial}{\partial\nu}=-\frac{g^{ni}}{\sqrt{g^{nn}}}\partial_i$ and $\Delta_g=g^{ij}\partial_{ij}-g^{ij}\Gamma^k_{\ ij}\partial_k$, it is straight forward to check that $f_\mu\circ \phi\in C^\infty(U,\R)$ has the desired properties on $U\cap\partial M$.\\

Let $U_1,...,U_m$ be charts that intersect and cover $\partial M$, $\eta_i$ be a subordinate partition of unity and put $U:=\bigcup_{i=1}^m U_i$. For $1\leq j\leq m$ we construct $\varphi^{(j)}_\mu:=f^{(j)}_\mu\circ \phi_j$ as above and define
$$\varphi_\mu:=\sum_{j=1}^{m}\eta_j \varphi^{(j)}_\mu\in C^\infty(U,\R).$$
Finally, let $U'\subset\subset U$ be a neighbourhood of $\partial M$ and $\zeta\in C^\infty(M)$ such that $\operatorname{supp}(\zeta)\subset U$ and $\zeta\equiv 1$ on $U'$. Then, by extending with $0$ on $M\backslash U$, the function $\zeta \varphi_\mu\in C^\infty(M)$ has the desired properties. 
\end{proof}

\begin{lemma}[Existence of an Extension Operator]\label{SchauderCh_SmoothExistenceOperatorExistenceLemma}
There exists a linear operator $E:C^\infty(\partial M)\rightarrow C^\infty(M)$ such that $E[f]\big|_{\partial M}=f$ for any $f\in C^\infty(\partial M)$.
\end{lemma}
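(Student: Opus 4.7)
The plan is to construct $E$ explicitly using the collar neighbourhood theorem and a cutoff in the normal direction. First I would apply Theorem \ref{SchauderCh_CollarNeighbourhoodTheorem} to obtain an open set $U\subset M$ containing $\partial M$ together with a smooth diffeomorphism $F:\partial M\times[0,1)\rightarrow U$ satisfying $F(p,0)=p$ for every $p\in\partial M$.

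Next I would fix a smooth cutoff $\chi\in C^\infty([0,1),[0,1])$ with $\chi\equiv 1$ on $[0,1/4]$ and $\operatorname{supp}(\chi)\subset[0,1/2]$. Given $f\in C^\infty(\partial M)$, I then define a function $\tilde E[f]:U\rightarrow\R$ by
$$\tilde E[f](F(p,t)):=\chi(t)f(p),\qquad (p,t)\in\partial M\times[0,1).$$
Since $F$ is a diffeomorphism and $(p,t)\mapsto\chi(t)f(p)$ is smooth on $\partial M\times[0,1)$, the function $\tilde E[f]$ is in $C^\infty(U)$. Its support is contained in the set $F(\partial M\times[0,1/2])$, which is a closed subset of $U$ (in $M$), since $\partial M\times[0,1/2]$ is closed in $\partial M\times[0,1)$ and $F$ is a homeomorphism onto its image $U$. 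Consequently, extending by zero outside of $U$ yields a well-defined function $E[f]\in C^\infty(M)$: smoothness on $U$ is automatic, smoothness on $M\setminus F(\partial M\times[0,1/2])$ is automatic (the extension is identically zero there), and the two open sets cover $M$.

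The identity $E[f]\big|_{\partial M}=f$ follows from $F(p,0)=p$ and $\chi(0)=1$, while linearity of $E$ is immediate from the defining formula. There is no real obstacle here beyond verifying that the zero extension across the boundary of $F(\partial M\times[0,1/2])$ produces a smooth function, which is handled by the support argument above; the collar neighbourhood theorem does all of the geometric work.
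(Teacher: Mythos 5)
Your construction is essentially the same as the paper's: pull $f$ back along the collar projection and damp it with a cutoff in the normal coordinate, then extend by zero. One small inaccuracy: that $F(\partial M\times[0,1/2])$ is closed in $M$ does not follow merely from $F$ being a homeomorphism onto $U$ (that only gives closedness \emph{in} $U$, which is open in $M$); what makes it closed in $M$ is that $\partial M\times[0,1/2]$ is compact (using that $\partial M$ is compact), hence $F(\partial M\times[0,1/2])$ is compact and therefore closed.
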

\begin{proof}
Using the Collar Neighbourhood Theorem \ref{SchauderCh_CollarNeighbourhoodTheorem}, there exists a neighbourhood $U$ of $\partial M$ and a diffeomorphism $F:U\rightarrow\partial M\times[0,1)$ with $F(p)=(p,0)$ for all $p\in\partial M$. We write $F(p)=(y(p), z(p))$. Given $f\in C^\infty(\partial M)$ we first define 
$$\tilde f\in C^\infty(U,\R),\ \tilde f(x):=f(y(x)).$$
Now, let $U'\subset\subset U$ be a neighbourhood of $\partial M$ and $\zeta\in C^\infty(M,[0,1])$ with $\zeta\equiv 1$ on $U'$ and $\operatorname{supp}(\zeta)\subset U$. We define 
$E[f]:=\zeta \tilde f$ by extending with $0$ outside of $U$.
\end{proof}

We now to extend the existence result from Theorem \ref{SchauderCh_GalerkinExistenceTheorem} to general $f\in C^{0,0,\gamma}(M\times[0,T])$.
\begin{korollar}\label{SchauderCh_ApproximationforgeneralfParabolic}
For every $f\in C^{0,0,\gamma}(M\times[0,T])$ there exists a function $u\in C^{4,1,\gamma}(M\times[0,T])$ such that 
\begin{equation}\label{SchauderCh_ExistenceLemmaProblem}
\left\{
\begin{array}{ll}
\displaystyle\dot u+\Delta_g^2u=f&\displaystyle\textrm{in $M\times[0,T]$},\vspace{.2cm}\\
\displaystyle\frac{\partial u}{\partial\nu}=\frac{\partial\Delta_g u}{\partial\nu}=0&\displaystyle\textrm{along $\partial M\times[0,T]$},\vspace{.2cm}\\
\displaystyle u(\cdot,0)=0& \displaystyle\textrm{on $M$}.
\end{array}
\right.
\end{equation}
\end{korollar}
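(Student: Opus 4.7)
The plan is to approximate $f\in C^{0,0,\gamma}(M\times[0,T])$ by a sequence of smooth functions $f_k$ satisfying the compatibility conditions $\partial_\nu\Delta_g^m f_k|_{\partial M}=0$ for $0\leq m\leq N$ required by Theorem \ref{SchauderCh_GalerkinExistenceTheorem}, arranged so that $\|f_k\|_{C^{0,0,\gamma}}$ stays bounded while $f_k\to f$ in $C^{0,0}(M\times[0,T])$. The corresponding solutions $u_k$ produced by Theorem \ref{SchauderCh_GalerkinExistenceTheorem} are then uniformly bounded in $C^{4,1,\gamma}$ by Corollary \ref{SchauderCh_SchauderEstimateonmanifolds}, and an Arzelà--Ascoli argument based on Corollary \ref{SchauderCh_CompactEmbeddingCorollary} extracts a limit $u$ solving \eqref{SchauderCh_ExistenceLemmaProblem}.

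The construction of $f_k$ proceeds in two stages. First, using a partition of unity subordinate to a good atlas and the collar neighbourhood of Theorem \ref{SchauderCh_CollarNeighbourhoodTheorem} to extend $f$ smoothly through $\partial M$ in boundary charts, I would spatially mollify $f$ with radius $\delta_k=k^{-(1-\gamma)}$ to obtain $\tilde f_k\in C^\infty(M\times[0,T])$ with $\tilde f_k\to f$ in $C^{0,0}$, $\|\tilde f_k\|_{C^{0,0,\gamma}}\leq C\|f\|_{C^{0,0,\gamma}}$, and the controlled blow-up $\|\nabla^j\tilde f_k\|_{C^0}\leq C_j k^{(1-\gamma)(j-\gamma)}$. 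Second, I would correct $\tilde f_k$ to satisfy the boundary compatibility. Combining Lemma \ref{SchauderCh_ExistenceofphijfunctionsLemma} with a concentration cutoff $\chi(kz)$ in the collar coordinate $z$, I construct $\psi_\mu^{(k)}\in C^\infty(M)$ satisfying $\partial_\nu\Delta_g^\mu\psi_\mu^{(k)}|_{\partial M}=1$, $\nabla^j\psi_\mu^{(k)}|_{\partial M}=0$ for $j\leq 2\mu$, and the key scaling estimate $\|\nabla^j\psi_\mu^{(k)}\|_{C^0}\leq Ck^{j-2\mu-1}$. Setting $\tilde f_k^{(0)}:=\tilde f_k$ and recursively
\begin{equation*}
\tilde f_k^{(\mu+1)}(p,t):=\tilde f_k^{(\mu)}(p,t)-E\!\left[\partial_\nu\Delta_g^\mu\tilde f_k^{(\mu)}(\cdot,t)|_{\partial M}\right]\!(p)\,\psi_\mu^{(k)}(p)
\end{equation*}
for $\mu=0,1,\ldots,N$, where $E$ is the extension operator of Lemma \ref{SchauderCh_SmoothExistenceOperatorExistenceLemma} applied at each $t$, the vanishing of $\nabla^j\psi_\mu^{(k)}$ on $\partial M$ for $j\leq 2\mu$ ensures that step $\mu+1$ eliminates the $\mu$-th compatibility condition without disturbing the lower-order ones already arranged. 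I would set $f_k:=\tilde f_k^{(N+1)}$ and verify by a short interpolation computation that the accumulated correction tends to zero in $C^{0,0}$ while remaining bounded in $C^{0,0,\gamma}$, so that $f_k\to f$ in $C^{0,0}$ and $\|f_k\|_{C^{0,0,\gamma}}\leq C$.

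Theorem \ref{SchauderCh_GalerkinExistenceTheorem} then produces $u_k\in C^{4,1,\gamma}(M\times[0,T])$ solving \eqref{SchauderCh_GalerkinProjectedProblem} with $f_k$ in place of $f$, and Corollary \ref{SchauderCh_SchauderEstimateonmanifolds} delivers $\|u_k\|_{C^{4,1,\gamma}}\leq C\|f_k\|_{C^{0,0,\gamma}}\leq C$. Combined with the compact embedding $C^{k,\gamma}\hookrightarrow\hookrightarrow C^{k,\gamma'}$ and Corollary \ref{SchauderCh_CompactEmbeddingCorollary}, a subsequence converges in $C^{4,1,\gamma'}(M\times[0,T])$ for every $\gamma'<\gamma$ to some $u$, and lower semi-continuity of the Hölder seminorms places $u$ in $C^{4,1,\gamma}$. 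Passing to the limit in $\dot u_k+\Delta_g^2u_k=f_k$, in the Neumann boundary conditions, and in the initial condition yields that $u$ solves \eqref{SchauderCh_ExistenceLemmaProblem}. The principal technical difficulty lies in the second stage of the construction: the recursion feeds higher-order boundary derivatives of $\tilde f_k^{(\mu)}$ into each correction, and only with a careful coupling between the mollification scale $\delta_k=k^{-(1-\gamma)}$ and the concentration rate $k^{-1}$ of $\psi_\mu^{(k)}$ does the $C^0$-smallness of $\psi_\mu^{(k)}$ compensate the derivative loss, simultaneously forcing $f_k\to f$ in $C^{0,0}$ and preserving the uniform $C^{0,0,\gamma}$ bound.
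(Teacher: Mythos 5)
Your proposal takes a genuinely different route from the paper, and the difference is instructive. The paper separates the problem into two clean steps: first it reduces to \emph{smooth} $f$ by a density argument (obtaining a Cauchy sequence of solutions from a Cauchy sequence of right-hand sides in $C^{0,0,\gamma}$), and only then, with $f$ smooth, does it introduce the boundary correction $F_\epsilon:=f-\eta_\epsilon T_N$. Crucially, in that second step the correction $T_N$ is a \emph{fixed} smooth function built from $f$ (via Lemmas \ref{SchauderCh_ExistenceofphijfunctionsLemma} and \ref{SchauderCh_SmoothExistenceOperatorExistenceLemma}) that vanishes on $\partial M$; only the cutoff $\eta_\epsilon$ shrinks. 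The entire technical burden then reduces to showing $\eta_\epsilon T_N\to 0$ in $C^{0,0,\gamma}$, which follows from the single fact $T_N|_{\partial M}=0$, and the solutions $u_\epsilon$ are Cauchy in $C^{4,1,\gamma}$. You instead couple the mollification scale to the concentration rate of the cutoffs and fold the two steps into one, settling for $f_k\to f$ in $C^{0,0}$ with a uniform $C^{0,0,\gamma}$ bound, then closing the argument via compactness and lower semicontinuity of the Hölder seminorms. That compactness-based scheme is in fact a robust and defensible alternative (indeed, it sidesteps the delicate question of whether $C^\infty$ is dense in the full Hölder norm), but it buys you nothing in simplicity.

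The genuine gap is that the heart of your construction is left unverified. Because you apply the recursion to mollifications whose higher derivatives blow up, each correction $E[\partial_\nu\Delta_g^\mu\tilde f_k^{(\mu)}|_{\partial M}]\psi_\mu^{(k)}$ involves competing rates: the boundary data grows like $k^{(1-\gamma)(2\mu+1-\gamma)}$ while $\|\psi_\mu^{(k)}\|_{C^0}$ decays like $k^{-(2\mu+1)}$, and the recursion feeds $(2\mu+1)$-th order derivatives of all earlier corrections $\psi_{\mu'}^{(k)}$ (which satisfy only $\|\nabla^j\psi_{\mu'}^{(k)}\|_{C^0}\lesssim k^{j-2\mu'-1}$, unbounded for $j>2\mu'+1$) into the right-hand sides of later ones. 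You acknowledge this as the principal difficulty but do not carry the estimates through, and the claim that the accumulated correction remains bounded in $C^{0,0,\gamma}$ is not self-evident without it (you would need, for instance, to arrange that $\chi-1$ has high-order contact with zero at the boundary so that $\partial_\nu\Delta_g^m\psi_{\mu'}^{(k)}|_{\partial M}=0$ also for $m>\mu'$, or else the cross terms contaminate the later compatibility conditions nontrivially). Separately, ``spatially mollify $f$'' cannot produce $\tilde f_k\in C^\infty(M\times[0,T])$ when $f$ is only $C^{\gamma/4}$ in time; you need a parabolic mollification (after extending $f$ across $t=0$ and $t=T$), and should then also track the time-derivative blowup since Theorem \ref{SchauderCh_GalerkinExistenceTheorem} requires smoothness in $t$. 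None of these issues is fatal, but as written the proposal outlines a plausible strategy rather than a complete proof, and the paper's two-step decomposition is both simpler and avoids the coupled-scaling bookkeeping entirely.
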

\begin{proof}
First we consider the case where $f\in C^ \infty(M\times[0,T])$. Let $N\in\N$ be as in Theorem \ref{SchauderCh_GalerkinExistenceTheorem}. For $0\leq j\leq N$, we use Lemma \ref{SchauderCh_ExistenceofphijfunctionsLemma} and choose smooth functions $\varphi_j$ that satisfy 
$$\frac{\partial}{\partial\nu}\Delta^ j\varphi_j=1\hspace{.5cm}\textrm{and}\hspace{.5cm}\nabla^k\varphi_j\big|_{\partial M}=0\hspace{.5cm}\textrm{for all $0\leq k\leq 2j$.}$$
Using the extension operator $E$ from Lemma \ref{SchauderCh_SmoothExistenceOperatorExistenceLemma}, we define a function $T_N\in C^\infty(M)$ recursively by
$$T_0:=\varphi_0E\left[\frac{\partial f}{\partial\nu}\right]
\hspace{.5cm}\textrm{and}\hspace{.5cm}
T_{k+1}:=T_k+\varphi_{k+1}E\left(\frac{\partial\Delta^ {k+1}f}{\partial\nu}-\frac{\partial \Delta^ {k+1}T_k}{\partial\nu}\right).$$
By construction, $T_N$ is smooth and satisfies
$$\frac{\partial\Delta^ j T_N}{\partial\nu}=\frac{\partial\Delta^ jf}{\partial\nu}\hspace{.5cm}\textrm{for all $0\leq j\leq N$}
\hspace{.5cm}\textrm{and}\hspace{.5cm}
T_N|_{\partial M}=0.$$
Let $\Omega$ be a collar neighbourhood of $\partial M$, i.e. a neighbourhood such that there exists a diffeomorphism $\Psi=(\Psi_1,\Psi_2):\Omega\rightarrow \partial M\times [0,1)$ with $\Psi(p)=(p,0)$ for $p\in\partial M$. For small $\epsilon>0$ we choose a cutoff function $\zeta_\epsilon\in C^ \infty([0,1),[0,1])$ with the properties 
$$\zeta_\epsilon\equiv 1\textrm{ on $[0,\epsilon]$},\hspace{.5cm}\zeta_\epsilon\equiv 0 \textrm{ on $[2\epsilon,1]$}\hspace{.5cm}\textrm{and}\hspace{.5cm}\|\zeta_\epsilon'\|_{C^1([0,1])}\leq \frac{C}\epsilon.$$
On $\Omega$ we define $\eta_\epsilon(p):=\zeta_\epsilon(\Psi_2(p))$. Extending $\eta_\epsilon$ by zero to all of $M$, we obtain a smooth function, again denoted by $\eta_\epsilon$, on $M$ with support in a small neighborhood $\Omega_\epsilon$ of the boundary. We put 
$$F_{\epsilon}:=f-\eta_\epsilon T_N$$
and claim that
$F_\epsilon\rightarrow f$ in $C^{0,0,\gamma}(M\times[0,T])$. Showing this is, however, quite involved and we postpone it to the end of the proof. Since $\eta_\epsilon\equiv 1$ on a neighbourhood of $\partial M$, we have
$$\frac{\partial}{\partial\eta}\Delta^ j F_{\epsilon}=0\hspace{.5cm}\textrm{for all $0\leq j\leq N$}.$$
Using Theorem \ref{SchauderCh_GalerkinExistenceTheorem}, there exist functions $u_\epsilon\in C^{4,1,\gamma}(M\times[0,T])$ such that $u_\epsilon$ solves problem \eqref{SchauderCh_ExistenceLemmaProblem} with $f$ replaced by $F_\epsilon$. 
Since $F_\epsilon\rightarrow f$ in $C^{0,0,\gamma}(M\times[0,T])$, we can use the Schauder estimate from Corollary \ref{SchauderCh_SchauderEstimateonmanifolds} to deduce that $(u_\epsilon)$ is Cauchy in $C^{4,1,\gamma}(M\times[0,T])$. We then obtain a solution to problem \eqref{SchauderCh_ExistenceLemmaProblem} as the limit $u:=\lim_{\epsilon\rightarrow 0^+} u_\epsilon$.\\

For general $f\in C^{0,0,\gamma}(M\times[0,T])$, we choose a sequence of smooth functions $f_k$ such that $f_k\rightarrow f$ in $C^{0,0,\gamma}(M\times[0,T])$. By the first part of the proof, we get a sequence $(u_k)\subset C^{4,1,\gamma}(M\times[0,T])$ such that $u_k$ solves Problem \ref{SchauderCh_ExistenceLemmaProblem} with $f$ replaced by $f_k$. Using the Schauder estimate from Corollary \ref{SchauderCh_SchauderEstimateonmanifolds}, we deduce that $(u_k)\subset C^{4,1,\gamma}(M\times[0,T])$ is Cauchy and hence convergent. The limit $u:=\lim_{k\rightarrow \infty}u$ is the sought-after solution.\\
 
To see that $F_\epsilon\rightarrow f$ in $C^{0,0,\gamma}(M\times[0,T])$ it suffices to show that $T_N\eta_\epsilon\rightarrow 0$ in $C^{0,0,\gamma}(M)$. Let $\mathcal A=\set{(\phi_\nu,U_\nu)\ |\ 1\leq\nu\leq m}$ be a good atlas on $M$ and $V_\nu:=\phi_\nu(U_\nu)$. Denoting the $x$-argument by $\cdot_1$ and the time argument by $\cdot_2$, we need to prove that for all $1\leq\nu\leq m$
\begin{equation}\label{SchauderCh_ApproximationsAreRealyApproximation}
\lim_{\epsilon\rightarrow0^+}\|T_N(\phi_\nu^{-1}(\cdot_1),\cdot_2)\eta_\epsilon(\phi_\nu^{-1}(\cdot_1 ))\|_{C^{0,0,\gamma}(V_\nu\times[0,T])}\rightarrow0.
\end{equation}
If $U_\nu$ is an interior chart, establishing Equation \eqref{SchauderCh_ApproximationsAreRealyApproximation} is trivial as $U_\nu\cap \operatorname{supp}(\eta_\epsilon)=\emptyset$ for all small enough values of $\epsilon$. Next, we consider a boundary chart. For the sake of simplicity in the representation, we drop the index $\nu$ and consider a chart $\phi:U\rightarrow V$ where $U\cap\partial M\neq\emptyset$ and $V\subset\R^{n-1}\times[0,\infty)$ is open with $\partial_SV:=V\cap(\R^{n-1}\times 0)\neq\emptyset$. We write $J(x,t):=T_N((\phi^{-1})(x),t)$ and note that $J$ is a smooth function with bounded derivatives that satisfies $J(x,t)=0$ when $x_n=0$. Additionally, we write $\xi_\epsilon(x):=\eta_\epsilon(\phi^{-1}(x))$.\\

We start by establishing a technical estimate. Note that if $p\in\partial M\cap U$ and $v\in T_pM$ but $v\not\in T_p\partial M$, then $D\Psi_2(p) v\neq 0$ as otherwise $D\Psi(p) T_pM\subset T_{p}\partial M\times0$, which cannot be as $\Psi$ is a diffeomorphism. Next, note that for any $x=(\vec x,0)$ we have  $\frac{\partial\phi^{-1}(\vec x,z)}{\partial z}\big|_{z=0}\not\in T_{\phi^{-1}(\vec x,0)}\partial M$. Since $\Psi_2(\phi^{-1}(\vec x,z))\geq 0$ and $\Psi_2(\phi^{-1}(\vec x,0))= 0$, we obtain
$$\frac d{dz}\bigg|_{z=0}\Psi_2(\phi^{-1}(\vec x,z))=D\Psi_2(\phi^{-1}(\vec x,0))\frac{\partial\phi^{-1}(x,z)}{\partial z}\bigg|_{z=0}>0.$$
As $\phi:U\rightarrow V$ is a good chart, we deduce that there exist $\theta>0$ and $z_0>0$ such that whenever $(\vec x,z)\in V$ and $0\leq z\leq z_0$ we have
$$\frac d{dz}\Psi_2(\phi^{-1}(\vec x,z))=D\Psi_2(\phi^{-1}(\vec x,z))\frac{\partial\phi^{-1}(x,z)}{\partial z}\geq \theta>0.$$
This implies $\Psi_2(\phi^{-1}(\vec x,z))\geq\theta z$ whenever $z\in[0,z_0]$. There exists $\epsilon_0>0$ such that for $\epsilon<\epsilon_0$ we have $\phi(U\cap \operatorname{supp}(\eta_\epsilon))\subset V\cap[0\leq z\leq z_0]$. As a consequence, note that when $2\theta^{-1}\epsilon<z<z_0$ we obtain $\Psi_2(\phi^{-1}(\vec x,z))>2\epsilon$ and hence $\xi_\epsilon(\vec x,z)$=0. By construction, if $\epsilon\in(0,\epsilon_0)$ and $z\geq z_0$ we have $\xi_\epsilon(\vec x,z)=0$. Thus there exists a constant $c_0>0$ such that if $\epsilon<\epsilon_0$ we have $\xi_\epsilon((\vec x,z))=0$ whenever $z\geq c_0\epsilon$. From now on we always assume $\epsilon\in(0,\epsilon_0)$.\\

Let $x=(\vec x, x_n)\in V$. If $x_n\geq c_0\epsilon$ we have $\xi_\epsilon(x)J(x,t)=0$. When $0\leq x_n<c_0\epsilon$ we may use $|\xi_\epsilon(x)|\leq 1$ and $J((\vec x,0),t)=0$ to estimate 
$$|J(x,t)\xi_\epsilon(x)|\leq | J(x,t)|\leq \|\nabla J\|_{C^0(V\times[0,T])}x_n\leq C\epsilon.$$
Since the constant in the above estimate is universal we have shown that $J\xi_\epsilon\rightarrow 0$ in $C^0(V\times[0,T])$.\\

Now let $t_1\neq t_2\in[0,T]$ and $x\in V$. If $x_n\geq c_0\epsilon$ we have $|(J\xi_\epsilon)(x,t_2)-(J\xi_\epsilon)(x,t_1)|=0$. If $x_n\leq c_0\epsilon$ we use $J((\vec x,0), t)=0$ to estimate
\begin{align*} 
&|\xi_\epsilon(x) J(x,t_2)-\xi_\epsilon(x) J(x,t_1)|\\
\leq &|J(x,t_2)-J(x,t_1)|\\
\leq &|t_2-t_1|\ \sup_{t\in[0,T]}|\partial_t J(x,t)|\\
\leq &|t_2-t_1|\sup_{t\in[0,T],y\in V}|\nabla\partial_t J(y,t)\||x_n|\\
\leq &C\epsilon|t_2-t_1|.
\end{align*}
Again, as the constant $C$ in the above estimate is universal we obtain that $[\xi_\epsilon J]^{\operatorname{time}}_{\frac\gamma4,V\times[0,T]}\rightarrow 0$ as $\epsilon\rightarrow 0^+$.\\

To estimate the spatial Hölder seminorm, we first make the following observation. By definition $\xi_\epsilon:=\zeta_\epsilon\circ\Psi_2\circ\phi^{-1}$ and hence $\|\nabla\xi_\epsilon\|_{C^0(V)}\leq \frac C\epsilon$. Let $x\neq y\in V$ and $t\in[0,T]$. If $x_n,y_n\geq C\epsilon$ we have $|\xi_\epsilon(x) J(x,t)-\xi_\epsilon(y) J(y,t)|=0$. If $x_n\geq c_0\epsilon>y_n$ we use $\xi_\epsilon(x)=0$ and $J((\vec y, 0),t)=0$ to estimate 
\begin{align*} 
|\xi_\epsilon(x)J(x,t)-\xi_\epsilon(y)J(y,t)|\leq & |\xi_\epsilon(x)-\xi_\epsilon(y)|\ |J(y,t)|\\
\leq &\left(\frac{|\xi_\epsilon(x)-\xi_\epsilon(y)|}{|x-y|}\right)^\gamma |x-y|^\gamma (|\xi_\epsilon(x)|+|\xi_\epsilon(y)|)^{1-\gamma}\ \|\nabla J\|_{C^0(V\times[0,T])}|y_n|\\
\leq &C\|\nabla\xi_\epsilon\|_{C^0(V)}^\gamma\epsilon|x-y|^\gamma\\
\leq &C\epsilon^{1-\gamma}|x-y|^\gamma.
\end{align*}
  Finally, if $x_n,y_n\leq C\epsilon$ we first use $|\xi_\epsilon|\leq 1$ to estimate 
$$|\xi_\epsilon(x)J(x,t)-\xi_\epsilon(y)J(y,t)|\leq |J(x,t)-J(y,t)|+|J(x,t)| |\xi_\epsilon(x)-\xi_\epsilon(y)|.$$
The second term can be estimated just as in the previous case and is therefore bounded by $C\epsilon^{1-\gamma}|x-y|^\gamma$. Using $J((\vec x,0),t)=J((\vec y,0),t)=0$, we estimate 
\begin{align*} 
|J(x,t)-J(y,t)|=&\left(\frac{|J(x,t)-J(y,t)|}{|x-y|}\right)^\gamma (|J(x,t)|+|J(y,t)|)^{1-\gamma}|x-y|^\gamma\\
\leq &C\|\nabla J\|_{C^0(V\times[0,T])}^\gamma\|\nabla J\|_{C^0(V\times[0,T])}^{1-\gamma}(|x_n|+|y_n|)^{1-\gamma}|x-y|^\gamma\\
\leq &C\epsilon^{1-\gamma}|x-y|^\gamma.
\end{align*}
So in total we have shown that $[\xi_\epsilon J]_{\gamma, V\times[0,T]}^{\operatorname{space}}\leq C\epsilon^{1-\gamma}$.\\

Combining the estimates from above, we have verified \eqref{SchauderCh_ApproximationsAreRealyApproximation} also for boundary charts. 
\end{proof}

We now extend the analysis to general boundary data. 
\begin{lemma}\label{SchauderCh_ExttendingtoIVandBV_Lemma}
Let $f\in C^{0,0,\gamma}(M\times[0,T])$, $h_1\in C^{3,0,\gamma}(\partial M\times[0,T])$, $h_2\in C^{1,0,\gamma}(\partial M\times[0,T])$ and $u_0\in C^{4,\gamma}(M)$ such that the following compatibility conditions are satisfied:
$$\frac{\partial u_0}{\partial\nu}=h_1(\cdot,0)
\hspace{.5cm}\textrm{and}\hspace{.5cm}
\frac{\partial\Delta_g u_0}{\partial\nu}=h_2(\cdot, 0)$$
Then there exists a unique solution $u\in C^{4,1,\gamma}(M\times[0,T])$ to 
$$
\left\{
\begin{array}{ll}
\displaystyle\dot u+\Delta_g^2u=f&\displaystyle\textrm{in $M\times[0,T]$},\vspace{.2cm}\\
\displaystyle\frac{\partial u}{\partial\nu}=h_1,\ \frac{\partial\Delta_g u}{\partial\nu}=h_2&\displaystyle\textrm{along $\partial M\times[0,T]$},\vspace{.2cm}\\
\displaystyle u(\cdot,0)=u_0& \displaystyle\textrm{on $M$}.
\end{array}
\right.
$$
\end{lemma}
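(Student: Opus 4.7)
The plan is to reduce the problem to one with homogeneous initial and boundary data, which is exactly the content of Corollary \ref{SchauderCh_ApproximationforgeneralfParabolic}. Concretely, I would seek a lift $w \in C^{4,1,\gamma}(M\times[0,T])$ satisfying $w(\cdot,0) = u_0$, $\partial_\nu w|_{\partial M\times[0,T]} = h_1$, and $\partial_\nu \Delta_g w|_{\partial M\times[0,T]} = h_2$. Given such a lift, $v:=u-w$ solves $\dot v+\Delta_g^2 v = f-\dot w-\Delta_g^2 w =: \tilde f \in C^{0,0,\gamma}(M\times[0,T])$ with $v(\cdot,0)=0$ and $\partial_\nu v=\partial_\nu \Delta_g v = 0$ on $\partial M\times[0,T]$. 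Corollary \ref{SchauderCh_ApproximationforgeneralfParabolic} then yields $v\in C^{4,1,\gamma}(M\times[0,T])$, and $u:=v+w$ is the sought solution. Uniqueness follows immediately from Corollary \ref{SchauderCh_SchauderEstimateonmanifolds}: the difference of two solutions satisfies the homogeneous problem with zero data and is forced to vanish.

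Constructing $w$ proceeds in the spirit of the construction of $T_N$ in the proof of Corollary \ref{SchauderCh_ApproximationforgeneralfParabolic}, using the functions $\varphi_0,\varphi_1$ from Lemma \ref{SchauderCh_ExistenceofphijfunctionsLemma} and the extension operator $E$ from Lemma \ref{SchauderCh_SmoothExistenceOperatorExistenceLemma}. I would set
\[
w_0(x,t) := u_0(x), \qquad w_1 := w_0 + \varphi_0\,E\bigl[h_1(\cdot,t)-h_1(\cdot,0)\bigr], \qquad w_2 := w_1 + \varphi_1\,E\bigl[h_2(\cdot,t)-\partial_\nu\Delta_g w_1(\cdot,t)\bigr].
\]
The properties $\partial_\nu\varphi_0 = 1$, $\varphi_0|_{\partial M}=0$ guarantee that $\partial_\nu w_1 = \partial_\nu u_0 + (h_1(\cdot,t)-h_1(\cdot,0)) = h_1$ by the first compatibility condition. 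The properties $\partial_\nu\Delta_g\varphi_1 = 1$ and $\nabla^k\varphi_1|_{\partial M}=0$ for $k\leq 2$ ensure that the second correction does not disturb $\partial_\nu w_2 = h_1$ while achieving $\partial_\nu\Delta_g w_2 = h_2$. Finally, the second compatibility condition $\partial_\nu\Delta_g u_0 = h_2(\cdot,0)$ forces the $\varphi_1$-correction to vanish at $t=0$, so that $w_2(\cdot,0)=u_0$.

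The main obstacle is that for general $C^{3,\gamma}$ boundary data, the operator $E$ produces only a $C^{3,\gamma}$ extension, so that $\varphi_0 E[\,\cdot\,]$ is only $C^{3,\gamma}$ in space, not $C^{4,\gamma}$, and the direct construction above does not immediately give the claimed $C^{4,1,\gamma}$ regularity. To handle this, I would approximate the data by smooth data. Specifically, mollify $u_0, h_1, h_2, f$ in local charts to obtain smooth $u_0^\epsilon,h_1^\epsilon,h_2^\epsilon,f^\epsilon$ that are uniformly bounded in the respective Hölder norms and converge to the originals in the sup norm, while preserving the compatibility conditions (e.g.\ by redefining $h_j^\epsilon$ near $t=0$ to match $\partial_\nu u_0^\epsilon$ and $\partial_\nu\Delta_g u_0^\epsilon$ using a bump function in $t$, with a vanishing correction as $\epsilon\to 0$). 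For smooth data the construction above produces a smooth lift $w^\epsilon$, and the reduced problem is handled by Corollary \ref{SchauderCh_ApproximationforgeneralfParabolic}, yielding $u^\epsilon \in C^{4,1,\gamma}(M\times[0,T])$. Corollary \ref{SchauderCh_SchauderEstimateonmanifolds} gives uniform bounds $\|u^\epsilon\|_{C^{4,1,\gamma}} \leq C$, and a compactness and lower-semicontinuity argument (via $C^{4,1,\gamma}\hookrightarrow\hookrightarrow C^{4,1}$) extracts a subsequence converging in $C^{4,1}$ to a limit $u \in C^{4,1,\gamma}(M\times[0,T])$. Passing to the limit in the PDE and the boundary/initial conditions — which survives because all derivatives up to the relevant order converge uniformly — produces a solution of the original problem.
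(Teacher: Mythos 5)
Your proof is correct and follows the same overall strategy as the paper --- lift the data, reduce to Corollary \ref{SchauderCh_ApproximationforgeneralfParabolic}, approximate smoothly, and pass to a limit --- but with two deviations worth recording. First, the lift: you absorb $u_0$ into $w_2$ by starting from the time-independent function $u_0$ and correcting the boundary data with $h_j(\cdot,t)-h_j(\cdot,0)$, so that the compatibility conditions force $w_2(\cdot,0)=u_0$. The paper instead constructs $w$ from $h_1,h_2$ alone, via $T_0:=\varphi_0 E[h_1]$ and $w:=T_0+\varphi_1 E[h_2-\partial_\nu\Delta_g T_0]$, sets $w_0:=w(\cdot,0)$, and recovers the initial condition by writing $u:=v+w+(u_0-w_0)$; the time-independence of $u_0-w_0$ feeds back into the right-hand side for $v$ as the extra term $-\Delta_g^2(u_0-w_0)$. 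The two decompositions are algebraically equivalent. Second, and more substantively, your passage to general data is the more robust of the two. The paper repairs the compatibility conditions of the smooth approximants by the constant-in-time shift $\hat h_{j,\epsilon}:=h_{j,\epsilon}+\bigl(B_j u_{0,\epsilon}-h_{j,\epsilon}(\cdot,0)\bigr)$ --- simpler than your bump-function-in-$t$ proposal, and you could adopt it --- and then argues that the approximate solutions are Cauchy in $C^{4,1,\gamma}$ via Corollary \ref{SchauderCh_SchauderEstimateonmanifolds}; this Cauchy argument, however, needs the smooth approximants to converge \emph{in the H\"older norm}, which is delicate because $C^\infty$ is dense only in the little H\"older spaces. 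Your compactness-plus-lower-semicontinuity argument sidesteps this entirely: mollification delivers uniform H\"older bounds and $C^0$-convergence, which is all you require.
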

\begin{proof}
Uniqueness follows immediately from the Schauder estimate in Corollary \ref{SchauderCh_SchauderEstimateonmanifolds}. To establish existence, we first assume that $f,g,h$ and $u_0$ are smooth. For $k=0,1$, we use Lemma \ref{SchauderCh_ExistenceofphijfunctionsLemma} to choose $\varphi_k\in C^\infty(M)$ that satisfy
$$\frac{\partial\Delta_g ^k \varphi_k}{\partial\nu}=1
\hspace{.5cm}\textrm{and}\hspace{.5cm}
\nabla^l \varphi_k=0\hspace{.2cm}\textrm{for all $0\leq l\leq 2k$}.$$
Using the extension operator $E$ from Lemma \ref{SchauderCh_SmoothExistenceOperatorExistenceLemma}, we define 
$$T_0:=\varphi_0 E[h_1]\hspace{.5cm}\textrm{and}\hspace{.5cm}
w:=T_0+\varphi_1 E\left[h_2-\frac{\partial \Delta_g T_0}{\partial\nu}\right].$$
By construction, $w$ is smooth and satisfies
$$\frac{\partial}{\partial\nu} w=h_1\hspace{.5cm}\textrm{and}\hspace{.5cm}\frac{\partial}{\partial\nu}\Delta_g w=h_2.$$
We define $w_0:=w(\cdot,0)$. Using Theorem \ref{SchauderCh_GalerkinExistenceTheorem}, we deduce that there exists a function $v\in C^{4,1,\gamma}(M\times[0,T])$ such that
$$\left\{
\begin{array}{ll}
\displaystyle\dot v+\Delta_g^2v=f-\dot w-\Delta_g ^2w-\Delta_g^2(u_0-w_0)&\displaystyle\textrm{in $M\times[0,T]$},\vspace{.2cm}\\
\displaystyle\frac{\partial v}{\partial\eta}=\frac{\partial\Delta_g v}{\partial\eta}=0&\displaystyle\textrm{along $\partial M\times[0,T]$},\vspace{.2cm}\\
\displaystyle v(\cdot,0)=0&\displaystyle\textrm{on $M$}.
\end{array}
\right.
$$
 We define $u:=v+w+(u_0-w_0)$. Clearly $u(\cdot,0)=u_0$ and 
$$\dot u+\Delta_g^2 u=f-\dot w-\Delta_g ^2w-\Delta_g^2(u_0-w_0)+\dot w+\Delta_g^2 w+\Delta_g^2 (u_0-w_0)=f.$$
Finally, we use the compatibility conditions to check that 
$$
\frac{\partial u}{\partial\nu}=0+h_1+\frac{\partial u_0}{\partial\nu}-h_1(\cdot,0)=h_1
\hspace{.5cm}\textrm{and}\hspace{.5cm}
\frac{\partial \Delta_g u}{\partial\nu}=0+h_2+\frac{\partial \Delta_g u_0}{\partial\nu}-h_2(\cdot,0)=h_2.
$$
So, for smooth data, the theorem is proven. Now for the general case. Let $f_\epsilon$, $h_{1,\epsilon}$, $h_{2,\epsilon}$ and $u_{0,\epsilon}$ be smooth approximations of $f,h_1,h_2$ and $u_0$  satisfying 
\begin{align*}
    &\|f-f_\epsilon\|_{C^{0,0,\gamma}(M\times[0,T])}<\epsilon,\hspace{.2cm}
\|h_1-h_{1,\epsilon}\|_{C^{3,0,\gamma}(M\times[0,T])}<\epsilon,\hspace{.2cm}
\|h_2-h_{2,\epsilon}\|_{C^{1,0,\gamma}(M\times[0,T])}<\epsilon,\\
&\hspace{.2cm}\textrm{and}\hspace{.2cm}
\|u_0-u_{0,\epsilon}\|_{C^{4,\gamma}(M)}<\epsilon.
\end{align*}
In general, these approximations will not satisfy the compatibility conditions, so we need to modify them a little. 
We define 
$$
    \hat h_{1,\epsilon}:=h_{1,\epsilon}+\left(\frac{\partial u_{0,\epsilon}}{\partial\nu}-h_{1,\epsilon}(\cdot,0)\right)
    \hspace{.5cm}\textrm{and}\hspace{.5cm}
    \hat h_{2,\epsilon}:=h_{2,\epsilon}+\left(\frac{\partial \Delta_g u_{0,\epsilon}}{\partial\nu}-h_{2,\epsilon}(\cdot,0)\right).
$$
Clearly $\hat h_{1,\epsilon}(\cdot,0)=\frac{\partial u_{0,\epsilon}}{\partial\nu}$ and $\hat h_{2,\epsilon}(\cdot,0)=\frac{\partial \Delta_g u_{0,\epsilon}}{\partial\nu}$. However, we need to verify that $\hat h_{1,\epsilon}\rightarrow h_1$ in $C^{3,0,\gamma}(\partial M\times[0,T])$ and $\hat h_{2,\epsilon}\rightarrow h_2$ in $C^{1,0,\gamma}(\partial M\times[0,T])$. To do so we use the compatibility condition $\partial_\nu u_0=h_1(\cdot, 0)$ to estimate 
\begin{align*}
    \|\hat h_{1,\epsilon}-h_{1,\epsilon}\|_{C^{3,0,\gamma}(\partial M\times[0,T])} &\leq \left\| \frac{\partial u_{0,\epsilon}}{\partial\nu}-h_{1,\epsilon}(\cdot,0)\right\|_{C^{3,0,\gamma}(\partial M\times[0,T])}\\
    &\leq \left\| \frac{\partial u_{0,\epsilon}}{\partial\nu}-\frac{\partial u_{0}}{\partial\nu}\right\|_{C^{3,0,\gamma}(\partial M\times[0,T])} +\|h_1(\cdot,0)-h_{1,\epsilon}(\cdot,0)\|_{C^{3,0,\gamma}(\partial M\times[0,T])}\\
    &\leq \| u_{0,\epsilon}-u_0\|_{C^{4,\gamma}( M)} +\|h_1-h_{1,\epsilon}\|_{C^{3,0,\gamma}(\partial M\times[0,T])}\\
    &\leq 2\epsilon.
\end{align*}
Proving $\hat h_{2,\epsilon}\rightarrow h_2$ in $C^{1,0,\gamma}(\partial M\times[0,T])$ is achieved by repeating essentially the same argument. Since $u_{0,\epsilon}$, $\hat h_{1,\epsilon}$ and $\hat h_{2,\epsilon}$ are smooth functions that satisfy the compatibility conditions, the first part of the proof implies the existence of a unique solution  $u_\epsilon$ to the problem
\begin{equation}\label{SchauderCh_GeneralProblemApproximateProblem}
\left\{
\begin{array}{ll}
    \displaystyle\dot u_\epsilon+\Delta_g^2 u_\epsilon=f_\epsilon&\displaystyle\textrm{in $M\times[0,T]$},\vspace{.2cm}\\
    \displaystyle\frac{\partial u_\epsilon}{\partial\nu}= \hat h_{1,\epsilon},\ \frac{\partial \Delta_g u_\epsilon}{\partial\nu}= \hat h_{2,\epsilon}&\displaystyle\textrm{along $\partial M\times[0,T]$},\vspace{.2cm}\\
    \displaystyle u_\epsilon(\cdot, 0)=u_{0,\epsilon}& \displaystyle\textrm{on $M$}.
\end{array}
\right.
\end{equation}
Using the Schauder estimate from Corollary \ref{SchauderCh_SchauderEstimateonmanifolds} it follows that $u_\epsilon$ is Cauchy in $C^{4,1,\gamma}(M\times[0,T])$ and thus converges to a function $u$ in $C^{4,1,\gamma}(M\times[0,T])$. Passing to the limit $\epsilon\rightarrow 0^+$ in Equation \eqref{SchauderCh_GeneralProblemApproximateProblem}, we deduce that $u$ is the sought after solution.\\
\end{proof}

\subsection{The Method of Continuity}
The following is a standard result and a proof can, for example, be found in Gilbarg's and Trudinger's \cite{gilbarg1977elliptic} (see Chapter 5, Section 2, Theorem 5.2). 
\begin{theorem}[The Method of Continuity]\label{SchauderCh_MethodofCOntinuity}\ \\
Let $B$ be a Banach space, $V$ be a normed vector space and let $L_0,L_1$ be bounded linear operators from $B$ into $V$. For each $t\in(0,1)$ put 
$L_t:=(1-t)L_0+tL_1$
and suppose that there is a constant $C$ such that 
$\|x\|_B\leq C\|L_t x\|_V$
for all $x\in B$ and $t\in[0,1]$.
Then, $L_0$ is surjective if and only if $L_1$ is surjective. 
\end{theorem}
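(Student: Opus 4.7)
The plan is to follow the standard connectedness argument. I would set
$$S := \{ t \in [0,1] : L_t : B \to V \text{ is surjective}\}$$
and show that $S$ is both open and closed in $[0,1]$; since $[0,1]$ is connected, $S$ is then $\emptyset$ or all of $[0,1]$, which is precisely the equivalence ``$0 \in S \Leftrightarrow 1 \in S$'' claimed in the theorem.

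The first preparatory observation is that the a priori estimate $\|x\|_B \le C \|L_t x\|_V$ says every $L_t$ is bounded below by $1/C$; in particular each $L_t$ is injective, and whenever some $L_s$ happens to be surjective (i.e.\ $s \in S$), its algebraic inverse $L_s^{-1} : V \to B$ is automatically a bounded linear map with $\|L_s^{-1}\|_{V \to B} \le C$. The decisive point is that this bound on the inverse is \emph{uniform} in $s \in S$, since $C$ does not depend on $t$.

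Next I would prove a uniform openness statement: there exists $\delta > 0$, depending only on $C$, $\|L_0\|$, $\|L_1\|$, such that $s \in S$ and $|t-s| < \delta$ imply $t \in S$. Given $y \in V$, the equation $L_t x = y$ can be rewritten, using $L_t = L_s + (t-s)(L_1 - L_0)$, as the fixed-point equation
$$x = L_s^{-1} y - (t-s)\, L_s^{-1}(L_1 - L_0)\, x =: T_y x.$$
The map $T_y : B \to B$ is an affine map whose linear part has operator norm bounded by $|t-s|\, C\, (\|L_0\| + \|L_1\|)$; choosing $\delta := \bigl(1 + 2C(\|L_0\| + \|L_1\|)\bigr)^{-1}$ makes $T_y$ a contraction with ratio at most $1/2$ for all $t$ with $|t - s| < \delta$. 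Since $B$ is complete, the Banach fixed-point theorem produces a (unique) $x \in B$ with $T_y x = x$, and this $x$ solves $L_t x = y$. Hence $L_t$ is surjective, i.e.\ $t \in S$.

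Openness of $S$ is then immediate, and closedness follows just as easily: if $t_n \in S$ with $t_n \to t^*$, then eventually $|t^* - t_n| < \delta$, so $t^* \in S$ by the uniform openness applied at $t_n$. The main (and essentially only) obstacle is recognizing that the a priori estimate gives a uniform bound on $\|L_s^{-1}\|$ for $s \in S$; without that uniformity the radius $\delta$ would depend on $s$ and one could not bootstrap openness to closedness. Completeness of $B$ enters exactly once, in the contraction mapping step, and is what makes the hypothesis ``$B$ Banach'' (as opposed to merely normed) indispensable.
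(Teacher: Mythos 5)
The paper does not prove this theorem; it cites Gilbarg and Trudinger (Theorem 5.2) for the proof, and the argument given there is exactly the contraction/uniform-step argument you present. Your proof is correct: the crucial point is, as you identify, that the $t$-independent a priori estimate gives the uniform bound $\|L_s^{-1}\|_{V\to B}\le C$ for every $s\in S$, which makes the admissible step size $\delta$ independent of $s$; this uniformity is what lets you obtain closedness from openness (equivalently, march from $t=0$ to $t=1$ in finitely many steps of length $<\delta$), and completeness of $B$ enters exactly once, in the Banach fixed-point step.
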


Combining Lemma \ref{SchauderCh_ExttendingtoIVandBV_Lemma} and Theorem \ref{SchauderCh_MethodofCOntinuity}, we obtain the following existence result.
\begin{lemma}\label{Schauder_ExistenceMethodContiuityLemma}
Let $L$ be a 2-elliptic operator with smooth coefficients on $M$ and denote by $B$ the canonical boundary conditions\footnotemark. For every $f\in C^{0,0,\gamma}(M\times[0,T])$, $h_1\in C^{3,0,\gamma}(\partial M\times[0,T])$ and $h_2\in C^{1,0,\gamma}(\partial M\times[0,T])$ satisfying $h_1(\cdot,0)=0$ and $h_2(\cdot, 0)=0$, there exists a unique solution $u\in C^{4,1,\gamma}(M\times[0,T])$ of 
$$
\left\{
\begin{array}{ll}
\displaystyle\dot u+Lu=f&\displaystyle\textrm{in $M\times[0,T]$},\vspace{.2cm}\\
\displaystyle B_1u=h_1,\ B_2 u=h_2&\displaystyle\textrm{along $\partial M\times[0,T]$},\vspace{.2cm}\\
\displaystyle u(\cdot,0)=0& \displaystyle\textrm{on $M$}.
\end{array}
\right.
$$
\end{lemma}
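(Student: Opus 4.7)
The plan is to apply the method of continuity (Theorem~\ref{SchauderCh_MethodofCOntinuity}) to interpolate between $L$ and $\Delta_g^2$, for which existence is already provided by Lemma~\ref{SchauderCh_ExttendingtoIVandBV_Lemma}. By Lemma~\ref{SchauderCh_LGeometricFormula}, write $L = \Delta_g^2 + \langle J,\nabla^3\cdot\rangle + \mathcal L$ for a smooth Riemannian metric $g$, a smooth $(3,0)$ tensor $J$ and a second-order operator $\mathcal L$ with smooth coefficients. For $s\in[0,1]$ define
\[
L_s := \Delta_g^2 + s\langle J,\nabla^3\cdot\rangle + s\mathcal L,
\]
so that $L_0=\Delta_g^2$ and $L_1=L$. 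Each $L_s$ is $2$-elliptic with the same metric $g$ and with coefficients bounded uniformly in $s$, so the quality bound $\Lambda_s := \|L_s\|_{C^{0,\gamma}_{\mathcal A}(M)}$ and the ellipticity constant $\Theta_{\mathcal A}(L_s)=\Theta_{\mathcal A}(\Delta_g^2)$ are controlled independently of $s$. Since $g$ is the same for every $L_s$ and the associated tensor field is $sJ$, Definition~\ref{SchauderCh_CanonicalBCDef} gives the canonical boundary operators
\[
B_{1,s}u = \tfrac{\partial u}{\partial\nu},\qquad B_{2,s}u = \tfrac{\partial\Delta_g u}{\partial\nu}+s\langle J,\nabla^2u\otimes\nu^\flat\rangle,
\]
which also depend affinely on $s$ with $s$-uniform norms.

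Next I set up the functional framework. Let
\[
\mathcal B := \{u\in C^{4,1,\gamma}(M\times[0,T])\ :\ u(\cdot,0)=0\},
\]
equipped with the $C^{4,1,\gamma}$ norm, and
\[
\mathcal V := \{(f,h_1,h_2)\in C^{0,0,\gamma}(M\times[0,T])\times C^{3,0,\gamma}(\partial M\times[0,T])\times C^{1,0,\gamma}(\partial M\times[0,T])\ :\ h_1(\cdot,0)=0,\ h_2(\cdot,0)=0\}.
\]
Both are Banach (closed subspaces of Banach spaces). Define
\[
\mathcal L_s:\mathcal B\to\mathcal V,\qquad \mathcal L_s u := \bigl(\dot u + L_s u,\ B_{1,s}u,\ B_{2,s}u\bigr).
\]
One checks the codomain is correct: if $u\in\mathcal B$ then $u(\cdot,0)\equiv 0$ on $M$, hence the spatial quantities $\partial_\nu u(\cdot,0)$ and $\partial_\nu\Delta_g u(\cdot,0)+s\langle J,\nabla^2 u(\cdot,0)\otimes\nu^\flat\rangle$ both vanish, so the compatibility conditions at $t=0$ are satisfied. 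Boundedness of $\mathcal L_s$ is immediate from the definitions of the $C^{k,[k/4],\gamma}$ norms.

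The uniform a priori estimate required by Theorem~\ref{SchauderCh_MethodofCOntinuity} is exactly Corollary~\ref{SchauderCh_SchauderEstimateonmanifolds} applied to each $L_s$: since the constants there depend only on $M$, $T$, $\gamma$, $g$, and on the combined norm $\Lambda_s+\|B_{1,s}\|+\|B_{2,s}\|$ (which is uniformly bounded in $s\in[0,1]$), we obtain a single constant $C$ with
\[
\|u\|_{C^{4,1,\gamma}_{\mathcal A}(M\times[0,T])} \leq C\bigl\|\mathcal L_s u\bigr\|_{\mathcal V}\qquad\text{for all }u\in\mathcal B,\ s\in[0,1].
\]
Surjectivity of $\mathcal L_0$ is the statement of Lemma~\ref{SchauderCh_ExttendingtoIVandBV_Lemma} with initial datum $u_0=0$; indeed, the hypotheses $h_1(\cdot,0)=0$, $h_2(\cdot,0)=0$ are precisely the compatibility conditions of that lemma when $u_0=0$. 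Theorem~\ref{SchauderCh_MethodofCOntinuity} then upgrades surjectivity from $\mathcal L_0$ to $\mathcal L_1$, yielding the desired solution. Uniqueness follows from the same a priori estimate applied to the difference of two solutions.

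There is no serious obstacle: all the hard analytic content (the Schauder estimate, the existence for $\Delta_g^2$, and the geometric rewriting of $L$) is already in place. The only bookkeeping to be careful about is (i) verifying that $\mathcal L_s$ genuinely maps $\mathcal B$ into $\mathcal V$ and not merely into the larger Hölder product space, and (ii) checking that the $s$-dependent constants from Corollary~\ref{SchauderCh_SchauderEstimateonmanifolds} are uniform in $s\in[0,1]$, which reduces to observing that the ellipticity of $L_s$ is unchanged and its coefficient norms are convex combinations of those of $\Delta_g^2$ and $L$.
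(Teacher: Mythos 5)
Your proposal is correct and mirrors the paper's own proof almost verbatim: same decomposition via Lemma~\ref{SchauderCh_LGeometricFormula}, same interpolation path $L_s$ and boundary operators $B_{i,s}$, same Banach space setup with the constraint $u(\cdot,0)=0$ and $h_i(\cdot,0)=0$, same invocation of Corollary~\ref{SchauderCh_SchauderEstimateonmanifolds} for the $s$-uniform a priori bound, and the same appeal to Lemma~\ref{SchauderCh_ExttendingtoIVandBV_Lemma} for the base case before applying the method of continuity. The one point you spell out that the paper leaves implicit --- the verification that $\mathcal{L}_s$ indeed maps $\mathcal{B}$ into $\mathcal{V}$ because $u(\cdot,0)\equiv 0$ forces $B_{i,s}u(\cdot,0)=0$ --- is a worthwhile addition.
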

\footnotetext{These automatically also have smooth coefficients.}
\begin{proof}
By Lemma \ref{SchauderCh_LGeometricFormula}, the operator $L$ induces a smooth metric $g_{L}$ on $M$. We have $L_1:=L=\Delta_{g_L}^2+Hu$ where $H$ is a third-order, linear differential operator with smooth coefficients. Let $L_0:=\Delta_{g_L}^2$ and, for $s\in(0,1)$, put
\begin{equation}\label{SchauderCh_Continuity01}
 L_s:=(1-s)L_0+sL_1=\Delta_{g_L}^2+sH.
\end{equation}
We denote by $\mathcal B_s$ the canonical boundary conditions induced by $ L_s$. Note that 
\begin{equation}\label{SchauderCh_Continuity02}
\mathcal B_su=\left(\frac{\partial u}{\partial\nu}, \frac{\partial\Delta_{g_L} u}{\partial\nu}+s\langle J,\nabla^2 u\otimes\nu^\flat\rangle\right)
=(1-s)\mathcal B_0 u+s\mathcal B_1 u.
\end{equation}
Here $J$ is the $(3,0)$-tensor field from  Lemma \ref{SchauderCh_LGeometricFormula}. For $k=1,3$ we put $C^{k,0,\gamma}_0(\partial M\times[0,T]):=\set{v\in C^{k,0,\gamma}(\partial M\times[0,T])\ |\ v(\cdot,0)=0}$ and define the spaces
\begin{align*}
    X&:=\set{u\in C^{4,1,\gamma}(M\times[0,T])\ |\ u(\cdot,0)=0},\vspace{-.2cm}\\
    Y&:=C^{0,0,\gamma}(M\times[0,T])\times C^{3,0,\gamma}_0(\partial M\times[0,T])\times C^{1,0,\gamma}_0(\partial M\times[0,T]).
\end{align*}
We equip $X$ with the $\|\cdot\|_{C^{4,1,\gamma}(M\times[0,T])}$ norm and $Y$ with the product norm of $\|\cdot\|_{C^{0,0,\gamma}(M\times[0,T])}$, $\|\cdot\|_{C^{3,0,\gamma}(\partial M\times[0,T])}$ and $\|\cdot\|_{C^{1,0,\gamma}(\partial M\times[0,T])}$. In this way, $X$ and $Y$ become Banach spaces. Clearly $T_0:=L_0\times\mathcal  B_0$ and $T_1:=L_1\times \mathcal B_1$ are linear and continuous maps from $X$ into $Y$. For $s\in(0,1)$ we put $T_s:=(1-s)T_0+sT_1$. Translating the Schauder estimate from Corollary \ref{SchauderCh_SchauderEstimateonmanifolds} into this new notation, we get
\begin{equation}\label{SchauderCh_Continuity03}
    \|u\|_X\leq C(s)\|T_su\|_Y.
\end{equation}
In view of Equations \eqref{SchauderCh_Continuity01} and \eqref{SchauderCh_Continuity02}, the constant $C(s)$ is actually independent of $s$. Indeed, the coefficients of the operator $T_s$ are bounded independent of $s$ and the metric $g_s$ induced by $L_s$ is constant, i.e. $g_s=g_L$ for all $s\in[0,1]$. So, by the method of continuity, the lemma follows once we have shown that $T_0 $ is surjective. This amounts to showing that for $f\in C^{0,0,\gamma}(M\times[0,T])$, $h_1\in C^{3,0,\gamma}(\partial M\times[0,T])$ and $h_2\in C^{1,0,\gamma}(\partial M\times[0,T])$ satisfying $h_1(\cdot,0)=h_2(\cdot, 0)=0$, we can solve the problem
$$
\left\{
\begin{array}{ll}
\displaystyle\dot u+\Delta_{g_L}^2u=f&\displaystyle\textrm{in $M\times[0,T]$},\vspace{.2cm}\\
\displaystyle\frac{\partial u}{\partial\nu}=h_1,\ \frac{\partial\Delta_{g_L} u}{\partial\nu}=h_2&\displaystyle\textrm{along $\partial M\times[0,T]$},\vspace{.2cm}\\
\displaystyle u(\cdot,0)=0& \displaystyle\textrm{on $M$}.
\end{array}
\right.
$$
This is true by Lemma \ref{SchauderCh_ExttendingtoIVandBV_Lemma} as the necessary compatibility conditions are satisfied.
\end{proof}

Using Lemma \ref{Schauder_ExistenceMethodContiuityLemma}, we now generalize to nonzero initial values.

\begin{theorem}\label{SchauderCh_FinalExistenceThmParabolic}
Let $M$ be a smooth, compact connected manifold with boundary $\partial M\neq\emptyset$ and $T>0$. Let $L$ be a 2-elliptic operator with smooth coefficients and denote by $B$ the canonical boundary conditions. Let $f\in C^{0,0,\gamma}(M\times[0,T])$, $h_1\in C^{3,0,\gamma}(\partial M\times[0,T])$, $h_2\in C^{1,0,\gamma}(\partial M\times[0,T])$ and $u_0\in C^{4,\gamma}(M)$ such that the compatibility conditions 
$$B_1 u_0=h_1(\cdot,0)
\hspace{.5cm}\textrm{and}\hspace{.5cm}
B_2 u_0=h_2(\cdot,0)
$$
are satisfied. Then, there exists a unique solution $u\in C^{4,1,\gamma}(M\times[0,T])$ to the problem
$$
\left\{
\begin{array}{ll}
\displaystyle\dot u+Lu=f&\displaystyle\textrm{in $M\times[0,T]$},\vspace{.2cm}\\
\displaystyle B_1u=h_1,\ B_2u=h_2&\displaystyle\textrm{along $\partial M\times[0,T]$},\vspace{.2cm}\\
\displaystyle u(\cdot,0)=u_0& \displaystyle\textrm{on $M$}.
\end{array}
\right.
$$
\end{theorem}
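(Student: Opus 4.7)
The plan is to reduce this to Lemma \ref{Schauder_ExistenceMethodContiuityLemma} by absorbing the initial datum into a lifted function, leaving a problem with zero initial value and vanishing boundary data at $t=0$.

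First, I extend $u_0$ trivially in time: set $\hat u_0(x,t):=u_0(x)$ for $(x,t)\in M\times[0,T]$. Since $\partial_t\hat u_0\equiv 0$ and $\nabla^k u_0\in C^{0,\gamma}(M)$ for $0\le k\le 4$, one checks directly from the definition of the parabolic Hölder spaces on $M$ (Definition \ref{SchauderCh_MANIFOLDParabolicHolderSpaceDef}) that $\hat u_0\in C^{4,1,\gamma}(M\times[0,T])$ with norm controlled by $\|u_0\|_{C^{4,\gamma}(M)}$. Because $L$ has smooth coefficients and is $2$-elliptic of class $C^{0,\gamma}$, we get $L\hat u_0\in C^{0,0,\gamma}(M\times[0,T])$, and because $B_1$ (resp. $B_2$) has $C^{3,\gamma}$ (resp. $C^{1,\gamma}$) coefficients acting on spatial derivatives of $\hat u_0$ up to order one (resp. three), we get $B_1\hat u_0\in C^{3,0,\gamma}(\partial M\times[0,T])$ and $B_2\hat u_0\in C^{1,0,\gamma}(\partial M\times[0,T])$ (both time-independent).

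Next, I introduce the auxiliary data
\begin{equation*}
\tilde f:=f-L\hat u_0,\quad \tilde h_1:=h_1-B_1\hat u_0,\quad \tilde h_2:=h_2-B_2\hat u_0.
\end{equation*}
These lie in the correct function spaces by the previous paragraph. Moreover, the compatibility hypotheses $B_1 u_0=h_1(\cdot,0)$ and $B_2 u_0=h_2(\cdot,0)$ translate precisely into
\begin{equation*}
\tilde h_1(\cdot,0)=h_1(\cdot,0)-B_1 u_0=0,\qquad \tilde h_2(\cdot,0)=h_2(\cdot,0)-B_2 u_0=0.
\end{equation*}
Thus Lemma \ref{Schauder_ExistenceMethodContiuityLemma} applies and yields a unique $v\in C^{4,1,\gamma}(M\times[0,T])$ with $\dot v+Lv=\tilde f$, $B_1v=\tilde h_1$, $B_2 v=\tilde h_2$, $v(\cdot,0)=0$.

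Setting $u:=v+\hat u_0$, the linearity of $L$, $B_1$, $B_2$ gives $\dot u+Lu=\dot v+Lv+L\hat u_0=\tilde f+L\hat u_0=f$, $B_iu=B_iv+B_i\hat u_0=\tilde h_i+B_i\hat u_0=h_i$ for $i=1,2$, and $u(\cdot,0)=v(\cdot,0)+u_0=u_0$, establishing existence. For uniqueness, the difference $w$ of two solutions satisfies $\dot w+Lw=0$, $B_1w=B_2w=0$, $w(\cdot,0)=0$, so the Schauder estimate from Corollary \ref{SchauderCh_SchauderEstimateonmanifolds} gives $\|w\|_{C^{4,1,\gamma}(M\times[0,T])}=0$. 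There is no real obstacle in this argument; the only point that requires brief verification is the regularity bookkeeping ensuring that the lifted data $\tilde f,\tilde h_1,\tilde h_2$ fall into the function-space hypotheses of Lemma \ref{Schauder_ExistenceMethodContiuityLemma}, which is guaranteed by the smoothness of the coefficients of $L$ and $B$ together with $u_0\in C^{4,\gamma}(M)$.
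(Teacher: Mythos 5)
Your proof is correct and follows essentially the same route as the paper: both reduce to Lemma \ref{Schauder_ExistenceMethodContiuityLemma} by subtracting the time-constant lift of $u_0$ (the paper writes this simply as $u_0$, you denote it $\hat u_0$), observing that the compatibility conditions force the lifted boundary data to vanish at $t=0$, and concluding with the Schauder estimate for uniqueness. Your extra paragraph on regularity bookkeeping is a reasonable amplification, not a different argument.
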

\begin{proof}
We put $\tilde f:=f-Lu_0$, $\tilde h_1:=h_1-B_1 u_0$ and $\tilde h_2:=h_2-B_2 u_0$. Then, $\tilde h_1(\cdot,0)=0$ and $\tilde h_2(\cdot,0)=0$. So, by Lemma \ref{Schauder_ExistenceMethodContiuityLemma}, there exists a solution $v\in C^{4,1,\gamma}(M\times[0,T])$ to the problem 
$$
\left\{
\begin{array}{ll}
\displaystyle\dot v+Lv=\tilde f&\displaystyle\textrm{in $M\times[0,T]$},\vspace{.2cm}\\
\displaystyle B_1v=\tilde h_1,\ B_2v=\tilde h_2&\displaystyle\textrm{along $\partial M\times[0,T]$},\vspace{.2cm}\\
\displaystyle v(\cdot,0)=0& \displaystyle\textrm{on $M$}.
\end{array}
\right.
$$
We get a solution to the original problem by defining $u:=v+u_0$. Finally, the uniqueness follows from the Schauder estimate in Corollary \ref{SchauderCh_SchauderEstimateonmanifolds}.
\end{proof}

\section{An Elliptic Problem}\label{SchauderCh_Ch04Sec05}
Using the parabolic Schauder estimates, we now derive elliptic Schauder estimates. Throughout this section, we assume that $g$ is a smooth Riemannian metric on $M$.

\begin{lemma}\label{SchauderCh_EllipticSchauderEstimate}
Let $\mathcal A$ be a good atlas on $M$, $L$ be a 2-elliptic operator of class $C^{0,\gamma}(M)$ on $M$ and let $B=(B_1,B_2)$ define $L$-compatible boundary conditions of class $C^{3,\gamma}(\partial M)\times C^{1,\gamma}(\partial M)$. Then for any $u\in C^{4,\gamma}(M)$
\begin{equation}\label{SchauderCh_EllipticSchauderEstimateEq}\|u\|_{C^{4,\gamma}(M)}\leq C\left(
\|Lu\|_{C^{0,\gamma}(M)}+\|B_1 u\|_{C^{3,\gamma}(\partial M)}+\|B_2 u\|_{C^{1,\gamma}(\partial M)}+\|u\|_{L^2(M)}
\right)
\end{equation}
Here $C$ depends only on $M$, $\gamma$, $\Lambda:=\|L\|_{C^{0,\gamma}_{\mathcal A}(M)}+\|B_1\|_{C^{3,\gamma}_{\mathcal A}(\partial M)}+\|B_2 \|_{C^{1,\gamma}_{\mathcal A}(\partial M)}$ and $\theta:=\Theta_{\mathcal A}(L)$.
\end{lemma}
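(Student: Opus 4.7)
The plan is to reduce the elliptic Schauder estimate to the parabolic estimate of Theorem~\ref{SchauderCh_GoodAPrioriEstimateOnManifold} via a time-cutoff trick that kills the initial-data contribution.

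First I would fix $T = T_0 = 1$ and pick a smooth cutoff $\eta \in C^\infty([0,T])$ with $\eta(0) = 0$ and $\eta \equiv 1$ on $[1/2, T]$, then define $v(x,t) := \eta(t) u(x)$, which lies in $C^{4,1,\gamma}(M \times [0,T])$. Because the coefficients of $L$ and $B$ are $t$-independent, $P := \partial_t + L$ is a $2$-parabolic operator of class $C^{0,0,\gamma}(M \times [0,T])$ and $B$ defines $P$-compatible boundary conditions of class $C^{3,0,\gamma}(\partial M \times [0,T]) \times C^{1,0,\gamma}(\partial M \times [0,T])$; all temporal Hölder seminorms of the coefficients vanish, so the associated parabolic norms and ellipticity constant equal $\Lambda$ and $\theta$ respectively.

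Next I would apply Theorem~\ref{SchauderCh_GoodAPrioriEstimateOnManifold} to $v$. A direct computation gives $Pv = \eta'(t) u + \eta(t) Lu$, $B_i v = \eta(t) B_i u$, and $v(\cdot, 0) \equiv 0$, so the initial-data term drops out. Bounding each parabolic norm on the right by the corresponding elliptic norm (with a constant depending on $\eta$ and $T$) and using the trivial bound $\sup_t \|v(\cdot, t)\|_{L^2} \leq \|u\|_{L^2}$ produces
\begin{equation*}
\|v\|_{C^{4,1,\gamma}(M \times [0,T])} \leq C\Big(\|Lu\|_{C^{0,\gamma}(M)} + \|B_1 u\|_{C^{3,\gamma}(\partial M)} + \|B_2 u\|_{C^{1,\gamma}(\partial M)} + \|u\|_{C^{0,\gamma}(M)} + \|u\|_{L^2(M)}\Big).
\end{equation*}
Since $v(\cdot, T/2) = u$, one has $\|u\|_{C^{4,\gamma}(M)} \leq \|v\|_{C^{4,1,\gamma}(M \times [0,T])}$, so the same inequality holds with $\|u\|_{C^{4,\gamma}(M)}$ on the left.

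Finally I would absorb the spurious $\|u\|_{C^{0,\gamma}(M)}$ term. Since $C^{4,\gamma}(M) \hookrightarrow\hookrightarrow C^{0,\gamma}(M)$ compactly (Arzelà–Ascoli) and $C^{0,\gamma}(M) \hookrightarrow L^2(M)$ continuously, Ehrling's lemma provides $\|u\|_{C^{0,\gamma}} \leq \epsilon \|u\|_{C^{4,\gamma}} + C(\epsilon) \|u\|_{L^2}$ for arbitrary $\epsilon > 0$. Choosing $\epsilon$ small and absorbing yields~\eqref{SchauderCh_EllipticSchauderEstimateEq}. The main obstacle is the bookkeeping in the second step — verifying that the $C^{0,0,\gamma}$, $C^{3,0,\gamma}$ and $C^{1,0,\gamma}$ norms of $\eta' u + \eta Lu$, $\eta B_1 u$ and $\eta B_2 u$ are indeed controlled by the elliptic Hölder norms of $u, Lu, B_1 u, B_2 u$; this relies only on $\eta$ being smooth on the bounded interval $[0,T]$ so that its (finitely many) time-derivatives have bounded $\gamma/4$-Hölder seminorm.
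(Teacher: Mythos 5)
Your proposal is correct and follows essentially the same strategy as the paper's proof: both build a time-dependent auxiliary function $v(p,t)=\phi(t)\,u(p)$ with $\phi(0)=0$ (so the initial-data term in Theorem~\ref{SchauderCh_GoodAPrioriEstimateOnManifold} vanishes), both identify the spurious $\|u\|_{C^{0,\gamma}(M)}$ contribution coming from $\dot v$, and both absorb it via Ehrling's lemma. The paper simply takes $\phi(t)=t$ on $[0,1]$ where you take a smooth cutoff $\eta$ that equals $1$ on $[1/2,T]$; the two choices are interchangeable and the bookkeeping of the parabolic Hölder seminorms of $\eta'u+\eta Lu$, $\eta B_1u$, $\eta B_2u$ goes through exactly as you describe.
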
 
\begin{proof}
We define $U\in C^{4,1,\gamma}(M\times[0,1])$ by letting $U(p,t):=tu(p)$. Note $U(\cdot, 0)=0$ and $U(\cdot, 1)=u$. So, using the Schauder estimate from Theorem \ref{SchauderCh_GoodAPrioriEstimateOnManifold}, we find 
\begin{equation}\label{SchauderCh_EllipticSchauderEstiamteProofeq1}
\begin{aligned}
     \|u\|_{C^{4,\gamma}_{\mathcal A}(M)}\leq \|U\|_{C^{4,1,\gamma}_{\mathcal A}(M\times[0,1])}\leq C\big(&\|\dot U+LU\|_{C^{0,0,\gamma}_{\mathcal A}(M\times[0,1])}+\|B_1U\|_{C^{3,0,\gamma}_{\mathcal A}(\partial M\times[0,1])}\\
     &+\|B_2 U\|_{C^{1,0,\gamma}_{\mathcal A}(\partial M\times[0,1])}+\sup_{t\in[0,1]}\|U(\cdot, t)\|_{L^2_{\mathcal A}(M)}\big).
     \end{aligned}
\end{equation}
Since $U(p,t)=tu(p)$, we have the estimates
\begin{align}
    &\|B_1 U\|_{C^{3,0,\gamma}_{\mathcal A}(\partial M\times[0,1])}\leq C\|B_1 u\|_{C^{3,\gamma}_{\mathcal A}(\partial M)}\label{SchauderCh_EllipticSchauderEstiamteProofeq2},\\
    &\|B_2 U\|_{C^{1,0,\gamma}_{\mathcal A}(\partial M\times[0,1])}\leq C\|B_2 u\|_{C^{1,\gamma}_{\mathcal A}(\partial M)}\label{SchauderCh_EllipticSchauderEstiamteProofeq3},\\
    &\sup_{t\in[0,1]}\|U(\cdot, t)\|_{L^2_{\mathcal A}(M)}\leq \|u\|_{L^2_{\mathcal A}(M)}\label{SchauderCh_EllipticSchauderEstiamteProofeq4},\\
    &\|\dot U+LU\|_{C^{0,0,\gamma}_{\mathcal A}(M\times[0,1])}\leq C(\|u\|_{C^{0,\gamma}_{\mathcal A}(M)}+\|Lu\|_{C^{0,\gamma}_{\mathcal A}(M)})\label{SchauderCh_EllipticSchauderEstiamteProofeq5}.
\end{align}
Inserting Estimates \eqref{SchauderCh_EllipticSchauderEstiamteProofeq2}, \eqref{SchauderCh_EllipticSchauderEstiamteProofeq3}, \eqref{SchauderCh_EllipticSchauderEstiamteProofeq4} and \eqref{SchauderCh_EllipticSchauderEstiamteProofeq5} into Estimate \eqref{SchauderCh_EllipticSchauderEstiamteProofeq1}, we obtain
\begin{equation}\label{SchauderCh_EllipticSchauderEstiamteProofeq7}
\begin{aligned}
\|u\|_{C^{4,\gamma}(M)}\leq C\big(&\|Lu\|_{C^{0,\gamma}(M)}+\|u\|_{C^{0,\gamma}(M)}+\|B_1 u\|_{C^{3,\gamma}(\partial M)}\\
&\hspace{.5cm}+\|B_2 u\|_{C^{1,\gamma}(\partial M)}+\|u\|_{L^2(M)}\big).
\end{aligned}
\end{equation}
By Ehrling's lemma $\|u\|_{C^{0,\gamma}(M)}\leq \epsilon\|u\|_{C^{4,\gamma}(M)}+C(\epsilon)\|u\|_{L^2(M)}$. Inserting into Estimate \eqref{SchauderCh_EllipticSchauderEstiamteProofeq7} and choosing $\epsilon$ small enough, the lemma follows. 
\end{proof}
Unlike in the parabolic case, it is not possible to remove the $L^2(M)$-norm on the right-hand side of Estimate \eqref{SchauderCh_EllipticSchauderEstimateEq}. To see this, we consider the elliptic equation
\begin{equation}\label{SchauderCh_BiLaplcianEq}
\left\{
\begin{aligned}
&\Delta_g^2 u=0\hspace{1.85cm}\textrm{in $M$,}\\
&\frac{\partial u}{\partial\nu}=\frac{\partial\Delta_g u}{\partial\nu}=0\hspace{.5cm}\textrm{along $\partial M$.}
\end{aligned}
\right.
\end{equation}
This equation has the non-trivial solution $u=1$ and hence we can not drop the term $\|u\|_{L^2(M)}$ in Lemma \ref{SchauderCh_EllipticSchauderEstimate}. This is, however, the only issue. We formalize this statement in the following corollary.

\begin{korollar}\label{SchauderCh_EllipticSchauderEstimate_Korollar}
Under the assumptions of Lemma \ref{SchauderCh_EllipticSchauderEstimate}, there exists a constant $C=C(M,\gamma, \Lambda,\theta)$ such that for every $u\in C^{4,\gamma}(M)$ 
$$\|u\|_{C^{4,\gamma}_{\mathcal A}(M)}\leq C\left(
\|\Delta_g^2 u\|_{C^{0,\gamma}_{\mathcal A}(M)}+\left\|\frac{\partial u}{\partial\nu}\right\|_{C^{3,\gamma}_{\mathcal A}(\partial M)}+\left\|\frac{\partial\Delta_g u}{\partial\nu}\right\|_{C^{1,\gamma}_{\mathcal A}(\partial M)}+\left|\int_M ud\mu_g\right|
\right).$$
\end{korollar}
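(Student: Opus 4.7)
The plan is to argue by contradiction and compactness, exploiting that the only obstruction to removing the $L^2$-norm in Lemma \ref{SchauderCh_EllipticSchauderEstimate} for the specific operator $L=\Delta_g^2$ with canonical Neumann conditions $B_1u=\partial_\nu u$, $B_2u=\partial_\nu\Delta_g u$ is the one-dimensional kernel consisting of constants, which is killed precisely by the integral condition $\int_M u\,d\mu_g=0$. Note that $\Lambda$ and $\theta$ from Lemma \ref{SchauderCh_EllipticSchauderEstimate} depend only on $g$ here, so the constant in that estimate is absolute for this problem.

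Suppose the claimed estimate fails. Then I will produce a sequence $(u_k)\subset C^{4,\gamma}(M)$ with $\|u_k\|_{C^{4,\gamma}_{\mathcal A}(M)}=1$ and
\[
\epsilon_k:=\|\Delta_g^2 u_k\|_{C^{0,\gamma}_{\mathcal A}(M)}+\|\partial_\nu u_k\|_{C^{3,\gamma}_{\mathcal A}(\partial M)}+\|\partial_\nu\Delta_g u_k\|_{C^{1,\gamma}_{\mathcal A}(\partial M)}+\left|\int_M u_k\,d\mu_g\right|\longrightarrow 0.
\]
Since $C^{4,\gamma}_{\mathcal A}(M)\hookrightarrow\hookrightarrow C^4(M)$ compactly, after passing to a subsequence $u_k\to u^*$ in $C^4(M)$ with $\|u^*\|_{C^{4,\gamma}_{\mathcal A}(M)}\leq 1$, and the limit satisfies $\Delta_g^2u^*=0$, $\partial_\nu u^*=0$, $\partial_\nu\Delta_g u^*=0$, and $\int_M u^*\,d\mu_g=0$.

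The core step is to show $u^*\equiv 0$. Two integrations by parts, using the two boundary conditions successively, give
\[
0=\int_M u^*\Delta_g^2 u^*\,d\mu_g=\int_M(\Delta_g u^*)^2\,d\mu_g,
\]
hence $\Delta_g u^*=0$; then $0=-\int_M u^*\Delta_g u^*\,d\mu_g=\int_M|\nabla_g u^*|_g^2\,d\mu_g$, so $\nabla_g u^*\equiv 0$ and, since $M$ is connected, $u^*$ is a constant. The constraint $\int_M u^*\,d\mu_g=0$ then forces $u^*=0$, so in particular $u_k\to 0$ in $C^0(M)$, hence in $L^2_{\mathcal A}(M)$.

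The conclusion is immediate from Lemma \ref{SchauderCh_EllipticSchauderEstimate} applied to $u_k$: the right-hand side is bounded by $C(\epsilon_k+\|u_k\|_{L^2_{\mathcal A}(M)})\to 0$, contradicting $\|u_k\|_{C^{4,\gamma}_{\mathcal A}(M)}=1$. I expect the only delicate bookkeeping to be verifying that the boundary integrations by parts are valid in the manifold setting (ensured by $u^*\in C^4(M)$ and the divergence theorem on $(M,g)$), and that the compactness passage preserves the differential equation and boundary values; both are routine. No step looks genuinely hard --- this is a standard Fredholm-alternative-style argument in the a priori estimate language.
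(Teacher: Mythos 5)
Your proof is correct, but it takes a genuinely different route from the paper's. The paper argues directly: starting from the energy identity $\int_M f u\,d\mu_g=\int_M(\Delta_g u)^2 d\mu_g+\int_{\partial M}(uh_2-\Delta_g u\,h_1)\,dS_g$ (with $f:=\Delta_g^2u$, $h_i$ the boundary data) it bounds $\|\Delta_g u\|_{L^2}$, then uses the Poincar\'e inequality (Lemma \ref{SchauderCh_PoincareInequality}) applied to $u-\bar u$, where $\bar u:=\fint_M u\,d\mu_g$, together with a further integration by parts to bound $\|u\|_{L^2}$ by the data and $|\bar u|$, modulo an $\epsilon\|u\|_{C^{4,\gamma}}$ term that is absorbed after plugging into Lemma \ref{SchauderCh_EllipticSchauderEstimate}. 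You instead argue by contradiction: you normalize a hypothetical counterexample sequence, use the compact embedding $C^{4,\gamma}(M)\hookrightarrow\hookrightarrow C^{4}(M)$ to extract a $C^4$-limit $u^*$, identify $u^*$ as a zero-mean element of the kernel of $\Delta_g^2$ with homogeneous canonical boundary conditions, show $u^*\equiv0$ by the same two integrations by parts, and then contradict Lemma \ref{SchauderCh_EllipticSchauderEstimate}. Both are sound. The paper's direct argument is quantitative and self-contained (it effectively proves a Poincar\'e-type inequality $\|u\|_{L^2}\lesssim\|\Delta_g^2u\|+\|h_1\|+\|h_2\|+|\bar u|$ with explicit constants and no appeal to sequential compactness); your Fredholm-alternative-style argument is shorter conceptually, reuses only the a priori estimate and the kernel characterization, and generalizes immediately to other operators once the kernel is identified, at the price of producing a non-constructive constant. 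One small point worth making explicit in a written-up version: the passage $u_k\to u^*$ in $C^4(M)$ does preserve the equation, the boundary conditions, and the mean-value constraint, precisely because $\Delta_g^2$, $\partial_\nu$, $\partial_\nu\Delta_g$ and $\int_M\cdot\,d\mu_g$ are all continuous $C^4\to C^0$ (resp.\ $\to\mathbb R$); you flag this as routine, and it is, but it is the step that turns compactness into a Liouville-type conclusion.
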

\begin{proof}
Throughout the proof, we will not make the dependence of $C$ on $M$, $\gamma$, $\Lambda$ and $\theta$ explicit. Let $u\in C^{4,\gamma}(M)$, $f:=\Delta_g^2 u$, $h_1:=\frac{\partial u}{\partial\nu}$ and $h_2:=\frac{\partial\Delta_g u}{\partial\nu}$. We compute
$$\int_M fu d\mu_g=\int_M u\Delta_g^2 ud\mu_g=\int_M (\Delta_g u)^2 d\mu_g+\int_{\partial M}u\frac{\partial\Delta_g u}{\partial\nu}-\Delta_g u\frac{\partial u}{\partial\nu} dS_g.$$
Rearranging this identity, introducing a small parameter $\epsilon>0$ and using Young's inequality, we get
\begin{equation}\label{SchauderCh_LapceL2Estimate}
\int_M (\Delta_g u)^2 d\mu_g\leq C(\epsilon)\left(\|f\|_{C^{0,\gamma}_{\mathcal A}(M)}^2+\|h_1\|_{C^{3,\gamma}_{\mathcal A}(\partial M)}^2+\|h_2\|_{C^{1,\gamma}_{\mathcal A}(\partial M)}^2\right)+\epsilon\|u\|_{C^{4,\gamma}_{\mathcal A}(M)}^2.
\end{equation}
We put $\bar u:=\fint_M ud\mu_g$. Clearly $\int_M u-\bar u d\mu_g=0$. So, using Lemma \ref{SchauderCh_PoincareInequality} and Young's inequality with a small parameter $\delta>0$, we may estimate 
\begin{align*}
    \int_M u^2 d\mu_g=&\int_M u^2d\mu_g-|M|\bar u^2+|M|\bar u^2\\
            =&\int_M (u-\bar u)^2 d\mu_g+|M|\bar u^2\\
            \leq & C\left(\int_M |\nabla u|^2 d\mu_g+\bar u^2\right)\\
            =& C\left(\int_{\partial M}u\frac{\partial u}{\partial\nu}dS_g-\int_M u\Delta_g u d\mu_g+\bar u^2\right)\\
            \leq &  C(\delta)\left(\int_{\partial M}uh_1dS_g+\|\Delta_gu\|_{L^2(M)}^2 +\bar u^2\right)+\delta\|u\|_{L^2(M)}^2.
\end{align*}
Choosing $\delta>0$ small enough, we may absorb $\|u\|_{L^2(M)}^2$ to the left-hand side. Inserting Estimate \eqref{SchauderCh_LapceL2Estimate} and yet again using Young's inequality with the already introduced small parameter $\epsilon$, we obtain 
$$\|u\|_{L^2(M)}^2\leq C(\epsilon)\left(
\|f\|_{C^{0,\gamma}(M)}^2+\|h_1\|_{C^{3,\gamma}(\partial M)}^2+\|h_2\|_{C^{1,\gamma}(\partial M)}^2+
\bar u^2\right)+\epsilon\|u\|_{C^{4,\gamma}(M)}^2.$$
Inserting this estimate into the estimates from Lemma \ref{SchauderCh_EllipticSchauderEstimate}, we obtain 
$$\|u\|_{C^{4,\gamma}(M)}\leq C(\epsilon)\left(\|f\|_{C^{0,\gamma}(M)}+\|h_1\|_{C^{3,\gamma}(\partial M)}+\|h_2\|_{C^{1,\gamma}(\partial M)}+
|\bar u|\right)+\epsilon\|u\|_{C^{4,\gamma}(M)}.$$
Choosing $\epsilon>0$ small enough, we may absorb $\epsilon\|u\|_{C^{4,\gamma}(M)}$ to the left-hand side and deduce the corollary.
\end{proof}

Using a similar argument as in the proof of Theorem \ref{SchauderCh_GalerkinExistenceTheorem}, we now establish the following existence theorem.
\begin{theorem}\label{SchauderCh_EllipticGalerkinLemma}
There exists $N(n)$ with the following property: For all $f\in C^\infty(M)$ satisfying
$$\int_M fd\mu_g=0\hspace{.5cm}\textrm{and}\hspace{.5cm}
\frac{\partial}{\partial\nu}\Delta_g^m f=0\hspace{.5cm}\textrm{for all $0\leq m\leq N$}$$
there exists a function $u\in C^{4,\gamma}(M)$ such that 
\begin{equation}\label{SchauderCh_GalerkinProjectedProblem_Elliptic}
\left\{
\begin{aligned}
&\Delta_g^2u=f\hspace{1.85cm}\textrm{in $M$,}\\
&\frac{\partial u}{\partial\nu}=\frac{\partial\Delta_g u}{\partial\nu}=0\hspace{.5cm}\textrm{along $\partial M$,}\\
&\int_M ud\mu_g=0.
\end{aligned}
\right.
\end{equation}
\end{theorem}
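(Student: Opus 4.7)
The plan is to imitate the Galerkin construction from Theorem \ref{SchauderCh_GalerkinExistenceTheorem}, replacing the parabolic Schauder estimate by its elliptic counterpart from Corollary \ref{SchauderCh_EllipticSchauderEstimate_Korollar}. Let $\{\phi_k,\lambda_k\}_{k\ge 0}$ be the Neumann spectral basis provided by Theorem \ref{SchauderCh_neumannlaplacianResultsTheorem}, so $\phi_0=|M|^{-1/2}$, $\lambda_0=0$, and for $k\ge 1$ we have $\lambda_k>0$ and $\Delta_g^2\phi_k=\lambda_k^2\phi_k$ with $\partial_\nu\phi_k=\partial_\nu\Delta_g\phi_k=0$. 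I choose $N=N(n)\in\N$ large enough that the Sobolev embedding $W^{2N,2}(M)\hookrightarrow\hookrightarrow C^{0,\gamma}(M)$ is compact.

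Set $c_k:=\langle f,\phi_k\rangle_{L^2_g(M)}$. The hypothesis $\int_M f\,d\mu_g=0$ forces $c_0=0$, so for $\mu\ge 1$ I define
$$
u_\mu:=\sum_{k=1}^{\mu}\frac{c_k}{\lambda_k^2}\phi_k,
\qquad
f_\mu:=\sum_{k=1}^{\mu}c_k\phi_k.
$$
Since each $\phi_k$ is smooth, $u_\mu\in C^\infty(M)$ and a direct computation gives $\Delta_g^2 u_\mu=f_\mu$, $\partial_\nu u_\mu=\partial_\nu\Delta_g u_\mu=0$, and $\int_M u_\mu\,d\mu_g=0$. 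Applying Corollary \ref{SchauderCh_EllipticSchauderEstimate_Korollar} to $u_\mu-u_\nu$ (all boundary/mean-zero terms cancel) yields
$$
\|u_\mu-u_\nu\|_{C^{4,\gamma}(M)}\le C\,\|f_\mu-f_\nu\|_{C^{0,\gamma}(M)}.
$$
Therefore it suffices to show that $(f_\mu)$ is Cauchy in $C^{0,\gamma}(M)$.

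For this I bound $(f_\mu)$ uniformly in $W^{2N,2}(M)$. The hypothesis $\partial_\nu\Delta_g^m f=0$ for $0\le m\le N$ together with repeated integration by parts gives, for $0\le k\le N$,
$$
\lambda_j^{2k}c_j=\langle(-\Delta_g)^k\phi_j,f\rangle_{L^2_g}=\langle\phi_j,(-\Delta_g)^k f\rangle_{L^2_g},
$$
so by Parseval $\sum_{j=1}^\infty\lambda_j^{4k}c_j^2\le\|\Delta_g^k f\|_{L^2_g(M)}^2<\infty$. Hence
$$
\|\Delta_g^k f_\mu\|_{L^2_g(M)}^2=\sum_{j=1}^{\mu}\lambda_j^{2k}c_j^2\le\|\Delta_g^k f\|_{L^2_g(M)}^2
$$
uniformly in $\mu$. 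Since $\partial_\nu\Delta_g^l f_\mu=0$ for every $l\ge 0$ (each $\phi_k$ has this property), Lemma \ref{SchauderCh_Wk2EstimateinTermsOfLaplace_Lemma} applied with $k=N$ produces a $\mu$-independent bound
$$
\|f_\mu\|_{W^{2N,2}(M)}\le C(M,N)\sum_{j=0}^{N}\|\Delta_g^j f\|_{L^2_g(M)}.
$$
By the compact embedding $W^{2N,2}(M)\hookrightarrow\hookrightarrow C^{0,\gamma}(M)$, a subsequence of $f_\mu$ converges in $C^{0,\gamma}(M)$; the $L^2_g(M)$-convergence $f_\mu\to f$ (Theorem \ref{SchauderCh_neumannlaplacianResultsTheorem}) identifies the limit as $f$, and a standard subsequence-of-every-subsequence argument upgrades this to full convergence $f_\mu\to f$ in $C^{0,\gamma}(M)$.

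Consequently $(u_\mu)$ is Cauchy in $C^{4,\gamma}(M)$, with limit $u\in C^{4,\gamma}(M)$. Passing to the limit in $\Delta_g^2 u_\mu=f_\mu$, $\partial_\nu u_\mu=\partial_\nu\Delta_g u_\mu=0$ and $\int_M u_\mu\,d\mu_g=0$ (all continuous operations in $C^{4,\gamma}(M)$) shows $u$ solves \eqref{SchauderCh_GalerkinProjectedProblem_Elliptic}. The main technical obstacle is the uniform $W^{2N,2}$ bound on $f_\mu$; this is precisely where the smoothness hypothesis $\partial_\nu\Delta_g^m f=0$ for $m\le N$ enters, via Lemma \ref{SchauderCh_Wk2EstimateinTermsOfLaplace_Lemma}, and the constant $N(n)$ is dictated by how many Sobolev derivatives are needed to embed into $C^{0,\gamma}$.
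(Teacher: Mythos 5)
Your proposal is correct and follows essentially the same Galerkin-approximation route as the paper: expand into Neumann eigenfunctions, use Corollary \ref{SchauderCh_EllipticSchauderEstimate_Korollar} to reduce to convergence of $f_\mu$ in $C^{0,\gamma}(M)$, and obtain uniform $W^{2N,2}$ bounds via Lemma \ref{SchauderCh_Wk2EstimateinTermsOfLaplace_Lemma} and the hypothesis on $f$. The only cosmetic slip is a confusion of powers of $\lambda_j$ (the correct identity is $(-\lambda_j)^k c_j = \langle \phi_j, (-\Delta_g)^k f\rangle_{L^2_g}$, giving $\sum_j \lambda_j^{2k} c_j^2 = \|\Delta_g^k f\|_{L^2}^2$, which is exactly what you use), but this echoes a typo already present in the paper and does not affect the argument.
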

\begin{proof}
By Sobolev embedding, we can fix $N(n)$ such that $W^{2N,2}(M)\hookrightarrow\hookrightarrow C^{0,\gamma}(M)$ compactly. 
Let $\lambda_k$ and $\phi_k$ denote the Neumann eigenvalues and eigenfunctions from Theorem \ref{SchauderCh_neumannlaplacianResultsTheorem}. For $k\geq 0$, we define $c_k:=\langle \phi_k, f\rangle_{L^2_g(M)}$ and note that by assumption $c_0=0$. For $\mu\in\N$ we define $f_\mu:=\sum_{j=1}^\mu c_j\phi_j$ as well as
$$u_\mu:=\sum_{j=1}^\mu \frac{c_j}{\lambda_j^2}\phi_j.$$
From direct computation it follows that the functions $u_\mu$ satisfy the problem
\begin{equation}\label{SchauderCh_GalerkinApproxEllipticEqu}
\left\{
\begin{aligned}
    &\Delta_g^2 u_\mu= f_\mu\hspace{1.85cm}\textrm{in $M$,}\\
    &\frac{\partial u_\mu}{\partial\nu}=\frac{\partial\Delta_g u_\mu}{\partial\nu}=0\hspace{.5cm}\textrm{along $\partial M$,}\\
    &\int_M u_\mu d\mu_g=0.
\end{aligned}
\right.
\end{equation}
Applying the Schauder estimate from Corollary \ref{SchauderCh_EllipticSchauderEstimate_Korollar} to the functions $u_\mu-u_\nu$  shows
$$\|u_\mu-u_\nu\|_{C^{4,\gamma}(M)}
\leq C\|f_\mu-f_\nu\|_{C^{0,\gamma}(M)}.$$
We prove that up to a subsequence, which we again denote by $\mu$, $f_\mu\rightarrow f$ in $C^{0,\gamma}(M)$. Once this is shown, we deduce that $u_\mu$ is Cauchy in $C^{4,\gamma}(M)$ and thus convergent in $C^{4,\gamma}(M)$. Putting $u:=\lim_{\mu\rightarrow\infty} u_\mu$ and passing to the limit in Equation \eqref{SchauderCh_GalerkinApproxEllipticEqu}, we verify that $u$ is a solution of Equation \eqref{SchauderCh_GalerkinProjectedProblem_Elliptic}.\\

First, note that we may use Theorem \ref{SchauderCh_neumannlaplacianResultsTheorem} to deduce that $f_\mu\rightarrow f$ in $L^2(M)$. So, if along some subsequence $f_\mu$ is convergent in $C^{0,\gamma}(M)$, the limit has to be $f$ as well. To show that $f_\mu$ is convergent in $C^{0,\gamma}(M)$, it is sufficient to show that $f_\mu$ is bounded in $W^{2N,2}(M)$ due to the choice of $N$. For $k\leq N$, we may use the assumption on $f$ and integration by parts to compute
\begin{align*}
    \lambda_j^{2k}c_j=&\langle (-\Delta_g)^ k\phi_j, f\rangle_{L^2_g(M)}=\langle \phi_j, (-\Delta_g)^ kf\rangle_{L^2_g(M)}.
\end{align*}
Consequently, we learn that for all $k\leq N$, we have the estimate 
\begin{equation}\label{SchauderCh_EllipticGalerkinlambdapowerestimate}
\sum_{j=1}^ \infty|\lambda_j^ {2k}c_j|^2=\sum_{j=1}^\infty \langle \phi_j, (-\Delta_g)^ kf\rangle_{L^2_g(M)}^2=\|\Delta_g^k f\|_{L^2(M)}^2\leq  C(M,k)\|f\|_{W^{2N,2}(M)}^2.
\end{equation}
Using Estimate \eqref{SchauderCh_EllipticGalerkinlambdapowerestimate}, we deduce that for $0\leq k\leq N$
$$\|\Delta_g^k f_\mu\|_{L^2(M)}^2=\|\sum_{j=0}^\mu (-\lambda_j)^{2k}c_j\phi_j\|_{L^2(M)}^2=\sum_{j=0}^\mu \lambda_j^{2k}c_j^2 \leq C(M,k)\|f\|_{W^{2N,2}(M)}^2.$$
Note $\phi_j\in C^\infty(M)$ and $\partial_\nu\Delta_g^k\phi_j=0$ for all $j\in\N$ and $k\in\N_0$. So, for all $\mu\in\N$ we have $f_\mu\in C^\infty(M)$ and $\partial_\nu\Delta_g^kf_\mu=0$ for all $k\in\N_0$. Using Lemma \ref{SchauderCh_Wk2EstimateinTermsOfLaplace_Lemma}, we may estimate
$$\|f_\mu\|_{W^{2N,2}(M)}\leq C(N)\sum_{k=0}^N\|\Delta_g^k f_\mu\|_{L^2(M)}\leq\sum_{k=0}^NC(M,N,k)\|f\|_{W^{2N,2}(M)}^2\leq C(M,N)\|f\|_{W^{2N,2}(M)}^2 .$$
Hence $f_\mu$ is bounded in $W^{2N,2}(M)$ and the theorem follows. 
\end{proof}

We now have to essentially repeat the analysis we provided in Subsection \ref{SchauderCh_SubSection_InitialAndBoundaryValues} with minor tweaks to adapt to the elliptic case.

\begin{korollar}\label{SchauderCh_EllipticExistenceCor1}
Let $f\in C^{0,\gamma}(M)$ such that $\int_M f d\mu_g=0$.
Then, there exists a function $u\in C^{4,\gamma}(M)$ such that 
\begin{equation}\label{SchauderCh_ExistenceLemmaProblemElliptic}
\left\{
\begin{aligned}
&\Delta_g^2u=f\hspace{1.85cm}\textrm{in $M$,}\\
&\frac{\partial u}{\partial\nu}=\frac{\partial\Delta_g u}{\partial\nu}=0\hspace{.5cm}\textrm{along $\partial M$,}\\
&\int_M ud\mu_g=0.
\end{aligned}
\right.
\end{equation}
\end{korollar}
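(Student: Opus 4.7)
The plan is to approximate $f$ by smooth functions that satisfy the compatibility hypotheses of Theorem \ref{SchauderCh_EllipticGalerkinLemma} (boundary vanishing of $\partial_\nu \Delta_g^m$ for $0\leq m\leq N$ and zero integral), then pass to the limit in the resulting approximate solutions via the a priori estimate from Corollary \ref{SchauderCh_EllipticSchauderEstimate_Korollar}. The strategy parallels the proof of Corollary \ref{SchauderCh_ApproximationforgeneralfParabolic} in the parabolic setting, with one extra book-keeping step to preserve the mean-zero constraint.

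First I would choose a sequence $f_k\in C^\infty(M)$ with $f_k\to f$ in $C^{0,\gamma}(M)$, and replace $f_k$ by $\tilde f_k := f_k-\fint_M f_k\, d\mu_g$, so that $\tilde f_k\in C^\infty(M)$, $\tilde f_k\to f$ in $C^{0,\gamma}(M)$, and $\int_M \tilde f_k\, d\mu_g=0$. It therefore suffices to solve the problem for smooth data with zero mean, provided the a priori estimate is uniform, and then pass to the limit.

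Next, fix $N=N(n)$ from Theorem \ref{SchauderCh_EllipticGalerkinLemma} and work with a single smooth $\tilde f:=\tilde f_k$ satisfying $\int_M \tilde f\, d\mu_g=0$. Using the functions $\varphi_j$ from Lemma \ref{SchauderCh_ExistenceofphijfunctionsLemma} and the extension operator $E$ from Lemma \ref{SchauderCh_SmoothExistenceOperatorExistenceLemma}, I build recursively a smooth correction
\[
T_0:=\varphi_0 E\!\left[\tfrac{\partial \tilde f}{\partial \nu}\right],\qquad T_{j+1}:=T_j+\varphi_{j+1}E\!\left[\tfrac{\partial \Delta_g^{j+1}\tilde f}{\partial\nu}-\tfrac{\partial \Delta_g^{j+1}T_j}{\partial\nu}\right],
\]
so that $T_N\in C^\infty(M)$ satisfies $T_N|_{\partial M}=0$ and $\partial_\nu\Delta_g^j T_N=\partial_\nu\Delta_g^j \tilde f$ for all $0\leq j\leq N$. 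Multiplying $T_N$ by the boundary-cutoff $\eta_\epsilon$ constructed in the proof of Corollary \ref{SchauderCh_ApproximationforgeneralfParabolic} (which is supported in a collar of width $O(\epsilon)$, equals $1$ near $\partial M$, and is bounded by the same Hölder-seminorm estimate) and setting $F_\epsilon:=\tilde f-\eta_\epsilon T_N$, the identical calculation given there shows $\eta_\epsilon T_N\to 0$ in $C^{0,\gamma}(M)$, so $F_\epsilon\to \tilde f$ in $C^{0,\gamma}(M)$, while by construction $\partial_\nu\Delta_g^j F_\epsilon\equiv 0$ on $\partial M$ for $0\leq j\leq N$. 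To restore mean zero I then replace $F_\epsilon$ by $\hat F_\epsilon:=F_\epsilon-\fint_M F_\epsilon\, d\mu_g$; subtracting a constant does not alter any normal derivatives, so $\hat F_\epsilon$ still meets the Neumann compatibilities, $\int_M \hat F_\epsilon\, d\mu_g=0$, and $\fint_M F_\epsilon\, d\mu_g\to \fint_M \tilde f\, d\mu_g=0$ as $\epsilon\to 0^+$ gives $\hat F_\epsilon\to\tilde f$ in $C^{0,\gamma}(M)$.

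Theorem \ref{SchauderCh_EllipticGalerkinLemma} now produces solutions $u_\epsilon\in C^{4,\gamma}(M)$ of \eqref{SchauderCh_ExistenceLemmaProblemElliptic} with $f$ replaced by $\hat F_\epsilon$. Applying Corollary \ref{SchauderCh_EllipticSchauderEstimate_Korollar} to the difference $u_\epsilon-u_{\epsilon'}$ (which solves the same problem with right-hand side $\hat F_\epsilon-\hat F_{\epsilon'}$, zero Neumann data, and zero mean) shows that $(u_\epsilon)$ is Cauchy in $C^{4,\gamma}(M)$; the limit solves \eqref{SchauderCh_ExistenceLemmaProblemElliptic} with right-hand side $\tilde f_k$. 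A second diagonal extraction over $k$, again controlled by Corollary \ref{SchauderCh_EllipticSchauderEstimate_Korollar}, produces the desired $u\in C^{4,\gamma}(M)$ solving \eqref{SchauderCh_ExistenceLemmaProblemElliptic}. The main obstacle is purely organizational: simultaneously enforcing the $N+1$ boundary compatibility conditions and the zero-mean constraint without losing the $C^{0,\gamma}$ approximation. The normal-derivative matching is handled exactly as in the parabolic case, and the mean-zero constraint costs only a single scalar correction that is asymptotically harmless, so no new analytic difficulty arises beyond what is already developed in Corollary \ref{SchauderCh_ApproximationforgeneralfParabolic}.
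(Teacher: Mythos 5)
Your proof is correct and follows essentially the same approach as the paper's: construct $T_N$ via Lemmas \ref{SchauderCh_ExistenceofphijfunctionsLemma} and \ref{SchauderCh_SmoothExistenceOperatorExistenceLemma}, cut off with $\eta_\epsilon$, correct by a constant to restore zero mean, and pass to the limit using Theorem \ref{SchauderCh_EllipticGalerkinLemma} and the Schauder estimate from Corollary \ref{SchauderCh_EllipticSchauderEstimate_Korollar}. Your explicit replacement of $f_k$ by $\tilde f_k:=f_k-\fint_M f_k\,d\mu_g$ in the smoothing step is a small clarification of a point the paper leaves implicit, and your $\hat F_\epsilon$ coincides with the paper's $F_\epsilon$ since $\fint_M F_\epsilon\,d\mu_g=-\fint_M\eta_\epsilon T_N\,d\mu_g$ once $\tilde f$ has mean zero.
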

\begin{proof}
We first consider the case where $f\in C^\infty(M)$. Following the proof of Corollary \ref{SchauderCh_ApproximationforgeneralfParabolic} we construct a function $T_N\in C^\infty(M)$ that satisfies 
$$\frac{\partial\Delta_g^ j T_N}{\partial\nu}=\frac{\partial\Delta_g^ jf}{\partial\nu}\hspace{.5cm}\textrm{for all $0\leq j\leq N$}
\hspace{.5cm}\textrm{and}\hspace{.5cm}T_N\big|_{\partial M}=0.$$
We define $\eta_\epsilon$ as in the proof of Corollary \ref{SchauderCh_ApproximationforgeneralfParabolic}. Recall that $\eta_\epsilon=1$ on a small neighborhood of $\partial M$. We define
$$F_{\epsilon}:=f-\eta_\epsilon T_N+\fint_M \eta_\epsilon T_N d\mu_g.$$
Using the properties of $T_N$ and $\eta_\epsilon$ it follows that $F_\epsilon$ satisfies
$$\int_M F_\epsilon d\mu_g=0
\hspace{.5cm}\textrm{and}\hspace{.5cm}
\frac{\partial}{\partial\eta}\Delta_g^ j F_{\epsilon}=0\hspace{.5cm}\textrm{for all $0\leq j\leq N$}.$$
By Corollary \ref{SchauderCh_EllipticExistenceCor1}, there exist functions $u_\epsilon\in C^{4,\gamma}(M)$ such that $u_\epsilon$ solves Problem \eqref{SchauderCh_ExistenceLemmaProblemElliptic} with $f$ replaced by $F_\epsilon$. Following the argument in the proof of Corollary \ref{SchauderCh_ApproximationforgeneralfParabolic}, we get $F_\epsilon\rightarrow f$ in $C^{0,\gamma}(M)$. The Schauder estimate from Corollary \ref{SchauderCh_EllipticSchauderEstimate_Korollar} implies that $(u_\epsilon)$ is Cauchy in $C^{4,\gamma}(M)$ and so we obtain a solution to Problem \eqref{SchauderCh_ExistenceLemmaProblemElliptic} as the limit $u:=\lim_{\epsilon\rightarrow 0^+} u_\epsilon$. In general, we choose smooth functions $f_k\in C^\infty(M)$ such that $f_k\rightarrow f$ in $C^{0,\gamma}(M)$. By the first part of the proof, there exist solutions $u_k$ to Problem \eqref{SchauderCh_ExistenceLemmaProblemElliptic} with $f$ replaced by $f_k$. The Schauder estimate from Corollary \ref{SchauderCh_EllipticSchauderEstimate_Korollar} implies that $(u_k)$ is Cauchy in $C^{4,\gamma}(M)$ and so we obtain a solution to Problem \eqref{SchauderCh_ExistenceLemmaProblemElliptic} as the limit $u:=\lim_{k\rightarrow\infty} u_k$.
\end{proof}

We now extend the analysis to general boundary data. 
\begin{korollar}\label{SchauderCh_EllipticProblemExistenceKorollar}
Let $f\in C^{0,\gamma}(M)$, $h_1\in C^{3,\gamma}(\partial M)$, $h_2\in C^{1,\gamma}(\partial M)$ such that the following compatibility condition is satisfied:
\begin{equation}\label{SchauderCh_EllipticFullCC}
\int_M fd\mu_g=\int_{\partial M}h_2 dS_g
\end{equation}
Then there exists a unique solution $u\in C^{4,\gamma}(M)$ to 
\begin{equation}\label{SchauderCh_EllipticFullProblem}
\left\{
\begin{aligned}
&\Delta_g^2u=f\hspace{2.73cm}\textrm{in $M$,}\\
&\frac{\partial u}{\partial\nu}=h_1,\ \frac{\partial\Delta_g u}{\partial\nu}=h_2\hspace{.5cm}\textrm{along $\partial M$,}\\
&\int_M ud\mu_g=0.
\end{aligned}
\right.
\end{equation}
\end{korollar}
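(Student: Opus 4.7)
Uniqueness is immediate: if $u\in C^{4,\gamma}(M)$ satisfies the problem with $f=h_1=h_2=0$ and $\int_M u\,d\mu_g=0$, then Corollary \ref{SchauderCh_EllipticSchauderEstimate_Korollar} gives $\|u\|_{C^{4,\gamma}(M)}\leq 0$, hence $u\equiv 0$. For existence, I will proceed exactly along the lines of Lemma \ref{SchauderCh_ExttendingtoIVandBV_Lemma} and Corollary \ref{SchauderCh_ApproximationforgeneralfParabolic}: first solve for smooth data by reducing to the homogeneous boundary case covered by Corollary \ref{SchauderCh_EllipticExistenceCor1}, then extend to general data by approximation using the Schauder estimate.

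\textit{Step 1 (smooth data).} Assume $f\in C^\infty(M)$, $h_1,h_2\in C^\infty(\partial M)$ satisfying \eqref{SchauderCh_EllipticFullCC}. Using Lemma \ref{SchauderCh_ExistenceofphijfunctionsLemma} choose smooth $\varphi_0,\varphi_1$ with $\partial_\nu\Delta_g^k\varphi_k=1$ and $\nabla^l\varphi_k|_{\partial M}=0$ for $l\leq 2k$, and let $E$ be the extension operator of Lemma \ref{SchauderCh_SmoothExistenceOperatorExistenceLemma}. Define
\begin{equation*}
T_0:=\varphi_0 E[h_1],\qquad w:=T_0+\varphi_1 E\!\left[h_2-\partial_\nu\Delta_g T_0\right]\in C^\infty(M),
\end{equation*}
which by construction satisfies $\partial_\nu w=h_1$ and $\partial_\nu\Delta_g w=h_2$. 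Set $\tilde f:=f-\Delta_g^2 w\in C^\infty(M)$. The divergence theorem and the compatibility condition yield
\begin{equation*}
\int_M \tilde f\,d\mu_g=\int_M f\,d\mu_g-\int_{\partial M}\partial_\nu\Delta_g w\,dS_g=\int_M f\,d\mu_g-\int_{\partial M}h_2\,dS_g=0,
\end{equation*}
so Corollary \ref{SchauderCh_EllipticExistenceCor1} provides $v\in C^{4,\gamma}(M)$ solving $\Delta_g^2 v=\tilde f$ in $M$ with $\partial_\nu v=\partial_\nu\Delta_g v=0$ along $\partial M$ and $\int_M v\,d\mu_g=0$. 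Then $u:=v+w-\fint_M(v+w)\,d\mu_g$ is the sought-after solution.

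\textit{Step 2 (general data).} Pick smooth approximations $f_k\to f$ in $C^{0,\gamma}(M)$, $h_{1,k}\to h_1$ in $C^{3,\gamma}(\partial M)$, $h_{2,k}\to h_2$ in $C^{1,\gamma}(\partial M)$, and replace $f_k$ by
\begin{equation*}
\tilde f_k:=f_k+\frac{1}{|M|}\left(\int_{\partial M}h_{2,k}\,dS_g-\int_M f_k\,d\mu_g\right).
\end{equation*}
The corrective constant tends to $0$ by \eqref{SchauderCh_EllipticFullCC}, so $\tilde f_k\to f$ in $C^{0,\gamma}(M)$, and by construction $\int_M\tilde f_k\,d\mu_g=\int_{\partial M}h_{2,k}\,dS_g$. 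Step 1 produces solutions $u_k\in C^{4,\gamma}(M)$. Since $\int_M(u_k-u_l)\,d\mu_g=0$, applying Corollary \ref{SchauderCh_EllipticSchauderEstimate_Korollar} to $u_k-u_l$ yields
\begin{equation*}
\|u_k-u_l\|_{C^{4,\gamma}(M)}\leq C\bigl(\|\tilde f_k-\tilde f_l\|_{C^{0,\gamma}(M)}+\|h_{1,k}-h_{1,l}\|_{C^{3,\gamma}(\partial M)}+\|h_{2,k}-h_{2,l}\|_{C^{1,\gamma}(\partial M)}\bigr),
\end{equation*}
so $(u_k)$ is Cauchy in $C^{4,\gamma}(M)$. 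Passing to the limit in the PDE, the normal-derivative conditions, and the integral constraint gives a solution $u\in C^{4,\gamma}(M)$ of \eqref{SchauderCh_EllipticFullProblem}.

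\textit{Main obstacle.} There is essentially no new difficulty beyond what has already been treated in the parabolic case; the one point requiring some attention is Step 2, where the compatibility condition \eqref{SchauderCh_EllipticFullCC} need not survive a naive approximation of $f$ and $h_2$, and must be restored by the constant correction above while ensuring that this correction vanishes in the limit.
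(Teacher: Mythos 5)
Your proof is correct and follows essentially the same approach as the paper: reduce to homogeneous boundary data via the cutoff functions $\varphi_0,\varphi_1$ and the extension operator, verify the adjusted compatibility condition so that Corollary \ref{SchauderCh_EllipticExistenceCor1} applies, and then approximate general data by smooth data with a constant correction to $f_k$ that restores compatibility and vanishes in the limit, closing with the Schauder estimate. The only cosmetic difference is that you normalize the mean at the end ($u:=v+w-\fint_M(v+w)\,d\mu_g$) whereas the paper subtracts the mean from $w$ before solving for $v$; since $\int_M v\,d\mu_g=0$ anyway, these are identical.
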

\begin{proof}
Uniqueness follows from the Schauder estimate from Corollary \ref{SchauderCh_EllipticSchauderEstimate_Korollar}. For existence, we follow the proof of Lemma \ref{SchauderCh_ExttendingtoIVandBV_Lemma}. First, we assume that $f$, $h_1$ and $h_2$ are smooth. For $k=0,1$ let $\varphi_k\in C^\infty(M)$ such that 
$$\frac{\partial\Delta_g ^k \varphi_k}{\partial\nu}=1
\hspace{.5cm}\textrm{and}\hspace{.5cm}
\nabla^l \varphi_k\big|_{\partial M}=0\hspace{.2cm}\textrm{for all $0\leq l\leq 2k$}.$$
Using the extension operator $E$ from Lemma \ref{SchauderCh_SmoothExistenceOperatorExistenceLemma}, we define 
$$T_0:=\varphi_0 E[h_1],\hspace{.5cm}
T_1:=T_0+\varphi_1 E\left[h_2-\frac{\partial \Delta_g T_0}{\partial\nu}\right]
\hspace{.5cm}\textrm{and}\hspace{.5cm}
w:=T_1-\fint_M T_1 d\mu_g.$$
By construction, $w$ is smooth and satisfies
$$\int_M wd\mu_g=0,\hspace{.5cm}\frac{\partial w}{\partial\nu} =h_1\hspace{.5cm}\textrm{and}\hspace{.5cm}\frac{\partial\Delta_g w}{\partial\nu}=h_2.$$
Using the compatibility condition \eqref{SchauderCh_EllipticFullCC}, we compute
$$\int_M f-\Delta_g^2 wd\mu_g=\int_M fd\mu_g-\int_{\partial M}\frac{\partial\Delta_g w}{\partial\nu}dS_g=0.$$
So, by Corollary \ref{SchauderCh_EllipticExistenceCor1} we know that there is a function $v\in C^{4,\gamma}(M)$ satisfying
$$\left\{
\begin{aligned}
&\Delta_g^2v=f-\Delta_g^2w\hspace{.55cm}\textrm{in $M$,}\\
&\frac{\partial v}{\partial\nu}=\frac{\partial\Delta_g v}{\partial\nu}=0\hspace{.5cm}\textrm{along $\partial M$,}\\
&\int_M vd\mu_g=0.
\end{aligned}
\right.
$$
We define $u:=v+w$ and compute
$$\Delta_g^2 u=f-\Delta_g ^2w+\Delta_g^2 w=f
\hspace{.5cm}\textrm{and}\hspace{.5cm}
\int_M ud\mu_g=\int_M vd\mu_g+\int_M wd\mu_g=0.$$
Additionally, we have by construction 
$$\frac{\partial u}{\partial\nu}=\frac{\partial v}{\partial\nu}+\frac{\partial w}{\partial\nu}=h_1
\hspace{.5cm}\textrm{and}\hspace{.5cm}
\frac{\partial\Delta_g u}{\partial\nu}=\frac{\partial \Delta_gv}{\partial\nu}+\frac{\partial\Delta_g w}{\partial\nu}=h_2.$$
So, for smooth data, the corollary is proven. Now for the general case. Let $f_k$, $h_{1,k}$ and $h_{2,k}$ be smooth such that $f_k\rightarrow f$ in $C^{0,\gamma}(M)$, $h_{1,k}\rightarrow h_1$ in $C^{3,\gamma}(\partial M)$ and $h_{2,k}\rightarrow h_2$ in $C^{1,\gamma}(\partial M)$ .
In general, these approximations will not satisfy the compatibility condition, so we need to modify them a little. We define 
$$
\tilde f_k:=f_k-\frac1{|M|}\left(\int_M f_k d\mu_g-\int_{\partial M}h_{2,k} dS_g\right).
$$
When $k\rightarrow\infty$, we have $f_k\rightarrow f$ and $h_{2,k}\rightarrow h_2$. Since $f$ and $h_2$ satisfy the compatibility condition \eqref{SchauderCh_EllipticFullCC}, we deduce that $\tilde f_k\rightarrow f$ in $C^{0,\gamma}(M)$ when $k\rightarrow\infty$. Additionally, we have 
$$\int_M \tilde f_k d\mu_g=
\int_M f_k d\mu_g -\left(\int_M f_k d\mu_g-\int_{\partial M}h_{2,k} dS_g\right)
=\int_{\partial M}h_{2,k} dS_g.$$
So, for each $k\in\N$ we get a solution $u_k\in C^{4,\gamma}(M)$ of Problem \eqref{SchauderCh_EllipticFullProblem} with data $\tilde f_k$, $h_{1,k}$ and $h_{2,k}$. Using the Schauder estimate form Corollary \ref{SchauderCh_EllipticSchauderEstimate_Korollar} we deduce that $u_k$ is Cauchy in $C^{4,\gamma}(M)$ and therefore converges to a solution of \eqref{SchauderCh_EllipticFullProblem} with data $f$, $h_1$ and $h_2$.
\end{proof}

\section{A Decay Estimate}\label{SchauderCh_Ch04Sec4}
In general, an estimate of the form derived in Corollary \ref{SchauderCh_SchauderEstimateonmanifolds} cannot be improved in the sense that the Schauder constant has a time dependence. Still, under additional assumptions, more can be proven. In this section we focus on $M=\Sp^2_+:=\set{(\omega_1,\omega_2,\omega_3)\in\Sp^2\ |\ \omega_3\geq 0}$ equipped with the metric $\delta$ induced by the standard metric on $\R^3$. Points on $\Sp^2$, $\Sp^2_+$ and $\Sp^2_-:=\set{(\omega_1,\omega_2,\omega_3)\in\Sp^2\ |\ \omega_3\leq 0}$ are always denoted as $\omega=(\omega_1,\omega_2,\omega_3)$. We denote the Laplace-Beltrami operator by $\Delta$ and the exterior normal along $\partial \Sp^2_+$ by $\nu$. Additionally, we abbreviate $L^2(\Sp^2_+):=L^2_\delta(\Sp^2_+)$.\\

We require the following proposition. For a proof, we refer to \cite{atkinson2012spherical}. (For the density statement, see Chapter 2, Section 8, Subsection 3, Theorem 2.34. For the statement concerning the eigenvalues, see Chapter 3, Section 3, Proposition 3.5. Finally, for the characterization of the spaces $E_0$ and $E_1$, see Chapter 2, Section 1, Subsection 1, Example 2.6 combined with Chapter 2, Section 1, Subsection 3, Definition 2.7) 

\begin{proposition}[Laplace Eigenfunctions on $\Sp^2$]\label{SchauderCh_EigenfunctiononFullSphere}
For $k\in\N_0$, let\\ $E_k:=\set{u\in C^ \infty(\Sp^2)\ |\ -\Delta u=k(k+1)u\textrm{ on }\Sp^2}$. Then the following statements hold:
\begin{itemize}
    \item  $\operatorname{span}_\R\left(\bigcup_{k=0}^ \infty E_k\right)$ is dense in $L^2(\Sp^2)$.
    \item $E_0=\R$ and, putting $f_i:\Sp^2\rightarrow\R,\ f_i(\omega)=\omega_i$,  $E_1=\operatorname{span}_\R\set{f_1,f_2,f_3}$.
\end{itemize}
\end{proposition}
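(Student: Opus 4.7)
The plan is to realize the eigenspaces $E_k$ on $\Sp^2$ as restrictions of homogeneous harmonic polynomials on $\R^3$. Writing the Euclidean Laplacian in polar coordinates,
$$\Delta_{\R^3} u = \partial_r^2 u + \frac{2}{r}\partial_r u + \frac{1}{r^2}\Delta_{\Sp^2} u,$$
and evaluating on a polynomial $p$ that is homogeneous of degree $k$ (so $\partial_r p = kp/r$ and $\partial_r^2 p = k(k-1)p/r^2$) gives
$$\Delta_{\R^3} p = r^{-2}\bigl[k(k+1)\, p|_{\Sp^2} + \Delta_{\Sp^2}(p|_{\Sp^2})\bigr].$$
Hence if $p$ is harmonic on $\R^3$, its restriction lies in $E_k$.

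For the density statement I would first invoke Stone--Weierstrass: the algebra of restrictions of polynomials from $\R^3$ to $\Sp^2$ separates points and contains the constants, so it is dense in $C^0(\Sp^2)$ and therefore in $L^2(\Sp^2)$. It then suffices to see that every such restriction lies in $\operatorname{span}_\R(\bigcup_k E_k)$. Decomposing a polynomial into its homogeneous parts and applying the classical algebraic decomposition $P_k = H_k \oplus |x|^2 P_{k-2}$ (where $H_k$ is the space of harmonic homogeneous polynomials of degree $k$) reduces the problem inductively on the degree, since $|x|^2 \equiv 1$ on $\Sp^2$ makes $(|x|^2 q)|_{\Sp^2} = q|_{\Sp^2}$.

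For the identifications: $E_0 = \R$ follows from $\int_{\Sp^2}|\nabla u|^2_\delta\, d\mu_\delta = -\int_{\Sp^2} u\Delta u\, d\mu_\delta = 0$, which forces $u$ to be constant on the connected manifold $\Sp^2$. The functions $f_i(\omega)=\omega_i$ are restrictions of linear, hence harmonic, polynomials of degree one, so by the computation above $f_i \in E_1$. For the reverse inclusion, given $u\in E_1$ define $P(x):=|x|\, u(x/|x|)$ on $\R^3\setminus\{0\}$; by the polar decomposition $P$ is harmonic there and bounded near the origin, so the removable singularity theorem for harmonic functions extends $P$ to a harmonic function on $\R^3$ which is still homogeneous of degree one, and any such function is linear. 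Therefore $P(x)=\sum a_i x_i$ and $u = \sum a_i f_i$.

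The main obstacle is the algebraic decomposition $P_k = H_k \oplus |x|^2 P_{k-2}$. The cleanest route is to introduce the Fischer inner product $\langle p, q\rangle := (p(\partial) q)(0)$ on homogeneous polynomials and verify that multiplication by $|x|^2$ is adjoint to $\Delta_{\R^3}$, so that $(|x|^2 P_{k-2})^\perp = \ker \Delta_{\R^3} \cap P_k = H_k$. Once this is in place, the density argument and the characterization of the low eigenspaces are essentially bookkeeping.
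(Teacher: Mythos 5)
The paper does not prove this proposition itself; it delegates to the reference \cite{atkinson2012spherical}, so there is no in-paper proof to compare against. What you have written is a self-contained proof along the standard spherical-harmonics route, and it is essentially correct. In particular: the polar decomposition of $\Delta_{\R^3}$ correctly identifies restrictions of harmonic degree-$k$ homogeneous polynomials as elements of $E_k$; Stone--Weierstrass plus the decomposition $P_k = H_k \oplus |x|^2 P_{k-2}$ (with $|x|^2 \equiv 1$ on $\Sp^2$) gives density; the integration by parts $\int_{\Sp^2}|\nabla u|^2 = -\int_{\Sp^2} u\Delta u = 0$ pins down $E_0=\R$; and the degree-one homogeneous extension $P(x) = |x|\,u(x/|x|)$, harmonic on $\R^3\setminus\{0\}$, bounded near $0$, hence extendable and therefore a linear polynomial, pins down $E_1$. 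Your Fischer-pairing sketch for $P_k = H_k\oplus|x|^2 P_{k-2}$ is also the cleanest way to see that the sum is direct (multiplication by $|x|^2$ and $\Delta_{\R^3}$ are adjoints, so $H_k = (|x|^2 P_{k-2})^\perp$, and injectivity of multiplication by $|x|^2$ gives the dimension count).

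One small notational imprecision: the display
\begin{equation*}
\Delta_{\R^3} p = r^{-2}\bigl[k(k+1)\, p|_{\Sp^2} + \Delta_{\Sp^2}(p|_{\Sp^2})\bigr]
\end{equation*}
is off by a factor of $r^k$ if $p|_{\Sp^2}$ is read as a function only of the angular variable; writing $p = r^k f$ with $f := p|_{\Sp^2}$, the correct identity is $\Delta_{\R^3} p = r^{k-2}\bigl[k(k+1)f + \Delta_{\Sp^2}f\bigr]$. Since you only use the vanishing of the bracket when $p$ is harmonic, the conclusion $p|_{\Sp^2}\in E_k$ is unaffected, but you should state the formula correctly if writing it out. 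Finally, for completeness you might note that the removable-singularity step in the $E_1$ argument uses that $P(x) = O(|x|) = o(|x|^{-1})$ near the origin, which is the right growth bound relative to the $\R^3$ fundamental solution; boundedness already suffices, as you say.
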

Using Proposition \ref{SchauderCh_EigenfunctiononFullSphere}, we now investigate the spectrum of the Neumann Laplacian on $\Sp^2_+$.
\begin{lemma}\label{SchauderCh_NeumannEVontheHalfSphere}
Using the notation from Theorem \ref{SchauderCh_neumannlaplacianResultsTheorem}: The eigenvalues of the Neumann Laplacian on $(\Sp^2_+,\delta)$ have the following properties:
\begin{itemize}
    \item $\lambda_0=0$ and $\phi_0=\sqrt{2\pi}^{-1}$.
    \item$\lambda_1=\lambda_2=2$ and one can choose $\phi_i(\omega)=\sqrt{\frac{3}{2\pi}}\omega_i$ for $i=1,2$.
    \item $\lambda_k\geq 6$ for $k\geq 3$.
\end{itemize}
\end{lemma}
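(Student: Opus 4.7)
The plan is to leverage a reflection argument to relate the Neumann eigenvalue problem on $\Sp^2_+$ to the full-sphere eigenvalue problem, where the spectrum is provided by Proposition \ref{SchauderCh_EigenfunctiononFullSphere}.

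The core ingredient is a \emph{reflection correspondence}: any Neumann eigenfunction $u \in C^\infty(\Sp^2_+)$ with eigenvalue $\lambda$ extends to a classical eigenfunction $\tilde u \in C^\infty(\Sp^2)$ of $-\Delta$ on $\Sp^2$ with the same eigenvalue via $\tilde u(\omega_1, \omega_2, \omega_3) := u(\omega_1, \omega_2, |\omega_3|)$, and conversely any reflection-invariant eigenfunction on $\Sp^2$ restricts to a Neumann eigenfunction on $\Sp^2_+$. To prove this, I would first note $\tilde u \in W^{1,2}(\Sp^2)$ and verify the weak equation $\int_{\Sp^2} \nabla \tilde u \cdot \nabla \tilde \varphi = \lambda \int_{\Sp^2} \tilde u \tilde \varphi$ for every $\tilde\varphi \in W^{1,2}(\Sp^2)$ by decomposing $\tilde\varphi$ into its even and odd parts under $\omega_3 \mapsto -\omega_3$; the odd part contributes zero on both sides by symmetry, while the even part restricts to a test function in $W^{1,2}(\Sp^2_+)$ against which the weak Neumann equation for $u$ applies, with a factor of $2$ from the contribution of $\Sp^2_-$. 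Iterating Proposition \ref{SchauderCh_RegularityMdfWITHOUTBOundary} and invoking Sobolev embedding promote $\tilde u$ to $C^\infty(\Sp^2)$. Proposition \ref{SchauderCh_EigenfunctiononFullSphere} therefore forces $\lambda \in \{k(k+1) : k \in \N_0\} = \{0, 2, 6, 12, \ldots\}$. For the converse direction, reflection invariance of $v \in C^\infty(\Sp^2)$ implies $\partial_{\omega_3} v = 0$ on the equator, hence the Neumann condition.

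Next I would verify the candidate eigenfunctions directly. Since $|\Sp^2_+| = 2\pi$, the constant $\phi_0 = (2\pi)^{-1/2}$ is the unique (up to sign) $L^2$-normalized eigenfunction with eigenvalue $0$. For $i = 1, 2$, Proposition \ref{SchauderCh_EigenfunctiononFullSphere} gives $-\Delta \omega_i = 2\omega_i$ on $\Sp^2$, and at any boundary point $(\omega_1, \omega_2, 0)$ the outward unit normal of $\Sp^2_+$ is the unique unit vector in $T\Sp^2$ normal to the equator and pointing into $\Sp^2_-$, namely $\nu = -e_3$; hence $\partial_\nu \omega_i = -\partial_3 \omega_i = 0$ for $i = 1, 2$. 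Since $\omega_i^2$ is symmetric under $\omega_3 \mapsto -\omega_3$, $\int_{\Sp^2_+} \omega_i^2 \, d\delta = \tfrac12 \int_{\Sp^2} \omega_i^2 \, d\delta = \tfrac{2\pi}{3}$, so $\phi_i := \sqrt{3/(2\pi)}\,\omega_i$ is $L^2$-normalized, and orthogonality of $\phi_0, \phi_1, \phi_2$ follows from odd symmetries of the integrands.

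Assembling everything via the reflection correspondence: the $\lambda = 0$ Neumann eigenspace is one-dimensional (spanned by constants), yielding $\lambda_0 = 0$. The $\lambda = 2$ Neumann eigenspace corresponds bijectively to the reflection-invariant subspace of $E_1 = \operatorname{span}\{\omega_1, \omega_2, \omega_3\}$, which is $\operatorname{span}\{\omega_1, \omega_2\}$ (since $\omega_3$ is odd in $\omega_3$) of dimension $2$, yielding $\lambda_1 = \lambda_2 = 2$. No Neumann eigenvalue lies strictly between $2$ and $6$ because the spectrum of $-\Delta$ on $\Sp^2$ is $\{k(k+1)\}_{k \geq 0}$, so $\lambda_3 \geq 6$. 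The main technical obstacle is the weak reflection lemma, but the even/odd decomposition of test functions handles it routinely.
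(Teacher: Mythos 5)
Your proof is correct and follows essentially the same route as the paper: both hinge on even reflection across the equator to produce a global eigenfunction on $\Sp^2$, promotion to smoothness via interior regularity, and the full-sphere spectral decomposition from Proposition \ref{SchauderCh_EigenfunctiononFullSphere} to force $\lambda \in \{k(k+1)\}$. The only mechanical difference is that you verify the weak equation by decomposing test functions into even and odd parts, whereas the paper integrates by parts separately on each hemisphere and observes that the two boundary terms vanish; you also spell out the normalization constants and the converse direction a bit more explicitly, but the underlying argument is the same.
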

The proof we present is essentially taken from the proof of Lemma 4 in \cite{AK}. 
 \begin{proof}
Let $\lambda\in\R$ and $\phi\in C^\infty(\Sp^2_+)$ satisfy the problem 
$$\left\{
    \begin{aligned}
        -\Delta\phi&=\lambda \phi\hspace{.5cm}\textrm{in $\Sp^2_+$},\\
        \frac{\partial \phi}{\partial\nu}&=0\hspace{.75cm}\textrm{along $\partial \Sp^2_+$}.
    \end{aligned}
\right.$$
We define the function
$$\bar\phi:\Sp^2\rightarrow\R,\ \bar\phi(\omega_1,\omega_2,\omega_3):=\left\{\begin{array}{ll}
        \displaystyle \phi(\omega_1,\omega_2,\omega_3)   & \displaystyle\textrm{if $\omega_3\geq 0$},\vspace{.15cm} \\
        \displaystyle \phi(\omega_1,\omega_2,-\omega_3)   & \displaystyle\textrm{if $\omega_3< 0$}.
\end{array}\right.$$
Since $\partial_\nu\phi=0$, it follows that $\bar\phi\in C^1(\Sp^2_+)\subset W^ {1,2}(\Sp^2)$. Additionally $\bar\phi\in C^ \infty(\Sp^2_+)$, $\bar\phi\in C^ \infty(\Sp^2_-)$ and $-\Delta\bar\phi=\lambda\bar\phi$ separately on $\Sp^2_+$ and $\Sp^2_-$. We claim that $\bar\phi$ is a weak solution of $-\Delta\bar\phi=\lambda\bar\phi$. To see this, let $\psi\in C^ \infty(\Sp^2)$ and denote the exterior unit normal along $\partial\Sp^2_-$ by $\tilde\nu$. Using $\partial_\nu\bar\phi=0$ along $\partial \Sp^2_+$ and $\partial_{\tilde\nu}\bar\phi=0$ along $\partial\Sp^2_-$, we use integration by parts to deduce
\begin{align*} 
\int_{\Sp^2}\langle\nabla\bar\phi,\nabla\psi\rangle d\mu_{\Sp^2}=&\int_{\Sp^2_+}\langle\nabla\bar\phi,\nabla\psi\rangle d\mu_{\Sp^2}+\int_{\Sp^2_-}\langle\nabla\bar\phi,\nabla\psi\rangle d\mu_{\Sp^2}\\
=&\int_{\partial\Sp^2_+}\frac{\partial\bar\phi}{\partial\nu}\psi dS-\int_{\Sp^2_+}\Delta\bar\phi\psi d\mu_{\Sp^2}+\int_{\partial\Sp^2_-}\frac{\partial\bar\phi}{\partial\tilde \nu}\psi dS-\int_{\Sp^2_-}\Delta\bar\phi \psi d\mu_{\Sp^2}\\
=&\int_{\Sp^2}\lambda\bar\phi\psi d\mu_{\Sp^2}.
\end{align*}
 By Proposition \ref{SchauderCh_RegularityMdfWITHOUTBOundary} $\bar\phi\in C^\infty(\Sp^2)$. Using the notation and statements from Proposition \ref{SchauderCh_EigenfunctiononFullSphere}, there exists $k\in\N_0$ and $f\in E_k$ such that $\langle f, \bar\phi\rangle\neq 0$. Consequently
 $$
\lambda\langle \bar\phi, f\rangle_{L^2(\Sp^2)}
=
-\langle\Delta\bar \phi, f\rangle_{L^2(\Sp^2)}
=
-\langle \bar\phi, \Delta f\rangle_{L^2(\Sp^2)}
=k(k+1) \langle \bar\phi, f\rangle_{L^2(\Sp^2)}
$$
and hence $\lambda=k(k+1)$. The lemma follows from Proposition \ref{SchauderCh_EigenfunctiononFullSphere}. Indeed, either $k\geq 2$ and $\lambda\geq 6$ or $k\in\set{0,1}$ and consequently $\lambda\in\set{0,2}$. If $\lambda=0$ then $\bar\phi\in E_0$ and hence $\phi$ is constant. If $k=1$, then $\bar\phi\in E_1$ and there exist $a_1,a_2,a_3\in\R$ such that $\phi(\omega)=\sum_{i=1}^3a_i\omega_i$. Finally $a_3=0$, since $0\overset!=\frac{\partial\phi}{\partial\nu}=a_3$.
\end{proof}
By Lemma \ref{SchauderCh_NeumannEVontheHalfSphere}, the kernel of $\Delta(\Delta+2)$ is the span of $\phi_0$, $\phi_1$ and $\phi_2$ (when imposing zero boundary values). For a function $\chi\in L^2(\Sp^2_+)$ we define the components $\chi^\parallel$ \emph{parallel} to and $\chi^\perp$ \emph{orthogonal} to the kernel as 
$$\chi^\parallel:=
\frac{\langle \chi,1\rangle_{L^2(\Sp^2_+)}}{\langle 1,1\rangle_{L^2(\Sp^2_+)}}1
+
\sum_{i=1}^2\frac{\langle \chi,\omega_i\rangle_{L^2(\Sp^2_+)}}{\langle \omega_i,\omega_i\rangle_{L^2(\Sp^2_+)}}\omega_i
\hspace{.5cm}\textrm{and}\hspace{.5cm}
\chi^\perp:=\chi-\chi^\parallel.$$
Here, in a typical abuse of notation, we have used the symbol $\omega_i$ also as the map $\omega\mapsto\omega_i$.\\

For $f\in C^{0,0,\gamma}(\Sp^2_+\times[0,T])$, $h_1\in C^{3,0,\gamma}(\partial\Sp^2_+\times[0,T])$, $h_2\in C^{1,0,\gamma}(\partial\Sp^2_+\times[0,T])$ and $u_0\in C^{4,\gamma}(\Sp^2_+)$ satisfying
\begin{equation}\label{SchauderCh_HalfSphereParabolicCompatibility}
\frac{\partial u_0}{\partial\nu}=h_1(\cdot,0)
\hspace{.5cm}\textrm{and}\hspace{.5cm}
\frac{\partial\Delta u_0}{\partial\nu}=h_2(\cdot,0)
\end{equation}
we consider the parabolic problem
\begin{equation}\label{SchauderCh_NullBVProblemonHalfSphere}
\left\{
\begin{array}{ll}
\displaystyle \dot u+\frac12\Delta(\Delta+2)u=f&\displaystyle \textrm{in $\Sp^2_+\times[0,T]$},\vspace{.15cm}\\
\displaystyle \frac{\partial u}{\partial\nu}=h_1,\ \ \frac{\partial\Delta u}{\partial\nu}=h_2&\displaystyle \textrm{along $\partial\Sp^2_+\times[0,T]$},\vspace{.15cm}\\
 \displaystyle u(0)=u_0&\displaystyle \textrm{on $\Sp^2_+$}.
\end{array}
\right.
\end{equation}
Since $u_0$ satisfies Equation \eqref{SchauderCh_HalfSphereParabolicCompatibility}, we may use Theorem \ref{SchauderCh_FinalExistenceThmParabolic} and deduce that Problem \eqref{SchauderCh_NullBVProblemonHalfSphere} has a unique solution $u\in C^{4,1,\gamma}(\Sp^2_+\times[0,T])$.

\begin{theorem}[Decay Estimate]\label{SchauderCh_DecayEstimate}
Let $T\geq 1$ and $u\in C^{4,1,\gamma}(\Sp^2_+\times[0,T])$ be the solution to Problem \eqref{SchauderCh_NullBVProblemonHalfSphere} with $f=0$ and $h_1=h_2=0$. Then for all $t\in[0,T]$ 
$$\|u(t)\|_{C^{4,\gamma}(\Sp^2_+)}\leq C\left(e^{-12 t}\|u_0^\perp\|_{C^{4,\gamma}(\Sp^2_+)}+\|u_0^\parallel\|_{C^{4,\gamma}(\Sp^2_+)}\right).$$
where $C=C_1(\Sp^2_+,\gamma)$ depends on $\Sp^2_+$ and $\gamma$ but importantly not on $T$.
\end{theorem}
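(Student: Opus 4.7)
My plan is to split $u = u^\parallel + u^\perp$ using the $L^2$-orthogonal projection onto the kernel $\operatorname{span}\{1,\omega_1,\omega_2\}$ of $\Delta(\Delta+2)$ with Neumann boundary conditions on $\Sp^2_+$, show that $u^\parallel$ is constant in time and $u^\perp$ decays in $L^2$ at rate $e^{-12t}$, and then promote the $L^2$ decay to a $C^{4,\gamma}$ decay by applying the $T$-independent Schauder estimate of Theorem \ref{SchauderCh_GoodAPrioriEstimateOnManifold} to the rescaled function $U(t):=e^{12t}u^\perp(t)$.

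By Lemma \ref{SchauderCh_NeumannEVontheHalfSphere}, the functions $1,\omega_1,\omega_2$ are Neumann eigenfunctions of $-\Delta$ with eigenvalues $0,2,2$, so each lies in $\ker\Delta(\Delta+2)$ and satisfies $\partial_\nu\phi=\partial_\nu\Delta\phi=0$. Two integrations by parts, using the boundary conditions on both $u$ and $\phi$, give $\langle\Delta(\Delta+2)u,\phi\rangle_{L^2}=\langle u,\Delta(\Delta+2)\phi\rangle_{L^2}=0$ for $\phi\in\{1,\omega_1,\omega_2\}$. Combined with the PDE this yields $\tfrac{d}{dt}\langle u(t),\phi\rangle_{L^2}=0$, so $u^\parallel(t)\equiv u_0^\parallel$ and hence $\|u^\parallel(t)\|_{C^{4,\gamma}}=\|u_0^\parallel\|_{C^{4,\gamma}}$ for every $t$.

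The complementary function $u^\perp=u-u^\parallel$ solves the same PDE with initial data $u_0^\perp$ and homogeneous boundary conditions. Its $L^2$ energy satisfies $\tfrac{d}{dt}\|u^\perp\|_{L^2}^2 = -\int(\Delta u^\perp)^2+2\int|\nabla u^\perp|^2$, and expanding $u^\perp(t)=\sum_{k\geq 3}c_k(t)\phi_k$ in the Neumann basis of Theorem \ref{SchauderCh_neumannlaplacianResultsTheorem} converts the right-hand side to $-\sum_{k\geq 3}\lambda_k(\lambda_k-2)c_k(t)^2\leq -24\|u^\perp(t)\|_{L^2}^2$, using $\lambda_k\geq 6$ for $k\geq 3$. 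Gr\"onwall then yields $\|u^\perp(t)\|_{L^2}\leq e^{-12t}\|u_0^\perp\|_{L^2}$.

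Finally, setting $U(t):=e^{12t}u^\perp(t)$, a direct computation shows that $U$ satisfies $\dot U+\bigl(\tfrac12\Delta(\Delta+2)-12 I\bigr)U=0$ with $U(0)=u_0^\perp$ and the same homogeneous boundary conditions. The shifted operator has the same principal $a^{ij}a^{kl}\partial_{ijkl}$ part as $\tfrac12\Delta(\Delta+2)$ (no third-order term is introduced) and only a new zeroth-order coefficient, so it is $2$-parabolic with the same canonical boundary structure; the hypotheses of Theorem \ref{SchauderCh_GoodAPrioriEstimateOnManifold} are therefore met. Applying it with $T_0:=1\leq T$ gives
\begin{equation*}
\|U\|_{C^{4,1,\gamma}(\Sp^2_+\times[0,T])}\leq C\Bigl(\|u_0^\perp\|_{C^{4,\gamma}}+\sup_{t\in[0,T]}\|U(t)\|_{L^2}\Bigr),
\end{equation*}
with $C$ independent of $T$. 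Crucially, $\|U(t)\|_{L^2}=e^{12t}\|u^\perp(t)\|_{L^2}\leq\|u_0^\perp\|_{L^2}\leq C\|u_0^\perp\|_{C^{4,\gamma}}$ by the previous step, so $\|U(t)\|_{C^{4,\gamma}}\leq C\|u_0^\perp\|_{C^{4,\gamma}}$; multiplying by $e^{-12t}$ delivers the claimed decay of $u^\perp$, and combining with the stationary $u^\parallel$ finishes the proof. The main subtlety is securing a Schauder constant independent of $T$, which is why Theorem \ref{SchauderCh_GoodAPrioriEstimateOnManifold} (with its $L^2$ supremum term) must be used in place of Corollary \ref{SchauderCh_SchauderEstimateonmanifolds}; the $e^{12t}$ rescaling is precisely the device that turns the exponential $L^2$ decay into a uniform bound on $\sup_t\|U(t)\|_{L^2}$.
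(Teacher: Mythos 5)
Your proposal is correct and follows essentially the same route as the paper: project off the kernel $\operatorname{span}\{1,\omega_1,\omega_2\}$, show $u^\parallel$ is stationary, establish the $L^2$ decay of $u^\perp$ via the eigenvalue gap $\lambda_k(\lambda_k-2)\geq 24$ for $k\geq 3$, and apply the $T$-independent estimate of Theorem \ref{SchauderCh_GoodAPrioriEstimateOnManifold} to the rescaled function $e^{12t}u^\perp$. The only cosmetic difference is that the paper works directly with $v=e^{12t}u^\perp$ and shows $\|v\|_{L^2}$ is non-increasing, whereas you first derive the $e^{-12t}$ decay of $\|u^\perp\|_{L^2}$ by Gr\"onwall and then rescale; these are equivalent.
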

\begin{proof}
We put $u^\parallel(t):=(u(t))^\parallel$ and $u^\perp(t):=(u(t))^\perp$. Using the PDE and the boundary conditions from Problem \eqref{SchauderCh_NullBVProblemonHalfSphere} and partial integration, we compute 
\begin{align*}
    \dot u^\parallel+\frac12\Delta(\Delta+2) u ^\parallel
    =&\frac{\langle\dot u(t), 1\rangle_{L^2(\Sp^2_+)}}{\langle 1,1\rangle_{L^2(\Sp^2_+)}}1+\sum_{i=1}^2\frac{\langle \dot u(t),\omega_i\rangle_{L^2(\Sp^2_+)}}{\langle\omega_i,\omega_i\rangle_{L^2(\Sp^2_+)}}\omega_i\\
     =&-\frac12\frac{\langle\Delta(\Delta+2)u(t), 1\rangle_{L^2(\Sp^2_+)}}{\langle 1,1\rangle_{L^2(\Sp^2_+)}}1-
     \frac12\sum_{i=1}^2\frac{\langle\Delta(\Delta+2) u(t),\omega_i\rangle_{L^2(\Sp^2_+)}}{\langle\omega_i,\omega_i\rangle_{L^2(\Sp^2_+)}}\omega_i\\
     =&-\frac12\frac{\langle u(t), \Delta(\Delta+2)1\rangle_{L^2(\Sp^2_+)}}{\langle 1,1\rangle_{L^2(\Sp^2_+)}}1-
     \frac12\sum_{i=1}^2\frac{\langle u(t),\Delta(\Delta+2)\omega_i\rangle_{L^2(\Sp^2_+)}}{\langle\omega_i,\omega_i\rangle_{L^2(\Sp^2_+)}}\omega_i\\
     =&0.
\end{align*}
Since $u^\parallel(t)\in\operatorname{span}(\phi_0,\phi_1,\phi_2)$ we have $\Delta(\Delta+2)u^\parallel(t)=0$ and hence $\dot u^\parallel=0$. So $u^\parallel(t)=u^\parallel(0)=u_0^\parallel$ and thus
\begin{equation}\label{SchauderCh_ParallelDecayEstiamte}
\|u^\parallel(t)\|_{C^{4,\gamma}(\Sp^2_+)}
=\|u_0^\parallel\|_{C^{4,\gamma}(\Sp^2_+)}.
\end{equation}
Next, we consider the evolution of $u^\perp=u-u^\parallel$. Using $\dot u^\parallel=\Delta(\Delta +2) u^\parallel=0$ we obtain
\begin{align*} 
\dot u^\perp+\frac12\Delta(\Delta+2) u^\perp
=\dot u+\frac12\Delta(\Delta+2) u
=0.
\end{align*}
Using the boundary conditions from Problem \eqref{SchauderCh_NullBVProblemonHalfSphere}, it is clear that $\partial_\nu u^\perp=\partial_\nu\Delta u^\perp=0$. Also  $u^\perp(0)=u_0^\perp$. We define $v(t):=e^{12t}u^\perp(t)$. It follows that 
\begin{equation}\label{SchauderCh_NullBVProblemonHalfSphere_PERPEXP}
\left\{
\begin{array}{ll}
\displaystyle\dot v+\frac12\Delta(\Delta+2)v=12v&\displaystyle\textrm{in $\Sp^2_+\times[0,T]$},\vspace{.15cm}\\
\displaystyle\frac{\partial v}{\partial\nu}=\frac{\partial\Delta v}{\partial\nu}=0&\displaystyle\textrm{along $\partial\Sp^2_+\times[0,T]$},\vspace{.15cm}\\
 \displaystyle v(0)=u_0^\perp&\displaystyle\textrm{on $\Sp^2_+$}.
\end{array}
\right.
\end{equation}

For $l=0,1,2$, the functions $\Delta^l v(t)$ are continuous and, by Theorem \ref{SchauderCh_neumannlaplacianResultsTheorem}, they can therefore be expanded into Neumann eigenfunctions. We claim
$$\Delta^l v(\cdot,t)=\sum_{k=0}^\infty c_k(t)(-\lambda_k)^l \phi_k\hspace{.5cm}\textrm{in $L^2(\Sp^2_+)$}.$$
Indeed by the boundary conditions $\partial_\nu v=\partial_\nu\Delta v=0$ from Problem \eqref{SchauderCh_NullBVProblemonHalfSphere_PERPEXP} we have $\langle\phi_k, \Delta^2 v(t)\rangle_{L^2(\Sp^2_+)}=-\lambda_k\langle \phi_k, \Delta v\rangle_{L^2(\Sp^2_+)}=\lambda_k^2 \langle v,\phi_k\rangle_{L^2(\Sp^2_+)}$. 
By definition, $v\perp\phi_0,\phi_1,\phi_2$ in $L^2(\Sp^2_+)$ and hence $c_0=c_1=c_2=0$. Additionally, $\lambda_k(\lambda_k-2)\geq 24$ for $k\geq 3$ since $\lambda_k\geq 6$ for such $k$. Consequently 
$$-\frac12\int_{\Sp^2_+}v(t)\Delta(\Delta +2) v(t)d\mu_{\Sp^2}=\frac12\sum_{k=2}^\infty\lambda_k(2-\lambda_k)|c_k(t)|^2\leq-12\sum_{k=2}^\infty |c_k(t)| ^2=-12\|v(t)\|_{L^2(\Sp^2_+)}^2.$$
Using this estimate, we compute
$$
    \frac12\frac d{dt}\int_{\Sp^2_+} v(t)^2d\mu_{\Sp^2}=\int_{\Sp^2_+} v(t)\dot v(t)d\mu_{\Sp^2}=\int_{\Sp^2_+} -v(t)\frac12\Delta(\Delta+2)v(t)+12v(t)^2d\mu_{\Sp^2}\leq 0.
$$
This implies $\|v(t)\|_{L^2(\Sp^2_+)}\leq \|v_0\|_{L^2(\Sp^2_+)}=\|u_0^\perp\|_{L^2(\Sp^2_+)}$ for all $t\in[0,T]$. Applying the estimate from Theorem \ref{SchauderCh_GoodAPrioriEstimateOnManifold} we deduce that for all $t\in[0,T]$ 
$$\|v(t)\|_{C^{4,\gamma}(\Sp^2_+)}\leq \|v\|_{C^{4,1,\gamma}(\Sp^2_+\times[0,T])}\leq C(\|u_0^\perp\|_{C^{4,\gamma}(\Sp^2_+)}+\sup_{t\in[0,T]}\|v(t)\|_{L^2(\Sp^2_+)})\leq C\|u_0^\perp\|_{C^{4,\gamma}(\Sp^2_+)}.$$
Recalling that $v(t)=e^{12t} u(t)$ we obtain 
\begin{equation}\label{SchauderCh_PerpPartEstimatae}
\|u^\perp(t)\|_{C^{4,\gamma}(\Sp^2_+)}\leq Ce^{-12 t}\|u_0^\perp\|_{C^{4,\gamma}(\Sp^2_+)}.
\end{equation}
Combining Estimates \eqref{SchauderCh_ParallelDecayEstiamte} and \eqref{SchauderCh_PerpPartEstimatae}, we deduce the theorem.
 \end{proof}

 \begin{korollar}\label{SchauderCh_InhomDecayKor}
     Let $T\geq 1$ and $u\in C^{4,1,\gamma}(\Sp^2_+\times[0,T])$ be the solution to Problem \eqref{SchauderCh_NullBVProblemonHalfSphere}. Then for all $t\in[0,T]$ 
\begin{align*} 
\|u(t)\|_{C^{4,\gamma}(\Sp^2_+)}\leq &C_1\left(e^{-12 t}\|u_0^\perp\|_{C^{4,\gamma}(\Sp^2_+)}+\|u_0^\parallel\|_{C^{4,\gamma}(\Sp^2_+)}\right)\\
&+C_2\left(\|f\|_{C^{0,0,\gamma}(\Sp^2_+\times[0,T])}+\|h_1\|_{C^{3,0,\gamma}(\partial\Sp^2_+\times[0,T])}+\|h_2\|_{C^{1,0,\gamma}(\partial\Sp^2_+\times[0,T])}\right)
\end{align*}
where $C_1=C_1(\Sp^2_+,\gamma)$ depends on $\Sp^2_+$ and $\gamma$ but importantly not on $T$ and $C_2=C_2(\Sp^2_+,\gamma,T)$ depends on $\Sp^2_+$ $\gamma$ and $T$.
 \end{korollar}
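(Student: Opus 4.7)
The plan is to use linear superposition: decompose $u$ into a part governed by the homogeneous problem (to which Theorem \ref{SchauderCh_DecayEstimate} applies) and a part governed by the inhomogeneous problem with zero initial data (to which Corollary \ref{SchauderCh_SchauderEstimateonmanifolds} applies). The obstacle is that a direct split $u = v + w$ with $v$ homogeneous and $w$ having zero initial data does not respect the compatibility conditions required by Theorem \ref{SchauderCh_FinalExistenceThmParabolic}; the term carrying $u_0$ would need to have trivial Neumann traces, but in general $\partial_\nu u_0 = h_1(\cdot,0)\neq 0$ and $\partial_\nu\Delta u_0 = h_2(\cdot,0)\neq 0$. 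So one first has to shift the boundary data away, mirroring the argument in Lemma \ref{SchauderCh_ExttendingtoIVandBV_Lemma}.

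First I would construct, via the extension operator and the auxiliary functions $\varphi_0,\varphi_1$ from Lemmas \ref{SchauderCh_SmoothExistenceOperatorExistenceLemma} and \ref{SchauderCh_ExistenceofphijfunctionsLemma}, a function $W\in C^{4,1,\gamma}(\Sp^2_+\times[0,T])$ depending linearly and continuously on $(h_1,h_2)$ such that $\partial_\nu W = h_1$ and $\partial_\nu\Delta W = h_2$ along $\partial\Sp^2_+\times[0,T]$, with
$$\|W\|_{C^{4,1,\gamma}(\Sp^2_+\times[0,T])}\leq C(\Sp^2_+,\gamma)\bigl(\|h_1\|_{C^{3,0,\gamma}(\partial\Sp^2_+\times[0,T])}+\|h_2\|_{C^{1,0,\gamma}(\partial\Sp^2_+\times[0,T])}\bigr).$$
Here the construction is local near $\partial\Sp^2_+$, so the $C^{4,1,\gamma}$-norm of $W$ is controlled independently of $T$. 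Setting $\tilde u := u - W$, the function $\tilde u$ solves the problem with source $\tilde f := f - \dot W - \tfrac12\Delta(\Delta+2)W$, vanishing boundary data, and initial value $\tilde u_0 := u_0 - W(\cdot,0)$, which satisfies $\partial_\nu\tilde u_0 = 0$ and $\partial_\nu\Delta\tilde u_0 = 0$.

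Next I would split $\tilde u = v + w$ using the existence theory of Theorem \ref{SchauderCh_FinalExistenceThmParabolic}: let $v$ solve the fully homogeneous problem with initial datum $\tilde u_0$ and let $w$ solve the inhomogeneous problem with source $\tilde f$, zero boundary data, and zero initial datum (compatibility holds in both cases). Theorem \ref{SchauderCh_DecayEstimate} applies to $v$ and yields
$$\|v(t)\|_{C^{4,\gamma}(\Sp^2_+)}\leq C_1\bigl(e^{-12t}\|\tilde u_0^\perp\|_{C^{4,\gamma}}+\|\tilde u_0^\parallel\|_{C^{4,\gamma}}\bigr)$$
with $C_1$ independent of $T$. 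Writing $\tilde u_0^{\perp} = u_0^\perp - W(\cdot,0)^\perp$ and similarly for the parallel component, and using the continuity of the $L^2$-orthogonal projections onto $\mathrm{span}\{1,\omega_1,\omega_2\}$ in $C^{4,\gamma}(\Sp^2_+)$, the $W(\cdot,0)$-contribution is bounded by $C(\|h_1\|+\|h_2\|)$ and can be absorbed into the $C_2$-term. For $w$, Corollary \ref{SchauderCh_SchauderEstimateonmanifolds} on the interval $[0,T]$ gives
$$\|w\|_{C^{4,1,\gamma}(\Sp^2_+\times[0,T])}\leq C_2(\Sp^2_+,\gamma,T)\|\tilde f\|_{C^{0,0,\gamma}(\Sp^2_+\times[0,T])},$$
and $\|\tilde f\|_{C^{0,0,\gamma}}\leq \|f\|_{C^{0,0,\gamma}}+C\|W\|_{C^{4,1,\gamma}}\leq \|f\|_{C^{0,0,\gamma}}+C(\|h_1\|+\|h_2\|)$.

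Finally I would assemble the estimates via $u = W + v + w$:
$$\|u(t)\|_{C^{4,\gamma}(\Sp^2_+)}\leq \|W(t)\|_{C^{4,\gamma}} + \|v(t)\|_{C^{4,\gamma}} + \|w(t)\|_{C^{4,\gamma}}.$$
The $v$-contribution produces the decaying term with constant $C_1$ independent of $T$, while the $W$ and $w$ contributions produce the non-decaying data terms with constant $C_2$ depending on $T$. The subtle point to verify carefully is that the splitting $u_0 = u_0^\parallel+u_0^\perp$ survives the subtraction of $W(\cdot,0)$: the excess terms $W(\cdot,0)^{\parallel}$ and $W(\cdot,0)^{\perp}$ are both bounded in $C^{4,\gamma}$ by $\|h_1\|_{C^{3,0,\gamma}}+\|h_2\|_{C^{1,0,\gamma}}$ and therefore feed cleanly into the $C_2$-term, yielding exactly the estimate claimed.
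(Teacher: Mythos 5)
Your overall architecture — superposition $u = (\text{piece seen by the decay estimate}) + (\text{piece seen by the Schauder estimate})$, with the compatibility issue handled by a preliminary shift — is the same as the paper's. However, the shift is implemented differently, and your implementation has a gap.

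The gap is in the construction of the lift $W\in C^{4,1,\gamma}(\Sp^2_+\times[0,T])$ with $\partial_\nu W = h_1$ and $\partial_\nu\Delta W = h_2$, bounded by $\|h_1\|_{C^{3,0,\gamma}}+\|h_2\|_{C^{1,0,\gamma}}$. You propose to build $W$ slicewise via $W(\cdot,t) := \varphi_0\, E[h_1(\cdot,t)] + \varphi_1\,E[\,h_2(\cdot,t)-\partial_\nu\Delta(\varphi_0 E[h_1(\cdot,t)])\,]$, mirroring the smooth case in Lemma \ref{SchauderCh_ExttendingtoIVandBV_Lemma}. But membership in $C^{4,1,\gamma}(\Sp^2_+\times[0,T])$ requires $\dot W$ to exist and be continuous (the derivative $\partial_t^j\nabla^k$ for $4j+k\leq 4$ includes $j=1$, $k=0$), and the slicewise construction makes $\dot W(\cdot,t) = \varphi_0 E[\dot h_1(\cdot,t)]+\dots$, which forces $h_1$ to be differentiable in time. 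A function $h_1\in C^{3,0,\gamma}(\partial\Sp^2_+\times[0,T])$ need not be $C^1$ in time — the definition of that space only imposes temporal Hölder continuity with exponent $\frac{3+\gamma}{4}<1$ at the $C^0$ level. So $W\notin C^{4,1,\gamma}$ in general, and the split $\tilde u = u - W$ does not stay inside $C^{4,1,\gamma}$, so neither Corollary \ref{SchauderCh_SchauderEstimateonmanifolds} nor Theorem \ref{SchauderCh_FinalExistenceThmParabolic} applies to $\tilde u$. (The paper only ever builds such a pointwise-in-time extension when $h_1, h_2$ are \emph{smooth}, and for general data it passes to the limit through smooth approximations — it never asserts a $C^{4,1,\gamma}$ lift of rough boundary data.)

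The fix, which is what the paper does, is to avoid the explicit lift altogether: instead of extending $h_1,h_2$ in $t$, solve an \emph{elliptic} problem at time $0$ only — $\Delta^2 w_0 = 0$, $\partial_\nu w_0 = h_1(\cdot,0)$, $\partial_\nu\Delta w_0 = h_2(\cdot,0)$, $\int w_0\,d\mu=0$ — and let $w_0$ serve as initial datum for the \emph{full} inhomogeneous parabolic problem (with source $f$, boundary data $h_1,h_2$). The parabolic existence theory supplies $w\in C^{4,1,\gamma}$ directly, with $\dot w = f - \tfrac12\Delta(\Delta+2)w$, so there is no regularity obstruction. Then $v := u - w$ solves the homogeneous problem with initial datum $v_0 := u_0-w_0$, whose Neumann traces vanish by the compatibility conditions, and the decay estimate from Theorem \ref{SchauderCh_DecayEstimate} applies to $v$. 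The projection bound $\|v_0^*\|_{C^{4,\gamma}}\leq\|u_0^*\|_{C^{4,\gamma}} + C(\|h_1\|+\|h_2\|)$ that you sketched for $\tilde u_0$ is exactly right and carries over verbatim to $v_0$.
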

 \begin{proof}
     We study the elliptic problem
     \begin{equation}\label{SchauderCh_DecayElliptic}
     \left\{
\begin{array}{ll}
\displaystyle\Delta^2w_0=0&\displaystyle\textrm{in $\Sp^2_+$,}\vspace{.2cm}\\
\displaystyle\frac{\partial w_0}{\partial\nu}=h_1(\cdot,0),\ \frac{\partial\Delta w_0}{\partial\nu}=h_2(\cdot,0)&\displaystyle\textrm{along $\partial\Sp^2_+$,}\vspace{.2cm}\\
\displaystyle\int_{\Sp^2_+} w_0d\mu_{\Sp^2}=0.&
\end{array}
\right.
\end{equation}
By Corollary \ref{SchauderCh_EllipticProblemExistenceKorollar}, Problem \eqref{SchauderCh_DecayElliptic} has a unique solution $w_0\in C^{4,\gamma}(\Sp^2_+)$. By Corollary \ref{SchauderCh_EllipticSchauderEstimate_Korollar}, it satisfies the Schauder estimate 
\begin{equation}\label{SchauderCh_DecayEllipticEstimate}
\|w_0\|_{C^{4,\gamma}(\Sp^2_+)}\leq C\left(
\|h_1(\cdot,0)\|_{C^{3,\gamma}(\partial\Sp^2_+)}
+
\|h_2(\cdot,0)\|_{C^{1,\gamma}(\partial\Sp^2_+)}
\right).
\end{equation}
We consider the parabolic problem 
\begin{equation}\label{SchauderCh_NullBVProblemonHalfSphere2}
\left\{
\begin{array}{ll}
\displaystyle \dot {w}+\frac12\Delta(\Delta+2)w=f&\displaystyle \textrm{in $\Sp^2_+\times[0,T]$},\vspace{.15cm}\\
\displaystyle \frac{\partial  w}{\partial\nu}=h_1,\ \ \frac{\partial\Delta w}{\partial\nu}=h_2&\displaystyle \textrm{along $\partial\Sp^2_+\times[0,T]$},\vspace{.15cm}\\
 \displaystyle  w(0)= w_0&\displaystyle \textrm{on $\Sp^2_+$}.
\end{array}
\right.
\end{equation}
By construction of $w_0$, the necessary compatibility conditions are satisfied. So, by Theorem \ref{SchauderCh_FinalExistenceThmParabolic}, Problem \eqref{SchauderCh_NullBVProblemonHalfSphere2} has a unique solution $w\in C^{4,1,\gamma}(\Sp^2_+\times[0,T])$. Combining the Schauder estimate provided by Corollary \ref{SchauderCh_SchauderEstimateonmanifolds} with Estimate \eqref{SchauderCh_DecayEllipticEstimate}, we deduce that $w$ satisfies the Schauder estimate
\begin{equation}\label{SchauderCh_NullBVProblemonHalfSphere2Estiamte}
\begin{aligned}
\|w\|_{C^{4,1,\gamma}(\Sp^2_+\times[0,T])}\leq C_2&\bigg(\|f\|_{C^{0,0,\gamma}(\Sp^2_+\times[0,T])}+\|h_1\|_{C^{3,0,\gamma}(\partial\Sp^2_+\times[0,T])}\\
&\hspace{1cm}+\|h_2\|_{C^{1,0,\gamma}(\partial\Sp^2_+\times[0,T])}\bigg),
\end{aligned}
\end{equation}
where $C_2=C_2(\Sp^2_+,\gamma, T)$. Next, we put $v_0:=u_0-w_0$ and consider the parabolic equation 
\begin{equation}\label{SchauderCh_NullBVProblemonHalfSphere1}
\left\{
\begin{array}{ll}
\displaystyle \dot {v}+\frac12\Delta(\Delta+2)v=0&\displaystyle \textrm{in $\Sp^2_+\times[0,T]$},\vspace{.15cm}\\
\displaystyle \frac{\partial v}{\partial\nu}=0,\ \ \frac{\partial\Delta v}{\partial\nu}=0&\displaystyle \textrm{along $\partial\Sp^2_+\times[0,T]$},\vspace{.15cm}\\
 \displaystyle v(0)=v_0&\displaystyle \textrm{on $\Sp^2_+$}.
\end{array}
\right.
\end{equation}
By construction of $v_0$, the necessary compatibility conditions are satisfied. So, by Theorem \ref{SchauderCh_FinalExistenceThmParabolic}, Problem \eqref{SchauderCh_NullBVProblemonHalfSphere1} has a unique solution $v\in C^{4,1,\gamma}(\Sp^2_+\times[0,T])$, which, by Theorem \ref{SchauderCh_DecayEstimate}, satisfies the decay estimate 
\begin{equation}\label{SchauderCh_NullBVProblemonHalfSphere1Estiamte}
    \|v(t)\|_{C^{4,\gamma}(\Sp^2_+)}\leq C_1\left(e^{-12 t}\|v_0^\perp\|_{C^{4,\gamma}(\Sp^2_+)}+\|v_0^\parallel\|_{C^{4,\gamma}(\Sp^2_+)}\right),
\end{equation}
where $C_1=C_1(\Sp^2_+,\gamma)$ but independent of $T$. Let $*\in\set{\perp,\parallel}$. We use Estimate \eqref{SchauderCh_DecayEllipticEstimate} to estimate 
\begin{align}
\|v_0^*\|_{C^{4,\gamma}(\Sp^2_+)}&\leq \|u_0^*\|_{C^{4,\gamma}(\Sp^2_+)}+\|w_0^*\|_{C^{4,\gamma}(\Sp^2_+)}\nonumber\\
&\leq \|u_0^*\|_{C^{4,\gamma}(\Sp^2_+)}+C\left(\|h_1\|_{C^{3,0,\gamma}(\partial\Sp^2_+\times[0,T])}+\|h_2\|_{C^{1,0,\gamma}(\partial\Sp^2_+\times[0,T])}\right).\label{SchauderCh_ComponentParallelPerpSPlitEstimate}
\end{align}
Since $u=v+w$, the corollary follows by combining Estimates \eqref{SchauderCh_NullBVProblemonHalfSphere2Estiamte}, \eqref{SchauderCh_NullBVProblemonHalfSphere1Estiamte} and \eqref{SchauderCh_ComponentParallelPerpSPlitEstimate}.
 \end{proof}

\bibliographystyle{plain}
\bibliography{quellen}

\end{document}